\newtheorem{theorem}{Theorem}[section]
\newtheorem{lemma}[theorem]{Lemma}
\newtheorem{proposition}[theorem]{Proposition}
\newtheorem{corollary}[theorem]{Corollary}
\newtheorem{definition}[theorem]{Definition}
\newtheorem{example}[theorem]{Example}
\newtheorem{remark}[theorem]{Remark}
\numberwithin{equation}{section}
\DeclareMathOperator{\grad}{grad}
\DeclareMathOperator{\rad}{rad}
\DeclareMathOperator{\Ob}{Ob}
\DeclareMathOperator{\pd}{pd}
\DeclareMathOperator{\gldim}{gldim}
\DeclareMathOperator{\Hom}{Hom}
\DeclareMathOperator{\End}{End}
\DeclareMathOperator{\Ext}{Ext}
\DeclareMathOperator{\ext}{ext}
\DeclareMathOperator{\Aut}{Aut}
\DeclareMathOperator{\Top}{Top}
\DeclareMathOperator{\Stab}{Stab}
\begin{document}
\bibliographystyle{hunsrt} 

\phd 

\title{\bf A generalized Koszul theory and its applications in representation theory}
\author{Liping Li}
\campus{University of Minnesota}
\program{School of Mathematics}
\director{Peter Webb, Adviser}

\submissionmonth{July} 
\submissionyear{2012} 

\abstract{
There are many structures (algebras, categories, etc) with natural gradings such that the degree 0 components are not semisimple. Particular examples include tensor algebras with non-semisimple degree 0 parts, extension algebras of standard modules of standardly stratified algebras. In this thesis we develop a generalized Koszul theory for graded algebras (categories) whose degree 0 parts may be non-semisimple. Under some extra assumption, we show that this generalized Koszul theory preserves many classical results such as the Koszul duality. Moreover, it has some close relation to the classical theory. Applications of this generalized theory to finite EI categories, directed categories, and extension algebras of standard modules of standardly stratified algebras are described. We also study the stratification property of standardly stratified algebras, and classify algebras standardly (resp., properly) stratified for all linear orders.
}
\words{331}    
\acknowledgements{

The author would like to express great appreciation to his advisor, Professor Peter Webb. Under his sincere direction and with his kind help, the author developed the work described in this thesis. He also wants to thank Professor Mazorchuk, Professor Green for the discussions and comments on the generalized Koszul theory and stratification theory, which are the main framework of this thesis.
}
\dedication{To my son Nitou (nickname of Yifeng Li), for the so many funny things and troubles he brought. }


\beforepreface

\afterpreface


\chapter{Introduction}
\label{intro_chapter}

The classical Koszul theory plays an important role in the representation theory of graded algebras. However, there are a lot of structures (algebras, categories, etc) having natural gradings with non-semisimple degree 0 parts, to which the classical theory cannot be applied. Particular examples of such structures include tensor algebras generated by non-semisimple algebras $A_0$ and $(A_0, A_0)$-bimodules $A_1$, and extension algebras of standard modules of standardly stratified algebras (see \cite{Li4}). Therefore, we are motivated to develop a generalized Koszul theory which can be used to study the above structures, and preserves many classical results such as the Koszul duality. Moreover, we also hope to get a close relation between this generalized theory and the classical theory.

In \cite{Green3,Madsen2,Madsen3,Woodcock} several generalized Koszul theories have been described, where the degree 0 part $A_0$ of a graded algebra $A$ is not required to be semisimple. In \cite{Woodcock}, $A$ is supposed to be both a left projective $A_0$-module and a right projective $A_0$-module. In Madsen's paper \cite{Madsen3}, $A_0$ is supposed to have finite global dimension. These requirements are too strong for us. The theory developed by Green, Reiten and Solberg in \cite{Green3} works in a very general framework, but some efforts are required to fill the gap between their theory and our applications. Moreover, in all these papers, relations between the generalized theories and the classical theory are lacking.

Thus we want to develop a generalized Koszul theory which can inherit many useful results of the classical theory, and can be applied to graded structures such as finite EI categories, directed categories, extension algebras, etc. Explicitly, Let $A = \bigoplus _{i \geqslant 0} A_i$ be a \textit{positively graded, locally finite} $k$-algebra generated in degrees 0 and 1, i.e., $\dim _k A_i < \infty$ and $A_1 \cdot A_i = A_{i+1}$ for all $i \geqslant 0$, where $A_0$ is an arbitrary finite-dimensional algebra. We define generalized \textit{Koszul modules} and \textit{Koszul algebras} in a way similar to the classical case. That is, a graded $A$-module $M$ is \textit{Koszul} if $M$ has a linear projective resolution, and $A$ is a \textit{Koszul algebra} if $A_0$ viewed as a graded $A$-module is Koszul. We also define \textit{quasi-Koszul modules} and \textit{quasi-Koszul algebras}: $M$ is \textit{quasi-Koszul} if the $\Ext _A^{\ast} (A_0, A_0)$-module $\Ext_A^{\ast} (M, A_0)$ is generated in degree 0, and $A$ is a \textit{quasi-Koszul algebra} if $A_0$ is a quasi-Koszul $A$-module. It turns out that this generalization works nicely for our goal. Indeed, if $A_0$ satisfies the following splitting condition (S), many classical results described in \cite{BGS,Green1,Green2,Martinez} generalize to our context.\\

\textbf{(S): Every exact sequence $0 \rightarrow P \rightarrow Q \rightarrow R \rightarrow 0$ of left (resp., right) $A_0$-modules splits if $P$ and $Q$ are left (resp., right) projective $A_0$-modules.}\\

In particular, we obtain the Koszul duality.

\begin{theorem}
Let $A = \bigoplus _{i \geqslant 1} A_i$ be a locally finite graded algebra. If $A$ is a Koszul algebra, then $E = \Ext ^{\ast}_A (-, A_0)$ gives a duality between the category of Koszul $A$-modules and the category of Koszul $\Gamma$-modules. That is, if $M$ is a Koszul $A$-module, then $E(M)$ is a Koszul $\Gamma$-module, and $E_{\Gamma}EM = \Ext ^{\ast} _{\Gamma} (EM, \Gamma_0) \cong M$ as graded $A$-modules.
\end{theorem}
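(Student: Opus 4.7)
The plan is to mimic the classical proof of Koszul duality, replacing every appeal to the semisimplicity of $A_0$ by an appeal to the splitting condition (S). Set $\Gamma = \Ext_A^{\ast}(A_0,A_0)$ with its Yoneda product, so that $\Gamma_0 \cong \End_A(A_0)^{op}$. First I would record the structural prerequisites: that $\Gamma$ is itself a positively graded, locally finite algebra generated in degrees $0$ and $1$, that $\Gamma_0$ inherits the splitting property (S), and that $\Gamma$ is a Koszul algebra whenever $A$ is. These facts ensure that the generalized theory applies symmetrically to both $A$ and $\Gamma$, so that the statements we want to prove actually make sense and the functor $E_{\Gamma}$ behaves in parallel to $E$.

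Next, given a Koszul $A$-module $M$, I would take its linear projective resolution $\cdots \to P^{2} \to P^{1} \to P^{0} \to M \to 0$, where each $P^{i}$ is generated in degree $i$. Using (S), each $P^{i}$ has the form $A \otimes_{A_0} V^{i}$ with $V^{i}$ a projective $A_0$-module placed in degree $i$. Applying $\Hom_A(-,A_0)$ term by term gives a cochain complex whose $i$-th term is $\Hom_{A_0}(V^{i}, A_0)$. I would show that, under the Yoneda action, this complex is a linear complex of projective $\Gamma$-modules whose cohomology is concentrated in degree $0$ and equals $E(M)$. This uses the standard long exact sequences for $\Ext$ applied to the syzygies $\Omega^{i}M$, each of which is Koszul after the appropriate shift. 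The upshot is a linear projective resolution of $E(M)$ over $\Gamma$, so $E(M)$ is a Koszul $\Gamma$-module.

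To establish the isomorphism $E_{\Gamma}E(M) \cong M$, I would construct a natural evaluation map $\eta_M \colon M \to E_{\Gamma}E(M)$ and verify that it is a graded isomorphism degree by degree. Computing $E_{\Gamma}E(M)$ from the linear $\Gamma$-resolution built above reduces the problem, in each internal degree $i$, to a duality statement of the form $\Hom_{\Gamma_0}(\Hom_{A_0}(V^{i}, A_0), \Gamma_0) \cong V^{i}$, which is precisely the kind of reflexivity that (S) on both sides guarantees for finitely generated projective $A_0$-modules. Degree $0$ handles $M_0$, and the higher degrees are obtained by applying the same argument to the syzygies, after noting that the $i$-th syzygy of $M$ corresponds (under $E$) to a shift of $E(M)$.

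The main obstacle I anticipate is the structural step: verifying that $\Gamma$ is Koszul and that (S) holds for $\Gamma_0$, together with the identifications $\Hom_A(A \otimes_{A_0} V, A_0) \cong \Hom_{A_0}(V, A_0)$ at the level of $\Gamma$-modules and their Yoneda action. Each point in the classical proof where one decomposes an $A_0$-module into simples must be replaced by splitting off projective $A_0$-summands via (S), and in particular identifying the projective cover of $E(M)$ over $\Gamma$ with the module coming from $V^{0}$ requires that (S) transfer correctly across the Yoneda algebra. Once this is in place, the rest of the argument runs in close parallel to the classical Koszul duality proof of Beilinson--Ginzburg--Soergel.
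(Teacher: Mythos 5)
Your overall strategy (dualize, replace semisimplicity of $A_0$ by the splitting property (S), and verify the isomorphism $E_{\Gamma}EM \cong M$ degreewise via reflexivity of projective $A_0$-modules against $\Gamma_0 \cong A_0^{\textnormal{op}}$) matches the spirit of the paper, and your degreewise reflexivity computation is essentially the paper's final step. But the central claim --- that $E(M)$ is a Koszul $\Gamma$-module --- has a genuine gap. Applying $\Hom_A(-,A_0)$ termwise to the minimal linear resolution $P^{\bullet}\to M$ does \emph{not} produce ``a linear complex of projective $\Gamma$-modules'': by minimality ($\Omega^{i}(M)\subseteq JP^{i-1}$) all induced differentials vanish, and each term $\Hom_A(P^i,A_0)\cong\Hom_{A_0}(V^i,A_0)\cong\Ext_A^i(M,A_0)$ is a projective $\Gamma_0$-module sitting in a single degree, not a projective $\Gamma$-module. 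So this construction gives you the underlying graded $\Gamma_0$-module of $E(M)$ but no $\Gamma$-projective resolution of it, and the phrase ``standard long exact sequences for $\Ext$ applied to the syzygies $\Omega^i M$'' does not fill the hole: the duality does not send syzygies of $M$ to shifts of $E(M)$. What is actually true, and what the paper proves, is that $\Gamma$-syzygies of $EM$ correspond to \emph{radical powers} of $M$: one uses that $M_0$ and $JM[-1]$ are Koszul (Proposition 2.1.9, which needs $A$ Koszul), applies $E$ to the exact sequence $0\to\Omega M\to\Omega(M_0)\to JM\to 0$ (exactness of $E$ on sequences of Koszul modules, Proposition 2.1.14), and identifies $E(M_0)$ as a projective cover of $EM$ to get $\Omega(EM)\cong E(JM[-1])[1]$, whence $\Omega^i(EM)\cong E(J^iM[-i])[i]$ by recursion. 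This identification is also what makes your degreewise computation of $E_{\Gamma}EM$ legitimate, since it yields $\Omega^i(EM)_i\cong\Hom_A(M_i,A_0)$.

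A secondary problem is circularity in your ``structural prerequisites'': that $\Gamma$ is Koszul is not an input you can record in advance --- it is exactly the case $M=A$ of the statement being proved ($EA=\Gamma_0$ Koszul as a $\Gamma$-module), and the paper obtains it inside the proof and only then exchanges the roles of $A$ and $\Gamma$ to get density of $E$. If you restructure your argument around the sequence $0\to E(JM[-1])[1]\to E(M_0)\to EM\to 0$ rather than around a termwise dual of the resolution, the rest of your plan (reflexivity over $A_0$ and $\Gamma_0$, fullness via $\hom_{\Gamma}(EL,EM)\cong\Hom_{\Gamma_0}((EL)_0,(EM)_0)\cong\hom_A(L,M)$) goes through as in the paper.
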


We then study the homological properties of generalized Koszul algebras. Under the assumption that $A_0$ is a self-injective algebra (thus the above splitting property is satisfied), we generalize many classical results described in \cite{BGS,Green1,Green2,Martinez}.

Let $\mathfrak{r}$ be the radical of $A_0$ and $\mathfrak{R} = A \mathfrak{r} A$ be the two-sided ideal generated by $\mathfrak{r}$. Define a quotient algebra $\bar{A} = A / A \mathfrak{r} A = \bigoplus _{i \geqslant 0} A_i / (A \mathfrak{r} A)_i$. For a graded $A$-module $M = \bigoplus _{i \geqslant 0} M_i$, we let $\bar{M} = M / \mathfrak{R} M =  \bigoplus _{i \geqslant 0} M_i / (\mathfrak{R} M)_i$. We prove that $\bar{M}$ is a well defined $\bar{A}$-module, and show that $M$ is generated in degree 0 if and only if the corresponding graded $\bar{A}$-module $\bar{M}$ is generated in degree 0, establishing a correspondence between our generalized Koszul theory and the classical theory as follows:

\begin{theorem}
Let $A = \bigoplus _{i \geqslant 1} A_i$ be a locally finite graded algebra and $M$ be a graded $A$-module. Suppose that both $A$ and $M$ are projective $A_0$-modules. Then $M$ is generalized Koszul if and only if  the corresponding grade $\bar{A}$-module $\bar{M}$ is classical Koszul. In particular, $A$ is a generalized Koszul algebra if and only if $\bar{A}$ is a classical Koszul algebra.
\end{theorem}

We then focus on the applications of this generalized Koszul theory. First we define \textit{directed categories}. A directed category $\mathcal{C}$ is a $k$-linear category equipped with a partial order $\leqslant$ on $\Ob \mathcal{C}$ such that for each pair of objects $x, y \in \Ob \mathcal{C}$, the space of morphisms $\mathcal{C} (x,y)$ is non-zero only if $x \leqslant y$. Directed categories include the $k$-linearizations of skeletal \textit{finite EI categories} as special examples, which are small categories with finitely many morphisms such that every endomorphism is an isomorphism. This partial order determines a canonical preorder $\preccurlyeq$ on the isomorphism classes of simple representations. Following the technique in \cite{Webb3}, we develop a stratification theory for directed categories, describe the structures of standard modules and characterize every directed category $\mathcal{C}$ standardly stratified with respect to the canonical preorder.

By the correspondence between graded $k$-linear categories and graded algebras described in \cite{Mazorchuk2}, we can view a graded directed category as a graded algebra and vice-versa. Therefore, all of our results on graded algebras can be applied to graded directed categories. In particular, we describe a relation between the generalized Koszul theory and the stratification theory over directed categories. For every directed category $\mathcal{C}$ we construct a directed subcategory $\mathcal{D}$ such that the endomorphism algebra of each object in $\mathcal{D}$ is one-dimensional. With this construction, we acquire another correspondence between the classical Koszul theory and our generalized Koszul theory for directed categories.

\begin{theorem}
Let $\mathcal{A}$ be a graded directed category and suppose that $\mathcal{A}_0$ has the splitting property (S). Construct $\mathcal{D}$ as before. Then:
\begin{enumerate}
\item $\mathcal{A}$ is a Koszul category in our sense if and only if $\mathcal{A}$ is standardly stratified and $\mathcal{D}$ is a Koszul category in the classical sense.
\item If $\mathcal{A}$ is a Koszul category, then a graded $\mathcal{A}$-module $M$ generated in degree 0 is Koszul if and only if $M \downarrow _{\mathcal{D}} ^{\mathcal{A}}$ is a Koszul $\mathcal{D}$-module and $M$ is a projective $\mathcal{A}_0$-module.
\end{enumerate}
\end{theorem}

We then consider the application of our generalized Koszul theory to finite EI categories, which include finite groups and finite posets as examples. They have nice combinatorial properties which can be used to define length gradings on the sets of morphisms. We discuss the possibility to put such a grading on an arbitrary finite EI category. In particular, we introduce \textit{finite free EI categories} and study their representations in details, and give a sufficient condition for their category algebras to be quasi-Koszul.

\begin{theorem}
Let $\mathcal{E}$ be a finite free EI category. If every object $x \in \Ob \mathcal{E}$ is either left regular or right regular, then $k \mathcal{E}$ is quasi-Koszul. Moreover, $k\mathcal{E}$ is Koszul if and only if $\mathcal{E}$ is standardly stratified.
\end{theorem}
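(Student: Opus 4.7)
The natural length grading on a finite free EI category $\mathcal{E}$ makes $k\mathcal{E}$ into a positively graded locally finite $k$-algebra generated in degrees $0$ and $1$, with $A_0 = \bigoplus_{x \in \Ob\mathcal{E}} k\Aut(x)$ a product of group algebras. Since each $k\Aut(x)$ is self-injective, $A_0$ satisfies the splitting condition (S), so the generalized Koszul machinery applies. The whole argument will exploit this explicit grading together with the unique factorization property afforded by freeness.

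For the quasi-Koszul claim, I would show that $A_0$, viewed as a graded $A$-module concentrated in degree $0$, has first syzygy generated in degree $1$, and that this linearity propagates to all higher syzygies; equivalently, $\Ext_A^\ast(A_0, A_0)$ is generated in degrees $0$ and $1$. The approach is a local one: for each simple summand $S_x$ of $A_0$, the first syzygy is controlled by the degree-$1$ morphisms incident to $x$, and the relations are the kernel of the multiplication $A_1 \otimes_{A_0} A_1 \to A_2$. Freeness of $\mathcal{E}$ forces those relations to come from honest coincidences of factorizations, i.e., they are quadratic monomial-type relations up to automorphism twists. The left-or-right regularity hypothesis then ensures that at each object $x$, the relevant piece of $A_1$ is projective as a one-sided $k\Aut(x)$-module; combined with (S), this lets the syzygy computation respect the grading and preserves generation in the correct degree at each step.

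For the second statement, I would invoke the preceding theorem on graded directed categories: $k\mathcal{E}$ is generalized Koszul if and only if $k\mathcal{E}$ is standardly stratified and the associated subcategory $\mathcal{D}$ is classically Koszul. When $\mathcal{E}$ is a finite free EI category, the category $\mathcal{D}$ obtained by collapsing each $k\Aut(x)$ to $k$ is (up to isomorphism) the path algebra of the acyclic quiver of unfactorizable morphisms, hence hereditary and in particular classically Koszul. The equivalence thus collapses to the single requirement that $k\mathcal{E}$ be standardly stratified.

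The main obstacle is the quasi-Koszul step: the hypothesis ``each object is \emph{either} left regular \emph{or} right regular'' is asymmetric across objects, so the bimodule structure of $A_1$ and the degree-wise propagation of generation through syzygies must be tracked componentwise. Splicing together projective resolutions built from mixed one-sided projectivities, rather than a uniform hypothesis on all of $A_1$, is where the delicate bookkeeping lies; once that is done, the Koszul--stratified equivalence is essentially a formal consequence of the earlier theorem.
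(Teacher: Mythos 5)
Your reduction of the second half to Theorem 3.2.8 is reasonable, but the quasi-Koszul half of your plan would try to prove a false statement. You propose to show that the syzygies of $A_0=k\mathcal{E}_0$ stay generated ``in the correct degree at each step'' and assert this is equivalent to $\Ext^{\ast}_{k\mathcal{E}}(A_0,A_0)$ being generated in degrees $0$ and $1$. A linear resolution of $A_0$ is Koszulity, strictly stronger than quasi-Koszulity, and under your hypothesis it fails in general: by the very ``moreover'' clause it is equivalent to standard stratification, and the free category with one unfactorizable morphism $\alpha\colon x\to y$, $\Aut_{\mathcal{E}}(y)$ of order $p$, $\Char k=p$, satisfies the hypothesis ($y$ is right regular, $x$ is left regular) but is not standardly stratified; there $\Omega^2(A_0)$ is concentrated in degree $1$, not $2$. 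The actual content to be proved is different: using the unique factorization property one shows $\Omega(k\mathcal{E}\alpha)$ is generated in degree $1$ for each unfactorizable $\alpha$, hence $\Ext^2_{k\mathcal{E}}(k\mathcal{E}_0,k\mathcal{E}_0)=0$; consequently, for a free EI category, quasi-Koszulity is equivalent to $\Ext^i_{k\mathcal{E}}(k\mathcal{E}_0,k\mathcal{E}_0)=0$ for all $i\geqslant 2$, i.e.\ to every higher syzygy $\Omega^i(k\mathcal{E}\alpha)$ remaining generated in degree $1$. The regularity hypothesis is then used at the target $y$ of $\alpha$: if $\Stab_{\Aut(y)}(\alpha)$ has invertible order, $k\mathcal{E}\alpha$ is projective; otherwise $y$ must be right regular, and the right projectivity of the modules $\beta(k\mathcal{E})$ for $\beta$ with source $y$ is exploited, via a dimension count on the values at each object $z$, to propagate degree-$1$ generation through the whole resolution. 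Your emphasis on the kernel of $A_1\otimes_{A_0}A_1\to A_2$ is also misplaced: freeness means $k\mathcal{E}$ is the tensor algebra $A_0[A_1]$, so that kernel is zero; the difficulty is not relations but the failure of $A_1$ (and of the syzygies) to be projective over $A_0$.

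For the Koszul--stratified equivalence, invoking Theorem 3.2.8 is fine ($A_0$ is self-injective, so (S) holds), but your justification that $\mathcal{D}$ is classically Koszul is wrong: $\mathcal{D}$ is not the path algebra of the quiver of unfactorizable morphisms. By construction $\mathcal{D}_1=\mathcal{A}_1$ consists of all unfactorizable morphisms and composition is the biset product over the automorphism group algebras, so $k\mathcal{D}$ is a quotient of that path algebra by the quadratic relations $\beta h\otimes\alpha-\beta\otimes h\alpha$, $h\in\Aut_{\mathcal{E}}(y)$; taking $x\to y\to z$ with $\Aut(y)=\mathbb{Z}/2$ and both bisets regular gives $\pd_{k\mathcal{D}}S_x=2$, so $k\mathcal{D}$ is not hereditary. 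Thus ``hereditary, hence Koszul'' fails, and you would still owe an argument that $\mathcal{D}$ is classically Koszul in the standardly stratified case (deducing it from Theorem 3.2.5 presupposes $k\mathcal{E}$ Koszul and is circular). The paper bypasses $\mathcal{D}$ entirely: standard stratification makes every stabilizer order invertible, so each $k\mathcal{E}\alpha$ with $\alpha$ unfactorizable is projective (via the idempotent $\frac{1}{|H_0|}\sum_{h\in H_0}h$), hence $J=\Omega(k\mathcal{E}_0)$ is projective and generated in degree $1$, giving a linear resolution of length one; conversely, Koszulity forces $k\mathcal{E}$ to be projective over $A_0$, which for a directed category is exactly standard stratification.
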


Let $A$ be a basic finite-dimensional algebra, and let $(\Lambda, \leqslant)$ be a finite preordered set parameterizing all simple $A$-modules $S_{\lambda}$ (up to isomorphism). This preordered set also parameterizes all indecomposable projective $A$-modules $P_{\lambda}$ (up to isomorphism). According to \cite{Cline}, the algebra $A$ is standardly-stratified with respect to $(\Lambda, \leqslant)$ if there exist modules $\Delta_{\lambda}$, $\lambda \in \Lambda$ (called \textit{standard modules}, such that the following conditions hold:
\begin{enumerate}
\item the composition factor multiplicity $[\Delta_{\lambda} : S_{\mu}] = 0$ whenever $\mu \nleqslant \lambda$;
and
\item for every $\lambda \in \Lambda$ there is a short exact sequence $0 \rightarrow K_{\lambda} \rightarrow P_{\lambda} \rightarrow \Delta_{\lambda} \rightarrow 0$ such that $K_{\lambda}$ has a filtration with factors $\Delta_{\mu}$ where $\mu > \lambda$.
\end{enumerate}
Let $\Delta$ be the direct sum of all standard modules and $\mathcal{F} (\Delta)$ be the full subcategory of $A$-mod such that each object in $\mathcal{F} (\Delta)$ has a filtration by standard modules. Since standard modules of $A$ are relative simple in $\mathcal{F} (\Delta)$, we are motivated to investigate the extension algebra $\Gamma = \Ext _A^{\ast} (\Delta, \Delta)$ of standard modules. These extension algebras were studied in \cite{Abe, Drozd, Klamt, Mazorchuk1, Miemietz}. Specifically, we are interested in the stratification property of $\Gamma$ with respect to $(\Lambda, \leqslant)$ and $(\Lambda, \leqslant ^{\textnormal{op}})$, and its Koszul property since $\Gamma$ has a natural grading. A particular question is to determine when $\Gamma$ is a generalized Koszul algebra, i.e., $\Gamma_0$ has a linear projective resolution.

Choose a fixed set of orthogonal primitive idempotents $\{ e_{\lambda} \}_{\lambda \in \Lambda}$ for $A$ such that $\sum _{\lambda \in \Lambda} e_{\lambda}= 1$. We can define a $k$-linear category $\mathcal{A}$ as follows: $\Ob \mathcal{A} = \{ e_{\lambda} \} _{\lambda \in \Lambda}$; for $\lambda, \mu \in \Lambda$, the morphism space $\mathcal{A} (e_{\lambda}, e_{\mu}) = e_{\mu} A e_{\lambda}$. A \textit{$k$-linear representation} of $\mathcal{A}$ is a $k$-linear functor from $\mathcal{A}$ to the category of finite-dimensional $k$-vector spaces. It is clear that $\mathcal{A}$-rep, the category of finite-dimensional $k$-linear representations of $\mathcal{A}$, is equivalent to $A$-mod. Call $\mathcal{A}$ the \textit{associated category} of $A$. We show that the associated category $\mathcal{E}$ of $\Gamma = \Ext _A^{\ast} (\Delta, \Delta)$ is a \textit{directed category} with respect to $\leqslant$, and characterize the stratification properties of $\mathcal{E}$ with respect to $\leqslant$ and $\leqslant ^{\textnormal{op}}$. As an analogue to linear modules of graded algebras, we define \textit{linearly filtered modules}. With this terminology, a sufficient condition is obtained for $\Gamma$ to be a generalized Koszul algebra.

Dlab and Ringel showed in \cite{Dlab2} that a finite-dimensional algebra $A$ is quasi-hereditary for all linear orderings of the simple modules (up to isomorphism) if and only if $A$ is a hereditary algebra. In \cite{Frisk1} stratification property of $A$ for different ordering of the simple modules was studied. These results motivate us to classify algebras standardly stratified or properly stratified for all linear orderings of the simple modules, which include hereditary algebras as special cases.

We prove that if $A$ is standardly stratified for all linear orders, then its associated category $\mathcal{A}$ is a directed category with respect to some partial order $\leqslant$ on $\Lambda$. Therefore, $J = \bigoplus _{\lambda \neq \mu \in \Lambda} e_{\mu} A e_{\lambda}$ can be viewed as a two-sided ideal of $A$ with respect to this chosen set of orthogonal primitive idempotents, and $A = A_0 \oplus J$ as vector spaces, where $A_0 = \bigoplus _{x \in \Ob \mathcal{A}} \mathcal{A} (x, x)$ constitutes of all endomorphisms in $\mathcal{A}$. With this observation, we describe several characterizations of algebras stratified for all linear orders, as well as a classification of these algebras. Explicitly, 

\begin{theorem}
Let $A$ be a basic finite-dimensional $k$-algebra whose associated category $\mathcal{A}$ is directed. Then the following are equivalent:
\begin{enumerate}
\item $A$ is standardly stratified (resp., properly stratified) for all linear orders;
\item the associated graded algebra $\check{A}$ is standardly stratified (resp., properly stratified) for all linear orders;
\item $\check{A}$ is the tensor algebra generated by $A_0 = \bigoplus _{\lambda \in \Lambda} \mathcal{A} (e_{\lambda}, e_{\lambda}) = e_{\lambda} A e_{\lambda}$ and a left (resp., left and right) projective $A_0$-module $\check{A}_1$.
\end{enumerate}
\end{theorem}

Let $\preccurlyeq$ be a particular linear order for which $\mathcal{A}$ is standardly stratified. It is well known that $\mathcal{F} (_{\preccurlyeq} \Delta)$ is an additive category closed under extensions, direct summands, and kernels of epimorphisms (see \cite{Cline, Dlab1, Webb3}). But in general it is not closed under cokernels of monomorphisms. However, if $A$ is standardly stratified with respect to all linear orders, there exists a (not necessarily unique) particular linear order $\preccurlyeq$ for which the corresponding category $\mathcal{F} (_{\preccurlyeq} \Delta)$ is closed under cokernels of monomorphisms. We give a criterion and classify all \textit{quasi-hereditary algebras} satisfying this property: they are precisely quotient algebras of finite-dimensional hereditary algebras.

In practice it is hard to determine whether there exists a linear order $\preccurlyeq$ for which $A$ is standardly stratified and the corresponding category $\mathcal{F} (_{\preccurlyeq} \Delta)$ is closed under cokernels of monomorphisms. Certainly, checking all linear orders is not an ideal way to do this. We then describe an explicit algorithm to construct a set $\mathcal{L}$ of linear orders with respect to all of which $A$ is standardly stratified. Moreover, if there exists a linear order $\preccurlyeq$ such that $A$ is standardly stratified and the corresponding category $\mathcal{F} (_{\preccurlyeq} \Delta)$ is closed under cokernels of monomorphisms, then $\preccurlyeq \in \mathcal{L}$.

The layout of this thesis is as follows. The generalized Koszul theory and its relation to the classical theory is developed in the Chapter 2. In the next two chapters we describe its application to directed categories and finite EI categories respectively. The extension algebras of standardly stratified algebras and their stratification properties and Koszul properties are described in Chapter 5. In the last chapter we classify algebras stratified for all linear orders, and study the problem of whether $\mathcal{F} (\Delta)$ is closed under cokernels of monomorphisms.

Here are the notation and conventions we use in this thesis. All algebras are $k$-algebras and $k$ is an algebraically closed field. If $A$ is a graded algebra, then it is positively graded, locally finite and generated in degrees 0 and 1. Denote the category of all graded locally finite $A$-modules by $A$-gmod. Let $M$ and $N$ be two $A$-modules. By $\Hom_A (M,N)$ and $\hom_A (M,N)$ we denote the spaces of all module homomorphisms and graded module homomorphisms (that is, the homomorphisms $\varphi \in \text{Hom}_A (M,N)$ such that $\varphi (M_i) \subseteq N_i$ for all $i \in \mathbb{Z}$) respectively. The $s$-th \textit{shift} $M[s]$ is defined in the following way: $M[s]_i = M_{i-s}$ for all $i \in \mathbb{Z}$. If $M$ is generated in degree $s$, then $\bigoplus _{i \geqslant s+1} M_i$ is a graded submodule of $M$, and $M_s \cong M/ \bigoplus _{i \geqslant s+1} M_i$ as vector spaces. We then view $M_s$ as an $A$-module by identifying it with this quotient module.

In the case that $A$ is non-graded, we always assume that $A$ is finite-dimensional and basic. By $A$-mod we denote the category of finitely generated modules. For $M \in A$-mod, by $\dim_k M$, $\pd_A M$ and $\rad M$ we mean the dimension of $M$ (as a vector space), the projective dimension of $M$ and the radical of $M$ respectively. The global dimension of $A$ is denoted by $\gldim A$.

All modules in this thesis are left modules if we do not make other assumptions. Composition of morphisms is from right to left. We view the zero module 0 as a projective (or free) module since this will simplify the expressions and proofs of many statements.


\chapter{A generalized Koszul theory}
\label{A generalized Koszul theory}

Now we begin to develop a generalized Koszul theory. Throughout this chapter $A$ is a positively graded and locally finite associative $k$-algebra with identity 1 generated in degrees 0 and 1, i.e., $A = \bigoplus _{i=0}^{\infty} A_i$ such that $A_i \cdot A_j = A_{i+j}$ for all $i, j \geqslant 0$; each $A_i$ is finite-dimensional. Define $J = \bigoplus _{i=1}^{\infty} A_i$, which is a two-sided ideal of $A$. An $A$-module $M$ is called \textit{graded} if $M = \bigoplus_{i \in \mathbb{Z}} M_i$ such that $A_i \cdot M_j \subseteq M_{i+j}$. We say $M$ is \textit{generated in degree s} if $M = A \cdot M_s$. It is clear that $M$ is generated in degree $s$ if and only if $JM \cong \bigoplus _{i \geqslant s+1} M_i$, which is equivalent to $J^lM \cong \bigoplus _{i \geqslant s+l} M_i$ for all $l \geqslant 1$.

In the first section we define generalized Koszul modules and study its properties. Generalized Koszul algebras with self-injective degree 0 parts are studied in Section 2. In the third section we prove the generalized Koszul duality. A relation between the generalized Koszul theory and the classical theory is described in the last section.

\section{Generalized Koszul modules}

Most results in this section are generalized from \cite{Green1,Green2,Martinez} and have been described in \cite{Li2, Li3}. We suggest the reader to refer to these papers.

We collect some preliminary results in the following lemma.
\begin{lemma}
Let $A$ be as above and $M$ be a locally finite graded $A$-module. Then:
\begin{enumerate}
\item $J = \bigoplus _{i \geqslant 1} A_i$ is contained in the graded radical of $A$;
\item $M$ has a graded projective cover;
\item the graded syzygy $\Omega M$ is also locally finite.
\end{enumerate}
\end{lemma}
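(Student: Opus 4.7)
My plan is to handle the three parts in sequence, using (1) to provide the graded-radical machinery that drives (2) and (3).

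For (1), I would use the standard characterization of the graded Jacobson radical as the intersection of annihilators of simple graded $A$-modules. Given any simple graded $A$-module $S$, let $s$ be the smallest degree with $S_s \neq 0$; then $JS \subseteq \bigoplus_{i > s} S_i$ is a proper graded submodule of $S$, hence $JS = 0$ by simplicity. Thus $J$ annihilates every simple graded $A$-module and is contained in the graded radical.

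For (2), the plan is to assemble a graded projective cover of $M$ from $A_0$-projective covers in each degree. Since $J$ lies in the graded radical by (1), the quotient $\bar{M} = M/JM$ is naturally a graded $A_0$-module; each component $\bar{M}_i$ is finite-dimensional because $M$ is locally finite, and hence admits an $A_0$-projective cover $\pi_i : Q_i \twoheadrightarrow \bar{M}_i$ (the finite-dimensional algebra $A_0$ has projective covers). I would then set $P = \bigoplus_i (A \otimes_{A_0} Q_i)[i]$, a graded projective $A$-module, and lift the collection $\{\pi_i\}$ to a graded homomorphism $\pi : P \to M$ using projectivity of each summand. Surjectivity of $\pi$ follows from a graded Nakayama argument applied to the induced isomorphism $P/JP \to M/JM$, and the same observation yields $\ker \pi \subseteq JP$, so $\pi$ is a projective cover. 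The main obstacle here is verifying carefully that $\pi$ is truly a projective cover (not merely a projective surjection); this requires the graded version of Nakayama's lemma, which is available thanks to (1).

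For (3), given $\pi : P \twoheadrightarrow M$ from (2), we have $\Omega M = \ker \pi \hookrightarrow P$, so it suffices to show $P$ is locally finite. Each $Q_i$ is a summand of a finite-rank free $A_0$-module and hence finite-dimensional, and since $A$ is positively graded we have $P_j = \bigoplus_{i \leqslant j} A_{j-i} \otimes_{A_0} Q_i$, a finite sum (using that the positive grading forces the locally finite $M$ to be bounded below) of finite-dimensional pieces. Thus each $P_j$, and therefore each $(\Omega M)_j$, is finite-dimensional.
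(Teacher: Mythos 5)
Your construction for (2)--(3) --- degreewise $A_0$-projective covers $\pi_i\colon Q_i\twoheadrightarrow \bar{M}_i$ of $\bar{M}=M/JM$, assembled into $P=\bigoplus_i(A\otimes_{A_0}Q_i)[i]$ and finished with graded Nakayama --- is essentially the paper's argument (the paper lifts idempotents and splits $\bar{M}$ into indecomposable summands instead of working degree by degree), and part (1) is fine. But the verification that $\pi$ is a projective cover contains a step that fails. Under the identification $P/JP\cong\bigoplus_i Q_i[i]$, the induced map $P/JP\to M/JM$ is $\bigoplus_i\pi_i$, which is an $A_0$-projective cover, \emph{not} an isomorphism: its kernel $\bigoplus_i\ker\pi_i$ is nonzero as soon as some $\bar{M}_i$ is not $A_0$-projective. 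For the same reason the claim $\ker\pi\subseteq JP$ is false in general; already for $A=A_0=k[t]/(t^2)$ and $M$ the simple module one has $JP=0$ while $\ker\pi=\rad A_0\neq 0$. Since the whole point of this theory is that $A_0$ need not be semisimple, $JP$ is strictly smaller than the graded radical of $P$. The correct statement is $\ker\pi\subseteq\pi^{-1}(JM)\subseteq\mathfrak{r}P+JP$ with $\mathfrak{r}=\rad A_0$, because $\ker\pi_i\subseteq\mathfrak{r}Q_i$; and $\mathfrak{r}P+JP$ is superfluous by the degreewise Nakayama induction (in the lowest degree $JP$ contributes nothing and ordinary Nakayama applies to the finite-dimensional $A_0$-module $P_j$; then induct on the degree). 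Surjectivity of $\pi$ only needs surjectivity of $\bigoplus_i\pi_i$ plus the same induction, so that part survives; but the minimality step as written would fail, and note that part (1) alone ($J\subseteq\grad A$) does not supply the Nakayama statement you need --- you must also bring in $\mathfrak{r}$, i.e.\ use that the graded radical is $\mathfrak{r}\oplus J$, or argue degreewise over $A_0$.

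A second, smaller gap: in (3) you justify finiteness of $P_j=\bigoplus_{i\leqslant j}A_{j-i}\otimes_{A_0}Q_i$ by asserting that positivity of the grading of $A$ forces a locally finite $M$ to be bounded below. That implication is false: $\bigoplus_{i\in\mathbb{Z}}(A_0/\mathfrak{r})[i]$, with $J$ acting by zero, is locally finite and unbounded below, and for such $M$ your $P$ has infinite-dimensional graded components, so the argument for (3) genuinely needs the bound. Boundedness below is an implicit hypothesis of the lemma (the paper's proof writes $M=\bigoplus_{i\geqslant 0}M_i$), so assume it rather than derive it; with it in place, the rest of (3) --- each $Q_i$ finite-dimensional since $\bar{M}_i$ and $A_0$ are, hence each $P_j$ and each $(\Omega M)_j$ finite-dimensional --- is correct.
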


\begin{proof}
By definition, the graded radical $\grad A$ is the intersection of all maximal proper graded submodules of $A$. Let $L \subsetneqq A$ be a maximal proper graded submodule. Then $L_0$ is a proper subspace of $A_0$. We claim that $A_i = L_i$ for all $i \geqslant 1$. Otherwise, we can define $\tilde{L} \subseteq A$ in the following way: $\tilde{L_0} = L_0$ and $\tilde{L}_i = A_i$ for $i \geqslant 1$. Then $L \subsetneqq \tilde{L} \subsetneqq A$, so $L$ is not a maximal proper graded submodule of $A$. This contradiction tells us that $L_i = A_i$ for all $i \geqslant 1$. Therefore, $J \subseteq \grad A$, and the first statement is proved.

We use the following fact to prove the second statement: every primitive idempotent in the algebra $A_0$ can be lifted to a primitive idempotent of $A$. Consequently, a projective $A_0$-module concentrated in some degree $d$ can be lifted to a graded projective $A$-module generated in degree $d$.

Define $\bar{M} = M/JM$, which is also a locally finite graded $A$-module. Write $\bar{M} = \bigoplus _{i \geqslant 0} \bar{M}_i$. Then each $\bar{M}_i$ is a finite-dimensional graded $A$-module since $J \bar{M} =0$ and $A_0 \bar{M}_i = \bar{M} _i$ for all $i \geqslant 0$. Therefore, $\bar{M}$ can be decomposed as a direct sum of indecomposable graded $A$-modules each of which is concentrated in a certain degree. Moreover, for each $i \in \mathbb{Z}$, there are only finitely many summands concentrated in degree $i$.

Take $L$ to be such an indecomposable summand and without loss of generality suppose that it is concentrated in degree 0. As an $A_0$-module, $L$ has a finitely generated projective cover $P_0$. By the lifting property, $P_0$ can be lifted to a finitely generated graded projective module $P$ generated in degree 0, which is a graded projective cover of $L$. Take the direct sum of these projective covers $P$ when $L$ ranges over all indecomposable summands of $\bar{M}$. In this way we obtain a graded projective cover $\tilde{P}$ of $\bar{M}$. Clearly, $\tilde{P}$ is also a graded projective cover of $M$. The second statement is proved.

Now we turn to the third statement. By the above proof, the graded projective cover $\tilde{P}$ of $M$ can be written as a direct sum $\bigoplus _{i \geqslant 0} P^i$ of graded projective modules, where $P^i$ is generated in degree $i$. For each fixed degree $i \geqslant 0$, there are only finitely many indecomposable summands $L$ of $\bar{M}$ concentrated in degree $i$, and the graded projective cover of each $L$ is finitely generated. Consequently, $P^i$ is finitely generated, and hence locally finite.

For a fixed $n \geqslant 0$, we have $\tilde{P}_n = \bigoplus _{i \geqslant 0} P^i_n = \bigoplus _{0 \leqslant i \leqslant n} P^i_n$. Since each $P^i$ is locally finite, $\dim_k P^i_n < \infty$. Therefore, $\dim_k \tilde{P}_n < \infty$, and $\tilde{P}$ is locally finite as well. As a submodule of $\tilde{P}$, the graded syzygy $\Omega M$ is also locally finite.
\end{proof}

These results will be used frequently.

\begin{lemma}
Let $0 \rightarrow L \rightarrow M \rightarrow N \rightarrow 0$ be an exact sequence of graded $A$-modules. Then:
\begin{enumerate}
  \item If $M$ is generated in degree $s$, so is $N$.
  \item If $L$ and $N$ are generated in degree $s$, so is $M$.
  \item If $M$ is generated in degree $s$, then $L$ is generated in degree $s$ if and only if $JM \cap L = JL$.
\end{enumerate}
\end{lemma}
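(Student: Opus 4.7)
The plan is to handle the three parts in order, exploiting that everything is graded and that $A$ is generated by $A_0$ and $A_1$, so ``generated in degree $s$'' is equivalent to the recursive description $M_i = 0$ for $i < s$ together with $M_i = A_1 M_{i-1}$ for $i > s$, and also to $JM = \bigoplus_{i \geqslant s+1} M_i$.

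For (1), the map $\pi: M \twoheadrightarrow N$ is a graded surjection, so $\pi(M_s) = N_s$. Since $M = AM_s$ I can apply $\pi$ to obtain $N = \pi(M) = \pi(AM_s) = A\pi(M_s) = AN_s$. For (2), let $M' = AM_s \subseteq M$; the same argument shows $\pi(M') = AN_s = N$, so $M = M' + L$. Now $L$ is graded and $L \subseteq M$, so $L_s \subseteq M_s$, giving $L = AL_s \subseteq AM_s = M'$. Hence $M = M'$ and $M$ is generated in degree $s$.

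For (3), the key computation is to identify $JM \cap L$ as a graded submodule. Using $M = AM_s$ I get $JM = \bigoplus_{i \geqslant s+1} M_i$, and because $L \hookrightarrow M$ is a graded inclusion, $(JM \cap L)_i = M_i \cap L_i = L_i$ for $i \geqslant s+1$ and $0$ otherwise. Thus $JM \cap L = \bigoplus_{i \geqslant s+1} L_i$, which depends only on the graded structure of $L$. If $L$ is generated in degree $s$, then $JL = \bigoplus_{i \geqslant s+1} L_i$ and we read off $JL = JM \cap L$. For the converse, assume $JL = JM \cap L = \bigoplus_{i \geqslant s+1} L_i$. Since $L_i \subseteq M_i = 0$ for $i < s$, I already have vanishing in low degrees, so it remains to show $L_i = A_{i-s} L_s$ for $i \geqslant s$. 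This I will prove by induction on $i$: the base case $i=s$ is trivial, and for $i > s$ the hypothesis $L_i = (JL)_i = \sum_{k=1}^{i-s} A_k L_{i-k}$ combined with the inductive identities $L_{i-k} = A_{i-k-s} L_s$ gives $L_i \subseteq A_{i-s} L_s$; the reverse containment is automatic.

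The main obstacle is the converse direction of (3): everything else reduces to applying the surjection $\pi$ or comparing graded components, but passing from the equality $JL = JM \cap L$ back to $L = AL_s$ is the one point where I genuinely need the inductive argument using $A_1 \cdot A_{i-s-1} = A_{i-s}$ (which is where the hypothesis that $A$ is generated in degrees $0$ and $1$ finally gets used).
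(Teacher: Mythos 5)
Your proof is correct, and it reaches the conclusion by a genuinely more elementary route than the paper in parts (2) and (3). For (2) the paper takes graded projective covers $P$ and $Q$ of $L$ and $N$, notes that a graded projective cover of $M$ is a direct summand of $P\oplus Q$ and hence generated in degree $s$; you instead work inside $M$ with $M'=AM_s$, using $\pi(M')=AN_s=N$ to get $M=M'+L$ and $L=AL_s\subseteq AM_s=M'$, which avoids the existence of graded projective covers (the paper's preceding lemma, which needs local finiteness) altogether. For the converse of (3) the paper builds the $3\times 3$ commutative diagram of the sequences $0\to JL\to JM\to JN\to 0$, $0\to L\to M\to N\to 0$ and their quotients, deduces that $L/JL$ is concentrated in degree $s$, and finishes with the graded Nakayama lemma (using that $J$ lies in the graded radical of $A$); you instead compute $JM\cap L=\bigoplus_{i\geqslant s+1}L_i$ directly and then prove $L_i=A_{i-s}L_s$ by induction on $i$ via $L_i=(JL)_i=\sum_{k=1}^{i-s}A_kL_{i-k}$, which is an explicit, self-contained substitute for Nakayama in this graded setting and makes transparent where the hypothesis $A_1\cdot A_i=A_{i+1}$ enters. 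You are also appropriately careful on the one genuine subtlety: the equality $JL=\bigoplus_{i\geqslant s+1}L_i$ alone does not force generation in degree $s$ without the vanishing $L_i=0$ for $i<s$, which you supply from $L_i\subseteq M_i=0$. Part (1) and the forward implication of (3) coincide with the paper's arguments. The trade-off is that the paper's proof is shorter given its earlier lemmas and reuses machinery that recurs later (the same diagram argument feeds Corollary 2.1.3), while yours is component-wise, needs no finiteness assumptions, and stands on its own.
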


\begin{proof}
(1): This is obvious.

(2): Let $P$ and $Q$ be graded projective covers of $L$ and $N$ respectively. Then $P$ and $Q$, and hence $P \oplus Q$ are generated in degree $s$. In particular, each graded projective cover of $M$, which is isomorphic to a direct summand of $P \oplus Q$, is generated in degree $s$. Thus $M$ is also generated in degree $s$.

(3): We always have $JL \subseteq JM \cap L$. Let $x \in L \cap JM$ be a homogeneous element of degree $i$. Since $M$ is generated in degree $s$, we have $i \geqslant s+1$. If $L$ is generated in degree $s$, then $x \in J^{i-s}L \subseteq JL$. Thus $L \cap JM \subseteq JL$, so $JL = L \cap JM$.

Conversely, the identity $JL = L \cap JM$ gives us the following commutative diagram where all rows and columns are exact:
\begin{align*}
\xymatrix{
& 0 \ar[d] & 0 \ar[d] & 0 \ar[d] &  \\
0 \ar[r] & JL \ar[r] \ar[d] & JM \ar[r] \ar[d] & JN \ar[r] \ar[d] & 0\\
0 \ar[r] & L \ar[r] \ar[d] & M \ar[r] \ar[d] & N \ar[r] \ar[d] & 0 \\
0 \ar[r] & L/JL \ar[r] \ar[d] & M/JM \ar[r] \ar[d] & N/JN \ar[r] \ar[d] & 0 \\
& 0 & 0 & 0 &
}
\end{align*}

Consider the bottom sequence. Since $(M / JM) \cong M_s $ is concentrated in degree $s$, $L/JL$ is also concentrated in degree $s$, i.e., $L/JL \cong L_s$. Let $I = A \cdot L_s$. Then $L \subseteq I + JL \subseteq L$, so $I + JL = L$. Note that $J$ is contained in the graded Jacobson radical of $A$. Therefore, by the graded Nakayama lemma, $I = A \cdot L_s = L$, so $L$ is generated in degree $s$.
\end{proof}

\begin{corollary}
Suppose that each graded $A$-module in the short exact sequence $0 \rightarrow L \rightarrow M \rightarrow N \rightarrow 0$ is generated in degree $0$. Then $J^iM \cap L = J^iL$ for all $i \geqslant 0$.
\end{corollary}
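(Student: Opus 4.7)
The plan is to exploit the explicit description of $J^i M$ recorded at the start of the chapter: since $M$ is generated in degree $0$, we have $J^i M = \bigoplus_{j \geqslant i} M_j$, and similarly $J^i L = \bigoplus_{j \geqslant i} L_j$ because $L$ is also generated in degree $0$. The inclusion $J^i L \subseteq J^i M \cap L$ is immediate from $L \subseteq M$. For the reverse inclusion I would use that $L$ is a \emph{graded} submodule of $M$, so $L_j = L \cap M_j$ and every element of $L$ decomposes into graded components that remain in $L$. Concretely,
\begin{align*}
J^i M \cap L \;=\; \Bigl(\bigoplus_{j \geqslant i} M_j\Bigr) \cap L \;=\; \bigoplus_{j \geqslant i} (M_j \cap L) \;=\; \bigoplus_{j \geqslant i} L_j \;=\; J^i L.
\end{align*}

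An alternative, more in the spirit of the preceding lemma, is induction on $i$. The case $i=0$ is trivial and $i=1$ is exactly part (3) of that lemma. For the inductive step, the identity $J^{i-1} M \cap L = J^{i-1} L$ makes the sequence $0 \to J^{i-1} L \to J^{i-1} M \to J^{i-1} N \to 0$ exact, with every term generated in degree $i-1$, so a shifted application of part (3) gives $J \cdot J^{i-1} M \cap J^{i-1} L = J \cdot J^{i-1} L$, i.e.\ $J^i M \cap J^{i-1} L = J^i L$; intersecting the inductive hypothesis with $J^i M$ then closes the induction.

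The ``hard'' part is essentially nonexistent here: the generation hypotheses pin down the graded pieces of $J^i M$ and $J^i L$ so tightly that the set-theoretic identity almost writes itself. The hypothesis that $N$ is generated in degree $0$ is not strictly needed in the direct argument above; it is included to make the statement a natural iterated form of part (3) of the previous lemma, and to guarantee exactness of the shifted sequences that appear in the inductive reformulation.
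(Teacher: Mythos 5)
Your proposal is correct, and your primary (direct) argument takes a genuinely different route from the paper. The paper proves the corollary by induction: it applies part (3) of the preceding lemma to get $JL = JM \cap L$, deduces from this the exactness of $0 \rightarrow JL \rightarrow JM \rightarrow JN \rightarrow 0$ with all terms generated in degree $1$, applies the lemma again to get $J^2M \cap JL = J^2L$, and iterates; the stated identity $J^iM \cap L = J^iL$ then follows by intersecting with the outcome of the previous stage — precisely the closing step you spell out in your alternative inductive argument, which is therefore essentially the paper's proof with its implicit final step made explicit. Your direct argument instead invokes the observation recorded at the beginning of the chapter, that generation in degree $0$ forces $J^iM = \bigoplus_{j \geqslant i} M_j$ and $J^iL = \bigoplus_{j \geqslant i} L_j$, and then uses only that $L$ is a graded submodule of $M$ (so $L_j = L \cap M_j$ and the intersection with $\bigoplus_{j \geqslant i} M_j$ is computed componentwise). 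This is shorter, needs no induction, and, as you correctly note, makes no use of $N$ at all — whose generation in degree $0$ is in any case automatic from part (1) of the lemma once $M$ is generated in degree $0$. What the paper's inductive route buys in exchange is the exactness of the shifted sequences $0 \rightarrow J^iL \rightarrow J^iM \rightarrow J^iN \rightarrow 0$ as a byproduct, a companion fact that is convenient in later arguments, whereas your computation delivers the intersection identity directly and under slightly weaker hypotheses.
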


\begin{proof}
Since all modules $L$, $M$ and $N$ are generated in degree 0, all $J^s L$, $J^s M$ and $J^s N$ are generated in degree $s$ for $s \geqslant 0$. The exactness of the above sequence implies $JL = L \cap JM$, which in turns gives the exactness of $0 \rightarrow JL \rightarrow JM \rightarrow JN \rightarrow 0$. By the above lemma, $J^2 M \cap JL = J^2 L$ and this implies the exactness of $0 \rightarrow J^2L \rightarrow J^2M \rightarrow J^2N \rightarrow 0$. The conclusion follows from induction.
\end{proof}

Now we introduce generalized \textit{Koszul modules} (or called \textit{linear modules}).

\begin{definition}
A graded $A$-module $M$ generated in degree 0 is called a Koszul module (or a linear module) if it has a (minimal) projective resolution
\begin{displaymath}
\xymatrix{ \ldots \ar[r] & P^n \ar[r] & P^{n-1} \ar[r] & \ldots \ar[r] & P^1 \ar[r] & P^0 \ar[r] & M \ar[r] & 0}
\end{displaymath}
such that $P^i$ is generated in degree $i$ for all $i \geqslant 0$.
\end{definition}

A direct consequence of this definition and the previous lemma is:

\begin{corollary}
Let $M$ be a Koszul module. Then $\Omega^i(M) /J\Omega^i (M) \cong \Omega^i(M)_i$ is a projective $A_0$-module for each $i \geqslant 0$, or equivalently, $\Omega^i (M) \subseteq JP^{i-1}$, where $P^{i-1}$ is a graded projective cover of $\Omega^{i-1} (M)$ and $\Omega$ is the Heller operator.
\end{corollary}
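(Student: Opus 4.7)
The plan is to exploit the Koszul hypothesis directly: the minimal graded projective resolution $\cdots \to P^i \to \cdots \to P^0 \to M \to 0$ has $P^i$ generated in degree $i$ for every $i \geq 0$, and I would track homogeneous degrees throughout. The single arithmetic fact I will use repeatedly is that for a graded projective $A$-module $P$ generated in degree $d$ one has $P_j = 0$ for $j < d$, hence $(JP)_d = 0$ and $JP = \bigoplus_{j \geq d+1} P_j$.

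First I would establish the containment $\Omega^i(M) \subseteq JP^{i-1}$. The differential $d^i : P^i \to P^{i-1}$ is graded of degree zero, and $P^i$ is generated in degree $i$, so $\Omega^i(M)_j = (d^i P^i)_j = 0$ for all $j < i$; on the other hand, since $P^{i-1}$ is generated in degree $i-1$, the observation above gives $JP^{i-1} = \bigoplus_{j \geq i} P^{i-1}_j$, and the containment is immediate. Next I would deduce that $\Omega^i(M)$ is itself generated in degree $i$: this follows from Lemma 2.1.2(1) applied to the surjection $P^i \twoheadrightarrow \Omega^i(M)$. As a consequence $J\Omega^i(M) = \bigoplus_{j \geq i+1} \Omega^i(M)_j$, which yields the canonical identification $\Omega^i(M)/J\Omega^i(M) \cong \Omega^i(M)_i$.

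Finally, to see that $\Omega^i(M)_i$ is a projective $A_0$-module I would compare it with the top of $P^i$. The projective cover $P^i \twoheadrightarrow \Omega^i(M)$ has kernel contained in $JP^i$, whose degree-$i$ component vanishes, so the cover restricts to a surjection that is also injective in degree $i$, giving $P^i_i \cong \Omega^i(M)_i$ as $A_0$-modules. By the construction in Lemma 2.1.1, $P^i$ was built by lifting a projective $A_0$-cover of $\Omega^i(M)/J\Omega^i(M)$ through the grading, so $P^i_i$ is a projective $A_0$-module by design, and the same is therefore true of $\Omega^i(M)_i$. The equivalence of the two formulations in the corollary collapses to this chain of degree-wise identifications.

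The argument is essentially bookkeeping once the Koszul hypothesis is unpacked, and I do not expect a substantive obstacle. The only step demanding a moment of care is the observation $(JP^{i-1})_i = (JP^{i-1})_{\geq i}$, which rests entirely on the linearity of the resolution; every other step reduces to applying a previous lemma.
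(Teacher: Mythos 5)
Your argument is correct and follows essentially the same route as the paper: both rest on the observation that linearity of the minimal resolution forces $\Omega^i(M)$ to be generated in degree $i$, that $JP^{i-1}=\bigoplus_{j\geqslant i}P^{i-1}_j$, and that $\Omega^i(M)_i$ is identified with the degree-$i$ part of a graded projective generated in degree $i$, hence projective over $A_0$. The only difference is organizational -- you carry out the degree bookkeeping directly at every $i$, where the paper reduces to the case $i=1$ by induction on the shifted syzygies and records the equivalence via a commutative diagram -- and this does not affect correctness.
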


\begin{proof}  Since $M$ is Koszul, $\Omega^i (M)$ is generated in degree $i$, and $\Omega^i(M) /J\Omega^i (M) \cong \Omega^i(M)_i$. Moreover, all $\Omega^i(M)[-i]$ are Koszul $A$-modules for $i \geqslant 0$. By induction, it is sufficient to prove $\Omega M \subseteq JP^0$. But this is obvious since $\Omega M$ is generated in degree 1. From the following commutative diagram we deduce that $\Omega M \subseteq JP^0$ if and only if the bottom sequence is exact, or equivalently $M/JM \cong P^0/JP^0 \cong P^0_0$ is a projective $A_0$-module.
\begin{align*}
\xymatrix{ 0 \ar[r] & \Omega M \ar@{=}[d] \ar[r] & JP^0 \ar[d] \ar[r] & JM \ar[d] \ar[r] & 0 \\
0 \ar[r] & \Omega M \ar[r] \ar[d] & P^0 \ar[r] \ar[d] & M \ar[r] \ar[d] & 0\\
0 \ar[r] & 0 \ar[r] & P^0/JP^0 \ar[r] & M/JM \ar[r] & 0 }
\end{align*}
\end{proof}

There are several characterizations of Koszul modules.

\begin{proposition}
Let $M$ be a graded $A$-module generated in degree 0. Then the following are equivalent:
\begin{enumerate}
  \item $M$ is Koszul.
  \item The syzygy $\Omega^i(M)$ is generated in degree $i$ for every $i \geqslant 0$.
  \item For all $i>0$, $\Omega^i(M) \subseteq JP^{i-1}$ and $\Omega^i(M) \cap J^2P^{i-1} = J \Omega^i(M)$, where $P^{i-1}$ is a graded projective cover of $\Omega^{i-1} (M)$.
  \item $\Omega^i(M) \subseteq JP^{i-1}$ and $\Omega^i(M) \cap J^{s+1} P^{i-1} = J^s \Omega^i(M)$ for all $i>0, s \geqslant 0$.
\end{enumerate}
\end{proposition}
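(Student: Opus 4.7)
The plan is to prove the equivalences via the cycle $(1) \Leftrightarrow (2) \Rightarrow (4) \Rightarrow (3) \Rightarrow (2)$, leaning on Lemma 2.1.2(3) and Corollary 2.1.3 for the finer statements about intersections with powers of $J$.

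First, the equivalence $(1) \Leftrightarrow (2)$ is essentially tautological from the existence of graded projective covers (Lemma 2.1.1(2)): in a minimal graded resolution $P^\bullet \to M$, the module $P^i$ is a graded projective cover of $\Omega^i(M)$, and a graded projective cover is generated in degree $i$ if and only if the module it covers is generated in degree $i$. So I would dispose of this equivalence in one short paragraph.

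For $(2) \Rightarrow (4)$, assume $\Omega^i(M)$ is generated in degree $i$ for all $i \geqslant 0$. For each $i > 0$, consider the short exact sequence
\[
0 \longrightarrow \Omega^i(M) \longrightarrow P^{i-1} \longrightarrow \Omega^{i-1}(M) \longrightarrow 0,
\]
where $P^{i-1}$ is the graded projective cover of $\Omega^{i-1}(M)$ and hence generated in degree $i-1$. Since $\Omega^i(M)$ is generated in degree $i$, it lies in $JP^{i-1}$, giving the first half of (4). Then the restricted sequence
\[
0 \longrightarrow \Omega^i(M) \longrightarrow JP^{i-1} \longrightarrow J\Omega^{i-1}(M) \longrightarrow 0
\]
consists of modules all generated in degree $i$. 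Shifting by $-i$ and applying Corollary 2.1.3 yields $J^s\Omega^i(M) = J^s(JP^{i-1}) \cap \Omega^i(M) = J^{s+1}P^{i-1} \cap \Omega^i(M)$ for every $s \geqslant 0$, which is precisely the second half of (4). The implication $(4) \Rightarrow (3)$ is the special case $s = 1$.

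For $(3) \Rightarrow (2)$, I would argue by induction on $i$. The base case $i=0$ holds by hypothesis, and $i=1$ follows from $\Omega M \subseteq JP^0$. For the inductive step, assume $\Omega^{i-1}(M)$ is generated in degree $i-1$, so that $P^{i-1}$ is generated in degree $i-1$ and $JP^{i-1}$ is generated in degree $i$. Apply Lemma 2.1.2(3) to the exact sequence
\[
0 \longrightarrow \Omega^i(M) \longrightarrow JP^{i-1} \longrightarrow JP^{i-1}/\Omega^i(M) \longrightarrow 0,
\]
in which $JP^{i-1}$ is generated in degree $i$: the lemma says $\Omega^i(M)$ is generated in degree $i$ if and only if $J(JP^{i-1}) \cap \Omega^i(M) = J\Omega^i(M)$, i.e., $J^2P^{i-1} \cap \Omega^i(M) = J\Omega^i(M)$, which is exactly the hypothesis (3).

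There is no serious obstacle here; the main thing to watch is the bookkeeping of degrees when shifting and the clean identification of which syzygy sequence to feed into Lemma 2.1.2(3) versus Corollary 2.1.3. In particular, the subtle point in $(2) \Rightarrow (4)$ is recognizing that after replacing $P^{i-1}$ with $JP^{i-1}$ one lands in the setting where all three terms are generated in the same degree $i$, which is what is needed to invoke Corollary 2.1.3.
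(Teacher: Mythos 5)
Your proof is correct and follows essentially the same route as the paper: the equivalence $(1)\Leftrightarrow(2)$ from minimality of graded projective covers, Corollary 2.1.3 applied to the exact sequence $0 \to \Omega^i(M) \to JP^{i-1} \to J\Omega^{i-1}(M) \to 0$ (all terms generated in degree $i$) for $(2)\Rightarrow(4)$, the specialization $s=1$ for $(4)\Rightarrow(3)$, and Lemma 2.1.2(3) with induction for $(3)\Rightarrow(2)$, exactly as in the paper, which phrases the induction as replacing $M$ by $\Omega^i(M)[-i]$. The only slip is the aside that the case $i=1$ of $(3)\Rightarrow(2)$ ``follows from $\Omega M \subseteq JP^0$'' alone — containment in $JP^0$ by itself does not give generation in degree 1, and one also needs $J^2P^0 \cap \Omega M = J\Omega M$ — but your general inductive step already treats $i=1$ correctly, so nothing is lost.
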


\begin{proof}
The equivalence of (1) and (2) is clear. It is also obvious that (3) is the special case of (4) for $s=1$. Now we show (1) implies (4). Indeed, if $M$ is a Koszul module, then both $JP^0$ and $\Omega M$ are generated in degree 1 and $\Omega M \subseteq JP^0$. Therefore we have the following exact sequence
\begin{equation*}
\xymatrix@1 {0 \ar[r] & \Omega M \ar[r] & JP^0 \ar[r] & JM \ar[r] & 0}
\end{equation*}
in which all modules are generated in degree 1. By Corollary 2.1.5 $J^{s+1} P^0 \cap \Omega M = J^s \Omega M$ for all $s >0$. Note that all syzygies of $M$ are also Koszul with suitable grade shifts. Replacing $M$ by $\Omega^i(M)[-i]$ and using induction we get (4).

Finally we show (3) implies (2) to finish the proof. Since $\Omega M \subseteq JP^0$ we still have the above exact sequence. Notice that both $JM$ and $JP^0$ are generated in degree 1 and $J^2P^0 \cap \Omega M = J\Omega M$, by Lemma 2.1.2, $\Omega M$ is generated in degree 1 as well. Now the induction procedure gives us the required conclusion.
\end{proof}

The condition that $\Omega^i(M) \subseteq JP^{i-1}$ (or equivalently, $\Omega^i(M)/J\Omega^i(M) \cong \Omega^i (M)_i$ is a projective $A_0$-module) in (3) of the previous proposition is necessary, as shown by the following example:

\begin{example}
Let $G$ be a finite cyclic group of prime order $p$ and $k$ be an algebraically closed field of characteristic $p$. Let the group algebra $kG$ be concentrated on degree 0, so $J=0$. Consider the trivial $kG$-module $k$. Obviously, $k$ is not a Koszul module. But since $J=0$, the condition $J \Omega^i(k) = J^2P^{i-1} \cap \Omega^i(k)$ holds trivially.
\end{example}

From now on we suppose that $A_0$ has the following splitting property:\\

\textbf{(S): Every exact sequence $0 \rightarrow P \rightarrow Q \rightarrow R \rightarrow 0$ of left (resp., right) $A_0$-modules splits if $P$ and $Q$ are left (resp., right) projective $A_0$-modules.}\\

Clearly, semisimple algebras, self-injective algebras, and direct sums of local algebras satisfy this property.

\begin{proposition}
Let $0 \rightarrow L \rightarrow M \rightarrow N \rightarrow 0$ be a short exact sequence of graded $A$-modules such that $L$ is a Koszul $A$-module. Then $M$ is Koszul if and only if $N$ is Koszul.
\end{proposition}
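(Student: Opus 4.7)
The plan is to build a short exact sequence of syzygies at every level and then read off Koszulness via Proposition 2.1.6. The engine is a refined horseshoe construction whose key input is the Koszul hypothesis on $L$ together with condition (S).

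First I would dispose of the ``generated in degree $0$'' question: if $M$ is Koszul then Lemma 2.1.2(1) gives it for $N$, and if $N$ is Koszul then Lemma 2.1.2(2), together with $L$ being generated in degree $0$, gives it for $M$. Now I would take graded projective covers $P_L \twoheadrightarrow L$ and $P_N \twoheadrightarrow N$ (both generated in degree $0$), lift $P_N \to N$ through $M \twoheadrightarrow N$, and combine with the composite $P_L \to L \hookrightarrow M$ to obtain a surjection $\varphi \colon P_L \oplus P_N \twoheadrightarrow M$. The critical claim is that $\varphi$ is already a projective cover. To see this, Lemma 2.1.2(3) yields $L \cap JM = JL$, whence a short exact sequence
\[
0 \to L/JL \to M/JM \to N/JN \to 0
\]
of $A_0$-modules concentrated in degree $0$. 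Since $L$ is Koszul, Corollary 2.1.5 shows $L/JL$ is a projective $A_0$-module, and condition (S) splits this sequence. Consequently $\varphi$ induces an isomorphism modulo $J$, so $\ker \varphi \subseteq J(P_L \oplus P_N)$ and hence $\ker \varphi = \Omega M$. The horseshoe then produces
\[
0 \to \Omega L \to \Omega M \to \Omega N \to 0.
\]

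After shifting by $-1$ these three modules are again generated in degree $0$, and $\Omega L[-1]$ is again Koszul by Proposition 2.1.6. Iterating the same construction delivers, for every $i \geq 0$, a short exact sequence
\[
0 \to \Omega^i L \to \Omega^i M \to \Omega^i N \to 0
\]
with $\Omega^i L$ generated in degree $i$. Proposition 2.1.6 now closes the argument: if $M$ is Koszul then $\Omega^i M$ is generated in degree $i$, and Lemma 2.1.2(1) forces the same on the quotient $\Omega^i N$, so $N$ is Koszul; conversely if $N$ is Koszul, both $\Omega^i L$ and $\Omega^i N$ are generated in degree $i$ and Lemma 2.1.2(2) yields the same for $\Omega^i M$, so $M$ is Koszul.

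The hard part will be the projective-cover refinement: a priori $P_L \oplus P_N$ only surjects onto $M$, so $\ker \varphi$ could equal $\Omega M \oplus Q$ for a projective summand $Q$ and the tidy sequence of syzygies would be spoiled. This is precisely where the Koszul hypothesis on $L$ (ensuring $L/JL$ is $A_0$-projective) meets the splitting condition (S); removing either ingredient breaks the induction.
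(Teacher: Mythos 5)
Your argument follows the same skeleton as the paper's proof (horseshoe over projective covers of $L$ and $N$, show the sum is a projective cover of $M$, iterate, read off Koszulness from Proposition 2.1.6), but there is a genuine gap at exactly the step you flag as critical. You split the sequence $0 \to L/JL \to M/JM \to N/JN \to 0$ by citing that $L/JL$ is a projective $A_0$-module ``and condition (S)''. Condition (S) only splits short exact sequences whose first \emph{two} terms are projective $A_0$-modules; projectivity of the first term alone is not what (S) asserts, and it is not sufficient in general. Indeed, (S) holds for every local (or direct sum of local) algebra $A_0$, yet for such an $A_0$ that is not self-injective a monomorphism out of a projective module need not split (e.g.\ $A_0 \hookrightarrow$ its injective envelope), so ``$L_0$ projective $+$ (S)'' cannot by itself force $M_0 \cong L_0 \oplus N_0$. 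Your closing remark, that the Koszulness of $L$ and (S) are the two ingredients making the cover claim work, asserts precisely this invalid principle; the hypothesis that $M$ or $N$ is Koszul is not a spectator here but is what makes the splitting go through.

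The repair is the case analysis the paper performs, after which your proof coincides with the paper's. If $M$ is assumed Koszul, then $M_0 = M/JM$ is a projective $A_0$-module by Corollary 2.1.5, so the sequence $0 \to L_0 \to M_0 \to N_0 \to 0$ has its first two terms projective and (S) splits it. If instead $N$ is assumed Koszul, then $N_0$ is projective and the sequence splits because its last term is projective, with no appeal to (S) at all. In either case $M_0 \cong L_0 \oplus N_0$, so $P_L \oplus P_N$ is a graded projective cover of $M$ and $\ker\varphi \cong \Omega M$, as you want. Note also that the same dichotomy must be invoked at every stage of your iteration: to split the degree-$i$ sequence $0 \to \Omega^i(L)_i \to \Omega^i(M)_i \to \Omega^i(N)_i \to 0$ you use that $(\Omega^i M)[-i]$, respectively $(\Omega^i N)[-i]$, is Koszul (depending on the direction being proved), not merely that $(\Omega^i L)_i$ is projective. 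With this correction the remaining steps (your use of Lemma 2.1.2 to transfer ``generated in degree $0$'', the horseshoe, and the conclusion via Proposition 2.1.6) are correct.
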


\begin{proof}
We verify the conclusion by using statement (2) in the last proposition. That is, given that $\Omega^i(L)$ is generated in degree $i$ for each $i \geq 0$, we want to show that $\Omega^i(M)$ is generated in degree $i$ if and only if so is $\Omega^i (N)$.

Consider the following diagram in which all rows and columns are exact:
\begin{align*}
\xymatrix{
& 0 \ar[d] & 0 \ar[d] & 0 \ar[d] &  \\
0 \ar[r] & \Omega L \ar[r] \ar[d] & M' \ar[r] \ar[d] & \Omega N \ar[r] \ar[d] & 0\\
0 \ar[r] & P \ar[r] \ar[d] & P \oplus Q \ar[r] \ar[d] & Q \ar[r] \ar[d] & 0 \\
0 \ar[r] & L \ar[r] \ar[d] & M \ar[r] \ar[d] & N \ar[r] \ar[d] & 0 \\
& 0 & 0 & 0. &
}
\end{align*}
Here $P$ and $Q$ are graded projective covers of $L$ and $N$ respectively. We claim $M' \cong \Omega M$. Indeed, the given exact sequence induces an exact sequence of $A_0$-modules:
\begin{align*}
\xymatrix { 0 \ar[r] & L_0 \ar[r] & M_0 \ar[r] & N_0 \ar[r] & 0.}
\end{align*}
Observe that $L_0$ is a projective $A_0$-module. If $N$ is Koszul, then $N_0$ is a projective $A_0$-module since $N_0 \cong Q^0_0$, and the above sequence splits. If $M$ is generalized Koszul, then $M_0$ is a projective $A_0$-module, and this sequence splits as well by the splitting property (S). In either case we have $M_0 \cong L_0 \oplus N_0$. Thus $P \oplus Q$ is a graded projective cover of $M$, and hence $M' \cong \Omega M$ is generated in degree 1 if and only if $\Omega N$ is generated in degree 1 by Lemma 2.1.2. Replace $L$, $M$ and $N$ by $(\Omega L)[-1]$, $(\Omega M)[-1]$ and $(\Omega N)[-1]$ (all of them are generalized Koszul) respectively in the short exact sequence. Repeating the above procedure we prove the conclusion by recursion.
\end{proof}

The condition that $L$ is Koszul in this proposition is necessary. Indeed, quotient modules of a Koszul module might not be Koszul.

\begin{proposition}
Let $A$ be a Koszul algebra and $M$ be a generalized Koszul module. Then $J^i M[-i]$ is also Koszul for each $i \geqslant 1$.
\end{proposition}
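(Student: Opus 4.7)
The plan is to induct on $i$, with the base case $i=1$ carrying all the content. For the inductive step, note that $J^iM[-i] = J\bigl(J^{i-1}M[-(i-1)]\bigr)[-1]$: once we know that for any Koszul module $N$ the shifted submodule $JN[-1]$ is again Koszul, then assuming $J^{i-1}M[-(i-1)]$ is Koszul by the inductive hypothesis, applying the $i=1$ case to this module gives exactly $J^iM[-i]$ on the nose.

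To establish the base case, let $P^0$ be the graded projective cover of $M$ and write $P^0 = A\otimes_{A_0} P^0_0$, where $P^0_0 \cong M/JM = M_0$ is a projective $A_0$-module by Corollary 2.1.7. Since $\Omega M \subseteq JP^0$ (again by Corollary 2.1.7), applying $P^0 \twoheadrightarrow M$ to $JP^0$ gives image $JM$ and kernel $JP^0 \cap \Omega M = \Omega M$, so we obtain the short exact sequence
$$0 \longrightarrow \Omega M \longrightarrow JP^0 \longrightarrow JM \longrightarrow 0.$$
Shifting down by one and recalling that $\Omega M[-1]$ is Koszul (because $M$ is), the preceding proposition reduces the claim that $JM[-1]$ is Koszul to showing that $JP^0[-1]$ is Koszul.

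The heart of the argument is then to verify that $JP^0[-1]$ is Koszul, and this is where the Koszulity of $A$ (as opposed to just of $M$) enters. Since $P^0_0$ is a projective $A_0$-module, it is a direct summand of $A_0^n$ for some $n$; viewing these as $A$-modules concentrated in degree $0$, the module $A_0$ is Koszul by hypothesis on $A$, hence so is $A_0^n$, and hence so is its direct summand $P^0_0$. The graded projective cover of $P^0_0$ as an $A$-module is precisely $P^0$ with kernel $JP^0$, so $\Omega(P^0_0) = JP^0$, and therefore $JP^0[-1]$ is a shifted first syzygy of a Koszul module, hence itself Koszul.

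The only technical points needing attention are the closure of Koszul modules under arbitrary direct sums and direct summands; both follow immediately from the fact that a minimal graded projective resolution of a direct sum is the direct sum of minimal graded projective resolutions of the summands, so linearity of the resolution transfers in both directions. These are the routine bookkeeping facts; the substantive input is that Koszulity of $A$ forces every projective $A_0$-module, viewed as an $A$-module concentrated in degree $0$, to be Koszul.
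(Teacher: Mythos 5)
Your proof is correct and follows essentially the same route as the paper: the short exact sequence $0 \to \Omega M \to JP^0 \to JM \to 0$ is exactly the paper's sequence $0 \to \Omega M \to \Omega(M_0) \to JM \to 0$ (since $P^0$ is also the projective cover of $M_0 \cong P^0_0$, so $JP^0 = \Omega(M_0)$), and both arguments conclude via Proposition 2.1.8 together with the fact that $M_0$, being a projective $A_0$-module, is Koszul because $A$ is a Koszul algebra, followed by recursion on $i$.
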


\begin{proof}
Consider the following commutative diagram:
\begin{align*}
\xymatrix{
& 0 \ar[r] & \Omega M \ar[r] \ar[d] & \Omega(M_0) \ar[r] \ar[d] & JM \ar[r] & 0 \\
& 0 \ar[r] & P^0 \ar[r]^{id} \ar[d] & P^0 \ar[r] \ar[d] & 0 & \\
0 \ar[r] & JM \ar[r] & M \ar[r] & M_0 \ar[r] & 0 &}
\end{align*}
Since $M_0$ is a projective $A_0$-module and $A_0$ is Koszul, $\Omega(M_0)[-1]$ is also Koszul. Similarly, $\Omega M[-1]$ is Koszul since so is $M$. Therefore, $JM[-1]$ is Koszul by the previous proposition. Now replacing $M$ by $JM[-1]$ and using recursion, we conclude that $J^iM[-i]$ is a Koszul $A$-module for every $i \geqslant 1$.
\end{proof}

The condition that $A$ is a Koszul algebra cannot be dropped, as shown by the following example.

\begin{example}
Let $A$ be the algebra with relations $\alpha \delta = \rho \alpha = \delta^2 = \rho^2 =0$. Put all endomorphisms in degree 0 and all non-endomorphisms in degree 1.
\begin{equation*}
\xymatrix {x \ar@(lu, ld)[]|{\delta} \ar[r]^{\alpha} & y \ar@(ru, rd)[]|{\rho}}
\end{equation*}
The indecomposable projective modules are described as:
\begin{equation*}
P_x = \begin{matrix} & x_0 & \\ x_0 & & y_1 \end{matrix} \qquad P_y = \begin{matrix} y_0 \\ y_0 \end{matrix}.
\end{equation*}
Clearly, $P_x$ is a Koszul module. But $JP_x [-1] \cong S_y$, the simple top of $P_y$, is not Koszul.
\end{example}

From this proposition we deduce that if $A$ is a Koszul algebra, then it is a projective $A_0$-module. Indeed, let $M = A$ in this proposition we get $J^i[-i]$ is Koszul for every $i \geqslant 0$. Therefore, $A_i \cong J^i [-i]_i$ is a projective $A_0$-module.

\begin{lemma}
Let $M$ be a graded $A$-module generated in degree $s$. If $M_s$ is a projective $A_0$-module, then $\Ext _A^i (M, A_0) \cong \Ext _A^{i-1} (\Omega M, A_0)$ for all $i \geqslant 1$.
\end{lemma}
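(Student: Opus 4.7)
The plan is to apply $\Hom_A(-, A_0)$ to the short exact sequence
\[0 \to \Omega M \to P \to M \to 0,\]
where $P$ is a graded projective cover of $M$, and extract the isomorphism from the resulting long exact sequence. For $i \geq 2$, projectivity of $P$ gives $\Ext_A^{i-1}(P, A_0) = \Ext_A^i(P, A_0) = 0$, so the long exact sequence immediately collapses to an isomorphism $\Ext_A^i(M, A_0) \cong \Ext_A^{i-1}(\Omega M, A_0)$ with no further input. Thus the only real work is in the case $i = 1$, which is where the hypothesis on $M_s$ must enter.

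For $i = 1$ the long exact sequence reads
\[\Hom_A(P, A_0) \xrightarrow{\iota^*} \Hom_A(\Omega M, A_0) \to \Ext_A^1(M, A_0) \to 0,\]
so the desired isomorphism $\Hom_A(\Omega M, A_0) \cong \Ext_A^1(M, A_0)$ is equivalent to showing that the restriction map $\iota^*$ along the inclusion $\iota : \Omega M \hookrightarrow P$ vanishes. I plan to handle this in two steps. First, using the construction of graded projective covers in Lemma 2.1.1 together with the assumption that $M_s$ is a projective $A_0$-module, the cover $P$ can be taken to be generated in degree $s$ with $P_s \cong M_s$ as $A_0$-modules (the $A_0$-projective cover of a projective is itself), which forces $(\Omega M)_s = 0$ and hence $\Omega M \subseteq \bigoplus_{i \geq s+1} P_i = JP$. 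Second, any $A$-linear $\phi : P \to A_0$ automatically kills $JP$, since $\phi(JP) = J \cdot \phi(P) \subseteq J \cdot A_0 = 0$ (the $A$-action on $A_0$ factors through $A_0 = A/J$). Combining these, $\iota^*(\phi)(x) = \phi(x) = 0$ for every $x \in \Omega M$, so $\iota^* = 0$.

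The main obstacle, such as it is, is identifying the precise role of the hypothesis: without projectivity of $M_s$ one typically has $(\Omega M)_s \neq 0$, and then $A$-linear maps $P \to A_0$ need not vanish on $\Omega M$, so the case $i=1$ breaks. The projectivity assumption is exactly what secures the containment $\Omega M \subseteq JP$, and once that is in hand the vanishing of $\iota^*$, together with the dimension shift in cohomological degree $1$, is formal.
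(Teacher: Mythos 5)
Your proof is correct and follows essentially the same route as the paper: apply $\Hom_A(-,A_0)$ to $0 \to \Omega M \to P \to M \to 0$, note the case $i \geq 2$ is formal, and use projectivity of $M_s$ to get $P_s \cong M_s$ for the case $i=1$. The only cosmetic difference is that you verify the vanishing of the restriction map $\Hom_A(P,A_0) \to \Hom_A(\Omega M, A_0)$ directly via $\Omega M \subseteq JP$, whereas the paper equivalently shows $\Hom_A(M,A_0) \to \Hom_A(P,A_0)$ is an isomorphism; by exactness these amount to the same thing.
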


\begin{proof}
It is true for $i >1$. When $i=1$, consider the following exact sequence:
\begin{equation*}
0 \rightarrow \text{Hom}y _A (M, A_0) \rightarrow \Hom _A (P, A_0) \rightarrow \Hom _A (\Omega M, A_0) \rightarrow \Ext^1 _A (M, A_0) \rightarrow 0.
\end{equation*}
As a graded projective cover of $M$, $P$ is also generated in degree $s$. Since $M_s$ is a projective $A_0$-module, $P_s \cong M_s$. So
\begin{equation*}
\Hom _A (M, A_0) \cong \Hom _{A_0} (M_s, A_0) \cong \Hom _{A_0} (P_s, A_0) \cong \Hom _A (P, A_0).
\end{equation*}
Thus $\Hom _A (\Omega M, A_0) \cong \Ext^1 _A (M, A_0)$.
\end{proof}

The above lemma holds for all finite-dimensional algebras $A_0$, no matter they satisfy the splitting property (S) or not.

The following lemma is also useful.

\begin{lemma}
Let $M$ be a positively graded $A$-module and suppose that $A$ is a projective $A_0$-module. Then the following are equivalent:
\begin{enumerate}
\item all $\Omega^i(M)$ are projective $A_0$-modules, $i \geqslant 0$;
\item all $\Omega^i(M)_i$ are projective $A_0$-modules, $i \geqslant 0$;
\item $M$ is a projective $A_0$-module.
\end{enumerate}
\end{lemma}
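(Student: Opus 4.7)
The plan is to prove the cyclic chain $(1) \Rightarrow (2) \Rightarrow (3) \Rightarrow (1)$, using the splitting property (S) together with one preliminary observation: if $A$ is projective as a left $A_0$-module, then every graded projective $A$-module $P$ has each homogeneous component $P_j$ projective over $A_0$. Indeed, writing $A = \bigoplus_i A_i$ as an $A_0$-module forces each $A_i$ to be projective (as a direct $A_0$-summand of $A$). Since $P$ is a graded summand of some $\bigoplus_k A[s_k]$, whose degree-$j$ component is $\bigoplus_k A_{j-s_k}$, the $A_0$-module $P_j$ is a summand of a direct sum of projective $A_0$-modules, hence projective.

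The implication $(1) \Rightarrow (2)$ is immediate: the grading decomposition $\Omega^i(M) = \bigoplus_j \Omega^i(M)_j$ is a decomposition of $A_0$-modules, so each summand (including $\Omega^i(M)_i$) of a projective $A_0$-module is projective. For $(3) \Rightarrow (1)$, I would induct on $i$ with base $i = 0$ being (3). For the inductive step, take the short exact sequence $0 \to \Omega^{i+1}(M) \to P^i \to \Omega^i(M) \to 0$ where $P^i$ is a graded projective cover of $\Omega^i(M)$. By the preliminary, $P^i$ is projective over $A_0$; by the inductive hypothesis so is $\Omega^i(M)$; hence (S) splits this sequence of $A_0$-modules, making $\Omega^{i+1}(M)$ a direct summand of $P^i$ and thus projective.

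The main work is $(2) \Rightarrow (3)$. I will show each $M_j$ is a projective $A_0$-module for $j \geq 0$; since $M = \bigoplus_j M_j$ as $A_0$-modules, this gives (3). Fix $j$ and reverse-induct on $i$, descending from $i = j$ down to $i = 0$, with the claim that $\Omega^i(M)_j$ is projective over $A_0$. The base case $i = j$ is exactly assumption (2). For the inductive step, take the degree-$j$ piece of $0 \to \Omega^i(M) \to P^{i-1} \to \Omega^{i-1}(M) \to 0$,
\begin{equation*}
0 \to \Omega^i(M)_j \to (P^{i-1})_j \to \Omega^{i-1}(M)_j \to 0,
\end{equation*}
where the middle term is projective over $A_0$ by the preliminary and the left term is projective by the inductive hypothesis. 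By (S), the sequence splits over $A_0$, so $\Omega^{i-1}(M)_j$ is projective. At $i = 0$ this yields $M_j = \Omega^0(M)_j$ projective.

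The main obstacle is identifying the correct induction for $(2) \Rightarrow (3)$: assumption (2) only provides projectivity along the diagonal $i = j$ of the bigraded array $\{\Omega^i(M)_j\}$, and the key idea is that by fixing $j$ and descending in $i$ one can spread this projectivity leftward across the entire row down to $\Omega^0(M)_j = M_j$, each step justified by (S) together with the preliminary observation on $(P^{i-1})_j$.
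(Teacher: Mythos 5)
Your proposal is correct and follows essentially the same route as the paper: the only substantive implication is $(2) \Rightarrow (3)$, and your downward induction on $i$ along the degree-$j$ row, splitting each degree-$j$ piece of the syzygy sequence $0 \to \Omega^i(M)_j \to (P^{i-1})_j \to \Omega^{i-1}(M)_j \to 0$ via (S), is exactly the paper's argument stated directly rather than by contradiction (the paper picks $n$ with $M_n$ non-projective and deduces that $\Omega^n(M)_n$ is non-projective). One small remark on $(3) \Rightarrow (1)$: condition (S) as stated does not literally apply there, since the submodule $\Omega^{i+1}(M)$ is not yet known to be projective, but no splitting hypothesis is needed at all because the quotient $\Omega^i(M)$ is a projective $A_0$-module and a surjection onto a projective module splits.
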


\begin{proof}
It is clear that (1) implies (2).

(3) implies (1): Consider the exact sequence $0 \rightarrow \Omega M \rightarrow P \rightarrow M \rightarrow 0$. View it as a short exact sequence of $A_0$-modules. Since $M$ and $A$ are projective $A_0$-modules, so are all $\Omega M$. The conclusion follows from induction.

(2) implies (3): Conversely, suppose that $\Omega^i(M)_i$ is a projective $A_0$-modules for every $i \geqslant 0$. We use contradiction to show that $M$ is a projective $A_0$-module. If this not the case, we can find the minimal number $n \geqslant 0$ such that $M_n$ is not a projective $A_0$-module. Consider the short exact sequence $0 \rightarrow (\Omega M)_n \rightarrow P_n \rightarrow M_n \rightarrow 0$. We know that $(\Omega M)_n$ is not a projective $A_0$-module since otherwise the splitting property (S) of $A_0$ forces this sequence splits and hence $M_n$ is a projective $A_0$-module, which is impossible. Replacing $M$ by $\Omega M$ and using induction, we deduce that $\Omega^n (M)_n$ is not a projective $A_0$-module. This contradicts our assumption. Therefore, $M_i$ are projective $A_0$-modules for all $i \geqslant 0$.
\end{proof}

An immediate corollary of this lemma is:

\begin{corollary}
If $A$ is a projective $A_0$-module, then every Koszul module $M$ is a projective $A_0$-module.
\end{corollary}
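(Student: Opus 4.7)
The plan is to combine the preceding lemma (call it Lemma 2.1.13, giving the equivalence of (1), (2), (3)) with the earlier Corollary 2.1.8 on the degree-$i$ piece of syzygies of a Koszul module. Concretely, by hypothesis $A$ is a projective $A_0$-module, so the equivalence (2) $\Leftrightarrow$ (3) of that lemma is available to us.

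First, I would invoke Corollary 2.1.8: if $M$ is Koszul, then for every $i \geqslant 0$ the quotient $\Omega^i(M)/J\Omega^i(M) \cong \Omega^i(M)_i$ is a projective $A_0$-module. This verifies condition (2) of the preceding lemma for the graded module $M$.

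Next, applying the implication (2) $\Rightarrow$ (3) of that lemma (where we use that $A$ is a projective $A_0$-module), we conclude that $M$ itself is a projective $A_0$-module, as desired.

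There is essentially no obstacle: the statement is an immediate packaging of Corollary 2.1.8 with Lemma 2.1.13. The only thing to be careful about is making sure the hypotheses of Lemma 2.1.13 are satisfied, namely that $A$ is a projective $A_0$-module (given) and that $M$ is positively graded (automatic since a Koszul module is generated in degree $0$, and hence $M = \bigoplus_{i\geqslant 0} M_i$). The proof is therefore a single sentence once both preceding results are cited.
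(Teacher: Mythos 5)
Your proof is correct and is essentially identical to the paper's own argument: the paper likewise cites the corollary that $\Omega^i(M)_i$ is a projective $A_0$-module for a Koszul module $M$, and then applies the equivalence of (2) and (3) in the preceding lemma (which requires $A$ to be a projective $A_0$-module). Nothing further is needed.
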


\begin{proof}
Since $M$ is a Koszul $A$-module, then for each $i \geqslant 0$, $\Omega^i (M)_i$ is a projective $A_0$-module by Corollary 2.1.5. This is true even $A_0$ does not satisfy the splitting property (S). The conclusion then follows from the equivalence of (2) and (3) in the above lemma.
\end{proof}

In particular, if $A$ is a Koszul algebra, then by Proposition 2.1.9 it is a projective $A_0$-module. Therefore, every Koszul $A$-module is a projective $A_0$-module.

We remind the reader that the condition that $A$ is a projective $A_0$-module cannot be dropped in this proposition. Indeed, consider the indecomposable projective module $P_x$ in Example 2.10. Clearly, $P_x$ is a Koszul module, but it is not a projective $A_0$-module.

\begin{proposition}
Suppose that $A$ is a projective $A_0$-module. Let $0 \rightarrow L \rightarrow M \rightarrow N \rightarrow 0$ be an exact sequence of Koszul modules. Then it induces the following short exact sequence:
\begin{equation*}
\xymatrix{ 0 \ar[r] & \Ext ^{\ast} _A (N, A_0) \ar[r] & \Ext ^{\ast} _A (M, A_0) \ar[r] & \Ext ^{\ast} _A (L, A_0) \ar[r] & 0.}
\end{equation*}
\end{proposition}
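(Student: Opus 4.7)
The plan is to apply $\Hom_A(-, A_0)$ to the given short exact sequence and show that the resulting long exact sequence of $\Ext$ groups breaks into short exact sequences at every cohomological degree. Concretely, once we have the long exact sequence
\begin{equation*}
0 \to \Hom_A(N, A_0) \to \Hom_A(M, A_0) \to \Hom_A(L, A_0) \to \Ext^1_A(N, A_0) \to \cdots,
\end{equation*}
it suffices to prove that each map $\Ext^i_A(M, A_0) \to \Ext^i_A(L, A_0)$ is surjective, or equivalently that every connecting homomorphism $\Ext^i_A(L, A_0) \to \Ext^{i+1}_A(N, A_0)$ vanishes.

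First I would establish, by iterating the $3\times 3$ diagram from the proof of Proposition 2.1.8, that for every $i \geqslant 0$ there is an induced short exact sequence of graded modules
\begin{equation*}
0 \to \Omega^i L \to \Omega^i M \to \Omega^i N \to 0,
\end{equation*}
each term of which is Koszul after the shift by $-i$. The key point at each stage is that $L$, $M$, $N$ are all Koszul, so Corollary 2.1.5 says their top-degree pieces are projective $A_0$-modules, and then the splitting property (S) forces $M_0 \cong L_0 \oplus N_0$. Consequently the graded projective cover of $M$ is the direct sum of those of $L$ and $N$, and the snake-like construction delivers the short exact sequence of first syzygies; applying the argument again to the Koszul modules $(\Omega L)[-1]$, $(\Omega M)[-1]$, $(\Omega N)[-1]$ and recursing produces the displayed sequence for all $i$.

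Next I would combine Lemma 2.1.12 with Corollary 2.1.14 to identify, naturally in $X \in \{L, M, N\}$,
\begin{equation*}
\Ext^i_A(X, A_0) \;\cong\; \Hom_A(\Omega^i X, A_0) \;\cong\; \Hom_{A_0}\bigl((\Omega^i X)_i,\, A_0\bigr),
\end{equation*}
where the first isomorphism uses that $(\Omega^{j} X)_{j}$ is projective over $A_0$ at every stage (which needs $A$ to be a projective $A_0$-module, supplied by hypothesis), and the second uses that $\Omega^i X$ is generated in degree $i$ with projective top. Restricting the short exact sequence of syzygies to graded component $i$ gives
\begin{equation*}
0 \to (\Omega^i L)_i \to (\Omega^i M)_i \to (\Omega^i N)_i \to 0,
\end{equation*}
a short exact sequence of projective $A_0$-modules. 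By (S) it splits, so applying $\Hom_{A_0}(-, A_0)$ preserves its exactness. Translating back through the identification above yields the required short exact sequence of $\Ext$'s in degree $i$, and assembling these over all $i$ completes the argument.

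The only delicate step is bookkeeping the iteration in the first paragraph: at each stage one must reapply the splitting of degree-zero pieces to ensure that the middle syzygy is genuinely $\Omega M$ rather than a larger module, and one must check that the three syzygies (after shift) are again Koszul so that the recursion can continue. This is exactly what Proposition 2.1.8 provides once we know $L$ is Koszul together with either $M$ or $N$; here all three are Koszul to begin with, so no additional verification is needed.
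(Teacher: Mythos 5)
Your proof is correct and takes essentially the same route as the paper: obtain the short exact sequences $0 \to \Omega^i L \to \Omega^i M \to \Omega^i N \to 0$ by iterating the argument of Proposition 2.1.8, split the degree-$i$ components using Corollary 2.1.5 together with property (S), apply $\Hom_{A_0}(-,A_0)$, and identify the resulting sequences with the $\Ext^i$ groups via Lemma 2.1.11. The initial framing via vanishing connecting homomorphisms is harmless, since your actual execution produces the short exact sequences in each degree directly, exactly as the paper does.
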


\begin{proof}
As in the proof of Proposition 2.1.8, the above exact sequence gives exact sequences $0 \rightarrow \Omega^i (L) \rightarrow \Omega^i (M) \rightarrow \Omega^i (N) \rightarrow 0$, $i \geqslant 0$. For a fixed $i$, the sequence $ 0 \rightarrow \Omega^i (L)_i \rightarrow \Omega^i(M)_i \rightarrow \Omega^i (N)_i \rightarrow 0$ splits since all terms are projective $A_0$-modules by Corollary 2.1.5. Applying the functor $\Hom _{A_0} (-, A_0)$ we get an exact sequence
\begin{equation*}
0 \rightarrow \Hom _{A_0} (\Omega^i(N)_i, A_0) \rightarrow \Hom _{A_0} (\Omega^i(M)_i, A_0) \rightarrow \Hom _{A_0} (\Omega^i(L)_i, A_0) \rightarrow 0
\end{equation*}
which is isomorphic to
\begin{equation*}
0 \rightarrow \Hom _A (\Omega^i(N), A_0) \rightarrow \Hom _A (\Omega^i(M), A_0) \rightarrow \Hom _A (\Omega^i(L), A_0) \rightarrow 0
\end{equation*}
since all modules are generated in degree $i$. By Lemma 2.1.11, it is isomorphic to
\begin{equation*}
\xymatrix{ 0 \ar[r] & \Ext^i _A (N, A_0) \ar[r] & \Ext^i _A (M, A_0) \ar[r] & \Ext^i _A (L, A_0) \ar[r] & 0.}
\end{equation*}
Putting them together we have:
\begin{equation*}
\xymatrix{ 0 \ar[r] & \Ext ^{\ast} _A (N, A_0) \ar[r] & \Ext ^{\ast} _A (M, A_0) \ar[r] & \Ext ^{\ast} _A (L, A_0) \ar[r] & 0.}
\end{equation*}
\end{proof}

Now we define \textit{quasi-Koszul modules} over the graded algebra $A$.

\begin{definition}
A positively graded $A$-module $M$ is called quasi-Koszul if
\begin{equation*}
\Ext _A^1 (A_0, A_0) \cdot \Ext _A^i (M, A_0)=  \Ext _A^{i+1} (M, A_0)
\end{equation*}
for all $i \geq 0$. The algebra $A$ is called a quasi-Koszul algebra if $A_0$ as an $A$-module is quasi-Koszul.
\end{definition}

Clearly, a graded $A$-module $M$ is quasi-Koszul if and only if as a graded $\Ext _A^{\ast} (A_0, A_0)$-module $\Ext _A ^{\ast} (M, A_0)$ is generated in degree $0$. The graded algebra $A$ is a quasi-Koszul algebra if and only if the cohomology ring $\Ext^{\ast}_A (A_0, A_0)$ is generated in degrees 0 and 1.

The quasi-Koszul property is preserved by the Heller operator. Explicitly, if $M$ is a quasi-Koszul $A$-module with $M_0$ a projective $A_0$-module, then its syzygy $\Omega M$ is also quasi-Koszul. This is because for each $i \geqslant 1$, we have:
\begin{align*}
& \Ext _A^{i} (\Omega M, A_0) \cong \Ext _A^{i+1} (M, A_0) \\
& = \Ext _A^1 (A_0, A_0) \cdot \Ext _A^i (M, A_0)\\
& = \Ext _A^1 (A_0, A_0) \cdot \Ext _A^{i-1} (\Omega M, A_0).
\end{align*}
The identity $\Ext _A^i (M, A_0) \cong \Ext _A^{i-1} (\Omega M, A_0)$ is proved in Lemma 2.1.11.

If $A_0$ is a semisimple $k$-algebra, quasi-Koszul modules generated in degree 0 coincide with Koszul modules. This is not true if $A_0$ only satisfy the splitting property (S). Actually, by the following theorem, every Koszul module is quasi-Koszul, but the converse does not hold in general. For example, let $kG$ be the group algebra of a finite group concentrated in degree 0. The reader can check that every $kG$-module generated in degree 0 is quasi-Koszul, but only the projective $kG$-modules are Koszul. If the order $|G|$ is not invertible in $k$, then all non-projective $kG$-modules generated in degree 0 are quasi-Koszul but not Koszul.

The following theorem gives us a close relation between quasi-Koszul modules and Koszul modules.

\begin{theorem}
Suppose that $A$ is a projective $A_0$-module. Then a graded $A$-module $M$ generated in degree 0 is Koszul if and only if it is quasi-Koszul and a projective $A_0$-module.
\end{theorem}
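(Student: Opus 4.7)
The plan is to handle the two implications separately. Throughout I use the identification, for any positively graded $A$-module $N$, of $\Ext_A^i(N,A_0)$ with $\Hom_{A_0}(\Omega^i(N)/J\Omega^i(N), A_0)$, which comes from the minimal graded projective resolution (its differentials have image in $JP^\bullet$ and therefore vanish after applying $\Hom_A(-,A_0)=\Hom_{A_0}(-/J(-),A_0)$); this identification preserves the internal grading inherited from $N$.

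For the forward direction, assume $M$ is Koszul. That $M$ is projective over $A_0$ is immediate from Corollary~2.1.13. For quasi-Koszulity, the Koszul property gives $\Ext_A^i(M,A_0)\cong\Hom_{A_0}(\Omega^i(M)_i, A_0)$, concentrated in internal degree $i$. Since the graded projective cover of $A_0$ is $A$ itself (using that $A_0$ is $A_0$-projective), one has $\Omega A_0=J$ and $J/J^2\cong A_1$, so $\Ext_A^1(A_0,A_0)\cong\Hom_{A_0}(A_1,A_0)$ is concentrated in internal degree $1$. The key short exact sequence is the degree-$(i+1)$ part of $0\to\Omega^{i+1}(M)\to P^i\to\Omega^i(M)\to 0$, namely
\[
0\longrightarrow \Omega^{i+1}(M)_{i+1}\longrightarrow A_1\otimes_{A_0}\Omega^i(M)_i \longrightarrow \Omega^i(M)_{i+1}\longrightarrow 0,
\]
in which all three terms are projective $A_0$-modules by Lemma~2.1.12, so the sequence splits by the splitting property~(S). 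I would then compute the Yoneda product explicitly: lift a representative $\eta\colon P^i\to A_0$ through the projective cover $A\twoheadrightarrow A_0$ to $\tilde\eta\colon P^i\to A$, restrict to $\Omega^{i+1}(M)$ (landing in $J$), and compose with $\xi\colon J\to A_0$. After decomposing $\Omega^i(M)_i=\bigoplus_\lambda (A_0 e_\lambda)^{n_\lambda}$, any $\zeta\colon\Omega^{i+1}(M)_{i+1}\to A_0$ extends via the split to a map on $A_1\otimes_{A_0}\Omega^i(M)_i$, and then decomposes as a finite sum of Yoneda products $\sum_k \xi_k\cdot\eta_k$, which gives the quasi-Koszul equality.

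For the backward direction, assume $M$ is quasi-Koszul and projective over $A_0$. I would prove by induction on $i$ that $\Omega^i(M)$ is generated in degree $i$; the base case $i=0$ is hypothesized. For the inductive step, the inductive hypothesis combined with the graded identification above forces $\Ext_A^j(M,A_0)$ to be concentrated in internal degree $j$ for every $j\leqslant i$, while $\Ext_A^1(A_0,A_0)$ is concentrated in internal degree $1$ by the argument above. Since Yoneda multiplication is additive in the internal grading, the quasi-Koszul identity
\[
\Ext_A^{i+1}(M,A_0)\;=\;\Ext_A^1(A_0,A_0)\cdot\Ext_A^i(M,A_0)
\]
then forces $\Ext_A^{i+1}(M,A_0)$ to be concentrated in internal degree $i+1$. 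On the other hand $\Omega^{i+1}(M)/J\Omega^{i+1}(M)$ is itself a graded projective $A_0$-module (the sequence $0\to J\Omega^{i+1}(M)\to \Omega^{i+1}(M)\to \Omega^{i+1}(M)/J\Omega^{i+1}(M)\to 0$ has projective middle and submodule by Lemma~2.1.12 and splits by (S)), and $\Hom_{A_0}(-,A_0)$ is faithful on projective $A_0$-modules (since $\Hom_{A_0}(A_0e_\lambda,A_0)\cong e_\lambda A_0\neq 0$ for each primitive $e_\lambda$). Hence the vanishing of $\Hom_{A_0}((\Omega^{i+1}(M)/J\Omega^{i+1}(M))_s,A_0)$ for $s\neq i+1$ forces the corresponding graded pieces to vanish, so $\Omega^{i+1}(M)$ is generated in degree $i+1$, completing the induction.

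The main obstacle lies in the forward direction: one must verify that the Yoneda product $\xi\cdot\eta$, computed by the lifting procedure through the projective cover $A\twoheadrightarrow A_0$, coincides with the natural $\Hom_{A_0}$-pairing on the split degree-$(i+1)$ sequence, and then that this pairing is surjective. The delicate bookkeeping here concerns the left $A_0$-linearity of $\xi$ versus the right $A_0$-action on $A_1$ that appears implicitly in $\tilde\eta(a\otimes x)=a\cdot\eta(x)$; once this is settled the backward direction reduces to formal grading arguments.
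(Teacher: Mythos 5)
Your forward direction is essentially correct but follows a different route from the paper's: the paper proves quasi-Koszulity by translating ``$\Omega M$ generated in degree $1$'' into an extension property for homomorphisms $\Omega M \to A_0$ (Lemma 2.1.15) and then produces the Yoneda factorization through push-out/pull-back diagrams, whereas you split the degree-$(i+1)$ piece of $0\to\Omega^{i+1}(M)\to P^i\to\Omega^i(M)\to 0$ using (S) and decompose the resulting pairing through a decomposition of the projective $A_0$-module $\Omega^i(M)_i$; the verification you flag (that the lifting recipe for the Yoneda product computes the map $a\otimes x\mapsto \xi(a\cdot\eta(x))$ restricted to $\Omega^{i+1}(M)_{i+1}$) is routine. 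One caveat: your opening identification, $\Ext_A^i(N,A_0)\cong\Hom_{A_0}(\Omega^i(N)/J\Omega^i(N),A_0)$ for \emph{arbitrary} positively graded $N$ ``because minimal differentials land in $JP^{\bullet}$,'' is false: minimality only places the image in the graded radical, which contains $\mathfrak{r}P^{\bullet}$ as well as $JP^{\bullet}$ (take $A=A_0$ non-semisimple to see the induced differentials need not vanish). Where you actually use it, the hypotheses of Lemma 2.1.11 are available, so you should route everything through that lemma.

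The genuine gap is in the backward direction, at the step converting purity of $\Ext_A^{i+1}(M,A_0)$ into generation of $\Omega^{i+1}(M)$ in degree $i+1$. You need ``$\Hom_{A_0}(T_s,A_0)=0$ implies $T_s=0$'' for $T=\Omega^{i+1}(M)/J\Omega^{i+1}(M)$, and you justify it by claiming $T$ is a projective $A_0$-module because the sequence $0\to J\Omega^{i+1}(M)\to\Omega^{i+1}(M)\to T\to 0$ has projective submodule and middle term ``by Lemma 2.1.12.'' Lemma 2.1.12 concerns the syzygies $\Omega^i(M)$ themselves; it says nothing about $J\Omega^{i+1}(M)$, which is merely an $A_0$-submodule of a projective $A_0$-module and need not be projective, so neither the splitting nor the projectivity of $T$ follows. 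Worse, projectivity of $\Omega^{i+1}(M)/J\Omega^{i+1}(M)$ is, by Corollary 2.1.5, essentially part of the Koszulity you are in the middle of proving, so assuming it here is close to circular. Nor can you instead invoke faithfulness of $\Hom_{A_0}(-,A_0)$ on all finite-dimensional modules: that is a cogenerator condition on $A_0$, valid when $A_0$ is self-injective (the Section 2.2 hypothesis), but Theorem 2.1.16 is stated under (S) only, and you give no argument that (S) forces every simple $A_0$-module to embed into $A_0$. This is precisely why the paper's if-part avoids purity altogether: it reduces, via Lemma 2.1.15, to showing every homomorphism $\Omega M\to A_0$ extends to $JP^0$, and the quasi-Koszul decomposition $x=\sum_i y_ih_i$ is used through explicit diagrams to construct the extension, the only splittings needed being degreewise ones like $0\to(\Omega M)_1\to P_1\to M_1\to 0$, which follow from $M$ being a projective $A_0$-module. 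To repair your argument you must either prove independently that $\Omega^{i+1}(M)/J\Omega^{i+1}(M)$ is projective over $A_0$ (equivalently, establish the extension property of Lemma 2.1.15 directly, which is the paper's route), or strengthen the hypothesis on $A_0$ to self-injectivity, which is not what the theorem asserts.
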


The following lemma will be used in the proof of this theorem.

\begin{lemma}
Let $M$ be a graded $A$-module generated in degree 0. Suppose that both $A$ and $M$ are projective $A_0$-modules. Then $\Omega M$ is generated in degree 1 if and only if every $A$-module homomorphism $\Omega M \rightarrow A_0$ extends to an $A$-module homomorphism $JP \rightarrow A_0$, where $P$ is a graded projective cover of $M$.
\end{lemma}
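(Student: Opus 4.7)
The proof naturally splits into two directions; I would handle the forward direction first, which is relatively clean, and then the reverse direction, which is the heart of the argument.

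For the forward direction, suppose $\Omega M$ is generated in degree $1$. Since $A_0$ is annihilated by $J$, every $A$-module homomorphism $\Omega M \to A_0$ factors uniquely through $\Omega M / J\Omega M$, and every $A$-module homomorphism $JP \to A_0$ factors through $JP / J^2P$. A direct computation, using that $P$ is generated in degree $0$ and $A$ in degrees $0$ and $1$, gives $JP / J^2 P = P_1$; and by hypothesis $\Omega M / J\Omega M = (\Omega M)_1$. The extension problem thus reduces to extending an $A_0$-homomorphism $(\Omega M)_1 \to A_0$ to $P_1 \to A_0$. The degree-$1$ component of $0 \to \Omega M \to P \to M \to 0$ gives an exact sequence of $A_0$-modules $0 \to (\Omega M)_1 \to P_1 \to M_1 \to 0$ which splits because, by Lemma 2.1.14, the hypothesis that $M$ is $A_0$-projective makes $M_1$ an $A_0$-projective module. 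Hence $(\Omega M)_1$ is a direct $A_0$-summand of $P_1$, and the desired extension exists.

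For the reverse direction, I would argue by contrapositive. Suppose $\Omega M$ is not generated in degree $1$. By Lemma 2.1.2(3) applied to the inclusion $\Omega M \hookrightarrow JP$ (using that $JP$ is generated in degree $1$), there exists a homogeneous element $x \in (\Omega M \cap J^2P) \setminus J\Omega M$ of some degree $d \geqslant 2$. Any $A$-homomorphism $g : JP \to A_0$ factors through $JP/J^2P = P_1$ and thus annihilates $J^2 P$, so $g(x) = 0$. Consequently, if we can produce an $A$-homomorphism $f : \Omega M \to A_0$ with $f(x) \neq 0$, then no extension of $f$ to $JP \to A_0$ exists, contradicting the hypothesis.

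To construct such $f$, one reduces to finding an $A_0$-homomorphism $\bar f : \Omega M / J\Omega M \to A_0$ with $\bar f(\bar x) \neq 0$, where $\bar x$ is the (nonzero) class of $x$. The key input is that $\Omega M$, and hence each graded piece $(\Omega M)_d$, is an $A_0$-projective module by Lemma 2.1.14. An induction on $d$ exploiting the splitting property (S) applied to the $A_0$-module sequences $0 \to (J\Omega M)_d \to (\Omega M)_d \to (\Omega M)_d / (J\Omega M)_d \to 0$ identifies the quotient as a projective $A_0$-summand of $(\Omega M)_d$; from any nonzero projective $A_0$-module one obtains a separating homomorphism to $A_0$ by embedding as a direct summand of a free $A_0$-module and projecting onto a coordinate containing $\bar x$.

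The main obstacle is verifying the inductive step: at each degree $d$ one must check that $(J\Omega M)_d$ is itself $A_0$-projective so that property (S) applies. This is where the assumption that $A$ is $A_0$-projective plays its essential role, giving $A_0$-projectivity of each $A_i$ and allowing one to propagate projectivity through the multiplication maps $A_i \otimes_{A_0} (\Omega M)_{d-i} \to (\Omega M)_d$ whose images build $(J\Omega M)_d$.
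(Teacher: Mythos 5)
Your forward direction is correct and is essentially the paper's own mechanism: reduce modulo $J$ (so that $\Hom_A(JP,A_0)\cong\Hom_{A_0}(P_1,A_0)$ and, under the hypothesis, $\Hom_A(\Omega M,A_0)\cong\Hom_{A_0}((\Omega M)_1,A_0)$), and use that the degree-one sequence $0\rightarrow(\Omega M)_1\rightarrow P_1\rightarrow M_1\rightarrow 0$ splits because $M_1$ is a projective $A_0$-module. The reverse direction, however, has a genuine gap, and it sits exactly where you flag it. Your contrapositive reduction is fine: if $\Omega M$ is not generated in degree $1$ there is a homogeneous $x\in(\Omega M)_d\setminus(J\Omega M)_d$ with $d\geqslant 2$, such an $x$ automatically lies in $J^2P$, and every $A$-homomorphism $JP\rightarrow A_0$ kills $J^2P$; so it suffices to produce an $A_0$-homomorphism $(\Omega M)_d/(J\Omega M)_d\rightarrow A_0$ not vanishing on $\bar x$. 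But your construction of this map rests on $(J\Omega M)_d$ being a projective $A_0$-module, and nothing gives that: $(J\Omega M)_d=\sum_{i\geqslant 1}A_i(\Omega M)_{d-i}$ is a sum of images of the multiplication maps $A_i\otimes_{A_0}(\Omega M)_{d-i}\rightarrow(\Omega M)_d$, and images (let alone sums of images) of maps between projective $A_0$-modules need not be projective — projectivity of $A$ and of $\Omega M$ over $A_0$ does not ``propagate through'' these maps. Moreover, in the very situation you are in (generation in degree $1$ fails) there is no reason for the quotient $(\Omega M)_d/(J\Omega M)_d$ to be projective, so (S) cannot be invoked as you propose. Even the weaker statement you actually need — that a nonzero $A_0$-module of this form admits a homomorphism to $A_0$ not killing a prescribed nonzero element — is not a formal consequence of (S); it does hold when $A_0$ cogenerates in the relevant weak sense (for instance when $A_0$ is self-injective, or a direct sum of local algebras), but that is an additional input your argument silently requires.

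For orientation, the paper argues globally rather than element-by-element: it applies $\Hom_A(-,A_0)$ to the exact sequence $0\rightarrow\Omega M\rightarrow JP\rightarrow JM\rightarrow 0$, identifies $\Hom_A(JM,A_0)\cong\Hom_{A_0}(M_1,A_0)$, $\Hom_A(JP,A_0)\cong\Hom_{A_0}(P_1,A_0)$ and $\Hom_A(\Omega M,A_0)\cong\Hom_{A_0}(\Omega M/J\Omega M,A_0)$, and compares the resulting three-term sequence with the split sequence coming from $0\rightarrow(\Omega M)_1\rightarrow P_1\rightarrow M_1\rightarrow 0$; the extension property is the surjectivity of the restriction map, which pins $\Hom_{A_0}(\Omega M/J\Omega M,A_0)$ down to $\Hom_{A_0}((\Omega M)_1,A_0)$, i.e.\ forces $\Hom_{A_0}(-,A_0)$ to vanish on the part of $\Omega M/J\Omega M$ in degrees $\geqslant 2$. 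This is the same detection issue you face, but packaged so that no projectivity of $(J\Omega M)_d$ is ever needed. To repair your write-up along your own lines, drop the unproved projectivity claim and instead prove (or assume, as the ambient hypotheses of the chapter effectively do) that $\Hom_{A_0}(N,A_0)=0$ implies $N=0$ for the relevant quotients $N$; the rest of your reduction is sound.
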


\begin{proof}
The exact sequence $0 \rightarrow \Omega M \rightarrow P \rightarrow M \rightarrow 0$ induces an exact sequence $
0 \rightarrow (\Omega M)_1 \rightarrow P_1 \rightarrow M_1 \rightarrow 0$ of $A_0$-modules, which splits since $M_1$ is a projective $A_0$-module. Applying the functor $\Hom_{A_0} (-, A_0)$ we get another splitting exact sequence
\begin{equation*}
0 \rightarrow \Hom _{A_0} (M_1, A_0) \rightarrow \Hom _{A_0} (P_1, A_0) \rightarrow \Hom _{A_0} ((\Omega M)_1, A_0) \rightarrow 0.
\end{equation*}
Note that $(\Omega M)_0 =0$. Thus $\Omega M$ is generated in degree 1 if and only if $\Omega M / J (\Omega M) \cong (\Omega M)_1$, if and only if the above sequence is isomorphic to
\begin{equation*}
0 \rightarrow \Hom _{A_0} (M_1, A_0) \rightarrow \Hom _{A_0} (P_1, A_0) \rightarrow \Hom _{A_0} (\Omega M / J \Omega M, A_0) \rightarrow 0.
\end{equation*}
Here we use the fact that $M_1$, $P_1$ and $(\Omega M)_1$ are projective $A_0$-modules. But the above sequence is isomorphic to
\begin{equation*}
0 \rightarrow \Hom _A (JM, A_0) \rightarrow \Hom _A (JP, A_0) \rightarrow \Hom _A (\Omega M, A_0) \rightarrow 0
\end{equation*}
since $JM$ and $JP$ are generated in degree 1. Therefore, $\Omega M$ is generated in degree 1 if and only if every (non-graded) $A$-module homomorphism $\Omega M \rightarrow A_0$ extends to a (non-graded) $A$-module homomorphism $JP \rightarrow A_0$.
\end{proof}

Now let us prove the theorem.

\begin{proof}
\textbf{The only if part.} Let $M$ be a Koszul $A$-module. Without loss of generality we can suppose that $M$ is indecomposable. Clearly it is a projective $A_0$-module. Thus we only need to show that $M$ is quasi-Koszul, i.e.,
\begin{equation*}
\Ext _A^{i+1} (M, A_0) = \Ext_A^1 (A_0, A_0) \cdot \Ext _A^i (M, A_0)
\end{equation*}
for all $i \geqslant 0$. By Lemma 2.1.11, we have $\Ext _A^{i+1} (M, A_0) \cong \Ext_A^1 (\Omega^i (M), A_0)$ and $\Ext _A^i (M, A_0) \cong \Hom_A (\Omega^i(M), A_0)$. Therefore, it suffices to show $\Ext _A^1 (M, A_0) = \Ext _A^1 (A_0, A_0) \cdot \Hom _A (M, A_0)$ since the conclusion follows if we replace $M$ by $(\Omega M) [-1]$ and use recursion.

To prove this identity, we first identify $\Ext _A^1 (M, A_0)$ with $\Hom _A (\Omega M, A_0)$ by Lemma 2.1.11. Take an element $x \in \Ext _A^1 (M, A_0)$ and let $g: \Omega M \rightarrow A_0$ be the corresponding homomorphism. Since $M$ is Koszul, it is a projective $A_0$-module, and $\Omega M$ is generated in degree 1. Thus by the previous lemma, $g$ extends to $JP^0$, and hence there is a homomorphism $\tilde{g}: JP^0 \rightarrow A_0$ such that $g = \tilde{g} \iota$, where $P^0$ is a graded projective cover of $M$ and $\iota: \Omega M \rightarrow JP^0$ is the inclusion.
\begin{align*}
\xymatrix { \Omega M \ar[r]^{\iota} \ar[d]^g & JP^0 \ar[dl]^{\tilde{g}}\\
A_0 & }
\end{align*}

We have the following commutative diagram:
\begin{align*}
\xymatrix{ 0 \ar[r] & \Omega M \ar[d]^{\iota} \ar[r] & P^0 \ar@{=}[d] \ar[r] & M \ar[d]^p \ar[r] & 0\\
0 \ar[r] & JP^0 \ar[r] & P^0 \ar[r] & P^0_0 \ar[r] & 0
}
\end{align*}
where the map $p$ is defined to be the projection of $M$ onto $M_0 \cong P_0^0$.

The map $\tilde{g}: JP^0 \rightarrow A_0$ gives a push-out of the bottom sequence. Consequently, we have the following commutative diagram:
\begin{align*}
\xymatrix{ 0 \ar[r] & \Omega M \ar[d]^{\iota} \ar[r] & P^0 \ar@{=}[d] \ar[r] & M \ar[d]^p \ar[r] & 0\\
0 \ar[r] & JP^0 \ar[r] \ar[d] ^{\tilde{g}} & P^0 \ar[d] \ar[r] & P^0_0 \ar[r] \ar@{=}[d] & 0\\
0 \ar[r] & A_0 \ar[r] & E \ar[r] & P^0_0 \ar[r] & 0.}
\end{align*}

Since $P_0 \in \text{add} (A_0)$, we can find some $m$ such $P_0$ can be embedded into $A_0 ^{\oplus m}$. Thus the bottom sequence $y \in \Ext _A^1 (P_0, A_0) \subseteq \bigoplus _{i=1}^m \Ext _A^1 (A_0, A_0)$ and we can write $y = y_1 + \ldots + y_m$ where $y_i \in \Ext _A^1 (A_0, A_0)$ is represented by the sequence
\begin{equation*}
\xymatrix{ 0 \ar[r] & A_0 \ar[r] & E_i \ar[r] & A_0 \ar[r] & 0}.
\end{equation*}
Composed with the inclusion $\epsilon: P_0 \rightarrow A_0 ^{\oplus m}$, the map $\epsilon \circ p = (p_1, \ldots, p_m)$ where each component $p_i$ is defined in an obvious way. Consider the pull-backs:
\begin{equation*}
\xymatrix {0 \ar[r] & A_0 \ar[r] \ar@{=}[d] & F_i \ar[r] \ar[d] & M \ar[r] \ar[d]^{p_i} & 0 \\
0 \ar[r] & A_0 \ar[r] & E_i \ar[r] & A_0 \ar[r] & 0.}
\end{equation*}
Let $x_i$ be the top sequence. Then $x = \sum_{i=1}^m x_i = \sum_{i=1}^m y_i p_i \in \Ext _A^1 (A_0, A_0) \cdot \Hom _A (M, A_0)$ and hence $\Ext _A^1 (M, A_0) \subseteq \Ext _A^1 (A_0, A_0) \cdot \Hom _A (M, A_0)$. The other inclusion is obvious.\\

\textbf{The if part.} By Proposition 2.1.6, it suffices to show that $\Omega^i (M)$ is generated in degree $i$ for each $i>0$. But we observe that if $M$ is quasi-Koszul and a projective $A_0$-modules, then each $\Omega^i(M)$ has these properties as well (see lemma 2.1.12). Thus we only need to show that $\Omega M$ is generated in degree 1 since the conclusion follows if we replace $M$ by $\Omega M$ and use recursion. By the previous lemma, it suffices to show that each (non-graded) $A$-module homomorphism $g: \Omega M \rightarrow A_0$ extends to $JP^0$.

The map $g$ gives a push-out $x \in \Ext _A^1 (M, A_0)$ as follows:
\begin{align*}
\xymatrix{ 0 \ar[r] & \Omega M \ar[d]^g \ar[r] & P^0 \ar[d] \ar[r] & M \ar@{=}[d] \ar[r] & 0\\
0 \ar[r] & A_0 \ar[r] & E \ar[r] & M \ar[r] & 0
}
\end{align*}
Since $M$ is quasi-Koszul, $x$ is contained in $\Ext _A^1 (A_0, A_0) \cdot \Hom _A (M, A_0)$. Thus $x = \sum_{i} y_i h_i$ with $y_i \in \Ext_A^1 (A_0, A_0)$ and $h_i \in \Hom_A (M, A_0)$, and each $y_i h_i$ gives the following commutative diagram, where the bottom sequence corresponds to $y_i$:
\begin{align}
\xymatrix{ 0 \ar[r] & A_0 \ar@{=}[d] \ar[r] & E_i \ar[d] \ar[r] & M \ar[d]^{h_i} \ar[r] & 0\\
0 \ar[r] & A_0 \ar[r] & F_i \ar[r] & A_0 \ar[r] & 0
}
\end{align}
By the natural isomorphism $\Ext _A^1 (M, A_0) \cong \Hom_A (\Omega M, A_0)$ (see Lemma 2.1.11), each $y_ih_i$ corresponds an $A$-homomorphism $g_i: \Omega M \rightarrow A_0$ such that the following diagram commutes:
\begin{align}
\xymatrix{ 0 \ar[r] & \Omega M \ar[d]^{g_i} \ar[r] & P^0 \ar[d] \ar[r] & M \ar@{=}[d] \ar[r] & 0\\
0 \ar[r] & A_0 \ar[r] & E_i \ar[r] & M \ar[r] & 0
}
\end{align}

Diagrams (2.1.2) and (2.1.3) give us:
\begin{align*}
\xymatrix{ 0 \ar[r] & \Omega M \ar@{=}[d] \ar[r]^{\iota} & JP^0 \ar[d]^{\tilde{j}} \ar[r] & JM \ar[d]^j \ar[r] & 0\\
0 \ar[r] & \Omega M \ar[r] \ar[d]^{g_i} & P^0 \ar[d]^{\tilde{h_i}} \ar[r] & M \ar[r] \ar[d]^{h_i} & 0\\
0 \ar[r] & A_0 \ar[r]^{\rho} & F_i \ar[r] & A_0 \ar[r] & 0
}
\end{align*}
Since $JM$ is sent to 0 by $h_ij$, there is a homomorphism $\varphi_i$ from $JP_0$ to the first term $A_0$ of the bottom sequence such that $\rho \varphi_i = \tilde{h_i} \tilde{j}$. Then $g_i$ factors through $\varphi_i$, i.e., $g_i = \varphi_i \iota$. Since $g = \sum_i g_i$, we know that $g$ extends to $JP^0$. This finishes the proof.
\end{proof}

An easy corollary of the above theorem is:

\begin{corollary}
Suppose that $A$ is a Koszul algebra. Then a graded $A$-module $M$ is Koszul if and only if it is quasi-Koszul and a projective $A_0$-module
\end{corollary}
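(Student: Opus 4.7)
The plan is to deduce this as an immediate specialization of Theorem 2.1.16. That theorem already gives the desired equivalence for graded modules generated in degree $0$, under the hypothesis that $A$ is projective as an $A_0$-module. The only thing to verify is that the strictly stronger assumption here---that $A$ is a Koszul algebra---implies the hypothesis of Theorem 2.1.16.

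For this I would appeal to the remark made immediately after Proposition 2.1.9: setting $M = A$ in that proposition yields that $J^i[-i]$ is a Koszul $A$-module for every $i \geqslant 1$, and then Corollary 2.1.5 identifies $A_i \cong (J^i[-i])_i$ as a projective $A_0$-module for every $i \geqslant 0$. Since $A = \bigoplus_{i \geqslant 0} A_i$, summing over $i$ shows that $A$ is a projective $A_0$-module, so the hypothesis of Theorem 2.1.16 is met.

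The forward direction of the corollary is now immediate: any Koszul module is by definition generated in degree $0$, and Theorem 2.1.16 says it is quasi-Koszul and projective as an $A_0$-module. For the backward direction, one reads the statement with the standing convention (consistent with the ambient framework of this section) that $M$ is generated in degree $0$, and Theorem 2.1.16 again delivers the conclusion. There is no real obstacle to the proof; it is essentially a relabeling of Theorem 2.1.16 after the $A_0$-projectivity of $A$ has been extracted from the Koszul hypothesis via Proposition 2.1.9.
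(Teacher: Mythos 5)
Your proposal is correct and follows essentially the same route as the paper: the paper's proof simply notes that a Koszul algebra is a projective $A_0$-module (exactly the consequence of Proposition 2.1.9 you cite) and then invokes Theorem 2.1.16. Your extra detail on extracting $A_0$-projectivity via $J^i[-i]$ and Corollary 2.1.5 is precisely the remark the paper makes after Proposition 2.1.9, so there is nothing to add.
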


\begin{proof}
This is clear since if $A$ is a Koszul algebra, then it is a projective $A_0$-module.
\end{proof}

\section{Generalized Koszul algebras}
In this section we generalize to our context some useful results on classical Koszul algebras appearing in \cite{BGS}. As before, $A$ is a positively graded, locally finite associative $k$-algebra. We suppose that $A_0$ is a \textbf{self-injective} algebra, so the splitting property (S) is satisfied. For two graded $A$-modules $M$ and $N$, we use $\Hom _A (M, N)$ and $\hom _A (M, N)$ to denote the space of all module homomorphisms and the space of graded module homomorphisms respectively. The derived functors Ext and ext correspond to Hom and hom respectively.

Recall that $A$ a \textit{quasi-Koszul algebra} if $A_0$ is quasi-Koszul as an $A$-module. In particular, if $A_0$ is a Koszul $A$-module, then $A$ is a quasi-Koszul algebra.

\begin{theorem}
The graded algebra $A$ is quasi-Koszul if and only if the opposite algebra $A^{\textnormal{op}}$ is quasi-Koszul.
\end{theorem}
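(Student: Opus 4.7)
The plan is to transport the quasi-Koszul property between $A$ and $A^{\text{op}}$ via the $k$-duality $D=\Hom_k(-,k)$, leveraging the self-injectivity of $A_0$. Being quasi-Koszul means the graded Ext algebra $E=\Ext^*_A(A_0,A_0)$ is generated in degrees $0$ and $1$; by induction this says $E_i = E_1^i$ for all $i$, a condition manifestly invariant under passage to the opposite algebra and under graded Morita equivalence. So it suffices to link $\Ext^*_A(A_0,A_0)$ and $\Ext^*_{A^{\text{op}}}(A_0,A_0)$ through a chain of such moves.

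First, $D$ is an exact contravariant equivalence between finite-dimensional left and right $A$-modules, and it reverses Yoneda composition. Applying $D$ to extensions representing classes in $\Ext^*_A(A_0,A_0)$ therefore yields a graded algebra isomorphism
\begin{equation*}
\Ext^*_{A^{\text{op}}}(DA_0, DA_0) \;\cong\; \Ext^*_A(A_0, A_0)^{\text{op}}.
\end{equation*}
Second, I would show that $A_0$ and $DA_0$ have the same additive closure as right $A$-modules. Both are annihilated by $J$, so both live in the category of right $A_0$-modules pushed up to $\text{mod-}A$ via $A \twoheadrightarrow A/J = A_0$. Viewed on the right, $A_0$ is a projective generator of $\text{mod-}A_0$ and is therefore a direct sum of a complete set of indecomposable projective right $A_0$-modules; $DA_0$ is an injective cogenerator and is therefore a direct sum of a complete set of indecomposable injective right $A_0$-modules. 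Self-injectivity of $A_0$ forces these two sets of indecomposables to coincide, so $\text{add}(A_0) = \text{add}(DA_0)$ in $\text{mod-}A_0$, and hence in $\text{mod-}A$. Standard Morita theory then furnishes a graded Morita equivalence between $\Ext^*_{A^{\text{op}}}(DA_0, DA_0)$ and $\Ext^*_{A^{\text{op}}}(A_0, A_0)$.

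Chaining together the three invariances (algebra opposite, the duality isomorphism, and graded Morita equivalence) gives $A$ quasi-Koszul iff $A^{\text{op}}$ quasi-Koszul. The main obstacle I expect is the bookkeeping in the first step: one must verify that the Yoneda product, the $k$-duality, the opposite operation, and the $*$-grading all interact consistently to give an honest graded anti-isomorphism, rather than merely vector space isomorphisms in each degree. A secondary subtlety is handling the case when $A_0$ is not basic: then $A_0 \not\cong DA_0$ as right $A_0$-modules in general (they decompose over the same indecomposables but with possibly different multiplicities), so one must genuinely invoke graded Morita equivalence rather than a direct algebra isomorphism to finish.
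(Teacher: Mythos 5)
Your proposal is correct and is essentially the paper's own argument: both rest on the graded $k$-duality identification $\Ext^*_A(A_0,A_0)\cong \Ext^*_{A^{\textnormal{op}}}(DA_0,DA_0)$ (the paper obtains it by observing that $D$ of a graded projective resolution of $A_0$ is a graded injective coresolution of $DA_0$), followed by using self-injectivity of $A_0$ to trade $DA_0$ for $A_0$. The only divergence is in that last trade: the paper first reduces to the basic case via Morita invariance of quasi-Koszulness, so $A_0$ is Frobenius and $DA_0\cong A_0^{\textnormal{op}}$ as modules outright, whereas you keep $A$ general and instead use $\text{add}(A_0)=\text{add}(DA_0)$ together with a full-corner (graded Morita) argument to transfer generation in degrees $0$ and $1$ --- a harmless variation on the same idea.
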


\begin{proof}
Since the quasi-Koszul property is invariant under the Morita equivalence, without loss of generality we can suppose that $A$ is a basic algebra. Therefore, $A_0$ is also a basic algebra. Let $M$ and $N$ be two graded $A$-modules. We claim $\ext^i_{A}(M, N) \cong \ext _{A^{\textnormal{op}}}^i (DN, DM)$ for all $i \geqslant 0$, where $D$ is the graded duality functor $\hom_k (-,k)$. Indeed, Let
\begin{align*}
\xymatrix{ \ldots \ar[r] & P^2 \ar[r] & P^1 \ar[r] & P^0 \ar[r] & M \ar[r] & 0 }
\end{align*}
be a projective resolution of $M$. Applying the graded functor $\hom_A (-,N)$ we get the following chain complex $C^{\ast}$:
\begin{align*}
\xymatrix{ 0 \ar[r] & \hom_A (P^0, N) \ar[r] & \hom_A (P^1, N) \ar[r] & \ldots. }
\end{align*}
Using the natural isomorphism $\hom_A (P^i, N) \cong \hom _{A^{\textnormal{op}}} (DN, DP^i)$, we get another chain complex $E^{\ast}$ isomorphic to the above one:
\begin{align*}
\xymatrix{ 0 \ar[r] & \hom_{A^{\textnormal{op}}} (DN, DP^0) \ar[r] & \hom_{A^{\textnormal{op}}} (DN, DP^1) \ar[r] & \ldots. }
\end{align*}
Notice that all $DP^i$ are graded injective $A^{\textnormal{op}}$-modules. Thus
\begin{equation*}
\ext ^i_{A}(M, N) \cong H^i(C^{\ast}) \cong H^i(E^{\ast}) \cong \ext _{A^{\textnormal{op}}}^i (DN, DM)
\end{equation*}
which is exactly our claim.

Now let $M = N = A_0$. Then $\ext ^i_{A} (A_0, A_0) \cong \ext _{A^{\textnormal{op}}}^i (DA_0, DA_0)$. Since $A_0$ is self-injective and basic, it is a Frobenius algebra. Therefore, $DA_0$ is isomorphic to $A_0^{\textnormal{op}}$ as a left $A_0^{\textnormal{op}}$-module (and hence as a left $A^{\textnormal{op}}$-module). By the next proposition, $A_0$ is a quasi-Koszul $A$-module if and only if $A_0^{\textnormal{op}}$ is a quasi-Koszul $A^{\textnormal{op}}$-module.
\end{proof}

However, if $A_0$ is a Koszul $A$-module, $A^{\textnormal{op}}_0$ need not be a Koszul $A^{\textnormal{op}}$-module, as shown by the following example.

\begin{example}
Let $A$ be the algebra with relations $\alpha \delta = \delta^2 =0$. Put all endomorphisms in degree 0 and all non-endomorphisms in degree 1.
\begin{equation*}
\xymatrix {x \ar@(lu, ld)[]|{\delta} \ar[r]^{\alpha} & y}
\end{equation*}
The indecomposable projective modules are described as:
\begin{equation*}
P_x = \begin{matrix} & x_0 & \\ x_0 & & y_1 \end{matrix} \qquad P_y = y_0.
\end{equation*}
The reader can check that $A$ is a Koszul algebra, but $A^{\textnormal{op}}$ is not. This is because $A^{\textnormal{op}}$ is not a projective $A_0^{\textnormal{op}} \cong A_0$-module, so it cannot be a Koszul algebra.
\end{example}

\begin{proposition}
The graded algebra $A$ is Koszul if and only if $A$ is a projective $A_0$-module, and whenever ext$^i _A (A_0, A_0 [n]) \neq 0$ we have $n=i$.
\end{proposition}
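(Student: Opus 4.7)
The plan is to work with a minimal graded projective resolution $P^{\bullet} \to A_0$ and, in both directions, track the degree in which each $P^i$ is generated.

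\emph{Forward direction.} Assume $A$ is Koszul. First I would verify that $A$ is projective as a left $A_0$-module: viewing $A$ as a free graded $A$-module of rank one, Proposition 2.1.9 applied to $M = A$ shows that $J^i[-i]$ is Koszul for every $i \geqslant 0$, and Corollary 2.1.5 then identifies $A_i \cong (J^i[-i])_0$ as a projective $A_0$-module, so $A = \bigoplus_i A_i$ is projective over $A_0$. For the Ext condition I would use the linear resolution with $P^i$ generated in degree $i$. Any graded $A$-module homomorphism $f : P^i \to A_0[n]$ is determined by its restriction to the degree-$i$ generators of $P^i$, which must land in $(A_0[n])_i = (A_0)_{i-n}$; since $A_0$ is concentrated in degree $0$, this target is nonzero only when $n = i$. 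Hence $\hom_A(P^i, A_0[n]) = 0$ whenever $n \neq i$, and this forces $\ext^i_A(A_0, A_0[n]) = 0$.

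\emph{Converse direction.} Now assume $A$ is projective over $A_0$ and $\ext^i_A(A_0, A_0[n]) = 0$ whenever $n \neq i$. I would prove by induction on $i$ that $P^i$ is generated in degree $i$, the base case $i = 0$ being immediate. For the inductive step, minimality together with the inductive hypothesis gives $\Omega^{i-1} \subseteq JP^{i-1}$, and therefore $\Omega^{i-1}$ is concentrated in degrees $\geqslant i$. Using that $A$ is projective over $A_0$, the graded projective cover $P^i \to \Omega^{i-1}$ decomposes as $P^i = \bigoplus_{j \geqslant i} Q^{(i,j)}$, where each $Q^{(i,j)}$ is generated in degree $j$ and takes the form $Q^{(i,j)} \cong A \otimes_{A_0} W_j$ for a projective $A_0$-module $W_j = (Q^{(i,j)})_j$ placed in degree $j$. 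The goal is then to show $Q^{(i,j)} = 0$ for every $j > i$.

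The key observation is that every differential in the complex $\hom_A(P^{\bullet}, A_0[n])$ vanishes: minimality gives $d(P^{k+1}) \subseteq JP^k$, and since $J \cdot A_0[n] = 0$ any graded map $P^k \to A_0[n]$ factors through $P^k/JP^k$ and so annihilates $d(P^{k+1})$. Consequently $\ext^i_A(A_0, A_0[n]) \cong \hom_A(P^i, A_0[n])$. Repeating the degree calculation from the forward direction, $\hom_A(Q^{(i,j)}, A_0[n])$ is nonzero only for $n = j$, and the Hom-tensor adjunction gives $\hom_A(Q^{(i,j)}, A_0[j]) \cong \hom_{A_0}(W_j, A_0)$. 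Since every nonzero projective $A_0$-module admits a nonzero map to $A_0$ (because $\hom_{A_0}(A_0 e, A_0) = eA_0 \neq 0$ for each primitive idempotent $e$), the hypothesis for $j > i$ forces $W_j = 0$ and hence $Q^{(i,j)} = 0$. The step I expect to require the most care is this decomposition $P^i = \bigoplus A \otimes_{A_0} W_j$ together with the Hom-tensor identification, both of which rely crucially on $A$ being projective over $A_0$; without that hypothesis the Ext-vanishing does not translate into information about the generating degrees of the $P^i$, which is why both conditions appear in the statement.
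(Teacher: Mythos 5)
Your proof is correct, and in the converse direction it takes a somewhat different route from the paper's. The paper argues by induction on the syzygies $\Omega^i(A_0)$ and converts the ext-vanishing into degree information by applying the graded version of Lemma 2.1.11 recursively, which is where it uses the hypothesis that $A$ is a projective $A_0$-module (through Lemma 2.1.12, so that the bottom components of the syzygies are projective $A_0$-modules) together with the standing assumption of that section that $A_0$ is self-injective (to detect nonzero tops by maps to $A_0$). You instead compute $\ext^i_A(A_0,A_0[n])\cong\hom_A(P^i,A_0[n])$ directly from the minimal resolution via the vanishing of the differentials, write each $P^i$ as $\bigoplus_j A\otimes_{A_0}W_j$ with $W_j$ a projective $A_0$-module, and detect nonzero $W_j$ by adjunction and $\hom_{A_0}(A_0e,A_0)\cong eA_0\neq 0$. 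This is more self-contained, and your detection step only needs projectivity of $W_j$ (which is automatic for tops of graded projective modules, as in the construction of Lemma 2.1.1), not self-injectivity of $A_0$; the forward direction is the same as the paper's.

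Two points need repair, though neither is fatal. First, the sentence ``minimality together with the inductive hypothesis gives $\Omega^{i-1}\subseteq JP^{i-1}$'' is not justified as written: minimality only places the syzygy inside the graded radical of $P^{i-1}$, which in the generating degree $i-1$ also contains $\rad_{A_0}\big((P^{i-1})_{i-1}\big)$, so the kernel may a priori have a nonzero piece in degree $i-1$. The containment in $JP^{i-1}$ is equivalent (Corollary 2.1.5) to $\Omega^{i-1}(A_0)_{i-1}$ being a projective $A_0$-module; this is true here, but only because $A$ is assumed projective over $A_0$, which by Lemma 2.1.12 makes all syzygies of $A_0$ projective $A_0$-modules. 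Alternatively you can simply drop the claim: allow summands $Q^{(i,j)}$ in all degrees $j$ and observe that your detection argument kills every $j\neq i$, not only $j>i$, since $\ext^i_A(A_0,A_0[j])=0$ also for $j<i$. Second, your closing remark misplaces where projectivity of $A$ over $A_0$ is really used: the decomposition $P^i\cong\bigoplus_j A\otimes_{A_0}W_j$ and the Hom-tensor identification hold for any locally finite graded projective $A$-module, with no hypothesis on $A$; the hypothesis is genuinely needed in the ``only if'' direction (Koszulity forces $A$ to be $A_0$-projective, as you note via Proposition 2.1.9 and Corollary 2.1.5) and, in the converse, precisely at the step just discussed.
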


\begin{proof}
If $A$ is a Koszul algebra, it is clearly a projective $A_0$-module. Moreover, there is a linear projective resolution
\begin{equation*}
\xymatrix { \ldots \ar[r] & P^2 \ar[r] & P^1 \ar[r] & P^0 \ar[r] & A_0 \ar[r] & 0}
\end{equation*}
with $P^i$ being generated in degree $i$. Applying $\hom_A (-, A_0 [n])$ we find that all terms in this complex except $\hom _A (P^{n}, A_0 [n])$ are 0. Consequently, ext$^i _A (A_0, A [n]) \neq 0$ unless $i=n$.

Conversely, suppose that $A$ is a projective $A_0$-module and ext$^i _A (A_0, A_0 [n]) = 0$ unless $n=i$, we want to show that $\Omega^{i} (A_0)$ is generated in degree $i$ by induction. Obviously, $\Omega^0 (A_0) = A_0$ is generated in degree 0. Suppose that $\Omega^j(A_0)$ is generated in degree $j$ for $0 \leqslant j \leqslant i$. Now consider $\Omega^{i+1} (A_0)$. By applying the graded version of Lemma 2.1.11 recursively, we have
\begin{equation*}
\text{hom} _A (\Omega^{i+1} (A_0), A_0 [n]) = \text{ext} _A^{i+1} (A_0, A_0 [n]).
\end{equation*}
The right-hand side is 0 unless $n = i + 1$, so $\Omega^{i+1} (A_0)$ is generated in degree $i+1$. By induction we are done.
\end{proof}

The reader can check that the conclusion of this proposition is also true for Koszul modules. i.e., $M$ is a Koszul $A$-module if and only if $\text{ext} _A ^i (M, A_0[n]) \neq 0$ implies $n = i$.

Define $A_0[A_1]$ to be the tensor algebra generated by $A_0$ and the $(A_0, A_0)$-bimodule $A_1$. Explicitly,
\begin{equation*}
A_0[A_1] = A_0 \oplus A_1 \oplus (A_1 \otimes A_1) \oplus (A_1 \otimes A_1 \otimes A_1) \oplus \ldots,
\end{equation*}
where all tensors are over $A_0$ and we use $\otimes$ rather than $\otimes _{A_0}$ to simplify the notation. This tensor algebra has a natural grading. Clearly, $A$ is a quotient algebra of $A_0[A_1]$. Let $R$ be the kernel of the quotient map $q: A_0[A_1] \rightarrow A$. We say that $A$ is a \textit{quadratic algebra} if the ideal $R$ has a set of generators contained in $A_1 \otimes A_1$.

\begin{theorem}
If $A$ is a Koszul algebra, then it is a quadratic algebra.
\end{theorem}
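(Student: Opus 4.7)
The plan is to realize $A$ as a quotient of the tensor algebra $T = A_0[A_1]$ and prove, by induction on degree, that the kernel $R$ of the natural surjection $q\colon T \to A$ is generated as a two-sided ideal by its degree-$2$ piece $R_2 = R \cap T_2$. Since $q$ is an isomorphism in degrees $0$ and $1$, we have $R = \bigoplus_{n \geqslant 2} R_n$ with $R_n = \ker(A_1^{\otimes n} \to A_n)$, all tensor products being over $A_0$. Writing $R_{(2)}$ for the two-sided ideal of $T$ generated by $R_2$, its degree-$n$ component is
\[ R_{(2),n} \;=\; \sum_{i+j = n-2} A_1^{\otimes i} \otimes_{A_0} R_2 \otimes_{A_0} A_1^{\otimes j}, \]
and the task reduces to proving $R_n \subseteq R_{(2),n}$ for all $n \geqslant 2$; the base case $n=2$ is immediate.

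Before the induction I would unpack what Koszulity tells us about the first two syzygies. By Proposition 2.1.9, $A$ is projective as an $A_0$-module, so each $A_i$ is a projective (hence flat) $A_0$-module; in particular $-\otimes_{A_0} A_1$ is exact. The graded projective cover $P^0$ can be chosen so that $\Omega(A_0) = J$, and $P^1$ can be taken with $P^1_n = A_{n-1} \otimes_{A_0} A_1$, mapping onto $J_n = A_n$ by multiplication. The Koszul hypothesis forces $\Omega^2(A_0)$ to be generated in degree $2$ (Proposition 2.1.6), yielding the key identity
\[ \Omega^2(A_0)_n \;=\; A_{n-2}\cdot R_2 \;\subseteq\; A_{n-1} \otimes_{A_0} A_1, \]
where $R_2 = \Omega^2(A_0)_2 \subseteq A_1 \otimes_{A_0} A_1$ and the left action is $a\cdot(b\otimes c) = (ab)\otimes c$.

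The inductive step proceeds via the factorization
\[ A_1^{\otimes n} \;\xrightarrow{\phi}\; A_{n-1} \otimes_{A_0} A_1 \;\xrightarrow{\mu}\; A_n, \]
where $\phi$ multiplies the first $n-1$ tensor factors. Flatness of $A_1$ over $A_0$ gives $\ker\phi = R_{n-1} \otimes_{A_0} A_1$, and $\ker\mu = \Omega^2(A_0)_n = A_{n-2}\cdot R_2$. Given $x \in R_n$, we may write $\phi(x) = \sum_i \bigl(a_1^{(i)} \cdots a_{n-2}^{(i)}\bigr)\cdot r_i$ with $a_j^{(i)} \in A_1$ and $r_i \in R_2$, and lift to $x' = \sum_i a_1^{(i)} \otimes \cdots \otimes a_{n-2}^{(i)} \otimes r_i \in A_1^{\otimes(n-2)} \otimes R_2 \subseteq R_{(2),n}$. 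Then $\phi(x) = \phi(x')$, so $x - x' \in R_{n-1} \otimes_{A_0} A_1$, and the inductive hypothesis $R_{n-1} \subseteq R_{(2),n-1}$ places this inside $R_{(2),n}$; hence $x \in R_{(2),n}$.

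The main obstacle is the translation of the abstract Koszul condition ``$\Omega^2(A_0)$ is generated in degree $2$'' into the concrete tensor identity $\Omega^2(A_0)_n = A_{n-2}\cdot R_2$, which requires $A_1$ to be $A_0$-projective and therefore hinges essentially on Proposition 2.1.9. Once that identification and the exactness of $-\otimes_{A_0} A_1$ are in place, the induction is a short diagram chase.
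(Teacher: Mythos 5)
Your argument is correct and follows essentially the same route as the paper's proof: both identify $\Omega^2(A_0)$ with the kernel $W$ of the multiplication $A\otimes_{A_0}A_1\to A$, use Koszulity to see that $W$ is generated in degree $2$, and use the projectivity of $A_1$ over $A_0$ (Proposition 2.1.9) to identify $\ker\phi$ with $R_{n-1}\otimes_{A_0}A_1$ before lifting through the surjection $A_1^{\otimes(n-2)}\to A_{n-2}$. The only difference is organizational: you run a direct induction on $n$ using $W_n=A_{n-2}\cdot R_2$ up front, whereas the paper argues by contradiction with a relation $x\in R_n$ outside the ideal generated by lower-degree relations and invokes $W/JW\cong W_2$ at the end.
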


\begin{proof}
This proof is a modification of the proofs of Theorem 2.3.2 and Corollary 2.3.3 in \cite{BGS}. First, consider the exact sequence
\begin{equation*}
\xymatrix{ 0 \ar[r] & W \ar[r] & A \otimes A_1 \ar[r] & A \ar[r] & A_0 \ar[r] & 0}
\end{equation*}
where $W$ is the kernel of the multiplication. Clearly, $\Omega(A_0) \cong J = \bigoplus _{i \geqslant 1} A_i$. Since the image of $(A \otimes A_1)_1 = A_0 \otimes A_1$ under the multiplication is exactly $A_1 = \Omega(A_0)_1$, $A \otimes A_1$ is a projective cover of $\Omega(A_0)$ and $\Omega^2 (A_0) = W \subseteq J \otimes A_1$. Therefore, $W$ is generated in degree 2, and hence $W/JW \cong W_2$ is concentrated in degree 2. Observe that $A$ is a quotient algebra of $A_0[A_1]$ with kernel $R$. Let $R_n$ be the kernel of the quotient map $A_1 ^{\otimes n} \rightarrow A_n$.

If $A$ is not quadratic, we can find some $x \in R_n$ with $n > 2$ such that $x$ is not contained in the two-sided ideal generated by $\sum _{i=2}^{n-1} R_i$. Consider the following composite of maps:
\begin{equation*}
\xymatrix {A_1 ^{\otimes n} = A_1 ^{\otimes n-1} \otimes A_1 \ar[r] ^-p & A_{n-1} \otimes A_1 \ar[r]^-m & A_n}.
\end{equation*}
Clearly $p(x) \in W$ since $m(p(x)) =0$. We show $p(x) \notin JW$ by contradiction.

Indeed, if $p(x) \in JW$, then $p(x) \in A_1 W$ since $JW \cong \bigoplus _{i \geqslant 3} W_i = A_1 W$ (notice that $W$ is generated in degree 2). Therefore, we can express $p(x)$ as a linear combination of vectors of the form $\lambda \cdot w$ with $\lambda \in A_1$ and $w \in W$. But $W \subseteq J \otimes A_1$, so each $w$ can be expressed as $\sum_i w_i' \otimes \lambda_i'$ with $w_i' \in A_{n-2}$, $\lambda_i' \in A_1$ such that $\sum_{i=1}^s w'_i \cdot \lambda'_i =0$ by the definition of $W$.

Since there is a surjective product map $\varphi: A_1 ^{\otimes n-2} \twoheadrightarrow A_{n-2}$, we can choose a pre-image $v_i^1 \otimes \ldots \otimes v_i^{n-2} \in \varphi^{-1} (w_i')$ for each $i$ and define
\begin{equation*}
\tilde{w} = \sum_{i=1}^s v_i^1 \otimes \ldots \otimes v_i^{n-2} \otimes \lambda'_i
\end{equation*}
which is contained in $R_{n-1}$ clearly. Observe that $p(\lambda \otimes \tilde{w}) = \lambda \cdot w$. Since $p(x)$ is a linear combination of vectors of the form $\lambda \cdot w$, by the above process we can get some $y$ which is a linear combination of vectors of the form $\lambda \otimes \tilde{w}$ such that $p(y) = p(x)$. Clearly, $p(x -y) = 0$ and $y \in A_1 \otimes R_{n-1}$.

Consider the following short exact sequence
\begin{equation*}
\xymatrix{ 0 \ar[r] & R_{n-1} \ar[r] & A_1 ^{\otimes n-1} \ar[r] & A_{n-1} \ar[r] & 0.}
\end{equation*}
Since $A_0$ is a Koszul $A$-module, $A$ is a projective $A_0$-module. Thus $A_1$ is a projective $A_0$-module as well. Therefore, the following sequence is also exact:
\begin{equation*}
\xymatrix{ 0 \ar[r] & R_{n-1} \otimes A_1 \ar[r] & A_1 ^{\otimes n} \ar[r]^-p & A_{n-1} \otimes A_1 \ar[r] & 0.}
\end{equation*}
Thus $x - y \in R_{n-1} \otimes A_1$ since $p(x-y) =0$. It follows $x \in A_1 \otimes R_{n-1} + R_{n-1} \otimes A_1$, contradicting our choice of $x$.

We proved $x \notin JW$. Then $p(x) \in W/JW \cong W_2$ is of degree 2. But this is impossible since $p$ as a graded homomorphism sends $x \in R_n$ with $n >2$ to an element of degree $n$.
\end{proof}

We can define the \textit{Koszul complex} for $A$ in a way similar to the classical situation.

Let $A \cong A_0[A_1] /(R)$ be quadratic with $R \subseteq A_1 \otimes A_1$ being a set of relations. Define $P_n^n = \bigcap _{i=0} ^{n-2} A_1 ^{\otimes i} \otimes R \otimes A_1 ^{\otimes n-i-2} \subseteq A_1 ^{\otimes n}$. In particular, $P_0^0 = A_0$, $P_1^1 = A_1$ and $P_2^2 = R$. Let $P^n  = A \otimes P_n^n$ such that $A_0 \otimes P_n^n \cong P_n^n$ is in degree $n$. Define $d^n: P^n \rightarrow P^{n-1}$ to be the restriction of $A \otimes A_1 ^{\otimes n} \rightarrow A \otimes A_1 ^{\otimes n-1}$ by $a \otimes v_1 \otimes \ldots \otimes v_n \mapsto av_1 \otimes v_2 \otimes \ldots \otimes v_n$. The reader can check $d^{n-1} d^{n} =0$ for $n \geqslant 1$. Therefore we get the following Koszul complex $K^{\ast}$:
\begin{equation*}
\xymatrix {\ldots \ar[r] & P^3 \ar[r]^{d^3} & A \otimes R \ar[r]^{d^2} & A \otimes A_1 \ar[r]^{d^1} & A \ar[r] & 0.}
\end{equation*}

\begin{theorem}
Let $A \cong A_0[A_1] /(R)$ be a quadratic algebra. Then $A$ is a Koszul algebra if and only if the Koszul complex is a projective resolution of $A_0$.
\end{theorem}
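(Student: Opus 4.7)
The plan is to compare the Koszul complex $K^\ast$ with the minimal graded projective resolution of $A_0$ and show that, under the Koszul hypothesis, the two coincide term by term. The direction $(\Leftarrow)$ is essentially immediate: if $K^\ast$ is a projective resolution of $A_0$, then since each term $P^n = A \otimes_{A_0} P^n_n$ is generated in degree $n$, we have a linear projective resolution of $A_0$, making $A$ a Koszul algebra by definition.

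For $(\Rightarrow)$, suppose $A$ is Koszul. By Proposition 2.1.9 and the remark following it, $A$ is a projective $A_0$-module, so each $A_i$ is projective over $A_0$ (on both sides, using the splitting property (S) on the self-injective algebra $A_0$). I would prove by induction on $n$ the three claims: (a) $\Omega^n(A_0)_n = P^n_n$; (b) $P^n_n$ is a projective $A_0$-module, whence $P^n = A \otimes_{A_0} P^n_n$ is a projective $A$-module; and (c) the truncation of $K^\ast$ through position $n$ coincides with the first $n+1$ terms of the minimal projective resolution of $A_0$.

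The base cases $n = 0, 1$ are immediate, and the case $n = 2$ records that $\Omega^2(A_0)_2 = W_2 = R = P^2_2$, which is where quadraticity (Theorem 2.2.4) enters. For the inductive step, by (c) at stage $n-1$, $\Omega^n(A_0) = \ker(d^{n-1})$ where $d^{n-1}\colon P^{n-1} \to P^{n-2}$ is the Koszul differential. Using the inclusion $P^{n-1}_{n-1} \hookrightarrow A_1 \otimes_{A_0} P^{n-2}_{n-2}$ (an easy consequence of the definition together with flatness of $A_1$ over $A_0$), the degree-$n$ component of $d^{n-1}$ becomes the multiplication $A_1 \otimes A_1 \to A_2$ applied to the first two tensor slots. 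Projectivity of $A_1$ as an $A_0$-module keeps $0 \to R \to A_1 \otimes A_1 \to A_2 \to 0$ exact after tensoring with $P^{n-2}_{n-2}$, so the kernel of the multiplication on $A_1 \otimes A_1 \otimes P^{n-2}_{n-2}$ is $R \otimes P^{n-2}_{n-2}$. Intersecting this kernel with $A_1 \otimes P^{n-1}_{n-1}$ and unwinding the defining intersections (each side imposes precisely the condition that every consecutive pair of factors in $A_1^{\otimes n}$ lies in $R$) yields $P^n_n$, establishing (a). Then (b) follows from Corollary 2.1.5 applied to the Koszul module $\Omega^n(A_0)$, and (c) follows because $d^n$ maps $P^n_n$ identically onto $\Omega^n(A_0)_n$ while $\Omega^n(A_0)$ is generated in degree $n$ (Koszul hypothesis), so $d^n$ surjects onto $\Omega^n(A_0)$ as a graded projective cover.

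The main obstacle is the combinatorial matching in (a): one has to identify the kernel of the degree-$n$ component of the Koszul differential, defined abstractly through the resolution, with the explicit iterated-intersection subspace $P^n_n \subseteq A_1^{\otimes n}$. Making this rigorous requires that intersections and tensor products over $A_0$ interact well, which is exactly where one uses the consequence of the Koszul hypothesis that $A$, and therefore $A_1$, is $A_0$-projective.
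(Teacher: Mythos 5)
Your proposal is correct in outline, but it organizes the hard direction differently from the paper, so it is worth comparing the two. The paper never matches the Koszul complex against the minimal resolution term by term. Instead it first proves, by a separate induction using the splitting property (S), that every graded piece $Z^n_i$ of every kernel is a projective $A_0$-module (so that all $P^n = A\otimes P^n_n$ are projective $A$-modules), and then proves acyclicity by a graded-Ext computation: assuming exactness through stage $n$, it writes $\ext^{n+1}_A(A_0,A_0[m])$ as the cokernel of $\hom_A(P^n,A_0[m])\rightarrow\hom_A(Z^n,A_0[m])$, uses $Z^n_m=0$ for $m\leqslant n$ together with the characterization of Koszulity in Proposition 2.2.3 to force $Z^n$ to be generated in degree $n+1$, and then invokes its property (2), namely $Z^n_{n+1}=P^{n+1}_{n+1}$, to conclude exactness at the next spot. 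You replace the first ingredient by Corollary 2.1.5 (the top-degree piece of a syzygy of the Koszul module $A_0$ is $A_0$-projective) and the second by the statement that $\Omega^n(A_0)$ is generated in degree $n$, running a single induction that identifies $K^{\ast}$ with the minimal linear resolution. This is legitimate and somewhat more economical: it avoids the ext-cokernel computation, and it also sidesteps the paper's slightly awkward ordering, in which the projectivity induction already presupposes that $d^{n+1}$ maps onto $Z^n$. What the two arguments share, and what neither proves in full detail, is the combinatorial identification of the top-degree kernel with $P^n_n$: your step (a) is precisely the paper's property (2), and you are right to single it out as the main obstacle.

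One correction to your setup: Koszulity of $A$ gives projectivity of the $A_i$ as left $A_0$-modules only; Example 2.2.2 of the paper exhibits a Koszul algebra that is not right $A_0$-projective, so your parenthetical claim ``on both sides'' is unjustified, and in particular you cannot use right flatness of $A_1$ (or $A_2$) to keep $0\rightarrow R\rightarrow A_1\otimes_{A_0}A_1\rightarrow A_2\rightarrow 0$ exact after tensoring, nor to commute $A_1\otimes_{A_0}-$ with the defining intersections. Fortunately your argument does not need it: the required Tor-vanishing follows from the inductively known left projectivity (hence flatness) of $P^{n-2}_{n-2}$, and the compatibility of $A_1\otimes_{A_0}-$ with the inclusion $P^{n-1}_{n-1}\hookrightarrow A_1\otimes_{A_0}P^{n-2}_{n-2}$ follows because both modules are projective left $A_0$-modules, so by (S) this inclusion splits. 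With the justification rerouted in this way, your induction goes through at the same level of rigor as the paper's own properties (1) and (2).
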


\begin{proof}
One direction is trivial. Now suppose that $A_0$ is a Koszul $A$-module. The Koszul complex $K^{\ast}$ of $A$ has the following properties:

\noindent (1). Let $Z^n$ be the kernel of $d^n: P^n \rightarrow P^{n-1}$. The restricted map $d_n^n:$
\begin{equation*}
P^n_n = A_0 \otimes \big{(}\bigcap _{i=0} ^{n-2} A_1 ^{\otimes i} \otimes R \otimes A_1 ^{\otimes n-i-2} \big{)} \rightarrow P^{n-1} _n = A_1 \otimes (\bigcap _{i=0} ^{n-3} A_1 ^{\otimes i} \otimes R \otimes A_1 ^{\otimes n-i-3})
\end{equation*}
is injective. Therefore $Z^n_i = 0$ for every $i \leqslant n$.

\noindent (2). $Z_{n+1}^n$, the kernel of the map $d_n^{n+1}:$
\begin{equation*}
P^n _{n+1} = A_1 \otimes \big{(} \bigcap_{i=0} ^{n-2} A_1^{\otimes i} \otimes R \otimes A_1 ^{\otimes n-i -2} \big{)} \rightarrow P^{n-1} _{n+1} = A_2 \otimes \big{(} \bigcap_{i=0} ^{n-3} A_1^{\otimes i} \otimes R \otimes A_1 ^{\otimes n-i -3} \big{)}
\end{equation*}
is
\begin{equation*}
A_1 \otimes \big{(} \bigcap_{i=0} ^{n-2} A_1^{\otimes i} \otimes R \otimes A_1 ^{\otimes n-i -2} \big{)} \cap (R \otimes A_1 ^{\otimes n-1}) = \bigcap_{i=0} ^{n-1} A_1^{\otimes i} \otimes R \otimes A_1 ^{\otimes n-i -1}
\end{equation*}
which is exactly $P_{n+1} ^{n+1}$ (or $d^{n+1}_{n+1} (P_{n+1} ^{n+1})$ since $d^{n+1}_{n+1}$ is injective by the last property).

We claim that each $P^n = A \otimes P^n_n$ is a projective $A$-module. Clearly, it is enough to show that each $P_n^n = Z_n ^{n-1}$ is a projective $A_0$-module. We prove the following stronger conclusion. That is, $Z^n_i$ are projective $A_0$-modules for $i \in \mathbb{Z}$ and $n \geqslant 0$. We use induction on $n$.

Since $A_0$ is a Koszul $A$-module, $A_i$ are projective $A_0$-modules for all $i \geqslant 0$. The conclusion is true for $Z^0 \cong J$ since $J_0 = 0$ and $J_m = A_m$ for $m \geqslant 1$. Suppose that it is true for $l \leqslant n$. That is, all $Z^l _i$ are projective $A_0$-modules for $l \leqslant n$ and $i \in \mathbb{Z}$. Consider $l = n+1$. By the second property described above, $P_{n+1} ^{n+1} = Z_{n+1}^n$, which is a projective $A_0$-module by the induction hypothesis. Therefore, $P^{n+1} = A \otimes P^{n+1}_{n+1}$ is a projective $A$-module, so $P^{n+1} _i$ are all projective $A_0$-modules for $i \in \mathbb{Z}$. But the following short exact sequence of $A_0$-modules splits
\begin{equation*}
\xymatrix{ 0 \ar[r] & Z^{n+1} _i \ar[r] & P^{n+1} _i \ar[r] & Z^n_i \ar[r] & 0}
\end{equation*}
since $Z^n_i$ is a projective $A_0$-module by the induction hypothesis. Now as a direct summand of $P^{n+1}_i$ which is a projective $A_0$-module, $Z^{n+1} _i$ is a projective $A_0$-module as well. Our claim is proved by induction.

We claim that this complex is acyclic. First, the sequence
\begin{equation*}
\xymatrix{ P^1 = A \otimes A_1 \ar[r] & P^0 = A \otimes A_0 \ar[r] & A_0 \ar[r] & 0}
\end{equation*}
is right exact. By induction on $n>1$
\begin{equation*}
\text{ext} _A ^{n+1} (A_0, A_0 [m]) = \text{coker} \big{(} \text{hom} _A (P^n, A_0 [m]) \rightarrow \text{hom} _A (Z^n, A_0 [m]) \big{)}.
\end{equation*}
By Property (1), $Z^n_m = 0$ if $m < n+1$. If $m>n+1$, $\hom _A (P^n, A_0 [m]) =0$ since $P^n$ is generated in degree $n$, so $\text{ext} _A ^{n+1} (A_0, A_0 [m]) = \text{hom} _A (Z^n, A_0 [m])$ by the above identity. But the left-hand side of this identity is non-zero only if $m = n+1$ since $A_0$ is Koszul. Therefore, $\hom _A (Z^n, A_0 [m]) = 0$ for $m > n+1$. Consequently, $Z^n$ is generated in degree $n+1$. By property (2), $Z^n _{n+1} = d^{n+1}_{n+1} (P^{n+1} _{n+1})$, so $Z^n = d^{n+1} (P^{n+1})$ since both modules are generated in degree $n+1$. Therefore, the Koszul complex is acyclic, and hence is a projective resolution of $A_0$.
\end{proof}

\section{Generalized Koszul duality}
In this section we prove the Koszul duality. As before, $A$ is a positively graded, locally finite algebra , and $A_0$ has the splitting property (S). Define $\Gamma = \Ext ^{\ast}_A (A_0, A_0)$ which has a natural grading. Notice that $\Gamma_0 \cong A_0^{\textnormal{op}}$ has the splitting property (S) as well. Let $M$ be a graded $A$-module. Then $\Ext ^{\ast}_A (M, A_0)$ is a graded $\Gamma$-module. Moreover, if $M$ is Koszul, then $\Ext _A^{\ast} (M, A_0)$ is generated in degree 0, so it is a finitely generated $\Gamma$-module. Thus $E = \Ext ^{\ast}_{A} (-, A_0)$ gives rise to a functor from the category of Koszul $A$-modules to $\Gamma$-gmod.

\begin{theorem}
If $A$ is a Koszul algebra, then $E = \Ext ^{\ast}_A (-, A_0)$ gives a duality between the category of Koszul $A$-modules and the category of Koszul $\Gamma$-modules. That is, if $M$ is a Koszul $A$-module, then $E(M)$ is a Koszul $\Gamma$-module, and $E_{\Gamma}EM = \Ext ^{\ast} _{\Gamma} (EM, \Gamma_0) \cong M$ as graded $A$-modules.
\end{theorem}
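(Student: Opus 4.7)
The plan is to verify two claims: (i) $E(M)$ is a Koszul $\Gamma$-module whenever $M$ is a Koszul $A$-module, and (ii) there is a natural graded $A$-module isomorphism $E_\Gamma E(M) \cong M$. The overall strategy is to check the three conditions of Theorem 2.1.17 for $E(M)$ over $\Gamma$ (namely that $\Gamma$ is projective over $\Gamma_0$, $E(M)$ is projective over $\Gamma_0$, and $E(M)$ is quasi-Koszul over $\Gamma$), and then use a symmetric application of the construction to prove (ii).

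The two pointwise projectivity conditions follow directly from Section~2.1. Iterating Lemma 2.1.11 gives $E(M)^n = \Ext^n_A(M, A_0) \cong \hom_{A_0}(\Omega^n(M)_n, A_0)$, and Corollary 2.1.5 says $\Omega^n(M)_n$ is a projective left $A_0$-module. Since the $A_0$-dual of a projective left $A_0$-module is a projective right $A_0$-module, i.e.\ a projective left $\Gamma_0$-module (recall $\Gamma_0 \cong A_0^{\textnormal{op}}$), each graded piece $E(M)^n$ is projective over $\Gamma_0$. Specialising to $M = A_0$ yields the same for $\Gamma^n = \Ext^n_A(A_0, A_0)$, so $\Gamma$ itself is a projective $\Gamma_0$-module. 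Moreover, Theorem 2.1.17 applied to $M$ shows that $M$ is quasi-Koszul over $A$, which is exactly the statement that $E(M)$ is generated in degree $0$ as a graded $\Gamma$-module.

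The main work is showing $E(M)$ is quasi-Koszul over $\Gamma$; equivalently, that $E(M)$ admits a linear projective resolution over $\Gamma$. I would construct this resolution inductively on the syzygy degree. Set $Q^0 = \Gamma \otimes_{\Gamma_0} E(M)^0$ to be the minimal $\Gamma$-projective cover, generated in degree $0$. Using the $A$-syzygy short exact sequence $0 \to \Omega M \to P^0 \to M \to 0$, whose top pieces are projective $A_0$-modules by Corollary 2.1.5, one may run the argument of Proposition 2.1.14 after the appropriate shift, apply $\hom_A(-, A_0)$, and invoke the long exact Ext sequence; this identifies the graded pieces of $\Omega_\Gamma E(M)$ with suitable pieces of $E(\Omega M)$ in a way compatible with the Yoneda $\Gamma$-action. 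Combining this with the inductive hypothesis that the Koszul $A$-module $\Omega M[-1]$ has the required linear $\Gamma$-resolution for $E(\Omega M[-1])$ allows one to splice and build a linear projective resolution of $E(M)$ over $\Gamma$.

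The main obstacle, as I see it, is tracking two gradings (cohomological and internal) through the identification of $\Omega_\Gamma E(M)$ with a shift of $E(\Omega M)$ as graded $\Gamma$-modules, and verifying that the connecting maps of the long exact Ext sequence match the $\Gamma$-action coming from the Yoneda composition. Once the Koszul property of $E(M)$ is in hand, claim (ii) follows by symmetry: applying the construction to the Koszul $\Gamma$-module $E(M)$ produces a linear $\Gamma$-resolution whose $n$-th term has top $\hom_{A_0}(M_n, A_0)$ sitting in degree $n$, so $(E_\Gamma E(M))^n \cong \hom_{\Gamma_0}(\hom_{A_0}(M_n, A_0), \Gamma_0) \cong M_n$ via the biduality for finitely generated projective $A_0$-modules (each $M_n$ being projective by Corollary 2.1.13). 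The $A$-module structure on $E_\Gamma E(M)$ is inherited from the canonical graded algebra isomorphism $A \cong \Ext^*_\Gamma(\Gamma_0, \Gamma_0)$ obtained by specialising the same biduality to $M = A_0$, and by naturality of the whole construction the comparison map $M \to E_\Gamma E(M)$ is a graded $A$-module isomorphism.
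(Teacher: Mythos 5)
The preliminary reductions are fine (each graded piece $E(M)_n\cong\Hom_{A_0}(\Omega^n(M)_n,A_0)$ is projective over $\Gamma_0\cong A_0^{\textnormal{op}}$, likewise for $\Gamma$, and $E(M)$ is generated in degree $0$ because Koszul implies quasi-Koszul), but the core step is misidentified: you propose to recognize $\Omega_{\Gamma}(E(M))$ inside $E(\Omega M)$ and to induct on the syzygies of $M$. Applying $E$ to $0\rightarrow\Omega M\rightarrow P^0\rightarrow M\rightarrow 0$ does not produce the $\Gamma$-syzygy of $E(M)$; it produces its \emph{radical}. Indeed, for $M$ Koszul one has $\Hom_A(M,A_0)\cong\Hom_A(P^0,A_0)$, so the long exact sequence gives $E(\Omega M)_i\cong\Ext^{i+1}_A(M,A_0)$, i.e. $E(\Omega M)[1]\cong\bigoplus_{i\geqslant 1}E(M)_i$, which (as $E(M)$ is generated in degree $0$) is $J_\Gamma E(M)$, not the kernel of a projective cover. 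A concrete counterexample to your identification, squarely inside the theorem's hypotheses: $A=k[x]/(x^2)$ with $A_0=k$, $A_1=kx$, and $M=A$. Then $\Omega M=0$, so $E(\Omega M)=0$, while $E(M)\cong k$ and $\Omega_\Gamma(E(M))\cong\Gamma[1]\neq 0$ (here $\Gamma\cong k[y]$). Hence no splicing with a resolution of $E(\Omega M[-1])$ can yield the linear resolution of $E(M)$, and the induction as set up cannot be repaired by bookkeeping of the two gradings.

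What the duality actually interchanges is the syzygy shift on one side with the radical shift on the other. The paper's proof starts instead from the sequence $0\rightarrow\Omega M\rightarrow\Omega(M_0)\rightarrow JM\rightarrow 0$ (comparing $M$ with its degree-$0$ part $M_0$ and the projective cover of $M_0$), uses that the degreewise sequences $0\rightarrow\Omega^i(M)_i\rightarrow\Omega^i(M_0)_i\rightarrow\Omega^{i-1}(JM[-1])_{i-1}\rightarrow 0$ split (all terms projective over $A_0$), and obtains $0\rightarrow E(JM[-1])[1]\rightarrow E(M_0)\rightarrow E(M)\rightarrow 0$ with $E(M_0)$ a projective cover; hence $\Omega_\Gamma(E(M))\cong E(JM[-1])[1]$ and recursively $\Omega^i_\Gamma(E(M))\cong E(J^iM[-i])[i]$, generated in degree $i$. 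The recursion therefore runs over $J^iM[-i]$, and its Koszulity is exactly Proposition 2.1.9, which is where the hypothesis that $A$ itself is Koszul enters (the example immediately following that proposition shows it fails otherwise); your sketch never uses this hypothesis in an essential way, which is a symptom of the misidentification. Your biduality computation for $E_\Gamma E(M)\cong M$ is the right idea, but it presupposes $\Omega^n_\Gamma(E(M))_n\cong\Hom_{A_0}(M_n,A_0)$, which again comes from the identification with $E(J^nM[-n])[n]$, not from $E(\Omega M)$.
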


\begin{proof}
Since $M$ and $A_0$ both are Koszul, by Proposition 2.1.9 $M_0$ and $JM[-1]$ are Koszul, where $J= \bigoplus _{i \geqslant 1} A_i$. Furthermore, we have the following short exact sequence of Koszul modules:
\begin{align*}
\xymatrix{ 0 \ar[r] & \Omega M [-1] \ar[r] & \Omega (M_0)[-1] \ar[r] & JM[-1] \ar[r] & 0.}
\end{align*}
As in the proof of Proposition 2.1.8, this sequence induces exact sequences recursively:
\begin{align*}
\xymatrix{ 0 \ar[r] & \Omega^i(M)[-i] \ar[r] & \Omega^i(M_0)[-i] \ar[r] & \Omega^{i-1}(JM[-1]) [1-i] \ar[r] & 0,}
\end{align*}
and gives exact sequences of $A_0$-modules:
\begin{align*}
\xymatrix{ 0 \ar[r] & \Omega^i(M)_i \ar[r] & \Omega^i(M_0)_i \ar[r] & \Omega^{i-1}(JM [-1])_{i-1} \ar[r] & 0.}
\end{align*}
Note that all terms appearing in the above sequence are projective $A_0$-modules. Applying the functor $\Hom_{A_0} (-, A_0)$ and using the following isomorphism for a graded $A$-module $N$ generated in degree $i$
\begin{equation*}
\Hom _A (N, A_0) \cong \Hom_A (N_i, A_0) \cong \Hom _{A_0} (N_i, A_0,)
\end{equation*}
we get:
\begin{align*}
0 \rightarrow \Hom _A (\Omega^{i-1}(JM[-1]), A_0) \rightarrow \Hom _A (\Omega^i(M_0), A_0) \rightarrow \Hom _A (\Omega^iM, A_0) \rightarrow 0.
\end{align*}
By Lemma 2.1.11, this sequence is isomorphic to
\begin{align*}
0 \rightarrow \Ext^{i-1} _A (JM[-1], A_0) \rightarrow \Ext^i _A (M_0, A_0) \rightarrow \Ext^i _A (M, A_0) \rightarrow 0.
\end{align*}
Now let the index $i$ vary and put these sequences together. We have:
\begin{align*}
\xymatrix{ 0 \ar[r] & E(JM[-1])[1] \ar[r] & E(M_0) \ar[r] & EM \ar[r] & 0.}
\end{align*}

Let us focus on this sequence. We claim $\Omega(EM) \cong E(JM[-1])[1]$. Indeed, since $M_0$ is a projective $A_0$-module and the functor $E$ is additive, $E(M_0)$ is a projective $\Gamma$-module. Since $JM[-1]$ is Koszul, $JM[-1]$ is quasi-Koszul and hence $E(JM[-1])$ as a $\Gamma$-module is generated in degree 0. Thus $E(JM[-1])[1]$ is generated in degree 1, and $E(M_0)$ is minimal. This proves the claim. Consequently, $\Omega(EM)$ is generated in degree 1 as a $\Gamma$-module. Moreover, replacing $M$ by $JM[-1]$ (which is also Koszul) and using the claimed identity, we have
\begin{equation*}
\Omega^2(EM) = \Omega(E(JM[-1])[1]) = \Omega( E(JM[-1]) [1] = E(J^2M[-2])[2],
\end{equation*}
which is generated in degree 2. By recursion, we know that $\Omega^i (EM) \cong E(J^iM [-i])[i]$ is generated in degree $i$ for all $i \geqslant 0$. Thus $EM$ is a Koszul $\Gamma$-module. In particular for $M = A$,
\begin{equation*}
EA = \Ext _A ^{\ast} (A, A_0) = \Hom _A (A, A_0) = \Gamma_0
\end{equation*}
is a Koszul $\Gamma$-module.

Since $\Omega ^i (EM)$ is generated in degree $i$ and
\begin{align*}
\Omega^i (EM)_i & \cong E(J^iM [-i])[i]_i \cong E(J^iM[-i])_0 \\
& = \Hom _A (J^iM[-i], A_0) \cong \Hom _A (M_i, A_0),
\end{align*}
we have
\begin{align*}
\Hom_{\Gamma} (\Omega^i(EM), \Gamma_0) & \cong \Hom _{\Gamma _0} (\Omega^i(EM)_i, \Gamma_0) \nonumber \\
& \cong \Hom _{\Gamma _0} (\Hom_A (M_i, A_0), \Gamma_0) \nonumber \\
& \cong \Hom _{\Gamma_0} (\Hom_{A_0} (M_i, A_0), \Gamma_0) \nonumber \\
& \cong M_i.
\end{align*}
The last isomorphism holds because $M_i$ is a projective $A_0$-module and $\Gamma_0 \cong A_0^{\textnormal{op}}$.

We have proved that $EM$ is a Koszul $\Gamma$-module. Therefore, $(\Omega ^i (EM))_i$ is a projective $\Gamma_0$-module for every $i \geqslant 0$. Applying Lemma 2.1.11 recursively, $\Ext _{\Gamma}^i (EM, \Gamma_0) \cong \Hom_{\Gamma} (\Omega^i(EM), \Gamma_0) \cong M_i$ for every $i \geqslant 0$. Adding them together, we conclude that $E_{\Gamma}E(M) \cong \bigoplus_{i=0}^{\infty} M_i \cong M$.

Now we have $E_{\Gamma} ( E(A)) = E_{\Gamma} (\Gamma_0) \cong A$. Moreover, $\Gamma$ is a graded algebra such that $\Gamma_0 \cong A_0^{\textnormal{op}}$ is self-injective as an algebra and Koszul as a $\Gamma$-module. Using this duality, we can exchange $A$ and $\Gamma$ in the above reasoning and get $EE_{\Gamma}(N) \cong N$ for an arbitrary Koszul $\Gamma$-module $N$. Thus $E$ is a dense functor.

Let $L$ be another Koszul $A$-module. Since $L, M, EL, EM$ are all generated in degree 0, we have
\begin{align*}
\hom _{\Gamma} (EL, EM) & \cong \Hom_{\Gamma_0} ((EL)_0, (EM)_0)\\
& \cong \Hom_{\Gamma_0} (\Hom_A (L, A_0), \Hom_A (M, A_0))\\
& \cong \Hom_{A_0 ^{\textnormal{op}}} (\Hom_{A_0} (L_0, A_0), \Hom_{A_0} (M_0, A_0))\\
& \cong \Hom_{A_0} (L_0, M_0) \cong \hom_A (L, M).
\end{align*}
Consequently, $E$ is a duality between the category of Koszul $A$-modules and the category of Koszul $\Gamma$-modules.
\end{proof}

\begin{remark}
We can also use $\hom _A(-, A_0)$ to define the functor $E$ on the category of Koszul $A$-modules, namely $E := \bigoplus _{i \geqslant 0} \text{ext} _A^i (-, A_0[i])$. Indeed, for a Koszul $A$-module $M$, we have:
\begin{align*}
\Ext _A^{\ast} (M, A_0) & = \bigoplus _{i \geqslant 0} \Ext _A^i (M, A_0) \\
& = \bigoplus _{i \geqslant 0} \bigoplus _{j \in \mathbb{Z}} \text{ext} _A^i (M, A_0 [j]) \\
& = \bigoplus _{i \geqslant 0} \text{ext} _A^i (M, A_0[i])
\end{align*}
since $\text{ext} _A^i (M, A_0 [j]) = 0$ for $i \neq j$.
\end{remark}

\section{A relation to the classical theory}

In this section we describe a correspondence between the generalized Koszul theory and the classical theory. As before, let $A = \bigoplus _{i \geqslant 0} A_i$ be a locally finite, positively graded algebra generated in degrees 0 and 1. At this moment we do \textbf{not} need the splitting condition (S). Let $\mathfrak{r}$ be the radical of $A_0$, and $\mathfrak{R} = A \mathfrak{r} A$ be the two-sided ideal generated by $\mathfrak{r}$. Note that $\mathfrak{r} + J$ is also a two sided-ideal of $A$ where $J = \bigoplus _{i \geqslant 1} A_i$, and it coincides with the radical of $A$ if $A$ is finite-dimensional. We then define the quotient graded algebra $\bar{A} = A / \mathfrak{R} = \bigoplus _{i \geqslant 0} A_i / \mathfrak{R}_i$.

\begin{lemma}
Notation as above, $\mathfrak {R} _s = \sum _{i = 0}^s A_i \mathfrak{r} A_{s-i} = (J + \mathfrak{r}) ^{s+1} _s$, and $\bar{A}$ is a well defined graded algebra.
\end{lemma}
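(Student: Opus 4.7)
The plan is to verify the two equalities defining $\mathfrak{R}_s$ and then observe that graded-ness of $\bar A$ follows for free from the first equality.

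First I would unpack $\mathfrak{R}_s = \sum_{i=0}^s A_i \mathfrak{r} A_{s-i}$ directly. The ideal $\mathfrak{R} = A\mathfrak{r}A$ is spanned by elements $a\cdot r \cdot b$ with $a,b \in A$ and $r \in \mathfrak{r} \subseteq A_0$. Decomposing $a = \sum_i a_i$, $b = \sum_j b_j$ into homogeneous pieces and using that $r$ is in degree $0$, the product $a_i r b_j$ is homogeneous of degree $i+j$. Collecting terms of degree $s$ gives the stated formula. In particular $\mathfrak{R}$ is a homogeneous (graded) two-sided ideal, which immediately ensures that $\bar A = A/\mathfrak{R}$ inherits a well-defined grading $\bar A = \bigoplus_{i \geqslant 0} A_i / \mathfrak{R}_i$; this settles the final assertion of the lemma.

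Next I would prove $\sum_{i=0}^s A_i \mathfrak{r} A_{s-i} = (J+\mathfrak{r})^{s+1}_s$. For the inclusion $\subseteq$, recall that $A$ is generated in degrees $0$ and $1$, so $A_i = A_1^i \subseteq J^i$ for $i \geqslant 1$, while $A_0\mathfrak{r}A_0 = \mathfrak{r}$ because $1 \in A_0$ and $\mathfrak{r}$ is a two-sided ideal of $A_0$. Hence a typical summand $a_i \cdot r \cdot b_{s-i}$ with $i \geqslant 1$, $s-i \geqslant 1$ can be written as a product of $i$ factors from $A_1 \subseteq J$, one factor from $\mathfrak{r}$, and $s-i$ factors from $A_1 \subseteq J$, giving $s+1$ factors in $J+\mathfrak{r}$ whose product has degree $s$. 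The boundary cases $i = 0$ or $i = s$ are handled similarly by padding one side with the $\mathfrak{r}$ factor alone (still yielding $s+1$ factors).

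For the reverse inclusion $\supseteq$, take any product $y_1 y_2 \cdots y_{s+1}$ with $y_k \in J+\mathfrak{r}$ and of total degree $s$. Expanding each $y_k$ as a $J$-part plus an $\mathfrak{r}$-part reduces the problem to products where each $y_k$ is homogeneous and lies either in some $A_{d_k}$ with $d_k \geqslant 1$ or in $\mathfrak{r}$. The key observation (pigeonhole) is that in such a product at least one factor must lie in $\mathfrak{r}$: if every $y_k$ were in $J$, the total degree would be at least $s+1$. Picking any such index $j$ with $y_j \in \mathfrak{r}$, the prefix $y_1 \cdots y_{j-1}$ lies in some $A_a$ and the suffix $y_{j+1}\cdots y_{s+1}$ in some $A_b$ with $a+b = s$, so the whole product lies in $A_a \mathfrak{r} A_b \subseteq \sum_{i=0}^s A_i \mathfrak{r} A_{s-i}$.

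The only mildly subtle step is the pigeonhole argument in the last paragraph, together with remembering that $\mathfrak{r} \subseteq A_0$ is homogeneous and that $A$ is generated in degrees $0$ and $1$ (so that every $A_i$ is reachable from $A_1$); both are essentially bookkeeping, so I do not anticipate any real obstacle.
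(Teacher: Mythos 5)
Your proposal is correct and takes essentially the same route as the paper: the first identity by expanding $A\mathfrak{r}A$ into homogeneous pieces, and the second identity by the two inclusions, with the same key observation that a degree-$s$ product of $s+1$ factors from $J+\mathfrak{r}$ must involve at least one factor from $\mathfrak{r}$ (the paper phrases your pigeonhole step as a case analysis on which $X_i$ equals $\mathfrak{r}$). The only difference is cosmetic: you settle the well-definedness of $\bar{A}$ by noting that $\mathfrak{R}$ is a homogeneous two-sided ideal, so the quotient is automatically graded, whereas the paper verifies the compatibility inclusions $\sum_i A_i\mathfrak{r}A_{s-i}\cdot A_t \subseteq \sum_i A_i\mathfrak{r}A_{s+t-i}$ (and its mirror) explicitly — same content, slightly more compact on your side.
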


\begin{proof}
Since $\mathfrak{R} = A \mathfrak{r} A$, the first identity is clear. Now we prove the second one. This is clearly true for $s = 0$ since $\sum _{i = 0}^0 A_i \mathfrak{r} A_{s-i} = A_0 \mathfrak{r} A_0 = \mathfrak{r} =  (\mathfrak{r} + J) _0$. If $s \geqslant 1$, then $A_i \mathfrak{r} A_{s-i} \subseteq A_s \subseteq J^i \cdot \mathfrak{r} \cdot J ^{s-i} \subseteq (\mathfrak{r} + J) ^{s+1}$. Therefore, $\sum _{i = 0}^s A_i \mathfrak{r} A_{s-i} \subseteq (\mathfrak{r} + J) ^{s+1}$. But every element is homogeneous and has degree $s$, so $\sum _{i = 0}^s A_i \mathfrak{r} A_{s-i} \subseteq (\mathfrak{r} + J) ^{s+1} _s$.

On the other hand,
\begin{equation*}
 (\mathfrak{r} + J) ^{s+1} = \sum _{X_i = \mathfrak{r}, J, \quad i=0} ^{s+1} X_1 \cdot \ldots \cdot X_{s+1}.
\end{equation*}
Correspondingly,
\begin{equation*}
  (\mathfrak{r} + J) ^{s+1} _s = \sum _{X_i = \mathfrak{r}, J, \quad i = 0} ^{s+1} (X_1 \cdot \ldots \cdot X_{s+1} )_s.
\end{equation*}
Clearly, if all $X_i = J$ for $0 \leqslant i \leqslant s+1$, then $(X_1 \cdot \ldots \cdot X_{s+1} )_s = 0$. So we can assume that there is at leat one $X_i = \mathfrak{r}$.

Take $0 \neq v_s \in (X_1 \cdot \ldots \cdot X_{s+1} )_s$. If $X_1 = \mathfrak{r}$, then $v_s \in \mathfrak{r} A_s$; if $X_{s+1} = \mathfrak{r}$, then $v_s \in A_s \mathfrak{r}$. Otherwise, we have some $0 < t < s+1$ such that $X_i = \mathfrak{r}$, and hence $v_s \in (J \cdot \mathfrak{r} \cdot J )_s = \sum _{i = 1}^s A_i \mathfrak{r} A_{s-i}$. In all cases we have $v_s \in \sum _{i = 0}^s A_i \mathfrak{r} A_{s-i}$. Therefore, $(\mathfrak {r} + J) ^{s+1} _s \subseteq \sum _{i = 0}^s A_i \mathfrak{r} A_{s-i}$. This proves the first statement.

The product of $\bar{A}$ is defined by the following rule. Take $a_s \in A_s$ and $a_t \in A_t$, $s, t \geqslant 0$, we define $\bar{a}_s \cdot \bar{a}_t = \overline{a_s a_t}$ to be the image of $a_sa_t$ in $\bar{A}_{s+t} = A_{s+t} / \mathfrak{R} _{s+t}$. Since by the first statement,
\begin{equation*}
\bar{A}_s = A_s / \sum _{i = 0}^s A_i \mathfrak{r} A_{s-i}, \quad \bar{A}_t = A_t / \sum _{i = 0}^t A_i \mathfrak{r} A_{t-i}, \quad \bar{A}_{s+t} = A_{s+t} / \sum _{i = 0} ^{s+t} A_i \mathfrak{r} A_{s+t-i},
\end{equation*}
it is enough to show that
\begin{equation*}
\sum _{i = 0}^s A_i \mathfrak{r} A_{s-i} \cdot A_t \subseteq \sum _{i = 0} ^{s+t} A_i \mathfrak{r} A_{s+t-i}, \quad A_s \cdot \sum _{i = 0}^t A_i \mathfrak{r} A_{t-i} \subseteq \sum _{i = 0} ^{s+t} A_i \mathfrak{r} A_{s+t-i}.
\end{equation*}
But these two inclusions hold obviously. Therefore, the product defined in this way gives rise to a well define product of $\bar{A}$ by bilinearity.
\end{proof}

Note that $\bar{A}_0 = A_0 / \mathfrak{r}$ is a semisimple algebra.

Given an arbitrary graded $A$-module $M = \bigoplus _{i \geqslant 0} M_i$, we can define a graded $\bar{A}$-module $\bar{M} =  M / \mathfrak{R} M = \bigoplus _{i \geqslant 0} M_i / (\mathfrak{R} M)_i$.

\begin{lemma}
Let $\bar{M}$ be as above. Then
\begin{enumerate}
\item $(\mathfrak {R} M)_n = \sum_{i = 0} ^n A_i \mathfrak{r} M_{n-i}$;
\item $\bar{M}$ is a well defined $\bar{A}$-module;
\item if $M$ is generated in degree 0, then
\begin{equation*}
(\mathfrak {R} M)_n = \sum_{i = 0} ^n A_i \mathfrak{r} M_{n-i} = \sum_{i = 0} ^n A_i \mathfrak{r} A_{n-i} M_0 =( (\mathfrak{r} + J) ^{n+1} M)_n.
\end{equation*}
\end{enumerate}
\end{lemma}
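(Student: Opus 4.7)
For part (1), I would unravel the definition $\mathfrak{R}M = A\mathfrak{r}AM$ degree by degree. Since $\mathfrak{r}\subseteq A_0$ is concentrated in degree $0$, any homogeneous element of $(\mathfrak{R}M)_n$ is a sum of products $a\cdot r\cdot b\cdot m$ with $a\in A_i$, $r\in\mathfrak{r}$, $b\in A_j$, $m\in M_k$, and $i+j+k=n$. Because $A_j M_k\subseteq M_{j+k}$, I may absorb $bm$ into a single homogeneous element $m'\in M_{n-i}$ and rewrite the element as $a\cdot r\cdot m'\in A_i\mathfrak{r}M_{n-i}$. The reverse inclusion $\sum_{i=0}^n A_i\mathfrak{r}M_{n-i}\subseteq(\mathfrak{R}M)_n$ is immediate from the definition of $\mathfrak{R}$.

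For part (2), the $\bar{A}$-action on $\bar{M}$ is forced to be $\bar{a}\cdot\bar{m}:=\overline{am}$. The only thing to check is well-definedness, which boils down to two containments: $\mathfrak{R}\cdot M\subseteq\mathfrak{R}M$ (immediate from the definition of $\mathfrak{R}M$) and $A\cdot\mathfrak{R}M\subseteq\mathfrak{R}M$ (immediate because $\mathfrak{R}M$ is by construction a left $A$-submodule of $M$). Associativity, distributivity, and grading compatibility are inherited from the $A$-module structure on $M$ and the grading on $\bar{A}$ established in Lemma 2.4.1; this step is entirely routine.

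For part (3), the hypothesis $M=AM_0$ gives $M_{n-i}=A_{n-i}M_0$ for every $0\leqslant i\leqslant n$. Substituting this into the identity from part (1) immediately yields the middle equality $(\mathfrak{R}M)_n=\sum_{i=0}^n A_i\mathfrak{r}A_{n-i}M_0$. For the third equality I would first observe that $\mathfrak{r}+J$ is a two-sided ideal of $A$: indeed, for $i\geqslant 1$ one has $A_i\mathfrak{r}\subseteq A_i\subseteq J$, while for $i=0$ one has $A_0\mathfrak{r}\subseteq\mathfrak{r}$, and symmetrically on the right. Therefore $(\mathfrak{r}+J)^{n+1}$ is a two-sided ideal of $A$, which gives the crucial simplification
\begin{equation*}
(\mathfrak{r}+J)^{n+1}M \;=\; (\mathfrak{r}+J)^{n+1}AM_0 \;=\; (\mathfrak{r}+J)^{n+1}M_0.
\end{equation*}
Since $M_0$ is concentrated in degree $0$, taking degree $n$ parts gives $\bigl((\mathfrak{r}+J)^{n+1}M\bigr)_n=\bigl((\mathfrak{r}+J)^{n+1}\bigr)_n\cdot M_0$, which by Lemma 2.4.1 equals $\mathfrak{R}_n\cdot M_0=\sum_{i=0}^n A_i\mathfrak{r}A_{n-i}M_0$, matching the middle expression.

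The only mildly delicate step, and the one I would flag as the main obstacle, is the bookkeeping in part (1): one must be careful that although $\mathfrak{R}=A\mathfrak{r}A$ involves \emph{four} factors when acting on $M$, the fact that $\mathfrak{r}$ lives in degree $0$ and $A_j M_k$ already lives in $M_{j+k}$ collapses the indexing to a single sum indexed by $i$. Once this is clean, parts (2) and (3) are formal consequences of part (1), Lemma 2.4.1, and the fact that $\mathfrak{r}+J$ is a two-sided ideal.
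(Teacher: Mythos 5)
Your proof is correct, and for parts (1) and (2) it is in substance the paper's argument: the paper proves (1) by expanding $(\mathfrak{R}M)_n=\sum_{s}\mathfrak{R}_s M_{n-s}$ via the formula $\mathfrak{R}_s=\sum_i A_i\mathfrak{r}A_{s-i}$ of Lemma 2.4.1 and absorbing $A_{s-i}M_{n-s}$ into $M_{n-i}$, which is exactly the bookkeeping you perform directly on $A\mathfrak{r}AM$; for (2) the paper checks the two degree-wise inclusions $\sum_{i}A_i\mathfrak{r}A_{s-i}\cdot M_t\subseteq\sum_{i}A_i\mathfrak{r}M_{s+t-i}$ and $A_s\cdot\sum_{i}A_i\mathfrak{r}M_{t-i}\subseteq\sum_{i}A_i\mathfrak{r}M_{s+t-i}$, which is just the concrete form of your remark that $\mathfrak{R}M$ is a graded $A$-submodule of $M$ annihilated by $\mathfrak{R}$ in the quotient. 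The genuine divergence is in the last equality of (3): the paper expands $((\mathfrak{r}+J)^{n+1}M)_n=\sum_{i=0}^n((\mathfrak{r}+J)^{n+1})_iA_{n-i}M_0$ and proves two inclusions, the nontrivial one by embedding $A_{n-i}$ into $((\mathfrak{r}+J)^{n-i})_{n-i}$ and using the power absorption $(\mathfrak{r}+J)^{2n+1-i}\subseteq(\mathfrak{r}+J)^{n+1}$, whereas you use that $\mathfrak{r}+J$ is a two-sided ideal and $1\in A$, so $(\mathfrak{r}+J)^{n+1}A=(\mathfrak{r}+J)^{n+1}$ and hence $(\mathfrak{r}+J)^{n+1}M=(\mathfrak{r}+J)^{n+1}M_0$, after which taking degree-$n$ parts and quoting Lemma 2.4.1 finishes the proof in one step. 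Your route is shorter and cleaner, avoiding the inclusion chain entirely, while both arguments ultimately rest on the same key identity $((\mathfrak{r}+J)^{n+1})_n=\sum_{i=0}^n A_i\mathfrak{r}A_{n-i}$ from Lemma 2.4.1; nothing is lost by your simplification.
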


\begin{proof}
Since $A_i \mathfrak{r} \in \mathfrak{R}$, we have  $A_i \mathfrak{r} M_{n-i} \subseteq (\mathfrak{R} M)_i$. Letting $i$ vary we get $(\mathfrak {R} M)_n \supseteq \sum_{i = 0} ^n A_i \mathfrak{r} M_{n-i}$. On the other hand,
\begin{equation*}
(\mathfrak{R} M)_n = \sum _{s=0} ^n \mathfrak{R} _s M_{n-s} = \sum _{s=0}^n \sum_{i=0}^s A_i \mathfrak{r} A_{s-i} M_{n-s} \subseteq \sum _{s=0}^n \sum_{i=0}^s A_i \mathfrak{r} M_{n-i} = \sum _{i=0}^n A_i \mathfrak{r} M_{n-i}.
\end{equation*}
This proves the first statement.

Now we prove the second statement. Take $a_s \in A_s$ and $v_t \in M_t$. We define $\bar{a}_s \cdot \bar{v}_t$ to be the image $\overline{a_s v_t}$ of $a_s v_t$ in $\bar{M}_{s+t}$. Since
\begin{equation*}
\bar{A}_s = A_s / \sum _{i = 0}^s A_i \mathfrak{r} A_{s-i}, \quad \bar{M}_{t} = M_t / \sum _{i = 0}^t A_i \mathfrak{r} M_{t-i}, \quad \bar{M}_{s+t} = M_{s+t} / \sum _{i = 0} ^{s+t} A_i \mathfrak{r} M_{s+t-i},
\end{equation*}
it suffices to show the following two inclusions:
\begin{equation*}
\sum _{i = 0}^s A_i \mathfrak{r} A_{s-i} \cdot M_t \subseteq \sum _{i=0}^{s+t} A_i \mathfrak{r} M_{s+t-i}, \quad A_s \cdot \sum _{i = 0}^t A_i \mathfrak{r} M_{t-i} \subseteq \sum _{i=0}^{s+t} A_i \mathfrak{r} M_{s+t-i}.
\end{equation*}
But these two inclusions are clearly true. Thus $\bar{M}$ is a well defined graded $\bar{A}$-module.

The first identity in the third statement has been established in (1). The second identity is clearly true since $M$ is generated in degree 0. So we only need to show the last identity. It is clear that
\begin{equation*}
((\mathfrak{r} + J) ^{n+1} M)_n = \sum_{i = 0} ^n ( (\mathfrak{r} + J) ^{n+1})_i M_{n-i} = \sum_{i = 0} ^n ( (\mathfrak{r} + J) ^{n+1})_i A_{n-i} M_0.
\end{equation*}
By taking $i = n$ and using the previous lemma, we have
\begin{equation*}
\sum_{i = 0} ^n A_i \mathfrak{r} A_{n-i} M_0 = ((\mathfrak{r} + J) ^{n+1}) _n M_0 \subseteq ( (\mathfrak{r} + J) ^{n+1} M)_n.
\end{equation*}
But on the other hand,
\begin{align*}
\sum_{i = 0} ^n ((\mathfrak{r} + J) ^{n+1})_i A_{n-i} M_0 & \subseteq \sum_{i = 0} ^n ((\mathfrak{r} + J) ^{n+1})_i \cdot ((\mathfrak{r} + J) ^{n-i}) _{n-i}  M_0\\
& \subseteq \sum_{i = 0} ^n ((\mathfrak{r} + J) ^{2n+1-i}) _n  M_0\\
& \subseteq ((\mathfrak{r} + J) ^{n+1}) _n  M_0\\
& = \sum_{i = 0} ^n A_s \mathfrak{r} A_{n-s} M_0
\end{align*}
since for $0 \leqslant i \leqslant n$ we always have $(\mathfrak{r} + J) ^{2n+1-i} \subseteq (\mathfrak{r} + J) ^{n+1}$. This finishes the proof.
\end{proof}

We use an example to show our construction.

\begin{example}
Let $A$ be the path algebra of the following quiver with relations: $\delta^2 = \theta^2 = 0$, $\theta \alpha = \alpha \delta = \beta$. Put $A_0 = \langle 1_x, 1_y, \delta, \theta \rangle$ and $A_1 = \langle \alpha, \beta \rangle$.
\begin{equation*}
\xymatrix{ x \ar@(ld,lu)[]|{\delta} \ar@<0.5ex>[r] ^{\alpha} \ar@<-0.5ex>[r] _{\beta} & y \ar@(rd,ru)[]|{\theta}}
\end{equation*}
The structures of graded indecomposable projective $A$-modules are:
\begin{equation*}
P_x = \begin{matrix} & x_0 & \\ x_0 & & y_1 \\ & y_1 & \end{matrix} \qquad P_y = \begin{matrix} y_0 \\ y_0 \end{matrix}.
\end{equation*}
We find $\mathfrak{r} = \langle \delta, \theta \rangle$, $\mathfrak{R} = \langle \delta, \theta, \beta \rangle$. Then the quotient algebra $\bar{A}$ is the path algebra of the following quiver with a natural grading:
\begin{equation*}
\xymatrix{ x \ar@(ld,lu)[]|{1_x} \ar[r] ^{\alpha} & y \ar@(rd,ru)[]|{1_y}}.
\end{equation*}

Let $M = \rad P_x = \langle \delta, \alpha, \beta \rangle$ which is a graded $A$-module. This module has the following structure and is not generated in degree 0:
\begin{equation*}
M = \begin{matrix} x_0 & & y_1 \\ & y_1 & \end{matrix}.
\end{equation*}
Then $\bar{M}_0 = M_0 / \mathfrak{r} M_0 = \langle \bar{\delta} \rangle \cong \bar{S}_x$, $\bar{M}_1 = M_1 / (\mathfrak{r} M_1 + A_1 \mathfrak{r} M_0) = \langle \bar{\alpha} \rangle \cong \bar{S}_y [1]$. Therefore, $\bar{M} \cong \bar{S}_x \oplus \bar{S}_y [1]$ is a direct sum of two simple $\bar{A}$-modules, and is not generated in degree 0 either.

We also note that since $M$ is not generated in degree 0, the identities in the third statement of the previous lemma are no long true. Indeed, we have:
\begin{align*}
& ((\mathfrak{r} + J)^2 M)_1 = \sum _{i = 0}^1 A_i \mathfrak{r} A_{1-i} M_0 = \mathfrak{r} A_1 M_0 + A_1 \mathfrak{r} M_0 = 0;\\
& (\mathfrak{R} M)_1 = \sum _{i = 0}^1 A_i \mathfrak{r} M_{1-i} = \mathfrak{r} M_1 + A_1 \mathfrak{r} M_0 = \langle \beta \rangle.
\end{align*}
\end{example}

The following proposition is crucial to prove the correspondence.

\begin{proposition}
A graded $A$-module $M$ is generated in degree 0 if and only if the corresponding graded $\bar{A}$-module $\bar{M}$ is generated in degree 0.
\end{proposition}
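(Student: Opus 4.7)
The plan is to dispose of the forward direction in one line and spend all the effort on the converse via induction plus a classical Nakayama argument for the base algebra $A_0$.

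\smallskip

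For the forward direction, if $M = A \cdot M_0$, then $M_n = A_n M_0$ for every $n \geqslant 0$, and projecting to the quotient $\bar{M} = M/\mathfrak{R} M$ shows $\bar{M}_n$ equals the image of $A_n M_0$, i.e. $\bar{A}_n \cdot \bar{M}_0$. So $\bar{M}$ is generated in degree $0$.

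\smallskip

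For the converse, I would prove by induction on $n$ that $M_n = A_n \cdot M_0$. The base case $n=0$ is tautological. For the inductive step, the hypothesis on $\bar{M}$ gives $\bar{M}_n = \bar{A}_n \cdot \bar{M}_0$, which unwinds to
\begin{equation*}
M_n \;=\; A_n M_0 + (\mathfrak{R} M)_n.
\end{equation*}
By Lemma 2.4.2(1), $(\mathfrak{R} M)_n = \sum_{i=0}^n A_i\, \mathfrak{r}\, M_{n-i}$. For each $i \geqslant 1$ the induction hypothesis gives $M_{n-i} = A_{n-i} M_0$, so $A_i\, \mathfrak{r}\, M_{n-i} \subseteq A_i A_0 A_{n-i} M_0 \subseteq A_n M_0$. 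The $i=0$ summand is $\mathfrak{r} M_n$. Substituting back,
\begin{equation*}
M_n \;=\; A_n M_0 + \mathfrak{r} M_n.
\end{equation*}

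\smallskip

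The final step is a Nakayama argument for the finite-dimensional algebra $A_0$. Since $A_0 \cdot A_n = A_n$, the subspace $A_n M_0 \subseteq M_n$ is stable under $A_0$, so $M_n/A_n M_0$ is a finitely generated (indeed finite-dimensional) $A_0$-module satisfying $M_n/A_n M_0 = \mathfrak{r}\,(M_n/A_n M_0)$. Nakayama's lemma applied to the Jacobson radical $\mathfrak{r}$ of $A_0$ forces $M_n = A_n M_0$, closing the induction. The only mild subtlety — and hence the main point to check — is confirming that $A_n M_0$ is $A_0$-stable so that the quotient $M_n/A_n M_0$ is a genuine $A_0$-module to which Nakayama applies; this follows immediately from $A_0 \cdot A_n \subseteq A_n$, so no real obstacle arises.
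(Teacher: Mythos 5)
Your proof is correct and takes essentially the same route as the paper's: induction on the degree $n$, using Lemma 2.4.2(1) and the induction hypothesis to reduce the identity $M_n = A_n M_0 + (\mathfrak{R}M)_n$ to $M_n = A_n M_0 + \mathfrak{r} M_n$, and then Nakayama's lemma over $A_0$ to conclude $M_n = A_n M_0$. The only difference is cosmetic: you argue with subspaces where the paper tracks a representative element $v_n - a_n v_0$, and your explicit check that $A_n M_0$ is $A_0$-stable just makes precise the step the paper summarizes as ``applying Nakayama's lemma to these $A_0$-modules.''
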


\begin{proof}
If $M$ is generated in degree 0, then $A_i M_0 = M_i$ for all $i \geqslant 0$. By our definition, it is clear that $\bar{A}_i \bar{M}_0 = \bar{M}_i$. That is, $\bar{M}$ is generated in degree 0.

Conversely, suppose that $\bar{M}$ is generated in degree 0. We want to show $A_i M_0 = M_i$ for $i \geqslant 0$. We use induction to prove this identity. Clearly, it holds for $i = 0$. So we suppose that it is true for all $0 \leqslant i < n$ and consider $M_n$.

Take $v_n \in M_n$ and consider its image $\bar{v}_n$ in $\bar{M}_n = M_n / \sum _{i = 0}^n A_i \mathfrak{r} M_{n-i}$. Since $\bar{M}$ is generated in degree 0, we can find some $a_n \in A_n$ and $v_0 \in M_0$ such that $\bar{v}_n = \bar{a}_n \bar{v}_0$. Thus $\overline {v_n - a_n v_0} = \bar{v}_n - \bar{a}_n \bar{v}_0 = 0$. This means
\begin{equation*}
v_n - a_n v_0 \in \sum_{i = 0}^n A_i \mathfrak{r} M_{n-i} = \mathfrak{r} M_n + \sum _{i=1}^n A_i \mathfrak{r} M_{n-i} = \mathfrak{r} M_n + \sum _{i=1}^n A_i \mathfrak{r} A_{n-i} M_0,
\end{equation*}
where the last identity follows from the induction hypothesis. But it is clear $A_n M_0 \supseteq \sum _{i=1}^n A_i \mathfrak{r} A_{n-i} M_0$, so $v_n - a_n v_0 \in \mathfrak{r} M_n + A_n M_0$. Consequently, $v_n \in \mathfrak{r} M_n + A_n M_0$. Since $v_n \in M_n$ is arbitrary, we have $M_n \subseteq \mathfrak{r} M_n + A_n M_0$. Applying Nakayama's lemma to these $A_0$-modules, we conclude that $M_n = A_n M_0$ as well. The conclusion then follows from induction.
\end{proof}

\begin{lemma}
Let $M$ be a graded $A$-module generated in degree 0. If $P$ is a grade projective cover of $M$, then $\bar{P}$ is a graded projective cover of $\bar{M}$.
\end{lemma}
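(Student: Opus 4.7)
The plan is to verify the two defining properties of a graded projective cover over $\bar{A}$: that $\bar{P}$ is a graded projective $\bar{A}$-module, and that the induced surjection $\bar{P} \twoheadrightarrow \bar{M}$ is essential, equivalently induces an isomorphism on graded tops. The first step is to compute the degree-$0$ parts. By Lemma 2.4.2(1), for any graded $A$-module $N$ we have $(\mathfrak{R}N)_0 = A_0 \mathfrak{r} N_0 = \mathfrak{r} N_0$, so $\bar{P}_0 = P_0 / \mathfrak{r} P_0$ and $\bar{M}_0 = M_0 / \mathfrak{r} M_0$. Moreover, the construction of graded projective covers in Lemma 2.1.1 shows that when $M$ is generated in degree $0$, the degree-$0$ component $P_0$ is precisely an $A_0$-projective cover of $M_0$; consequently the induced map $P_0 / \mathfrak{r} P_0 \to M_0 / \mathfrak{r} M_0$, i.e.\ $\bar{P}_0 \to \bar{M}_0$, is an isomorphism.

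Next I would verify projectivity and surjectivity. Because $M$ is generated in degree $0$, so is $P$, and Lemma 2.1.1 lets us write $P \cong \bigoplus_j A e_j$ with each $e_j$ a primitive idempotent of $A_0$ lifted to $A$. Applying the right-exact functor $N \mapsto N / \mathfrak{R} N$ yields a surjection $\bar{P} \twoheadrightarrow \bar{M}$ and an identification $\bar{P} \cong \bigoplus_j \bar{A}\bar{e}_j$, a direct sum of graded projective $\bar{A}$-modules generated in degree $0$.

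Finally I would establish essentiality using the semisimplicity of $\bar{A}_0$. Since $\bar{A}_0 = A_0 / \mathfrak{r}$ is semisimple and $\bar{J} := \bigoplus_{i \geq 1} \bar{A}_i$ is contained in the graded radical of $\bar{A}$ (by the argument of Lemma 2.1.1(1) applied to $\bar{A}$), the graded top of any graded $\bar{A}$-module generated in degree $0$ is simply its degree-$0$ component. By the preceding proposition, both $\bar{P}$ and $\bar{M}$ are generated in degree $0$, so the induced map on graded tops coincides with $\bar{P}_0 \to \bar{M}_0$, which was already shown to be an isomorphism. Hence the kernel of $\bar{P} \twoheadrightarrow \bar{M}$ lies in the graded radical of $\bar{P}$, so $\bar{P}$ is a graded projective cover of $\bar{M}$. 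The main point requiring care is keeping the ideal $\mathfrak{r} \subseteq A_0$ distinct from the ideal $J \subseteq A$ and remembering that $\mathfrak{r}$ acts as zero on every $\bar{A}$-module; once this is set up, the argument reduces to the standard fact that $A_0$-projective covers descend to projective covers after quotienting by $\mathfrak{r}$.
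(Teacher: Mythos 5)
Your proof is correct and follows essentially the same route as the paper's: reduce to degree $0$ via Proposition 2.4.4 and the identity $(\mathfrak{R}N)_0=\mathfrak{r}N_0$, observe $\bar{P}_0=P_0/\mathfrak{r}P_0\cong M_0/\mathfrak{r}M_0=\bar{M}_0$ because $P_0$ is an $A_0$-projective cover of $M_0$, and conclude the covering property from generation in degree $0$. You simply make explicit the projectivity of $\bar{P}$ and the essentiality argument that the paper leaves as "clear."
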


\begin{proof}
Clearly, $\bar{P}$ is a grade projective module. Both $\bar{P}$ and $\bar{M}$ are generated in degree 0 by the previous proposition. To show that $\bar{P}_0$ is a graded projective cover of $\bar{M}_0$, it suffices to show that $\bar{P}_0$ is a projective cover of $\bar{M}_0$ as $\bar{A}_0$-modules. But this is clearly true since $\bar{P}_0 = P_0 / \mathfrak{r} P_0 \cong M_0 / \mathfrak{r} M_0 \cong \bar{M}_0$.
\end{proof}

The procedure of sending $M$ to $\bar{M}$ preserves exact sequences of graded $A$-modules which are projective $A_0$-modules.

\begin{lemma}
Let $0 \rightarrow L \rightarrow M \rightarrow N \rightarrow 0$ be a short exact sequence of graded $A$-modules such that all terms are projective $A_0$-modules. Then the corresponding sequence $0 \rightarrow \bar{L} \rightarrow \bar{M} \rightarrow \bar{N} \rightarrow 0$ is also exact.
\end{lemma}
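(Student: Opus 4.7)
The functor $M \mapsto \bar{M} = \bar{A}\otimes_A M$ is right exact, being a base change along the surjection $A \twoheadrightarrow \bar{A}$, so the sequence $\bar{L}\to \bar{M}\to \bar{N}\to 0$ is automatically exact. What remains is injectivity of $\bar{L}\to\bar{M}$, equivalently the identity $\mathfrak{R}M \cap L = \mathfrak{R}L$ (the inclusion $\supseteq$ is immediate). The plan is to verify this identity degree by degree, showing $(\mathfrak{R}M)_n \cap L_n = (\mathfrak{R}L)_n$ by induction on $n$, using the explicit formula $(\mathfrak{R}M)_n = \sum_{i=0}^n A_i \mathfrak{r} M_{n-i}$ from Lemma 2.4.2 together with the recursive version $(\mathfrak{R}M)_n = \mathfrak{r}M_n + A_1 (\mathfrak{R}M)_{n-1}$, which is valid because $A$ is generated by $A_1$ over $A_0$.

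Base case $n=0$: since $N_0$ is $A_0$-projective, the short exact sequence $0\to L_0\to M_0\to N_0\to 0$ splits as an $A_0$-sequence, so $M_0 = L_0 \oplus N_0'$ for some $A_0$-submodule $N_0' \cong N_0$. Since $\mathfrak{r}\subseteq A_0$ respects the $A_0$-decomposition, $\mathfrak{r}M_0 = \mathfrak{r}L_0 \oplus \mathfrak{r}N_0'$, whence $\mathfrak{r}M_0 \cap L_0 = \mathfrak{r}L_0 = (\mathfrak{R}L)_0$.

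For the inductive step, I would fix a graded $A_0$-linear section $\sigma: N \to M$ of $\pi$ (available since each $N_n$ is $A_0$-projective) and use it to write $M = L \oplus \sigma(N)$ as graded $A_0$-modules. Then $\mathfrak{r} M = \mathfrak{r} L \oplus \sigma(\mathfrak{r}N)$, and since $AM=M$ we upgrade to $\mathfrak{R}M = A\mathfrak{r}M = \mathfrak{R}L + A\sigma(\mathfrak{r}N)$. Thus the claim reduces to showing $A\sigma(\mathfrak{r}N)_n \cap L_n \subseteq (\mathfrak{R}L)_n$. An element of this intersection can be written as $z = \sum_k a_k \sigma(x_k)$ with $a_k \in A_{i_k}$, $x_k \in (\mathfrak{r}N)_{n-i_k}$; applying $\pi$ gives $\sum_k a_k x_k = 0$ in $N_n$, so $z = \sum_k [a_k\sigma(x_k) - \sigma(a_k x_k)]$, a sum of "commutator" correction terms each lying in $L$.

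The main obstacle is to show that each such commutator lies in $(\mathfrak{R}L)_n$, since $\sigma$ is only $A_0$-linear and so the correction $a_k\sigma(x_k)-\sigma(a_k x_k)$ is a genuine measure of the failure of $A$-linearity. I would handle this by a secondary induction on $\deg(a_k)$: if $a_k = bc$ with $b\in A_1$ and $c\in A_{i_k-1}$, then
\[
a_k \sigma(x_k) - \sigma(a_k x_k) = b\bigl(c\sigma(x_k) - \sigma(c x_k)\bigr) + \bigl(b\sigma(cx_k) - \sigma(b\cdot cx_k)\bigr),
\]
reducing to the case $a_k \in A_1$. In that base case one exploits $x_k \in \mathfrak{r}N$, so $\sigma(x_k) = r_k\sigma(n_k) \in \mathfrak{r}M$ by $A_0$-linearity of $\sigma$; combining this with the primary induction hypothesis at degree $n-1$ (applied to the $L$-components arising from the splitting) should force the correction into $A_1\mathfrak{r}L \subseteq (\mathfrak{R}L)_n$, completing the argument.
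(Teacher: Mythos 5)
Your reduction is carried out correctly up to the last step: right-exactness is formal, exactness on the left is equivalent to $L\cap\mathfrak{R}M=\mathfrak{R}L$, the degree-$0$ case follows from the $A_0$-splitting, and using a graded $A_0$-linear section $\sigma$ every element of $L_n\cap(\mathfrak{R}M)_n$ is indeed of the form $u+\sum_k\bigl(a_k\sigma(x_k)-\sigma(a_kx_k)\bigr)$ with $u\in(\mathfrak{R}L)_n$, $a_k\in A_{i_k}$, $x_k\in\mathfrak{r}N_{n-i_k}$. But the step you flag as the "main obstacle" is a genuine gap, not a verification you postponed. First, your secondary induction does not close: in the identity $a\sigma(x)-\sigma(ax)=b\bigl(c\sigma(x)-\sigma(cx)\bigr)+\bigl(b\sigma(cx)-\sigma(bcx)\bigr)$ the second summand is a defect whose second argument $cx$ lies only in $A_{i-1}\mathfrak{r}N\subseteq\mathfrak{R}N$, no longer in $\mathfrak{r}N$, and the $A_0$-linearity of $\sigma$ gives no control whatsoever over $\sigma(\mathfrak{R}N)$; so the recursion never reduces to the base case you describe. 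Second, and decisively, the base-case containment itself ($a\sigma(x)-\sigma(ax)\in\mathfrak{R}L$ for $a\in A_1$, $x\in\mathfrak{r}N$) is not a consequence of the stated hypotheses, so no refinement of the commutator bookkeeping can succeed from them alone.

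Here is a concrete obstruction. Let $A_0=k[t]/(t^2)$, $\mathfrak{r}=(t)$, let $A_1=A_0\otimes_kA_0$ be the free $(A_0,A_0)$-bimodule on one generator $e$, and let $A=A_0\oplus A_1$ with $A_1\cdot A_1=0$; then $A$ is free over $A_0$ on both sides and $A_0$ even satisfies (S). Take $M$ with $M_0=A_0u$, $M_1=A_0v$ (both free of rank one), the $A_1$-action given by the left $A_0$-linear map $A_1\otimes_{A_0}M_0\to M_1$ sending $e\otimes u\mapsto 0$ and $(1\otimes t)\otimes u\mapsto v$, i.e. $e\cdot u=0$ and $e\cdot(tu)=v$; put $L=M_1$ and $N=M/L$. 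All three modules are free over $A_0$, and $\sigma(\bar u)=u$ is a graded $A_0$-splitting. The single correction term $e\sigma(t\bar u)-\sigma(e\cdot t\bar u)=e\cdot(tu)-0=v$ lies in $L\cap\mathfrak{R}M$ (indeed $v=(1\otimes t)\cdot u\in A_1\mathfrak{r}M_0$), but $\mathfrak{R}L=ktv$ does not contain $v$; consequently $\bar L\to\bar M$ kills the nonzero class of $v$ and the barred sequence is not exact. So the containment you need fails under the hypotheses as recorded in the statement, which also shows that the paper's own one-step passage from the degreewise $A_0$-splitting to exactness of $0\to\bar L_i\to\bar M_i\to\bar N_i\to 0$ is exactly the point at issue: any complete argument must use additional structure of the sequences to which the lemma is later applied (for instance that the middle term is a projective cover of $N$, so that $L\subseteq\operatorname{rad}M$ and the $A_1$-action cannot create new elements of $\mathfrak{R}M$ inside $L$ as above), and not merely degreewise projectivity over $A_0$.
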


\begin{proof}
For each $i \geqslant 0$, the given exact sequence induces a short exact sequence of $A_0$-modules $0 \rightarrow L_i \rightarrow M_i \rightarrow N_i \rightarrow 0$. Since all terms are projective $A_0$-modules, this sequence splits. Thus we get an exact sequence of $\bar{A}_0$-modules $0 \rightarrow \bar{L}_i \rightarrow \bar{M}_i \rightarrow \bar{N}_i \rightarrow 0$. Let the index $i$ vary and take direct sum. Then we get an exact sequence of graded $\bar{A}$-modules $0 \rightarrow \bar{L} \rightarrow \bar{M} \rightarrow \bar{N} \rightarrow 0$ as claimed.
\end{proof}

The condition that all terms are projective $A_0$-modules cannot be dropped, as shown by the following example.

\begin{example}
Let $A = A_0 = k[t]/(t^2)$ and $S$ be the simple module and consider a short exact sequence of graded $A$-modules $0 \rightarrow S \rightarrow A \rightarrow S \rightarrow 0$. We have $\bar{A} \cong k$. But the corresponding sequence $0 \rightarrow \bar{S} \rightarrow \bar{A} \rightarrow \bar{S} \rightarrow 0$ is not exact. Actually, the first map $\bar{S} \rightarrow \bar{A}$ is 0 since the image of $S$ is contained in $\mathfrak{r} A_0$.
\end{example}

Now we can prove the main result of this section.

\begin{theorem}
Let $A = \bigoplus _{i \geqslant 1} A_i$ be a locally finite graded algebra and $M$ be a graded $A$-module. Suppose that both $A$ and $M$ are projective $A_0$-modules. Then $M$ is generalized Koszul if and only if  the corresponding grade $\bar{A}$-module $\bar{M}$ is classical Koszul. In particular, $A$ is a generalized Koszul algebra if and only if $\bar{A}$ is a classical Koszul algebra.
\end{theorem}

\begin{proof}
Let
\begin{equation}
\xymatrix{ \ldots \ar[r] & P^2 \ar[r] & P^1 \ar[r] & P^0 \ar[r] & M \ar[r] & 0}
\end{equation}
be a minimal projective resolution of $M$. Note that all terms in this resolution and all syzygies are projective $A_0$-modules. By Lemmas 2.4.5 and 2.4.6, $\bar{M}$ has the following minimal projective resolution
\begin{equation}
\xymatrix{ \ldots \ar[r] & \overline {P^2} \ar[r] & \overline {P^1} \ar[r] & \overline {P^0} \ar[r] & \overline {M} \ar[r] & 0}.
\end{equation}
Moreover, this resolution is linear if and only if the resolution (2.4.1) is linear by Proposition 2.4.4. That is, $M$ is generalized Koszul if and only if $\bar{M}$ is classical Koszul. This proves the first statement. Applying it to the graded $A$-module $A_0$ we deduce the second statement immediately.
\end{proof}

If $A$ has the splitting property (S), we have a corresponding version for the previous theorem.

\begin{corollary}
Let $A = \bigoplus _{i \geqslant 1} A_i$ be a locally finite graded algebra satisfying the splitting property (S).
\begin{enumerate}
\item $A$ is a generalized Koszul algebra if and only it is a projective $A_0$-module and $\bar{A}$ is a classical Koszul algebra.
\item Suppose that $A$ is a projective $A_0$-module. A graded $A$-module $M$ is generalized Koszul if and only if it is a projective $A_0$-module and the corresponding graded $\bar{A}$-module $\bar{M}$ is classical Koszul.
\end{enumerate}
\end{corollary}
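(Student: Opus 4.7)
The plan is to reduce both parts of the corollary directly to Theorem 2.4.8, with the splitting property (S) used only to supply automatically the projectivity-over-$A_0$ hypotheses that appear as standing assumptions in that theorem.

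For part (1), I would first handle the ``if'' direction by applying Theorem 2.4.8 to the graded $A$-module $M = A_0$, which is trivially a projective $A_0$-module: under this specialization, classical Koszulity of $\bar A$ translates into generalized Koszulity of $A_0$ as a graded $A$-module, which by definition means $A$ is a generalized Koszul algebra. For the ``only if'' direction, suppose $A$ is a generalized Koszul algebra. The idea is to invoke Proposition 2.1.9 with $M = A$ to see that $J^i[-i]$ is a Koszul $A$-module for every $i \geqslant 0$, and then Corollary 2.1.5 forces every $A_i \cong (J^i[-i])_i$ to be a projective $A_0$-module. Summing over $i$ one obtains that $A$ itself is a projective $A_0$-module, and a second application of Theorem 2.4.8 yields classical Koszulity of $\bar A$.

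For part (2), under the standing hypothesis that $A$ is a projective $A_0$-module, the ``if'' direction is once again an immediate application of Theorem 2.4.8. For the converse, given a generalized Koszul $A$-module $M$, I would appeal to Corollary 2.1.14 (which requires precisely that $A$ be a projective $A_0$-module) to deduce that $M$ is automatically a projective $A_0$-module. Both hypotheses of Theorem 2.4.8 are then in force, and it delivers the classical Koszulity of $\bar M$.

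The substantive work---the equivalence between generalized Koszulity of $M$ and classical Koszulity of $\bar M$---has already been surmounted in Theorem 2.4.8, so there is no real obstacle remaining. The only subtlety is verifying that, under the splitting property (S), (generalized) Koszulity of $A$ (respectively of $M$) forces projectivity over $A_0$; this is supplied by Proposition 2.1.9 together with Corollary 2.1.5 in the first case, and by Corollary 2.1.14 in the second.
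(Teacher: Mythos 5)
Your proof is correct and is essentially the paper's own argument: the paper likewise reduces both parts to Theorem 2.4.8, citing the remark after Proposition 2.1.9 (via Corollary 2.1.5) to get that a generalized Koszul algebra is a projective $A_0$-module, and Corollary 2.1.14 to get that a generalized Koszul module over such an $A$ is a projective $A_0$-module. Your explicit specialization of Theorem 2.4.8 to $M = A_0$ is just the ``in particular'' clause already stated there, so nothing differs in substance.
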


\begin{proof}
If $A$ is a generalized Koszul algebra, then it is a projective $A_0$-module. Moreover, $\bar{A}$ is a classical Koszul algebra by the previous theorem. The converse statement also follows from the previous theorem. This proves the first statement.

If $A$ is a projective $A_0$-module and $M$ is generalized Koszul, then $M$ is a projective $A_0$-module. Moreover, $\bar{M}$ is a classical Koszul module by the previous theorem. The converse statement follows from the previous theorem as well.
\end{proof}

We cannot drop the condition that $A$ is a projective $A_0$-module in the above theorem, as shown by the following example.

\begin{example}
Let $A$ be the path algebra of the following quiver with relations: $\delta^2 = \theta^2 = 0$, $\theta \alpha = \alpha \delta = 0$. Put $A_0 = \langle 1_x, 1_y, \delta, \theta \rangle$ and $A_1 = \langle \alpha \rangle$.
\begin{equation*}
\xymatrix{ x \ar@(ld,lu)[]|{\delta} \ar[r] ^{\alpha} & y \ar@(rd,ru)[]|{\theta}}
\end{equation*}
The structures of graded indecomposable projective $A$-modules are:
\begin{equation*}
P_x = \begin{matrix} & x_0 & \\ x_0 & & y_1 \end{matrix} \qquad P_y = \begin{matrix} y_0 \\ y_0 \end{matrix}.
\end{equation*}
We find $\mathfrak{r} = \langle \delta, \theta \rangle = \mathfrak{R}$. Then the quotient algebra $\bar{A}$ is the path algebra of the following quiver:
\begin{equation*}
\xymatrix{ x \ar@(ld,lu)[]|{1_x} \ar[r] ^{\alpha} & y \ar@(rd,ru)[]|{1_y}}.
\end{equation*}

Let $\Delta_x = P_x / S_y = \langle \delta, 1_x \rangle$ which is a graded $A$-module concentrated in degree 0. The first syzygy $\Omega (\Delta_x) \cong S_y[1]$ is generated in degree 1, but the second syzygy $\Omega^2 (\Delta_x) \cong S_y[1]$ is not generated in degree 2. Therefore, $\Delta_x$ is not generalized Koszul. However, $\bar{\Delta}_x \cong \bar{S}_x$ is obviously a classical Koszul $\bar{A}$-module. Moreover, we can check that $A$ is not a generalized Koszul algebra, but $\bar{A}$ is a classical Koszul algebra.
\end{example} 
\chapter{Applications to directed categories}
\label{Applications to directed categories}

In this chapter we apply the general Koszul theory developed in the previous chapter to a type of structure called \textit{directed categories}. We first exploit their stratification properties, and show that there is a close relation between the stratification theory and the generalized Koszul theory. We also prove another correspondence between the generalized Koszul theory and the classical theory for directed categories similar to the one described in Section 2.4.

All directed categories $\mathcal{A}$ we consider in this chapter are \textit{locally finite $k$-linear categories} with finitely many objects. That is, for $x, y \in \Ob \mathcal{A}$, the set of morphisms $\mathcal{A} (x,y)$ is a finite-dimensional $k$-vector space, and the composition of morphisms is $k$-linear. To simplify the technical part, we suppose furthermore that $\mathcal{A}$ is \textit{skeletal}, i.e., $x \cong y$ implies $x = y$ for $x,y \in \Ob \mathcal{A}$.

\section{Stratification properties of directed categories}

We start with some preliminary background from stratification theory, for which the reader can find more details in \cite{Cline, Dlab1, Dlab2, Webb1, Webb3, Xi}. Let $A$ be a basic finite-dimensional $k$-algebra with a chosen set of orthogonal primitive idempotents $\{ e_{\lambda} \} _{\lambda \in \Lambda}$ indexed by a set $\Lambda$ such that $\sum _{\lambda \in \Lambda} e_{\lambda} = 1$. Let $P_{\lambda} = A e_{\lambda}$ and $S_{\lambda} = P_{\lambda} / \rad P_{\lambda}$. According to \cite{Cline}, $A$ is \textit{standardly stratified} with respect to a fixed preorder $\preccurlyeq$ if there exist modules $\Delta_{\lambda}$, $\lambda \in \Lambda$, such that the following conditions hold:
\begin{enumerate}
\item the composition factor multiplicity $[\Delta_{\lambda} : S_{\mu}] = 0$ whenever $\mu \npreceq \lambda$;
and
\item for every $\lambda \in \Lambda$ there is a short exact sequence $0 \rightarrow K_{\lambda} \rightarrow P_{\lambda} \rightarrow \Delta_{\lambda} \rightarrow 0$ such that $K_{\lambda}$ has a filtration with factors $\Delta_{\mu}$ where $\mu \succ \lambda$.
\end{enumerate}
The \textit{standard module} $\Delta_{\lambda}$ is the largest quotient of $P_{\lambda}$ with only composition factors $S_{\mu}$ such that $\mu \preccurlyeq \lambda$. Similarly, the \textit{proper standard module} $\bar{ \Delta}_{\lambda}$ can be defined as the largest quotient of $P_{\lambda}$ such that all composition factors $S_{\mu}$ satisfy $\mu \succ \lambda$ except for one copy of $S_{\lambda}$. \textit{Costandard module} $\nabla _{\lambda}$ and \textit{proper costandard module} $\bar {\nabla} _{\lambda}$ are defined dually.

In the case that $A$ is standardly stratified, standard modules $\Delta_{\lambda}$ have the following description:
\begin{equation*}
\Delta_{\lambda} = P_{\lambda} / \sum _{\mu \succ \lambda} \text{tr} _{P_{\mu}} (P_{\lambda}),
\end{equation*}
where tr$_{P_{\mu}} (P_{\lambda})$ is the trace of $P_{\mu}$ in $P _{\lambda}$ (\cite{Dlab1, Webb3}). Let $\Delta$ be the direct sum of all standard modules and $\mathcal{F} (\Delta)$ be the full subcategory of $A$-mod such that each object in $\mathcal{F} (\Delta)$ has a filtration by standard modules. Similarly we define $\mathcal{F} (\overline {\Delta})$, $\mathcal{F} (\nabla)$ and $\mathcal{F} (\overline {\nabla})$. It is well known that $\Delta$ has finite projective dimension, and $\mathcal{F} (\Delta)$ is closed under direct summands, extensions, kernels of epimorphisms, but it is not closed under cokernels of monomorphisms in general. For more details, see \cite{Cline, Dlab1, Dlab2, Webb3}.

We construct a relation $\sim$ on the preordered set $\Lambda$ as follows: for $\lambda, \mu \in \Lambda$, define $\lambda \sim \mu$ if $\lambda \preccurlyeq \mu$ and $\mu \preccurlyeq \lambda$. The reader can check that $\sim$ is an equivalence relation. Let $\{ \Lambda_i \} _{i=1}^n$ be the set of all equivalence classes and define $E_i = \sum _{\lambda \in \Lambda_i} e_{\lambda}$. Then we define a partial order $\leqslant$ on the set $\{ E_i \} _{i =1}^n$ in the following way: $E_i \leqslant E_j$ if and only if there are $\lambda \in \Lambda_i$ and $\mu \in \Lambda_j$ such that $\lambda \preccurlyeq \mu$. The reader can check that this partial order is well defined. In the case that this induced partial order $\leqslant$ is a linear order, $A$ is standardly stratified if and only if $_AA \in \mathcal{F} (\Delta)$ according to Theorem 3.4 in \cite{Webb3}, and the algebra is \textit{properly stratified} if $_AA \in \mathcal{F} (\Delta) \cap \mathcal{F} (\overline {\Delta})$. If the endomorphism algebra of each standard module is one-dimensional, then $A$ is called \textit{quasi-hereditary}.

Now we introduce directed categories and consider their stratification properties.

\begin{definition}
A locally finite $k$-linear category $\mathcal{A}$ is a directed category if there is a partial order $\leqslant$ on $\Ob \mathcal{A}$ such that $x \leqslant y$ whenever $\mathcal{A} (x, y) \neq 0$.
\end{definition}

Correspondingly, we define \textit{directed algebras}.

\begin{definition}
A finite-dimensional algebra $A$ is called a directed algebra with respect to a partial order $\leqslant$ on a chosen set of orthogonal idempotents $\{ e_i \} _{i=1}^n$ with $\sum _{i=1}^n  e_i = 1$ if $e_j \leqslant e_i$ whenever $\Hom _A (A e_i, A e_j) \cong e_i A e_j \neq 0$.
\end{definition}

Notice that in the above definition we do not require the idempotents $e_i$ to be primitive. Clearly, every algebra $A$ is always directed with respect to the trivial set $\{1\}$.

We point out that although a directed category $\mathcal{A}$ is defined with respect to a partial order $\leqslant$, we can extend this partial order to a linear order with respect to which $\mathcal{A}$ is still directed. Indeed, let $O_1$ be the set of maximal objects in $\mathcal{A}$ with respect to $\leqslant$, $O_2$ be the set of maximal objects in $\Ob \mathcal{A} \setminus O_1$ with respect to $\leqslant$, and so on. In this way we get $\Ob \mathcal{A} = \bigsqcup _{i=1}^n O_i$. Then define arbitrary linear orders $\leqslant_i$ on $O_i$, $1 \leqslant i \leqslant n$. Take $x, y \in \Ob \mathcal{A}$ and suppose that $x \in O_i$ and $y \in O_j$, $1 \leqslant i, j \leqslant n$. Define $x \tilde {<} y$ if $i < j$ or $i = j$ but $x <_i y$. Then $\tilde{\leqslant}$ is a linear order extending $\leqslant$, and $\mathcal{A}$ is still directed with respect to $\tilde{\leqslant}$. Therefore, without loss of generality we assume that $\leqslant$ is linear.

Gabriel's construction \cite{Bautista} gives a bijective correspondence between directed categories and directed algebras. Let $A$ be a directed algebra with respect to a linear order $\leqslant$ on a chosen set of orthogonal idempotents $\{e_i\} _{i=1}^n$ such that $\sum _{i=1}^n  e_i = 1$. Then we can construct a directed category $\mathcal{A}$ in the following way: $\Ob \mathcal{A} = \{e_i\} _{i=1}^n$ with the same linear order, and $\mathcal{A} (e_i, e_j) = e_j A e_i \cong \Hom _A (A e_j, A e_i)$. The reader can check that $\mathcal{A}$ is indeed a directed category. We call $\mathcal{A}$ the \textit{associated category} of $A$.

Conversely, given a directed category $\mathcal{A}$ with a linear order $\leqslant$ on $\Ob \mathcal{A}$, we obtain an algebra $A$ which is directed with respect to $\leqslant$ on the chosen set of orthogonal idempotents $\{ 1_x \} _{x \in \Ob \mathcal{A}}$, namely, $1_x \leqslant 1_y$ if and only if $x \leqslant y$. As a $k$-vector space, $A = \bigoplus _{x, y \in \Ob \mathcal{A}} \mathcal{A} (x, y)$. For two morphisms $\alpha: x \rightarrow y$ and $\beta: z \rightarrow w$, the product $\beta \cdot \alpha = 0$ if $y \neq z$, otherwise it is the composite morphism $\beta \alpha$. Since every vector in $A$ is a linear combination of morphisms in $\mathcal{A}$, the multiplication of morphisms can be extended linearly to a well defined product in $A$. The reader can check that the algebra $A$ we get in this way is indeed a directed algebra, which is called the \textit{associated algebra} of $\mathcal{A}$.

It is well known that $A$-mod is equivalent to the category of finite-dimensional \textit{$k$-linear representations} of $\mathcal{A}$, where a $k$-linear representation of $\mathcal{A}$ is defined as a $k$-linear functor from $\mathcal{A}$ to the category of finite-dimensional vector spaces. If one of $A$ and $\mathcal{A}$ is graded, then the other one can be graded as well. Moreover, $A$-gmod is equivalent to the category of finite-dimensional graded $k$-linear representations of $\mathcal{A}$. For more details, see \cite{Mazorchuk1}. Because of these facts, we may identify a directed category $\mathcal{A}$ with its associated algebra $A$ and abuse notation and terminologies. For example, we may say idempotents in $\mathcal{A}$, ideals of $\mathcal{A}$ and so on. We hope this will not cause confusions to the reader and point out that all results in previous chapter can be applied to graded directed categories.

Let $\mathcal{A}$ be a directed category. The morphism space of $\mathcal{A}$ can be decomposed as the direct sum of $\mathcal{A} 1_x$ with $x$ ranging over all objects, where by $\mathcal{A} 1_x$ we denote the vector space formed by all morphisms with source $x$. Therefore, each $\mathcal{A} 1_x$ is a projective $\mathcal{A}$-module, and every indecomposable projective $\mathcal{A}$-module is isomorphic to a direct summand of a certain $\mathcal{A} 1_x$. The isomorphism classes of simple $\mathcal{A} (x, x)$-modules with $x$ varying within $\Ob \mathcal{A}$ give rise to isomorphism classes of simple $\mathcal{A}$-modules. Explicitly, let $V$ be a simple $\mathcal{A} (x, x)$-module for some object $x$, we can construct a simple $\mathcal{A}$-module $S$: $S(x) = V$ and $S(y) = 0$ for $y \neq x$. These results are well known for finite EI categories, see \cite{Webb2}.

Our next task is to translate some results on finite EI categories in Section 2 of \cite{Webb3} to directed categories. First, we want to show that every directed category is stratified with respect to the given linear order.

\begin{proposition}
Let $\mathcal{D}$ and $\mathcal{E}$ be full subcategories of a directed category $\mathcal{A}$ such that $\Ob \mathcal{D} \cup \Ob \mathcal{E} = \Ob \mathcal{A}$, $\Ob \mathcal{D} \cap \Ob \mathcal{E} = \emptyset$, and $\mathcal{A} (x,y) = 0$ for $x \in \Ob \mathcal{D}$ and $y \in \Ob \mathcal{E}$. Let $e = \sum _{x \in \Ob \mathcal{D}} 1_x$ and $I = \mathcal{A} e \mathcal{A}$. Then $I$ is a stratified ideal of $\mathcal{A}$.
\end{proposition}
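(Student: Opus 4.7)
My plan is to translate the hypothesis on hom-spaces into a simple idempotent identity inside $\mathcal{A}$ (viewed as its associated algebra) and then verify the two defining conditions for $I = \mathcal{A} e \mathcal{A}$ to be a stratified ideal in the Cline--Parshall--Scott sense (cf.\ \cite{Cline}). First I would set $f = 1 - e = \sum_{y \in \Ob \mathcal{E}} 1_y$ and observe that for each $x \in \Ob \mathcal{D}$ and $y \in \Ob \mathcal{E}$ the block $1_y \mathcal{A} 1_x = \mathcal{A}(x,y)$ vanishes by hypothesis; summing over all such pairs yields the idempotent relation $f \mathcal{A} e = 0$.

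The decisive consequence I would extract from this is the equality $\mathcal{A} e = (e+f)\mathcal{A} e = e\mathcal{A} e$, which collapses the two-sided ideal to
\[
I = \mathcal{A} e \mathcal{A} = (e\mathcal{A} e)\cdot \mathcal{A} = e\mathcal{A}
\]
by a short computation using $e \in e\mathcal{A} e$. This is the structural heart of the argument: the directedness assumption annihilates the ``wrong'' off-diagonal block, leaving only morphisms ending in $\mathcal{D}$ inside $I$.

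With this identification in hand, checking the stratifying conditions is essentially formal. The canonical multiplication map $\mu \colon \mathcal{A} e \otimes_{e \mathcal{A} e} e\mathcal{A} \to \mathcal{A} e \mathcal{A}$ becomes, after replacing $\mathcal{A} e$ by $e\mathcal{A} e$, the canonical isomorphism $e\mathcal{A} e \otimes_{e\mathcal{A} e} e\mathcal{A} \cong e\mathcal{A} = \mathcal{A} e \mathcal{A}$, hence is an isomorphism. Moreover, since $\mathcal{A} e = e\mathcal{A} e$ is free (indeed of rank one) as a right $e\mathcal{A} e$-module, $\operatorname{Tor}^{e \mathcal{A} e}_n(\mathcal{A} e, e\mathcal{A}) = 0$ holds trivially for every $n \geqslant 1$.

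I do not expect any real obstacle in this proposition; the whole argument is essentially an unpacking of definitions. The only conceptual point worth emphasising is that the \emph{one-sided} vanishing $f\mathcal{A} e = 0$ coming from directedness (rather than a full two-sided triangular decomposition) is already enough to force both stratifying conditions, with everything ultimately reducing to the identity $\mathcal{A} e = e\mathcal{A} e$.
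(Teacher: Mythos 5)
Your proposal is correct and follows essentially the same route as the paper: the key identity $\mathcal{A}e = e\mathcal{A}e$ (derived from the vanishing of $\mathcal{A}(x,y)$ for $x \in \Ob\mathcal{D}$, $y \in \Ob\mathcal{E}$) forces $I = \mathcal{A}e\mathcal{A} = e\mathcal{A}$, makes the multiplication map $\mathcal{A}e \otimes_{e\mathcal{A}e} e\mathcal{A} \to \mathcal{A}e\mathcal{A}$ an isomorphism, and gives the vanishing of $\operatorname{Tor}^{e\mathcal{A}e}_n(\mathcal{A}e, e\mathcal{A})$ for $n \geqslant 1$ since $\mathcal{A}e$ is projective (indeed free of rank one) over $e\mathcal{A}e$. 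The only cosmetic difference is that you package the hypothesis as the idempotent relation $f\mathcal{A}e = 0$ with $f = 1 - e$, which is just a compact way of stating what the paper says in terms of sources and targets of morphisms.
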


\begin{proof}
The proof is similar to that of Proposition 2.2 in \cite{Webb2}. Clearly $I$ is idempotent. Notice that $\mathcal{A} e$ is the space constituted of all morphisms with sources contained in $\Ob \mathcal{D}$ and $e \mathcal{A} e$ is the space constituted of all morphisms with both sources and targets contained in $\Ob \mathcal{D}$. Since $\mathcal{A} (x, y) = 0$ for $x \in \Ob \mathcal{D}$ and $y \in \Ob \mathcal{E}$, these two spaces coincide, i.e., $\mathcal{A} e = e \mathcal{A} e$. In particular, $\mathcal{A} e$ is projective $e \mathcal{A} e$-module, here $e \mathcal{A} e$ is an algebra for which the associated directed category is precisely $\mathcal{D}$. Therefore, Tor$ _n ^{e \mathcal{A} e} (\mathcal{A}e, e\mathcal{A} ) = 0$ for $n \geqslant 1$. Furthermore,
\begin{equation*}
\mathcal{A} e \otimes _{e \mathcal{A} e} e \mathcal{A} = e \mathcal{A} e \otimes _{e \mathcal{A} e} e \mathcal{A} \cong e \mathcal{A}.
\end{equation*}
We claim $e \mathcal{A} = \mathcal{A} e \mathcal{A}$. Clearly, $e \mathcal{A} \subseteq \mathcal{A} e \mathcal{A}$. On the other hand, since we just proved $\mathcal{A} e = e \mathcal{A} e$, we have $\mathcal{A} e \mathcal{A} = e \mathcal{A} e \mathcal{A} \subseteq e \mathcal{A}$. Therefore, $e \mathcal{A} = \mathcal{A} e \mathcal{A}$ as we claimed. In conclusion, $I$ is indeed a stratified ideal of $\mathcal{A}$.
\end{proof}

\begin{corollary}
Every directed category $\mathcal{A}$ is stratified with respect to the given linear order on $\Ob \mathcal{A}$.
\end{corollary}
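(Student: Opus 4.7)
The plan is to argue by induction on $n = |\Ob \mathcal{A}|$, repeatedly peeling off the maximum object and invoking the previous proposition. The base case $n = 1$ is immediate, since a one-object directed category is trivially stratified via the chain $0 \subset \mathcal{A}$.

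For the inductive step, I would let $x \in \Ob \mathcal{A}$ be the maximum element with respect to the given linear order, and set $\mathcal{D}$ to be the full subcategory on $\{x\}$ and $\mathcal{E}$ the full subcategory on $\Ob \mathcal{A} \setminus \{x\}$. Maximality of $x$ forces $\mathcal{A}(x, y) = 0$ for every $y \in \Ob \mathcal{E}$, since otherwise the directedness axiom would yield $x \leqslant y$ with $x \neq y$, contradicting maximality. The hypotheses of the previous proposition are therefore satisfied, producing a stratified ideal $I = \mathcal{A} 1_x \mathcal{A}$ of $\mathcal{A}$.

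I would then identify the quotient $\mathcal{A}/I$ with the associated algebra of $\mathcal{E}$. The reasoning used in the proof of the previous proposition gives $\mathcal{A} 1_x = 1_x \mathcal{A} 1_x$, and hence $I = \mathcal{A} 1_x \mathcal{A} = 1_x \mathcal{A}$ consists precisely of the morphisms of $\mathcal{A}$ with target $x$. Combined with the vector-space decomposition $\mathcal{A} = \bigoplus_{y, z \in \Ob \mathcal{A}} \mathcal{A}(y, z)$, this identifies $\mathcal{A}/I$ with $\bigoplus_{y, z \neq x} \mathcal{A}(y, z)$, which is exactly the associated algebra of $\mathcal{E}$. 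Since $\mathcal{E}$ is a directed category on $n - 1$ objects with respect to the restricted linear order, the induction hypothesis supplies a chain of stratified ideals $0 = \bar{I}_0 \subset \bar{I}_1 \subset \cdots \subset \bar{I}_{n-1} = \mathcal{A}/I$. Lifting this chain along the surjection $\mathcal{A} \twoheadrightarrow \mathcal{A}/I$ and prepending $0 \subset I$ yields the desired filtration of $\mathcal{A}$ by stratified ideals realizing the stratification with respect to the given linear order.

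No serious obstacle is expected, since the key geometric content is already packaged in the previous proposition. The only items demanding genuine care are the identification $\mathcal{A}/I \cong \mathcal{E}$ (needed to invoke the induction hypothesis) and the verification that preimages of stratified ideals under $\mathcal{A} \twoheadrightarrow \mathcal{A}/I$ remain stratified ideals in $\mathcal{A}$; both are routine bookkeeping from the definitions.
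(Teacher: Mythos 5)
Your proposal is correct and is essentially the paper's argument: the paper builds the nested sets $S_1 \subset S_2 \subset \cdots$ of top objects and takes the chain of ideals $\mathcal{A}e_i\mathcal{A}$, each a stratified ideal by the same previous proposition, which amounts to peeling off maximal objects exactly as you do. Your inductive formulation merely makes explicit the identification $\mathcal{A}/I \cong e'\mathcal{A}e'$ (the associated algebra of the full subcategory on the remaining objects) and the lifting of the chain, bookkeeping the paper leaves implicit.
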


\begin{proof}
The linear order $\leqslant$ on $\Ob \mathcal{A}$ gives a filtration on $\Ob \mathcal{A}$ in the following way: let $S_1$ be a set containing the unique maximal object in $\Ob \mathcal{A}$, $S_2$ is formed by adding the unique maximal object in $\Ob \mathcal{A} \setminus S_1$ into $S_1$, $S_3$ is formed by adding the maximal object in $\Ob \mathcal{A} \setminus S_2$ into $S_2$, and so on. Consider the full subcategories $\mathcal{D}_i$ formed by $S_i$ and let $e_i = \sum _{x \in S_i} 1_x$. Then the ideals $\mathcal{A} e_i \mathcal{A}$ give a stratification of $\mathcal{A}$ by the previous proposition.
\end{proof}

Now we want to describe standard modules and give a characterization of standardly stratified directed categories $\mathcal{A}$ with respect to the particular preorders induced by the given linear orders. Before doing that, we need to define this preorder on a complete set of primitive idempotents of $\mathcal{A}$ (or precisely, primitive idempotents of the associated algebra $A$). For every object $x$, $\mathcal{A} (x, x) = 1_x \mathcal{A} 1_x$ is a finite-dimensional $k$-algebra, so we can choose a complete set of orthogonal primitive idempotents $E_x = \{e_{\lambda}\} _{\lambda \in \Lambda_x}$ with $\sum_{\lambda \in \Lambda_x} e_{\lambda} = 1_x$. In this way we get a complete set of orthogonal primitive idempotents $\bigsqcup _{x \in \Ob \mathcal{A}} E_x$. The linear order $\leqslant$ on $\Ob \mathcal{A}$ can be used to define a preordered set $(\Lambda, \preceq)$ indexing all these primitive idempotents, namely for $e_{\lambda} \in E_x$ and $e_{\mu} \in E_y$, $e_{\lambda} \preceq e_{\mu}$ if and only if $x \leqslant y$. We can check that $\preceq$ is indeed a preorder. We denote $e_{\lambda} \prec e_{\mu}$ if $e_{\lambda} \preceq e_{\mu}$ but $e_{\mu} \npreceq e_{\lambda}$ for $\lambda, \mu \in \Lambda$. Let $P_{\lambda} = \mathcal{A} e_{\lambda}$. Therefore, the preordered set $(\Lambda, \preceq)$ also index all indecomposable projective representations of $\mathcal{A}$ up to isomorphism.

The following proposition gives a description of standard $\mathcal{A}$-modules with respect to the above preorder.

\begin{proposition}
The standard $\mathcal{A}$-module $\Delta_{\lambda}$ is only supported on $x$ with value $\Delta_{\lambda} (x) \cong 1_x \mathcal{A} e_{\lambda}$, where $x \in \Ob \mathcal{A}$ and $e_{\lambda} \in E_x$.
\end{proposition}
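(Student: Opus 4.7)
My plan is to unpack the trace description $\Delta_{\lambda} = P_{\lambda} / \sum_{\mu \succ \lambda} \text{tr}_{P_{\mu}} (P_{\lambda})$ recalled earlier in the section, evaluating everything object by object and exploiting the directedness of $\mathcal{A}$ together with the standard identification $\Hom_{\mathcal{A}}(P_{\mu}, P_{\lambda}) \cong e_{\mu} \mathcal{A} e_{\lambda}$ via $\phi \mapsto \phi(e_{\mu})$.

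First I would observe that, with $e_{\lambda} \in E_x$, the indecomposable projective $P_{\lambda} = \mathcal{A} e_{\lambda}$ evaluated at $z \in \Ob \mathcal{A}$ is $P_{\lambda}(z) = 1_z \mathcal{A} e_{\lambda} = \mathcal{A}(x, z) e_{\lambda}$, which by the definition of a directed category vanishes unless $z \geqslant x$. Symmetrically, for $e_{\mu} \in E_y$ with $\mu \succ \lambda$, the relation $y > x$ in the linear order on $\Ob \mathcal{A}$ forces $P_{\mu}(x) = 1_x \mathcal{A} e_{\mu} = \mathcal{A}(y, x) e_{\mu} = 0$. Since any $\mathcal{A}$-linear map $\phi : P_{\mu} \to P_{\lambda}$ acts componentwise, the image $\phi(P_{\mu})$ is zero at $x$, and summing over all such $\phi$ and all $\mu \succ \lambda$ yields $\bigl(\sum_{\mu \succ \lambda} \text{tr}_{P_{\mu}} (P_{\lambda})\bigr)(x) = 0$. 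Therefore $\Delta_{\lambda}(x) = P_{\lambda}(x) = 1_x \mathcal{A} e_{\lambda}$, as desired.

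To obtain the vanishing $\Delta_{\lambda}(z) = 0$ for $z \neq x$, it suffices, by the support computation for $P_{\lambda}$ above, to treat $z > x$. Given $\alpha \in P_{\lambda}(z) = 1_z \mathcal{A} e_{\lambda}$, I decompose the identity morphism as $1_z = \sum_{\nu \in \Lambda_z} e_{\nu}$ using the chosen primitive idempotents in $\mathcal{A}(z, z)$, so that $\alpha = \sum_{\nu} e_{\nu} \alpha$. Each summand $e_{\nu} \alpha$ lies in $e_{\nu} \mathcal{A} e_{\lambda}$ and corresponds under the Hom--trace identification to an $\mathcal{A}$-linear map $\phi_{\nu} : P_{\nu} \to P_{\lambda}$ with $\phi_{\nu}(e_{\nu}) = e_{\nu}\alpha$, so $e_{\nu}\alpha \in \text{tr}_{P_{\nu}} (P_{\lambda})$. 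Because $z > x$, every $\nu \in \Lambda_z$ satisfies $\nu \succ \lambda$, so $\alpha$ lies in $\sum_{\mu \succ \lambda} \text{tr}_{P_{\mu}} (P_{\lambda})$, giving $\Delta_{\lambda}(z) = 0$.

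The only delicate point is the bookkeeping around the preorder $\preccurlyeq$ on $\Lambda$: because $\preccurlyeq$ is constant on each $E_x$, one must confirm that the decomposition of $1_z$ only introduces indices $\nu$ with $\nu \succ \lambda$ strictly, which is exactly the content of $z > x$. Beyond that, the argument is a direct application of the directedness of $\mathcal{A}$ together with the Hom--trace calculation for projectives, and no further input is needed.
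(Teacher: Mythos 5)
Your proof is correct and follows essentially the same route as the paper: evaluate $P_{\lambda}=\mathcal{A}e_{\lambda}$ objectwise, use directedness to restrict the support to objects $z\geqslant x$, and for $z>x$ use the identification $\Hom_{\mathcal{A}}(P_{\nu},P_{\lambda})\cong e_{\nu}\mathcal{A}e_{\lambda}$ to see that the trace sum fills up all of $1_z\mathcal{A}e_{\lambda}$ (the paper packages the idempotents at $z$ into $\mathcal{A}1_z$, while you decompose $1_z$ into the primitive $e_{\nu}$'s, which is the same computation). Your explicit check that the trace sum vanishes at $x$ is a point the paper dismisses with ``clearly,'' and it is handled correctly.
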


\begin{proof}
Recall standard modules have the following description:
\begin{equation*}
\Delta_{\lambda} = P_{\lambda} / \sum_{\mu \succ \lambda, \text{ } \mu \in \Lambda} \text{tr} _{P_{\mu}} (P _{\lambda}),
\end{equation*}
where tr$_{P_{\mu}} (P_{\lambda})$ is the trace of $P_{\mu}$ in $P_{\lambda}$.

Let us first analyze the structure of $P_{\lambda} = \mathcal{A} e_{\lambda}$. Since $e_{\lambda} \in E_x$, $P_{\lambda}$ is a direct summand of $\mathcal{A} 1_x$. The value of $\mathcal{A} 1_x$ on an arbitrary object $y$ is $1_y \mathcal{A} 1_x$, the space of all morphisms from $x$ to $y$. Therefore, the value of $P_{\lambda}$ on $y$ is $1_y \mathcal{A} e_{\lambda}$. Since $\mathcal{A}$ is directed with respect to $\leqslant$, if $x > y$, then there is no nontrivial morphisms from $x$ to $y$. Therefore, $1_y \mathcal{A} 1_x$ and hence $1_y \mathcal{A} e_{\lambda}$ are 0. We deduce immediately that $\Delta_{\lambda}$ is only supported on objects $y$ satisfying $x \leqslant y$.

Let $y$ be an object such that $y > x$. Then every $e_{\mu} \in E_y$ satisfies $e_{\mu} \succ e_{\lambda}$. Since $\sum _{e_{\mu} \in E_y} e_{\mu} = 1_y$, by taking the sum we find that tr$ _{\mathcal{A} 1_y} (P_{\lambda})$ is contained in $\sum_{\mu \succ \lambda, \text{ } \mu \in \Lambda} \text{tr} _{P_{\mu}} (P_{\lambda})$. The value on $y$ of tr$ _{\mathcal{A} 1_y} (\mathcal{A} 1_x)$ is $1_y \mathcal{A} 1_x$. Since $P_{\lambda} = \mathcal{A} e_{\lambda}$ is a direct summand of $\mathcal{A} 1_x$, the value on $y$ of tr$ _{\mathcal{A} 1_y} (P_{\lambda})$ is $1_y \mathcal{A} e_{\lambda}$. Consequently, the value of $\sum_{\mu \succ \lambda, \text{ } \mu \in \Lambda} \text{tr} _{P_{\mu}} (P_{\lambda})$ on $y$ contains $1_y \mathcal{A} e_{\lambda}$, which equals the value of $P_{\lambda}$ on $y$. Therefore, the value of $\sum_{\mu \succ \lambda, \text{ } \mu \in \Lambda} \text{tr} _{P_{\mu}} (P_{\lambda})$ on $y$ is precisely $1_y \mathcal{A} e_{\lambda}$, so the value of $\Delta_{\lambda}$ on $y$ is 0.

We have proved that $\Delta_{\lambda}$ is only supported on $x$. Clearly, its value on $x$ is $1_x \mathcal{A} e_{\lambda}$.
\end{proof}

This proposition tells us that standard modules are exactly indecomposable direct summands of $\bigoplus _{x \in \Ob \mathcal{A}} \mathcal{A} (x, x)$ (viewed as a $\mathcal{A}$-module by identifying it with the quotient module $\bigoplus _{x, y \in \Ob \mathcal{A}} \mathcal{A} (x, y) / \bigoplus _{x \neq y} \mathcal{A} (x, y)$).

To simplify the expression, we stick to the following convention frow now on:\\

\textbf{Convention:} When we say that a directed category is standardly stratified, we always refer to the preorder $\preceq$ induced by the given linear order $\leqslant$ on the set of objects.\\

The next theorem characterizes standardly stratified directed categories.

\begin{theorem}
Let $\mathcal{A}$ be a directed category. Then $\mathcal{A}$ is standardly stratified if and only if the morphism space $\mathcal{A} (x,y)$ is a projective $\mathcal{A} (y,y)$-module for every pair of objects $x, y \in \Ob \mathcal{A}$. In particular, if $\mathcal{A}$ is standardly stratified, then $\bigoplus _{x \in \Ob \mathcal{A}}  \mathcal{A} (x, x)$ as a $\mathcal{A}$-module has finite projective dimension.
\end{theorem}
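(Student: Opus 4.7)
My plan is to use Proposition 3.1.4, which identifies the standard module $\Delta_{\mu}$ for $\mu \in E_y$ with the $\mathcal{A}$-module supported only on $y$ with value $\mathcal{A}(y,y) e_{\mu}$, an indecomposable projective $\mathcal{A}(y,y)$-module. Under the convention (combined with Theorem~3.4 of \cite{Webb3}), $\mathcal{A}$ is standardly stratified if and only if each projective $P_{\lambda} = \mathcal{A} e_{\lambda}$ admits a $\Delta$-filtration. Since the evaluation functor $M \mapsto M(y)$ is exact, a $\Delta$-filtration of $P_{\lambda}$ (with $\lambda \in E_x$) restricts at every $y$ to a filtration of $P_{\lambda}(y) = \mathcal{A}(x, y) e_{\lambda}$ whose successive quotients are either zero or indecomposable projective $\mathcal{A}(y, y)$-modules (the latter exactly for the factors $\Delta_{\mu}$ with $\mu \in E_y$). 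As every extension of an $\mathcal{A}(y,y)$-module by an indecomposable projective splits, $\mathcal{A}(x, y) e_{\lambda}$ is projective over $\mathcal{A}(y,y)$; summing over $\lambda \in E_x$ then gives projectivity of $\mathcal{A}(x, y)$, proving the ``only if'' direction.

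For the converse I will establish the following stronger lemma by induction on the number of objects $y$ with $M(y) \neq 0$: assuming all $\mathcal{A}(x, y)$ are projective over $\mathcal{A}(y, y)$, every $\mathcal{A}$-module $M$ whose values $M(y)$ are projective $\mathcal{A}(y, y)$-modules lies in $\mathcal{F}(\Delta)$. Let $y_0$ be minimal in the support of $M$, choose a decomposition $M(y_0) = \bigoplus_{\mu \in E_{y_0}} \bigl( \mathcal{A}(y_0, y_0) e_{\mu} \bigr)^{n_{\mu}}$ into indecomposable projectives, and set $N := \bigoplus_{\mu} \Delta_{\mu}^{n_{\mu}}$. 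I will define a surjection $\phi : M \twoheadrightarrow N$ equal to the chosen isomorphism on the $y_0$-component and zero elsewhere. This is a genuine $\mathcal{A}$-module morphism because $M$ vanishes below $y_0$ (so no structure map from a lower value forces a compatibility at $y_0$) and $N$ vanishes above $y_0$ (so every outgoing compatibility is automatic). The kernel $M'$ is supported strictly above $y_0$, still satisfies the pointwise projectivity hypothesis, and hence lies in $\mathcal{F}(\Delta)$ by induction; combined with $N \in \mathcal{F}(\Delta)$ this gives $M \in \mathcal{F}(\Delta)$. Applying this to $M = {}_{\mathcal{A}}\mathcal{A}$, where each $P_{\lambda}(y) = \mathcal{A}(x, y) e_{\lambda}$ is a direct summand of $\mathcal{A}(x, y)$ and therefore projective over $\mathcal{A}(y, y)$, completes the ``if'' direction.

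For the final assertion, Proposition 3.1.4 identifies $\bigoplus_{x \in \Ob \mathcal{A}} \mathcal{A}(x, x)$ with $\bigoplus_{\lambda} \Delta_{\lambda}$ as $\mathcal{A}$-modules, so it suffices to show each $\Delta_{\lambda}$ has finite projective dimension. I will argue by descending induction in the preorder $\preceq$: for maximal $\lambda$ one has $\Delta_{\lambda} = P_{\lambda}$, which is projective; otherwise the kernel $K_{\lambda}$ in $0 \to K_{\lambda} \to P_{\lambda} \to \Delta_{\lambda} \to 0$ admits a $\Delta$-filtration with factors strictly above $\lambda$, each of finite projective dimension by induction, so $\mathrm{pd}(\Delta_{\lambda}) \leq \mathrm{pd}(K_{\lambda}) + 1 < \infty$. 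The main obstacle is the ``if'' direction---specifically, verifying that $\phi$ in the lemma is really a well-defined $\mathcal{A}$-module homomorphism; this is precisely what forces the choice of $y_0$ to be minimal in the support of $M$ and is the technical heart of the argument.
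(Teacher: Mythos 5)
Your proposal is correct and takes essentially the same approach as the paper: the ``only if'' direction evaluates a $\Delta$-filtration of each $P_{\lambda}$ at $y$ (the evaluated filtration has projective $\mathcal{A}(y,y)$-modules as successive \emph{quotients}, which is why each step splits -- phrase the splitting that way, since extensions with projective submodule need not split), and the ``if'' direction assembles pointwise projectivity into a $\Delta$-filtration, with your induction on the support, peeling off a minimal object $y_0$, making explicit the filtration step the paper only sketches. The final claim is handled the same way; you merely reprove by descending induction the well-known finiteness of $\pd \Delta_{\lambda}$ that the paper cites, and the standard-module description you invoke is Proposition 3.1.5, not 3.1.4.
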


\begin{proof}
Suppose that $\mathcal{A}$ is standardly stratified and take two arbitrary objects $x$ and $y$ in $\mathcal{A}$. Since 0 is regarded as a projective module, we can assume $\mathcal{A} (x, y) \neq 0$ and want to show that it is a projective $\mathcal{A} (y, y)$-module. Consider the projective $\mathcal{A}$-module $\mathcal{A} 1_x$, which has a filtration with factors standard modules. Since each standard module is only supported on one object, the value of $\mathcal{A} 1_x$ on $y$ is exactly the sum of these standard modules with non-zero values on $y$. This sum is direct since standard modules supported on $y$ are non-comparable with respect to the preorder and therefore there is no extension between them (or because by the previous proposition each of these standard modules is projective viewed as a $\mathcal{A} (y,y)$-module). Therefore, the value of $\mathcal{A} 1_x$ on $y$ is a projective $\mathcal{A} (y,y)$-module. But the value of $\mathcal{A} 1_x$ on $y$ is precisely $\mathcal{A} (x,y)$, so the only if part is proved.

Conversely, let $P_{\lambda} = \mathcal{A} e_{\lambda}$ be an indecomposable projective $\mathcal{A}$-module. Its value on an arbitrary object $y$ is $1_y \mathcal{A} e_{\lambda} \cong 1_y \mathcal{A} 1_x$ which is either 0 or isomorphic to a direct summand of $\mathcal{A} (x, y)$. If $\mathcal{A} (x, y)$ is a projective $\mathcal{A} (y, y)$-module, then the value of $P_{\lambda}$ on $y$ is a projective $\mathcal{A} (y, y)$-module as well. This value can be expressed as a direct sum of standard modules supported on $y$ since standard modules are exactly indecomposable direct summands of $\bigoplus _{x \in \Ob \mathcal{A}} \mathcal{A} (x, x)$. Therefore we can get a filtration of $P_{\lambda}$ by standard modules.

It is well known that the projective dimension of a standard module is finite if the algebra is standardly stratified. Since $\bigoplus _{x \in \Ob \mathcal{A}}  \mathcal{A} (x, x)$ as a $\mathcal{A}$-module is a direct sum of standard modules, the last statement follows from this fact immediately.
\end{proof}

If the directed category $\mathcal{A}$ is standardly stratified, then $\bigoplus _{x \in \Ob \mathcal{A}} \mathcal{A} (x, x)$ has finite projective dimension. We will prove later that if $\mathcal{A}$ is the $k$-linearization of a finite EI category, the converse statement is also true.

\section{Koszul properties of directed categories}

From now on we suppose that $\mathcal{A}$ is a \textit{graded} category, that is, there is a grading on the morphisms in $\mathcal{A}$ such that $\mathcal{A}_i \cdot \mathcal{A}_j \subseteq \mathcal{A} _{i+j}$, where we denote the subspace constituted of all morphisms with grade $i$ by $\mathcal{A}_i$. Furthermore, $\mathcal{A}$ is supposed to satisfy the following condition: $\mathcal{A}_i \cdot \mathcal{A}_j = \mathcal{A} _{i+j}$. Every vector in $\mathcal{A}_i$ is a linear combination of morphisms with degree $i$. Clearly, $\mathcal{A}_i = \bigoplus _{x, y \in \Ob \mathcal{A}} \mathcal{A} (x, y)_i$. We always suppose $\mathcal{A}_i = 0$ for $i <0$ and $\mathcal{A}_0 = \bigoplus _{x \in \Ob \mathcal{A}} \mathcal{A} (x, x)$. This is equivalent to saying that $\mathcal{A}_0$ is the direct sum of all standard $\mathcal{A}$-modules by Proposition 3.1.5.

Given a graded directed category $\mathcal{A}$, we can apply the functor $E = \Ext _{\mathcal{A}} ^{\ast} (-, \mathcal{A}_0)$ to construct the \textit{Yoneda category} $E (\mathcal{A}_0)$: $\Ob E(\mathcal{A} _0) = \Ob \mathcal{A}$ and $E (\mathcal{A} _0) (x, y) _n = \Ext ^n _{\mathcal{A}} (\mathcal{A}(x, x), \mathcal{A} (y,y))$. This is precisely the categorical version of the Yoneda algebra. By the correspondence between graded algebras and graded categories, we can define \textit{Koszul categories, quasi-Koszul categories, quadratic categories, Koszul modules, quasi-Koszul modules} for graded categories as well. We do not repeat these definitions here but emphasize that all results described in the previous sections can be applied to graded categories.

A corollary of Theorem 3.1.6 relates stratification theory to Koszul theory in the context of directed categories.

\begin{theorem}
Let $\mathcal{A}$ be a graded directed category and suppose that $\mathcal{A}_0$ coincides with $\bigoplus _{x \in \Ob \mathcal{A}} \mathcal{A} (x, x)$ and satisfies the splitting property (S). Then:
\begin{enumerate}
\item $\mathcal{A}$ is standardly stratified if and only if $\mathcal{A}$ is a projective $\mathcal{A}_0$-module.
\item $\mathcal{A}$ is a Koszul category if and only if $\mathcal{A}$ is standardly stratified and quasi-Koszul.
\item If $\mathcal{A}$ is standardly stratified, then a graded $\mathcal{A}$-module $M$ generated in degree 0 is Koszul if and only if it is a quasi-Koszul $\mathcal{A}$-module and a projective $\mathcal{A}_0$-module.
\end{enumerate}
\end{theorem}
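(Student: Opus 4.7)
The plan is to derive all three parts by combining two already-proven results: Theorem 3.1.6, which characterises standard stratification of a directed category by projectivity of every morphism space $\mathcal{A}(x,y)$ over the endomorphism algebra $\mathcal{A}(y,y)$, and Theorem 2.1.16, which equates Koszulness of an $\mathcal{A}$-module (over a graded algebra projective over its degree-$0$ part) with being quasi-Koszul together with projectivity over $\mathcal{A}_0$. The bridge between them is the observation that, since $\mathcal{A}_0 = \bigoplus_x \mathcal{A}(x,x)$ is a direct product of blocks, projectivity of a left $\mathcal{A}_0$-module splits into projectivity block by block.

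For (1), I would view $\mathcal{A}$ as a left $\mathcal{A}_0$-module via the orthogonal decomposition $1 = \sum_x 1_x$. Each summand $1_y \mathcal{A} = \bigoplus_x \mathcal{A}(x,y)$ is a module over the single block $\mathcal{A}(y,y)$ (the remaining blocks annihilating it), so $\mathcal{A}$ is a projective $\mathcal{A}_0$-module if and only if every $\mathcal{A}(x,y)$ is a projective $\mathcal{A}(y,y)$-module. Theorem 3.1.6 then converts this latter condition into standard stratification.

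Part (2) follows by applying Theorem 2.1.16 to $M = \mathcal{A}_0$. The backward direction requires $\mathcal{A}$ projective over $\mathcal{A}_0$, which (1) supplies from standard stratification; then $\mathcal{A}_0$ is trivially self-projective and quasi-Koszul by hypothesis, so Theorem 2.1.16 promotes it to a Koszul $\mathcal{A}$-module, meaning $\mathcal{A}$ is a Koszul category. The forward direction uses that a Koszul category is automatically $\mathcal{A}_0$-projective (the remark after Proposition 2.1.9), whence (1) yields standard stratification, while Theorem 2.1.16 immediately extracts quasi-Koszulness of $\mathcal{A}_0$. Part (3) is then a direct invocation of Theorem 2.1.16: standard stratification together with (1) furnishes the standing hypothesis that $\mathcal{A}$ be projective over $\mathcal{A}_0$, and the theorem gives the equivalence for $M$ verbatim.

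The only step demanding genuine attention is the block-decoupling argument in (1): I would make explicit that the $\{1_y\}$ are central orthogonal idempotents inside $\mathcal{A}_0$, so that $\mathcal{A} = \bigoplus_y 1_y\mathcal{A}$ is a decomposition of $\mathcal{A}_0$-modules, and then invoke the standard fact that a module over a finite product of algebras is projective if and only if each component is projective over the corresponding factor. After this, every remaining step is a formal application of previously established machinery, and no new homological computation is required.
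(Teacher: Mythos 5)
Your proposal is correct and follows essentially the same route as the paper: part (1) via the block decomposition of $\mathcal{A}_0$ combined with Theorem 3.1.6, and parts (2) and (3) by feeding the resulting projectivity of $\mathcal{A}$ over $\mathcal{A}_0$ into Theorem 2.1.16 (together with the observation after Proposition 2.1.9 that a Koszul algebra is projective over its degree-$0$ part). Your explicit appeal to Theorem 2.1.16 for (3) is, if anything, a cleaner citation than the paper's, since it needs only standard stratification rather than Koszulness of $\mathcal{A}$.
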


\begin{proof}
Take an arbitrary pair of objects $x, y \in \Ob \mathcal{A}$. If $\mathcal{A}$ is standardly stratified, then $\mathcal{A} (x,y)$ is either 0 (a zero projective module) or a projective $\mathcal{A} (y,y)$-module by Theorem 3.1.6. Notice that each $\mathcal{A} (x, y)_i$ is a $\mathcal{A} (y,y)$-module since $\mathcal{A} (y,y) \subseteq \mathcal{A}_0$, and we have the decomposition $\mathcal{A} (x,y) = \bigoplus _{i \geqslant 0} \mathcal{A} (x,y)_i$. Therefore, $\mathcal{A}(x,y)_i$ is a projective $\mathcal{A} (y,y)$-module, and hence a projective $\mathcal{A}_0$-module since only the block $\mathcal{A} (y,y)$ of $\mathcal{A}_0$ acts on $\mathcal{A}(x,y)_i$ nontrivially. In conclusion, $\mathcal{A}_i = \bigoplus _{ x, y \in \Ob \mathcal{A}} \mathcal{A} (x,y)_i$ is a projective $\mathcal{A}_0$-module. Conversely, if $\mathcal{A}_i$ are projective $\mathcal{A}_0$-modules for all $i \geqslant 0$, then each $\mathcal{A} (x, y)_i$, and hence $\mathcal{A} (x,y)$ are projective $\mathcal{A} (y,y)$-modules, so $\mathcal{A}$ is standardly stratified again by the previous theorem. The first statement is proved.

We know that $\mathcal{A}$ is Koszul if and only if it is quasi-Koszul and $\mathcal{A}$ is a projective $\mathcal{A}_0$-module. Then (2) follows immediately from (1).

The last part is an immediate result of Corollary 2.1.18.
\end{proof}

To each graded category $\mathcal{A}$ we can associate an \textit{associated quiver} $Q$ in the following way: the vertices of $Q$ are exactly the objects in $\mathcal{A}$; if $\mathcal{A}(x, y)_1 \neq 0$, then we put an arrow from $x$ to $y$ with $x, y$ ranging over all objects in $\mathcal{A}$. Clearly, the associated quiver of $\mathcal{A}$ is completely determined by $\mathcal{A}_0$ and $\mathcal{A}_1$. There is no loop in $Q$ since $\mathcal{A} (x, x)_1 =0$ for each $x \in \Ob \mathcal{A}$.

\begin{proposition}
Let $\mathcal{A}$ be a graded category with $\mathcal{A}_0 = \bigoplus _{x \in \Ob \mathcal{A}} \mathcal{A} (x, x)$ and $Q$ be its associated quiver. Then $\mathcal{A}$ is a directed category if and only if $Q$ is an acyclic quiver.
\end{proposition}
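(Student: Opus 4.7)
The plan is to use the generation condition $\mathcal{A}_i \cdot \mathcal{A}_j = \mathcal{A}_{i+j}$ together with the identification $\mathcal{A}_0 = \bigoplus_{x \in \Ob\mathcal{A}} \mathcal{A}(x,x)$ to translate the existence of nonzero morphism spaces into the existence of directed paths in $Q$, and vice versa.

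For the forward direction, suppose $\mathcal{A}$ is directed with respect to a partial order $\leqslant$ on $\Ob \mathcal{A}$. I would observe that an arrow $x \to y$ in $Q$ by definition requires $\mathcal{A}(x,y)_1 \neq 0$, hence $\mathcal{A}(x,y) \neq 0$, which forces $x \leqslant y$. Since loops are excluded (because $\mathcal{A}(x,x)_1 \subseteq \mathcal{A}_0 \cap \mathcal{A}_1 = 0$), every arrow in $Q$ strictly increases the partial order, so no directed cycle can exist and $Q$ is acyclic.

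For the backward direction, assume $Q$ is acyclic and define a relation $\leqslant$ on $\Ob \mathcal{A}$ by declaring $x \leqslant y$ if and only if there is a directed path from $x$ to $y$ in $Q$ (the empty path handles reflexivity). Transitivity is immediate by concatenation, and antisymmetry is exactly the acyclicity of $Q$, so $\leqslant$ is a partial order. It remains to verify that $\mathcal{A}(x,y) \neq 0$ with $x \neq y$ implies $x \leqslant y$. The assumption $\mathcal{A}_0 = \bigoplus_{z} \mathcal{A}(z,z)$ forces $\mathcal{A}(x,y)_0 = 0$ whenever $x \neq y$, so any nonzero morphism from $x$ to $y$ lies in some $\mathcal{A}(x,y)_n$ with $n \geqslant 1$. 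The key step is to exploit the generation hypothesis: iterating $\mathcal{A}_1 \cdot \mathcal{A}_{n-1} = \mathcal{A}_n$ gives the decomposition
\begin{equation*}
\mathcal{A}(x,y)_n = \sum_{z_1, \ldots, z_{n-1} \in \Ob\mathcal{A}} \mathcal{A}(z_{n-1}, y)_1 \cdot \mathcal{A}(z_{n-2}, z_{n-1})_1 \cdots \mathcal{A}(x, z_1)_1.
\end{equation*}
If $\mathcal{A}(x,y)_n \neq 0$, at least one summand on the right is nonzero, which in particular requires each factor $\mathcal{A}(z_{i-1}, z_i)_1$ to be nonzero (setting $z_0 = x$, $z_n = y$). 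Each such factor supplies an arrow $z_{i-1} \to z_i$ in $Q$, and the concatenation is a directed path from $x$ to $y$, yielding $x \leqslant y$ as required.

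The main obstacle is the backward implication, and within it the step of extracting a genuine path in $Q$ from the mere nonvanishing of $\mathcal{A}(x,y)_n$; this is precisely where the hypothesis that $\mathcal{A}$ is generated in degrees $0$ and $1$ is essential, since without it one could have nonzero higher-degree morphisms that do not factor through any chain of degree-$1$ morphisms, and the argument would collapse.
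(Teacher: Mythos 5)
Your proposal is correct and follows essentially the same route as the paper: the forward direction reads arrows as strict inequalities in the given partial order, and the backward direction defines the order by reachability in $Q$ and uses the generation condition $\mathcal{A}_1 \cdot \mathcal{A}_{n-1} = \mathcal{A}_n$ to factor any nonzero degree-$n$ morphism space through chains of nonzero degree-$1$ pieces, hence through a directed path. The only cosmetic difference is that you argue with the morphism spaces $\mathcal{A}(z_{i-1},z_i)_1$ directly, while the paper writes a nonzero homogeneous morphism as a linear combination of composites of degree-$1$ morphisms; the substance is identical.
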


\begin{proof}
Assume that $\mathcal{A}$ is directed. By the definition, there is a partial order $\leqslant$ on $\Ob \mathcal{A}$ such that $\mathcal{A} (x,y) \neq 0$ only if $x \leqslant y$ for $x, y \in \Ob \mathcal{A}$. In particular, $\mathcal{A}(x, y)_1 \neq 0$ only if $x < y$. Therefore, an arrow $x \rightarrow y$ exists in $Q$ only if $x < y$. If there is an oriented cycle
\begin{equation*}
x_1 \rightarrow x_2 \rightarrow \ldots \rightarrow x_n \rightarrow x_1
\end{equation*}
in $Q$, then $x_1 < x_2 < \ldots < x_n < x_1$, which is impossible. Hence $Q$ must be acyclic.\

Conversely, if $Q$ is acyclic, we then define $x \leqslant y$ if and only if there is a directed path (including trivial path with the same source and target) from $x$ to $y$ in $Q$ for $x, y \in \Ob \mathcal{A}$. This gives rise to a well defined partial order on $\Ob \mathcal{A}$. We claim that $\mathcal{A}$ is a directed category with respect to this partial order, i.e., $\mathcal{A} (x, y) \neq 0$ implies $x \leqslant y$. Since it holds trivially for $x = y$, we assume that $x \neq y$. Take a morphism $0 \neq \alpha \in \mathcal{A} (x, y)$ with a degree $n$ (this is possible since $\mathcal{A} (x,y)$ is a non-zero graded space). Since $\mathcal{A}_n = \mathcal{A}_1 \cdot \ldots \cdot \mathcal{A}_1$, we can express $\alpha$ as a linear combination of composite morphisms
\begin{equation*}
\xymatrix {x=x_0 \ar[r] ^-{\alpha_1} & x_1 \ar[r] ^-{\alpha_2} & \ldots \ar[r] ^-{\alpha_n} & x_n = y}
\end{equation*}
with each $\alpha_i \in \mathcal{A}_1$ and all $x_i$ distinct (since endomorphisms in $\mathcal{A}$ are contained in $\mathcal{A}_0$). Therefore, there is a nontrivial directed path
\begin{equation*}
x=x_0 \rightarrow  x_1 \rightarrow x_2 \rightarrow \ldots \rightarrow x_n=y
\end{equation*}
in $Q$, and we have $x < x_1 < x_2 < \ldots < y$, which proves our claim.
\end{proof}

Let $\mathcal{A}$ be a graded directed category. We define the \textit{free directed category} $\hat {\mathcal{A}}$ of $\mathcal{A}$ by using the associated quiver $Q$. Explicitly, $\hat {\mathcal{A}}$ has the same objects and endomorphisms as $\mathcal{A}$. For each pair of objects $x \neq y$ we construct $\hat{ \mathcal {C}} (x, y)$ as follows. let $\Gamma_{x,y}$ be the set of all paths from $x$ to $y$ in $Q$. In the case that $\Gamma_{x,y} = \emptyset$ we let $\hat{ \mathcal {C}} (x, y) = 0$. Otherwise, take an arbitrary path $\gamma \in \Gamma_{x,y}$ pictured as below
\begin{equation*}
x \rightarrow x_1 \rightarrow x_2 \rightarrow \ldots \rightarrow x_{n-1} \rightarrow y,
\end{equation*}
and define $(x, y)_{\gamma}$ to be
\begin{equation*}
\mathcal{A}(x_{n-1}, y)_1 \otimes _{\mathcal{A} (x_{n-1}, x_{n-1})} \mathcal{A}(x_{n-2}, x_{n-1})_1 \otimes _{\mathcal{A} (x_{n-2}, x_{n-2})} \ldots \otimes _{\mathcal{A} (x_1, x_1)} \mathcal{A}(x, x_1)_1.
\end{equation*}
Finally, we define
\begin{equation*}
\hat {\mathcal{A}} (x, y) = \bigoplus _{\gamma \in \Gamma_{x, y}} (x, y)_{\gamma}.
\end{equation*}
It is clear that $\hat{\mathcal{A}}$ is also a graded category with $\hat{\mathcal{A}}_0 = \mathcal{A}_0$ and $\hat{\mathcal{A}}_1 = \mathcal{A}_1$. Therefore, $\hat{\mathcal{A}}$ has the same associated quiver as that of $\mathcal{A}$ and is also a directed category by the above lemma. Actually, the associated graded algebra $\hat{A}$ is precisely the tensor algebra generated by $\mathcal{A}_0$ and $\mathcal{A}_1$, and $\mathcal{A}$ is a quotient category of $\hat {\mathcal{A}}$. We will prove later that a graded category $\mathcal{A}$ is standardly stratified for all linear orders if and only if it is a free directed category and $\mathcal{A}_1$ is a projective $\mathcal{A}_0$-module. The reader can also check that if two grade categories $\mathcal{A}$ and $\mathcal{D}$ have the same degree 0 and degree 1 components, then one is a directed category if and only if so is the other.

\begin{theorem}
Let $\mathcal{A}$ be a directed Koszul category and suppose that $\mathcal{A}_0$ satisfies the splitting condition (S), then the Yoneda category $\mathcal{E} = E(\mathcal{A}_0)$ is also directed and Koszul.
\end{theorem}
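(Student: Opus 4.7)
The plan is to establish the two conclusions---Koszulity and directedness of $\mathcal{E}$---as separate tasks.

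For Koszulity, I would invoke the Koszul duality (Theorem 2.3.1) applied to the trivially Koszul $\mathcal{A}$-module $M = \mathcal{A}$ (which is projective and hence Koszul). As computed in that theorem's proof, $E(\mathcal{A}) = \Hom_{\mathcal{A}}(\mathcal{A}, \mathcal{A}_0) \cong \mathcal{E}_0$, and the duality guarantees this is a Koszul $\mathcal{E}$-module, which by definition makes $\mathcal{E}$ a Koszul category. One minor bookkeeping point is that the splitting property (S) transfers from $\mathcal{A}_0$ to $\mathcal{E}_0 \cong \mathcal{A}_0^{\mathrm{op}}$ automatically, thanks to the left--right symmetric formulation of (S).

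For directedness, I would keep the same partial order $\leqslant$ on $\Ob \mathcal{E} = \Ob \mathcal{A}$ and check that each graded piece of $\mathcal{E}(x,y)$ vanishes whenever $x \not\leqslant y$. In degree $0$, $\mathcal{E}(x,y)_0 = \Hom_{\mathcal{A}}(\mathcal{A}(x,x), \mathcal{A}(y,y))$ vanishes for $x \neq y$ because the two modules have disjoint supports. In degree $1$, since $\mathcal{A}(x,x)$ is a direct summand of $\mathcal{A}_0$ and hence a projective $\mathcal{A}_0$-module, Lemma 2.1.11 together with the decomposition $\mathcal{A}_0 = \bigoplus_y \mathcal{A}(y,y)$ yields
\begin{equation*}
\mathcal{E}(x,y)_1 \cong \Hom_{\mathcal{A}}(\Omega \mathcal{A}(x,x), \mathcal{A}(y,y)).
\end{equation*}
The projective cover of $\mathcal{A}(x,x)$ is $\mathcal{A} 1_x$, so $\Omega \mathcal{A}(x,x)$ has value $\bigoplus_{i \geqslant 1} \mathcal{A}(x,z)_i$ at an object $z$, which vanishes unless $z > x$ by directedness of $\mathcal{A}$. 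Since any homomorphism into $\mathcal{A}(y,y)$ (supported only at $y$) is determined by its value at $y$, we deduce $\mathcal{E}(x,y)_1 = 0$ whenever $y \not> x$. For $n \geqslant 2$, Theorem 2.2.3 ensures the Koszul category $\mathcal{E}$ is quadratic, hence generated in degrees $0$ and $1$, so $\mathcal{E}(x,y)_n$ is spanned by Yoneda products $\mathcal{E}(z_{n-1}, y)_1 \cdot \mathcal{E}(z_{n-2}, z_{n-1})_1 \cdots \mathcal{E}(x, z_1)_1$; each nonzero summand forces a chain $x \leqslant z_1 \leqslant \cdots \leqslant y$, hence $x \leqslant y$.

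The main obstacle is the degree-$1$ analysis: identifying the support of the syzygy $\Omega \mathcal{A}(x,x)$ correctly and justifying the application of Lemma 2.1.11, which requires $\mathcal{A}(x,x)$ to be a projective $\mathcal{A}_0$-module (immediate since it is a direct summand of $\mathcal{A}_0$ itself). Once this is in hand, the extension to higher degrees is a formal consequence of the Yoneda product and the quadratic structure, and Koszulity follows without further work from the duality theorem.
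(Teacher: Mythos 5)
Your argument is correct, and while the Koszulity half is exactly the paper's (apply Theorem 2.3.1 to see that $E(\mathcal{A}) \cong \mathcal{E}_0$ is a Koszul $\mathcal{E}$-module), your directedness argument takes a genuinely different route. The paper proves by induction that every syzygy $\Omega^i(\mathcal{A}_0 1_x)$ is supported on objects $z \geqslant x$ (its projective cover lies in $\text{add}\big(\bigoplus_{z \geqslant x} \mathcal{A}1_z\big)$, and these projectives are again supported there), which kills $\mathcal{E}(x,y)_i \cong \Hom_{\mathcal{A}}(\Omega^i(\mathcal{A}_0 1_x), \mathcal{A}_0 1_y)$ for all $i$ at once when $x \nleqslant y$; you instead compute only degrees $0$ and $1$ by hand (via $\Omega(\mathcal{A}_0 1_x) \cong J1_x$, supported on $z > x$) and then propagate the vanishing multiplicatively through the Yoneda products. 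Your route is shorter once generation in degrees $0$ and $1$ is known; the paper's gives the stronger support statement for all syzygies and needs no generation result. Two small repairs: the quadraticity theorem you invoke is proved in Section 2.2 under the standing assumption that the degree $0$ part is self-injective, whereas here $\mathcal{E}_0 \cong \mathcal{A}_0^{\textnormal{op}}$ only satisfies (S); the fact you actually need, that $\mathcal{E}_{n+1} = \mathcal{E}_1 \cdot \mathcal{E}_n$, follows instead from Theorem 2.1.16 ($\mathcal{A}$ Koszul and projective over $\mathcal{A}_0$ implies $\mathcal{A}_0$ is quasi-Koszul, which is precisely this generation statement), valid under (S). Also, you should record why $\mathcal{E}$ is locally finite, i.e. $\dim_k \mathcal{E}(x,y) < \infty$: the paper deduces $\pd_{\mathcal{A}} \mathcal{A}_0 < \infty$ from Theorem 3.2.1, since a Koszul directed category is standardly stratified. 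Neither point affects the substance of your proof.
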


\begin{proof}
Applying the Koszul duality (Theorem 2.3.1) we know that $\mathcal{E}$ is a Koszul category. What we need to show is that $\mathcal{E}$ is a directed category as well. Since $\mathcal{A}$ is standardly stratified, $\pd _{\mathcal{A}} \mathcal{A} _0 < \infty$. Therefore, all morphisms in $\mathcal{E}$ form a finite-dimensional space. In particular, $\mathcal{E}$ is a locally finite $k$-linear category.

Let $\leqslant$ be the linear order on $\Ob \mathcal{A}$ with respect to which $\mathcal{A}$ is directed. This linear order gives a linear order on $\Ob \mathcal{E} = \Ob \mathcal{A}$. We claim that $\mathcal{E}$ is directed with respect to $\leqslant$, i.e., if $x \nleq y$ are two distinct objects in $\mathcal{E}$, then $\mathcal{E} (x, y) = 0$.

Since $\mathcal{E}$ is the Yoneda category of $\mathcal{A}$, $\mathcal{E} (x, y) = 1_y \mathcal{E} 1_x \cong \Ext _{\mathcal{A}} ^{\ast} (\mathcal{A}_0 1_x, \mathcal{A}_0 1_y)$. But $\mathcal{A}$ is a Koszul category, so $\mathcal{A}_i$ are projective $\mathcal{A}_0$-modules for all $i \geqslant 0$ by Theorem 3.2.1. Therefore, all $\Omega^i (\mathcal{A}_0 1_x)_i$ are projective $\mathcal{A}_0$-modules (Lemma 2.1.12), and we have (Lemma 2.1.11)
\begin{equation*}
\mathcal{E} (x, y)_i \cong \Ext _{\mathcal{A}} ^i (\mathcal{A}_0 1_x, \mathcal{A}_0 1_y) \cong \Hom _{\mathcal{A}} (\Omega^i (\mathcal{A}_0 1_x), \mathcal{A}_0 1_y).
\end{equation*}
Observe that $\mathcal{A}_0 1_y = \mathcal{A} (y, y)$ is only supported on $y$ and $y \ngeq x$. If we can prove the statement that each $\Omega^i (\mathcal{A}_0 1_x)$ is only supported on objects $z$ with $z \geqslant x$, then our claim is proved.

Clearly, $\Omega^0 (\mathcal{A}_0 1_x) = \mathcal{A}_0 1_x = \mathcal{A} (x,x)$ is only supported on $x$, so the statement is true for $i =0$. Now suppose that $\Omega^n (\mathcal{A}_0 1_x)$ is only supported on objects $z \geqslant x$ and consider $\Omega^{n+1} (\mathcal{A}_0 1_x)$. Let $S$ be the set of objects $z$ such that the value $1_z \Omega^n (\mathcal{A}_0 1_x)$ of $\Omega^n (\mathcal{A}_0 1_x)$ on $z$ is non-zero. Then we can find a short exact sequence:
\begin{equation*}
\xymatrix {0 \ar[r] & N \ar[r] & \bigoplus _{z \in S} (\mathcal{A} 1_z) ^{m_z} \ar[r]^p & \Omega^n (\mathcal{A}_0 1_x) \ar[r] & 0}
\end{equation*}
such that the map $p$ gives a surjection $p_z: (1_z \mathcal{A} 1_z) ^{m_z} \rightarrow 1_z \Omega^n (\mathcal{A}_0 1_x)$ for $z \in S$. Thus $p$ is a surjection and $\Omega^{n+1} (\mathcal{A}_0 1_x)$ is a direct summand of $N$. Notice that all $\mathcal{A} 1_z$ are supported only on objects $w \geqslant z$, and $z \geqslant x$ by the induction hypothesis. Therefore, the submodule $\Omega^{n+1} (\mathcal{A}_0 1_x) \subseteq N \subseteq \bigoplus _{z \in S} (\mathcal{A} 1_z) ^{m_z} $ is only supported on objects $w \geqslant x$. Our statement is proved by induction. This finishes the proof.\
\end{proof}

Actually, we will prove later that the associated categories of extension algebras of standard modules of standardly stratified algebras are always directed. This immediately generalizes the above theorem since by the given condition $\mathcal{A}_0$ coincides with the direct sum of standard modules. This condition is crucial. The following example shows that without it the Yoneda category $E (\mathcal{A} _0)$ might not be directed even if $\mathcal{A}$ is a Koszul directed category.

\begin{example}
Let $\mathcal{A}$ be the following category. Put an order $x < y$ on the objects and the following grading on morphisms: $\mathcal{A}_0 = \langle 1_x, 1_y, \beta \rangle$, $\mathcal{A}_1 = \langle \alpha \rangle$.
\begin{equation*}
\xymatrix{ x \ar@/^/ [rr]^{\alpha} \ar@ /_/ [rr]_{\beta} & & y.}
\end{equation*}
This category is directed obviously. It is standardly stratified (actually hereditary) with $\Delta_x \cong k_x$ and $\Delta_y \cong k_y$. By the exact sequence
\begin{equation*}
\xymatrix{ 0 \ar[r] & k_y [1] \ar[r] & \mathcal{A} \ar[r] & \mathcal{A}_0 \ar[r] & 0,}
\end{equation*}
$\mathcal{A} _0$ is a Koszul module. But $\Delta_x \oplus \Delta_y \ncong \mathcal{A} _0$. Furthermore, $\Delta_x \oplus \Delta_y$ is not Koszul since from the short exact sequence
\begin{equation*}
\xymatrix{ 0 \ar[r] & k_y [1] \oplus k_y \ar[r] & \mathcal{A} \ar[r] & \Delta_x \oplus \Delta_y \ar[r] & 0}
\end{equation*}
we find that $\Omega(\Delta_x \oplus \Delta_y)$ is not generated in degree 1.\

By computation we get the Yoneda category $\mathcal{D} = E (\mathcal{A} _0)$ pictured as below, with relation $\alpha \cdot \beta = 0$.
\begin{equation*}
\xymatrix{ x \ar@/^/ [rr]^{\alpha} & & y \ar@ /^/ [ll]^{\beta}.}
\end{equation*}
This is not a directed category with respect to the order $x <y$. However, we check: $P_y = \Delta'_y = \mathcal{D}_0 1_y$ and $\Delta'_x = k_x \cong P_x/P_y$. Therefore, $\Delta'_x \oplus \Delta'_y \cong \mathcal{D}_0$, and $\mathcal{D}$ is standardly stratified. The exact sequence
\begin{equation*}
\xymatrix{ 0 \ar[r] & P_y [1] \ar[r] & \mathcal{D} \ar[r] & \mathcal{D}_0 \ar[r] & 0}
\end{equation*}
tells us that $\mathcal{D}_0$ is a Koszul $\mathcal{D}$-module. Therefore, $\mathcal{D}$ is standardly stratified and Koszul, but not directed.
\end{example}

In the previous chapter we have described a correspondence between the generalized Koszul theory and the classical theory. For directed categories, there is another close relation between these two theories, which we describe here. Let $\mathcal{A}$ be a graded directed category. We then define a subcategory $\mathcal{D}$ of $\mathcal{A}$ by replacing all endomorphism rings in $\mathcal{A}$ by $k \cdot 1$, the span of the identity endomorphism. Explicitly, $\Ob \mathcal{D} = \Ob \mathcal{A}$; for $x, y \in \Ob \mathcal{D}$, $\mathcal{D} (x, y) = k \langle 1_x \rangle $ if $x = y$ and $ \mathcal{D} (x, y) = \mathcal{A} (x,y)$ otherwise. Clearly, $\mathcal{D}$ is also a graded directed category with $\mathcal{D}_i = \mathcal{A}_i$ for every $i \geqslant 1$. Observe that the degree 0 component $\mathcal{D}_0$ is semisimple.

\begin{theorem}
Let $\mathcal{A}$ be a graded directed Koszul category and define the subcategory $\mathcal{D}$ as above. If $M$ is a Koszul $\mathcal{A}$-module, then $M \downarrow _{\mathcal{D}} ^{\mathcal{A}}$ is a Koszul $\mathcal{D}$-module. In particular, $\mathcal{D}$ is a Koszul category in the classical sense.
\end{theorem}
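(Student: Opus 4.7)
The plan is to construct a linear $\mathcal{D}$-projective resolution of $M\downarrow_{\mathcal{D}}^{\mathcal{A}}$ by paralleling the given linear $\mathcal{A}$-projective resolution of $M$ and carefully analyzing the discrepancy between $\mathcal{A}_0$ and $\mathcal{D}_0 = \bigoplus_x k\cdot 1_x$. First, by Theorem 3.2.1, $\mathcal{A}$ being Koszul forces $\mathcal{A}$ to be standardly stratified and hence projective over $\mathcal{A}_0$; consequently every Koszul $\mathcal{A}$-module (and each of its $\mathcal{A}$-syzygies) is projective as an $\mathcal{A}_0$-module, so $M_0$ is a projective $\mathcal{A}_0$-module. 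A key elementary observation used throughout is that a graded module generated in a strictly positive degree over $\mathcal{A}$ is also generated in that degree over $\mathcal{D}$: every composition $\alpha\beta$ of positive-degree morphisms must pass through a necessarily nontrivial intermediate object $z$ (since $\mathcal{A}(y,y)_{\geqslant 1} = 0$), and the positive-degree morphism spaces satisfy $\mathcal{A}(z,y)_{\geqslant 1} = \mathcal{D}(z,y)_{\geqslant 1}$.

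Next, form the $\mathcal{D}$-projective cover $Q^0 = \mathcal{D}\otimes_{\mathcal{D}_0} M_0 \twoheadrightarrow M\downarrow$ (well-defined since $\mathcal{D}_0$ is semisimple so $M_0$ is automatically $\mathcal{D}_0$-projective) and compare it with the $\mathcal{A}$-projective cover $P^0 = \mathcal{A}\otimes_{\mathcal{A}_0} M_0 \twoheadrightarrow M$. The natural quotient $\otimes_{\mathcal{D}_0}\twoheadrightarrow\otimes_{\mathcal{A}_0}$ induces a surjection $\pi\colon Q^0 \twoheadrightarrow P^0\downarrow$ whose kernel $L$ vanishes in degree $0$ and satisfies $L(y)_i = \bigoplus_{x\neq y}\ker\bigl(\mathcal{A}(x,y)_i \otimes_k M_0(x) \to \mathcal{A}(x,y)_i \otimes_{\mathcal{A}(x,x)} M_0(x)\bigr)$ for $i\geqslant 1$. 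Applying the snake lemma to $\pi$ together with $0 \to \Omega_{\mathcal{A}}(M) \to P^0 \to M \to 0$ produces
\begin{equation*}
0 \to L \to \Omega_{\mathcal{D}}(M\downarrow) \to (\Omega_{\mathcal{A}} M)\downarrow \to 0.
\end{equation*}
The term $(\Omega_{\mathcal{A}} M)\downarrow$ is $\mathcal{D}$-generated in degree $1$ by Koszulness of $M$ and the elementary observation above. For $L$: a typical generating relation $\alpha r \otimes v - \alpha \otimes rv \in L(y)_i$ with $i\geqslant 2$ admits, via $\mathcal{A}_i = \mathcal{A}_{i-1}\mathcal{A}_1$, a factorization $\alpha = \sum_j \beta_j \gamma_j$ with $\gamma_j \in \mathcal{A}(x,z_j)_1$ and $\beta_j \in \mathcal{A}(z_j,y)_{i-1}$ passing through intermediate objects $z_j \neq x, y$; the generator then equals $\sum_j \beta_j \cdot (\gamma_j r \otimes v - \gamma_j \otimes rv)$ where each $\beta_j \in \mathcal{D}(z_j,y)_{i-1}$ acts on the degree-$1$ element $\gamma_j r \otimes v - \gamma_j \otimes rv \in L(z_j)_1$. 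Hence $\Omega_{\mathcal{D}}(M\downarrow)$ is generated in degree $1$.

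To iterate, I would prove by induction on $i$ that $\Omega_{\mathcal{D}}^i(N\downarrow)$ is generated in degree $i$ for every Koszul $\mathcal{A}$-module $N$. The horseshoe lemma applied to the above short exact sequence expresses $\Omega_{\mathcal{D}}^i(\Omega_{\mathcal{D}}(M\downarrow))$ (non-minimally) as $\Omega_{\mathcal{D}}^i(L) \oplus \Omega_{\mathcal{D}}^i((\Omega_{\mathcal{A}} M)\downarrow)$. The second summand is generated in degree $i+1$ by the inductive hypothesis applied to the Koszul $\mathcal{A}$-module $\Omega_{\mathcal{A}}(M)[-1]$ and shifting back. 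For $\Omega_{\mathcal{D}}^i(L)$, the same type of intermediate-object factorization as used above controls the generators of successive syzygies of $L$; verifying this rigorously at every syzygy level is the principal technical obstacle of the proof. Once this is granted, Proposition 2.1.6 shows $M\downarrow$ is Koszul over $\mathcal{D}$.

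Finally, the ``in particular'' statement follows by specializing the main result to $M = \mathcal{A}_0$. The restriction $\mathcal{A}_0\downarrow$ is concentrated in degree $0$ and, since $\mathcal{D}_+ \subset J$ annihilates $\mathcal{A}_0 = \mathcal{A}/J$, it is a semisimple $\mathcal{D}$-module isomorphic to $\bigoplus_{x\in\Ob\mathcal{A}} S_x^{\dim \mathcal{A}(x,x)}$, where $S_x$ denotes the simple $\mathcal{D}$-module concentrated at $x$. Thus $\mathcal{D}_0 = \bigoplus_x S_x$ is a direct summand of the Koszul $\mathcal{D}$-module $\mathcal{A}_0\downarrow$, and a direct summand of a classically Koszul module is itself Koszul (its linear projective resolution splits off as a direct summand), so $\mathcal{D}_0$ is Koszul over $\mathcal{D}$, i.e., $\mathcal{D}$ is a classical Koszul category.
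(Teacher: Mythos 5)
Your first-syzygy analysis is correct: the comparison map $\mathcal{D}\otimes_{\mathcal{D}_0}M_0 \twoheadrightarrow (\mathcal{A}\otimes_{\mathcal{A}_0}M_0)\downarrow_{\mathcal{D}}^{\mathcal{A}}$, the snake-lemma sequence $0\to L\to \Omega_{\mathcal{D}}(M\downarrow)\to(\Omega_{\mathcal{A}}M)\downarrow\to 0$, and the factorization argument showing that $L$ is generated in degree $1$ all hold. But the induction on $i$ collapses exactly where you flag ``the principal technical obstacle'': you need $\Omega^i_{\mathcal{D}}(L)$ to be generated in degree $i+1$ for every $i$, i.e.\ $L[-1]$ must admit a linear $\mathcal{D}$-projective resolution, and nothing in the proposal controls this. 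The intermediate-object factorization only produces degree-$1$ generators of $L$; it says nothing about the relations among them, i.e.\ about $\Omega_{\mathcal{D}}(L)$ and the higher syzygies. Worse, this missing piece is not a routine verification but essentially the theorem itself: since your comparison map is a $\mathcal{D}$-projective cover of $(\mathcal{A}\otimes_{\mathcal{A}_0}M_0)\downarrow$, one has $L=\Omega_{\mathcal{D}}\bigl((\mathcal{A}\otimes_{\mathcal{A}_0}M_0)\downarrow\bigr)$, and $\mathcal{A}\otimes_{\mathcal{A}_0}M_0\in\mathrm{add}(\mathcal{A})$; so ``$L[-1]$ is Koszul over $\mathcal{D}$'' is equivalent to the statement of the theorem for projective $\mathcal{A}$-modules, which in turn (via the sequence $0\to{}_{\mathcal{D}}\mathcal{D}\to{}_{\mathcal{D}}\mathcal{A}\to\bigoplus_x k_x^{m_x}\to 0$) contains the claim that $\mathcal{D}_0$ is a Koszul $\mathcal{D}$-module. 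You derive that last claim as a corollary of the main statement, so as it stands the plan is circular: the base of your syzygy induction for $L$ presupposes the conclusion. The paper avoids this by a different induction, on the number of objects: it removes a minimal object $x$, treats the special case $M=\mathcal{A}_01_x$ (whose $\mathcal{A}$-syzygy is supported away from $x$, so the smaller category applies), deduces that $\mathcal{D}_0$ is Koszul, and only then handles general $M$ using the sequence $0\to P'\to P\downarrow\to T\to 0$ with $T\in\mathrm{add}(\mathcal{D}_0)$ together with Proposition 2.1.8. Some device of this kind (or an independent proof that $\mathcal{A}\downarrow$ is Koszul over $\mathcal{D}$) is what your argument is missing.

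Two smaller points. The horseshoe step is misstated: it does not express $\Omega^i_{\mathcal{D}}(\Omega_{\mathcal{D}}(M\downarrow))$ as a direct sum $\Omega^i_{\mathcal{D}}(L)\oplus\Omega^i_{\mathcal{D}}((\Omega_{\mathcal{A}}M)\downarrow)$, only as the middle of a short exact sequence with those ends (up to projective summands); since $\mathcal{D}_0$ is semisimple, the splitting property holds for $\mathcal{D}$ and you could simply quote Proposition 2.1.8 over $\mathcal{D}$ instead. Also, your appeal to Theorem 3.2.1 to get $\mathcal{A}$ (and $M$) projective over $\mathcal{A}_0$ assumes the splitting condition (S), which Theorem 3.2.5 does not assume (see Remark 3.2.7); fortunately all you actually need is that $M_0$ is a projective $\mathcal{A}_0$-module, which follows from Corollary 2.1.5 without (S), so this is fixable, unlike the gap above.
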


\begin{proof}
We prove the conclusion by induction on the size of $\Ob \mathcal{A}$. If the size of $\Ob \mathcal{A}$ is 1, the conclusion holds trivially. Now suppose that the conclusion is true for categories with at most $n$ objects and let $\mathcal{A}$ be a graded directed category with $n+1$ objects. Take $x$ to be a minimal object in $\mathcal{A}$ and define $\mathcal{A}_x$ ($\mathcal{D}_x$, resp.) to be the full subcategory of $\mathcal{A}$ ($\mathcal{D}$, resp.) formed by removing $x$ from it. Clearly $\mathcal{A}_x$ and $\mathcal{D}_x$ have $n$ objects.\

The following fact, which is well known in the context of finite EI categories (see \cite{Xu1, Xu2}), is essential in the proof.\\

\textbf{Fact:} Every graded $\mathcal{A}_x$-module $N$ can be viewed as a $\mathcal{A}$-module with $N(x) =0$ by induction. Conversely, every graded $\mathcal{A}$-module $M$ with $M(x) = 0$ can also be viewed as a $\mathcal{A}_x$-module by restriction. Furthermore, if $M(x) = 0$, then $\Omega^i (M)(x) = 0$ for all $i \geqslant 0$. The above induction and restriction preserves projective modules: a projective $\mathcal{A}_x$-module is still projective when viewed as a $\mathcal{A}$-module; conversely, a projective $\mathcal{A}$-module $P$ with $P(x) =0$ is still projective viewed as a $\mathcal{A}_x$-module. Therefore, a graded $\mathcal{A}$-module $M$ with $M(x)=0$ is Koszul if and only if it is Koszul as a $\mathcal{A}_x$-module. All these results hold for the pair $(\mathcal{D}, \mathcal{D}_x)$ similarly.\\

By this fact, we only need to handle Koszul $\mathcal{A}$-modules $M$ with $M(x) \neq 0$. Indeed, if $M(x) = 0$, then $M$ is also Koszul regarded as a $\mathcal{A}_x$-module. By the induction hypothesis, $M \downarrow _{\mathcal{D}_x} ^{\mathcal{A}_x}$ is a Koszul $\mathcal{D}_x$-module. By the above fact, $M \downarrow _{\mathcal{D}} ^{\mathcal{A}}$ is a Koszul $\mathcal{D}$-module. Thus the conclusion is true for Koszul $\mathcal{A}$-modules $M$ with $M(x) =0$.\

Firstly we consider the special case $M = \mathcal{A}_0 1_x = \mathcal{A} (x,x)$ which is concentrated on $x$ when viewed as a $\mathcal{A}$-module. It is clear that
\begin{equation*}
\Omega (\mathcal{A}_0 1_x) \downarrow _{\mathcal{D}} ^{\mathcal{A}} = \Omega (\mathcal{D}_0 1_x)
\end{equation*}
as vector spaces since for each pair $u \neq v \in \Ob \mathcal{A}$, $\mathcal{A} (u, v) = \mathcal{D} (u, v)$, and
\begin{equation*}
\mathcal{A}_0 1_x \downarrow _{\mathcal{D}} ^{\mathcal{A}} \cong (\mathcal{D}_0 1_x)^m \cong k_x^m,
\end{equation*}
where $m = \dim _k \mathcal{A} (x, x)$. Since $\mathcal{A}_01_x$ is a Koszul $\mathcal{A}$-module, $\Omega (\mathcal{A}_0 1_x) [-1]$ is a Koszul $\mathcal{A}$-module supported on $\Ob \mathcal{A}_x$. By the induction hypothesis, $\Omega(\mathcal{D}_0 1_x) [-1] = \Omega (\mathcal{A}_0 1_x) \downarrow _{\mathcal{D}} ^{\mathcal{A}} [-1]$ is a Koszul $\mathcal{D}_x$-module, and hence a Koszul $\mathcal{D}$-module. Therefore, $\mathcal{D}_0 1_x$, and hence $\mathcal{A}_0 1_x \downarrow _{\mathcal{D}} ^{\mathcal{A}} \cong (\mathcal{D}_0 1_x)^m$ are Koszul $\mathcal{D}$-modules. In the case that $y \neq x$, $\mathcal{D}_0 1_y$ is a direct summand of $\mathcal{A}_0 1_y \downarrow _{\mathcal{D}} ^{\mathcal{A}}$. It is Koszul viewed as a $\mathcal{D}_x$-module by the induction hypothesis, and hence is a Koszul $\mathcal{D}$-module. Consequently, $\mathcal{D}_0$ is a Koszul $\mathcal{D}$-module, so $\mathcal{D}$ is a Koszul category in the classical sense.\

Now let $M$ be an arbitrary Koszul $\mathcal{A}$-module with $M(x) \neq 0$. Consider the exact sequence
\begin{equation}
\xymatrix {0 \ar[r] & \Omega M \downarrow _{\mathcal{D}} ^{\mathcal{A}} \ar[r] & P \downarrow _{\mathcal{D}} ^{\mathcal{A}} \ar[r]^p & M \downarrow _{\mathcal{D}} ^{\mathcal{A}} \ar[r] & 0}
\end{equation}
induced by
\begin{equation*}
\xymatrix {0 \ar[r] & \Omega M \ar[r] & P \ar[r]^p & M \ar[r] & 0.}
\end{equation*}
The structures of $\mathcal{A}$ and $\mathcal{D}$ give the following exact sequence:
\begin{equation*}
\xymatrix {0 \ar[r] & _{\mathcal{D}} \mathcal{D} \ar[r] & _{\mathcal{D}} \mathcal{A} \ar[r] & \bigoplus _{x \in \Ob \mathcal{A}} k_x^{m_x} \ar[r] & 0}
\end{equation*}
where $k_x \cong \mathcal{D}_01_x$ and $m_x = \dim _k \mathcal{A} (x, x) - 1$. Since $P \downarrow _{\mathcal{D}} ^{\mathcal{A}} \in \text{add} (_{\mathcal{D}} \mathcal{A})$, the above sequence gives us a corresponding sequence for $P \downarrow _{\mathcal{D}} ^{\mathcal{A}}$:
\begin{equation}
\xymatrix {0 \ar[r] & P' \ar[r]^{\iota} & P \downarrow _{\mathcal{D}} ^{\mathcal{A}} \ar[r] & T \ar[r] & 0,}
\end{equation}
here $P'$ is a projective $\mathcal{D}$-module and $T \in \text{add} (\mathcal{D}_0)$. Both of them are generated in degree 0. \

Putting sequences (3.2.1) and (3.2.2) together we get
\begin{equation*}
\xymatrix{ 0 \ar[r] & \Omega (M') \ar[r] \ar[d]^{\varphi} & P' \ar[r]^{p \circ \iota} \ar[d]^{\iota} & M' \ar[d] \ar[r] & 0 \\
0 \ar[r] & \Omega (M) \downarrow _{\mathcal{D}} ^{\mathcal{A}} \ar[r] & P \downarrow _{\mathcal{D}} ^{\mathcal{A}} \ar[r]^p \ar[d] & M \downarrow _{\mathcal{D}} ^{\mathcal{A}} \ar[r] \ar[d] & 0 \\
& & T \ar[r] &  M \downarrow _{\mathcal{D}} ^{\mathcal{A}} /M'
}
\end{equation*}
where $M' = (p \circ \iota) (P')$. Notice that $p$ and $\iota$ both are injective restricted to degree 0 components. Therefore $p \circ \iota$ is also injective restricted to the degree 0 component of $P'$ (actually it is an isomorphism restricted to the degree 0 component). Consequently, $P'$ is a projective cover of $M'$ and the kernel of $p \circ \iota$ is indeed $\Omega (M')$.\

We claim that $\varphi$ is an isomorphism and hence $\Omega(M') \cong (\Omega M) \downarrow _{\mathcal{D}} ^{\mathcal{A}}$. It suffices to show $T \cong M \downarrow _{\mathcal{D}} ^{\mathcal{A}} /M'$ by the snake lemma. First, since $T$ is concentrated in degree 0 in sequence (3.1.2), $(P \downarrow _{\mathcal{D}} ^{\mathcal{A}} )_i = P'_i$ and
\begin{equation*}
M'_i = (p \circ \iota) (P'_i) = p ((P \downarrow _{\mathcal{D}} ^{\mathcal{A}} )_i) = (M \downarrow _{\mathcal{D}} ^{\mathcal{A}}) _i, i >0.
\end{equation*}
Therefore, $ M \downarrow _{\mathcal{D}} ^{\mathcal{A}} /M'$ is concentrated in degree 0. Furthermore, since $M$ is Koszul, $(P \downarrow _\mathcal{D} ^{\mathcal{A}} )_0 = P_0 = M_0 = (M \downarrow _{\mathcal{D}} ^{\mathcal{A}})_0$, and $P'_0 = M'_0$ because $p\circ \iota$ restricted to $P_0'$ is an isomorphism as well. We deduce that
\begin{equation*}
T \cong (P \downarrow _\mathcal{D} ^{\mathcal{A}} )_0 / P'_0 \cong (M \downarrow _\mathcal{D} ^{\mathcal{A}} )_0 / M'_0 = M \downarrow _{\mathcal{D}} ^{\mathcal{A}} /M',
\end{equation*}
exactly as we claimed.\

Now consider the rightmost column of the above diagram. Clearly, the bottom term $M \downarrow _{\mathcal{D}} ^{\mathcal{A}} /M' \cong T \in \text{add} (\mathcal{D}_0)$ is Koszul since we just proved that $\mathcal{D}_0$ is Koszul. The $\mathcal{A}$-module $(\Omega M) [-1]$ is Koszul since $M$ is supposed to be Koszul. Moreover, because $x$ is minimal and $M$ is generated in degree 0, $M(x) \subseteq M_0$ and hence $(\Omega M) [-1] (x) = (\Omega M)(x) = 0$. Therefore, $(\Omega M) [-1]$ is a Koszul $\mathcal{A}$-module supported on $\Ob \mathcal{A}_x$, so it is also a Koszul $\mathcal{A}_x$-module. By the induction hypothesis, $(\Omega M') [-1] \cong (\Omega M) [-1] \downarrow _{\mathcal{D}} ^{\mathcal{A}}$ is Koszul viewed as a $\mathcal{D}_x$-module, and hence Koszul as a $\mathcal{D}$-module. Thus the top term $M'$ is a Koszul $\mathcal{D}$-module since as a homomorphic image of $P'$ (which is generated in degree 0) it is generated in degree 0 as well. By Proposition 2.1.8, $M \downarrow _{\mathcal{D}} ^{\mathcal{A}}$ is also a Koszul $\mathcal{D}$-module since $\mathcal{D}_0$ is semisimple by our construction. The conclusion follows from induction.\
\end{proof}

The converse of the above theorem is also true.

\begin{theorem}
Let $\mathcal{A}$ be a graded directed category and construct the subcategory $\mathcal{D}$ as before. Suppose that $\mathcal{D}$ is Koszul in the classical sense. Let $M$ be a graded $\mathcal{A}$-module generated in degree 0 such that $\Omega^i(M)_i$ are projective $\mathcal{A}_0$-modules for all $i \geqslant 0$. Then $M$ is a Koszul $\mathcal{A}$-module whenever $M \downarrow _{\mathcal{D}} ^{\mathcal{A}}$ is a Koszul $\mathcal{D}$-module.
\end{theorem}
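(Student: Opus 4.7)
My plan is to prove by induction on $n=|\Ob\mathcal{A}|$ that $M$ is Koszul over $\mathcal{A}$, mirroring the structure of the previous theorem's proof. The key preliminary observation is that $\mathcal{D}$ is a subcategory of $\mathcal{A}$ satisfying $\mathcal{D}_j=\mathcal{A}_j$ for $j\geqslant 1$, so any graded module whose $\mathcal{D}$-restriction is generated in degree $s$ is automatically generated in degree $s$ over $\mathcal{A}$. It therefore suffices to show that $\Omega^i_{\mathcal{A}}(M)\downarrow_{\mathcal{D}}^{\mathcal{A}}$ is generated in degree $i$ over $\mathcal{D}$ for every $i\geqslant 0$; combined with the hypothesis that $\Omega^i(M)_i$ is a projective $\mathcal{A}_0$-module, Proposition 2.1.6 will then yield Koszulity.

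For the inductive step, I would pick a minimal object $x$ and let $\mathcal{A}_x,\mathcal{D}_x$ denote the full subcategories obtained by removing $x$. Minimality of $x$ ensures $\mathcal{D}_x$ remains classically Koszul (projective resolutions of simples at objects $\neq x$ never involve $\mathcal{D}1_x$) and forces $M(x)\subseteq M_0$ and $(\Omega^i_{\mathcal{A}}M)(x)=0$ for $i\geqslant 1$. The case $M(x)=0$ reduces directly to the inductive hypothesis applied to $M$ viewed as an $\mathcal{A}_x$-module. In the case $M(x)\neq 0$, I would reuse the commutative diagram from the forward proof, producing the decomposition $0\to P'\to P\downarrow_{\mathcal{D}}^{\mathcal{A}}\to T\to 0$ (with $P'$ the $\mathcal{D}$-projective part and $T\in\text{add}(\mathcal{D}_0)$ concentrated in degree $0$) and the identification $\Omega_{\mathcal{D}}(M')\cong (\Omega_{\mathcal{A}}M)\downarrow_{\mathcal{D}}^{\mathcal{A}}$, where $M'\subseteq M\downarrow_{\mathcal{D}}^{\mathcal{A}}$ is the $\mathcal{D}$-submodule with quotient $T$.

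The crux of the argument is to show $M'$ is Koszul over $\mathcal{D}$. Comparing $P'$ with the minimal $\mathcal{D}$-projective cover $Q=\mathcal{D}\otimes_{\mathcal{D}_0}M_0$ of $M\downarrow_{\mathcal{D}}^{\mathcal{A}}$ and applying the snake lemma to the resulting ladder produces
\[
0\longrightarrow \Omega_{\mathcal{D}}(M')\longrightarrow \Omega_{\mathcal{D}}(M\downarrow_{\mathcal{D}}^{\mathcal{A}})\longrightarrow \Omega_{\mathcal{D}}(T)\longrightarrow 0,
\]
in which the middle term is generated in degree $1$ by Koszulity of $M\downarrow_{\mathcal{D}}^{\mathcal{A}}$, and the right term is generated in degree $1$ since $T\in\text{add}(\mathcal{D}_0)$ and $\mathcal{D}$ is classically Koszul. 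The projectivity hypothesis that $\Omega_{\mathcal{A}}(M)_1$ is a projective $\mathcal{A}_0$-module, together with $M_0$ being projective over $\mathcal{A}_0$, should combine with Lemma 2.1.2 to force $\Omega_{\mathcal{D}}(M')$ to be generated in degree $1$; iterating on higher syzygies will establish $M'$ Koszul over $\mathcal{D}$.

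Once $M'$ is Koszul, $(\Omega_{\mathcal{A}}M)[-1]\downarrow_{\mathcal{D}_x}^{\mathcal{A}_x}\cong \Omega_{\mathcal{D}}(M')[-1]$ is Koszul over $\mathcal{D}_x$, and the $\mathcal{A}_x$-module $\Omega_{\mathcal{A}}(M)[-1]$ satisfies all the hypotheses of the theorem over the smaller category. By induction it is Koszul over $\mathcal{A}_x$, hence over $\mathcal{A}$, and therefore $M$ itself is Koszul over $\mathcal{A}$. The main obstacle is the Koszulity of $M'$: submodules of Koszul modules are not in general Koszul, and the projectivity hypothesis must be used in an essential way to align the $\mathcal{A}_0$- and $\mathcal{D}_0$-structures so that Lemma 2.1.2 propagates generation-in-degree-$1$ from the middle term to the submodule in the snake sequence above.
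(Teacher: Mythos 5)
Your overall skeleton (induction on $|\Ob \mathcal{A}|$, a minimal object $x$, the decomposition $0 \to P' \to \tilde{P}\downarrow_{\mathcal{D}}^{\mathcal{A}} \to T \to 0$, the identification $\Omega_{\mathcal{D}}(M') \cong (\Omega_{\mathcal{A}} M)\downarrow_{\mathcal{D}}^{\mathcal{A}}$, and the reduction of the inductive step to showing that $(\Omega_{\mathcal{A}} M)[-1]\downarrow_{\mathcal{D}}^{\mathcal{A}}$ is a Koszul $\mathcal{D}$-module) is the same as the paper's. But the step you yourself flag as the obstacle is a genuine gap, and as set up it would fail. Your snake-lemma sequence $0 \to \Omega_{\mathcal{D}}(M') \to \Omega_{\mathcal{D}}(M\downarrow_{\mathcal{D}}^{\mathcal{A}}) \to \Omega_{\mathcal{D}}(T) \to 0$ places the unknown module in the \emph{submodule} position, and nothing in the paper lets you pass Koszulity, or even generation in degree $1$, from the middle and quotient to a submodule: Proposition 2.1.8 transfers Koszulity between middle term and quotient only when the submodule is already known to be Koszul, and Lemma 2.1.2(3) would require checking $J\,\Omega_{\mathcal{D}}(M\downarrow_{\mathcal{D}}^{\mathcal{A}}) \cap \Omega_{\mathcal{D}}(M') = J\,\Omega_{\mathcal{D}}(M')$, which you do not verify and which the hypothesis that each $\Omega^i(M)_i$ is $\mathcal{A}_0$-projective does not obviously supply. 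In the paper that projectivity hypothesis is used elsewhere: to make the comparison of projective covers an isomorphism in degree $0$, and to check the hypotheses for $(\Omega M)[-1]$ over $\mathcal{A}_x$ in the induction; it is not what controls generation of a submodule.

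The paper closes this by turning the extension around. It compares the minimal $\mathcal{D}$-projective cover $P$ of $M\downarrow_{\mathcal{D}}^{\mathcal{A}}$ with the restriction $\tilde{P}\downarrow_{\mathcal{D}}^{\mathcal{A}}$ of the $\mathcal{A}$-projective cover of $M$: since $M_0$ is $\mathcal{A}_0$-projective, $P_0 = (\tilde{P}\downarrow_{\mathcal{D}}^{\mathcal{A}})_0 = M_0$, so the induced map $\varphi \colon P \to \tilde{P}\downarrow_{\mathcal{D}}^{\mathcal{A}}$ is surjective, and with $K = \ker \varphi$ the snake lemma gives $0 \to K \to \Omega_{\mathcal{D}}(M\downarrow_{\mathcal{D}}^{\mathcal{A}}) \to (\Omega_{\mathcal{A}} M)\downarrow_{\mathcal{D}}^{\mathcal{A}} \to 0$. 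A second diagram identifies $K$ with the kernel of a projective cover $P'' \to T$, so $K \in \mathrm{add}(J)$ with $J = \bigoplus_{i \geqslant 1}\mathcal{D}_i$; since $\mathcal{D}_0$ is classically Koszul, $K[-1]$ is Koszul, and Proposition 2.1.8 now applies with the unknown term in the \emph{quotient} position, giving that $(\Omega_{\mathcal{A}} M)[-1]\downarrow_{\mathcal{D}}^{\mathcal{A}}$ is Koszul over $\mathcal{D}$, hence over $\mathcal{D}_x$, after which your inductive step works verbatim. Note that your $\Omega_{\mathcal{D}}(T)$ is exactly this $K$, so your sequence and the paper's involve the same three modules with sub and quotient roles interchanged; the missing idea is precisely to arrange the known-Koszul module $K[-1]$ as the sub. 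Also, there is no need to prove $M'$ is Koszul at all in this direction.
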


\begin{proof}
We use the similar technique to prove the conclusion. Notice that we always assume that $\mathcal{A}_0 = \bigoplus _{x \in \Ob \mathcal{A}} \mathcal{A} (x,x)$. If $\mathcal{A}$ has only one object, then Koszul modules are exactly projective modules generated in degree 0 and the conclusion holds. Suppose that it is true for categories with at most $n$ objects. Let $\mathcal{A}$ be a category of $n+1$ objects and take a minimal object $x$. Define $\mathcal{A}_x$ and $\mathcal{D}_x$ as before. As in the proof of last theorem, a graded $\mathcal{A}$-module $M$ with $M(x) =0$ is Koszul if and only if it is Koszul viewed as a $\mathcal{A}_x$-module by restriction. and the same result holds for the pair $(\mathcal{D}, \mathcal{D}_x)$. In particular, $\mathcal{D}_x$ is a Koszul category. Therefore, we only need to show that an arbitrary graded $\mathcal{A}$-module $M$ which is generated in degree 0 and satisfies the following conditions is Koszul: $\Omega^i(M)_i$ is a projective $\mathcal{A}_0$-module for each $i \geqslant 0$; $M(x) \neq 0$; and $M \downarrow _{\mathcal{D}} ^{\mathcal{A}}$ is a Koszul $\mathcal{D}$-module.\

Let $M$ be such a $\mathcal{A}$-module and consider the commutative diagram:

\begin{equation}
\xymatrix{ & K \ar@{=}[r] \ar[d] & K \ar[d] \ar[r] & 0 \ar[d] \\
0 \ar[r] & \Omega(M \downarrow _{\mathcal{D}} ^{\mathcal{A}}) \ar[d] \ar[r] & P \ar[r] \ar[d]^{\varphi} & M \downarrow _{\mathcal{D}} ^{\mathcal{A}} \ar[r] \ar@{=}[d] & 0 \\
0 \ar[r] & (\Omega M) \downarrow _{\mathcal{D}} ^{\mathcal{A}} \ar[r] & \tilde{P} \downarrow _{\mathcal{D}} ^{\mathcal{A}} \ar[r] & M \downarrow _{\mathcal{D}} ^{\mathcal{A}} \ar[r] & 0}
\end{equation}
where $P$ and $\tilde{P}$ are projective covers of $M \downarrow _{\mathcal{D}} ^{\mathcal{A}}$ and $M$ respectively. Since $M_0$ is a projective $\mathcal{A}_0$-module, $P_0 = M_0 = (M \downarrow _{\mathcal{D}} ^{\mathcal{A}} )_0 =(\tilde{P} \downarrow _{\mathcal{D}} ^{\mathcal{A}} )_0$ as vector spaces, and the induced map $\varphi$ restricted to $P_0$ is an isomorphism. Therefore $\varphi$ is surjective since both $P$ and $\tilde{P} \downarrow _{\mathcal{D}} ^{\mathcal{A}}$ are generated in degree 0. Let $K$ be the kernel of $\varphi$.\

We have the following exact sequence similar to sequence (3.2.2):
\begin{equation*}
\xymatrix{ 0 \ar[r] & P' \ar[r] & \tilde{P} \downarrow _{\mathcal{D}} ^{\mathcal{A}} \ar[r] ^{\tilde{p}} & T \ar[r] & 0, }
\end{equation*}
where $P'$ is a projective $\mathcal{D}$-module such that $P'_i = (\tilde{P} \downarrow _{\mathcal{D}} ^{\mathcal{A}} )_i$ for every $i \geqslant 1$, and $T \in \text{add} (\mathcal{D}_0)$.\

Let $P''$ be a projective cover of $T$ (as a $\mathcal{D}$-module). Then we obtain:

\begin{equation}
\xymatrix{ & & K \ar@{=}[r] \ar[d] & K \ar[d] \\
0 \ar[r] & P' \ar@{=}[d] ^{\alpha} \ar[r] & P \ar[r]^p \ar[d]^{\varphi} & P'' \ar[d]^{p''} \ar[r] & 0 \\
0 \ar[r] & P' \ar[r] & \tilde{P} \downarrow _{\mathcal{D}} ^{\mathcal{A}} \ar[r] ^{\tilde{p}} & T \ar[r] & 0}.
\end{equation}
We give some explanations here. Since $P$ is a projective $\mathcal{D}$-module and the map $p''$ is surjective, the map $\tilde{p} \circ \varphi$ factors through $p''$ and gives a map $p: P \rightarrow P''$. Restricted to degree 0 components, $p''$ and $\varphi$ (see diagram (3.2.3)) are isomorphisms and $\tilde{p}$ is surjective. Thus $p$ restricted to the degree 0 components is also surjective. But $P''$ is generated in degree 0, so $p$ is surjective. Since $P_0 = (\tilde{P} \downarrow _{\mathcal{D}} ^{\mathcal{A}} )_0$ and $P''_0 = T_0 = T$, $\alpha$ restricted to the degree 0 components is an isomorphism, and hence an isomorphism of projective $k\mathcal{D}$-modules (notice that the middle row splits since $P''$ is a projective $\mathcal{D}$-module, so the kernel should be a projective $\mathcal{D}$-module generated in degree 0). By the snake Lemma, the kernel of $p''$ is also $K$ up to isomorphism.\

Let $J = \bigoplus _{i \geqslant 1} \mathcal{D}_i$. Since $\mathcal{D}_0$ is supposed to be a Koszul $\mathcal{D}$-module, $J [-1] \cong \Omega (\mathcal{D} _0) [-1]$ is a Koszul $\mathcal{D}$-module, too. Consider the leftmost column in diagram 3.2.3. The top term $K[-1]$ is a Koszul $\mathcal{D}$-module since $K \cong P'' / T \cong P''/P''_0 \in \text{add} (J)$. The middle term $\Omega (M \downarrow _{\mathcal{D}} ^{\mathcal{A}}) [-1]$ is Koszul as well since $M \downarrow _{\mathcal{D}} ^{\mathcal{A}}$ is supposed to be Koszul. By Proposition 2.1.8, the bottom term $(\Omega M)[-1] \downarrow _{\mathcal{D}} ^{\mathcal{A}}$ must be Koszul.

Since $M$ is generated in degree 0 and $x$ is a minimal object, $M(x) \subseteq M_0$, so $M(x) = (M \downarrow _{\mathcal{D}} ^{\mathcal{A}}) (x) \subseteq (M \downarrow _{\mathcal{D}} ^{\mathcal{A}}) _0$ as vector spaces. Similarly, $P(x) \subseteq P_0$ and $P(x) \cong (M \downarrow _{\mathcal{D}} ^{\mathcal{A}}) (x)$, so $\Omega (M \downarrow _{\mathcal{D}} ^{\mathcal{A}}) (x) = 0$. Consequently, $(\Omega M)[-1]$ is supported on $\Ob \mathcal{A}_x$ by observing the leftmost column of diagram (3.2.3). Moreover, we can show as in the proof of Theorem 3.2.3 that all of its syzygies are supported on $\Ob \mathcal{A}_x$, and $\Omega^i ((\Omega M)[-1])_i = \Omega^{i+1} (M)_{i+1}$ are projective $(\mathcal{A}_x)_0$-modules. Therefore, applying the induction hypothesis to $(\Omega M)[-1]$ supported on $\Ob \mathcal{A}_x$ and $(\Omega M) [-1] \downarrow _{\mathcal{D}} ^{\mathcal{A}}$ supported on $\Ob \mathcal{D}_x$, we conclude that $(\Omega M) [-1]$ is a Koszul $\mathcal{A} _x$-module, and hence a Koszul $\mathcal{A}$-module. Clearly, $M$ is a Koszul $\mathcal{A}$-module since it is generated in degree 0. The conclusion follows from induction.
\end{proof}

\begin{remark}
We remind the reader that in the previous two theorems we do not assume that $\mathcal{A}_0 = \bigoplus _{x \in \Ob \mathcal{A}} \mathcal{A} (x,x)$ satisfies the splitting condition (S). On the other hand, by our construction, $\mathcal{D}_0 \cong \bigoplus _{x \in \Ob \mathcal{D}} k_x$ is a semisimple algebra.
\end{remark}

Assuming that $\mathcal{A}_0$ indeed has the splitting property, we get the following nice correspondence.

\begin{theorem}
Let $\mathcal{A}$ be a graded directed category and suppose that $\mathcal{A}_0$ has the splitting property (S). Construct $\mathcal{D}$ as before. Then:
\begin{enumerate}
\item $\mathcal{A}$ is a Koszul category in our sense if and only if $\mathcal{A}$ is standardly stratified and $\mathcal{D}$ is a Koszul category in the classical sense.
\item If $\mathcal{A}$ is a Koszul category, then a graded $\mathcal{A}$-module $M$ generated in degree 0 is Koszul if and only if $M \downarrow _{\mathcal{D}} ^{\mathcal{A}}$ is a Koszul $\mathcal{D}$-module and $M$ is a projective $\mathcal{A}_0$-module.
\end{enumerate}
\end{theorem}

\begin{proof}
If $\mathcal{A}$ is Koszul in our sense, then it is standardly stratified by (2) of Theorem 3.2.1, and $\mathcal{D}$ is Koszul in the classical sense by Theorem 3.2.5. Conversely, if $\mathcal{D}$ is Koszul in the classical sense, then $\mathcal{A}_0 \downarrow _{\mathcal{D}} ^{\mathcal{A}} \in \text{add} (\mathcal{D}_0)$ is a Koszul $\mathcal{D}$-module. If $\mathcal{A}$ is furthermore standardly stratified, then it is a projective $\mathcal{A}_0$-module by Theorem 3.2.1. Therefore, all $\Omega^i( \mathcal{A}_0 )_i$ are projective $\mathcal{A}_0$-modules according to Lemma 2.1.12. Thus $\mathcal{A}_0$ is a Koszul $\mathcal{A}$-module by Theorem 3.2.6, and hence $\mathcal{A}$ is a Koszul category. This proves the first statement.

Now suppose that $\mathcal{A}$ is Koszul. Then $\mathcal{A}$ is a projective $\mathcal{A}_0$-module. If $M$ is a Koszul $\mathcal{A}$-module, $M \downarrow _{\mathcal{D}} ^{\mathcal{A}}$ is a Koszul $\mathcal{D}$-module by Theorem 3.2.5. Furthermore, $M$ is a projective $\mathcal{A}_0$-module by Corollary 2.1.18. Conversely, if $M$ is a projective $\mathcal{A}_0$-module and $M \downarrow _{\mathcal{D}} ^{\mathcal{A}}$ is a Koszul $\mathcal{D}$-module, then by Lemma 2.1.12 $\Omega^i (M)_i$ are projective $\mathcal{A}_0$-modules for all $i \geqslant 0$. By Theorem 3.2.6. $M$ is a Koszul $\mathcal{A}$-module.
\end{proof} 
\chapter{Applications to finite EI categories}
\label{Applications to finite EI categories}

In this chapter we study the application of our generalized Koszul theory to \textit{finite EI categories}. They include finite groups and posets as special examples, and the $k$-linearizations of (skeletal) finite EI categories are directed categories. The application of classical Koszul theory to incidence algebras of finite poset has been discussed in \cite{Reiner}.

In the first section we give some background on finite EI categories and some preliminary results. In the second section we study in details \textit{finite free EI categories} and their representations. Koszul properties of finite EI categories are studied in the last section.

Through this chapter all finite EI categories are \textit{skeletal}.

\section{Preliminaries}

For the reader's convenience, we include in this section some background on the representation theory of finite EI categories. Please refer to \cite{Webb2, Webb3, Xu1, Xu2} for more details.

A \textit{finite EI category} $\mathcal{E}$ is a small category with finitely many morphisms such that every endomorphism in $\mathcal{E}$ is an isomorphism. Examples include finite groups (viewed as categories with one object) and finite posets (all endomorphisms are identities). The category $\mathcal{E}$ is \textit{connected} if for any two distinct objects $x$ and $y$, there is a list of objects $x=x_0, x_1, \ldots, x_n=y$ such that either $\Hom_{\mathcal{E}} (x_i, x_{i+1})$ or $\Hom_{\mathcal{E}} (x_{i+1}, x_i)$ is not empty, $0 \leqslant i \leqslant n-1$. Every finite EI category is a disjoint union of connected components, and each component is a full subcategory.

A \textit{representation} of $\mathcal{E}$ is a functor $R$ from $\mathcal{E}$ to the category of finite-dimensional $k$-vector spaces. The functor $R$ assigns a vector space $R(x)$ to each object $x$ in $\mathcal{E}$, and a linear transformation $R(\alpha): R(x) \rightarrow R(y)$ to each morphism $\alpha: x \rightarrow y$ such that all composition relations of morphisms in $\mathcal{E}$ are preserved under $R$. A \textit{homomorphism} $\varphi: R_1 \rightarrow R_2$ of two representations is a natural transformation of functors.

A finite EI category $\mathcal{E}$ determines a finite-dimensional $k$-algebra $k\mathcal{E}$ called the \textit{category algebra}. It has basis all morphisms of $\mathcal{E}$, and the multiplication is defined by the composition of morphisms (the composition is 0 when two morphisms cannot be composed) and bilinearity. By Theorem 7.1 of \cite{Mitchell}, a representation of $\mathcal{E}$ is equivalent to a $k\mathcal{E}$-module. Thus we do not distinguish these two concepts throughout this chapter.

By Proposition 2.2 in \cite{Webb2}, if $\mathcal{E}$ and $\mathcal{D}$ are equivalent finite EI categories, $k \mathcal{E}$-mod is Morita equivalent to $k \mathcal{D}$-mod. Moreover If $\mathcal{E} = \bigsqcup _{i=1}^{m} \mathcal{E}_i$ is a disjoint union of several full categories, the category algebra $k\mathcal{E}$ has an algebra decomposition $k\mathcal{E}_1 \oplus \ldots \oplus k\mathcal{E}_m$.  Thus it is sufficient to study the representations of connected, skeletal finite EI categories. We make the following convention:\\

\noindent \textbf{Convention:} \textit{All finite EI categories in this chapter are \textbf{connected} and \textbf{skeletal}. Thus endomorphisms, isomorphisms and automorphisms in a finite EI category coincide.}\\

Under the hypothesis that $\mathcal{E}$ is skeletal, if $x$ and $y$ are two distinct objects in $\mathcal{E}$ with $\Hom_{\mathcal{E}} (x,y)$ non-empty, then $\Hom_{\mathcal{E}} (y,x)$ is empty. Indeed, if this is not true, we can take $\alpha \in \Hom_{\mathcal{E}} (y, x)$ and $\beta \in \Hom_{\mathcal{E}} (x,y)$. The composite $\beta \alpha$ is an endomorphism of $y$, hence an automorphism. Similarly, the composite $\alpha \beta$ is an automorphism of $x$. Thus both $\alpha$ and $\beta$ are isomorphisms, so $x$ is isomorphic to $y$. But this is impossible since $\mathcal{E}$ is skeletal and $x \neq y$.

\section{Finite free EI categories}

The content of this section comes from \cite{Li1}. Please refer to that paper for more details.

Before defining \textit{finite free EI categories}, we need to introduce \textit{finite EI quivers}, which are finite quivers with extra structure.

\begin{definition}
A finite EI quiver $\hat{Q}$ is a datum $(Q_0, Q_1, s, t, f, g)$, where: $(Q_0, Q_1, s, t)$ is a finite acyclic quiver with vertex set $Q_0$, arrow set $Q_1$, source map $s$ and target map $t$. The map $f$ assigns a finite group $f(v)$ to each vertex $v \in Q_0$; the map $g$ assigns an $(f(t(\alpha)), f(s(\alpha)))$-biset to each arrow $\alpha \in Q_1$.
\end{definition}

If $f$ assigns the trivial group to each vertex in $Q_0$ in the above definition, we obtain a quiver in the usual sense. In this sense, finite acyclic quivers are special cases of finite EI quivers.

Each finite EI quiver $\hat{Q} = (Q_0, Q_1, s, t, f, g)$ determines a finite EI category $\mathcal{C_{\hat{Q}}}$ in the following way: the objects in $\mathcal{C_{\hat{Q}}}$ are precisely the vertices in $Q_0$. For a particular object $v$ in $\mathcal{C_{\hat{Q}}}$, we define $\Aut_{\mathcal{C_{\hat{Q}}}} (v) = f (v)$, which is a finite group by our definition. It remains to define $\Hom_{\mathcal{C_{\hat{Q}}}} (v,w)$ if $v \neq w$ are distinct vertices in $Q_0$, and the composition of morphisms.

Let $\xymatrix{v \ar@{~>}[r]^{\gamma} & w}$ be a directed path from $v$ to $w$. Then $\gamma$ can be written uniquely as a composition of arrows, where $v_i \in Q_0$ and $\alpha_i \in Q_1$ for $i=1, \ldots, n$.
\begin{equation*}
\xymatrix{v=v_0 \ar[r]^{\alpha_1} & v_1 \ar[r]^{\alpha_2} & \ldots \ar[r]^{\alpha_n} & v_n=w}
\end{equation*}
Notice that $g(\alpha_i)$ is an $(f(v_i), f(v_{i-1}))$-biset, so we define:
\begin{equation*}
H_{\gamma}=g(\alpha_n) \times_{f(v_{n-1})} g(\alpha_{n-1}) \times_{f(v_{n-2})} \ldots \times_{f(v_1)} g(\alpha_1),
\end{equation*}
the biset product defined in \cite{Webb0}. Finally, $\Hom_{\mathcal{C_{\hat{Q}}}} (v,w)$ can be defined as $\bigsqcup _{\gamma} H_{\gamma}$, the disjoint union of all $H_{\gamma}$, over all possible paths $\gamma$ from $x$ to $y$. In the case $v = w$ we define $\Hom_{\mathcal{C_{\hat{Q}}}} (v,v) = f(v)$.

Let $\alpha$ and $\beta$ be two morphisms in $\mathcal{C_{\hat{Q}}}$. They lie in two sets $H_{\gamma_1}$ and $H_{\gamma_2}$, where $\gamma_1$ and $\gamma_2$ are two paths determined by $\alpha$ and $\beta$ respectively, possibly of length 0. Their composite $\beta \circ \alpha$ can be defined by the following rule:\ it is 0 if the composite $\gamma_2 \gamma_1$ is not defined in $\hat{Q}$. Otherwise, the initial vertex $v$ of $\gamma_2$ is exactly the terminal vertex of $\gamma_1$. Since there is a natural surjective map $p: H_{\gamma_2} \times H_{\gamma_1} \rightarrow H_{\gamma_2} \times_{f(v)} H_{\gamma_1}$, we define $\beta \circ \alpha = p(\beta, \alpha)$, the image of $(\beta, \alpha)$ in $H_{\gamma_2} \times_{f(v)} H_{\gamma_1}$. This definition satisfies the associative rule,
and in this way we get a finite EI category $\mathcal{C_{\hat{Q}}}$ from $\hat{Q}$.

\begin{definition}
A finite EI category $\mathcal{E}$ is a finite free EI category \footnote{The terminology ``free EI category'' has previously been introduced with different meaning in \cite{Luck}. See Definition 16.1 on page 325. By L\"{u}ck's definition, a finite EI category $\mathcal{E}$ is a free EI category if $\Aut_{\mathcal{E}} (y)$ acts freely on $\Hom _{\mathcal{E}} (x, y)$ for all $x, y \in \Ob (\mathcal{E})$.} if it is isomorphic to the finite EI category $\mathcal{C_{\hat{Q}}}$ generated from a finite EI quiver $\hat{Q}$ by the above construction.
\end{definition}

In practice it is inconvenient to check whether a finite EI category $\mathcal{E}$ is free or not by using the definition. Fortunately, there is an equivalent characterization built upon unfactorizable morphisms: the Unique Factorization Property (UFP).

\begin{definition}
A morphism $\alpha: x \rightarrow z$ in a finite EI category $\mathcal{E}$ is unfactorizable if $\alpha$ is not an isomorphism and whenever it has a factorization as a composite $\xymatrix{x \ar[r]^{\beta} & y\ar[r]^{\gamma} & z}$, then either $\beta$ or $\gamma$ is an isomorphism.
\end{definition}

The reader may want to know the relation between the terminology \textit{unfactorizable morphism} and the term \textit{irreducible morphism} which is widely accepted and used, for example in \cite{Auslander, Bautista, Xu1}. Indeed, in this chapter they coincide since we only deal with finite EI categories. But in a more general context, they are different, as we explain in the following example:

\begin{example}
Consider the following category $\mathcal{E}$ with two objects $x \ncong y$. The non-identity morphisms in $\mathcal{E}$ are generated by $\alpha: x \rightarrow y$ and $\beta: y \rightarrow x$ with the only nontrivial relation $\beta \alpha = 1_x$. Then the morphisms in $\mathcal{E}$ are $1_x, 1_y, \alpha, \beta \text{ and } \alpha\beta$. It is not a finite EI category since $\alpha \beta \in \End _{\mathcal{E}} (y)$ is not an isomorphism. Then neither $\alpha$ nor $\beta$ are irreducible morphisms since one of them is a split monomorphism and the other is a split epimorphism. However, the reader can check that they are unfactorizable morphisms.
\end{example}

\begin{equation*}
\xymatrix{ x \ar@(ul,dl)_{1_x} \ar@/^/[rr] ^{\alpha} & & y \ar@/^/[ll] ^{\beta} \ar@(ur,dr) ^{1_y}}
\end{equation*}

Note that the composite of an unfactorizable morphism with an isomorphism is still unfactorizable.

\begin{proposition}
Let $\alpha: x \rightarrow y$ be an unfactorizable morphism. Then $h\alpha g$ is also unfactorizable for every $h \in \Aut_{\mathcal{E}}(y)$ and every $g \in \Aut_{\mathcal{E}}(x)$.
\end{proposition}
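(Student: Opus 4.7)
The plan is to unpack the definition of unfactorizable directly and use the fact that $g$ and $h$ are isomorphisms, so multiplying by them (or their inverses) on the left or right preserves the property of being an isomorphism.

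First I would verify that $h\alpha g$ is not itself an isomorphism. If it were, then $\alpha = h^{-1}(h\alpha g)g^{-1}$ would be a composite of three isomorphisms, hence an isomorphism, contradicting the assumption that $\alpha$ is unfactorizable (which by definition excludes isomorphisms).

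Next, suppose $h\alpha g = \gamma \circ \beta$ is any factorization in $\mathcal{E}$, with $\beta \colon x \to z$ and $\gamma \colon z \to y$ for some object $z$. Rearranging using the inverses of $h$ and $g$ gives
\begin{equation*}
\alpha \;=\; h^{-1} \circ (h\alpha g) \circ g^{-1} \;=\; (h^{-1}\gamma) \circ (\beta g^{-1}),
\end{equation*}
which is a factorization of $\alpha$ through the object $z$. Since $\alpha$ is unfactorizable, one of $h^{-1}\gamma$ or $\beta g^{-1}$ must be an isomorphism. If $h^{-1}\gamma$ is an isomorphism, then $\gamma = h \circ (h^{-1}\gamma)$ is an isomorphism (composition of two isomorphisms); analogously, if $\beta g^{-1}$ is an isomorphism, then $\beta = (\beta g^{-1}) \circ g$ is an isomorphism. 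In either case the factorization $h\alpha g = \gamma \beta$ has one of the factors being an isomorphism, which is exactly the unfactorizability condition for $h\alpha g$.

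No step here looks like a genuine obstacle; the argument is essentially formal manipulation of isomorphisms in a category, and the finite EI hypothesis plays no active role beyond guaranteeing that $g$ and $h$ have inverses (which is already built into the assumption $g \in \Aut_{\mathcal{E}}(x)$, $h \in \Aut_{\mathcal{E}}(y)$). The only thing to be a little careful about is writing down the ``not an isomorphism'' check explicitly, since the definition of unfactorizable has two clauses (not an isomorphism, and every factorization is trivial) and both must be verified for $h\alpha g$.
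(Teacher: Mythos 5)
Your proof is correct and follows essentially the same route as the paper: given a factorization $h\alpha g = \gamma\beta$, rewrite $\alpha = (h^{-1}\gamma)(\beta g^{-1})$, invoke unfactorizability of $\alpha$, and transfer the isomorphism back to $\gamma$ or $\beta$. Your explicit check that $h\alpha g$ is not itself an isomorphism is a small point the paper leaves implicit, but the argument is the same.
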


\begin{proof}
Fix a decomposition $h\alpha g=\alpha_1 \alpha_2$. Then we have $\alpha = (h^{-1} \alpha_1) (\alpha_2 g^{-1})$. But $\alpha$ is unfactorizable, so by definition either one of $h^{-1}\alpha_1$ and $\alpha_2 g^{-1}$ is an isomorphism. Without loss of generality, let $h^{-1}\alpha_1$ be an isomorphism. Then $\alpha_1$ is an isomorphism since $h^{-1}$ is an automorphism.
\end{proof}

Let $\mathcal{E}$ be a finite EI category. By the previous proposition, the set of unfactorizable morphisms from an object $x$ to another object $y$ is closed under the actions of $\Aut_{\mathcal{E}}(x)$ and $\Aut_{\mathcal{E}}(y)$. Choose a fixed representative for each ($\Aut_{\mathcal{E}} (y)$,$\Aut_{\mathcal{E}} (x)$)-orbit. Repeating this process for all pairs of different objects $(x,y)$, we get a set $A =\{ \alpha_1, \ldots, \alpha_n\}$ of orbit representatives. Elements in $A$ are called \textit{representative unfactorizable morphisms}.

We should point out here that each finite EI category $\mathcal{E}$ determines a finite EI quiver $\hat{Q}$ in the following way: its vertices are objects in $\mathcal{E}$; we put an arrow $x \rightarrow y$ in $\hat{Q}$ for each representative unfactorizable morphism $\alpha: x \rightarrow y$ in $\mathcal{E}$. Thus the arrows biject with all representative unfactorizable morphisms $\alpha: x \rightarrow y$ in $\mathcal{E}$, or equivalently, all $\Aut _{\mathcal{E}} (y) \times \Aut _{\mathcal{E}} (x)$-orbits of unfactorizable morphisms in $\mathcal{E}$. The map $f$ assigns $\Aut_{\mathcal{E}} (x)$ to each object $x$; the map $g$ assigns the ($\Aut_{\mathcal{E}} (y)$, $\Aut_{\mathcal{E}} (x)$)-biset where a representative unfactorizable morphism $\alpha: x \rightarrow y$ lies to the corresponding arrow. Obviously, this finite EI quiver is unique up to isomorphism. We call this quiver the \textit{finite EI quiver} of $\mathcal{E}$.

Now suppose that $\mathcal{E}$ is a finite free EI category. It is possible that there is more than one finite EI quiver generating $\mathcal{E}$, although they must have the same vertices. However, it is not hard to see that all those finite EI quivers are subquivers of the finite EI quiver of $\mathcal{E}$.

All non-isomorphisms can be written as composites of unfactorizable morphisms.
\begin{proposition}
Let $\alpha: x \rightarrow y$ be a morphism with $x \neq y$. Then it has a decomposition \xymatrix{x=x_0 \ar[r]^{\alpha_1} & x_1 \ar[r]^{\alpha_2} & \ldots \ar[r]^{\alpha_n} & x_n=y}, where all $\alpha_i$ are unfactorizable.
\end{proposition}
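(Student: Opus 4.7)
The plan is to proceed by strong induction on a length invariant attached to $\alpha$. First I would observe that $\alpha: x \to y$ with $x \neq y$ cannot be an isomorphism: if it were, then $\Hom_{\mathcal{E}}(y,x)$ would contain $\alpha^{-1}$, and combined with $\alpha \in \Hom_{\mathcal{E}}(x,y)$ this would force $x \cong y$, hence $x = y$ by the standing skeletality convention. So it is at least legitimate to ask for a decomposition into unfactorizables.

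Next I would introduce the key finiteness tool. Recall from the earlier discussion of directed categories that the relation $u \preceq v$ iff $\Hom_{\mathcal{E}}(u,v) \neq \emptyset$ is a partial order on $\Ob \mathcal{E}$ (this uses both EI and skeletality). Given any factorization
\begin{equation*}
\xymatrix{x = x_0 \ar[r]^{\alpha_1} & x_1 \ar[r]^{\alpha_2} & \cdots \ar[r]^{\alpha_n} & x_n = y}
\end{equation*}
in which no $\alpha_i$ is an isomorphism, each $\alpha_i$ must satisfy $x_{i-1} \neq x_i$ (otherwise $\alpha_i$ would be an endomorphism, hence an isomorphism by the EI property). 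Consequently $x = x_0 \prec x_1 \prec \cdots \prec x_n = y$ is a strictly increasing chain in $(\Ob \mathcal{E}, \preceq)$, so $n \leq |\Ob \mathcal{E}| - 1$. Define $N(\alpha)$ to be the supremum of such lengths $n$; the bound just derived shows $N(\alpha)$ is a well-defined positive integer.

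Now I would induct on $N(\alpha)$. The base case $N(\alpha) = 1$ is exactly the statement that $\alpha$ admits no nontrivial factorization through non-isomorphisms, i.e., $\alpha$ is unfactorizable, so the trivial decomposition works. For the inductive step, assume $N(\alpha) \geq 2$. Then by definition of $N(\alpha)$ there is a factorization $\alpha = \beta \gamma$ with $\gamma: x \to z$ and $\beta: z \to y$, neither an isomorphism; moreover $z \neq x$ and $z \neq y$ as above. Concatenating maximal non-isomorphism factorizations of $\beta$ and $\gamma$ yields one of $\alpha$, so $N(\beta) + N(\gamma) \leq N(\alpha)$; since $N(\beta), N(\gamma) \geq 1$, both are strictly less than $N(\alpha)$. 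By the inductive hypothesis (applied separately to $\beta$ and $\gamma$, if $z \neq y$ and $z \neq x$ respectively, which we just verified), each factors as a composite of unfactorizables, and concatenating these gives the desired decomposition of $\alpha$.

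The main conceptual step, and really the only nontrivial point, is the boundedness of $N(\alpha)$: this is where one genuinely uses both the finiteness of $\Ob \mathcal{E}$ and the EI hypothesis to convert an arbitrary non-isomorphism factorization into a strict chain in the poset. Once that is in hand, the induction is routine.
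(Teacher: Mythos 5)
Your proof is correct and takes essentially the same approach as the paper's: both hinge on the observation that the intermediate objects in any factorization into non-isomorphisms are pairwise distinct (by the skeletal EI property), so such chains have length bounded by $|\Ob \mathcal{E}|$. The paper argues termination of an iterative refinement informally from finiteness of objects, whereas you package the same termination argument as a strong induction on the maximal factorization length $N(\alpha)$ — a cleaner formalization, but not a different idea.
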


\begin{proof}
If $\alpha$ is unfactorizable, we are done. Otherwise, we can find a decomposition for $\alpha$: \xymatrix{x \ar[r]^{\alpha_1} & x_1 \ar[r]^{\alpha_2} & y} where neither $\alpha_1$ nor $\alpha_2$ is an isomorphism. In particular, $x_1$ is different from $x$ and $y$. If both $\alpha_1$ and $\alpha_2$ are unfactorizable, we are done. Otherwise, assume $\alpha_1$ is not unfactorizable. Repeating the above process, we can get a decomposition \xymatrix{x \ar[r] ^{\alpha_{11}} & x_{11} \ar[r]^{\alpha_{12}} & x_1 \ar[r]^{\alpha_2} & y}. With the same reasoning, $x, x_{11}, x_1, y$ are pairwise different. Since there are only finitely many objects, this process ends after finitely many steps. Therefore we get a decomposition of $\alpha$ into unfactorizable morphisms.
\end{proof}

For an arbitrary finite EI category $\mathcal{E}$, the ways to decompose a non-isomorphism into unfactorizable morphisms need not to be unique. However, we can show that for finite free EI categories, this decomposition is unique up to a trivial relation, i.e., they satisfy the property defined below:
\begin{definition}
A finite EI category $\mathcal{E}$ satisfies the Unique Factorization Property (UFP) if whenever a non-isomorphism $\alpha$ has two decompositions into unfactorizable morphisms:
\begin{equation*}
\xymatrix{x=x_0 \ar[r]^{\alpha_1} & x_1 \ar[r]^{\alpha_2} & \ldots \ar[r]^{\alpha_m} &x_m=y \\
x=x_0 \ar[r]^{\beta_1} & y_1 \ar[r]^{\beta_2} & \ldots \ar[r]^{\beta_n} & y_n=y}
\end{equation*}
then $m=n$, $x_i = y_i$, and there are $ h_i \in \Aut_{\mathcal{E}} (x_i)$ such that the following diagram commutes, $1 \leqslant i \leqslant n-1$:
\begin{align*}
\xymatrix{ x_0 \ar[r]^{\alpha_1} \ar@{=}[d]^{id} & x_1 \ar[r]^{\alpha_2} \ar[d]^{h_1} & \ldots \ar[r]^{\alpha_{\ldots}} \ar[d]^{h_{\ldots}} & x_{n-1} \ar[r]^{\alpha_n} \ar[d]^{h_{n-1}} & x_n \ar@{=}[d]^{id} \\
x_0 \ar[r]^{\beta_1} & x_1 \ar[r]^{\beta_2} & \ldots \ar[r]^{\beta_{\ldots}} & x_{n-1} \ar[r]^{\beta_n} & x_n}
\end{align*}
\end{definition}

The UFP gives a characterization of finite free EI categories.
\begin{proposition}
A finite EI category $\mathcal{E}$ is free if and only if it satisfies the UFP.
\end{proposition}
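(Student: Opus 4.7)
The plan is to prove both implications by analyzing how morphisms in $\mathcal{C}_{\hat{Q}}$ decompose into unfactorizable pieces and matching the resulting ambiguity with the UFP equivalence.

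For the forward direction, suppose $\mathcal{E} \cong \mathcal{C}_{\hat{Q}}$ for some finite EI quiver $\hat{Q}$. The first step is to show that a non-isomorphism lying in $H_\gamma$ is unfactorizable if and only if the path $\gamma$ has length one. The ``if'' direction uses the fact that any nontrivial factorization $\alpha = \alpha_2 \circ \alpha_1$ through an intermediate object would place $\alpha$ in $H_{\gamma_2 \gamma_1}$ for a strictly longer concatenated path, contradicting the disjointness of $\bigsqcup_\gamma H_\gamma$; the ``only if'' is immediate from the biset-product definition of $H_\gamma$. Given this, every $\alpha \in H_\gamma$ with $\gamma = \alpha_n \cdots \alpha_1$ has a canonical factorization into unfactorizables obtained from any representative tuple $[a_n, \ldots, a_1]$. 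If $\alpha$ also decomposes as $b_m \circ \cdots \circ b_1$, each $b_j$ sits in some $g(\beta_j)$ and the composite lies in $H_{\beta_m \cdots \beta_1}$; disjointness forces $\beta_m \cdots \beta_1 = \gamma$, whence $m = n$ and the intermediate vertices agree. The equality of the two tuples in the iterated amalgamated product is then precisely the data of the automorphisms $h_i$ of Definition 4.2.7.

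For the converse, construct the canonical finite EI quiver $\hat{Q}$ of $\mathcal{E}$ as described before Proposition 4.2.6, and define a functor $F : \mathcal{C}_{\hat{Q}} \to \mathcal{E}$ that is the identity on objects and on each automorphism group and sends a class $[a_n, \ldots, a_1] \in H_\gamma$ to the composite $a_n \circ \cdots \circ a_1$ in $\mathcal{E}$. Well-definedness of $F$ follows from associativity of composition in $\mathcal{E}$, which respects each amalgamation move $(\ldots, a_{i+1} h, a_i, \ldots) \sim (\ldots, a_{i+1}, h a_i, \ldots)$. Surjectivity on Hom-sets uses Proposition 4.2.6 together with Proposition 4.2.5, the latter allowing us to rewrite each unfactorizable factor as an orbit representative by pushing automorphism corrections into adjacent factors. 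Injectivity on Hom-sets is exactly the UFP: two tuples whose composites agree in $\mathcal{E}$ are linked by the $h_i$ of Definition 4.2.7, which generate the biset-product equivalence. Functoriality is automatic from the concatenation rule defining composition in $\mathcal{C}_{\hat{Q}}$, so $F$ is an isomorphism of finite EI categories.

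The main technical obstacle is establishing the precise correspondence between UFP witnesses and single-step amalgamation moves in the iterated biset product. Each $h_i$ appearing in a UFP diagram should supply exactly one amalgamation step, and conversely every equality of tuples in the amalgamated product should decompose into a sequence of such single-step moves, each contributing one UFP-style $h_i$. Handling boundary cases where automorphisms appear as intermediate factors inside a decomposition is delicate but is controlled by Proposition 4.2.5, which keeps composites of unfactorizables with automorphisms inside the same $\Aut$-orbit and hence introduces no new combinatorial data.
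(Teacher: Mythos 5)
Your proof is correct and follows essentially the same route as the paper's: the forward direction identifies unfactorizable morphisms of $\mathcal{C}_{\hat{Q}}$ with elements of the length-one pieces $g(\tau)$, uses disjointness of $\bigsqcup_{\gamma} H_{\gamma}$ to match the two paths, and reads off the automorphisms $h_i$ from the orbit description of the iterated biset product, exactly as in the paper (which asserts the length-one identification as ``easy to see'' and extracts the $h_i$ by induction). The converse is the paper's argument with the comparison functor written in the opposite direction ($\mathcal{C}_{\hat{Q}} \to \mathcal{E}$ instead of $\mathcal{E} \to \mathcal{C}_{\hat{Q}}$), which is immaterial; the ``technical obstacle'' you flag is not really one, since equality in the amalgamated product is by definition an orbit relation under the product of the intermediate automorphism groups, which is precisely the UFP data.
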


\begin{proof}
Suppose $\mathcal{E}$ is a finite free EI category generated by a finite EI quiver $\hat{Q}= (Q_0, Q_1, s, t, f, g)$. Let $\alpha: v \rightarrow w$ be an arbitrary non-isomorphism. By the previous proposition $\alpha$ can be written as a composite of unfactorizable morphisms. Let $\alpha_m \circ \ldots \circ \alpha_1$ and $\beta_n \circ \ldots \circ \beta_1$ be two such decompositions of $\alpha$. It is easy to see from definitions that an unfactorizable morphism in $\mathcal{E}$ lies in $g(\tau)$ for some unique arrow $\tau \in Q_1$. Thus $\alpha_m \circ \ldots \circ \alpha_1$ and $\beta_n \circ \ldots \circ \beta_1$ determine two paths $\gamma_1$ and $\gamma_2$ in $\hat{Q}$ from $v$ to $w$. But $\alpha$ is contained in $\Hom_{\mathcal{E}} (v,w) = \bigsqcup _{\gamma} H_{\gamma}$, the disjoint union taken over all possible paths from $v$ to $w$, so $\gamma_1$ must be the same as $\gamma_2$. Consequently, $m =n$, and $\alpha_i$ and $\beta_i$ have the same target and source for $1 \leqslant i \leqslant n$. By the definition of biset product, the fact
\begin{equation*}
\alpha_n \circ (\alpha_{n-1} \ldots \circ \alpha_1) = \beta_n \circ (\beta_{n-1} \circ \ldots \circ \beta_1)
\end{equation*}
in the biset product implies that there is an automorphisms $g_{n-1} \in \Aut _{\mathcal{E}} (x_{n-1})$ such that
\begin{equation*}
\alpha_n = \beta_n g_{n-1}, \quad (\alpha_{n-1} \circ \ldots \circ \alpha_1) = g_{n-1}^{-1} (\beta_{n-1} \circ \ldots \circ \beta_1),
\end{equation*}
where $x_{n-1}$ is the common target of $\alpha_{n-1}$ and $\beta_{n-1}$. By an easy induction on $n$, we show that $\{ \alpha_i \}_{i=1}^n$ and $\{ \beta_j \}_{j=1}^n$ have the required relations in the previous definition. Thus $\mathcal{E}$ satisfies the UFP.

On the other hand, if $\mathcal{E}$ satisfies the UFP,  we want to show that $\mathcal{E}$ is isomorphic to the finite free EI category $\mathcal{C_{\hat{Q}}}$ generated from its finite EI quiver $\hat{Q}$. Define a functor $F: \mathcal{E} \rightarrow \mathcal{C_{\hat{Q}}}$ in the following way: First, $F(x) = x$ for every object $x$ in $\mathcal{E}$ since Ob$(\mathcal{E}) = \Ob (\mathcal{C_{\hat{Q}}})$ by our construction. Furthermore, it is also clear that $\Aut_{\mathcal{E}}(x) = \Aut \mathcal{C_{\hat{Q}}} (x)$ for every object $x$, and the biset of unfactorizable morphisms from $x$ to $y$ in $\mathcal{E}$ is the same as that in $\mathcal{C_{\hat{Q}}}$ for every pair of different objects $x$ and $y$. Therefore we can let $F$ be the identity map restricted to automorphisms and unfactorizable morphisms in $\mathcal{E}$. Proposition 4.2.6 tells us that every non-isomorphism $\alpha$ in $\mathcal{E}$ is a composite $\alpha_n \circ \ldots \circ \alpha_1$ of unfactorizable morphisms, so $F(\alpha)$ can be defined as $F(\alpha_n) \circ \ldots \circ F(\alpha_1)$. By the UFP $F$ is well-defined and is a bijection restricted to $\Hom_{\mathcal{E}} (x, y)$ for each pair of distinct objects $x$ and $y$. Consequently, $F$ is a bijection from Mor($\mathcal{E}$) to Mor$ (\mathcal{C_{\hat{Q}}})$ and so is an isomorphism. This finishes the proof.
\end{proof}

The following two lemmas give some special properties of finite free EI categories.

\begin{lemma}
Let $\mathcal{E}$ be a finite free EI category and $\alpha: x \rightarrow y$ be an unfactorizable morphism. Define $H = \Aut _{\mathcal{E}} (y)$ and $H_0 = \Stab _H (\alpha)$. If $| H_0 |$ is invertible in $k$, then the cyclic module $k\mathcal{E} \alpha$ is projective.
\end{lemma}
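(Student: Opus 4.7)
The plan is to exhibit $k\mathcal{E}\alpha$ as the image of a suitable idempotent and so conclude projectivity.

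Set $e_0 = \frac{1}{|H_0|}\sum_{h \in H_0} h$, viewed in $k\mathcal{E}$ via the natural inclusion $kH \hookrightarrow k\mathcal{E}$. Since $|H_0|$ is invertible in $k$, a direct computation gives $e_0^2 = e_0$ and $he_0 = e_0$ for every $h \in H_0$. Thus $k\mathcal{E} e_0$ is a projective left $k\mathcal{E}$-module. Moreover, $e_0\alpha = \alpha$ by definition of the stabilizer, so right multiplication by $\alpha$ yields a well-defined surjective $k\mathcal{E}$-module homomorphism
\[
\phi \colon k\mathcal{E} e_0 \longrightarrow k\mathcal{E}\alpha, \qquad \xi \longmapsto \xi\alpha.
\]

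Next I would analyze both sides in terms of the right $H_0$-action on $X = \{\mu \in \Mor\mathcal{E} : s(\mu) = y\}$. For $\mu \in X$ and $h \in H_0$ the identity $he_0 = e_0$ gives $(\mu h)e_0 = \mu e_0$, and a short calculation shows that $\mu e_0$ is a nonzero scalar multiple of $\sum_{\nu \in \mathrm{orbit}(\mu)}\nu$ (the stabilizer order is invertible in $k$, being a divisor of $|H_0|$). Choosing one representative $\mu_O$ per right $H_0$-orbit $O \subseteq X$, the elements $\{\mu_O e_0\}_O$ thus form a $k$-basis of $k\mathcal{E} e_0$.

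The technical core is the orbit description: for $\mu_1, \mu_2 \in X$, one has $\mu_1\alpha = \mu_2\alpha$ in $\mathcal{E}$ if and only if $\mu_1 = \mu_2 h$ for some $h \in H_0$. The ``if'' direction is immediate. For ``only if'', if $\mu_1,\mu_2$ are both automorphisms of $y$, then $(\mu_2^{-1}\mu_1)\alpha = \alpha$ forces $\mu_2^{-1}\mu_1 \in H_0$; if they are both non-isomorphisms (necessarily of the same unfactorizable length), decompose each into unfactorizables and append $\alpha$, producing two unfactorizable decompositions of the single morphism $\mu_1\alpha = \mu_2\alpha$. The UFP (Proposition 4.2.8) supplies compatibility automorphisms $1_x = k_0, k_1, \ldots, k_m, k_{m+1} = 1$; reading off the square involving $\alpha$ yields $k_1\alpha = \alpha$, hence $k_1 \in H_0$, and telescoping the remaining squares through the product $\mu_1 = \prod_{j} k_{j+1}^{-1}\mu_2^{(j)} k_j$ collapses to $\mu_1 = \mu_2 k_1$.

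Combining these steps, $\phi$ carries the basis $\{\mu_O e_0\}_O$ bijectively onto $\{\mu_O\alpha\}_O$, which is a basis of $k\mathcal{E}\alpha$ by the orbit description just established. Hence $\phi$ is an isomorphism, and $k\mathcal{E}\alpha$ is projective. The main obstacle I anticipate is the UFP bookkeeping in the non-isomorphism case: one must keep track of which compatibility automorphism lands in $H_0$ and verify that the telescoped product collapses to a single right translation by it, rather than to some more tangled expression.
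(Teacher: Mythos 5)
Your proposal is correct and follows essentially the same route as the paper's (sketched) proof: both construct the idempotent $e_0 = \frac{1}{|H_0|}\sum_{h \in H_0} h$ and show that right multiplication by $\alpha$ gives an isomorphism $k\mathcal{E}e_0 \cong k\mathcal{E}\alpha$, the paper deferring the verification to \cite{Li1}. Your orbit analysis and the UFP argument identifying when $\mu_1\alpha = \mu_2\alpha$ are exactly the details that verification requires, and they are carried out correctly.
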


\begin{proof}
This is Lemma 5.2 of \cite{Li1}, where we assumed that the automorphism groups of all objects are invertible in $k$ but only used the fact that $| H_0 |$ is invertible in $k$. Here we give a sketch of the proof. Let $e = \frac{1} { | H_0 |} \sum_{h\in H_0} h$. Then $e$ is well defined since $| H_0 |$ is invertible in $k$, and is an idempotent in $k\mathcal{E}$. Now define a map $\varphi: k\mathcal{E}e \rightarrow k\mathcal{E} \alpha$ by sending $re$ to $r\alpha$ for $r \in k\mathcal{E}$. We can check that $\varphi$ is an $k\mathcal{E}$-module isomorphism. Thus $k\mathcal{E} \alpha$ is projective. See \cite{Li1} for a detailed proof.
\end{proof}

\begin{lemma}
Let $\mathcal{E}$ be a finite free EI category and $\alpha: x \rightarrow y$ and $\alpha': x' \rightarrow y'$ be two distinct unfactorizable morphisms in $\mathcal{E}$. Then $k\mathcal{E} \alpha \cap k\mathcal{E} \alpha' = 0$ or $k\mathcal{E} \alpha = k\mathcal{E} \alpha'$.
\end{lemma}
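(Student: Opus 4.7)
The plan is to show that a nonzero intersection forces $\alpha'=h\alpha$ for some $h\in\Aut_{\mathcal{E}}(y)$, from which $k\mathcal{E}\alpha=k\mathcal{E}\alpha'$ is immediate (since then $\alpha'\in k\mathcal{E}\alpha$ and $\alpha=h^{-1}\alpha'\in k\mathcal{E}\alpha'$). The essential tool is the UFP characterization of finite free EI categories (Definition 4.2.7 together with Proposition 4.2.8).

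First reduce to the case $x=x'$. Since the morphisms of $\mathcal{E}$ form a $k$-basis of $k\mathcal{E}$, every element of $k\mathcal{E}\alpha$ is a $k$-linear combination of morphisms of the form $\mu\alpha$ with the source of $\mu$ equal to $y$, each of which has source $x$. Thus $k\mathcal{E}\alpha\subseteq (k\mathcal{E})\cdot 1_x$ and $k\mathcal{E}\alpha'\subseteq (k\mathcal{E})\cdot 1_{x'}$; if $x\ne x'$ these subspaces intersect trivially, so we may assume $x=x'$. Now suppose the intersection is nonzero. Comparing basis expansions, we obtain $\mu,\mu'\in \Mor(\mathcal{E})$ such that $\mu\alpha=\mu'\alpha'\ne 0$ as a single morphism of $\mathcal{E}$; in particular the targets of $\mu$ and $\mu'$ coincide.

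Next, present $\mu\alpha=\mu'\alpha'$ as a composite of unfactorizable morphisms in two ways. When $\mu$ is an automorphism of $y$, Proposition 4.2.5 ensures $\mu\alpha$ is itself unfactorizable, giving a length-one decomposition; otherwise Proposition 4.2.6 produces $\mu=\mu_n\circ\cdots\circ\mu_1$ with each $\mu_i$ unfactorizable and $n\ge 1$, so $\mu\alpha$ has decomposition length $n+1\ge 2$. Treat $\mu'\alpha'$ analogously. The UFP equates the lengths of the two decompositions, so either $\mu,\mu'$ are both automorphisms (length one) or neither is (length at least two). In the first case, equality of targets gives $y=y'$ and we simply rearrange to obtain $\alpha=(\mu^{-1}\mu')\alpha'$ with $\mu^{-1}\mu'\in\Aut_{\mathcal{E}}(y)$. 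In the second case, the UFP applied to the length-$(n+1)$ decompositions, after identifying $\alpha$ and $\alpha'$ as the leftmost (first-composed) unfactorizable morphisms $\alpha_1$ and $\beta_1$ in the diagram of Definition 4.2.7, yields $y=y'$ together with $h_1\in\Aut_{\mathcal{E}}(y)$ (from the leftmost commutative square, using $h_0=\mathrm{id}$) such that $\alpha'=h_1\alpha$. Either way $\alpha$ and $\alpha'$ lie in a common left $\Aut_{\mathcal{E}}(y)$-orbit, so $k\mathcal{E}\alpha=k\mathcal{E}\alpha'$.

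The main obstacle is reading the UFP carefully: one must set up the indexing so that $\alpha$ and $\alpha'$ sit in position $1$ (not position $n+1$) in the two decompositions of $\mu\alpha=\mu'\alpha'$, and then extract the key identity $\beta_1=h_1\alpha_1$ from the leftmost square by using the boundary condition $h_0=\mathrm{id}$. A secondary subtlety is the length-one case, where the UFP provides no $h_i$ at all and one argues directly from $\mu\alpha=\mu'\alpha'$; ruling out the mixed scenario (one of $\mu,\mu'$ an automorphism but not the other) relies precisely on the length-matching clause of the UFP.
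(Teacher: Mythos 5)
Your proof is correct and follows essentially the same route as the paper's (sketched) argument: both reduce to a single morphism lying in $\mathcal{E}\alpha\cap\mathcal{E}\alpha'$ and then invoke the Unique Factorization Property to force $\alpha'=h\alpha$ for some $h\in\Aut_{\mathcal{E}}(y)$, whence the cyclic modules coincide. You merely fill in the details the paper leaves implicit (the basis-element comparison, the length matching, and the extraction of $h_1$ from the leftmost square of the UFP diagram), which is exactly what a complete write-up of that sketch should do.
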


\begin{proof}
This is Lemma 5.1 of \cite{Li1}. We give a sketch of the proof. Notice that $k\mathcal{E} \alpha$ is spanned by all morphisms of the form $\beta \alpha$ with $\beta: y \rightarrow z$ a morphism starting at $y$. Similarly, $k\mathcal{E} \alpha'$ is spanned by all morphisms of the form $\beta' \alpha'$ with $\beta': y' \rightarrow z'$ a morphism starting at $y'$. If $x \neq x'$ or $y \neq y'$, then by the Unique Factorization Property of finite free EI categories we conclude that the set $\mathcal{E} \alpha \cap \mathcal{E} \alpha' = \emptyset$, and the conclusion follows. If $x =x'$ and $y= y'$, then the set $\mathcal{E} \alpha$ coincides with the set $\mathcal{E} \alpha'$ if and only if there is an automorphism $h \in \Aut _{\mathcal{E}} (y)$ such that $h\alpha = \alpha'$ again by the UFP. Otherwise, we must have $\mathcal{E} \alpha \cap \mathcal{E} \alpha' = \emptyset$. The conclusion follows from this observation.
\end{proof}

\begin{remark}
The reader can check that the conclusion of Lemma 4.2.9 is true for any non-isomorphisms $\alpha$ in $\mathcal{E}$ by using the UFP. Moreover, a direct check shows that it is also true for automorphisms. Similarly, we can also prove that Lemma 4.2.10 still holds if we assume that $\alpha$ and $\alpha'$ are two morphisms with the same target and source.
\end{remark}

It is well known that every subgroup of a free group is still free. Finite free EI categories have a similar property.

\begin{proposition}
Let $\mathcal{E}$ be a finite EI category. Then $\mathcal{E}$ is a finite free EI category if and only if all of its full subcategories are finite free EI categories.
\end{proposition}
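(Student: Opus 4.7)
The forward direction is immediate: $\mathcal{E}$ is a full subcategory of itself, so if every full subcategory is free then so is $\mathcal{E}$. For the converse, suppose $\mathcal{E}$ is a finite free EI category and let $\mathcal{F}$ be a full subcategory. Since $\mathcal{F}$ inherits finiteness and the EI property, and since Proposition 4.2.6 applied internally to $\mathcal{F}$ guarantees that every non-isomorphism in $\mathcal{F}$ admits at least one decomposition into $\mathcal{F}$-unfactorizable morphisms, by Proposition 4.2.8 it suffices to verify the UFP inside $\mathcal{F}$.

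The plan is to promote any two $\mathcal{F}$-decompositions into $\mathcal{E}$-decompositions and invoke the UFP of $\mathcal{E}$. Given a non-isomorphism $\alpha$ in $\mathcal{F}$ with two $\mathcal{F}$-factorizations $\alpha = \alpha_m \circ \cdots \circ \alpha_1 = \beta_n \circ \cdots \circ \beta_1$ into $\mathcal{F}$-unfactorizables, I would first refine each $\alpha_i$ (regarded as a morphism of $\mathcal{E}$) into a composite $\gamma^i_{k_i} \circ \cdots \circ \gamma^i_1$ of $\mathcal{E}$-unfactorizables via Proposition 4.2.6, and similarly each $\beta_j$ into $\delta^j_{l_j} \circ \cdots \circ \delta^j_1$. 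The key observation, which uses both fullness of $\mathcal{F}$ and the $\mathcal{F}$-unfactorizability of $\alpha_i$, is that every intermediate object in the refinement of $\alpha_i$ must lie in $\Ob \mathcal{E} \setminus \Ob \mathcal{F}$: any intermediate landing in $\Ob \mathcal{F}$ would yield a proper factorization of $\alpha_i$ within $\mathcal{F}$, contradicting its $\mathcal{F}$-unfactorizability. The same holds for each $\beta_j$.

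Concatenating, we obtain two $\mathcal{E}$-decompositions of $\alpha$ into $\mathcal{E}$-unfactorizables whose intermediate objects that lie in $\Ob \mathcal{F}$ are exactly $x_1, \ldots, x_{m-1}$ (respectively $y_1, \ldots, y_{n-1}$), appearing as an ordered subsequence of the full intermediate-object list. Applying UFP in $\mathcal{E}$ forces these two lists to agree, and restricting to the positions in $\Ob \mathcal{F}$ yields $m = n$ and $x_i = y_i$ for all $i$. UFP in $\mathcal{E}$ also furnishes compatible automorphisms $g_j \in \Aut_{\mathcal{E}}(z_j)$ at each intermediate object $z_j$ of the refined decomposition satisfying $g_j \circ \gamma_j = \delta_j \circ g_{j-1}$. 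Setting $h_i := g_{p_i}$, where $p_i$ is the position at which $z_{p_i} = x_i$ in the refined list (with $h_0 = h_m = \mathrm{id}$), one telescopes the $g$-relations along each $\mathcal{F}$-segment to conclude that $h_i \circ \alpha_i = \beta_i \circ h_{i-1}$; since $h_i \in \Aut_{\mathcal{E}}(x_i) = \Aut_{\mathcal{F}}(x_i)$ by fullness, this is precisely the UFP condition in $\mathcal{F}$.

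The main obstacle is conceptual rather than computational: one must carefully distinguish $\mathcal{F}$-unfactorizability from $\mathcal{E}$-unfactorizability, since an $\mathcal{F}$-unfactorizable morphism need not be $\mathcal{E}$-unfactorizable. The proof hinges on the observation that fullness of $\mathcal{F}$ rules out any intermediate landing in $\Ob \mathcal{F}$ during an $\mathcal{E}$-refinement of an $\mathcal{F}$-unfactorizable, so the $\mathcal{F}$-decomposition is canonically recoverable from the $\mathcal{E}$-decomposition by isolating those intermediate objects that belong to $\Ob \mathcal{F}$; once that is secured, the rest is bookkeeping under the UFP in $\mathcal{E}$.
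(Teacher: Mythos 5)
Your proposal is correct and takes essentially the same route as the paper's proof: refine the two $\mathcal{F}$-decompositions into $\mathcal{E}$-unfactorizables, apply the UFP of $\mathcal{E}$ to match the refined sequences, use fullness together with $\mathcal{F}$-unfactorizability to match the breakpoints, and telescope the automorphisms to get the UFP relations in $\mathcal{F}$. The only difference is organizational: the paper matches the breakpoints by a contradiction at the first position where the two subsequences of $\mathcal{F}$-objects differ, whereas you observe up front that no interior object of the refinement of an $\mathcal{F}$-unfactorizable can lie in $\Ob \mathcal{F}$ — the same use of fullness, packaged slightly more directly.
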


\begin{proof}
The if part is trivial since $\mathcal{E}$ is such a subcategory of itself. Now let $\mathcal{D}$ be a full subcategory of $\mathcal{E}$. We want to show that $\mathcal{D}$ satisfies the UFP.

Take a factorizable morphism $\alpha$ in $\mathcal{D}$ and two decompositions of the following form:
\begin{equation*}
\xymatrix{x=x_0 \ar[r]^{\alpha_1} & x_1 \ar[r]^{\alpha_2} & \ldots \ar[r]^{\alpha_m} &x_m=y \\
x=x_0 \ar[r]^{\beta_1} & x'_1 \ar[r]^{\beta_2} & \ldots \ar[r]^{\beta_n} & x'_n=y}
\end{equation*}
where all $\alpha_i$ and $\beta_j$ are unfactorizable in $\mathcal{D}$, but possibly factorizable in $\mathcal{E}$. Decomposing them into unfactorizable morphisms in $\mathcal{E}$, we get two extended sequences of unfactorizable morphisms as follows:

\begin{equation*}
\xymatrix{x=x_0 \ar[r]^{\delta_1} & w_1 \ar[r]^{\delta_2} & \ldots \ar[r]^{\delta_r} &w_r=y \\
x=x_0 \ar[r]^{\theta_1} & w_1 \ar[r]^{\theta_2} & \ldots \ar[r]^{\theta_r} & w_r=y}
\end{equation*}
Each pair of morphisms $\delta_i$ and $\theta_i$ have the same source and target since $\mathcal{E}$ is a finite free EI category. Moreover, there are $h_i \in \Aut _{\mathcal{E}} (w_i)$, $1 \leqslant i \leqslant r-1$, such that $\theta_1 = h_1 \delta_1$, $\theta_2 = h_2 \delta_2 h_1^{-1}$,  $\ldots$, $\theta_{r-1} = h_{r-1} \delta_{r-1} h_{r-2}^{-1}$, $\theta_r = h_{r-1}^{-1} \delta_r$. If we can prove the fact that $m=n$ and $x_1 = x_1'$, $\ldots$, $x_m = x_n'$, then the conclusion follows. Indeed, if this is true, say $x_1 = x_1' = w_{r_1}$, $\ldots$, $x_m = x'_m = w_{r_m}$, then we have $\beta_1 = h_{r_1} \alpha_1$, $\beta_2 = h_{r_2} \alpha_2 h_{r_1}^{-1}$, $\ldots$, $\beta_n = \alpha_n h_{r_n}^{-1}$, which is exactly the UFP.

We show this fact by contradiction. Suppose that $x_1 = x'_1$, $x_2 = x_2'$, $\ldots$, $x_{i-1} = x'_{i-1}$, and let $x_i$ be the first object in the sequence different from $x_i'$. Notice both $x_m \ldots x_1$ and $x_n' \ldots x_1'$ are subsequences of the sequence $w_r \ldots w_1$. As a result, $x_i$ appears before $x_i'$ in $w_r \ldots w_1$, or after $x_i'$. Without loss of generality we assume that $x_i$ is before $x_i'$. Let $x_{i-1} = x'_{i-1} = w_a$, $x_i= w_b$ and $x_i'= w_c$. We must have $a \leq b < c$. Furthermore, we check that
\begin{equation*}
\beta_i = \theta_c \circ \theta_{c-1} \circ \ldots \circ \theta_{a+1} = (\theta_c \circ \ldots \circ \theta_{b+1}) \circ (\theta_b \circ \ldots \circ \theta_{a+1}),
\end{equation*}
with $(\theta_c \circ \ldots \circ \theta_{b+1})$ contained in $\Hom_{\mathcal{E}} (x_i, x_i')$ and $(\theta_b \circ \ldots \circ \theta_{a+1})$ contained in $\Hom_{\mathcal{E}} (x_{i-1}, x_i)$. But $\mathcal{D}$ is a full subcategory, so $\beta_i$ is the composite of two non-isomorphisms in $\mathcal{D}$. This is a contradiction since we have assumed that $\beta_i$ is unfactorizable in $\mathcal{D}$.
\end{proof}

The condition that $\mathcal{D}$ is a full subcategory of $\mathcal{E}$ is required. Consider the following two examples:

\begin{example}
Let $\mathcal{E}$ be the free category generated by the following quiver. Let $\mathcal{D}$ be the subcategory of $\mathcal{E}$ obtained by removing the morphism $\beta$ from $\mathcal{E}$. The category $\mathcal{E}$ is free, but the subcategory $\mathcal{D}$ is not free. Indeed, morphisms $\alpha$, $\beta\alpha$, $\gamma \beta$, $\gamma$ are all unfactorizable in $\mathcal{D}$. Thus the morphism $\gamma \beta \alpha$ has two decompositions $(\gamma\beta) \circ \alpha$ and $(\gamma)\circ (\beta\alpha)$, which contradicts the UFP.
\end{example}

\begin{equation*}
\xymatrix{ \bullet \ar[r]^{\alpha} & \bullet \ar[r]^{\beta} & \bullet \ar[r]^{\gamma} & \bullet }
\end{equation*}

\begin{example}
Let $\mathcal{E}$ be the following category: both $\Aut _{\mathcal{E}} (x)$ and $\Aut _{\mathcal{E}} (z)$ are trivial groups of order 1; $\Aut _{\mathcal{E}} (y) = \langle g \rangle$ has order 2; $g$ interchanges $\beta_1$ and $\beta_2$ and fixes $\alpha$. Then $\beta_1 \alpha = (\beta_2 g) \alpha = \beta_2 (g \alpha) = \beta_2 \alpha$. It is not hard to check that $\mathcal{E}$ satisfies the UFP; but the subcategory formed by removing the morphism $g$ from $\mathcal{E}$ does not satisfy the UFP.
\end{example}

\begin{equation*}
\xymatrix{ x \ar[r]^{\alpha} & y \ar@<0.5ex>[r]^{\beta_1} \ar@<-0.5ex>[r]_{\beta_2} & z }
\end{equation*}

We record here a theorem classifying finite EI categories with hereditary category algebras. For a proof, please refer to \cite{Li1}.

\begin{theorem}
The category algebra of a finite EI category $\mathcal{E}$ is hereditary if and only if $\mathcal{E}$ is a finite free EI category and the automorphism group of each object has order invertible in $k$.
\end{theorem}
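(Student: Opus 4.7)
The plan has two parts. For sufficiency I would identify $k\mathcal{E}$ with a tensor algebra over a semisimple ring, using Proposition 4.2.8 (free $\Leftrightarrow$ UFP). For necessity I would use the Koszul--quadratic machinery of Theorems 2.2.3 and 2.2.5 to recover this tensor-algebra structure from hereditariness.

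\emph{Sufficiency.} Suppose $\mathcal{E}$ is free and $|G_x|$ is invertible in $k$ for every object $x$, writing $G_x:=\Aut_{\mathcal{E}}(x)$. By Maschke each $kG_x$ is semisimple, so $A_0:=\bigoplus_x kG_x$ is semisimple. The UFP (Proposition 4.2.8) equips $k\mathcal{E}$ with a length grading in which degree $n$ is spanned by composites of $n$ representative unfactorizable morphisms, subject only to the relations forced by the $(A_0,A_0)$-bimodule structure on the degree-$1$ piece $A_1$; that is, $k\mathcal{E}\cong A_0[A_1]$ as graded algebras. A tensor algebra over a semisimple ring is hereditary (its radical $A\otimes_{A_0}A_1$ is projective as a left module), so $k\mathcal{E}$ is hereditary.

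\emph{Necessity.} Suppose $k\mathcal{E}$ is hereditary. First I would force each $kG_x$ to be semisimple: the corner algebra $1_x k\mathcal{E} 1_x=kG_x$ (only endomorphisms of $x$ survive in the corner) inherits hereditariness from $k\mathcal{E}$ by the standard fact that the exact functor $e\cdot{-}:A\text{-mod}\to eAe\text{-mod}$ carries a length-$\le 1$ projective $A$-resolution of $Ae\otimes_{eAe}M$ to a length-$\le 1$ projective $eAe$-resolution of $M$. But $kG_x$ is also Frobenius, and a self-injective algebra of finite global dimension is semisimple, so $|G_x|$ is invertible in $k$ by Maschke. Now $A_0:=\bigoplus_x kG_x$ is semisimple and $k\mathcal{E}=\bigoplus_{i\ge 0}A_i$ carries its radical grading, generated in degrees $0$ and $1$. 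The minimal resolution $0\to J\to k\mathcal{E}\to A_0\to 0$ with $J=\rad k\mathcal{E}$ is linear of length $\le 1$, so $A_0$ is a Koszul $k\mathcal{E}$-module and $k\mathcal{E}$ is a Koszul algebra. By Theorem 2.2.3, $k\mathcal{E}\cong A_0[A_1]/(R)$ with $R\subseteq A_1\otimes_{A_0}A_1$; by Theorem 2.2.5 the Koszul complex furnishes the minimal projective resolution of $A_0$, so its term $k\mathcal{E}\otimes_{A_0}R$ in homological degree $2$ must vanish by hereditariness, forcing $R=0$ and $k\mathcal{E}\cong A_0[A_1]$.

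To deduce that $\mathcal{E}$ is free, note that the isomorphism $k\mathcal{E}\cong A_0[A_1]$ says precisely that the length grading on $k\mathcal{E}$ is the tensor grading; equivalently, every morphism of $\mathcal{E}$ decomposes uniquely, modulo the bimodule action of the $G_x$, into composites of unfactorizable morphisms, which is the UFP. By Proposition 4.2.8, $\mathcal{E}$ is a finite free EI category. The main technical obstacle is the structural step $k\mathcal{E}\cong A_0[A_1]$: the Koszul--quadratic framework produces it cleanly, but one must verify that Theorem 2.2.5's Koszul complex is indeed the \emph{minimal} projective resolution so that vanishing of its degree-$2$ term forces the relation bimodule $R$ to vanish. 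The corner-algebra fact and the translation back to the UFP are comparatively routine.
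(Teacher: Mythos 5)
Your sufficiency argument is essentially sound: for a free EI category the linearization of the biset products identifies $k\mathcal{E}$ with the tensor algebra $A_0[A_1]$, where $A_0=\bigoplus_x kG_x$ and $A_1$ is spanned by the unfactorizable morphisms, and a finite-dimensional tensor algebra over a semisimple $A_0$ has projective radical, hence is hereditary (this is also the content of Lemmas 4.2.9 and 4.2.10 as used in Theorem 4.3.4). The necessity direction, however, has a genuine gap at its pivot: you assert that $k\mathcal{E}$ ``carries its radical grading, generated in degrees $0$ and $1$'' with degree-$0$ part $\bigoplus_x kG_x$, and then feed this grading into Theorems 2.2.3 and 2.2.5. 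At that point of the argument $\mathcal{E}$ is not yet known to be free, and an arbitrary finite EI category need not admit any such grading: the length grading is well defined only under the condition of Proposition 4.3.1, and the radical filtration by itself only produces the associated graded algebra, which in general is not isomorphic to the algebra one started with. The whole Koszul--quadratic machinery of Chapter 2 presupposes an algebra that is already graded with $A_1\cdot A_i=A_{i+1}$, so invoking it here is circular: constructing a grading whose degree-one piece is spanned by the unfactorizable morphisms is essentially the Unique Factorization Property you are trying to prove. (The point you flag as the main obstacle, minimality of the Koszul complex, is not the real issue; once a grading exists, projectivity of $J=\rad k\mathcal{E}$ already forces the multiplication map $k\mathcal{E}\otimes_{A_0}A_1\to J$ to be an isomorphism, which kills $R$ without any appeal to minimality.) A workable repair stays ungraded: one always has the canonical bimodule decomposition $J=U\oplus J^2$ with $U$ the span of the unfactorizable morphisms, hence the covering surjection $k\hat{\mathcal{E}}\cong A_0[U]\twoheadrightarrow k\mathcal{E}$ of Proposition 4.2.17; once each $kG_x$ is semisimple, hereditariness makes $J$ projective, so its projective cover $k\mathcal{E}\otimes_{A_0}U\to J$ is an isomorphism, and iterating (or comparing dimensions with $k\hat{\mathcal{E}}$) shows the covering map is an isomorphism, which is exactly the UFP, i.e., freeness by Proposition 4.2.8.

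Two smaller points. Your corner-algebra step is under-justified: exactness of $1_x k\mathcal{E}\otimes_{k\mathcal{E}}(-)$ does not make $1_xP$ a projective $kG_x$-module for $P$ a projective $k\mathcal{E}$-module (that would require $1_x k\mathcal{E}$ to be projective as a left $kG_x$-module, which fails for general algebras), so ``corners of hereditary algebras are hereditary'' needs its own argument---it is true over the algebraically closed $k$ used here, but you should either prove it or extract the invertibility of $|\Aut_{\mathcal{E}}(x)|$ directly from projectivity of the radical. Finally, note that the thesis itself does not prove Theorem 4.2.16 but refers to \cite{Li1}, whose toolkit (Lemmas 4.2.9 and 4.2.10 here) indicates a module-theoretic route through the cyclic modules $k\mathcal{E}\alpha$; your graded Koszul strategy is genuinely different and attractive in outline, but it only becomes a proof after the grading issue above is resolved.
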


Finite free EI categories have a certain universal property which is stated in the following proposition:

\begin{proposition}
Let $\mathcal{E}$ be a finite EI category. Then there is a finite free EI category $\hat{\mathcal{E}}$ and a full functor $\hat{F}: \hat{\mathcal{E}} \rightarrow \mathcal{E}$ such that $\hat{F}$ is the identity map restricted to objects, isomorphisms and unfactorizable morphisms. This finite free EI category is unique up to isomorphism.
\end{proposition}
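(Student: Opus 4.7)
The plan is to take $\hat{\mathcal{E}} := \mathcal{C}_{\hat{Q}}$, the finite free EI category generated by the finite EI quiver $\hat{Q}$ of $\mathcal{E}$ constructed earlier in this section: vertices are the objects of $\mathcal{E}$; arrows biject with $(\Aut_{\mathcal{E}}(y),\Aut_{\mathcal{E}}(x))$-orbits of unfactorizable morphisms $x\to y$; $f$ assigns $\Aut_{\mathcal{E}}(x)$ to each vertex $x$; and $g$ assigns to each arrow the biset of unfactorizable morphisms on the corresponding orbit. By construction of $\mathcal{C}_{\hat{Q}}$, we automatically have $\Ob\hat{\mathcal{E}}=\Ob\mathcal{E}$, $\Aut_{\hat{\mathcal{E}}}(x)=\Aut_{\mathcal{E}}(x)$ for each $x$, and the biset of unfactorizable morphisms from $x$ to $y$ in $\hat{\mathcal{E}}$ is identified with the corresponding biset in $\mathcal{E}$.

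I would then define $\hat{F}:\hat{\mathcal{E}}\to\mathcal{E}$ to be the identity on objects, on automorphisms, and on unfactorizable morphisms. For a general non-isomorphism $\hat{\alpha}$ in $\hat{\mathcal{E}}$, Proposition 4.2.6 supplies a factorization $\hat{\alpha}=\hat{\alpha}_n\circ\cdots\circ\hat{\alpha}_1$ into unfactorizable morphisms, and I set $\hat{F}(\hat{\alpha})$ to be the corresponding composite $\hat{\alpha}_n\circ\cdots\circ\hat{\alpha}_1$ taken in $\mathcal{E}$. Functoriality is then just concatenation of factorizations, and fullness follows from Proposition 4.2.6 applied to $\mathcal{E}$: every non-isomorphism in $\mathcal{E}$ is a composite of unfactorizable morphisms, each of which is in the image of $\hat{F}$.

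The substantive step is well-definedness. Suppose $\hat{\beta}_m\circ\cdots\circ\hat{\beta}_1$ is a second factorization of $\hat{\alpha}$ into unfactorizable morphisms in $\hat{\mathcal{E}}$. Since $\hat{\mathcal{E}}$ is free, Proposition 4.2.8 (the UFP) gives $m=n$, matching intermediate objects, and automorphisms $h_i\in\Aut_{\hat{\mathcal{E}}}(x_i)$ with $\hat{\beta}_1=h_1\hat{\alpha}_1$, $\hat{\beta}_i=h_i\hat{\alpha}_ih_{i-1}^{-1}$ for $1<i<n$, and $\hat{\beta}_n=\hat{\alpha}_nh_{n-1}^{-1}$. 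Because the morphisms $\hat{\alpha}_i$, $\hat{\beta}_i$ and $h_i$ are, under our identification, the very same morphisms in $\mathcal{E}$, the identical equations hold in $\mathcal{E}$, so the two composites in $\mathcal{E}$ agree. Thus $\hat{F}$ is well-defined, and the identity-on-identifiers property is built into the construction.

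For uniqueness, let $(\hat{\mathcal{E}}',\hat{F}')$ be another such pair. Because $\hat{F}'$ is the identity on objects, automorphisms, and unfactorizable morphisms, the finite EI quiver of $\hat{\mathcal{E}}'$ has the same vertices, the same vertex groups, and the same arrow bisets as $\hat{Q}$; and since $\hat{\mathcal{E}}'$ is free, the argument in Proposition 4.2.8 shows that $\hat{\mathcal{E}}'$ is isomorphic to the free EI category on its own quiver, which coincides with $\hat{Q}$. Hence $\hat{\mathcal{E}}'\cong\hat{\mathcal{E}}$. I expect the main obstacle to be the bookkeeping in the well-definedness step: one must be careful that the biset and group identifications set up when defining $\hat{\mathcal{E}}$ really are the identifications that make the UFP equations in $\hat{\mathcal{E}}$ transport verbatim into equations in $\mathcal{E}$. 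Once that is laid out cleanly, everything else is formal.
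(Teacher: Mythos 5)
Your proposal is correct and follows essentially the same route as the paper: take $\hat{\mathcal{E}}=\mathcal{C}_{\hat{Q}}$ for the finite EI quiver $\hat{Q}$ of $\mathcal{E}$, define $\hat{F}$ as the identity on objects, automorphisms and unfactorizable morphisms, extend via Proposition 4.2.6, verify well-definedness by the UFP (Proposition 4.2.8) applied in the free cover, and get fullness and uniqueness exactly as the paper does. Your remark that the UFP relations in $\hat{\mathcal{E}}$ transport verbatim to $\mathcal{E}$ under the biset identification is precisely the point the paper uses (and states slightly loosely), so nothing is missing.
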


We call $\hat{\mathcal{E}}$ the \textit{free EI cover} of $\mathcal{E}$.

\begin{proof} We mentioned before that every finite EI category $\mathcal{E}$ determines a finite EI quiver $\hat{Q}$ (see the paragraphs before Proposition 4.2.6), hence a finite free EI category $\mathcal{C_{\hat{Q}}}$ satisfying: Ob$(\mathcal{E}) = \Ob (\mathcal{C_{\hat{Q}}})$; $\Aut_{\mathcal{E}}(x) = \Aut \mathcal{C_{\hat{Q}}} (x)$ for every object $x$; the biset of unfactorizable morphisms from $x$ to $y$ in $\mathcal{E}$ is the same as that in $\mathcal{C_{\hat{Q}}}$ for every pair of different objects $x$ and $y$.

Define a functor $\hat{F}: \mathcal{C_{\hat{Q}}} \rightarrow \mathcal{E}$ in the following way: $\hat{F}$ is the identity map on objects, isomorphisms and unfactorizable morphisms. Now if $\delta: x \rightarrow y$ is neither an isomorphism nor an unfactorizable morphism, it can be decomposed as the composite
\begin{equation*}
\xymatrix{x=x_0 \ar[r]^{\beta_1} & x_1 \ar[r]^{\beta_2} & \ldots \ar[r]^{\beta_m} &x_m=y,}
\end{equation*}
where each $\beta_i$ is unfactorizable for $1 \leqslant i \leqslant m$. Define
\begin{equation*}
\hat{F}(\delta) = \hat{F}(\beta_m) \ldots \hat{F}(\beta_2) \hat{F}(\beta_1).
\end{equation*}

We want to verify that $\hat{F}$ is well defined for factorizable morphisms as well. That is, if $\delta$ has another decomposition into unfactorizable morphisms
\begin{equation*}
\xymatrix{x=x_0 \ar[r]^{\beta'_1} & z_1 \ar[r]^{\beta_2} & \ldots \ar[r]^{\beta'_n} &z_n=y,}
\end{equation*}
then
\begin{equation*}
\hat{F}(\beta_n) \hat{F}(\beta_{n-1}) \ldots \hat{F}(\beta_1) = \hat{F}(\beta'_n) \ldots \hat{F}(\beta'_2) \hat{F}(\beta'_1).
\end{equation*}
Since $\mathcal{E}$ satisfies the UFP, we have $m=n$ and $x_i = z_i$ for $1 \leqslant i \leqslant n$, and $\beta_1 = h_1 \beta'_1$, $\beta_2 =h_2 \beta'_2 h_1^{-1}, \ldots, \beta_{n-1} = h_{n-1} \beta'_{n-1} h_{n-2}^{-1}$, $\beta_n =\beta'_n h_{n-1}^{-1}$, where $h_i \in \Aut_{\mathcal{E}} (x_i)$. Since $\hat{F}$ is the identity map on automorphisms and unfactorizable morphisms, we get:
\begin{align*}
& \hat{F}(\beta_n) \hat{F}(\beta_{n-1}) \ldots \hat{F}(\beta_1) \\
& = \beta_n \circ \beta_{n-1} \circ \ldots \circ \beta_1 \\
& = (\beta'_n h_{n-1}^{-1}) \circ (h_{n-1} \beta'_{n-1} h_{n-2}^{-1}) \circ \ldots \circ (h_1 \beta'_1) \\
& = \beta'_n \circ \beta'_{n-1} \circ \ldots \beta'_1 \\
& = \hat{F}(\beta'_n) \hat{F}(\beta'_{n-1}) \ldots \hat{F}(\beta'_1).
\end{align*}

Therefore, $\hat{F}$ is a well defined functor. It is full since all automorphisms and unfactorizable morphisms in $\mathcal{E}$ are images of $\hat{F}$, and all other morphisms in $\mathcal{E}$ are their composites. Moreover, $\hat{\mathcal{E}}$ is unique up to isomorphism since it is completely determined by objects, isomorphisms and unfactorizable morphisms in $\mathcal{E}$.
\end{proof}

It turns out that the covering functor $F: \hat {\mathcal{E}} \rightarrow \mathcal{E}$ described in this proposition gives rise to a algebra quotient map $\varphi: k \hat {\mathcal{E}} \rightarrow k\mathcal{E}$.

\begin{proposition}
Let $\mathcal{E}$ be a finite EI category and $\hat{\mathcal{E}}$ be its free EI cover. Then the category algebra $k\mathcal{E}$ is a quotient algebra of $k \hat {\mathcal{E}}$. If $k\mathcal{E} \cong k \hat{\mathcal{E}} /I$, then the $k \hat{\mathcal{E}}$-ideal $I$ as a vector space is spanned by elements of the form $\hat{\alpha} - \hat{\beta}$, where $\hat{\alpha}$ and $\hat{\beta}$ are morphisms in $\hat{\mathcal{E}}$ with $\hat{F} (\hat{\alpha}) = \hat{F} (\hat{\beta})$.
\end{proposition}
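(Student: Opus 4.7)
The plan is to define the algebra map $\varphi: k\hat{\mathcal{E}} \to k\mathcal{E}$ as the $k$-linear extension of the covering functor $\hat{F}$ on the morphism basis, and then identify $I = \ker \varphi$ fiber by fiber over the morphisms of $\mathcal{E}$.

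First I would verify that $\varphi$ is an algebra homomorphism. For basis elements $\hat{\alpha}, \hat{\beta}$ of $k\hat{\mathcal{E}}$, the product $\hat{\alpha} \cdot \hat{\beta}$ is either the composite $\hat{\alpha} \circ \hat{\beta}$ (when composable) or zero; since $\hat{F}$ is the identity on objects it preserves sources and targets, so the same dichotomy applies after applying $\hat{F}$, and functoriality of $\hat{F}$ then gives $\varphi(\hat{\alpha}\hat{\beta}) = \varphi(\hat{\alpha}) \varphi(\hat{\beta})$. Surjectivity of $\varphi$ is immediate from fullness of $\hat{F}$ (Proposition 4.2.16): every morphism of $\mathcal{E}$ is in the image of $\hat{F}$, and $\Mor \mathcal{E}$ is a $k$-basis of $k\mathcal{E}$. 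This already yields the first assertion $k\mathcal{E} \cong k\hat{\mathcal{E}}/I$ for $I = \ker \varphi$.

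Next I would describe $I$ as a vector space through its fiber decomposition. For each $\gamma \in \Mor \mathcal{E}$, write $S_\gamma = \hat{F}^{-1}(\gamma) \subseteq \Mor \hat{\mathcal{E}}$; the sets $S_\gamma$ partition $\Mor \hat{\mathcal{E}}$, giving a $k$-vector space decomposition $k\hat{\mathcal{E}} = \bigoplus_{\gamma \in \Mor \mathcal{E}} \mathrm{span}_k(S_\gamma)$. Since $\varphi$ sends each element of $S_\gamma$ to $\gamma$, its restriction to $\mathrm{span}_k(S_\gamma)$ has kernel equal to the hyperplane $\{\sum c_i \hat{\alpha}_i : \hat{\alpha}_i \in S_\gamma,\ \sum c_i = 0\}$, of dimension $|S_\gamma| - 1$.

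Finally, any such hyperplane is spanned by the differences $\hat{\alpha} - \hat{\beta}$ for $\hat{\alpha}, \hat{\beta} \in S_\gamma$ (enumerate $S_\gamma$ and use consecutive differences, noting that a vector of coefficient sum zero is automatically a linear combination of these). Taking the direct sum over all $\gamma$ produces a $k$-spanning set for $I$ consisting precisely of differences $\hat{\alpha} - \hat{\beta}$ with $\hat{F}(\hat{\alpha}) = \hat{F}(\hat{\beta})$, which is what was claimed. There is no real obstacle: the argument reduces to routine linear algebra once the fiber decomposition is set up. The only subtlety worth flagging is that the proposition asserts $I$ is \emph{$k$-linearly} spanned (not merely ideal-generated) by these differences, which is why the fiberwise analysis, rather than a more indirect ideal-theoretic argument, is the natural route.
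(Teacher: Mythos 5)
Your proposal is correct and follows essentially the same route as the paper: the paper also takes an element of the kernel, writes it in the morphism basis, groups the basis morphisms by their common image under $\hat{F}$ (your fiber decomposition, phrased with index bookkeeping), deduces that each group's coefficient sum vanishes, and expresses the element as a linear combination of differences $\hat{\alpha} - \hat{\beta}$ within each group. Your explicit verification that $\varphi$ is a surjective algebra homomorphism is a reasonable addition that the paper leaves implicit after Proposition 4.2.16.
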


\begin{proof}
Let $U$ be the vector space spanned by elements $\hat{\alpha} - \hat{\beta}$ such that $\hat{F} (\hat{\alpha}) = \hat{F} (\hat{\beta})$. Clearly, $U \subseteq I$ and we want to show the other inclusion. Let $x \in U$. By the definition of category algebras, $x$ can be expresses uniquely as $\sum_{i=1} ^n \lambda_i \alpha_i$ where $\alpha_i$ are pairwise different morphisms in $\hat {\mathcal{E}}$ and $\lambda_i \in k$. Then $\varphi (\sum _{i=1}^n \lambda_i \alpha_i) = \sum_{i=1}^n \lambda_i \hat{F} (\alpha_i) = 0$. Those $\hat{F} (\alpha_i)$ are probably not pairwise different in $\mathcal{E}$. By changing the indices if necessary, we can write the set $\{ \alpha_i \}_{i=1}^n$ as a disjoint union of $l$ subsets: $\{ \alpha_1, \ldots, \alpha_{s_1} \}$, $\{\alpha_{s_1+1}, \ldots, \alpha_{s_2} \}$ and so on, until $\{\alpha_{s_{l-1}+1}, \ldots, \alpha_{s_l} \}$ such that two morphisms have the same image under $\hat{F}$ if and only if they are in the same set.

Now we have:
\begin{align*}
\varphi(x) = (\lambda_1 + \ldots + \lambda_{s_1}) \hat{F} (\alpha_{s_1}) + \ldots + (\lambda _{s_{l-1}+1} + \ldots + \lambda_{s_l}) \hat{F} (\alpha_{s_l}) = 0.
\end{align*}
Therefore,
\begin{align*}
\lambda_1 + \ldots + \lambda_{s_1} = \ldots = \lambda _{s_{l-1}+1} + \ldots + \lambda_{s_l} = 0,
\end{align*}
and hence
\begin{align*}
x &= [\lambda_2 (\alpha_2 - \alpha_1) + \ldots + \lambda_{s_1}(\alpha_{s_1} - \alpha_1)] + \ldots \\
& + [\lambda _{s_{l-1}+2} (\alpha_{s_{l-1}+2} - \alpha_{s_{l-1}+1}) + \ldots + \lambda_{s_l} (\alpha_{s_l} - \alpha_{s_{l-1}+1})]
\end{align*}
is contained in $U$.
\end{proof}

\section{Koszul properties of finite EI categories}

In this section we study the Koszul properties of finite EI categories. First we discuss the possibility to put a length grading on a finite EI category.

If $\mathcal{E}$ is a finite free EI category, we can put a \textit{length grading} on its morphisms as follows: automorphisms and unfactorizable morphisms are given grades 0 and 1 respectively; if $\alpha$ is a factorizable morphism, then it can be expressed (probably not unique) as a composite $\alpha_n \alpha_{n-1} \ldots \alpha_2 \alpha_1$ with all $\alpha_i$ unfactorizable and we assign $\alpha$ grade $n$. This grading is well defined by the Unique Factorization Property of finite free EI categories. Obviously, this length grading cannot be applied to an arbitrary finite EI category. We say a finite EI category can be \textit{graded} if this length grading is well defined on it. The following proposition gives us criterions to determine whether an arbitrary finite EI category can be graded.

\begin{proposition}
Let $\mathcal{E}$ be a finite EI category. Then the following are equivalent:\
\begin{enumerate}
\item $\mathcal{E}$ is a graded finite EI category.
\item For each factorizable morphism $\alpha$ in $\mathcal{E}$, whenever it has two factorizations $\alpha_1 \circ \ldots \circ \alpha_m$ and $\beta_1 \circ \ldots \circ \beta_n$ into unfactorizable morphisms, we have $m=n$.
\item Let $\hat{\mathcal{E}}$ be the free EI cover of $\mathcal{E}$ and $\hat{F}: \hat{\mathcal{E}} \rightarrow \mathcal{E}$ be the covering functor. If two morphisms $\hat{\alpha}$ and $\hat{\beta}$ in $\hat{\mathcal{E}}$ have the same image under $\hat{F}$, then they have the same length in $\hat{\mathcal{E}}$.
\end{enumerate}
\end{proposition}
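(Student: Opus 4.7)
The plan is to prove $(1) \Leftrightarrow (2)$ directly from the definition of the length grading, and then to use the free EI cover $\hat{\mathcal{E}}$ together with its Unique Factorization Property to establish $(2) \Leftrightarrow (3)$.

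For $(1) \Leftrightarrow (2)$, the length grading is defined so as to assign grade $0$ to automorphisms, grade $1$ to unfactorizable morphisms, and grade $n$ to any factorizable morphism presented as a composite of $n$ unfactorizable morphisms; Proposition 4.2.6 guarantees that at least one such presentation exists for every non-isomorphism. The only possible obstruction to the assignment being well defined is an ambiguity in the length across different presentations of the same morphism, and $(2)$ is exactly the assertion that no such ambiguity occurs. Thus (1) and (2) are equivalent by unwinding the definitions.

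For $(2) \Rightarrow (3)$, I would start from $\hat{\alpha}, \hat{\beta} \in \hat{\mathcal{E}}$ with $\hat{F}(\hat{\alpha}) = \hat{F}(\hat{\beta})$, dispose of the trivial cases (both automorphisms, or both unfactorizable) using that $\hat{F}$ is the identity on these classes, and then invoke the UFP in the free EI category $\hat{\mathcal{E}}$ to obtain well-defined lengths $m$ and $n$ of $\hat{\alpha}$ and $\hat{\beta}$, together with decompositions into unfactorizable morphisms of $\hat{\mathcal{E}}$. Applying the functor $\hat{F}$ to these decompositions yields two unfactorizable decompositions of the common image in $\mathcal{E}$, of lengths $m$ and $n$, so $(2)$ forces $m = n$. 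For the converse $(3) \Rightarrow (2)$, any two unfactorizable decompositions $\alpha_1 \circ \cdots \circ \alpha_m$ and $\beta_1 \circ \cdots \circ \beta_n$ of a common morphism $\alpha \in \mathcal{E}$ lift to composites $\hat{\alpha}, \hat{\beta} \in \hat{\mathcal{E}}$ of lengths $m$ and $n$ via the identification of unfactorizable morphisms; both map to $\alpha$ under the functor $\hat{F}$, and $(3)$ then yields $m = n$.

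The main delicate point will be keeping careful track of which morphisms live in which category: we use repeatedly that the unfactorizable morphisms of $\mathcal{E}$ are literally identified with the unfactorizable morphisms of $\hat{\mathcal{E}}$ and are fixed by $\hat{F}$, so that decompositions transfer cleanly in both directions. Once this identification is in place and the UFP in $\hat{\mathcal{E}}$ supplies a genuine integer-valued length function on morphisms of $\hat{\mathcal{E}}$, the remaining arguments reduce to functoriality of $\hat{F}$ and to counting unfactorizable factors, so no further technical input should be required.
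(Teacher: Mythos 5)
Your proposal is correct and follows essentially the same route as the paper: $(1)\Leftrightarrow(2)$ by unwinding the definition of the length grading, and $(2)\Leftrightarrow(3)$ by transferring unfactorizable decompositions along the covering functor $\hat{F}$, using that the unfactorizable morphisms of $\mathcal{E}$ and $\hat{\mathcal{E}}$ are identified and fixed by $\hat{F}$ and that the UFP gives a well-defined length in $\hat{\mathcal{E}}$. In fact your sketch of $(2)\Rightarrow(3)$ spells out the direction the paper dismisses with ``in a similar way,'' which is a welcome addition.
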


\begin{proof}
It is easy to see that if condition (2) holds, our grading works for $\mathcal{E}$, and hence (1) is true. Otherwise, if a factorizable morphism $\alpha$ has two decompositions $\alpha_n \circ \ldots \circ \alpha_1$ and $\beta_m \circ \ldots \circ \beta_1$ with $m \neq n$, then $\alpha$ should be assigned a grade $n$ by the first decomposition, and a grade $m$ by the second decomposition. Thus our grading cannot be applied to $\mathcal{E}$. This proves the equivalence of (1) and (2).

Now let $\alpha$ be an arbitrary morphism in $\mathcal{E}$ which has two different decompositions $\alpha_n \circ \ldots \circ \alpha_1$ and $\beta_m \circ \ldots \circ \beta_1$ into unfactorizable morphisms. Since $\hat{\mathcal{E}}$ is the free EI cover of $\mathcal{E}$, these unfactorizable morphisms are also unfactorizable morphisms in $\hat{\mathcal{E}}$. Let $\hat{\alpha}$ and $\hat{\beta}$ be the composite morphisms of these $\alpha_i$'s and $\beta_i$'s in $\hat{\mathcal{E}}$ respectively. Thus $\hat{\alpha} - \hat{\beta}$ is contained in $U$ since they have the same image $\alpha$ under $\hat{F}$. If (3) is true, then $m = n$ since $\hat{\alpha}$ and $\hat{\beta}$ have lengths $m$ and $n$ respectively. Therefore (3) implies (2). We can check that (2) implies (3) in a similar way.
\end{proof}

Theorem 3.1.6 has a corresponding version for finite EI categories.

\begin{proposition}
Let $\mathcal{E}$ be a graded finite EI category. Then $k \mathcal{E}$ is a Koszul algebra if and only if $k\mathcal{E}$ is a quasi-Koszul algebra and $\mathcal{E}$ is a standardly stratified category (in a sense defined in \cite{Webb3}) with respect to the canonical partial order on $\Ob \mathcal{E}$.
\end{proposition}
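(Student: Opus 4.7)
The plan is to deduce this from Theorem 3.2.1(2) applied to $\mathcal{A} = k\mathcal{E}$, after verifying the relevant hypotheses in the EI setting.

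First I would observe that a graded finite EI category $\mathcal{E}$, with its length grading, becomes a graded directed category in the sense of Section 3.1, with respect to the canonical partial order on $\Ob \mathcal{E}$ defined by $x \leqslant y$ if and only if $\mathcal{E}(x,y) \neq \emptyset$. This is indeed a partial order (not merely a preorder) because $\mathcal{E}$ is skeletal: as noted at the start of Section 4.1, distinct objects $x \neq y$ cannot admit non-isomorphisms in both directions without contradicting the EI condition. Under the length grading, automorphisms lie in degree $0$, unfactorizable morphisms in degree $1$, and every morphism decomposes as a composite of these, so $k\mathcal{E}$ is positively graded and generated in degrees $0$ and $1$, with $(k\mathcal{E})_0 = \bigoplus_{x \in \Ob \mathcal{E}} k\Aut_{\mathcal{E}}(x) = \bigoplus_{x \in \Ob \mathcal{E}} \mathcal{E}(x,x)$.

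Next I would check that $(k\mathcal{E})_0$ satisfies the splitting property (S). Each summand $k\Aut_{\mathcal{E}}(x)$ is the group algebra of a finite group, hence a Frobenius and in particular self-injective algebra, so the finite direct sum $(k\mathcal{E})_0$ is itself self-injective. Over a self-injective algebra every projective module is also injective, so any short exact sequence $0 \to P \to Q \to R \to 0$ with $P$ and $Q$ projective splits, establishing (S).

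With these hypotheses in place, Theorem 3.2.1(2) applied to $\mathcal{A} = k\mathcal{E}$ yields exactly the desired equivalence: $k\mathcal{E}$ is Koszul if and only if it is standardly stratified and quasi-Koszul. The only delicate point is confirming that standardly stratified in Webb's sense for the EI category $\mathcal{E}$ with respect to the canonical partial order agrees with the notion appearing in Theorem 3.2.1; but this follows from the preorder construction preceding Proposition 3.1.5, where the chosen primitive idempotents of each $k\Aut_{\mathcal{E}}(x)$ inherit the order of their host object, giving the same induced preorder on the indexing set of simple modules in both settings. The argument is therefore a specialization of the directed-category framework, and I anticipate no serious obstacle beyond this bookkeeping.
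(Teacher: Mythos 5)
Your proposal is correct and follows essentially the same route as the paper: the paper's proof simply observes that the $k$-linearization of $\mathcal{E}$ is a graded directed category (the associated category of $k\mathcal{E}$) and applies the directed-category result of Chapter 3 (Theorem 3.2.1(2), via Theorem 3.1.6). Your additional checks --- that the length grading makes $(k\mathcal{E})_0 = \bigoplus_x k\Aut_{\mathcal{E}}(x)$ self-injective, hence satisfying (S), and that the canonical partial order induces the same preorder on primitive idempotents --- are exactly the hypotheses the paper leaves implicit.
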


\begin{proof}
Let $\mathcal{C}$ the $k$-linearization of $\mathcal{E}$. Then $\mathcal{C}$ is a directed category. Actually, it is the associated category of the category algebra $k\mathcal{E}$. Applying Theorem 3.1.6 to $\mathcal{C}$ the conclusion follows immediately.
\end{proof}

We know that if a directed category $\mathcal{C}$ is standardly stratified (with respect to the preorder induced by the given linear order), then $\mathcal{C} (x,x)$ viewed as a $\mathcal{C}$-module has finite projective dimension for each $x \in \Ob \mathcal{C}$. For finite EI categories, the converse is true as well.

\begin{proposition}
Let $\mathcal{E}$ be a finite EI category which might not be graded. Then $\mathcal{E}$ is standardly stratified if and only if $M = \bigoplus _{x \in \Ob \mathcal{E}} k \Aut _{\mathcal{E}} (x)$ viewed as a  $k\mathcal{E}$-module has finite projective dimension.
\end{proposition}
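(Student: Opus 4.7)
The only if direction is an immediate corollary of Theorem~3.1.6 applied to the $k$-linearization $\mathcal{A}$ of $\mathcal{E}$: because $\mathcal{E}$ is skeletal and EI, $\mathcal{A}$ is a directed $k$-category with respect to the canonical partial order, and $M$ coincides with $\bigoplus_{x}\mathcal{A}(x,x)$, which has finite projective dimension by the last sentence of that theorem. For the converse I plan to induct on $|\Ob \mathcal{E}|$, with the one-object case being trivial. In the inductive step, pick a maximal object $x$ and let $\mathcal{E}'$ be the full subcategory on $\Ob \mathcal{E} \setminus \{x\}$. Maximality of $x$ has two crucial consequences: every morphism out of $x$ is an endomorphism, so $k\mathcal{E} 1_x = k\Aut_\mathcal{E}(x)$ is a projective direct summand of $M$; and every morphism ending in $\Ob \mathcal{E}'$ also starts there, so with $e = 1-1_x$ we have $ek\mathcal{E} = ek\mathcal{E} e = k\mathcal{E}'$ and the restriction $N \mapsto eN$ is an exact functor sending indecomposable projective $k\mathcal{E}$-modules to indecomposable projective $k\mathcal{E}'$-modules (or to zero). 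Hence $\pd_{k\mathcal{E}'}M' \leq \pd_{k\mathcal{E}}M < \infty$ for $M' = \bigoplus_{y \in \Ob\mathcal{E}'} k\Aut_\mathcal{E}(y)$, and by the inductive hypothesis $\mathcal{E}'$ is standardly stratified.

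By Theorem~3.1.6 it suffices to prove that $\mathcal{E}(y,z)$ is projective over $k\Aut_\mathcal{E}(z)$ for every pair of objects. The cases $y,z \in \Ob\mathcal{E}'$ follow from the inductive hypothesis, the case $y=z=x$ is trivial, and $\mathcal{E}(x,z) = 0$ for $z \neq x$ by maximality, so it only remains to prove that $\mathcal{E}(y,x)$ is projective over $k\Aut_\mathcal{E}(x)$ for each $y \neq x$. To this end, let $L_y = k\Aut_\mathcal{E}(y)$ be viewed as the $k\mathcal{E}$-module supported at $y$, and let $K_y = \ker(k\mathcal{E} 1_y \twoheadrightarrow L_y)$; both have finite projective dimension because $L_y$ is a summand of $M$. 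The subspace $N \subseteq K_y$ supported at $x$, namely $\mathcal{E}(y,x)$ with its $k\Aut_\mathcal{E}(x)$-action, is a $k\mathcal{E}$-submodule (maximality of $x$ ensures that left multiplication by any non-endomorphism at $x$ is zero), and the quotient $Q = K_y/N$ is annihilated by $1_x$. Viewed as a $k\mathcal{E}'$-module, $Q$ is the kernel of the analogous surjection $k\mathcal{E}' 1_y \twoheadrightarrow L_y$ and therefore carries a standard $k\mathcal{E}'$-filtration by the inductive hypothesis; Proposition~3.1.5 then identifies the extension by zero of each such factor with the corresponding standard $k\mathcal{E}$-module, which is a summand of $M$ and hence of finite $k\mathcal{E}$-projective dimension. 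Consequently $Q$ has finite pd over $k\mathcal{E}$, and the exact sequence $0 \to N \to K_y \to Q \to 0$ forces $\pd_{k\mathcal{E}} N < \infty$.

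The main obstacle is then to upgrade the finite projective dimension of $N = \mathcal{E}(y,x)_{\mathrm{ext}}$ over $k\mathcal{E}$ to projectivity of $\mathcal{E}(y,x)$ over the group algebra $A = k\Aut_\mathcal{E}(x)$. My plan is to prove the equality $\pd_{k\mathcal{E}} V_{\mathrm{ext}} = \pd_A V$ for every $A$-module $V$, where $V_{\mathrm{ext}} = k\mathcal{E} 1_x \otimes_A V$ is extension by zero. The functor $(-)_{\mathrm{ext}}$ is exact because $k\mathcal{E} 1_x = A$ as a right $A$-module, and it sends projective $A$-modules to projective $k\mathcal{E}$-modules since maximality of $x$ gives $k\mathcal{E} e_\lambda = A e_\lambda$ for $e_\lambda \in E_x$. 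Moreover any $k\mathcal{E}$-submodule of $V_{\mathrm{ext}}$ is itself supported only at $x$ (again by maximality) and corresponds bijectively to an $A$-submodule of $V$, so $\rad_{k\mathcal{E}} V_{\mathrm{ext}} = (\rad_A V)_{\mathrm{ext}}$; therefore the extension of a minimal projective $A$-resolution of $V$ is a minimal projective $k\mathcal{E}$-resolution of $V_{\mathrm{ext}}$ of the same length. Since $A$ is a self-injective group algebra, finite projective dimension forces projectivity, and applying the equality to $V = \mathcal{E}(y,x)$ produces the missing projectivity and completes the induction via Theorem~3.1.6.
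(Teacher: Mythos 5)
Your proof is correct, but it takes a genuinely different route from the paper's. The paper argues the hard direction by contraposition: assuming $\mathcal{E}$ is not standardly stratified, it picks (via Webb's criterion) a non-isomorphism into an object $y$ whose stabilizer has order divisible by $\Char k$, chooses a source $z$ maximal among sources of such ``bad'' morphisms, and shows the single summand $k\Aut_{\mathcal{E}}(z)$ of $M$ has infinite projective dimension by tracking the values at $y$ of all syzygies: these values stay nonzero and non-projective over $k\Aut_{\mathcal{E}}(y)$, because the relevant projective covers are supported on objects $>z$ (whose values at $y$ are projective $k\Aut_{\mathcal{E}}(y)$-modules) and self-injectivity of the group algebra prevents the sequences from splitting. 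You instead induct on $|\Ob\mathcal{E}|$ by peeling off a maximal object $x$: the idempotent truncation $N\mapsto eN$ is exact and projective-preserving, giving the inductive hypothesis for the full subcategory $\mathcal{E}'$; you then isolate the only missing condition of Theorem 3.1.6, namely projectivity of $\mathcal{E}(y,x)$ over $k\Aut_{\mathcal{E}}(x)$, by splitting the syzygy $K_y$ of $k\Aut_{\mathcal{E}}(y)$ into the part $N$ at $x$ and a quotient $Q$ which, via closure of $\mathcal{F}(\Delta)$ under kernels of epimorphisms over $k\mathcal{E}'$ and Proposition 3.1.5, has finite projective dimension over $k\mathcal{E}$; finally your observation that extension by zero at a maximal object matches submodule lattices, hence minimal resolutions, so that $\pd_{k\mathcal{E}}N=\pd_{k\Aut(x)}\mathcal{E}(y,x)$, combined with self-injectivity of the group algebra, finishes the step. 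Both arguments hinge on self-injectivity of group algebras; the paper's is more direct and does not need the filtration machinery of $\mathcal{F}(\Delta)$, while yours yields a reusable pd-comparison for modules concentrated at a maximal object and fits the object-peeling inductions used elsewhere in the thesis (e.g., Theorems 3.2.5 and 3.2.6). Two small points to note in a write-up: the full subcategory $\mathcal{E}'$ may be disconnected, so you should remark that the statement passes to disjoint unions (the category algebra and $M$ decompose accordingly), and you should say explicitly that the closure of $\mathcal{F}(\Delta)$ under kernels of epimorphisms is being invoked for the canonical preorder on $\mathcal{E}'$, as recorded in Section 3.1.
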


\begin{proof}
One direction is clear since $M$ is precisely the direct sum of all standard modules, and hence has finite projective dimension.

Conversely, suppose that $\mathcal{E}$ is not standardly stratified. Then there is a non-isomorphism $\gamma: t \rightarrow y$ such that the order of $H_{\gamma}$ is not invertible in the field $k$ by Theorem 2.5 in \cite{Webb3}, where $H = \Aut _{\mathcal{E}} (y)$ and $H_{\gamma} = \Stab _H (\gamma)$. For this object $y$, define $S$ to be the set of objects $w$ such that there is a non-isomorphism $\beta: w \rightarrow y$ satisfying that $| H_{\beta}|$ is not invertible in $k$. This set $S$ is nonempty since $t \in S$. It is a poset equipped with the partial order inherited from the canonical partial order on $\Ob \mathcal{E}$. Take a fixed object $z$ which is maximal in this set and define $I_{ > z} = \{ x \in \Ob \mathcal{E} \mid x > z \}$.

By our definition, for an arbitrary object $x \in I_{>z}$ and a non-isomorphism $\alpha: x \rightarrow y$ (if it exists), the group $H _{\alpha} \leqslant H$ has an order invertible in $k$. Therefore, the $kH$-module $kH \alpha$ is projective. Since the value of $k \mathcal{E} 1_x$ on $y$ is 0 or is spanned by all non-isomorphisms from $x$ to $y$, and these non-isomorphisms form a disjoint union of $H$-orbits, we conclude that the value of $k\mathcal{E} 1_x$ on $y$ is a projective $kH$-module (notice that we always view 0 as a zero projective module). With the same reasoning, we know that the value of $k\mathcal{E} 1_z$ on $y$ is not a projective $kH$-module.

Consider the $k\mathcal{E}$-module $L = k \Aut _{\mathcal{E}} (z)$. We claim that $\pd L = \infty$. If this is true, then $\pd M = \infty$ since $L$ is a direct summand of $M$. We prove this claim by showing the following statement: for each $i \geqslant 1$, every projective cover of $\Omega^i(L)$ is supported on $I_{ > z}$; the value $\Omega^i(L) (y)$ of $\Omega(L)$ on $y$ is non-zero and is not a projective $kH$-module. Clearly, $\Omega(L)$ is spanned by all non-isomorphisms starting from $z$ and is supported on $I_{ > z}$; $\Omega(L) (y)$, spanned by all non-isomorphisms from $z$ to $y$, is non-zero. Moreover, $\Omega(L)(y)$ coincides with the value of $k\mathcal{E} 1_z$ on $y$ and is not a projective $kH$-module. Therefore our statement is true for $i=1$.

Suppose that this statement is true for $n$, and let $P$ be a projective cover of $\Omega^n (L)$. The exact sequence
\begin{equation*}
\xymatrix{ 0 \ar[r] & \Omega^{n+1} (L) \ar[r] & P \ar[r] & \Omega^n (L) \ar[r] & 0}
\end{equation*}
gives rise to an exact sequence
\begin{equation*}
\xymatrix{ 0 \ar[r] & \Omega^{n+1} (L) (y) \ar[r] & P(y) \ar[r] & \Omega^n (L)(y) \ar[r] & 0.}
\end{equation*}

Let us focus on the above sequences. Since $\Omega^n (L)$ is supported on $I_{ > z}$, so are $P$ and $\Omega^{n+1} (L)$. By the induction hypothesis $\Omega^n (L) (y) \neq 0$, Thus $P(y) \neq 0$. But $P$ is supported on $I_{ > z}$, so $P \in \text{add} (\bigoplus _{x \in I_{ > z}}  k\mathcal{E} 1_x)$. Notice that the value of each $k \mathcal{E} 1_x$ on $y$ is zero or a nontrivial projective $kH$-module. Therefore, $P(y)$ is a projective $kH$-module. Again by the induction hypothesis, $\Omega^n (L) (y)$ is not a projective $kH$-module, so $\Omega^n (L) (y) \ncong P(y)$, and $\Omega^{n+1} (L) (y)$ is non-zero. It cannot be a projective $kH$-module. Otherwise, $\Omega^{n+1} (L) (y)$ is also an injective $kH$-module and hence the above sequence splits, so $\Omega^n (L) (y)$ as a summand of $P(y)$ is a projective $kH$-module, too. But this contradicts the induction hypothesis.

We proved the induction hypothesis for $\Omega^{n+1} (L)$. Thus our statement and claim are proved. Consequently, $\pd M = \infty$.
\end{proof}

Now we can prove:

\begin{theorem}
Let $\mathcal{E}$ be a finite free EI category graded by the length grading. Then the following are equivalent:
\begin{enumerate}
\item $\pd k\mathcal{E}_0 \leqslant 1$;
\item $k \mathcal{E}$ is a Koszul algebra;
\item $\mathcal{E}$ is standardly stratified;
\item $\pd k\mathcal{E}_0 < \infty$.
\end{enumerate}
\end{theorem}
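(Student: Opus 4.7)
The plan is to close the cycle $(1) \Rightarrow (4) \Rightarrow (3) \Rightarrow (2) \Rightarrow (3) \Rightarrow (1)$. The implication $(1) \Rightarrow (4)$ is trivial, and $(4) \Leftrightarrow (3)$ is precisely Proposition 4.3.3. Since $k\mathcal{E}_0 = \bigoplus_{x} k\Aut_{\mathcal{E}}(x)$ is a direct sum of group algebras it is self-injective, hence satisfies the splitting property (S), so Proposition 4.3.2 applies and yields $(2) \Rightarrow (3)$.

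The substantive content lies in $(3) \Rightarrow (1)$ and $(3) \Rightarrow (2)$, and I will handle both by exhibiting the explicit short exact sequence
\[0 \longrightarrow J \longrightarrow k\mathcal{E} \longrightarrow k\mathcal{E}_0 \longrightarrow 0,\]
where $J = \bigoplus_{i \geqslant 1} k\mathcal{E}_i$ is the span of all non-identity morphisms. The key step is to decompose $J$ as an internal direct sum of cyclic submodules. Proposition 4.2.6 factors every non-identity morphism $\gamma$ as $\gamma = \delta \alpha$ with $\alpha$ unfactorizable, so $J = \sum_{\alpha} k\mathcal{E}\alpha$ with $\alpha$ ranging over all unfactorizable morphisms. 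Choose one representative per equivalence class under $\alpha \sim \alpha'$ iff $k\mathcal{E}\alpha = k\mathcal{E}\alpha'$; combined with Lemma 4.2.10 and the UFP (which forces the sets $\mathcal{E}\alpha$ to partition the set of non-identity morphisms), this upgrades the sum to an internal direct sum $J = \bigoplus_{[\alpha]} k\mathcal{E}\alpha$.

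Now I invoke Webb's criterion (Theorem 2.5 of \cite{Webb3}): $\mathcal{E}$ is standardly stratified iff $|\Stab_{\Aut_{\mathcal{E}}(y)}(\gamma)|$ is invertible in $k$ for every non-identity morphism $\gamma \colon x \to y$. Assuming $(3)$, this applies in particular to every unfactorizable $\alpha$, so Lemma 4.2.9 makes each summand $k\mathcal{E}\alpha$ projective, and hence $J$ is projective, proving $(3) \Rightarrow (1)$. Moreover, the displayed sequence is already a linear projective resolution of $k\mathcal{E}_0$: the left regular module $k\mathcal{E}$ is graded projective and generated in degree $0$, while $J$ is generated in degree $1$ because its generating unfactorizable morphisms all lie in $k\mathcal{E}_1$ under the length grading. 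This exhibits $k\mathcal{E}_0$ as a Koszul $k\mathcal{E}$-module and so proves $(3) \Rightarrow (2)$.

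The main obstacle is the internal direct sum decomposition $J = \bigoplus_{[\alpha]} k\mathcal{E}\alpha$; freeness of the EI category is essential here (it fails for a general finite EI category, where non-identity morphisms may have several inequivalent unfactorizable right factors), and it is what simultaneously delivers both the projectivity of the syzygy and the linearity of the resolution. Every other step of the argument reduces to results already established in Sections 4.1--4.3 or quoted from \cite{Webb3}.
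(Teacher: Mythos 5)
Your proposal is correct and takes essentially the same route as the paper: the substantive content in both is the decomposition of $J = \Omega(k\mathcal{E}_0)$ into cyclic modules $k\mathcal{E}\alpha$ with $\alpha$ unfactorizable (Lemma 4.2.10 together with the UFP), their projectivity via Lemma 4.2.9 and Webb's stabilizer criterion, and the observation that $J$ is generated in degree 1, with only the routing of implications differing (you prove $(3)\Rightarrow(2)$ directly where the paper proves $(1)\Rightarrow(2)$, which is the same computation once $J$ is known to be projective). One cosmetic slip: $J$ is spanned by the non-isomorphisms (morphisms of positive length), not by all non-identity morphisms, since non-identity automorphisms lie in $k\mathcal{E}_0$.
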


\begin{proof}
It is clear that $\pd k\mathcal{E}_0 = 0$ if and only if $\mathcal{E}$ is a finite EI category with a single object since we only consider connected categories. In this situation, $k \mathcal{E} = k \mathcal{E}_0$, and all statements are trivially true. Thus without loss of generality we suppose that $\pd k\mathcal{E}_0 \neq 0$.

Observe that $\pd k\mathcal{E} _0 =1$ if and only if $\Omega( k\mathcal{E} _0) = J = \bigoplus _{i \geqslant 1} k\mathcal{E}_i$ is projective. Since $J$ is spanned by all non-isomorphisms in $\mathcal{E}$ and each non-isomorphism can be written as a composition of unfactorizable morphisms, it is generated in degree 1. Thus $k \mathcal{E} _0$ is a linear $k\mathcal{E}$-module, and (1) implies (2). Clearly (2) implies (3). The statements (3) and (4) are equivalent by Proposition 4.3.3.

Now we prove that (3) implies (1). If $\mathcal{E}$ is standardly stratified, then for every morphism $\alpha : x \rightarrow y$ in $\mathcal{E}$, the order of $\Stab _H (\alpha)$ is invertible in $k$, where $H = \Aut _{\mathcal{E}} (y)$. By Lemma 4.2.10, $J$ is a direct sum of some $k\mathcal{E}$-modules $k\mathcal{E} \alpha_i$'s with each $\alpha_i$ unfactorizable. By Lemma 4.2.9, each $k\mathcal{E} \alpha_i$ is projective. Therefore, $J$ is also projective, i.e., $\pd k\mathcal{E}_0 = 1$.
\end{proof}

This theorem and Theorem 3.2.8 give us a way to construct Koszul algebras in the classical sense. Indeed, let $\mathcal{E}$ be a standardly stratified finite free EI category and define $\mathcal{D}$ to be the subcategory formed by removing all non-identity automorphisms. Then by Theorem 3.2.8 $k\mathcal{D}$ is a Koszul algebra in the classical sense since $k\mathcal{E}_0$ is a linear $k \mathcal{E}$-module in the generalized sense by the previous theorem.

Let us get more information about the projective resolutions of $k\mathcal{E} _0$ for arbitrary finite free EI categories. In general, $\Omega( k\mathcal{E} _0) \cong J = \bigoplus _{i \geqslant 1} k\mathcal{E}_i$ is not projective, but it is still a direct sum of some $k\mathcal{E}$-modules $k\mathcal{E} \alpha$'s with each $\alpha$ unfactorizable by Lemma 4.2.10. Thus the projective resolutions of $k\mathcal{E}_0$ is completely determined by the projective resolutions of those $k\mathcal{E} \alpha$'s.

\begin{lemma}
Let $\mathcal{E}$ be a finite free EI category and $\alpha: x \rightarrow y$ be an unfactorizable morphism. Grade the $k \mathcal{E}$-module $k\mathcal{E} \alpha$ by putting $\alpha$ in degree 1, namely, $(k \mathcal{E} \alpha)_1 = k \Aut _{\mathcal{E}} (y) \alpha$. Then $\Omega (k \mathcal{E} \alpha)$ is 0 or is generated in degree 1, and $\Omega (k \mathcal{E} \alpha)_1 = \Omega (k \mathcal{E} \alpha) (y)$, the value of $\Omega (k \mathcal{E} \alpha)$ on $y$.
\end{lemma}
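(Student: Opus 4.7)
The plan is to construct the graded projective cover of $k\mathcal{E}\alpha$ explicitly as an induced module and then analyze its kernel by a $\mathrm{Tor}$ computation. Write $H = \Aut_{\mathcal{E}}(y)$, $B = k\mathcal{E}_0 = \bigoplus_{z \in \Ob \mathcal{E}} k\Aut_{\mathcal{E}}(z)$, and $H_0 = \Stab_H(\alpha)$, and observe that $(k\mathcal{E}\alpha)_1 = kH\alpha$ is naturally a cyclic left $kH$-module, isomorphic to $kH/L$ where $L$ is the left ideal of $kH$ generated by $\{ 1 - h_0 : h_0 \in H_0 \}$. Choose a $B$-projective cover $\pi \colon P_1 \twoheadrightarrow kH\alpha$ with kernel $K$ (extending $P_1$ by zero on the other blocks of $B$), and form the induced module $Q = k\mathcal{E} \otimes_B P_1$, viewed with $P_1$ placed in degree $1$ so that $Q_n = (k\mathcal{E})_{n-1} \otimes_B P_1$ for $n \geqslant 1$.

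The first step is to show that right multiplication by $\alpha$ induces an isomorphism of graded left $k\mathcal{E}$-modules $k\mathcal{E} \otimes_B kH\alpha \xrightarrow{\sim} k\mathcal{E}\alpha$ (after placing $kH\alpha$ in degree $1$), and hence a surjection $\widetilde{\pi} \colon Q \twoheadrightarrow k\mathcal{E}\alpha$. The essential ingredient is the UFP of Definition 4.2.7: two morphisms $\gamma, \gamma' \colon y \to z$ of length $n-1$ satisfy $\gamma \alpha = \gamma' \alpha$ if and only if $\gamma' = \gamma h$ for some $h \in H_0$, which identifies the right annihilator of $\alpha$ inside $(k\mathcal{E})_{n-1} \cdot 1_y$ with $(k\mathcal{E})_{n-1} \cdot 1_y \cdot L$ and yields the tensor-product description $(k\mathcal{E})_{n-1} \otimes_B kH\alpha \cong (k\mathcal{E}\alpha)_n$. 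A routine check that $\ker \widetilde{\pi} \subseteq \rad Q$ (using $K \subseteq \rad P_1$ in degree $1$ and $Q_{\geqslant 2} \subseteq J \cdot Q$ in higher degrees) then confirms that $Q$ is the graded projective cover, so $\Omega (k\mathcal{E}\alpha) = \ker \widetilde{\pi}$.

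The second, more substantive step is to compute $\Omega$ degree by degree. Apply $(k\mathcal{E})_{n-1} \otimes_B -$ to the short exact sequence $0 \to K \to P_1 \to kH\alpha \to 0$ of $B$-modules; since $P_1$ is projective over $B$ we have $\mathrm{Tor}_1^B((k\mathcal{E})_{n-1}, P_1) = 0$, so the resulting four-term exact sequence identifies $\Omega_n$ with the image of $(k\mathcal{E})_{n-1} \otimes_B K$ in $Q_n$. Under the left $k\mathcal{E}$-action on $Q$ this image equals $(k\mathcal{E})_{n-1} \cdot K = (k\mathcal{E} \cdot \Omega_1)_n$, so $\Omega = k\mathcal{E} \cdot \Omega_1$ is generated in degree $1$; evidently $\Omega = 0$ exactly when $K = 0$, i.e., when $kH\alpha$ is already projective over $B$.

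The identification $\Omega_1 = \Omega(k\mathcal{E}\alpha)(y)$ then follows almost for free: $P_1$ is supported on the block $kH$, and because $y$ is the target of $\alpha$ while the EI structure forbids returning to $y$ along any morphism of positive length, both $Q(y) = P_1$ and $(k\mathcal{E}\alpha)(y) = kH\alpha$ are concentrated in degree $1$; hence $\Omega(y) = \ker(P_1 \to kH\alpha) = K = \Omega_1$, and the reverse inclusion $\Omega_1 \subseteq P_1 = Q(y)$ gives $\Omega_1 \subseteq \Omega(y)$. I expect the main obstacle to be the tensor-product identification of the first step: without the UFP there could be hidden coincidences among the composites $\gamma \alpha$ that are not captured by $L$, and it is precisely the free EI hypothesis that eliminates them and lets the Tor argument deliver both conclusions cleanly.
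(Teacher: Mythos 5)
Your proof is correct, and it packages the argument differently from the paper. The paper works with the (generally non-minimal) presentation $k\mathcal{E}1_y[1] \twoheadrightarrow k\mathcal{E}\alpha$, $1_y \mapsto \alpha$, computes its kernel $N$ explicitly as the span of differences $\beta_1 - \beta_2$ with $\beta_1\alpha = \beta_2\alpha$, and then invokes the UFP to rewrite each such difference as $\beta(1-h)$ with $h \in \Stab_{\Aut_{\mathcal{E}}(y)}(\alpha)$, so that $N = k\mathcal{E}\cdot N(y)$ and the conclusion passes to the direct summand $\Omega(k\mathcal{E}\alpha)$ of $N$. You use exactly the same combinatorial input from the UFP --- that $\gamma\alpha = \gamma'\alpha$ forces $\gamma' \in \gamma H_0$ --- but you spend it once, to establish the induced-module identity $k\mathcal{E}\alpha \cong k\mathcal{E}\otimes_B kH\alpha$ with $kH\alpha \cong kH/L$; after that, everything is formal: $Q = k\mathcal{E}\otimes_B P_1$ is checked to be the genuine graded projective cover, and generation of the syzygy in degree $1$ drops out of right-exactness of $\otimes_B$ applied degreewise, while the support argument at $y$ gives $\Omega_1 = \Omega(y)$. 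What your route buys is that you identify $\Omega(k\mathcal{E}\alpha)$ on the nose (not merely up to projective summands) and you isolate the stronger structural fact that $k\mathcal{E}\alpha$ is induced from its degree-$1$ part over $k\mathcal{E}_0$; what the paper's route buys is brevity and no need to verify minimality of a cover. One small inaccuracy: the vanishing of $\mathrm{Tor}_1^B((k\mathcal{E})_{n-1}, P_1)$ is irrelevant to the step where you use it --- the identification of $\Omega_n$ with the image of $(k\mathcal{E})_{n-1}\otimes_B K$ follows from right-exactness of the tensor product alone --- but this is harmless and does not affect the argument.
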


\begin{proof}
Let $H = \Aut _{\mathcal{E}} (y)$ and $H_0 = \Stab _H (\alpha)$. If $| H_0|$ is invertible in $k$, then by Lemma 4.2.9, $k\mathcal{E} \alpha$ is a projective $k\mathcal{E}$-module, so $\Omega^i( k \mathcal{E} \alpha) =0$ for all $i \geqslant 1$, in particular $\Omega (k \mathcal{E} \alpha) =0$. The conclusion is trivially true. Thus we only need to deal with the case that $|H_0|$ is not invertible.

Consider the projective presentation
\begin{equation*}
\xymatrix {0 \ar[r] & N \ar[r] & k\mathcal{E} 1_y [1] \ar[r]^p & k \mathcal{E} \alpha \ar[r] & 0}
\end{equation*}
where $p$ maps $1_y$ to $\alpha$. Since $\Omega (k \mathcal{E} \alpha)$ is isomorphic to a direct summand of $N$, it is enough to show that $N$ is generated in degree 1, and $N_1 = N(y)$.

Notice that $k\mathcal{E} 1_y$ is spanned by all morphisms in $\mathcal{E}$ with source $y$, and $k\mathcal{E} \alpha$ is spanned by all morphisms in $\mathcal{E}$ of the form $\beta \alpha$ where $\beta$ is a morphism in $\mathcal{E}$ with source $y$. We claim that $N$ is spanned by vectors of the form $\beta_1 - \beta_2$ with $\beta_1 \alpha = \beta_2 \alpha$, where $\beta_1$ and $\beta_2$ are two morphisms with source $y$.

Clearly, every such difference is contained in $N$. Conversely, let $v \in N$. Then $v$ can be written as $\sum _{i=1} ^n \lambda_i \beta_i$ such that $\lambda_i \in k$ and $\beta_i$ are pairwise different morphisms with source $y$. By the definition of $p$, $\sum _{i=1} ^n \lambda_i \beta_i \alpha = 0$. Those $ \beta_i \alpha$ might not be pairwise different in $\mathcal{E}$. Now we apply the same technique used in the proof of Proposition 4.2.17. By changing the indices if necessary, we can group the same morphisms together and suppose that $ \beta_1 \alpha = \ldots = \beta_{s_1} \alpha$, $ \beta_{s_1+1} \alpha = \ldots = \beta_{s_2} \alpha$ and so on, until $ \beta_{s_{l-1}+1} \alpha = \ldots = \beta_{s_l} \alpha$.

We have:
\begin{align*}
p(v) = (\lambda_1 + \ldots + \lambda_{s_1}) \beta_{s_1} \alpha + \ldots + (\lambda _{s_{l-1}+1} + \ldots + \lambda_{s_l}) \beta_{s_l} \alpha = 0.
\end{align*}
Therefore,
\begin{align*}
\lambda_1 + \ldots + \lambda_{s_1} = \ldots = \lambda _{s_{l-1}+1} + \ldots + \lambda_{s_l} = 0,
\end{align*}
and hence
\begin{align*}
v &= [\lambda_2 (\beta_2 - \beta_1) + \ldots + \lambda_{s_1}(\beta_{s_1} - \beta_1)] + \ldots \\
& + [\lambda _{s_{l-1}+2} (\beta_{s_{l-1}+2} - \beta_{s_{l-1}+1}) + \ldots + \lambda_{s_l} (\beta_{s_l} - \beta_{s_{l-1}+1})].
\end{align*}
So $v$ can be written as a sum of these differences.

Now we can prove the lemma. Take an arbitrary object $z \in \Ob \mathcal{E}$ and consider the value $N (z)$. If it is 0, the conclusion holds trivially. Suppose that $N (z) \neq 0$. By the above description, $N (z)$ is spanned by vectors $\beta_1 - \beta_2$ such that $\beta_1, \beta_2$ are two morphisms from $y$ to $z$, and $\beta_1 \alpha = \beta_2 \alpha$. By the equivalent definition of UFP described in Remark 6.5, there is an automorphism $h \in \Aut _{\mathcal{E}} (y)$ such that $\beta_1 = \beta_2 h$ and $\alpha = h^{-1} \alpha$. Therefore $h\alpha = \alpha$, and $1-h \in N (y)$. Thus $\beta_1 - \beta_2 = \beta (1-h) \in k\mathcal{E} \cdot N (y)$. Since $z$ is taken to be an arbitrary object, $N$ is generated by $N (y)$, which is clearly equal to $N_1$.
\end{proof}

From this lemma we can get:

\begin{proposition}
Let $\mathcal{E}$ be a finite free EI category, then $\Ext _{k\mathcal{E}} ^2 (k\mathcal{E}_0, k\mathcal{E}_0) = 0$.
\end{proposition}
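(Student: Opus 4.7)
The plan is to reduce this to a cohomological vanishing statement via the decomposition of $\Omega(k\mathcal{E}_0)$ into cyclic pieces, and then invoke Shapiro's lemma together with the freeness of a group algebra as a module over the algebra of a subgroup.

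First I would note that the projective cover of $k\mathcal{E}_0$ is $k\mathcal{E}$, so $\Omega(k\mathcal{E}_0) = J = \bigoplus_{i \geq 1} k\mathcal{E}_i$. Combining Proposition 4.2.6 (every non-isomorphism factors through an unfactorizable morphism) with Lemma 4.2.10 yields a direct sum decomposition $J = \bigoplus_{[\alpha]} k\mathcal{E}\alpha$ indexed by $\Aut_{\mathcal{E}}(y)$-orbit representatives of unfactorizable morphisms $\alpha : x \to y$. The standard dimension shift from the exact sequence $0 \to J \to k\mathcal{E} \to k\mathcal{E}_0 \to 0$ then gives
\[
\Ext^2_{k\mathcal{E}}(k\mathcal{E}_0, k\mathcal{E}_0) \;\cong\; \Ext^1_{k\mathcal{E}}(J, k\mathcal{E}_0) \;\cong\; \bigoplus_{[\alpha]} \Ext^1_{k\mathcal{E}}(k\mathcal{E}\alpha, k\mathcal{E}_0),
\]
reducing the problem to $\Ext^1(k\mathcal{E}\alpha, k\mathcal{E}_0) = 0$ for each unfactorizable $\alpha : x \to y$.

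Fixing such an $\alpha$, I would apply $\Hom(-, k\mathcal{E}_0)$ to the projective presentation $0 \to N \to k\mathcal{E} 1_y [1] \to k\mathcal{E}\alpha \to 0$ appearing in the proof of Lemma 4.3.5, and split the target along $k\mathcal{E}_0 = \bigoplus_z k\Aut_{\mathcal{E}}(z)$. For each $z \neq y$, the module $k\Aut_{\mathcal{E}}(z)$ vanishes at $y$ while (by Lemma 4.3.5) $N$ is generated in degree $1$ by $N(y)$; this forces $\Hom(N, k\Aut_{\mathcal{E}}(z)) = 0$, so the corresponding Ext summand vanishes. The only nontrivial summand is $z = y$. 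Writing $H = \Aut_{\mathcal{E}}(y)$ and $H_\alpha = \Stab_H(\alpha)$, Lemma 4.3.5 identifies $N(y) \subseteq kH$ with the kernel of the canonical surjection $kH \twoheadrightarrow k(H/H_\alpha)$ of left $kH$-modules. Using that $N \subseteq k\mathcal{E}1_y$ is supported on $\{w \in \Ob \mathcal{E} : w \geq y\}$ in the directed partial order, I would verify that extension by zero converts every $kH$-linear map $N(y) \to kH$ into a $k\mathcal{E}$-linear map $N \to k\Aut_{\mathcal{E}}(y)$, yielding
\[
\Ext^1_{k\mathcal{E}}(k\mathcal{E}\alpha,\, k\Aut_{\mathcal{E}}(y)) \;\cong\; \Ext^1_{kH}(k(H/H_\alpha),\, kH).
\]

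Finally I would kill the right-hand side using Shapiro's lemma. Since $k(H/H_\alpha) = kH \otimes_{kH_\alpha} k$ and induction from $kH_\alpha$-modules to $kH$-modules is exact (as $kH$ is free over $kH_\alpha$) and preserves projectives,
\[
\Ext^1_{kH}(k(H/H_\alpha),\, kH) \;\cong\; \Ext^1_{kH_\alpha}(k,\, kH) \;=\; H^1(H_\alpha,\, kH).
\]
The restriction of $kH$ to $kH_\alpha$ via left multiplication is free, because $H_\alpha$ acts freely on $H$ by left multiplication with orbits the right cosets in $H_\alpha \backslash H$. Group cohomology with coefficients in a projective module vanishes in positive degrees, so this Ext is zero, and the proposition follows.

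The main obstacle will be the compatibility check hidden in the third paragraph: one must confirm that the naive extension by zero of a $kH$-linear map $N(y) \to kH$ genuinely respects the $k\mathcal{E}$-module structure on $N$. The support condition $N(w) = 0$ for $w < y$, which comes from $N \subseteq k\mathcal{E} 1_y$ together with $\mathcal{E}$ being directed, is precisely what eliminates the potentially dangerous constraints arising from morphisms $w \to y$ with $w \neq y$.
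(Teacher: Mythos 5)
Your proposal is correct, and it reaches the vanishing by a noticeably different route than the paper, although both ultimately rest on the same input. The shared skeleton is the dimension shift to $\Ext^1_{k\mathcal{E}}(J,k\mathcal{E}_0)$ together with Lemma 4.3.5; after that the paper finishes uniformly: since $\Omega J$, its projective cover $P$ and $J$ are all generated in degree $1$, the sequence $0\to \Hom_{k\mathcal{E}}(J,k\mathcal{E}_0)\to\Hom_{k\mathcal{E}}(P,k\mathcal{E}_0)\to\Hom_{k\mathcal{E}}(\Omega J,k\mathcal{E}_0)$ is identified with the image of the degree-$1$ sequence $0\to(\Omega J)_1\to P_1\to J_1\to 0$ under $\Hom_{k\mathcal{E}_0}(-,k\mathcal{E}_0)$, which is exact because $k\mathcal{E}_0$ is a self-injective algebra; surjectivity of the last map gives $\Ext^1(J,k\mathcal{E}_0)=0$. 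You instead split $J=\bigoplus_{[\alpha]}k\mathcal{E}\alpha$ and $k\mathcal{E}_0=\bigoplus_z k\Aut_{\mathcal{E}}(z)$, kill the blocks $z\neq y$ by the generation-at-$y$ argument, and convert the block $z=y$ into $\Ext^1_{kH}(k[H/H_\alpha],kH)$, which you dispose of by Eckmann--Shapiro plus the vanishing of $H^{>0}(H_\alpha,kH)$; note that this last fact (positive-degree cohomology of a \emph{finite} group with projective coefficients vanishes) is exactly the self-injectivity of $kH_\alpha$, so your Shapiro step is the paper's self-injectivity argument in disguise, localized at one orbit and one object. Two remarks: the ``main obstacle'' you flag admits a cleaner justification than the directedness bookkeeping --- since $J$ annihilates $k\mathcal{E}_0$ and $N$ is generated by $N(y)$, every $k\mathcal{E}$-map $N\to k\Aut_{\mathcal{E}}(y)$ factors through $N/JN\cong N(y)$ and conversely, which is precisely the graded identification the paper uses; and your identification of the $\Ext^1$ groups needs (only) the routine observation that restriction-to-$y$ is compatible with restriction along $N\hookrightarrow k\mathcal{E}1_y$. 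What your route buys is finer, blockwise information (each $\Ext^1_{k\mathcal{E}}(k\mathcal{E}\alpha,k\Aut_{\mathcal{E}}(z))$ is identified or killed separately, in group-theoretic terms); what the paper's buys is brevity and a template that is reused verbatim in Proposition 4.3.7.
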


\begin{proof}
Since $\Omega (k \mathcal{E} _0) \cong J$, it is enough to show $\Ext _{k \mathcal{E}} ^1 (J, k \mathcal{E} _0) =0$. The conclusion holds trivially if $J$ is projective. Otherwise, since $J$ is the direct sum of some $k\mathcal{E} \alpha$'s with $\alpha$ unfactorizable, by the above lemma we know that $\Omega J$ is generated in degree 1.

Applying the functor $\Hom _{k\mathcal{E}} (-, k\mathcal{E} _0)$ to $0 \rightarrow \Omega J \rightarrow P \rightarrow J \rightarrow 0$ we get
\begin{equation*}
0 \rightarrow \Hom _{k\mathcal{E} } (J, k\mathcal{E}_0) \rightarrow \Hom _{k\mathcal{E} } (P, k\mathcal{E} _0) \rightarrow \Hom _{k\mathcal{E}} (\Omega J, k\mathcal{E}_0) \rightarrow \Ext _{k\mathcal{E}} ^1 (J, k\mathcal{E}_0) \rightarrow 0.
\end{equation*}
Since all modules are generated in degree 1, the sequence
\begin{equation}
0 \rightarrow \Hom _{k\mathcal{E} } (J, k\mathcal{E}_0) \rightarrow \Hom _{k\mathcal{E}} (P, k\mathcal{E} _0) \rightarrow \Hom _{k\mathcal{E}} (\Omega J, k\mathcal{E}_0)
\end{equation}
is isomorphic to the sequence
\begin{equation*}
0 \rightarrow \Hom _{k\mathcal{E} _0} (J_1, k\mathcal{E}_0) \rightarrow \Hom _{k\mathcal{E} _0} (P_1, k\mathcal{E} _0) \rightarrow \Hom _{k\mathcal{E} _0} ((\Omega J)_1, k\mathcal{E}_0)
\end{equation*}
obtained by applying the exact functor $\Hom _{k \mathcal{E}_0} (-, k\mathcal{E} _0)$ to the exact sequence $0 \rightarrow (\Omega J)_1 \rightarrow P_1 \rightarrow J_1 \rightarrow 0$. Thus the last map in sequence (4.3.1) is surjective, so $\Ext _{k \mathcal{E}} ^1 (J, k \mathcal{E} _0) =0$.
\end{proof}

The fact that $\Omega J$ is generated in degree 1 implies $\Ext^2 _{k\mathcal{E}} (k\mathcal{E}_0, k\mathcal{E}_0) =0$. Actually the converse statement is also true. Indeed, consider the exact sequence $0 \rightarrow \Omega J \rightarrow P \rightarrow J \rightarrow 0$. If $\Ext^2 _{k\mathcal{E}} (k\mathcal{E}_0, k\mathcal{E}_0) =0$, applying the exact functor $\Hom _{k\mathcal{E} _0} (-, k\mathcal{E}_0)$ we get the exact sequence
\begin{equation*}
0 \rightarrow \Hom _{k\mathcal{E} _0} (J, k\mathcal{E} _0) \rightarrow \Hom _{k\mathcal{E} _0} (P, k\mathcal{E} _0) \rightarrow \Hom _{k\mathcal{E} _0} (\Omega J, k\mathcal{E} _0) \rightarrow 0,
\end{equation*}
which is isomorphic to
\begin{equation*}
0 \rightarrow \Hom _{k\mathcal{E} _0} (J_1, k\mathcal{E} _0) \rightarrow \Hom _{k\mathcal{E} _0} (P_1, k\mathcal{E} _0) \rightarrow \Hom _{k\mathcal{E} _0} (\Omega J / J (\Omega J), k\mathcal{E} _0) \rightarrow 0
\end{equation*}
since both $J$ and $P$ are generated in degree $1$. Applying the functor $\Hom _{ k\mathcal{E} _0} (-, k \mathcal{E}_0 )$ again, we recover $0 \rightarrow \Omega J / J (\Omega J) \rightarrow P_1 \rightarrow J_1 \rightarrow  0$. Therefore, $\Omega J / J (\Omega J) \cong (\Omega J)_1$, so $\Omega J$ is generated in degree 1.

Finite free EI categories with quasi-Koszul category algebras have very special homological properties. For example:

\begin{proposition}
Let $\mathcal{E}$ be a finite free EI category. Then the following are equivalent:
\begin{enumerate}
\item $\Ext _{k\mathcal {E}} ^i (k \mathcal{E} _0, k\mathcal{E} _0) = 0$ for all $i \geqslant 2$;
\item for every unfactorizable morphism $\alpha: x \rightarrow y$ and $i \geqslant 0$, either $\Omega ^i (k \mathcal{E} \alpha) $ are all 0, or they are all generated in degree 1 (in which case it is generated by $\Omega ^i (k \mathcal{E} \alpha) (y)$);
\item $k \mathcal{E}$ is a quasi-Koszul algebra.
\end{enumerate}
\end{proposition}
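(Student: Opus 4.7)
The plan is to derive $(1) \Leftrightarrow (3)$ directly from Proposition 4.3.6 and then prove $(1) \Leftrightarrow (2)$ by decomposing $\Omega(k\mathcal{E}_0)$ via Lemma 4.2.10 and inducting on the Ext degree. For $(1) \Leftrightarrow (3)$: the quasi-Koszul condition asserts that $\Ext^1_{k\mathcal{E}}(k\mathcal{E}_0, k\mathcal{E}_0) \cdot \Ext^i_{k\mathcal{E}}(k\mathcal{E}_0, k\mathcal{E}_0) = \Ext^{i+1}_{k\mathcal{E}}(k\mathcal{E}_0, k\mathcal{E}_0)$ for every $i \geq 0$. Since Proposition 4.3.6 gives $\Ext^2_{k\mathcal{E}}(k\mathcal{E}_0, k\mathcal{E}_0) = 0$, a one-line induction on $i$ produces $\Ext^{i+1} = 0$ for every $i \geq 1$, establishing $(3) \Rightarrow (1)$. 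The converse is immediate, since $(1)$ makes $\Ext^*_{k\mathcal{E}}(k\mathcal{E}_0, k\mathcal{E}_0)$ concentrated in degrees $0$ and $1$, hence trivially generated there.

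For the equivalence $(1) \Leftrightarrow (2)$, I apply Lemma 4.2.10 to obtain the decomposition $\Omega(k\mathcal{E}_0) \cong J = \bigoplus_\alpha k\mathcal{E}\alpha$, where $\alpha$ runs over a complete set of representative unfactorizable morphisms. Dimension shift gives, for every $j \geq 0$,
\[
\Ext^{j+1}_{k\mathcal{E}}(k\mathcal{E}_0, k\mathcal{E}_0) \;\cong\; \Ext^j_{k\mathcal{E}}(J, k\mathcal{E}_0) \;\cong\; \bigoplus_\alpha \Ext^j_{k\mathcal{E}}(k\mathcal{E}\alpha, k\mathcal{E}_0),
\]
so $(1)$ is equivalent to the vanishing $\Ext^j_{k\mathcal{E}}(k\mathcal{E}\alpha, k\mathcal{E}_0) = 0$ for every representative unfactorizable $\alpha$ and every $j \geq 1$. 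Fixing such an $\alpha \colon x \to y$, I will prove this vanishing in parallel with the structural statement in $(2)$ by induction on $j$.

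For the base case $j = 1$, I recycle the argument immediately following Proposition 4.3.6 applied to the summand $k\mathcal{E}\alpha$ in place of $J$: both $k\mathcal{E}\alpha$ and its graded projective cover $P$ are generated in degree $1$, so applying $\Hom_{k\mathcal{E}}(-, k\mathcal{E}_0)$ to $0 \to \Omega(k\mathcal{E}\alpha) \to P \to k\mathcal{E}\alpha \to 0$ collapses to a sequence of $\Hom_{k\mathcal{E}_0}$'s between the degree-$1$ components; combined with Lemma 4.3.5, this shows that $\Ext^1_{k\mathcal{E}}(k\mathcal{E}\alpha, k\mathcal{E}_0) = 0$ precisely when $\Omega(k\mathcal{E}\alpha)$ is zero or generated by $\Omega(k\mathcal{E}\alpha)(y)$. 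For the inductive step, assume $\Omega^i(k\mathcal{E}\alpha)$ is generated by its value at $y$. A direct generalization of Lemma 4.3.5, obtained by repeating its proof with $\alpha$ replaced by a $k\Aut_{\mathcal{E}}(y)$-generating set of $\Omega^i(k\mathcal{E}\alpha)(y)$ and invoking the Unique Factorization Property from Proposition 4.2.8 in the same way, identifies the degree-$1$ part $\Omega^{i+1}(k\mathcal{E}\alpha)_1$ with $\Omega^{i+1}(k\mathcal{E}\alpha)(y)$; the same Hom-computation as in the base case, coupled with the dimension shift $\Ext^{i+1}_{k\mathcal{E}}(k\mathcal{E}\alpha, k\mathcal{E}_0) \cong \Ext^1_{k\mathcal{E}}(\Omega^i(k\mathcal{E}\alpha), k\mathcal{E}_0)$, then yields the equivalence at stage $i+1$. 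The main obstacle is carrying out this inductive step rigorously: extending the UFP-driven description of kernels from Lemma 4.3.5 to iterated syzygies while carefully managing the gradings inherited from successive minimal projective covers. A cleaner alternative would be to show that whenever $\Omega^i(k\mathcal{E}\alpha)$ is generated by its value at $y$, it splits, via a version of Lemma 4.2.10 applied at the object $y$, as a direct sum of modules $k\mathcal{E}\gamma$ for suitable morphisms $\gamma$ starting at $y$, which reduces the inductive step to the base case applied summand-wise.
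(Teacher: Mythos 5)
Your skeleton is the paper's: you get (1)$\Leftrightarrow$(3) from Proposition 4.3.6 exactly as the paper does, and you attack (1)$\Leftrightarrow$(2) by passing to the summands $k\mathcal{E}\alpha$ of $J\cong\Omega(k\mathcal{E}_0)$, dimension shifting, and inducting with the $\Hom$-computation of Proposition 4.3.6 together with the double-duality argument in the paragraph after it (the paper runs the same induction with $J$ itself rather than summand by summand, an immaterial difference). The base case and the real engine of your induction -- the shift $\Ext^{i+1}_{k\mathcal{E}}(k\mathcal{E}\alpha,k\mathcal{E}_0)\cong\Ext^1_{k\mathcal{E}}(\Omega^i(k\mathcal{E}\alpha),k\mathcal{E}_0)$ plus the fact that for a module generated in degree $1$ with cover generated in degree $1$, exactness of $\Hom_{k\mathcal{E}_0}(-,k\mathcal{E}_0)$ (here $k\mathcal{E}_0$ is a direct sum of group algebras) gives $\Ext^1=0$ if and only if the syzygy is generated in degree $1$ -- are correct and are what the paper uses.

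The step you flag as the main obstacle is, however, where your plan goes astray. The only fact you need there, $\Omega^{i+1}(k\mathcal{E}\alpha)_1=\Omega^{i+1}(k\mathcal{E}\alpha)(y)$, requires no UFP argument at all: once $\Omega^{i}(k\mathcal{E}\alpha)$ is generated in degree $1$ by its value at $y$, its graded projective cover $P^{i}$ lies in $\mathrm{add}(k\mathcal{E}1_y[1])$, and since endomorphisms in $\mathcal{E}$ are automorphisms one has $P^{i}_1=P^{i}(y)$, whence every graded submodule $N\subseteq P^{i}$ satisfies $N_1=N\cap P^i_1=N\cap P^i(y)=N(y)$. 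By contrast, ``repeating the proof of Lemma 4.3.5'' for iterated syzygies cannot work as stated: the kernel of a cover of a module generated by elements of $(k\Aut_{\mathcal{E}}(y))^m$ is not spanned by differences of morphisms, and the literal analogue of Lemma 4.3.5 for higher syzygies (unconditional generation in degree $1$) is false -- in Example 4.3.8 one has $\Omega^2(k\mathcal{E}\alpha)\cong y_1\oplus z_2$, which is not generated in degree $1$; if the analogue held, every finite free EI category would be quasi-Koszul. The same example defeats your ``cleaner alternative'': there $\Omega(k\mathcal{E}\alpha)=k\mathcal{E}(1_y-h)\cong S_y$ is generated by its value at $y$ but its generator is a group-algebra element, not a morphism, and it is not a direct sum of modules $k\mathcal{E}\gamma$ with $\gamma$ a morphism starting at $y$; moreover, were such a splitting always available, Lemma 4.3.5 applied to the $\gamma$'s would again force quasi-Koszulity of every free EI category. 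So keep your reduction, base case and Hom-computation, drop the proposed UFP generalization and the splitting alternative, and insert the one-line observation $P^i_1=P^i(y)$; with that the induction closes exactly as in the paper's sketch.
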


\begin{proof}
If $k \mathcal{E}$ is a quasi-Koszul algebra, then
\begin{equation*}
\Ext _{k\mathcal {E}} ^i (k \mathcal{E} _0, k\mathcal{E} _0) = \Ext _{k\mathcal{E}} ^2 (k \mathcal{E} _0, k\mathcal{E} _0) \cdot \Ext _{k\mathcal{E}} ^{i-2} (k \mathcal{E} _0, k\mathcal{E} _0)
\end{equation*}
for every $i \geqslant 2$. But $\Ext _{k\mathcal{E}} ^2 (k \mathcal{E} _0, k\mathcal{E} _0) =0$ by Proposition 4.3.6, so (3) implies (1). Clearly, (1) implies (3).

Notice that $k \mathcal{E} \alpha$ is a isomorphic to a direct summand of $J \cong \Omega (k \mathcal{E}_0)$. Thus we only need to prove the equivalence of the following two statements:
\begin{enumerate}[(1')]
\item $\Ext _{k\mathcal {E}} ^i (J, k\mathcal{E} _0) = 0$ for every $i \geqslant 1$;
\item $\Omega ^i (J) =0 $ or is generated in degree 1 for every $i \geqslant 1$.
\end{enumerate}

Since the technique we use is similar to that in the proof of Proposition 4.3.6, we only give a sketch of the proof. In the case that $J$ is projective, i.e., $\mathcal{E}$ is standardly stratified, then (1') and (2') are trivially true, hence they are equivalent. Now suppose that $J$ is not projective. From the proof of the previous proposition and the paragraph after it we conclude that $\Omega J$ is generated in degree 1 if and only if $\Ext _{k\mathcal{E}} ^1 (J, k\mathcal{E} _0) =0$. Replacing $J$ by $\Omega J$ (which is also generated in degree 1 either by the induction hypothesis or by the hypothesis $\Ext _{k\mathcal{E}} ^1 (J, k\mathcal{E} _0) =0$) and using the same technique, we get $\Omega ^2 (J)$ is generated in degree 1 if and only if $\Ext _{k\mathcal{E}} ^2 (J, k\mathcal{E} _0) =0$. The equivalence of (1') and (2'), and hence the equivalence of (1) and (2), come from induction.
\end{proof}

The reader may guess that the category algebra of a finite free EI category is always quasi-Koszul in our sense because of the following reasons: finite free EI categories generalize finite groups and acyclic quivers, for which the associated algebras are all quasi-Koszul;  by Proposition 4.3.2 and Theorem 4.3.4, for an arbitrary finite EI category $\mathcal{E}$, $k\mathcal{E}$ is Koszul if $\mathcal{E}$ is standardly stratified and one of the following condition holds: $k \mathcal{E}$ is quasi-Koszul, or $\mathcal{E}$ is a finite free EI category; and we have proved that $\Ext _{k\mathcal{E}} ^2 (k\mathcal{E} _0, k\mathcal{E} _0) =0$ if $\mathcal{E}$ is a finite free EI category. Unfortunately, this conjecture is false, as shown by the following example.

\begin{example}
Let $\mathcal{E}$ be the following finite EI category where: $\Aut _{\mathcal{E}} (x) = \langle 1_x \rangle$, $\Aut _{\mathcal{E}} (z) = \langle 1_z \rangle$, $\Aut _{\mathcal{E}} (y) = \langle h \rangle$ is a group of order 2; $\mathcal{E} (x, y) = \{ \alpha \}$, $\mathcal{E} (y, z) = \{ \beta \}$ and $\mathcal{E} (x, z) = \{ \beta \alpha \}$. The reader can check that $\mathcal{E}$ is a finite free EI category and then the length grading can be applied on it. Let $k$ be an algebraically closed field with characteristic 2.

\begin{equation*}
\xymatrix { x \ar[r] ^{\alpha} & y \ar[r] ^{\beta} & z.}
\end{equation*}

The indecomposable direct summands of $k \mathcal{E}$ and $k \mathcal{E}_0$ are:
\begin{equation*}
P_x = \begin{matrix}   x_0 \\ y_1 \\ z_2 \end{matrix}, \qquad P_y = \begin{matrix} & y_0 & \\ y_0 & & z_1 \end{matrix}, \qquad P_z = z_0, \qquad k \mathcal{E} _0 \cong x_0 \oplus z_0 \oplus \begin{matrix} y_0 \\ y_0 \end{matrix}.
\end{equation*}
We use indices to mark the degrees of composition factors. The reader should bear in mind that the two simple modules $y$ appearing in $P_y$ have the same degree.

Take the summand $x_0$ of $k\mathcal{E} _0$. By computation, we get
\begin{equation*}
\Omega (x_0) = \begin{matrix}   y_1 \\ z_2 \end{matrix}, \qquad \Omega ^2 (x_0) = \begin{matrix} y_1 \end{matrix}, \qquad \Omega ^3 (x_0) = y_1 \oplus z_2.
\end{equation*}
Applying $\Hom _{k\mathcal{E}} (-, k\mathcal{E} _0)$ to the exact sequence
\begin{equation*}
\xymatrix{0 \ar[r] & \Omega^3 (x_0) \ar[r] & P_y [1] \ar[r] & \Omega^2 (x_0) \ar[r] &0}
\end{equation*}
we get $\Ext^3 _{k\mathcal{E}} (k\mathcal{E}_0, k\mathcal{E}_0 ) \neq 0$. Consequently, $k \mathcal{E}$ is not a quasi-Koszul algebra in our sense by the previous proposition.
\end{example}

We aim to characterize finite free EI categories with quasi-Koszul category algebras. For this goal, we make the following definition:

\begin{definition}
Let $\mathcal{E}$ be a finite EI category. An object $x \in \Ob \mathcal{E}$ is called left regular if for every morphism $\alpha$ with target $x$, the stabilizer of $\alpha$ in $\Aut _{\mathcal{E}} (x)$ has an order invertible in $k$. Similarly, $x$ is called right regular if for every morphism $\beta$ with source $x$, the stabilizer of $\beta$ in $\Aut _{\mathcal{E}} (x)$ has an order invertible in $k$.
\end{definition}

\begin{remark}
We make some comments for this definition.
\begin{enumerate}
\item If $x \in \Ob \mathcal{E}$ is maximal, i.e., there is no non-isomorphisms with source $x$, then $x$ is right regular by convention; similarly, if $x$ is minimal, then it is trivially left regular.
\item The category $\mathcal{E}$ is standardly stratified if and only if every object $x \in \Ob \mathcal{E}$ is left regular; similarly, $\mathcal{E} ^{\textnormal{op}}$ is standardly stratified if and only if every object $x \in \Ob \mathcal{E}$ is right regular
\item If $\mathcal{E}$ is a finite free EI category and $x \in \Ob \mathcal{E}$. Then $x$ is left regular if and only if for every $\alpha$ with target $x$, the $k\mathcal{E}$-module $k \mathcal{E} \alpha$ is a left projective $k \mathcal{E}$-module. Similarly, $x$ is right regular if and only if for every $\beta$ with source $x$, the right $k\mathcal{E}$-module $\beta (k \mathcal{E})$ is a right projective $k \mathcal{E}$-module.
\end{enumerate}
\end{remark}

\begin{lemma}
Let $\mathcal{E}$ be a finite free EI category and $\beta: x \rightarrow y$ be a morphism with $x \in \Ob \mathcal{E}$ right regular. Then there exists some idempotent $e$ in $k \mathcal{E}$ such that $\beta (k \mathcal{E}) \cong e (k \mathcal{E})$ as right $k \mathcal{E}$-modules by sending $e$ to $\beta$. In particular, $\beta (k G) \alpha \cong e (kG) \alpha$ as vector spaces for every morphism $\alpha$ with target $x$, where $G = \Aut _{\mathcal{E}} (x)$.
\end{lemma}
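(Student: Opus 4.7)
The approach mirrors the proof of Lemma 4.2.9 but for right modules. Let $H_0 = \Stab_G(\beta) = \{h \in G : \beta h = \beta\}$. Right regularity of $x$ forces $|H_0|$ to be invertible in $k$, so $e := \frac{1}{|H_0|}\sum_{h \in H_0} h \in kG \subseteq k\mathcal{E}$ is a well-defined idempotent, and a direct computation gives $\beta e = \beta$. I will define a right $k\mathcal{E}$-module homomorphism $\psi : e(k\mathcal{E}) \to k\mathcal{E}$ by $\psi(r) = \beta r$; then $\psi(e) = \beta$ and the image equals $\beta e(k\mathcal{E}) = \beta(k\mathcal{E})$, so $\psi$ is automatically surjective onto $\beta(k\mathcal{E})$. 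The whole task therefore reduces to proving $\psi$ injective.

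For injectivity I will produce matching bases of $e(k\mathcal{E})$ and $\beta(k\mathcal{E})$ that $\psi$ maps bijectively. Since $e = 1_x e$, both modules are spanned by the translates $e\delta$ and $\beta\delta$ respectively, as $\delta$ ranges over morphisms of $\mathcal{E}$ with target $x$. A short check (using $eh = e$ for $h \in H_0$ in one direction, and the nonvanishing of the $\delta$-coefficient in the expansion of $e\delta$ in the morphism basis in the other) shows that $e\delta = e\delta'$ iff $\delta'$ lies in the left $H_0$-orbit of $\delta$. Picking orbit representatives $\{\delta_i\}$ for the left $H_0$-action on morphisms targetting $x$ therefore yields a basis $\{e\delta_i\}$ of $e(k\mathcal{E})$. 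The key claim is that $\beta\delta = \beta\delta'$ is also equivalent to $\delta, \delta'$ lying in the same $H_0$-orbit, so $\{\beta\delta_i\}$ is a basis of $\beta(k\mathcal{E})$ indexed by the same set, and $\psi(e\delta_i) = \beta\delta_i$ becomes a bijection of bases.

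The nontrivial direction of this claim, $\beta\delta = \beta\delta' \Rightarrow \delta' = h\delta$ for some $h \in H_0$, is where the free EI hypothesis enters, via the UFP (Proposition 4.2.8). I will fix unfactorizable decompositions $\beta = \beta_m \circ \cdots \circ \beta_1$, $\delta = \delta_n \circ \cdots \circ \delta_1$, and $\delta' = \delta'_{n'} \circ \cdots \circ \delta'_1$, producing two unfactorizable decompositions of the common morphism $\beta\delta = \beta\delta'$ of total lengths $m+n$ and $m+n'$. The UFP forces $n = n'$, matches intermediate objects, and supplies automorphisms $h_1, \ldots, h_{m+n-1}$ at these objects that conjugate consecutive factors. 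At the position corresponding to $x$ the resulting $h_n \in G$ satisfies $\delta' = h_n \delta$ (after telescoping the $\delta$-side relations); the $\beta$-side relations $\beta_j = h_{n+j}\beta_j h_{n+j-1}^{-1}$ for $j = 1, \ldots, m-1$ together with $\beta_m = \beta_m h_{n+m-1}^{-1}$ then telescope as $\beta h_n = \beta_m \cdots \beta_2 (\beta_1 h_n) = \cdots = (\beta_m h_{n+m-1}) \beta_{m-1} \cdots \beta_1 = \beta$, forcing $h_n \in H_0$. Carrying out this telescoping cleanly is the main technical obstacle.

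For the ``in particular'' assertion I will simply restrict $\psi$ to the subspace $e(kG)\alpha \subseteq e(k\mathcal{E})$ spanned by $\{eg\alpha : g \in G\}$; since $\psi(eg\alpha) = \beta g\alpha$ and $\psi$ is already known to be injective on $e(k\mathcal{E})$, this restriction is a $k$-linear isomorphism $e(kG)\alpha \cong \beta(kG)\alpha$.
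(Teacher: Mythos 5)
Your proposal is correct and takes essentially the same route as the paper: the same averaged idempotent $e = \frac{1}{|H_0|}\sum_{h \in H_0} h$ built from $H_0 = \Stab_G(\beta)$ (well defined by right regularity; the paper's proof writes $\Stab_G(\alpha)$ but clearly intends the stabilizer of $\beta$) and the same right-module map sending $er \mapsto \beta r$. The paper simply asserts bijectivity ``in a way similar to the proof of Lemma 4.2.9'' and defers the details to the cited reference, and your orbit-basis comparison together with the UFP telescoping is exactly the deferred verification, so the two arguments coincide in substance.
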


\begin{proof}
Let $G_0 = \Stab _G (\alpha)$ and $e = \sum _{g \in G_0} g / | G_0 |$. This is well defined since $x$ is right regular. Then we can prove $\beta (k \mathcal{E}) \cong e (k \mathcal{E})$ as right $k\mathcal{E}$-modules in a way similar to the proof of Lemma 4.2.9. The isomorphism is given by sending $er$ to $\beta r$ for $r \in k\mathcal{E}$. Since the image of $e (kG) \alpha \subseteq k\mathcal{E}$ is exactly $\beta (k \mathcal{E}) \alpha$, we deduce that $e (kG) \alpha \cong \beta (kG) \alpha$ as vector spaces.
\end{proof}

Using these concepts, we can get a sufficient condition for the category algebra of a finite free EI category to be quasi-Koszul.

\begin{theorem}
Let $\mathcal{E}$ be a finite free EI category such that every object $x \in \Ob \mathcal{E}$ is either left regular or right regular. Then $k \mathcal{E}$ is quasi-Koszul.
\end{theorem}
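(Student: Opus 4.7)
The plan is to verify the criterion from Proposition 4.3.8: it suffices to show that for each unfactorizable $\alpha \colon x \to y$ in $\mathcal{E}$ and every $i \geqslant 1$, the syzygy $\Omega^i(k\mathcal{E}\alpha)$ is zero or generated in degree $1$. Write $H = \Aut_{\mathcal{E}}(y)$ and $H_0 = \Stab_H(\alpha)$. If $y$ is left regular, then by definition $|H_0|$ is invertible in $k$, so Lemma 4.2.9 gives that $k\mathcal{E}\alpha$ is already a projective $k\mathcal{E}$-module and all higher syzygies vanish. Thus the real work is in the complementary case where $y$ is right regular.

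In that case the central claim I would establish is the identification
\[
\Omega(k\mathcal{E}\alpha) \;\cong\; k\mathcal{E}1_y[1] \otimes_{kH} N(y)
\]
of $k\mathcal{E}$-modules, where $N(y) = \{v \in kH : v\alpha = 0\}$ is viewed as a left $kH$-module. The first ingredient is a UFP computation (using Proposition 4.2.8): if $s\alpha = s'\alpha$ for morphisms $s,s' \colon y \to z$ in $\mathcal{E}$, then a telescoping of the intermediate automorphisms collapses to $s' = sh$ for some $h \in H_0$. This identifies the kernel of $k\mathcal{E}(y,z) \to k\mathcal{E}(x,z)$, $s \mapsto s\alpha$, with $k\mathcal{E}(y,z)\cdot \omega_{H_0}$, where $\omega_{H_0}$ is the augmentation ideal of $kH_0$; in particular $N(y) = kH \cdot \omega_{H_0}$. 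The second ingredient is flatness: right regularity of $y$ says that every right $H$-stabilizer on $\Hom_{\mathcal{E}}(y,z)$ has order invertible in $k$, so each $k\mathcal{E}(y,z)$ is a projective (hence flat) right $kH$-module. Applying $k\mathcal{E}1_y \otimes_{kH} -$ to the short exact sequence $0 \to N(y) \to kH \to k[H/H_0] \to 0$ and combining with the generation statement of Lemma 4.3.5 then yields the claimed isomorphism.

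Once this description is secured, the conclusion follows by transport of projective resolutions. The functor $k\mathcal{E}1_y[1] \otimes_{kH} -$ is exact and carries any projective $kH$-module $kHe$ to the projective $k\mathcal{E}$-module $k\mathcal{E}\cdot 1_y e\,[1]$, so applied to a projective resolution of $N(y)$ over $kH$ it produces a projective resolution of $\Omega(k\mathcal{E}\alpha)$ over $k\mathcal{E}$. Consequently $\Omega^i(k\mathcal{E}\alpha) \cong k\mathcal{E}1_y[1] \otimes_{kH} \Omega^{i-1}_{kH}(N(y))$ up to projective summands coming from non-minimality. Any module of the form $k\mathcal{E}1_y[1] \otimes_{kH} M$ is generated in degree $1$, because its degree $1$ part is $M$ itself and the $k\mathcal{E}$-action fills in the higher-degree parts; since direct summands of such modules are still generated in degree $1$, the minimal syzygy has the desired property. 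The main obstacle I anticipate is the verification of the tensor-product identification of $\Omega(k\mathcal{E}\alpha)$: it requires the careful UFP calculation describing the kernel at every vertex $z$ simultaneously, together with the flatness input that is exactly where the right-regularity of $y$ is consumed.
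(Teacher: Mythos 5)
Your proposal is correct, and it takes a genuinely different route from the paper's proof, even though both consume the hypotheses in the same places. The paper also reduces to the criterion that each $\Omega^i(k\mathcal{E}\alpha)$ is zero or generated in degree $1$ (note this is Proposition 4.3.7, not 4.3.8 --- a harmless mislabel), and it also uses right regularity of $y$ through the idempotents of Lemma 4.3.11; but instead of your base-change isomorphism it argues by induction on $i$: it restricts a minimal projective resolution of $k\mathcal{E}\alpha$ to the object $y$ to obtain a minimal $kH$-resolution of $kH\alpha$, and then identifies $\Omega^{i+1}(k\mathcal{E}\alpha)(z)$ with $\bigoplus_s \beta_s\cdot\Omega^{i+1}_{kH}(kH\alpha)$ by an explicit dimension count. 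You instead prove once and for all that the kernel of $k\mathcal{E}1_y[1]\to k\mathcal{E}\alpha$ is $k\mathcal{E}1_y[1]\otimes_{kH}N(y)$ --- your UFP telescoping identifying the kernel at each $z$ with $k\mathcal{E}(y,z)\cdot\omega_{H_0}$ is exactly the computation inside the proof of Lemma 4.3.5, and right regularity of $y$ makes each $k\mathcal{E}(y,z)$ a projective, hence flat, right $kH$-module, which gives injectivity of the multiplication map --- and then you transport a $kH$-projective resolution of $N(y)$ through the exact, projective-preserving functor $k\mathcal{E}1_y[1]\otimes_{kH}-$, so that every kernel in sight is of the form $k\mathcal{E}1_y[1]\otimes_{kH}(-)$ and visibly generated in degree $1$. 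What your approach buys is a non-inductive, more functorial proof with no dimension counting, plus the explicit formula $\Omega^i(k\mathcal{E}\alpha)\cong k\mathcal{E}1_y[1]\otimes_{kH}\Omega^{i-1}_{kH}(N(y))$ up to projective summands; what the paper's count buys is that it never has to verify injectivity of a multiplication map, since the dimensions force it. Two small points to tighten in a write-up: the case split should be on whether $|H_0|$ is invertible in $k$ (if not, $y$ fails to be left regular and is therefore right regular by hypothesis; if it is, Lemma 4.2.9 applies irrespective of which regularity holds), and the splitting of the kernel as $\Omega(k\mathcal{E}\alpha)$ plus a projective summand should be taken in the graded category (graded projective covers exist by Lemma 2.1.1), so that your remark that direct summands remain generated in degree $1$ applies to graded summands.
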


\begin{proof}
By the second statement of Proposition 4.3.7, it is enough to show that for each unfactorizable $\alpha: x \rightarrow y$ and every $i \geqslant 1$, $\Omega ^i (k \mathcal{E} \alpha)$ is 0 or generated by $\Omega ^i (k \mathcal{E} \alpha) (y)$. Let $H = \Aut _{\mathcal{E}} (y)$ and $H_0 = \Stab _H (\alpha)$. If $| H_0 |$ is invertible in $k$, then $k \mathcal{E} \alpha$ is a projective $k\mathcal{E}$-module, and the conclusion follows. So we only need to deal with the case that the order of $H_0$ is not invertible in $k$.

By Lemma 4.3.5, $\Omega (k \mathcal{E} \alpha)$ is generated in degree 1, or equivalently, generated by its value $\Omega (k \mathcal{E} \alpha) (y) = 1_y \Omega (k \mathcal{E} \alpha)$ on $y$. Now suppose that $\Omega ^i (k \mathcal{E} \alpha)$ is also generated in degree 1, or equivalently, generated by its value $\Omega ^i (k \mathcal{E} \alpha) (y) = 1_y \Omega ^i (k \mathcal{E} \alpha)$ on $y$, where $i \geqslant 1$. We claim that $\Omega ^{i+1} (k \mathcal{E} \alpha)$ is generated by $\Omega ^{i+1} (k \mathcal{E} \alpha) (y)$, which is clearly equal to $\Omega ^{i+1} (k \mathcal{E} \alpha)_1$. If this is true, then conclusion follows from Proposition 4.3.7.

Take an arbitrary object $z \in \Ob \mathcal{E}$ such that $\mathcal{E} (y, z) \neq \emptyset$. (In the case $\mathcal{E} (y, z) = \emptyset$, $\Omega ^s (k \mathcal{E} \alpha) (z) =0$ for $s \geqslant 0$, and the claim is trivially true.) The morphisms in $\mathcal{E} (y, z)$ form a disjoint union of orbits under the right action of $H$. By taking a representative $\beta_i$ from each orbit we have $\mathcal{E} (y, z) = \bigsqcup _{i=1} ^n \beta_i H$. Since $|H _0|$ is not invertible, $y$ is not left regular. By the assumption, $y$ must be right regular. Therefore, by the previous lemma, for each representative morphism $\beta_s$, $1 \leqslant s \leqslant n$, there exist some idempotent $e_i$ such that $\beta_s (k \mathcal{E}) \cong e_s (k \mathcal{E})$ as right projective $k \mathcal{E}$-modules, and $\beta_s (k \mathcal{E}) \alpha \cong e_s (k \mathcal{E}) \alpha$ as vector spaces.

Consider the exact sequence
\begin{equation*}
\xymatrix{ 0 \ar[r] & \Omega ^{i+1} (k \mathcal{E} \alpha) \ar[r] & P^i \ar[r] & \Omega^i (k \mathcal{E} \alpha) \ar[r] & 0,}
\end{equation*}
where we assume inductively that $\Omega ^i (k \mathcal{E} \alpha)$ is generated in degree 1, or equivalently generated by its value on $y$. Thus $P^i \in \text{add} (k \mathcal{E} 1_y [1])$. Observe that the segment of a minimal projective resolution of the $k\mathcal{E}$-module $k \mathcal{E} \alpha$
\begin{equation*}
\xymatrix{ P^{i+1} \ar[r] & P^i \ar[r] & \ldots \ar[r] & P^0 \ar[r] & k\mathcal{E} \alpha \ar[r] & 0}
\end{equation*}
induces a minimal projective resolution of the $kH$-module $kH \alpha$:
\begin{equation*}
\xymatrix{ P^{i+1} (y) \ar[r] & P^i (y) \ar[r] & \ldots \ar[r] & P^0 (y) \ar[r] & kH \alpha \ar[r] & 0.}
\end{equation*}
Thus $\Omega ^j (k\mathcal{E} \alpha)_1 = \Omega^j (k\mathcal{E} \alpha) (y) = \Omega ^j _{kH} (kH \alpha)$ for $1 \leqslant j \leqslant i+1$.

Applying the exact functor $\Hom _{k \mathcal{E}} (k \mathcal{E} 1_y, -)$ to the exact sequence
\begin{equation}
\xymatrix {0 \ar[r] & \Omega ^{i+1} (k \mathcal{E} \alpha) \ar[r] & P^i \ar[r] & \Omega^i (k \mathcal{E} \alpha) \ar[r] & 0},
\end{equation}
we get an exact sequence
\begin{equation*}
\xymatrix {0 \ar[r] & \Omega ^{i+1} (k \mathcal{E} \alpha) (y) \ar[r] & 1_y P^i \ar[r] & \Omega^i (k \mathcal{E} \alpha) (y) \ar[r] & 0},
\end{equation*}
which can be identified with
\begin{equation*}
\xymatrix {0 \ar[r] & \Omega ^{i+1} _{kH} (kH \alpha) \ar[r] & P^i(y) \ar[r] & \Omega^i _{kH} (kH \alpha) \ar[r] & 0}.
\end{equation*}
Applying the exact functor $\Hom_{kH} (\bigoplus _{s=1}^n kHe_s, -)$ to the above sequence, we have another exact sequence
\begin{equation}
0 \rightarrow \bigoplus _{s=1}^n e_s \Omega ^{i+1} _{kH} (kH \alpha) \rightarrow \bigoplus _{s=1}^n e_s P^i(y) \rightarrow \bigoplus _{s=1}^n e_s \Omega^i _{kH} (kH \alpha) \rightarrow 0.
\end{equation}

Since $\Omega^i (k \mathcal{E} \alpha)$ is generated by $\Omega ^i (k \mathcal{E} \alpha) (y) = \Omega^i _{kH} (kH \alpha)$ by the induction hypothesis, the value of $\Omega ^i (k\mathcal{E} \alpha )$ on $z$ is $\sum _{s=1}^n \beta_s \cdot \Omega_{kH} ^i (kH \alpha)$ (this is well defined as $\Omega ^i _{kH} (kH \alpha) \subseteq (kH) ^{\oplus m}$ for some $m \geqslant 0$). We check that this sum is actually direct by the UFP of $\mathcal{E}$. In conclusion,
\begin{equation}
\Omega ^i (k\mathcal{E} \alpha ) (z) = \bigoplus _{s=1}^n \beta_s \cdot \Omega_{kH} ^i (kH \alpha) \cong \bigoplus _{s=1}^n e_s \Omega_{kH} ^i (kH \alpha).
\end{equation}
Similarly, the value of $P^i$ on $z$ is
\begin{equation}
P^i (z) = \bigoplus _{s=1}^n \beta_s \cdot P^i(y) \cong \bigoplus _{s=1}^n e_s P^i(y).
\end{equation}

Restricted to $z$, sequence (4.3.2) gives rise to
\begin{equation}
\xymatrix {0 \ar[r] & \Omega ^{i+1} (k \mathcal{E} \alpha) (z) \ar[r] & P^i(z) \ar[r] & \Omega^i (k \mathcal{E} \alpha)(z) \ar[r] & 0}.
\end{equation}

On one hand, $\bigoplus _{s=1}^n \beta_s \Omega ^{i+1} _{kH} (kH \alpha) \subseteq \Omega ^{i+1} (k \mathcal{E} \alpha) (z)$. On the other hand, we have:
\begin{align*}
& \dim _k \bigoplus _{s=1}^n \beta_s \Omega ^{i+1} _{kH} (kH \alpha) = \dim _k \bigoplus _{s=1}^n e_s \Omega ^{i+1} _{kH} (kH \alpha) \quad \text{ by Lemma 4.3.11} \\
& = \dim_k \bigoplus _{s=1}^n e_s P^i(y) - \dim_k \bigoplus _{s=1}^n e_s \Omega^i _{kH} (kH \alpha) \quad \text{ by sequence (4.3.3)} \\
& = \dim_k P^i (z) - \dim_k \Omega ^i (k\mathcal{E} \alpha ) (z) \quad \text{ by identities (4.3.4) and (4.3.5)} \\
& = \dim_k \Omega ^{i+1} (k \mathcal{E} \alpha) (z) \quad \text{ by sequence (4.3.6)}
\end{align*}
Therefore, we have $\Omega ^{i+1} (k\mathcal{E} \alpha ) (z) = \bigoplus _{s=1}^n \beta_s \Omega ^{i+1} _{kH} (kH \alpha) = \bigoplus _{s=1}^n \beta_s \Omega ^{i+1} _{k \mathcal{E}} (k \mathcal{E} \alpha) (y)$ since $\Omega ^{i+1} _{kH} (kH \alpha) = \Omega ^{i+1} _{k \mathcal{E}} (k \mathcal{E} \alpha) (y)$. That is, the value of $\Omega ^{i+1} (k \mathcal{E} \alpha)$ on $z$ is generated by $\Omega ^{i+1} (k \mathcal{E} \alpha) (y)$. Since $z$ is arbitrary, our claim holds, and the conclusion follows from induction.
\end{proof}

\chapter{Extension algebras of standard modules}
\label{Extension algebras of standard modules}

Let $A$ be a basic finite-dimensional $k$-algebra standardly stratified with respect to a preordered set $(\Lambda, \leqslant)$ indexing all simple modules (up to isomorphism), $\Delta$ be the direct sum of all standard modules, and $\mathcal{F} (\Delta)$ be the category of finitely generated $A$-modules with $\Delta$-filtrations. That is, for each $M \in \mathcal{F} (\Delta)$, there is a chain $0 = M_0 \subseteq M_1 \subseteq \ldots \subseteq M_n = M$ such that $M_i / M_{i-1}$ is isomorphic to an indecomposable summand of $\Delta$, $1 \leqslant i \leqslant n$. Since standard modules of $A$ are relative simple in $\mathcal{F} (\Delta)$, we are motivated to exploit the extension algebra $\Gamma = \Ext _A^{\ast} (\Delta, \Delta)$ of standard modules. These extension algebras were studied in \cite{Abe, Drozd, Klamt, Mazorchuk1, Miemietz}. In this chapter, we are interested in the stratification property of $\Gamma$ with respect to $(\Lambda, \leqslant)$ and $(\Lambda, \leqslant ^{\textnormal{op}})$, and its Koszul property since $\Gamma$ has a natural grading. A particular question is that in which case it is a \textit{generalized Koszul algebra}, i.e., $\Gamma_0$ has a linear projective resolution.

By Gabriel's construction (see Section 3.1), we associate a locally finite $k$-linear category $\mathcal{E}$ to the extension algebra $\Gamma$ such that the category $\Gamma$-mod is equivalent to the category of finitely generated $k$-linear representations of $\mathcal{E}$. We show that the category $\mathcal{E}$ is a directed category with respect to $\leqslant$. That is, the morphism space $\mathcal{E} (x,y) = 0$ whenever $x \nleqslant y$. With this observation, we characterize the stratification property of $\Gamma$ in the first section. Then we define \textit{linearly filtered modules}, and use this terminology to obtain a sufficient condition for $\Gamma$ to be generalized Koszul.

Throughout this chapter $A$ is a finite-dimensional basic associative $k$-algebra with identity 1, where $k$ is algebraically closed.

\section{Stratification property of extension algebras}

The definition of standardly stratified algebras can be found in Section 3.1 of this thesis, and we do not repeat it here. Suppose that $A$ is standardly stratified with respect to a poset $(\Lambda, \leqslant)$. \footnote{For some authors the preordered set $(\Lambda, \leqslant)$ is supposed to be a poset (\cite{Dlab1, Dlab2}) or even a linearly ordered set (\cite{Agoston1, Agoston2}). Algebras standardly stratified in this sense are called \textit{strongly standardly stratified} (\cite{Frisk1, Frisk2}). In this chapter $(\Lambda, \leqslant)$ is supposed to be a poset. Example 5.1.5 in this section explains why we should work with partial orders instead of the more general preorders.} Then standard modules can be described as:
\begin{equation*}
\Delta_{\lambda} = P_{\lambda} / \sum _{\mu > \lambda} \text{tr} _{P_{\mu}} (P_{\lambda}),
\end{equation*}
where tr$_{P_{\mu}} (P_{\lambda})$ is the trace of $P_{\mu}$ in $P _{\lambda}$. Let $\Delta$ be the direct sum of all standard modules and $\mathcal{F} (\Delta)$ be the full subcategory of $A$-mod such that each object in $\mathcal{F} (\Delta)$ has a filtration by standard modules. Clearly, since $A$ is standardly stratified for $\leqslant$, $_AA \in \mathcal{F} (\Delta)$, or equivalently, every indecomposable projective $A$-module has a filtration by standard modules.

Given $M \in \mathcal{F} (\Delta)$ and a fixed filtration $0 = M_0 \subseteq M_1 \subseteq \ldots \subseteq M_n =M$, we define the \textit{filtration multiplicity} $m_{\lambda} = [ M: \Delta_{\lambda}]$ to be the number of factors isomorphic to $\Delta_{\lambda}$ in this filtration. By Lemma 1.4 of \cite{Erdmann}, The filtration multiplicities defined above are independent of the choice of a particular filtration. Moreover, since each standard module has finite projective dimension, we deduce that every $A$-module contained in $\mathcal{F} (\Delta)$ has finite projective dimension. Therefore, the extension algebra $\Gamma = \Ext _A^{\ast} (\Delta, \Delta)$ is finite-dimensional.

\begin{lemma}
Let $\Delta_{\lambda}$, $\Delta_{\mu}$ be standard modules. Then $\Ext _A^n (\Delta _{\lambda}, \Delta_{\mu}) =0$ if $\lambda \nleqslant \mu$ for all $n \geqslant 0$.
\end{lemma}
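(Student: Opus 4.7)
The plan is to prove the lemma by induction on $n$, using the standard stratification data: the short exact sequence $0 \to K_\lambda \to P_\lambda \to \Delta_\lambda \to 0$ with $K_\lambda$ filtered by $\Delta_\nu$ for $\nu > \lambda$, together with the composition factor restriction $[\Delta_\nu : S_\tau] = 0$ unless $\tau \leqslant \nu$.

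For the base case $n=0$, I would argue as follows. The standard module $\Delta_\lambda$ has simple top $S_\lambda$ (it is a quotient of $P_\lambda$). Hence any nonzero $\varphi \in \Hom_A(\Delta_\lambda, \Delta_\mu)$ has image with top isomorphic to $S_\lambda$, so $S_\lambda$ occurs as a composition factor of $\Delta_\mu$. By the defining property of standard modules this forces $\lambda \leqslant \mu$, contrapositively giving the claim when $\lambda \nleqslant \mu$.

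For the inductive step, assume the lemma holds for $n-1$ and all pairs, and let $\lambda \nleqslant \mu$. Apply $\Hom_A(-, \Delta_\mu)$ to $0 \to K_\lambda \to P_\lambda \to \Delta_\lambda \to 0$; since $P_\lambda$ is projective the resulting long exact sequence yields the dimension-shift isomorphism
\begin{equation*}
\Ext_A^n(\Delta_\lambda, \Delta_\mu) \cong \Ext_A^{n-1}(K_\lambda, \Delta_\mu).
\end{equation*}
Now $K_\lambda$ has a finite filtration with factors $\Delta_\nu$ where $\nu > \lambda$. Since $\leqslant$ is a partial order (not merely a preorder) and $\lambda \nleqslant \mu$, every such $\nu$ satisfies $\nu \nleqslant \mu$: if instead $\nu \leqslant \mu$, then $\lambda < \nu \leqslant \mu$ would give $\lambda \leqslant \mu$, contradicting our hypothesis. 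By the inductive hypothesis, $\Ext_A^{n-1}(\Delta_\nu, \Delta_\mu) = 0$ for each such $\nu$. A straightforward secondary induction on the length of the $\Delta$-filtration of $K_\lambda$, applied to the short exact sequences coming from successive filtration steps and the associated long exact Ext sequences, then gives $\Ext_A^{n-1}(K_\lambda, \Delta_\mu) = 0$, completing the induction.

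The main obstacle to watch out for is the poset hypothesis: the implication ``$\nu > \lambda$ and $\lambda \nleqslant \mu$ $\Rightarrow$ $\nu \nleqslant \mu$'' uses antisymmetry/transitivity of a partial order and would fail for a general preorder, which is exactly why the standing convention of this chapter restricts to partial orders. Apart from this, the argument is a routine two-layer induction (outer on $n$, inner on filtration length) and uses only that $P_\lambda$ is projective and that filtration multiplicities are finite.
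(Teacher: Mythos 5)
Your proof is correct and rests on the same mechanism as the paper's: dimension shifting along $0 \rightarrow K_{\lambda} \rightarrow P_{\lambda} \rightarrow \Delta_{\lambda} \rightarrow 0$ together with the fact that the kernel is filtered by $\Delta_{\nu}$ with $\nu > \lambda$ (hence $\nu \nleqslant \mu$) and the Hom-vanishing between standard modules, which you supply at $n=0$ via the simple top of $\Delta_{\lambda}$ and the composition-factor condition. The only difference is organizational: you run an outer induction on $n$ over all pairs with an inner induction on filtration length, whereas the paper first proves by induction that every syzygy $\Omega^{i}(\Delta_{\lambda})$ has $\Delta$-filtration factors $\Delta_{\nu}$ with $\nu \geqslant \lambda$ and then applies the surjection $\Hom_A(\Omega^{n}(\Delta_{\lambda}), \Delta_{\mu}) \twoheadrightarrow \Ext_A^{n}(\Delta_{\lambda}, \Delta_{\mu})$.
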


\begin{proof}
First, we claim $[\Omega^i (\Delta_{\lambda}) : \Delta_{\nu}] = 0$ whenever $\lambda \nleqslant \nu$ for all $i \geqslant 0$, where $\Omega$ is the Heller operator. Indeed, for $i =0$ the conclusion holds clearly. Suppose that it is true for all $i \leqslant n$ and consider $\Omega ^{n+1} (\Delta _{\lambda})$. We have the following exact sequence:
\begin{equation*}
\xymatrix {0 \ar[r] & \Omega ^{n+1} (\Delta _{\lambda}) \ar[r] & P \ar[r] & \Omega^n (\Delta _{\lambda}) \ar[r] & 0.}
\end{equation*}
By the induction hypothesis, $[\Omega ^n (\Delta _{\lambda}) : \Delta_{\nu}] = 0$ whenever $\lambda \nleqslant \nu$. Therefore, $[P : \Delta_{\nu}] = 0$ whenever $\lambda \nleqslant \nu$, and hence $[\Omega ^{n+1} (\Delta _{\lambda}) : \Delta_{\nu}] = 0$ whenever $\lambda \nleqslant \nu$. The claim is proved by induction.

The above exact sequence induces a surjection $\Hom _A (\Omega ^n (\Delta _{\lambda}), \Delta_{\mu}) \rightarrow \Ext ^n_A (\Delta _{\lambda}, \Delta_{\mu})$. Thus it suffices to show $\Hom _A (\Omega ^n (\Delta _{\lambda}), \Delta_{\mu}) =0$ for all $n \geqslant 0$ if $\lambda \nleqslant \mu$. By the above claim, all filtration factors $\Delta_{\nu}$ of $\Omega ^n (\Delta _{\lambda})$ satisfy $\nu \geqslant \lambda$, and hence $\nu \nleqslant \mu$. But $\Hom _A (\Delta_{\nu}, \Delta_{\mu}) = 0$ whenever $\nu \nleqslant \mu$. The conclusion follows.
\end{proof}

Now let $\Gamma = \Ext _A^{\ast} (\Delta, \Delta)$. This is a graded finite-dimensional algebra equipped with a natural grading. In particular, $\Gamma_0 = \End _A (\Delta)$. For each $\lambda \in \Lambda$, $\Delta_{\lambda}$ is an indecomposable $A$-module. Therefore, up to isomorphism, the indecomposable projective $\Gamma$-modules are exactly those $\Ext _A^{\ast} (\Delta_{\lambda}, \Delta)$, $\lambda \in \Lambda$.

The associated $k$-linear category $\mathcal{E}$ of $\Gamma$ has the following structure: $\Ob \mathcal{E} = \{ \Delta _{\lambda} \} _{\lambda \in \Lambda}$; the morphism space $\mathcal{E} (\Delta _{\lambda}, \Delta_{\mu}) = \Ext _A^{\ast} (\Delta_{\lambda}, \Delta_{\mu})$. The partial order $\leqslant$ induces a partial order on $\Ob \mathcal{E}$ which we still denote by $\leqslant$, namely, $\Delta _{\lambda} \leqslant \Delta_{\mu}$ if and only if $\lambda \leqslant \mu$.

\begin{proposition}
The associated category $\mathcal{E}$ of $\Gamma$ is directed with respect to $\leqslant$. In particular, $\Gamma$ is standardly stratified with respect to $\leqslant ^{\textnormal{op}}$ and all standard modules are projective.
\end{proposition}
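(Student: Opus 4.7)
My plan is, first, to derive the directedness of $\mathcal{E}$ as an immediate consequence of Lemma 5.1.1. By definition $\mathcal{E}(\Delta_\lambda, \Delta_\mu) = \Ext^*_A(\Delta_\lambda, \Delta_\mu)$, and the lemma says this vanishes whenever $\lambda \nleqslant \mu$, which is exactly the defining condition of a directed category (Definition 3.1.1) with respect to the partial order on $\Ob \mathcal{E} = \{\Delta_\lambda\}_{\lambda \in \Lambda}$ induced by $\leqslant$ on $\Lambda$.

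Next, to handle the ``in particular'' clause I would describe the indecomposable projective $\Gamma$-modules through the associated category $\mathcal{E}$ and then verify the Cline--Parshall--Scott conditions for $\leqslant^{\textnormal{op}}$ by direct inspection. Since each $\Delta_\lambda$ is indecomposable with local endomorphism ring (as $k$ is algebraically closed), the idempotent $\varepsilon_\lambda \in \Gamma_0 = \End_A(\Delta)$ projecting $\Delta$ onto $\Delta_\lambda$ is primitive, and the family $\{\varepsilon_\lambda\}_{\lambda \in \Lambda}$ is a complete set of orthogonal primitive idempotents summing to $1$. Hence the indecomposable projective $\Gamma$-modules are the $P^\Gamma_\lambda = \Gamma \varepsilon_\lambda$, and the identification $\varepsilon_\mu \Gamma \varepsilon_\lambda = \mathcal{E}(\Delta_\lambda, \Delta_\mu) = \Ext^*_A(\Delta_\lambda, \Delta_\mu)$ together with the first step shows that $P^\Gamma_\lambda$ is supported only on those $\Delta_\mu$ with $\mu \geqslant \lambda$, equivalently $\mu \leqslant^{\textnormal{op}} \lambda$. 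In particular every composition factor $S^\Gamma_\mu$ of $P^\Gamma_\lambda$ satisfies $\mu \leqslant^{\textnormal{op}} \lambda$.

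With this in hand the Cline--Parshall--Scott definition recalled at the start of Section 5.1 finishes the job. The standard $\Gamma$-module $\Delta^{\Gamma,\textnormal{op}}_\lambda$ for $(\Lambda,\leqslant^{\textnormal{op}})$ is, by definition, the largest quotient of $P^\Gamma_\lambda$ whose composition factors $S^\Gamma_\mu$ satisfy $\mu \leqslant^{\textnormal{op}} \lambda$; since every composition factor of $P^\Gamma_\lambda$ already meets this constraint, we conclude $\Delta^{\Gamma,\textnormal{op}}_\lambda = P^\Gamma_\lambda$. The defining exact sequence $0 \to K_\lambda \to P^\Gamma_\lambda \to \Delta^{\Gamma,\textnormal{op}}_\lambda \to 0$ therefore holds with $K_\lambda = 0$, trivially $\Delta$-filtered. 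This simultaneously establishes that $\Gamma$ is standardly stratified with respect to $\leqslant^{\textnormal{op}}$ and that all of its standard modules are projective.

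The real substance of the proposition is concentrated in Lemma 5.1.1, whose proof runs an induction on Heller syzygies to propagate the Hom-vanishing to all higher $\Ext^n_A(\Delta_\lambda, \Delta_\mu)$; once that lemma is granted, the remainder is essentially a bookkeeping exercise in conventions. The only subtlety worth flagging is why the reversed order $\leqslant^{\textnormal{op}}$ is the correct one: the paper's convention $\varepsilon_\mu \Gamma \varepsilon_\lambda = \Ext^*_A(\Delta_\lambda, \Delta_\mu)$ forces $P^\Gamma_\lambda$ to be supported on the upward cone $\{\mu : \mu \geqslant \lambda\}$, and matching this against the CPS axiom (composition factors $\preccurlyeq \lambda$) compels the indexing preorder $\preccurlyeq$ to be $\leqslant^{\textnormal{op}}$.
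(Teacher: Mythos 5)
Your proposal is correct and follows essentially the same route as the paper: directedness of $\mathcal{E}$ is read off directly from Lemma 5.1.1, and the ``in particular'' clause is obtained by observing that directedness forces every standard $\Gamma$-module for $\leqslant^{\textnormal{op}}$ to coincide with the corresponding indecomposable projective. The only cosmetic difference is that you invoke the ``largest quotient with composition factors $S_{\mu}$, $\mu \preccurlyeq \lambda$'' description of standard modules, whereas the paper phrases the same vanishing via the trace formula $\operatorname{tr}_{Q_{\mu}}(Q_{\lambda}) = 0$ for $\mu \nleqslant^{\textnormal{op}} \lambda$; these are interchangeable.
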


\begin{proof}
The first statement follows from the previous lemma. The second statement is also clear. Indeed, since $\Gamma$ is directed with respect to $\leqslant$, $e_{\mu} \Gamma e_{\lambda} \cong \Hom _{\Gamma} (Q_{\mu}, Q_{\lambda}) = 0$ if $\mu \ngeqslant \lambda$, where $Q_{\mu}, Q_{\lambda}$ are projective $\Gamma$-modules. Thus tr$ _{Q_{\mu}} (Q_{\lambda}) = 0$ whenever $\mu \ngeqslant \lambda$, or equivalently, tr$ _{Q_{\mu}} (Q_{\lambda}) = 0$ whenever $\mu \nleqslant ^{\textnormal{op}} \lambda$. Therefore, all standard modules with respect to $\leqslant ^{\textnormal{op}}$ are projective.
\end{proof}

The following theorem characterize the stratification property of $\Gamma$.

\begin{theorem}
If $A$ is standardly stratified for $(\Lambda, \leqslant)$, then $\mathcal{E}$ is a directed category with respect to $\leqslant$ and is standardly stratified for $\leqslant ^{\textnormal{op}}$. Moreover, $\mathcal{E}$ is standardly stratified for $\leqslant$ if and only if for all $\lambda, \mu \in \Lambda$ and $s \geqslant 0$, $\Ext _A^s (\Delta_{\lambda}, \Delta_{\mu})$ is a projective $\End _A (\Delta_{\mu})$-module.
\end{theorem}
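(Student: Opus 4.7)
The first two assertions---that $\mathcal{E}$ is directed with respect to $\leqslant$ and standardly stratified with respect to $\leqslant^{\mathrm{op}}$---were already established in Proposition 5.1.2, so I would simply quote that result.

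For the ``moreover'' part the plan is to invoke Theorem 3.2.1(1) applied to $\mathcal{E}$, viewed as a graded directed $k$-linear category with $\mathcal{E}_n(\Delta_\lambda, \Delta_\mu) = \Ext^n_A(\Delta_\lambda, \Delta_\mu)$. First I would verify the required hypotheses: by construction $\mathcal{E}_0 = \bigoplus_{\lambda \in \Lambda} \End_A(\Delta_\lambda)$ coincides with $\bigoplus_{x \in \Ob \mathcal{E}} \mathcal{E}(x,x)$, and since each $\Delta_\lambda$ is indecomposable its endomorphism ring is local, so $\mathcal{E}_0$ is a direct sum of local algebras and hence satisfies the splitting property (S). Theorem 3.2.1(1) then yields that $\mathcal{E}$ is standardly stratified with respect to $\leqslant$ if and only if $\mathcal{E}$ is a projective $\mathcal{E}_0$-module.

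It remains to rephrase this in the form stated in the theorem. The decomposition $\mathcal{E} = \bigoplus_{\lambda, \mu, s} \Ext^s_A(\Delta_\lambda, \Delta_\mu)$ is simultaneously the decomposition of $\mathcal{E}$ into graded pieces and a decomposition as $\mathcal{E}_0$-modules; on each summand $\Ext^s_A(\Delta_\lambda, \Delta_\mu)$, only the block $\End_A(\Delta_\mu) \subseteq \mathcal{E}_0$ acts nontrivially. Hence projectivity of $\mathcal{E}$ over $\mathcal{E}_0$ reduces piece by piece to the projectivity of each $\Ext^s_A(\Delta_\lambda, \Delta_\mu)$ over the corresponding local ring $\End_A(\Delta_\mu)$, which is precisely the stated criterion.

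The one subtlety that warrants care is that the extension algebra $\Gamma$ need not be generated in degrees $0$ and $1$, whereas this is a running hypothesis in Chapter 2. Fortunately, an inspection of the proof of Theorem 3.2.1(1) shows that it uses only the existence of the grading together with the identity $\mathcal{A}_0 = \bigoplus_x \mathcal{A}(x,x)$ and property (S)---the generation hypothesis plays no role---so the theorem applies verbatim in our situation. With this remark in place, the argument is a direct assembly of pieces already available in the thesis.
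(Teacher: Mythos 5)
The first half of your argument is fine and matches the paper. The ``moreover'' part, however, has a genuine gap: the verification of the hypotheses of Theorem 3.2.1 is wrong. You claim that $\mathcal{E}_0 = \bigoplus_{\lambda\in\Lambda}\End_A(\Delta_\lambda)$ coincides with $\bigoplus_{x\in\Ob\mathcal{E}}\mathcal{E}(x,x)$, but the degree $0$ part of the extension algebra is $\End_A(\Delta)=\bigoplus_{\lambda,\mu}\Hom_A(\Delta_\lambda,\Delta_\mu)$, and $\Hom_A(\Delta_\lambda,\Delta_\mu)$ is frequently nonzero for $\lambda<\mu$ (for instance, in Example 5.2.12 one has $\Hom_A(\Delta_x,\Delta_y)\neq 0$ since $\Delta_x$ embeds into the socle of $\Delta_y$; Example 5.2.14 exhibits a $\Gamma_0$ containing such a degree $0$ arrow $\alpha$ and, moreover, failing the splitting property (S)). So in general $\mathcal{E}_0$ is strictly larger than $\bigoplus_x\mathcal{E}(x,x)$, it need not be a direct sum of local algebras, and it need not satisfy (S). Consequently Theorem 3.2.1(1) does not apply as stated, and your subsequent ``piece by piece'' reduction of projectivity of $\mathcal{E}$ over $\mathcal{E}_0$ to projectivity of each $\Ext_A^s(\Delta_\lambda,\Delta_\mu)$ over the block $\End_A(\Delta_\mu)$ also breaks down, since $\mathcal{E}_0$ may act through its off-diagonal degree $0$ morphisms as well. (Within Theorem 3.2.1 the standard modules are the indecomposable summands of $\bigoplus_x\mathcal{A}(x,x)$, which is exactly what fails here.)

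The correct route, and the one the paper takes, bypasses the grading entirely: apply the ungraded Theorem 3.1.6 to the directed category $\mathcal{E}$ (directedness is Proposition 5.1.2, coming from Lemma 5.1.1). That theorem needs no hypothesis on $\mathcal{E}_0$ and says that $\mathcal{E}$ is standardly stratified for $\leqslant$ if and only if each morphism space $\mathcal{E}(\Delta_\lambda,\Delta_\mu)=\Ext_A^{\ast}(\Delta_\lambda,\Delta_\mu)$ is a projective module over $\mathcal{E}(\Delta_\mu,\Delta_\mu)=\End_A(\Delta_\mu)$ (higher self-extensions of standard modules vanish); since the $\End_A(\Delta_\mu)$-action respects the cohomological degree, this is equivalent to each $\Ext_A^s(\Delta_\lambda,\Delta_\mu)$ being projective over $\End_A(\Delta_\mu)$, which is the stated criterion. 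Your closing worry about the extension algebra not being generated in degrees $0$ and $1$ is then moot, since no graded result is needed.
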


\begin{proof}
The first statement follows from Proposition 5.1.2 and the second statement follows from Theorem 3.1.6.
\end{proof}

In the case that $A$ is quasi-hereditary, we have:

\begin{corollary}
If $A$ is a quasi-hereditary algebra with respect to $\leqslant$, then $\Gamma$ is quasi-hereditary with respect to both $\leqslant$ and $\leqslant ^{\textnormal{op}}$.
\end{corollary}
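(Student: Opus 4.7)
The plan is to verify quasi-heredity of $\Gamma$ for each of the two orders in turn. Much of the groundwork is already in place: by Proposition 5.1.2 the standard $\Gamma$-modules for $\leqslant^{\textnormal{op}}$ coincide with the indecomposable projectives $Q_\lambda = \Gamma e_\lambda$, so $\Gamma$ is standardly stratified for $\leqslant^{\textnormal{op}}$ automatically, and quasi-heredity for this order reduces to the identity $\End_\Gamma(Q_\lambda) \cong e_\lambda\Gamma e_\lambda = \bigoplus_{i \geqslant 0}\Ext_A^i(\Delta_\lambda,\Delta_\lambda) \cong k$. Since $\End_A(\Delta_\lambda) = k$ is part of the quasi-hereditary hypothesis on $A$, the whole step rests on the ext-vanishing $\Ext_A^i(\Delta_\lambda,\Delta_\lambda) = 0$ for $i \geqslant 1$.

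I would establish the sharper statement $\Ext_A^i(\Delta_\lambda,\Delta_\mu) = 0$ for all $i \geqslant 1$ whenever $\mu \not> \lambda$, proved by induction on the height of $\lambda$ in the poset. For maximal $\lambda$ the module $\Delta_\lambda = P_\lambda$ is projective. For general $\lambda$, the sequence $0 \to K_\lambda \to P_\lambda \to \Delta_\lambda \to 0$ with $K_\lambda$ filtered by $\Delta_\nu$ for $\nu > \lambda$ and the long exact sequence reduce the claim, after a further induction on the filtration length of $K_\lambda$, to $\Ext_A^j(\Delta_\nu,\Delta_\mu) = 0$ for $j \geqslant 0$, $\nu > \lambda$, and $\mu \not> \lambda$. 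Transitivity of the partial order forces $\mu \not> \nu$, so the inductive hypothesis (in height) applies in degrees $\geqslant 1$; in degree $0$ one observes that a non-zero $\Hom_A(\Delta_\nu,\Delta_\mu)$ forces $\nu \leqslant \mu$, which with $\nu > \lambda$ and $\mu \not> \lambda$ is impossible in a partial order.

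For the order $\leqslant$, Theorem 5.1.3 reduces standard stratification of $\Gamma$ to projectivity of each $\Ext_A^s(\Delta_\lambda,\Delta_\mu)$ as an $\End_A(\Delta_\mu)$-module, which is automatic since $\End_A(\Delta_\mu) = k$. For quasi-heredity it then suffices to check $\End_\Gamma(\tilde\Delta_\lambda) \cong k$, where $\tilde\Delta_\lambda = Q_\lambda / \sum_{\mu > \lambda}\text{tr}_{Q_\mu}(Q_\lambda)$ is the standard $\Gamma$-module for $\leqslant$. The quasi-heredity of $A$ makes every simple $\Gamma$-module $\tilde S_\lambda$ one-dimensional, so composition multiplicities can be read as $[M : \tilde S_\lambda] = \dim(e_\lambda M)$; in particular $[Q_\lambda : \tilde S_\lambda] = \dim(e_\lambda\Gamma e_\lambda) = 1$ by the ext-vanishing, forcing $[\tilde\Delta_\lambda : \tilde S_\lambda] = 1$ since $\tilde\Delta_\lambda$ is a quotient of $Q_\lambda$ with top $\tilde S_\lambda$. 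Next, the vanishing $\Hom_\Gamma(Q_\mu,\tilde\Delta_\lambda) = 0$ for $\mu > \lambda$ (any such image would have top $\tilde S_\mu$, which is not a composition factor of $\tilde\Delta_\lambda$) implies every map $Q_\lambda \to \tilde\Delta_\lambda$ vanishes on $\sum_{\mu > \lambda}\text{tr}_{Q_\mu}(Q_\lambda)$ and descends to $\tilde\Delta_\lambda$, giving $\End_\Gamma(\tilde\Delta_\lambda) \cong e_\lambda\tilde\Delta_\lambda$ of dimension one.

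The main obstacle of the whole proof is the ext-vanishing step for quasi-hereditary algebras; once that is in hand, everything else is essentially bookkeeping on top of Proposition 5.1.2 and Theorem 5.1.3.
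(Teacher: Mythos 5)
Your proof is correct, and its skeleton matches the paper's: quasi-heredity for $\leqslant^{\textnormal{op}}$ via Proposition 5.1.2 (the standard modules are the projectives $\Gamma e_{\lambda}$, so everything reduces to $e_{\lambda}\Gamma e_{\lambda}\cong k$), and standard stratification of $\Gamma$ for $\leqslant$ via the criterion of Theorem 5.1.3, where projectivity over $\End_A(\Delta_{\mu})\cong k$ is automatic. Where you genuinely diverge is in the two finishing steps. First, the paper simply writes $\Ext_A^{\ast}(\Delta_{\lambda},\Delta_{\lambda})=\End_A(\Delta_{\lambda})$; the vanishing of $\Ext_A^{i}(\Delta_{\lambda},\Delta_{\lambda})$ for $i\geqslant 1$ is a standard fact for standardly stratified algebras but is not literally covered by Lemma 5.1.1 (which only treats $\lambda\nleqslant\mu$), so your downward induction on height, proving $\Ext_A^{i}(\Delta_{\lambda},\Delta_{\mu})=0$ for $i\geqslant 1$ and $\mu\not>\lambda$ by dimension shift along $0\rightarrow K_{\lambda}\rightarrow P_{\lambda}\rightarrow\Delta_{\lambda}\rightarrow 0$, supplies a justification that the paper leaves implicit. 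Second, for the order $\leqslant$ the paper invokes Proposition 3.1.5 to identify the standard $\Gamma$-modules with the one-dimensional modules $k_{\lambda}$, so their endomorphism rings are trivially $k$; you instead compute $\End_{\Gamma}(\tilde{\Delta}_{\lambda})\cong e_{\lambda}\tilde{\Delta}_{\lambda}$ directly, using $[Q_{\lambda}:\tilde{S}_{\lambda}]=\dim_k e_{\lambda}\Gamma e_{\lambda}=1$ and $\Hom_{\Gamma}(Q_{\mu},\tilde{\Delta}_{\lambda})=0$ for $\mu>\lambda$, which is legitimate once standard stratification for $\leqslant$ is in hand. Your route is longer but more self-contained; the paper's is shorter because it leans on the directed-category machinery of Chapter 3 and on the folklore ext-vanishing.
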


\begin{proof}
We have shown that $\Gamma$ is standardly stratified with respect to $\leqslant ^{\textnormal{op}}$ and the corresponding standard modules $_{\Gamma} \Delta _{\lambda} \cong \Gamma 1_{\lambda}$ for $\lambda \in \Lambda$. Therefore,
\begin{equation*}
\End _{\Gamma} (_{\Gamma} \Delta _{\lambda}) = \End _{\Gamma} (\Gamma 1_{\lambda}) \cong 1_{\lambda} \Gamma 1_{\lambda} = \Ext _A^{\ast} (\Delta_{\lambda}, \Delta_{\lambda}) = \End_A (\Delta_{\lambda}) \cong k
\end{equation*}
since $A$ is quasi-hereditary. So $\Gamma$ is also quasi-hereditary with respect to $\leqslant^{\textnormal{op}}$.

Now consider the stratification property of $\Gamma$ with respect to $\leqslant$. The associated category $\mathcal{E}$ is directed with respect to $\leqslant$. Since $\Ext _A^{\ast} (\Delta_{\mu}, \Delta_{\mu}) = \End _A (\Delta_{\mu}) \cong k$ for all $\mu \in \Lambda$, $\mathcal{E} (\Delta_{\lambda}, \Delta_{\mu}) = \Ext _A^{\ast} (\Delta_{\lambda}, \Delta_{\mu})$ is a projective $k$-module for each pair $\lambda, \mu \in \Lambda$. Therefore, $\mathcal{E}$ is standardly stratified for $\leqslant$ by the previous theorem. Moreover, by Proposition 3.1.5, the standard modules of $\mathcal{E}$ (or the standard modules of $\Gamma$) are precisely indecomposable summands of $\bigoplus _{\lambda \in \Lambda} \Ext _A^{\ast} (\Delta_{\lambda}, \Delta_{\lambda}) \cong \bigoplus _{\lambda \in \Lambda} k_{\lambda}$. Clearly, for $\lambda \in \Lambda$, $\End _{\Gamma} (k_{\lambda}, k_{\lambda}) \cong k$, so $\Gamma$ is quasi-hereditary with respect to $\leqslant$.
\end{proof}

The following example from 8.2 in \cite{Frisk2} illustrates why we should assume that $\leqslant$ is a partial order rather than a preorder. Indeed, in a preordered set $(\Lambda, \leqslant)$ we cannot deduce $x = y$ if $x \leqslant y$ and $y \leqslant x$.

\begin{example}
Let $A$ be the path algebra of the following quiver with relations $\alpha_1 \beta_1 = \alpha_2 \beta_2 = \alpha_2 \alpha_1 = \beta_1 \beta_2 =0$. Define a preorder $\leqslant$ by letting $x \leqslant y < z$ and $y \leqslant x < z$.
\begin{equation*}
\xymatrix{x \ar@/^/[r] ^{\alpha_1} & y \ar@/^/[r] ^{\alpha_2} \ar@/^/[l] ^{\beta_1} & z \ar@/^/[l] ^{\beta_2}}.
\end{equation*}
Projective modules and standard modules are described as follows:
\begin{equation*}
P_x \cong \Delta_x = \begin{matrix} x \\ y \\ x \end{matrix} \qquad P_y = \begin{matrix} & y & \\ x & & z \\ & & y \end{matrix} \qquad \Delta_y = \begin{matrix} y \\ x \end{matrix} \qquad P_z \cong \Delta_z = \begin{matrix} z \\ y \end{matrix}
\end{equation*}
Then the associated category $\mathcal{E}$ of $\Gamma = \Ext _A^{\ast} (\Delta, \Delta)$ is not a directed category since both $\Hom_A (\Delta_x, \Delta_y)$ and $\Hom _A (\Delta_y, \Delta_x)$ are nonzero.
\end{example}

Now we generalize the above results to \textit{Ext-Projective Stratifying Systems} (EPSS). From now on the algebra $A$ is finite-dimensional and basic, but we do not assume that it is standardly stratified for some partial order, as we did before. The EPSS we describe in this chapter is indexed by a finite poset $(\Lambda, \leqslant)$ rather than a linearly ordered set as in \cite{Marcos1, Marcos2}. However, this difference is not essential and all properties described in \cite{Marcos1, Marcos2} can be applied to our situation with suitable modifications.

\begin{definition}
(Definition 2.1 in \cite{Marcos2}) Let $\underline{\Theta} = \{ \Theta_{\lambda} \}_{\lambda \in \Lambda}$ be a set of nonzero $A$-modules and $\underline{Q} = \{ Q_{\lambda} \} _{\lambda \in \Lambda}$ be a set of indecomposable $A$-modules, both of which are indexed by a finite poset $(\Lambda, \leqslant)$. We call $(\underline {\Theta}, \underline{Q})$ an EPSS if the following conditions are satisfied:
\begin{enumerate}
\item $\Hom _A (\Theta_{\lambda}, \Theta_{\mu}) =0$ if $\lambda \nleqslant \mu$;
\item for each $\lambda \in \Lambda$, there is an exact sequence $0 \rightarrow K_{\lambda} \rightarrow Q_{\lambda} \rightarrow \Theta_{\lambda} \rightarrow 0$ such that $K_{\lambda}$ has a filtration only with factors isomorphic to $\Theta_{\mu}$ satisfying $\mu > \lambda$;
\item for every $A$-module $M \in \mathcal{F} (\underline {\Theta})$ and $\lambda \in \Lambda$, $\Ext _A^1 (Q_{\lambda}, M) =0$.
\end{enumerate}
\end{definition}

We denote $\Theta$ and $Q$ the direct sums of all $\Theta _{\lambda}$'s and $Q_{\lambda}$'s respectively, $\lambda \in \Lambda$.

Given an EPSS $(\underline {\Theta}, \underline{Q})$ indexed by $(\Lambda, \leqslant)$, $(\underline {\Theta}, \leqslant)$ is a \textit{stratifying system} (SS) since $\Hom _A (\Theta_{\lambda}, \Theta_{\mu}) =0$ if $\lambda \nleqslant \mu$, and $\Ext _A^1 (\Theta_{\lambda}, \Theta_{\mu}) =0$ if $\lambda \nless \mu$. Conversely, given a stratifying system $(\underline {\Theta}, \leqslant)$, we can construct an EPSS $(\underline {\Theta}, \underline{Q})$ unique up to isomorphism. See \cite{Marcos2} for more details. Moreover, as described in \cite{Marcos2}, the algebra $B = \End _A (Q) ^{\textnormal{op}}$ is standardly stratified, and the functor $e_Q = \Hom _A (Q, -)$ gives an equivalence of exact categories between $\mathcal{F} (\Theta)$ and $\mathcal{F} (_B \Delta)$.

To study the extension algebra $\Gamma = \Ext _A^{\ast} (\Theta, \Theta)$, one may want to use projective resolutions of $\Theta$. However, different from the situation of standardly stratified algebras, the regular module $_AA$ in general might not be contained in $\mathcal{F} (\Theta)$. If we suppose that $_AA$ is contained in $\mathcal{F} (\Theta)$ (in this case the stratifying system $(\underline {\Theta}, \leqslant)$ is said to be \textit{standard}) and $\mathcal{F} (\Theta)$ is closed under the kernels of surjections, then by Theorem 2.6 in \cite{Marcos1} $A$ is standardly stratified for $\leqslant$ and those $\Theta_{\lambda}$'s coincide with standard modules of $A$. This situation has been completely discussed previously. Alternately, we use the \textit{relative projective resolutions} whose existence is guaranteed by the following proposition.

\begin{proposition}
(Corollary 2.11 in \cite{Marcos2}) Let $(\underline {\Theta}, \underline{Q})$ be an EPSS indexed by a finite poset $(\Lambda, \leqslant)$. Then for each $M \in \mathcal{F} (\Theta)$, there is a finite resolution
\begin{equation*}
\xymatrix {0 \ar[r] & Q^d \ar[r] & \ldots \ar[r] & Q^0 \ar[r] & M \ar[r] & 0}
\end{equation*}
such that each kernel is contained in $\mathcal{F} (\Theta)$, where $0 \neq Q^i \in \text{add} (Q)$ for $0 \leqslant i \leqslant d$.
\end{proposition}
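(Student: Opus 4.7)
The plan is to transport the problem to the standardly stratified algebra $B = \End_A(Q)^{\textnormal{op}}$, where projective resolutions are well understood, and then translate the result back via the exact equivalence $e_Q = \Hom_A(Q, -): \mathcal{F}(\Theta) \to \mathcal{F}(_B \Delta)$ mentioned in the paragraph preceding the statement. Under this equivalence, each $Q_\lambda$ corresponds to the indecomposable projective $B$-module $_B P_\lambda$, and each $\Theta_\lambda$ corresponds to the standard $B$-module $_B \Delta_\lambda$, so $\textnormal{add}(Q)$ matches the subcategory of projective $B$-modules.

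First I would set $N = e_Q(M) \in \mathcal{F}(_B \Delta)$. Since $B$ is standardly stratified with respect to $\leqslant$, every standard $B$-module has finite projective dimension, so $N$ has finite projective dimension as well. Thus $N$ admits a finite minimal projective resolution in $B$-mod,
\begin{equation*}
0 \to P^d \to P^{d-1} \to \ldots \to P^0 \to N \to 0.
\end{equation*}
Since $_BB \in \mathcal{F}(_B \Delta)$, each $P^i$ lies in $\mathcal{F}(_B \Delta)$; and since $\mathcal{F}(_B \Delta)$ is closed under kernels of epimorphisms (a classical fact recalled in Section 5.1), all intermediate syzygies lie in $\mathcal{F}(_B \Delta)$ as well. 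Applying the inverse equivalence to this sequence, which lives entirely in the exact subcategory $\mathcal{F}(_B \Delta)$, yields a finite exact sequence
\begin{equation*}
0 \to Q^d \to Q^{d-1} \to \ldots \to Q^0 \to M \to 0
\end{equation*}
in $\mathcal{F}(\Theta) \subseteq A$-mod with $Q^i \in \textnormal{add}(Q)$, and with every intermediate kernel in $\mathcal{F}(\Theta)$, which is exactly what is claimed.

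The main technical point is to verify that $e_Q$ really is an equivalence of \emph{exact} categories, so that a short exact sequence with all three terms in $\mathcal{F}(_B \Delta)$ corresponds bijectively to a short exact sequence in $\mathcal{F}(\Theta) \subseteq A$-mod (not merely an exact sequence in some abstract sense). This relies on the defining condition $\Ext_A^1(Q_\lambda, M) = 0$ for $M \in \mathcal{F}(\Theta)$ in the definition of an EPSS, which ensures that $e_Q$ is exact on $\mathcal{F}(\Theta)$ and that $e_Q^{-1}$ preserves short exact sequences within $\mathcal{F}(_B \Delta)$. Once this faithful-exactness is in hand, finiteness of $d$ reduces to the homological input $\textnormal{pd}_B\,_B\Delta < \infty$, a standard property of standardly stratified algebras, and closure of $\mathcal{F}(_B \Delta)$ under kernels of epimorphisms handles the statement about kernels.
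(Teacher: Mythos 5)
Your argument is correct, but it is not the route the source takes: the thesis does not prove this statement itself, it cites Corollary 2.11 of \cite{Marcos2}, and the proof there (which the thesis essentially reproduces in the form of Lemma 5.1.9 and the remark following it) is the elementary iteration of the relative projective cover construction. One takes $0 \rightarrow K_1 \rightarrow Q^0 \rightarrow M \rightarrow 0$ with $K_1 \in \mathcal{F}(\Theta)$ and $\min(K_1) > \min(M)$, and repeats; since the height function on the finite poset $\Lambda$ is bounded, and a module filtered only by $\Theta_{\lambda}$ with $\lambda$ of maximal height lies in $\text{add}(Q)$ (for maximal $\lambda$ one has $Q_{\lambda} \cong \Theta_{\lambda}$, and there are no extensions between $\Theta$'s of equal height), the process stops after at most the length of the longest chain in $\Lambda$, which is exactly the bound on $d$ the paper invokes right after the proposition. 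Your route instead transports $M$ across the exact equivalence $e_Q$ to $\mathcal{F}({}_B\Delta)$ over the standardly stratified algebra $B = \End_A(Q)^{\textnormal{op}}$, uses finiteness of projective dimension on $\mathcal{F}({}_B\Delta)$ together with closure under kernels of epimorphisms, and transports the (minimal) resolution back, splicing the short exact sequences; since $e_Q(Q) \cong {}_BB$, the terms come back in $\text{add}(Q)$, and minimality (for $M \neq 0$) gives the nonvanishing of the $Q^i$. What the direct argument buys is self-containedness within the EPSS axioms plus the explicit chain-length bound on $d$; what yours buys is brevity, at the price of invoking the full equivalence theorem.

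One caveat you should make explicit: in \cite{Marcos2} the equivalence $e_Q : \mathcal{F}(\Theta) \rightarrow \mathcal{F}({}_B\Delta)$ and the standard stratification of $B$ are established using the relative projective cover/presentation results (i.e., the same Proposition 2.10 underlying Lemma 5.1.9), and Corollary 2.11 is deduced directly from them, not from the equivalence. So your proof is not circular — the equivalence needs only two-step presentations, not full finite resolutions — but its logical order is reversed relative to the source, and you are treating the exactness of $e_Q$ and of its quasi-inverse on these subcategories as a black box imported from \cite{Marcos2} rather than something you verify from the axiom $\Ext_A^1(Q, \mathcal{F}(\Theta)) = 0$ alone.
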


The number $d$ in this resolution is called the \textit{relative projective dimension} of $M$.

\begin{proposition}
Let $(\underline {\Theta}, \underline{Q})$ be an EPSS indexed by a finite poset $(\Lambda, \leqslant)$ and $d$ be the relative projective dimension of $\Theta$. If $\Ext _A^s (Q, \Theta) = 0$ for all $s \geqslant 1$, then for $M, N \in \mathcal{F} (\Theta)$ and $s > d$, $\Ext _A^s (M, N) =0$.
\end{proposition}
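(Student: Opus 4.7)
The plan is to combine a dimension-shifting argument along relative projective resolutions with two layers of induction on $\Theta$-filtration length. Since we only have the vanishing of $\Ext_A^s(Q,\Theta)$ as input, but need to compare with arbitrary $N \in \mathcal{F}(\Theta)$ on the right, my first move is to propagate the vanishing to all of $\mathcal{F}(\Theta)$ in the second argument, and only then apply the resolution of Proposition~5.1.7 to the first argument.

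\textbf{Step 1 (extending vanishing in the second argument).} I would first show that $\Ext_A^s(Q, N) = 0$ for every $s \geqslant 1$ and every $N \in \mathcal{F}(\Theta)$. The base case $N = \Theta_\mu$ is immediate because $\Ext_A^s(Q, \Theta_\mu)$ is a direct summand of $\Ext_A^s(Q, \Theta) = 0$. For the inductive step, pick a short exact sequence $0 \to N' \to N \to \Theta_\mu \to 0$ with $N'$ of shorter $\Theta$-filtration length; the long exact sequence of $\Ext_A^{\ast}(Q, -)$ pinches $\Ext_A^s(Q,N)$ between two vanishing terms.

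\textbf{Step 2 (base case $M = \Theta_\lambda$).} Since the relative projective dimension of each $\Theta_\lambda$ is at most $d$, fix a relative projective resolution
\begin{equation*}
0 \to Q^e \to Q^{e-1} \to \ldots \to Q^0 \to \Theta_\lambda \to 0
\end{equation*}
with $e \leqslant d$, $Q^i \in \mathrm{add}(Q)$, and all syzygies $K^i = \ker(Q^{i-1} \to Q^{i-2})$ still in $\mathcal{F}(\Theta)$ (with $K^0 = \Theta_\lambda$ and $K^e = Q^e$). Splitting into short exact sequences $0 \to K^{i+1} \to Q^i \to K^i \to 0$, the long exact sequence of $\Ext_A^{\ast}(-, N)$ combined with Step 1 yields $\Ext_A^{s-i}(K^i, N) \cong \Ext_A^{s-i-1}(K^{i+1}, N)$ provided both $\Ext_A^{s-i}(Q^i,N)$ and $\Ext_A^{s-i-1}(Q^i,N)$ vanish, which is guaranteed whenever $s - i \geqslant 2$. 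Iterating $e$ times gives $\Ext_A^s(\Theta_\lambda, N) \cong \Ext_A^{s-e}(Q^e, N)$, and the latter is zero for $s - e \geqslant 1$; since $s > d \geqslant e$, this gives $\Ext_A^s(\Theta_\lambda, N) = 0$.

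\textbf{Step 3 (induction on $M$).} For general $M \in \mathcal{F}(\Theta)$, do induction on the length of a $\Theta$-filtration of $M$ using a short exact sequence $0 \to M' \to M \to \Theta_\lambda \to 0$. Applying $\Ext_A^{\ast}(-, N)$ and using Step 2 together with the induction hypothesis kills the flanking terms for $s > d$, so $\Ext_A^s(M,N) = 0$.

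The main subtlety is in Step 2, namely making sure that \emph{both} $\Ext_A^{s-i}(Q^i, N)$ and $\Ext_A^{s-i-1}(Q^i, N)$ vanish at each shift, not just one; this is precisely what Step 1 is designed to provide (the hypothesis $\Ext_A^s(Q,\Theta) = 0$ for a single fixed $s$ would not suffice). Everything else is routine long-exact-sequence chasing, and the two inductions on filtration lengths are self-contained once Step 1 is in place.
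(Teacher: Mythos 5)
Your proof is correct and follows essentially the same route as the paper: dimension-shifting along the relative projective resolution of Proposition 5.1.7 using $\Ext_A^{s}(Q,\Theta)=0$ for $s\geqslant 1$, together with routine inductions on $\Theta$-filtration length to reduce to standard modules. The only difference is bookkeeping: the paper reduces immediately to showing $\Ext_A^s(\Theta,\Theta)=0$ and shifts degrees with $\Theta$ fixed in the second argument, whereas you first upgrade the hypothesis to $\Ext_A^{s}(Q,N)=0$ for all $N\in\mathcal{F}(\Theta)$ and then shift; both versions are valid.
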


\begin{proof}
Since $M$ and $N$ are contained in $\mathcal{F} (\Theta)$, it is enough to show that $\Ext _A^s (\Theta, \Theta) = 0$ for all $s > d$. If $d =0$, then $Q = \Theta$ and the conclusion holds trivially. So we suppose $d \geqslant 1$. Applying the functor $\Hom _A (-, \Theta)$ to the exact sequence
\begin{equation*}
\xymatrix {0 \ar[r] & K_1 \ar[r] & Q \ar[r] & \Theta \ar[r] & 0}
\end{equation*}
we get a long exact sequence. In particular, from the segment
\begin{equation*}
\xymatrix {\Ext _A^{s-1} (Q, \Theta) \ar[r] & \Ext _A^{s-1} (K_1, \Theta) \ar[r] & \Ext _A^s (\Theta, \Theta) \ar[r] & \Ext _A^s (Q, \Theta)}
\end{equation*}
of this long exact sequence we deduce that $\Ext _A^s (\Theta, \Theta) \cong \Ext _A^{s-1} (K_1, \Theta)$ since the first and last terms are 0. Now applying $\Hom _A (-, \Theta)$ to the exact sequence
\begin{equation*}
\xymatrix {0 \ar[r] & K_2 \ar[r] & Q^1 \ar[r] & K_1 \ar[r] & 0}
\end{equation*}
we get $\Ext _A^{s-1} (K_1, \Theta) \cong \Ext _A^{s-2} (K_2, \Theta)$. Thus $\Ext _A^s (\Theta, \Theta) \cong \Ext _A^{s-d} (K_d, \Theta)$ by induction. But $K_d \cong Q^d \in \text{add} (Q)$. The conclusion follows.
\end{proof}

Thus $\Gamma = \Ext _A^{\ast} (\Theta, \Theta)$ is a finite-dimensional algebra under the given assumption.

There is a natural partition on the finite poset $(\Lambda, \leqslant)$ as follows: let $\Lambda_1$ be the subset of all minimal elements in $\Lambda$, $\Lambda_2$ be the subset of all minimal elements in $\Lambda \setminus \Lambda_1$, and so on. Then $\Lambda = \sqcup_{i \geqslant 1} \Lambda_i$. With this partition, we can introduce a \textit{height function} $h: \Lambda \rightarrow \mathbb{N}$ in the following way: for $\lambda \in \Lambda_i \subseteq \Lambda$, $i \geqslant 1$, we define $h(\lambda) = i$.

For each $M \in \mathcal{F} (\Theta)$, we define supp$(M)$ to be the set of elements $\lambda \in \Lambda$ such that $M$ has a $\Theta$-filtration in which there is a factor isomorphic to $\Theta_{\lambda}$. For example, supp$ (\Theta_{\lambda}) = \{ \lambda \}$. By Lemma 2.6 in \cite{Marcos2}, the multiplicities of factors of $M$ is independent of the choice of a particular $\Theta$-filtration. Therefore, supp$(M)$ is well defined. We also define $\min (M) = \min (\{ h(\lambda) \mid \lambda \in \text{supp} (M) \})$.

\begin{lemma}
Let $(\underline {\Theta}, \underline{Q})$ be an EPSS indexed by a finite poset $(\Lambda, \leqslant)$. For each $M \in \mathcal{F} (\Theta)$, there is an exact sequence $0 \rightarrow K_1 \rightarrow Q^0 \rightarrow M$ such that $K_1 \in \mathcal{F} (\Theta)$ and $\min (K_1) > \min (M)$, where $Q^0 \in \text{add} (Q)$.
\end{lemma}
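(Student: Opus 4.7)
The plan is to induct on the length $n$ of a $\Theta$-filtration of $M$, once I first establish the key observation: whenever $\mu > \lambda$ in $(\Lambda, \leqslant)$, one has $h(\mu) > h(\lambda)$. For this, if $h(\mu) = j$ then $\mu$ is minimal in $\Lambda \setminus (\Lambda_1 \cup \cdots \cup \Lambda_{j-1})$; hence $\lambda$ cannot also lie in this set (it would contradict minimality), which forces $\lambda \in \Lambda_1 \cup \cdots \cup \Lambda_{j-1}$ and thus $h(\lambda) < j = h(\mu)$.

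In the base case $M \cong \Theta_\lambda$, I would take $Q^0 = Q_\lambda$ and use the sequence $0 \to K_\lambda \to Q_\lambda \to \Theta_\lambda \to 0$ of Definition 5.1.6(2). Its filtration factors are $\Theta_\mu$ with $\mu > \lambda$, so the key observation gives $h(\mu) > h(\lambda) = \min(M)$ for every $\mu \in \operatorname{supp}(K_\lambda)$, as required.

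For the inductive step, fix a filtration $0 = M_0 \subset \cdots \subset M_n = M$ and set $\Theta_{\lambda_n} = M_n/M_{n-1}$. The induction hypothesis, applied to $M_{n-1}$, yields $0 \to K_1' \to Q'^0 \to M_{n-1} \to 0$ with $Q'^0 \in \operatorname{add}(Q)$, $K_1' \in \mathcal{F}(\Theta)$, and $\min(K_1') > \min(M_{n-1})$. Since $\operatorname{Ext}_A^1(Q_{\lambda_n}, M_{n-1}) = 0$ by Definition 5.1.6(3), the EPSS surjection $Q_{\lambda_n} \twoheadrightarrow \Theta_{\lambda_n}$ lifts through $M \twoheadrightarrow \Theta_{\lambda_n}$, and putting $Q^0 = Q'^0 \oplus Q_{\lambda_n}$ with the resulting map to $M$ produces a surjection fitting into a commutative diagram whose rows are $0 \to Q'^0 \to Q^0 \to Q_{\lambda_n} \to 0$ and $0 \to M_{n-1} \to M \to \Theta_{\lambda_n} \to 0$. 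The snake lemma then furnishes a short exact sequence $0 \to K_1' \to K_1 \to K_{\lambda_n} \to 0$, and closure of $\mathcal{F}(\Theta)$ under extensions (standard for EPSS) places $K_1 \in \mathcal{F}(\Theta)$.

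To conclude, I would observe that $\operatorname{supp}(M_{n-1}) \subseteq \operatorname{supp}(M)$ because any $\Theta$-filtration of $M_{n-1}$ extends to one of $M$ by appending $\Theta_{\lambda_n}$, so $\min(M_{n-1}) \geqslant \min(M)$; combined with the base-case bound $\min(K_{\lambda_n}) > h(\lambda_n) \geqslant \min(M)$ and $\min(K_1') > \min(M_{n-1}) \geqslant \min(M)$, the extension description of $K_1$ yields $\min(K_1) > \min(M)$. The main subtlety, and really the only nontrivial point, is the key observation relating $\leqslant$ to $h$; once that is in hand, the rest is a routine snake-lemma construction of a partial relative projective cover built one filtration step at a time.
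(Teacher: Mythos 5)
Your proof is correct, but it takes a genuinely different route from the paper, which does not write out an argument at all: the paper proves this lemma by citing Proposition 2.10 of Marcos--Mendoza--S\'aenz (the linearly ordered case) and remarking that the poset case follows similarly because $\Ext_A^1(\Theta_\lambda,\Theta_\mu)=0$ whenever $h(\lambda)=h(\mu)$. You instead give a self-contained induction on the length of a $\Theta$-filtration: the base case is the defining sequence $0\to K_\lambda\to Q_\lambda\to\Theta_\lambda\to 0$ combined with your order/height compatibility $\mu>\lambda\Rightarrow h(\mu)>h(\lambda)$ (which is correct, and is also the hidden reason behind the paper's equal-height Ext-vanishing remark), and the inductive step uses only axiom (3) of Definition 5.1.6 to lift $Q_{\lambda_n}\twoheadrightarrow\Theta_{\lambda_n}$ through $M\twoheadrightarrow\Theta_{\lambda_n}$, a snake-lemma comparison of the split row $0\to Q'^0\to Q^0\to Q_{\lambda_n}\to 0$ with $0\to M_{n-1}\to M\to\Theta_{\lambda_n}\to 0$, and closure of $\mathcal{F}(\Theta)$ under extensions; the final estimate $\min(K_1)>\min(M)$ follows since $\operatorname{supp}(K_1)\subseteq\operatorname{supp}(K_1')\cup\operatorname{supp}(K_{\lambda_n})$ and all heights there exceed $\min(M)$. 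What your approach buys is independence from the external reference and from Proposition 5.2.1 (the simultaneous splitting off of all minimal-height factors, which is where the equal-height Ext-vanishing is really used), at the cost of producing a possibly non-minimal $Q^0$ -- which is all the lemma asks for, since minimality only enters later (Lemma 5.2.4) under extra hypotheses. Two cosmetic points: the displayed sequence in the statement should be read as $0\to K_1\to Q^0\to M\to 0$, and your construction does give surjectivity, which is what Proposition 5.1.7 needs; and when a kernel such as $K_\lambda$ or $K_1$ is zero the inequality on $\min$ should be read as vacuous, consistent with the paper's convention of treating $0$ as projective.
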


\begin{proof}
This is Proposition 2.10 in \cite{Marcos2} which deals with the special case that $\Lambda$ is a linearly ordered set. The general case can be proved similarly by observing the fact that $\Ext _A^1 (\Theta_{\lambda}, \Theta_{\mu}) = 0$ if $h (\lambda) = h (\mu)$.
\end{proof}

By this lemma, the relative projective dimension of every $M \in \mathcal{F} (\Theta)$ cannot exceed the length of the longest chain in $\Lambda$.

As before, we let $\mathcal{E}$ be the $k$-linear category associated to $\Gamma = \Ext _A^{\ast} (\Theta, \Theta)$.

\begin{theorem}
Let $(\underline {\Theta}, \underline{Q})$ be an EPSS indexed by a finite poset $(\Lambda, \leqslant)$. such that $\Ext_A ^i (Q, \Theta) =0$ for all $i \geqslant 1$. Then $\mathcal{E}$ is a directed category with respect to $\leqslant$ and is standardly stratified for $\leqslant ^{\textnormal{op}}$. Moreover, it is standardly stratified for $\leqslant$ if and only if for all $s \geqslant 0$, $\Ext _A^s (\Theta_{\lambda}, \Theta_{\mu})$ is a projective $\End _A (\Theta_{\mu})$-module, $\lambda, \mu \in \Lambda$.
\end{theorem}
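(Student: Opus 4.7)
The plan is to mirror the proof of Theorem 5.1.3, but replace the ordinary projective resolutions used there (which are unavailable now since $_{A}A$ need not lie in $\mathcal{F}(\Theta)$) by the relative projective resolutions supplied by Proposition 5.1.7. The hypothesis $\Ext_A^i(Q, \Theta) = 0$ for $i \geqslant 1$ says precisely that modules in $\mathrm{add}(Q)$ are $\Hom_A(-, \Theta_\mu)$-acyclic, so such resolutions still compute $\Ext^{\ast}_A(-, \Theta_\mu)$ on $\Theta$-filtered modules.

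The heart of the argument is an analogue of Lemma 5.1.1: for $\lambda, \mu \in \Lambda$ with $\lambda \nleqslant \mu$, $\Ext^n_A(\Theta_\lambda, \Theta_\mu) = 0$ for every $n \geqslant 0$. I would prove this by descending induction on $\lambda$ in the finite poset $\Lambda$. When $\lambda$ is maximal, the EPSS sequence forces $K_\lambda = 0$ (no indices $\sigma > \lambda$ exist), so $\Theta_\lambda \cong Q_\lambda$, and the vanishing follows from the hypothesis $\Ext^n(Q, \Theta) = 0$ for $n \geqslant 1$ and from the first EPSS axiom for $n = 0$. For the inductive step, the key order-theoretic observation is that if $\sigma > \lambda$ and $\lambda \nleqslant \mu$, then $\sigma \nleqslant \mu$ (otherwise transitivity would force $\lambda < \sigma \leqslant \mu$, contradicting $\lambda \nleqslant \mu$; this is where we use that $\leqslant$ is a partial order, not a mere preorder). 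The induction hypothesis therefore applies to every $\sigma$ appearing in the $\Theta$-filtration of $K_\lambda$, giving $\Ext^i_A(\Theta_\sigma, \Theta_\mu) = 0$ for all $i \geqslant 0$; a dévissage along this filtration via repeated long exact sequences then yields $\Ext^i_A(K_\lambda, \Theta_\mu) = 0$ for all $i$. Applying the long exact sequence attached to $0 \to K_\lambda \to Q_\lambda \to \Theta_\lambda \to 0$, combined once more with $\Ext^n(Q_\lambda, \Theta_\mu) = 0$ for $n \geqslant 1$, completes the induction.

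From this lemma directedness of $\mathcal{E}$ is immediate, and the argument of Proposition 5.1.2 carries over verbatim: for $\mu < \lambda$ one has $\mathrm{tr}_{Q_\mu}(Q_\lambda) = 0$ by directedness, so the standard modules of $\Gamma$ with respect to $\leqslant^{\textnormal{op}}$ coincide with the indecomposable projectives, forcing $\Gamma$ to be standardly stratified for $\leqslant^{\textnormal{op}}$. The equivalence in the ``moreover'' clause is then a direct application of Theorem 3.1.6 to the directed graded category $\mathcal{E}$, exactly as in the proof of Theorem 5.1.3. I expect the only real technical hurdle to be the dévissage controlling $\Ext^i(K_\lambda, \Theta_\mu)$, which rests essentially on the transitivity step that needs the poset (rather than preorder) hypothesis on $\Lambda$; the remainder is formal once the vanishing lemma is in place.
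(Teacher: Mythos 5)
Your treatment of the key vanishing statement takes a genuinely different route from the paper's, and it works. The paper proves the directedness of $\mathcal{E}$ by taking the relative projective resolution of $\Theta_{\lambda}$ from Proposition 5.1.7, refined via Lemma 5.1.9 so that $\min$ strictly increases along the kernels, dimension-shifting $\Ext_A^s(\Theta_{\lambda}, \Theta_{\mu}) \cong \Ext_A^1(K_{s-1}, \Theta_{\mu})$ using $\Ext_A^{\geqslant 1}(Q, \Theta) = 0$ as in Proposition 5.1.8, and then invoking the height function $h$ to see that every filtration factor of $K_{s-1}$ is some $\Theta_{\nu}$ with $\nu \nleqslant \mu$. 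Your descending induction on the finite poset, working directly with the defining sequences $0 \rightarrow K_{\lambda} \rightarrow Q_{\lambda} \rightarrow \Theta_{\lambda} \rightarrow 0$, ordinary long exact sequences, and a d\'evissage along the $\Theta$-filtration of $K_{\lambda}$, is correct and in some ways cleaner: it avoids the height function and the refined resolutions entirely and delivers the vanishing in all degrees at once. (A small quibble: the transitivity step $\lambda \leqslant \sigma \leqslant \mu \Rightarrow \lambda \leqslant \mu$ is valid for preorders too; the poset hypothesis is really needed elsewhere, cf.\ Example 5.1.5.) Your reduction of the remaining claims to Proposition 5.1.2 and Theorem 3.1.6 is exactly what the paper does.

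There is, however, one gap in your handling of the ``moreover'' clause. Theorem 3.1.6 characterizes standard stratification of the directed category $\mathcal{E}$ by projectivity of $\mathcal{E}(\Theta_{\lambda}, \Theta_{\mu})$ over $\mathcal{E}(\Theta_{\mu}, \Theta_{\mu}) = \Ext_A^{\ast}(\Theta_{\mu}, \Theta_{\mu})$, whereas the theorem asks for projectivity over $\End_A(\Theta_{\mu})$. These conditions coincide only after one knows $\Ext_A^n(\Theta_{\mu}, \Theta_{\mu}) = 0$ for all $n \geqslant 1$; your lemma covers only $\lambda \nleqslant \mu$ and so excludes the diagonal $\lambda = \mu$, leaving this identification unjustified. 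This is precisely why the paper proves the vanishing for all $\lambda \nless \mu$ and $s \geqslant 2$ (the case $s = 1$ being a stratifying-system axiom). The fix costs essentially nothing: run your same descending induction for the statement ``$\Ext_A^n(\Theta_{\lambda}, \Theta_{\mu}) = 0$ for all $n \geqslant 1$ whenever $\lambda \nless \mu$.'' For $n = 1$ this is the axiom $\Ext_A^1(\Theta_{\lambda}, \Theta_{\mu}) = 0$ for $\lambda \nless \mu$; for $n \geqslant 2$ the long exact sequence gives a surjection from $\Ext_A^{n-1}(K_{\lambda}, \Theta_{\mu})$ onto $\Ext_A^n(\Theta_{\lambda}, \Theta_{\mu})$ since $\Ext_A^n(Q_{\lambda}, \Theta_{\mu}) = 0$, and the d\'evissage now only needs the inductive vanishing in degrees $\geqslant 1$ for the factors $\Theta_{\sigma}$ with $\sigma > \lambda$, each of which satisfies $\sigma \nless \mu$ by the same transitivity-plus-antisymmetry argument you already use. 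With this strengthening, the appeal to Theorem 3.1.6 yields the ``moreover'' clause exactly as stated.
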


\begin{proof}
We only need to show that $\mathcal{E}$ is a directed category with respect to $\leqslant$ since the other statements can be proved as in Theorem 5.1.3. We know $\Hom _A (\Theta_{\lambda}, \Theta_{\mu}) = 0$ if $\lambda \nleqslant \mu$ and $\Ext _A^1 (\Theta_{\lambda}, \Theta_{\mu}) = 0$ for all $\lambda \nless \mu$. Therefore, it suffices to show that for all $s \geqslant 2$, $\Ext _A^s (\Theta_{\lambda}, \Theta_{\mu}) = 0$ if $\lambda \nless \mu$.

By Proposition 5.1.7 and Lemma 5.1.9, $\Theta_{\lambda}$ has a relative projective resolution
\begin{equation*}
\xymatrix {0 \ar[r] & Q^d \ar[r] ^{f_d} & \ldots \ar[r] ^{f_1} & Q^0 \ar[r] ^{f_0} & \Theta_{\lambda} \ar[r] & 0}
\end{equation*}
such that for each map $f_t$, min($K_t) > \text{min} (K_{t-1})$, where $K_t = \text{Ker} (f_t)$ and $1 \leqslant t \leqslant d$. By Proposition 5.1.8, $\Ext _A^s (\Theta_{\lambda}, \Theta_{\mu}) = 0$ if $s > d$; if $2 \leqslant s \leqslant d$, we have $\Ext _A^s (\Theta_{\lambda}, \Theta_{\mu}) \cong \Ext _A^1 (K_{s-1}, \Theta_{\mu})$. But we have chosen
\begin{equation*}
\min (K_{s-1}) > \min (K_{s-2}) > \ldots > \min (\Theta_{\lambda}) = h(\lambda) \geqslant h(\mu).
\end{equation*}
Thus each factor $\Theta_{\nu}$ appearing in a $\Theta$-filtration of $K_{s-1}$ satisfies $h(\nu) > h(\mu)$, and hence $\nu \nleqslant \mu$. Since $\Ext _A^1 (\Theta_{\nu}, \Theta_{\mu}) =0$ for all $\nu \nleqslant \mu$, we deduce
\begin{equation*}
\Ext _A^s (\Theta_{\lambda}, \Theta_{\mu}) \cong \Ext _A^1 (K_{s-1}, \Theta_{\mu}) =0.
\end{equation*}
This finishes the proof.
\end{proof}

The following corollary is a generalization of Corollary 5.1.4.

\begin{corollary}
Let $(\underline {\Theta}, \underline{Q})$ be an EPSS indexed by a finite poset $(\Lambda, \leqslant)$. If for all $s \geqslant 1$ and $\lambda \in \Lambda$ we have $\Ext _A ^s (Q, \Theta) = 0$ and $\End_A (\Theta_{\lambda}, \Theta_{\lambda}) \cong k$, then $\Gamma$ is quasi-hereditary with respect to both $\leqslant$ and $\leqslant ^{\textnormal{op}}$.
\end{corollary}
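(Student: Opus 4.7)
The plan is to mirror the proof of Corollary 5.1.4, using Theorem 5.1.9 in place of Theorem 5.1.3. By Proposition 5.1.8, the hypothesis $\Ext_A^s(Q,\Theta)=0$ for $s\geqslant 1$ forces $\Gamma$ to be finite-dimensional, and Theorem 5.1.9 then tells us that the associated category $\mathcal{E}$ of $\Gamma$ is directed with respect to $\leqslant$ and standardly stratified with respect to $\leqslant^{\textnormal{op}}$. For the latter, the standard $\Gamma$-modules are $_\Gamma\Delta_\lambda\cong\Gamma 1_\lambda$, so establishing quasi-heredity with respect to $\leqslant^{\textnormal{op}}$ amounts to verifying that $\End_\Gamma(\Gamma 1_\lambda)\cong 1_\lambda\Gamma 1_\lambda=\Ext^{\ast}_A(\Theta_\lambda,\Theta_\lambda)\cong k$ for every $\lambda\in\Lambda$.

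The crux is therefore to show $\Ext^s_A(\Theta_\lambda,\Theta_\lambda)=0$ for $s\geqslant 1$, since $\End_A(\Theta_\lambda)\cong k$ is assumed. The plan is to apply $\Hom_A(-,\Theta_\lambda)$ to the defining sequence $0\to K_\lambda\to Q_\lambda\to\Theta_\lambda\to 0$ from the EPSS. Because $\Ext^s_A(Q_\lambda,\Theta_\lambda)=0$ for $s\geqslant 1$, the induced long exact sequence exhibits $\Ext^s_A(\Theta_\lambda,\Theta_\lambda)$ as a quotient of $\Ext^{s-1}_A(K_\lambda,\Theta_\lambda)$ for every $s\geqslant 1$. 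Now $K_\lambda$ carries a $\Theta$-filtration with factors $\Theta_\nu$ satisfying $\nu>\lambda$, and the directedness supplied by Theorem 5.1.9 gives $\Ext^t_A(\Theta_\nu,\Theta_\lambda)=0$ for all $t\geqslant 0$ whenever $\nu\not\leqslant\lambda$, in particular whenever $\nu>\lambda$. A routine induction on the length of a $\Theta$-filtration, using the long exact sequences of Ext at each step, then yields $\Ext^t_A(K_\lambda,\Theta_\lambda)=0$ for all $t\geqslant 0$, which completes the vanishing and hence this half of the statement.

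For quasi-heredity with respect to $\leqslant$, the characterization in Theorem 5.1.9 reduces matters to checking that each $\Ext^s_A(\Theta_\lambda,\Theta_\mu)$ is projective as an $\End_A(\Theta_\mu)$-module; since $\End_A(\Theta_\mu)\cong k$ is a field, this holds automatically, so $\mathcal{E}$ is standardly stratified with respect to $\leqslant$. By Proposition 3.1.5, the standard modules of $\mathcal{E}$ with respect to $\leqslant$ are the indecomposable summands of $\bigoplus_{\lambda}\mathcal{E}(\Theta_\lambda,\Theta_\lambda)=\bigoplus_\lambda\Ext^{\ast}_A(\Theta_\lambda,\Theta_\lambda)$, and the Ext-vanishing above shows that each summand is a one-dimensional module concentrated at a single object, whose endomorphism ring is $k$. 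The main technical step throughout is the self-Ext vanishing for $\Theta_\lambda$; I expect everything else to be formal bookkeeping relying on Theorem 5.1.9 and Proposition 3.1.5.
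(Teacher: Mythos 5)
Your argument is correct and takes essentially the same route as the paper, whose proof simply repeats that of Corollary 5.1.4: directedness and standard stratification of $\mathcal{E}$ from Theorem 5.1.10 (your ``5.1.9''), the automatic projectivity of $\Ext_A^s(\Theta_\lambda,\Theta_\mu)$ over $\End_A(\Theta_\mu)\cong k$, and Proposition 3.1.5 identifying the standard modules for $\leqslant$. The only addition is that you make explicit the vanishing $\Ext_A^s(\Theta_\lambda,\Theta_\lambda)=0$ for $s\geqslant 1$, which the paper leaves implicit in the identification $1_\lambda\Gamma 1_\lambda=\End_A(\Theta_\lambda)$, and your induction along the $\Theta$-filtration of $K_\lambda$ using $\Ext_A^s(Q,\Theta)=0$ and the directedness of $\mathcal{E}$ establishes it correctly.
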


\begin{proof}
This can be proved as Corollary 5.1.4.
\end{proof}

\section{Koszul property of extension algebras}

If $A$ is a graded standardly stratified algebra with $A_0$ a semisimple algebra, then $B = \Ext _A^{\ast} (A_0, A_0)$ is a Koszul algebra, too. Moreover, the functor $\Ext _A^{\ast} (-, A_0)$ gives an equivalence between the category of linear $A$-modules and the category of linear $B$-modules. However, even if $A$ is quasi-hereditary with respect to a partial order $\leqslant$, $B$ might not be quasi-hereditary with respect to $\leqslant$ or $\leqslant ^{\textnormal{op}}$ (compared to Corollary 5.1.4). This problem has been considered in \cite{Agoston1, Agoston2, Mazorchuk3, Mazorchuk4}.

On the other hand, if $A$ is quasi-hereditary with respect to $\leqslant$, we have shown that the extension algebra $\Gamma = \Ext _A^{\ast} (\Delta, \Delta)$ is quasi-hereditary with respect to both $\leqslant$ and $\leqslant ^{\textnormal{op}}$. But $\Gamma$ is in general not a Koszul algebra (even in the generalized sense). In this section we want to get a sufficient condition for $\Gamma$ to be a generalized Koszul algebra.

We work in the context of EPSS described in last section. Let $(\underline {\Theta}, \underline{Q})$ be an EPSS indexed by a finite poset $(\Lambda, \leqslant)$; $Q = \bigoplus _{\lambda \in \Lambda} Q_{\lambda}$ and $\Theta = \bigoplus _{\lambda \in \Lambda} \Theta_{\lambda}$. \textbf{We insist the following conditions: $\Ext \mathbf{_A^s (Q, \Theta) =0}$ for all $\mathbf{s \geqslant 1}$; each $\Theta_{\lambda}$ has a simple top $S_{\lambda}$; and $S_{\lambda} \ncong S_{\mu}$ for $\lambda \neq \mu$.} These conditions are always true for the classical stratifying system of a standardly stratified basic algebra. In particular, in that case $Q = _AA$.

\begin{proposition}
Let $0 \neq M \in \mathcal{F} (\Theta)$ and $i = \text{min (M)}$. Then there is an exact sequence
\begin{equation}
\xymatrix{ 0 \ar[r] & M[1] \ar[r] & M \ar[r] & \bigoplus _{h (\lambda) = i } \Theta_{\lambda} ^{\oplus m_{\lambda}} \ar[r] & 0}
\end{equation}
such that $M[1] \in \mathcal{F} (\Theta)$ and $\text{min} (M[1]) > \text{min} (M)$.
\end{proposition}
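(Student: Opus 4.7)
The plan is to produce the required short exact sequence by rearranging a $\Theta$-filtration of $M$ so that all factors $\Theta_\lambda$ with $h(\lambda)=i$ appear on top. I will fix any $\Theta$-filtration $0=M_0\subset M_1\subset\cdots\subset M_n=M$ with $M_k/M_{k-1}\cong\Theta_{\lambda_k}$, and for each $\lambda$ with $h(\lambda)=i$ let $m_\lambda$ be the number of times $\Theta_\lambda$ occurs as a factor; these multiplicities are well defined by the filtration-multiplicity result cited in Section 5.1.

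The key step will be an elementary swap of two adjacent factors. Suppose $M_k/M_{k-1}\cong\Theta_\lambda$ with $h(\lambda)=i$ and $M_{k+1}/M_k\cong\Theta_\mu$ with $h(\mu)\geqslant i$. Since $h(\lambda)=i=\min(M)$ and $h(\mu)\geqslant i$, we cannot have $\mu<\lambda$: if $h(\mu)=i$ then $\mu$ and $\lambda$ lie in the same antichain $\Lambda_i$ of the height filtration and are incomparable; if $h(\mu)>i$ then $\mu<\lambda$ is excluded by the construction of $h$. Hence $\mu\nless\lambda$, and the $\Ext^1$-vanishing of the stratifying system gives $\Ext^1_A(\Theta_\mu,\Theta_\lambda)=0$. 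Therefore the extension $0\to\Theta_\lambda\to M_{k+1}/M_{k-1}\to\Theta_\mu\to 0$ splits, producing a new intermediate submodule $M_k'$ with $M_k'/M_{k-1}\cong\Theta_\mu$ and $M_{k+1}/M_k'\cong\Theta_\lambda$, which floats $\Theta_\lambda$ one position upward.

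Iterating this bubble-up, I will reach a refined filtration $0=M_0\subset\cdots\subset M_r=M[1]\subset\cdots\subset M_n=M$ in which every factor above $M[1]$ has height exactly $i$, while every factor of $M[1]$ has height strictly greater than $i$. In particular $M[1]\in\mathcal{F}(\Theta)$ and $\min(M[1])>i$. To finish, I have to identify $M/M[1]$ with the direct sum in (5.2.1): its $\Theta$-filtration uses only $\Theta_\lambda$ with $h(\lambda)=i$, and any two such indices are pairwise incomparable, so the Ext-vanishing applies in both directions and every successive extension splits. This yields $M/M[1]\cong\bigoplus_{h(\lambda)=i}\Theta_\lambda^{\oplus m_\lambda}$, hence the desired sequence.

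The main technical obstacle is bookkeeping during the bubble-up: one must verify that successive swaps can be performed coherently and that the multiplicities $m_\lambda$ on top coincide with those of the original filtration. Both are automatic, because the unordered multiset of $\Theta$-filtration factors is an invariant of the module, so no multiplicity is created or destroyed by rearrangement.
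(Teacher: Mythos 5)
Your proof is correct. The splitting step is justified: every factor of $M$ has height at least $i=\min(M)$, so for a height-$i$ factor $\Theta_\lambda$ sitting directly below a factor $\Theta_\mu$ one always has $\mu \nless \lambda$ (if $h(\mu)=i$ the two indices are equal or incomparable, and $\mu<\lambda$ would force $h(\mu)<i$), whence $\Ext^1_A(\Theta_\mu,\Theta_\lambda)=0$ by the stratifying-system property recorded in Section 5.1, and the adjacent factors can be interchanged; the same vanishing (note it also covers $\mu=\lambda$, since the condition is the strict relation $\nless$) splits the top quotient into $\bigoplus_{h(\lambda)=i}\Theta_\lambda^{\oplus m_\lambda}$. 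The only difference from the paper is one of presentation: the paper does not write out an argument at all, but simply cites Proposition 2.9 of Marcos--Mendoza--S\'aenz for the linearly ordered case and remarks that the poset case follows from the vanishing of $\Ext^1_A(\Theta_\lambda,\Theta_\mu)$ when $h(\lambda)=h(\mu)$; your bubble-up rearrangement is precisely a self-contained implementation of that adaptation, using the same key vanishing, so it supplies the details the paper delegates to the reference rather than a genuinely different method. (A trivial point you share with the paper: if all factors of $M$ have height $i$ then $M[1]=0$, and the inequality $\min(M[1])>\min(M)$ is to be read with the usual convention for the empty support.)
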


\begin{proof}
This is Proposition 2.9 in \cite{Marcos2} which deals with the special case that $\Lambda$ is a linearly ordered set. The general case can be proved similarly by observing the fact that $\Ext _A^1 (\Theta_{\lambda}, \Theta_{\mu}) = 0$ if $h (\lambda) = h (\mu)$.
\end{proof}

It is clear that $m_{\lambda} = [M: \Theta_{\lambda}]$. Based on this proposition, we make the following definition:

\begin{definition}
Let $M \in \mathcal{F} (\Theta)$ with $\text{min} (M) = i$. We say $M$ is generated in height $i$ if in sequence (5.2.1) we have
\begin{equation*}
\Top (M) = M / \rad M \cong \Top \Big{(}\bigoplus _{h (\lambda) = i } \Theta_{\lambda} ^{\oplus m_{\lambda}} \Big{)} = \bigoplus _{h (\lambda) = i } S_{\lambda} ^{\oplus m_{\lambda}}.
\end{equation*}
\end{definition}

We introduce some notation: if $M \in \mathcal{F} (\Theta)$ is generated in height $i$, then define $M_i = \bigoplus _{h (\lambda) = i } \Theta_{\lambda} ^{\oplus m_{\lambda}}$ in sequence (5.2.1). If $M[1]$ is generated in some height $j$, we can define $M[2] = M[1][1]$ and $M[1]_j$ in a similar way. This procedure can be repeated.

\begin{proposition}
Let $0 \rightarrow L \rightarrow M \rightarrow N \rightarrow 0$ be an exact sequence in $\mathcal{F} (\Theta)$. If $M$ is generated in height $i$, so is $N$. Conversely, if both $L$ and $N$ are generated in height $i$, then $M$ is generated in height $i$ as well.
\end{proposition}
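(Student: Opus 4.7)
The plan is to recast ``generated in height $i$'' via the top functor: $M \in \mathcal{F}(\Theta)$ is generated in height $i$ if and only if $\min(M) = i$ and $\text{Top}(M) \cong \bigoplus_{h(\lambda)=i} S_\lambda^{m_\lambda^M}$, equivalently the kernel $M[1]$ of the canonical surjection in Proposition 5.2.1 lies in $\rad M$. The main workhorses are additivity of filtration multiplicities, $m_\lambda^M = m_\lambda^L + m_\lambda^N$, and the right-exact sequence $\text{Top}(L) \to \text{Top}(M) \to \text{Top}(N) \to 0$ obtained from applying $\text{Top}(-) = -/\rad(-)$ to the given short exact sequence.

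For the converse direction, with $L$ and $N$ generated in height $i$, the tops $\text{Top}(L) \cong \bigoplus_{h(\lambda)=i} S_\lambda^{m_\lambda^L}$ and $\text{Top}(N) \cong \bigoplus_{h(\lambda)=i} S_\lambda^{m_\lambda^N}$ are explicit semisimples. Right-exactness of $\text{Top}(-)$ then forces $\text{Top}(M)$ to be semisimple concentrated on height-$i$ simples with $\dim \text{Top}(M) \leq \sum m_\lambda^L + \sum m_\lambda^N = \sum m_\lambda^M$. Since additivity gives $\min(M) = i$, Proposition 5.2.1 supplies the reverse inequality via the surjection $\text{Top}(M) \twoheadrightarrow \text{Top}(\overline{M}) \cong \bigoplus_{h(\lambda)=i} S_\lambda^{m_\lambda^M}$. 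Equality yields the target isomorphism $\text{Top}(M) \cong \bigoplus_{h(\lambda)=i} S_\lambda^{m_\lambda^M}$, proving $M$ is generated in height $i$.

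For the forward direction, assume $M$ is generated in height $i$ and $N \neq 0$. The surjection $\text{Top}(M) \twoheadrightarrow \text{Top}(N)$ shows $\text{Top}(N)$ is semisimple on height-$i$ simples, and this combined with the surjection $\text{Top}(N) \twoheadrightarrow \text{Top}(\overline{N})$ of Proposition 5.2.1 forces $\min(N) = i$ (any higher minimum would inject a simple of height $> i$ into $\text{Top}(N)$). The crux is then matching multiplicities. I would establish $\Hom(M[1], \Theta_\lambda) = 0$ for $h(\lambda) = i$ by induction on the $\Theta$-filtration length of $M[1]$, using $\Hom(\Theta_\mu, \Theta_\lambda) = 0$ for $h(\mu) > h(\lambda)$ (which holds because $\mu < \lambda$ would force $h(\mu) < h(\lambda)$). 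This lets $\gamma \circ p$ factor through $\beta$, producing the commutative square
\[
\xymatrix{
M \ar@{->>}[r]^{p} \ar@{->>}[d]_{\beta} & N \ar@{->>}[d]^{\gamma} \\
\overline{M} \ar@{->>}[r]^{\pi} & \overline{N}
}
\]
with $\pi$ surjective and block-diagonal on the height-$i$ components (since $\Hom(\Theta_\mu, \Theta_\nu) = 0$ for distinct $\mu, \nu$ both of height $i$). The general case would then be reduced by first applying Proposition 5.2.1 to $L$ to peel off $L[1]$ (the ensuing subsequence $0 \to L[1] \to M \to M/L[1] \to 0$ has $\min(L[1]) > i$ and is closed by a direct multiplicity count) and then inducting on the rank of $L_{(i)} = \bigoplus_{h(\lambda)=i} \Theta_\lambda^{m_\lambda^L}$ to reach the atomic case $L \cong \Theta_\lambda$.

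The principal obstacle will be this atomic case: if $\Theta_\lambda$ were to embed entirely into $\rad M$, then $\text{Top}(N) = \text{Top}(M)$ would carry one excess copy of $S_\lambda$ beyond $m_\lambda^N = m_\lambda^M - 1$, contradicting the desired conclusion. Ruling out such an embedding is the main technical hurdle; the plan is to exploit Ext-projectivity of $Q_\lambda$ together with $\Ext^1(\Theta_\lambda, \Theta_\lambda) = 0$ (from the stratifying system axiom, since $\lambda \not< \lambda$) to show that this embedding is incompatible with $N \in \mathcal{F}(\Theta)$ under the standing EPSS hypotheses.
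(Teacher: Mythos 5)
Your converse direction is fine and is essentially the paper's argument: sandwich $\Top(M)$ between the quotient $\bigoplus_{h(\lambda)=i}S_\lambda^{m_\lambda^M}$ coming from Proposition 5.2.1 (using $\min(M)=i$, which follows from additivity of filtration multiplicities) and the bound coming from right-exactness of $\Top(-)$. The problem is the forward direction: after correctly getting $\min(N)=i$ and that $\Top(N)$ is concentrated on height-$i$ simples, you reduce the multiplicity matching through a peeling/induction scheme to an ``atomic case'' ($L\cong\Theta_\lambda$ with image inside $\rad M$) and then explicitly leave that case unproved, offering only a plan based on Ext-projectivity of $Q_\lambda$ and $\Ext_A^1(\Theta_\lambda,\Theta_\lambda)=0$. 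As written this is a genuine gap: the crux of the statement (that $\Top(N)$ carries no excess height-$i$ simples beyond the filtration multiplicities $m_\lambda^N$) is exactly what is not established, and it is not clear how the proposed Ext-vanishing would deliver it.

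What closes the gap is an elementary counting fact you never invoke: for any $X\in\mathcal{F}(\Theta)$ and any $\lambda$, the multiplicity of $S_\lambda$ in $\Top(X)$ equals $\dim_k\Hom_A(X,S_\lambda)$, and by running $\Hom_A(-,S_\lambda)$ (left exact) along a $\Theta$-filtration of $X$ this is at most $[X:\Theta_\lambda]$, since $\Hom_A(\Theta_\mu,S_\lambda)=0$ for $\mu\neq\lambda$ and is one-dimensional for $\mu=\lambda$ (each $\Theta_\mu$ has simple top $S_\mu$, pairwise non-isomorphic --- the standing hypotheses of this section). Applied to $X=N$ this gives the upper bound $t_\lambda\leqslant m_\lambda^N$, while the surjection of Proposition 5.2.1 for $N$ gives $t_\lambda\geqslant m_\lambda^N$; in particular ``generated in height $i$'' is equivalent to ``$\Top$ concentrated on height-$i$ simples,'' and the forward direction follows at once from the fact that $\Top(N)$ is a quotient (hence a direct summand) of $\Top(M)$. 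This is exactly how the paper argues, via the two containments $\Top(N)\subseteq\Top(M)$ and $\Top(M)\subseteq\Top(L)\oplus\Top(N)$ together with the identity $\Top(M)\cong\bigoplus_{h(\lambda)=i}S_\lambda^{m_\lambda}$ in Definition 5.2.2. In particular your feared embedding $\Theta_\lambda\hookrightarrow\rad M$ is ruled out by this same count (it would force $\dim_k\Hom_A(N,S_\lambda)>[N:\Theta_\lambda]$), so no induction on filtration length, no factorization diagram, and no Ext-projectivity argument is needed.
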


\begin{proof}
We always have $\Top (N) \subseteq \Top (M)$ and $\Top (M) \subseteq \Top (L) \oplus \Top (N)$. The conclusion follows from these inclusions and the rightmost identity in the above definition.
\end{proof}

Notice that $[Q_{\lambda} : \Theta_{\lambda}] = 1$ and $[Q_{\lambda} : \Theta_{\mu}] = 0$ for all $\mu \ngeqslant \lambda$. We claim that $Q_{\lambda}$ is generated in height $h(\lambda)$ for $\lambda \in \Lambda$. Indeed, the algebra $B = \End _A (Q)^{\textnormal{op}}$ is a standardly stratified algebra, with projective modules $\Hom _A (Q, Q_{\lambda})$ and standard modules $\Hom _A (Q, \Theta_{\lambda})$, $\lambda \in \Lambda$. Moreover, the functor $\Hom _A (Q, -)$ gives an equivalence between $\mathcal{F} (\Theta) \subseteq A$-mod and $\mathcal{F} (_B \Delta) \subseteq B$-mod. Using this equivalence and the standard filtration structure of projective $B$-modules we deduce the conclusion.

\begin{lemma}
If $M \in \mathcal{F} (\Theta)$ is generated in height $i$ with $[M: \Theta_{\lambda} ] = m_{\lambda}$, then $M$ has a relative projective cover $Q^i \cong \bigoplus _{h (\lambda) = i} Q_{\lambda} ^{\oplus m_{\lambda}}$.
\end{lemma}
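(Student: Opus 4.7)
The plan is to construct explicitly a surjection $\tilde{q}\colon Q^i\twoheadrightarrow M$ with $Q^i=\bigoplus_{h(\lambda)=i}Q_\lambda^{\oplus m_\lambda}$ and $\ker\tilde{q}\in\mathcal{F}(\Theta)$, and then argue this realises the relative projective cover asserted.

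First, since $M$ is generated in height $i$, Proposition 5.2.1 furnishes a short exact sequence
\begin{equation*}
0\to M[1]\to M\xrightarrow{p} T\to 0,
\end{equation*}
where $T=\bigoplus_{h(\lambda)=i}\Theta_\lambda^{\oplus m_\lambda}$ and $M[1]\in\mathcal{F}(\Theta)$ with $\min(M[1])>i$. Assembling the canonical EPSS surjections $Q_\lambda\twoheadrightarrow\Theta_\lambda$ of Definition 5.1.6 yields a surjection $q\colon Q^i\twoheadrightarrow T$ whose kernel $\bigoplus_{h(\lambda)=i}K_\lambda^{\oplus m_\lambda}$ lies in $\mathcal{F}(\Theta)$ by EPSS axiom (2). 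Since EPSS axiom (3) guarantees $\Ext_A^1(Q^i,M[1])=0$, the surjection $q$ lifts through $p$ to produce $\tilde{q}\colon Q^i\to M$ with $p\tilde{q}=q$.

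To see $\tilde{q}$ is surjective I would invoke Nakayama's lemma. Since $q=p\tilde{q}$ is surjective we have $\tilde{q}(Q^i)+M[1]=M$, so it suffices to show $M[1]\subseteq\rad M$. By the definition of being generated in height $i$, $\Top(M)=\bigoplus_{h(\lambda)=i}S_\lambda^{\oplus m_\lambda}$, while $M[1]$ is $\Theta_\mu$-filtered with all $h(\mu)>i$; every simple quotient of $M[1]$ therefore has the form $S_\mu$ with $h(\mu)>i$. By the standing assumption that $S_\nu\not\cong S_{\nu'}$ for $\nu\neq\nu'$, the composite $M[1]\hookrightarrow M\twoheadrightarrow M/\rad M$ must vanish, as required.

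It remains to verify $\ker\tilde{q}\in\mathcal{F}(\Theta)$, and this is where I expect the main difficulty. A short diagram chase produces an exact sequence $0\to\ker\tilde{q}\to\ker q\to M[1]\to 0$, with both $\ker q$ and $M[1]$ in $\mathcal{F}(\Theta)$. Since $\mathcal{F}(\Theta)$ is not generally closed under submodules, I would pass through the equivalence $e_Q=\Hom_A(Q,-)\colon\mathcal{F}(\Theta)\simeq\mathcal{F}({}_B\Delta)$ with $B=\End_A(Q)^{\mathrm{op}}$ standardly stratified, where $\mathcal{F}({}_B\Delta)$ is known to be closed under kernels of surjections; this then transports back to give $\ker\tilde{q}\in\mathcal{F}(\Theta)$. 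Minimality of $\tilde{q}$ (so that $Q^i$ really plays the role of a cover) is then essentially automatic once one compares filtration multiplicities: $[Q^i:\Theta_\lambda]=m_\lambda$ for $h(\lambda)=i$ matches exactly the contribution from the lowest layer $T$ of $M$, which is the minimum possible for any surjection from an element of $\text{add}(Q)$ onto $M$.
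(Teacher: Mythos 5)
Your construction and the surjectivity argument are exactly those of the paper: the paper also starts from the sequence $0 \to M[1] \to M \to \bigoplus_{h(\lambda)=i}\Theta_\lambda^{\oplus m_\lambda} \to 0$ of Proposition 5.2.1, lifts the canonical map $Q^i \to \bigoplus_{h(\lambda)=i}\Theta_\lambda^{\oplus m_\lambda}$ through it using relative projectivity of $Q^i$ (i.e.\ $\Ext_A^1(Q^i, M[1])=0$), and deduces surjectivity from the fact that generation in height $i$ identifies $\Top(M)$ with $\bigoplus_{h(\lambda)=i}S_\lambda^{\oplus m_\lambda}$; your variant via $M[1]\subseteq \rad M$ and Nakayama is the same argument in different words, and is correct. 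Your minimality remark is also fine in substance: since filtration multiplicities bound top multiplicities and $[Q_\mu:\Theta_\lambda]=\delta_{\mu\lambda}$ for $h(\mu)=h(\lambda)=i$, no proper direct summand of $Q^i$ can surject onto $M$; the paper simply declares minimality clear and quotes Proposition 8.3 of \cite{Webb1} for uniqueness.

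The step you yourself single out as the main difficulty, namely $\ker\tilde q \in \mathcal{F}(\Theta)$, is where your proposal has a genuine gap --- although it is also not part of what the lemma asserts, and the paper's proof does not address it at all (a relative projective cover here is just a minimal epimorphism from $\mathrm{add}(Q)$, handled by \cite{Webb1}). Your route through $e_Q=\Hom_A(Q,-)$ does not work as sketched, for two reasons. First, to transport $0\to\ker\tilde q\to\ker q\to M[1]\to 0$ (or $0\to\ker\tilde q\to Q^i\to M\to 0$) to an exact sequence of $B$-modules you must know that $\Hom_A(Q,-)$ preserves the epimorphism, i.e.\ that the obstruction in $\Ext_A^1(Q,\ker\tilde q)$ vanishes; but EPSS axiom (3) gives this vanishing only for modules already known to lie in $\mathcal{F}(\Theta)$, which is precisely what you are trying to prove about $\ker\tilde q$ --- the argument is circular. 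Second, even granting exactness, $e_Q$ is an equivalence only between $\mathcal{F}(\Theta)$ and $\mathcal{F}({}_B\Delta)$, so from $e_Q(\ker\tilde q)\in\mathcal{F}({}_B\Delta)$ you cannot ``transport back'' to conclude $\ker\tilde q\in\mathcal{F}(\Theta)$ for a module not yet known to be in $\mathcal{F}(\Theta)$; nor may you quote closure of $\mathcal{F}(\Theta)$ itself under kernels of epimorphisms, which is not available at this level of generality. If you really want this membership, the $e_Q$ idea has to be run in the opposite direction: build the projective cover of $e_Q(M)$ on the $B$-side, where $B$ is standardly stratified and $\mathcal{F}({}_B\Delta)$ is closed under kernels of epimorphisms, pull the whole sequence back through the inverse equivalence, and identify the resulting epimorphism with $\tilde q$ by uniqueness of covers; alternatively one works with the non-minimal epimorphism of Lemma 5.1.9 (Proposition 2.10 of \cite{Marcos2}), whose kernel lies in $\mathcal{F}(\Theta)$ by construction. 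For the lemma as stated, however, your first two paragraphs already suffice.
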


\begin{proof}
There is a surjection $f: M \rightarrow \bigoplus _{h(\lambda) = i} \Theta_{\lambda} ^{\oplus m_{\lambda}}$ by Proposition 5.2.1. Consider the following diagram:
\begin{equation*}
\xymatrix{ & \bigoplus _{h (\lambda) = i} Q_{\lambda} ^{\oplus m_{\lambda}} \ar[d]^p \ar@{-->}[dl]_q \\
M \ar[r]^f & \bigoplus _{h (\lambda) = i} \Theta_{\lambda} ^{\oplus m_{\lambda}} \ar[r] & 0.}
\end{equation*}
Since $Q^i$ is projective in $\mathcal{F} (\Theta)$, the projection $p$ factors through the surjection $f$. In particular, $\Top \Big{(} \bigoplus _{h (\lambda) = i} \Theta_{\lambda} ^{\oplus m_{\lambda}} \Big{)} = \bigoplus _{h (\lambda) = i} S_{\lambda} ^{\oplus m_{\lambda}}$ is in the image of $fq$. Since $M$ is generated in height $i$, $f$ induces an isomorphism between $\Top (M)$ and $\Top \Big{(} \bigoplus _{h (\lambda) = i} \Theta_{\lambda} ^{\oplus m_{\lambda}} \Big{)}$. Thus $\Top (M)$ is in the image of $q$, and hence $q$ is surjective. It is clear that $q$ is minimal, so $Q^1 = \bigoplus _{h (\lambda) = i} Q_{\lambda} ^{\oplus m_{\lambda}}$ is a relative projective cover of $M$. The uniqueness follows from Proposition 8.3 in \cite{Webb1}.
\end{proof}

We use $\Omega _{\Theta} ^i (M)$ to denote the \textit{i-th relative syzygy} of $M$. Actually, for every $M \in \mathcal{F} (\Theta)$ there is always a relative projective cover by Proposition 8.3 in \cite{Webb1}.

The following definition is an analogue of that of Koszul modules.

\begin{definition}
An $A$-module $M \in \mathcal{F} (\Theta)$ is said to be linearly filtered if there is some $i \in \mathbb{N}$ such that $\Omega _{\Theta} ^s (M)$ is generated in height $i + s$ for $s \geqslant 0$.
\end{definition}

Equivalently, $M \in \mathcal{F} (\Theta)$ is linearly filtered if and only if it is generated in height $i$  and has a relative projective resolution
\begin{equation*}
\xymatrix{ 0 \ar[r] & Q^l \ar[r] & Q^{l-1} \ldots \ar[r] & Q^{i+1} \ar[r] & Q^i \ar[r] & M \ar[r] & 0}
\end{equation*}
such that each $Q^s$ is generated in height $s$, $i \leqslant s \leqslant l$.

We remind the reader that there is a common upper bound for the relative projective dimensions of modules contained in $\mathcal{F} (\Theta)$, which is the length of the longest chains in the finite poset $(\Lambda, \leqslant)$. It is also clear that if $M$ is linearly filtered, so are all relative syzygies and direct summands. In other words, the subcategory $\mathcal{LF} (\Theta)$ constituted of linearly filtered modules contains all relative projective modules, and is closed under summands and relative syzygies. But in general it is not closed under extensions, kernels of epimorphisms and cokernels of monomorphisms.

The following proposition is an analogue of Proposition 2.1.8

\begin{proposition}
Let $0 \rightarrow L \rightarrow M \rightarrow N \rightarrow 0$ be an exact sequence in $\mathcal{F} (\Theta)$ such that all terms are generated in height $i$. If $L$ is linearly filtered, then $M$ is linearly filtered if and only if $N$ is linearly filtered.
\end{proposition}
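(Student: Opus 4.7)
My strategy mirrors the proof of Proposition 2.1.8 for classical Koszul modules: build a horseshoe diagram lifting the given short exact sequence to relative projective covers, extract a short exact sequence of relative syzygies, and then iterate. The preliminary observation is that since $L$, $M$, $N$ are all generated in height $i$, their tops consist only of simples $S_\lambda$ with $h(\lambda) = i$, and the multiplicity of each such $S_\lambda$ in the top agrees with the filtration multiplicity $[-:\Theta_\lambda]$. Additivity of these filtration multiplicities across the given short exact sequence (obtained by splicing a $\Theta$-filtration of $N$ on top of one of $L$) then yields $\Top(M) \cong \Top(L) \oplus \Top(N)$.

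Next I would carry out the horseshoe construction. Let $P$ and $R$ denote relative projective covers of $L$ and $N$ respectively; both are generated in height $i$ by Lemma 5.2.4. Since $R \in \text{add}(Q)$ and $L \in \mathcal{F}(\Theta)$, the third axiom of an EPSS gives $\Ext_A^1(R, L) = 0$, so the surjection $R \to N$ lifts to a map $R \to M$; combined with the composite $P \to L \hookrightarrow M$ this produces a surjection $P \oplus R \twoheadrightarrow M$. By the top computation above, $P \oplus R$ is in fact a relative projective cover of $M$, so its kernel is precisely $\Omega_\Theta M$, and the snake lemma produces
\begin{equation*}
0 \longrightarrow \Omega_\Theta L \longrightarrow \Omega_\Theta M \longrightarrow \Omega_\Theta N \longrightarrow 0
\end{equation*}
in $\mathcal{F}(\Theta)$.

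With this sequence of syzygies in hand, the equivalence follows by iterating Proposition 5.2.3. Since $L$ is linearly filtered, $\Omega_\Theta L$ is generated in height $i+1$. If $M$ is linearly filtered, then $\Omega_\Theta M$ is also generated in height $i+1$, and the first half of Proposition 5.2.3 forces $\Omega_\Theta N$ to be generated in height $i+1$; the converse half of the same proposition handles the opposite implication. We then find ourselves with a short exact sequence of syzygies all generated in height $i+1$ in which $\Omega_\Theta L$ is still linearly filtered, so the entire argument repeats at the next level. Since relative projective dimensions in $\mathcal{F}(\Theta)$ are bounded above by the length of the longest chain in $\Lambda$, the recursion terminates after finitely many steps, yielding the linear filtration of $\Omega_\Theta^s N$ (resp.\ $\Omega_\Theta^s M$) in every degree $s$.

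The main technical point I anticipate is the top computation underlying the horseshoe: without the uniform hypothesis that all three of $L$, $M$, $N$ are generated in the same height $i$, the map $P \oplus R \to M$ need not be minimal, its kernel would be a proper extension of $\Omega_\Theta M$ by a relative projective summand, and the snake-lemma identification collapses. The symmetry required to apply Proposition 5.2.3 in both directions would likewise fail. So the entire argument really pivots on this alignment hypothesis, which is precisely what allows the multiplicity bookkeeping at each stage to reproduce itself after taking syzygies.
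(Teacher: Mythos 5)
Your proposal is correct and follows essentially the same route as the paper: a horseshoe construction over the relative projective covers $\bigoplus_{h(\lambda)=i} Q_\lambda^{\oplus l_\lambda}$ and $\bigoplus_{h(\lambda)=i} Q_\lambda^{\oplus n_\lambda}$ (the paper cites Lemma 5.2.4 for the resulting commutative diagram), yielding the short exact sequence $0 \rightarrow \Omega_\Theta(L) \rightarrow \Omega_\Theta(M) \rightarrow \Omega_\Theta(N) \rightarrow 0$, followed by iterated use of Proposition 5.2.3. Your explicit verification that $P \oplus R$ is a relative projective cover of $M$ (via additivity of filtration multiplicities and $\Ext_A^1(Q,L)=0$) is exactly the content the paper leaves implicit, so there is nothing to correct.
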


\begin{proof}
Let $m_{\lambda} = [M : \Theta_{\lambda}]$, $l_{\lambda} = [L : \Theta_{\lambda}]$ and $n_{\lambda} = [N : \Theta_{\lambda}]$. By the previous lemma, we get the following commutative diagram:

\begin{equation*}
\xymatrix{ & 0 \ar[d] & 0 \ar[d] & 0 \ar[d] & \\
0 \ar[r] & \Omega _{\Theta}(L) \ar[r] \ar[d] & \Omega _{\Theta}(M) \ar[r] \ar[d] & \Omega _{\Theta}(N) \ar[r] \ar[d] & 0 \\
0 \ar[r] & \bigoplus _{h(\lambda) = i} Q_{\lambda}^{\oplus l_{\lambda}} \ar[r] \ar[d] & \bigoplus _{h(\lambda) = i} Q_{\lambda} ^{\oplus m_{\lambda}} \ar[r] \ar[d] & \bigoplus _{h(\lambda) = i} Q_{\lambda} ^{\oplus n_{\lambda}} \ar[r] \ar[d] & 0 \\
0 \ar[r] & L \ar[r] \ar[d] & M \ar[r] \ar[d] & N \ar[r] \ar[d] & 0 \\
 & 0 & 0 & 0 &}
\end{equation*}
Since $\Omega _{\Theta} (L)$ is generated in height $i+1$, by Proposition 5.2.3, $\Omega _{\Theta} (M)$ is generated in height $i+1$ if and only if $\Omega _{\Theta} (N)$ is generated in height $i+1$. Replacing $L$, $M$ and $N$ by $\Omega _{\Theta} (L)$, $\Omega _{\Theta}(M)$ and $\Omega _{\Theta} (N)$ respectively, we conclude that $\Omega _{\Theta}^2 (M)$ is generated in height $i+2$ if and only if $\Omega _{\Theta}^2 (N)$ is generated in height $i+2$. The conclusion follows from induction.
\end{proof}

The following corollary is an analogue of Proposition 2.1.9.

\begin{corollary}
Suppose that $M \in \mathcal{F} (\Theta)$ is generated in height $i$ and linearly filtered. If $\bigoplus _{h(\lambda) = i} \Theta_{\lambda}$ is linearly filtered, then $M[1]$ is generated in height $i+1$ and linearly filtered.
\end{corollary}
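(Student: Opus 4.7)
The plan is to imitate the argument of Proposition 2.1.9 in the relative setting. Write $T_i = \bigoplus_{h(\lambda)=i} \Theta_\lambda^{\oplus m_\lambda}$ where $m_\lambda = [M:\Theta_\lambda]$, so that the defining sequence (5.2.1) reads
\begin{equation*}
0 \longrightarrow M[1] \longrightarrow M \stackrel{f}{\longrightarrow} T_i \longrightarrow 0.
\end{equation*}
Since $\bigoplus_{h(\lambda)=i}\Theta_\lambda$ is linearly filtered and $\mathcal{LF}(\Theta)$ is closed under summands and arbitrary direct sums (via direct sums of relative projective resolutions), the module $T_i$ is linearly filtered; in particular it is generated in height $i$.

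First I would identify a common relative projective cover. Because $M$ is generated in height $i$, $\Top(M) \cong \bigoplus_{h(\lambda)=i} S_\lambda^{\oplus m_\lambda}$, and by the hypothesis that each $\Theta_\mu$ has simple top $S_\mu$ the same formula holds for $\Top(T_i)$. The surjection $f$ induces a surjection on tops between spaces of equal dimension, hence an isomorphism on tops. By Lemma 5.2.4 both $M$ and $T_i$ have relative projective cover $Q^i = \bigoplus_{h(\lambda)=i} Q_\lambda^{\oplus m_\lambda}$, and we may choose the covers so that the resulting square
\begin{equation*}
\xymatrix{
0 \ar[r] & \Omega_\Theta(M) \ar[r] \ar[d]^{\alpha} & Q^i \ar[r] \ar@{=}[d] & M \ar[r] \ar[d]^{f} & 0 \\
0 \ar[r] & \Omega_\Theta(T_i) \ar[r] & Q^i \ar[r] & T_i \ar[r] & 0
}
\end{equation*}
commutes. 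The snake lemma then yields an exact sequence
\begin{equation*}
0 \longrightarrow \Omega_\Theta(M) \stackrel{\alpha}{\longrightarrow} \Omega_\Theta(T_i) \longrightarrow M[1] \longrightarrow 0,
\end{equation*}
all three terms of which lie in $\mathcal{F}(\Theta)$.

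Next I would feed this sequence into Propositions 5.2.3 and 5.2.6. Because $M$ is linearly filtered, $\Omega_\Theta(M)$ is generated in height $i+1$; because $T_i$ is linearly filtered, so is $\Omega_\Theta(T_i)$, which is therefore also generated in height $i+1$. Proposition 5.2.3 applied to the above exact sequence (where the middle term is generated in height $i+1$) forces the quotient $M[1]$ to be generated in height $i+1$ as well, giving the first half of the conclusion. With all three terms now generated in the same height and with the left term $\Omega_\Theta(M)$ linearly filtered, Proposition 5.2.6 shows that the middle term $\Omega_\Theta(T_i)$ being linearly filtered implies the right term $M[1]$ is linearly filtered, which finishes the proof.

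The main subtlety is ensuring that Proposition 5.2.6 really applies, which requires all three terms of the snake-lemma sequence to be generated in the \emph{same} height $i+1$; this is why the hypothesis that $\bigoplus_{h(\lambda)=i}\Theta_\lambda$ is linearly filtered is crucial, since without it there is no reason for $\Omega_\Theta(T_i)$ to be generated in height $i+1$, and the sequence would be incompatible with the height hypotheses of Propositions 5.2.3 and 5.2.6.
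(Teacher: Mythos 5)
Your proposal is correct and follows essentially the same route as the paper: both identify the common relative projective cover $\bigoplus_{h(\lambda)=i} Q_\lambda^{\oplus m_\lambda}$ of $M$ and $\bigoplus_{h(\lambda)=i}\Theta_\lambda^{\oplus m_\lambda}$, extract the exact sequence $0 \to \Omega_\Theta(M) \to \Omega_\Theta\bigl(\bigoplus_{h(\lambda)=i}\Theta_\lambda^{\oplus m_\lambda}\bigr) \to M[1] \to 0$, and then apply Propositions 5.2.3 and 5.2.6. Your explicit verification that all three terms are generated in height $i+1$ is exactly the point the paper relies on as well.
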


\begin{proof}
Clearly $\bigoplus _{h(\lambda) = i} \Theta_{\lambda}$ is generated in height $i$. Let $m_{\lambda} = [M : \Theta_{\lambda}]$. Notice that both $M$ and $\bigoplus _{h(\lambda) = i} \Theta_{\lambda} ^{\oplus m_{\lambda}}$ have projective cover $\bigoplus _{h(\lambda) = i} Q_{\lambda} ^{\oplus m_{\lambda}}$. Thus the exact sequence
\begin{equation*}
\xymatrix{ 0 \ar[r] & M[1] \ar[r] & M \ar[r] & \bigoplus _{h(\lambda) = i} \Theta_{\lambda} ^{\oplus m_{\lambda}} \ar[r] & 0}
\end{equation*}
induces the following diagram:
\begin{equation*}
\xymatrix{ & \Omega _{\Theta}(M) \ar@{^{(}->} [r] \ar[d] & \Omega _{\Theta} \Big{(} \bigoplus _{h(\lambda) = i} \Theta_{\lambda} ^{\oplus m_{\lambda}} \Big{)} \ar@{->>}[r] \ar[d] & M[1] \\
& \bigoplus _{h(\lambda) = i} Q_{\lambda} ^{\oplus m_{\lambda}} \ar@{=}[r] \ar[d] & \bigoplus _{h(\lambda) = i} Q_{\lambda} ^{\oplus m_{\lambda}} \ar[d] \\
M[1] \ar@{^{(}->}[r] & M \ar@{->>}[r] & \bigoplus _{h(\lambda) = i} \Theta_{\lambda} ^{\oplus m_{\lambda}}.}
\end{equation*}

Consider the top sequence. Since both $\Omega _{\Theta} (\bigoplus _{h(\lambda) = i} \Theta_{\lambda} ^{\oplus m_{\lambda}})$ and $\Omega _{\Theta} (M)$ are generated in height $i+1$ and linearly filtered, $M[1]$ is also generated in height $i+1$ and linearly filtered by Propositions 5.2.3 and 5.2.6.
\end{proof}

These results tell us that linearly filtered modules have properties similar to those of linear modules of graded algebras.

\begin{lemma}
Let $M \in \mathcal{F} (\Theta)$ be generated in height $i$ and $m_{\lambda} = [M : \Theta_{\lambda}]$. If $\Hom _A (Q, \Theta) \cong \Hom _A (\Theta, \Theta)$ as $\End _A (\Theta)$-modules, then
\begin{equation*}
\Hom _A (M, \Theta) \cong \Hom _A (\bigoplus _{h(\lambda) = i} Q_{\lambda} ^{\oplus m_{\lambda}}, \Theta) \cong \Hom _A (\bigoplus _{h(\lambda) = i} \Theta_{\lambda} ^{\oplus m_{\lambda}}, \Theta).
\end{equation*}
\end{lemma}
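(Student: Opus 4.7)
The plan is to exploit the two exact sequences available for $M$: the sequence $0 \to \Omega_{\Theta}(M) \to Q^i \to M \to 0$ arising from the relative projective cover $Q^i = \bigoplus_{h(\lambda)=i} Q_{\lambda}^{\oplus m_{\lambda}}$ (Lemma 5.2.4), and the sequence $0 \to M[1] \to M \to \bigoplus_{h(\lambda)=i} \Theta_{\lambda}^{\oplus m_{\lambda}} \to 0$ (Proposition 5.2.1), together with the key observation that these two fit into a commuting diagram in which the composition $Q^i \twoheadrightarrow M \twoheadrightarrow \bigoplus_{h(\lambda)=i} \Theta_{\lambda}^{\oplus m_{\lambda}}$ coincides (up to isomorphism) with the sum of canonical surjections $\pi := \bigoplus \pi_{\lambda}^{\oplus m_{\lambda}}$, where $\pi_{\lambda}\colon Q_{\lambda} \twoheadrightarrow \Theta_{\lambda}$ is the canonical surjection in the EPSS.

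First I would handle the second isomorphism, which is essentially bookkeeping. The hypothesis $\Hom_A(Q,\Theta) \cong \Hom_A(\Theta,\Theta)$ as $\End_A(\Theta)$-modules, read componentwise via the idempotents corresponding to $\lambda \in \Lambda$, forces $\Hom_A(Q_{\lambda},\Theta) \cong \Hom_A(\Theta_{\lambda},\Theta)$ for each $\lambda$, with the isomorphism induced by precomposition with $\pi_{\lambda}$. Taking direct sums over $h(\lambda)=i$ with multiplicities $m_{\lambda}$ yields
\begin{equation*}
\Hom_A\bigl(\textstyle\bigoplus_{h(\lambda)=i} Q_{\lambda}^{\oplus m_{\lambda}}, \Theta\bigr) \;\cong\; \Hom_A\bigl(\textstyle\bigoplus_{h(\lambda)=i} \Theta_{\lambda}^{\oplus m_{\lambda}}, \Theta\bigr),
\end{equation*}
realized by precomposition with $\pi$.

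Next I would attack the first isomorphism. Applying $\Hom_A(-,\Theta)$ to $0 \to \Omega_{\Theta}(M) \to Q^i \to M \to 0$ gives an injection $\Hom_A(M,\Theta) \hookrightarrow \Hom_A(Q^i,\Theta)$ by precomposition with the projective cover map $p\colon Q^i \twoheadrightarrow M$. To prove surjectivity, take any $\phi \in \Hom_A(Q^i,\Theta)$. By the second isomorphism just established, $\phi = \bar{\phi} \circ \pi$ for some $\bar{\phi}\colon \bigoplus_{h(\lambda)=i} \Theta_{\lambda}^{\oplus m_{\lambda}} \to \Theta$. Since $M$ is generated in height $i$, the surjection $p$ fits into the commutative triangle $\pi = q \circ p$, where $q\colon M \twoheadrightarrow \bigoplus_{h(\lambda)=i} \Theta_{\lambda}^{\oplus m_{\lambda}}$ is the surjection of Proposition 5.2.1; this commutativity holds because $Q^i$ is the common relative projective cover of both $M$ and $\bigoplus_{h(\lambda)=i} \Theta_{\lambda}^{\oplus m_{\lambda}}$, and the projective cover factorization is unique up to isomorphism. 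Hence $\phi = (\bar{\phi} \circ q) \circ p$, exhibiting $\phi$ as a precomposition of a map in $\Hom_A(M,\Theta)$ with $p$. Combining injectivity and surjectivity, $\Hom_A(M,\Theta) \cong \Hom_A(Q^i,\Theta)$.

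The only subtle point I foresee is verifying that $\pi = q \circ p$ at the level of the canonical maps, not merely up to some automorphism of $\bigoplus_{h(\lambda)=i} \Theta_{\lambda}^{\oplus m_{\lambda}}$; this is the main obstacle, but it is essentially a restatement of the uniqueness of the relative projective cover (Proposition 8.3 in \cite{Webb1}), which is already invoked in the proof of Lemma 5.2.4. Any automorphism ambiguity is absorbed into $\bar{\phi}$ and does not affect the factorization argument, so the composition of the two isomorphisms gives the desired conclusion $\Hom_A(M,\Theta) \cong \Hom_A(\bigoplus_{h(\lambda)=i}\Theta_{\lambda}^{\oplus m_{\lambda}}, \Theta)$ realized by precomposition with $q$.
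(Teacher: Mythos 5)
Your argument for the first isomorphism is essentially sound (and close in spirit to the paper, which simply sandwiches $\Hom_A(M,\Theta)$ between the two injections coming from the surjections $Q^i \twoheadrightarrow M$ and $M \twoheadrightarrow \bigoplus_{h(\lambda)=i}\Theta_\lambda^{\oplus m_\lambda}$ and compares dimensions), but the step you dismiss as ``bookkeeping'' is where the real content lies, and your justification of it fails. The idempotents of $\End_A(\Theta)$ act on $\Hom_A(Q,\Theta)$ and on $\Hom_A(\Theta,\Theta)$ by \emph{post}-composition, so reading the hypothesis componentwise via these idempotents only yields $\Hom_A(Q,\Theta_\lambda)\cong\Hom_A(\Theta,\Theta_\lambda)$ for each $\lambda$, a decomposition in the target variable. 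The decompositions you actually need, $\Hom_A(Q,\Theta)=\bigoplus_\lambda\Hom_A(Q_\lambda,\Theta)$ and $\End_A(\Theta)=\bigoplus_\lambda\Hom_A(\Theta_\lambda,\Theta)$, are decompositions of the source variable into indecomposable left $\End_A(\Theta)$-modules, and the abstract isomorphism of the hypothesis need not respect them: Krull--Schmidt only gives $\Hom_A(Q_\lambda,\Theta)\cong\Hom_A(\Theta_{\sigma(\lambda)},\Theta)$ for some permutation $\sigma$ of $\Lambda$. Since the lemma asserts the isomorphism with the \emph{same} indices and multiplicities at height $i$, one must prove $\sigma=\mathrm{id}$ (or at least that $\sigma$ preserves heights and multiplicities). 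The paper does exactly this: for $\lambda$ maximal one has $Q_\lambda\cong\Theta_\lambda$; then, since $\End_A(\Theta)$ is basic, repetitions are excluded, and $\Hom_A(Q_\lambda,\Theta)$ contains a surjection onto the summand $\Theta_\mu$ of $\Theta$ if and only if $\mu=\lambda$ (using the simple tops and the $\Theta$-filtration of $Q_\lambda$), which pins down the matching by downward induction. This argument, or a substitute for it, is missing from your proposal, and without it the second isomorphism --- and hence the whole lemma --- is not established.

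A related smaller point: your claim that the componentwise isomorphism is ``induced by precomposition with $\pi_\lambda$'' is not automatic either; $\pi_\lambda^*$ is injective because $\pi_\lambda$ is onto, and it becomes an isomorphism only once you know the two $\Hom$-spaces have the same finite dimension, i.e.\ only after the index-matching above. Granting that, your factorization argument for the first isomorphism does work: $q\circ p$ and $\pi$ differ by an automorphism of $Q^i$ (by relative projectivity plus minimality of the cover), and that automorphism is harmlessly absorbed, as you say. But note that once the dimensions are known to agree, the explicit factorization is unnecessary: the chain of injections $\Hom_A(\bigoplus_{h(\lambda)=i}\Theta_\lambda^{\oplus m_\lambda},\Theta)\hookrightarrow\Hom_A(M,\Theta)\hookrightarrow\Hom_A(\bigoplus_{h(\lambda)=i}Q_\lambda^{\oplus m_\lambda},\Theta)$ together with equality of the outer dimensions already forces all three spaces to be isomorphic, which is how the paper concludes.
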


\begin{proof}
We claim that $\Hom_A (Q, \Theta) \cong \End _A (\Theta)$ implies $\Hom_A (Q_{\lambda}, \Theta) \cong \Hom _A (\Theta_{\lambda}, \Theta)$ for every $\lambda \in \Lambda$. Then the second isomorphism follows immediately. First, note that $\End _A (\Theta)$ is a basic algebra with $n$ non-zero indecomposable summands $\Hom _A (\Theta_{\lambda}, \Theta)$, $\lambda \in \Lambda$, where $n$ is the cardinal number of $\Lambda$. But $\Hom _A (Q, \Theta) \cong \bigoplus _{\lambda \in \Lambda} \Hom _A (Q_{\lambda}, \Theta)$ has at least $n$ non-zero indecomposable summands. If $\Hom_A (Q, \Theta) \cong \End _A (\Theta)$, by Krull-Schmidt theorem, $\Hom _A (Q_{\lambda}, \Theta)$ must be indecomposable, and is isomorphic to some $\Hom _A (\Theta_{\mu}, \Theta)$.

If $\lambda \in \Lambda$ is maximal, $\Hom _A (\Theta_{\lambda}, \Theta) \cong \Hom _A (Q_{\lambda}, \Theta)$ since $Q_{\lambda} \cong \Theta_{\lambda}$. Define $\Lambda_1$ to be the subset of maximal elements in $\Lambda$ and consider $\lambda_1 \in \Lambda \setminus \Lambda_1$ which is maximal. We have
\begin{equation*}
\Hom _A (Q_{\lambda_1}, \Theta) \ncong \Hom _A (\Theta_{\lambda}, \Theta) \cong \Hom _A (Q_{\lambda}, \Theta)
\end{equation*}
for every $\lambda \in \Lambda_1$ since $\End _A (\Theta)$ is a basic algebra. Therefore, $\Hom _A (Q_{\lambda_1}, \Theta)$ must be isomorphic to some $\Hom _A (\Theta_{\mu}, \Theta)$ with $\mu \in \Lambda \setminus \Lambda_1$. But $\Hom _A (\Theta_{\mu}, \Theta)$ contains a surjection from $\Theta_{\mu}$ to the direct summand $\Theta_{\mu}$ of $\Theta$, and $\Hom _A (Q _{\lambda_1}, \Theta)$ contains a surjection from $Q_{\lambda_1}$ to $\Theta_{\mu}$ if and only if $\lambda_1 = \mu$. Thus we get $\lambda_1 = \mu$. Repeating the above process, we have $\Hom_A (Q_{\lambda}, \Theta) \cong \Hom _A (\Theta_{\lambda}, \Theta)$ for every $\lambda \in \Lambda$.

Applying $\Hom _A (-, \Theta)$ to the surjection $M \rightarrow \bigoplus _{h(\lambda) = i} \Theta_{\lambda} ^{\oplus m_{\lambda}}$ we get
\begin{equation*}
\Hom _A (\bigoplus _{h(\lambda) = i} \Theta_{\lambda} ^{\oplus m_{\lambda}}, \Theta) \subseteq \Hom _A (M, \Theta).
\end{equation*}
Similarly, from the relative projective covering map $\bigoplus _{h(\lambda) = i} Q_{\lambda} ^{\oplus m_{\lambda}} \rightarrow M$ we have
\begin{equation*}
\Hom _A (M, \Theta) \subseteq \Hom _A (\bigoplus _{h(\lambda) = i} Q_{\lambda} ^{\oplus m_{\lambda}}, \Theta).
\end{equation*}
Comparing these two inclusions and using the second isomorphism, we deduce the first isomorphism.
\end{proof}

The reader may aware that the above lemma is an analogue to the following result in representation theory of graded algebras: if $A$ is a graded algebra and $M$ is a graded module generated in degree 0, then $\Hom _A (M, A_0) \cong \Hom _A (M_0, A_0)$.

\begin{lemma}
Suppose that $\Hom _A (Q, \Theta) \cong \Hom _A (\Theta, \Theta)$. If $M \in \mathcal{F} (\Theta)$ is generated in height $i$, then $\Ext _A^s (M, \Theta) \cong \Ext _A^{s-1} (\Omega _{\Theta}(M), \Theta)$ for all $s \geqslant 1$.
\end{lemma}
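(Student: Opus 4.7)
The plan is to imitate the strategy used for Lemma 2.1.11, replacing ordinary projective covers by relative projective covers in $\mathcal{F}(\Theta)$, and using the hypothesis $\Ext^{\ast}_A(Q,\Theta)=0$ in positive degrees to kill the intermediate $\Ext$-terms that would otherwise obstruct the induction.

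First, since $M$ is generated in height $i$, Lemma 5.2.4 gives a relative projective cover $Q^0 = \bigoplus_{h(\lambda)=i} Q_\lambda^{\oplus m_\lambda}$ and a short exact sequence
\begin{equation*}
\xymatrix{ 0 \ar[r] & \Omega_\Theta(M) \ar[r] & Q^0 \ar[r]^-p & M \ar[r] & 0,}
\end{equation*}
with $\Omega_\Theta(M)\in\mathcal{F}(\Theta)$. Applying $\Hom_A(-,\Theta)$ yields the long exact sequence
\begin{equation*}
\xymatrix@C=14pt{ 0 \ar[r] & \Hom_A(M,\Theta) \ar[r]^-{p^{\ast}} & \Hom_A(Q^0,\Theta) \ar[r] & \Hom_A(\Omega_\Theta(M),\Theta) \ar[r] & \Ext^1_A(M,\Theta) \ar[r] & \cdots}
\end{equation*}
Because $Q^0\in\mathrm{add}(Q)$, the standing assumption $\Ext^s_A(Q,\Theta)=0$ for $s\geqslant 1$ forces $\Ext^s_A(Q^0,\Theta)=0$ for every $s\geqslant 1$.

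For $s\geqslant 2$ the two flanking terms $\Ext^{s-1}_A(Q^0,\Theta)$ and $\Ext^s_A(Q^0,\Theta)$ in the long exact sequence vanish, so the connecting homomorphism gives
\begin{equation*}
\Ext^{s-1}_A(\Omega_\Theta(M),\Theta)\;\cong\;\Ext^s_A(M,\Theta),
\end{equation*}
which is exactly what is required.

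The case $s=1$ is the only subtle point and is where Lemma 5.2.8 enters. From the long exact sequence, the vanishing $\Ext^1_A(Q^0,\Theta)=0$ already yields surjectivity of $\Hom_A(\Omega_\Theta(M),\Theta)\to\Ext^1_A(M,\Theta)$; injectivity is equivalent to showing that $p^{\ast}:\Hom_A(M,\Theta)\to\Hom_A(Q^0,\Theta)$ is an isomorphism. By Lemma 5.2.8, applied on the one hand to $M$ (generated in height $i$ with multiplicities $m_\lambda$) and on the other hand to $Q^0$ itself (also generated in height $i$ with the same multiplicities), both sides are canonically isomorphic to $\Hom_A\bigl(\bigoplus_{h(\lambda)=i}\Theta_\lambda^{\oplus m_\lambda},\Theta\bigr)$, and under these identifications $p^{\ast}$ corresponds to the identity. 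Thus $p^{\ast}$ is an isomorphism, the connecting map $\Hom_A(\Omega_\Theta(M),\Theta)\to\Ext^1_A(M,\Theta)$ is therefore injective as well, and the desired isomorphism in degree $1$ follows.

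The main obstacle is the $s=1$ step: one must argue carefully that the two instances of the isomorphism in Lemma 5.2.8 are compatible under the projection $p$, i.e.\ that the identification is natural enough to force $p^{\ast}$ itself to be an isomorphism (not merely that its source and target have equal dimensions). The rest of the argument is a routine long-exact-sequence chase leaning on the vanishing $\Ext^{\geqslant 1}_A(Q,\Theta)=0$.
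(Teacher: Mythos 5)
Your proposal is correct and follows essentially the same route as the paper: apply $\Hom_A(-,\Theta)$ to the relative projective cover sequence $0 \rightarrow \Omega_{\Theta}(M) \rightarrow Q^0 \rightarrow M \rightarrow 0$, use $\Ext_A^{s}(Q,\Theta)=0$ for $s \geqslant 1$ to settle the case $s \geqslant 2$, and invoke Lemma 5.2.8 to handle $s=1$. The naturality issue you flag is already resolved by how Lemma 5.2.8 is proved there: the isomorphism $\Hom_A(M,\Theta)\cong\Hom_A\bigl(\bigoplus_{h(\lambda)=i}Q_{\lambda}^{\oplus m_{\lambda}},\Theta\bigr)$ is exactly the inclusion $p^{\ast}$ induced by the covering map, so $p^{\ast}$ being an isomorphism is the literal content of that lemma rather than something needing a separate compatibility check.
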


\begin{proof}
Let $m_{\lambda} = [M: \Theta_{\lambda}]$. Applying $\Hom _A (-, \Theta)$ to the exact sequence
\begin{equation*}
\xymatrix{ 0 \ar[r] & \Omega _{\Theta} (M) \ar[r] & \bigoplus _{h(\lambda) = i} Q_{\lambda} ^{\oplus m_{\lambda}} \ar[r] & M \ar[r] & 0}
\end{equation*}
we get a long exact sequence. In particular, for all $s \geqslant 2$, by observing the segment
\begin{align*}
& 0 = \Ext _A^{s-1} (\bigoplus _{h(\lambda) = i} Q ^{\oplus m_{\lambda}}, \Theta) \rightarrow \Ext _A^{s-1} (\Omega _{\Theta} (M), \Theta) \\
& \rightarrow \Ext _A^s (M, \Theta) \rightarrow \Ext _A^s (\bigoplus _{h(\lambda) = i} Q_{\lambda} ^{\oplus m_{\lambda}}, \Theta) = 0
\end{align*}
we conclude $\Ext _A^{s-1} (\Omega _{\Theta}(M), \Theta) \cong \Ext _A^s (M, \Theta)$.

For $s=1$, we have
\begin{align*}
& 0 \rightarrow \Hom _A (M, \Theta) \rightarrow \Hom _A (\bigoplus _{h(\lambda) = i} Q_{\lambda} ^{\oplus m_{\lambda}}, \Theta)\\
& \rightarrow \Hom _A (\Omega _{\Theta} (M), \Theta) \rightarrow \Ext _A^1 (M, \Theta) \rightarrow 0.
\end{align*}
By the previous lemma, the first inclusion is an isomorphism. Thus $\Ext _A^1 (M, \Theta) \cong \Hom _A (\Omega _{\Theta} (M), \Theta)$.
\end{proof}

The following proposition tells us that if all standard modules are linearly filtered, then the extension algebra is generated in degrees 0 and 1.

\begin{proposition}
Suppose that $\Hom _A (Q, \Theta) \cong \Hom _A (\Theta, \Theta)$, and $\Theta_{\lambda}$ are linearly filtered for all $\lambda \in \Lambda$. If $M \in \mathcal{F} (\Theta)$ is linearly filtered, then
\begin{equation*}
\Ext _A^{i+1} (M, \Theta) = \Ext _A^1 (\Theta, \Theta) \cdot \Ext _A^i (M, \Theta)
\end{equation*}
for all $i \geqslant 0$, i.e., $\Ext _A^{\ast} (M, \Theta)$ as a graded $\Gamma = \Ext _A^{\ast} (\Theta, \Theta)$-module is generated in degree 0.
\end{proposition}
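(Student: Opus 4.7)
The plan is to imitate the proof strategy of Theorem 2.1.15 (the ``only if'' direction) in the relative setting furnished by the EPSS $(\underline\Theta,\underline Q)$, with relative syzygies $\Omega_\Theta$ playing the role of graded syzygies and $\Theta$ playing the role of $A_0$.

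First, I iterate Lemma 5.2.9 to obtain the natural identifications $\Ext_A^i(M,\Theta) \cong \Hom_A(\Omega_\Theta^i M,\Theta)$ and $\Ext_A^{i+1}(M,\Theta) \cong \Ext_A^1(\Omega_\Theta^i M,\Theta)$, under which the Yoneda multiplication $\Ext_A^1(\Theta,\Theta) \cdot \Ext_A^i(M,\Theta) \to \Ext_A^{i+1}(M,\Theta)$ is realised as pullback of extensions of $\Theta$ by $\Theta$ along homomorphisms $\Omega_\Theta^i M \to \Theta$. Because every relative syzygy of a linearly filtered module is again linearly filtered, the proposition reduces to the single identity
\begin{equation*}
\Ext_A^1(M,\Theta) \;=\; \Ext_A^1(\Theta,\Theta) \cdot \Hom_A(M,\Theta)
\end{equation*}
for an arbitrary linearly filtered $M$.

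Now fix such an $M$, generated in height $i$ with $m_\lambda = [M:\Theta_\lambda]$, and set $T = \bigoplus_{h(\lambda)=i}\Theta_\lambda^{\oplus m_\lambda}$. By Lemma 5.2.4 both $M$ and $T$ share the relative projective cover $Q = \bigoplus_{h(\lambda)=i} Q_\lambda^{\oplus m_\lambda}$, giving a canonical inclusion $\Omega_\Theta(M) \subseteq \Omega_\Theta(T) \subseteq Q$ with quotient $M[1]$, together with the natural surjection $\pi: M \twoheadrightarrow T$. Given $\xi \in \Ext_A^1(M,\Theta)$ represented via Lemma 5.2.9 by $g:\Omega_\Theta(M)\to\Theta$, the central claim is that $g$ extends to a homomorphism $\tilde g:\Omega_\Theta(T)\to\Theta$. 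Granting this, pushing out $0\to \Omega_\Theta(T)\to Q\to T\to 0$ along $\tilde g$ yields $\eta\in\Ext_A^1(T,\Theta)$ with $\pi^*\eta = \xi$ by functoriality of pushouts and pullbacks. Because $T$ is a direct sum of summands of $\Theta$, $\eta$ decomposes as $\eta = \sum_j y_j\cdot \mathrm{pr}_j$ with $y_j \in \Ext_A^1(\Theta_{\lambda_j},\Theta) \subseteq \Ext_A^1(\Theta,\Theta)$ and summand projections $\mathrm{pr}_j:T\to\Theta_{\lambda_j}$. Consequently $\xi = \pi^*\eta = \sum_j y_j\cdot(\mathrm{pr}_j\circ\pi)$ lies in $\Ext_A^1(\Theta,\Theta)\cdot\Hom_A(M,\Theta)$, as desired.

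The hard part will be producing the extension $\tilde g$, which is the analog of Lemma 2.1.16 in this relative framework. The short exact sequence $0\to \Omega_\Theta(M)\to \Omega_\Theta(T)\to M[1]\to 0$ induces
\begin{equation*}
\Hom_A(\Omega_\Theta(T),\Theta) \longrightarrow \Hom_A(\Omega_\Theta(M),\Theta) \longrightarrow \Ext_A^1(M[1],\Theta),
\end{equation*}
so the extension exists precisely when the image of $g$ in $\Ext_A^1(M[1],\Theta)$ vanishes. To establish this I plan to exploit that $\Omega_\Theta(M)$ and $\Omega_\Theta(T) = \bigoplus\Omega_\Theta(\Theta_\lambda)^{\oplus m_\lambda}$ are both generated in height $i+1$ (by linear filteredness of $M$ and of each $\Theta_\lambda$ respectively). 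By Lemma 5.2.8 applied at height $i+1$, $g$ factors through the top layer of $\Omega_\Theta(M)$, which is a direct sum of standard modules $\Theta_\mu$ with $h(\mu)=i+1$; a parallel factorisation for $\tilde g$ through the analogous top layer of $\Omega_\Theta(T)$ is then produced summand by summand, the hypothesis $\Hom_A(Q,\Theta)\cong\Hom_A(\Theta,\Theta)$ being used to ensure these top-level factorisations are compatible with the ambient inclusion $\Omega_\Theta(M)\subseteq\Omega_\Theta(T)$. This splitting-at-the-top step is the technical heart of the argument, and is precisely where the linear filteredness hypothesis on each $\Theta_\lambda$ is used essentially.
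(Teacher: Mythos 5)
Your reduction to the case $i=0$ via Lemma 5.2.9, the setup $T=\bigoplus_{h(\lambda)=i}\Theta_\lambda^{\oplus m_\lambda}$ with common relative projective cover $Q$, the exact sequence $0\rightarrow \Omega_\Theta(M)\rightarrow\Omega_\Theta(T)\rightarrow M[1]\rightarrow 0$, and the concluding pushout--pullback argument expressing $\xi$ as $\sum_j y_j\cdot(\mathrm{pr}_j\circ\pi)$ all coincide with the paper's proof of this proposition. However, at what you yourself call the technical heart -- the existence of the extension $\tilde g:\Omega_\Theta(T)\rightarrow\Theta$ of $g$ -- you offer only a plan, and the mechanism you describe ("a parallel factorisation for $\tilde g$ through the analogous top layer of $\Omega_\Theta(T)$ is then produced summand by summand, the hypothesis $\Hom_A(Q,\Theta)\cong\Hom_A(\Theta,\Theta)$ being used to ensure compatibility") is not an argument: the compatibility of the two top-layer factorisations with the inclusion $\Omega_\Theta(M)\subseteq\Omega_\Theta(T)$ is precisely what has to be proved, and the hypothesis $\Hom_A(Q,\Theta)\cong\Hom_A(\Theta,\Theta)$ does not by itself supply it (its actual role is to make Lemmas 5.2.8 and 5.2.9 available).

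The paper closes this step concretely, and you are missing its two ingredients. First, not only $\Omega_\Theta(M)$ and $\Omega_\Theta(T)$ but also the quotient $M[1]$ is generated in height $i+1$ and linearly filtered (Corollary 5.2.7, using that every $\Theta_\lambda$ is linearly filtered) -- a fact absent from your sketch, yet it is what makes $\Hom_A(M[1],\Theta)$ computable from its top layer. Second, the filtration multiplicities at height $i+1$ are additive, $[\Omega_\Theta(M):\Theta_\lambda]+[M[1]:\Theta_\lambda]=[\Omega_\Theta(T):\Theta_\lambda]$, so the induced sequence of height-$(i+1)$ standard layers is a split short exact sequence (there are no extensions between standard modules of equal height). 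Applying $\Hom_A(-,\Theta)$ to this split sequence and using Lemma 5.2.8 for all three terms identifies the result with $0\rightarrow\Hom_A(M[1],\Theta)\rightarrow\Hom_A(\Omega_\Theta(T),\Theta)\rightarrow\Hom_A(\Omega_\Theta(M),\Theta)\rightarrow 0$, and the right-hand surjectivity is exactly the statement that every $g$ extends to $\tilde g$ (equivalently, that your obstruction in $\Ext_A^1(M[1],\Theta)$ vanishes). Until you supply an argument of this kind, the proof is incomplete at its decisive point.
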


\begin{proof}
The proof is similar to that of Theorem 2.1.16. For the sake of completeness we include it here.

Suppose that $M$ is generated in height $d$ and linearly filtered. By Lemma 5.2.9,
\begin{equation*}
\Ext _A^{i+1} (M, \Theta) \cong \Ext _A^i (\Omega _{\Theta} (M), \Theta).
\end{equation*}
But $\Omega _{\Theta}$ is generated in height $d+1$ and linearly filtered. Thus by induction
\begin{equation*}
\Ext _A^{i+1} (M, \Theta) \cong \Ext _A^1 (\Omega^i _{\Theta} (M), \Theta), \quad \Ext _A^i (M, \Theta) \cong \Hom _A (\Omega^i _{\Theta} (M), \Theta).
\end{equation*}
Therefore, it suffices to show
\begin{equation*}
\Ext _A^1 (M, \Theta) = \Ext _A^1 (\Theta, \Theta) \cdot \Hom _A (M, \Theta)
\end{equation*}
since we can replace $M$ by $\Omega _{\Theta} ^i (M)$, which is linearly filtered as well.\

Let $m_{\lambda} = [M: \Theta_{\lambda}]$ and define $Q^0 = \bigoplus _{h(\lambda) = d} Q_{\lambda} ^{\oplus m_{\lambda}}$, $M_0 = \bigoplus _{h(\lambda) = d} \Theta_{\lambda} ^{\oplus m_{\lambda}}$. We have the following commutative diagram:
\begin{equation*}
\xymatrix {0 \ar[r] & \Omega _{\Theta} (M) \ar@{=}[d] \ar[r] & Q^0[1] \ar[r] \ar[d] & M[1] \ar[d] \ar[r] & 0\\
0 \ar[r] & \Omega _{\Theta} (M) \ar[r] & Q^0 \ar[r] \ar[d] & M \ar[d] \ar[r] & 0\\
 & & M_0 \ar@{=}[r] & M_0}
\end{equation*}
where $Q^0[1] = \Omega _{\Theta} (M_0)$, see Proposition 5.2.1.

Observe that all terms in the top sequence are generated in height $d+1$ and linearly filtered. For every $\lambda \in \Lambda$ with $h(\lambda) = d+1$, we have
\begin{equation*}
[\Omega _{\Theta} (M): \Theta_{\lambda}] + [M[1]: \Theta_{\lambda}] = [Q^0[1]: \Theta_{\lambda}].
\end{equation*}
Let $r_{\lambda}, s_{\lambda}$ and $t_{\lambda}$ be the corresponding numbers in the last equality. Then we get a split short exact sequence
\begin{equation*}
\xymatrix{ 0 \ar[r] & \bigoplus _{h(\lambda) = d+1} \Theta_{\lambda} ^{\oplus r_{\lambda}} \ar[r] & \bigoplus _{h(\lambda) = d+1} \Theta_{\lambda} ^{\oplus t_{\lambda}} \ar[r] & \bigoplus _{h(\lambda) = d+1} \Theta_{\lambda} ^{\oplus s_{\lambda}} \ar[r] & 0}.
\end{equation*}
Applying $\Hom _A (-, \Theta)$ to this sequence and using Lemma 5.2.8, we obtain the exact sequence
\begin{equation*}
0 \rightarrow \Hom _A (M[1], \Theta) \rightarrow \Hom _A (Q^0[1], \Theta) \rightarrow \Hom _A (\Omega _{\Theta} (M), \Theta) \rightarrow 0.
\end{equation*}
Therefore, each map $\Omega _{\Theta} (M) \rightarrow \Theta$ can extend to a map $Q^0[1] \rightarrow \Theta$.

To prove $\Ext_A^1 (M, \Theta) = \Ext _A^1 (\Theta, \Theta) \cdot \Hom _A (M, \Theta)$, by Lemma 5.2.9 we first identify $\Ext _A^1 (M, \Theta)$ with $\Hom _A (\Omega _{\Theta} (M), \Theta)$. Take an element $x \in \Ext _A^1 (M, \Theta)$ and let $g: \Omega _{\Theta} (M) \rightarrow \Theta$ be the corresponding homomorphism. As we just showed, $g$ can extend to $Q^0[1]$, and hence there is a homomorphism $\tilde{g}: Q^0[1] \rightarrow \Theta$ such that $g = \tilde{g} \iota$, where $\iota$ is the inclusion.
\begin{align*}
\xymatrix { \Omega _{\Theta} (M) \ar[r]^{\iota} \ar[d]^g & Q^0[1] \ar[dl]^{\tilde{g}}\\
\Theta.}
\end{align*}

We have the following commutative diagram:
\begin{align*}
\xymatrix{ 0 \ar[r] & \Omega _{\Theta} (M) \ar[d]^{\iota} \ar[r] & Q^0 \ar@{=}[d] \ar[r] & M \ar[d]^p \ar[r] & 0\\
0 \ar[r] & Q^0[1] \ar[r] & Q^0 \ar[r] & M_0 \ar[r] & 0,}
\end{align*}
where $p$ is the projection of $M$ onto $M_0$. The map $\tilde{g}: Q^0[1] \rightarrow \Theta$ gives a push-out of the bottom sequence:
\begin{align*}
\xymatrix{ 0 \ar[r] & \Omega _{\Theta} (M) \ar[d]^{\iota} \ar[r] & Q^0 \ar@{=}[d] \ar[r] & M \ar[d]^p \ar[r] & 0\\
0 \ar[r] & Q^0[1] \ar[r] \ar[d] ^{\tilde{g}} & Q^0 \ar[d] \ar[r] & M_0 \ar[r] \ar@{=}[d] & 0\\
0 \ar[r] & \Theta \ar[r] & E \ar[r] & M_0 \ar[r] & 0.}
\end{align*}

Since $M_0 \cong \bigoplus _{h(\lambda) = d} \Theta_{\lambda} ^{\oplus m_{\lambda}}$, the bottom sequence represents some
\begin{equation*}
y \in \Ext _A^1 (\Theta ^{\oplus m}, \Theta) = \bigoplus _{i=1} ^m \Ext _A^1 (\Theta, \Theta)
\end{equation*}
where $m = \sum _{h(\lambda) = d} m_{\lambda}$. Therefore, we can write $y = y_1 + \ldots + y_m$ where $y_i \in \Ext _A^1 (\Theta, \Theta)$ is represented by the sequence
\begin{equation*}
\xymatrix{ 0 \ar[r] & \Theta \ar[r] & E_i \ar[r] & \Theta \ar[r] & 0}.
\end{equation*}
Composed with the inclusions $\epsilon_{\lambda}: \Theta_{\lambda} \rightarrow \Theta$, we get the map $(p_1, \ldots, p_m)$ where each component $p_i$ is defined in an obvious way. Consider the pull-backs:
\begin{equation*}
\xymatrix {0 \ar[r] & \Theta \ar[r] \ar@{=}[d] & F_i \ar[r] \ar[d] & M \ar[r] \ar[d]^{p_i} & 0 \\
0 \ar[r] & \Theta \ar[r] & E_i \ar[r] & \Theta \ar[r] & 0.}
\end{equation*}
Denote by $x_i$ the top sequence. Then
\begin{equation*}
x = \sum_{i=1}^m x_i = \sum_{i=1}^m y_i \cdot p_i \in \Ext _A^1 (\Theta, \Theta) \cdot \Hom _A (M, \Theta),
\end{equation*}
so $\Ext _A^1 (M, \Theta) \subseteq \Ext _A^1 (\Theta, \Theta) \cdot \Hom _A (M, \Theta)$. The other inclusion is obvious.
\end{proof}

Now we can prove the main result of this chapter.

\begin{theorem}
Let $(\underline {\Theta}, \underline{Q})$ be an EPSS indexed by a finite poset $(\Lambda, \leqslant)$ such that $\Ext_A ^i (Q, \Theta) =0$ for all $i \geqslant 1$ and $\Hom _A (Q, \Theta) \cong \Hom _A (\Theta, \Theta)$. Suppose that all $\Theta_{\lambda}$ are linearly filtered for $\lambda \in \Lambda$. If $M \in \mathcal{F} (\Theta)$ is linearly filtered, then the graded module $\Ext _A^{\ast} (M, \Theta)$ has a linear projective resolution. In particular, $\Gamma = \Ext _A^{\ast} (\Theta, \Theta)$ is a generalized Koszul algebra.
\end{theorem}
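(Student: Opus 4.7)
My plan is to adapt the proof of the classical Koszul duality (Theorem 2.3.1) to the relative setting, with linearly filtered modules playing the role of Koszul modules and the height function on $\Lambda$ replacing the usual grading. The central identity I aim to establish is $\Omega_\Gamma E(M) \cong E(M[1])[1]$ of graded $\Gamma$-modules for every linearly filtered $M$ with $\min(M) = i$. Once this holds, induction on $s$ gives $\Omega_\Gamma^s E(M) \cong E(M[s])[s]$, which is generated in degree $s$ by Proposition 5.2.10 (and $M[s]$ remains linearly filtered by iterating Corollary 5.2.7). Since the heights in $\Lambda$ are bounded, the chain $M \supseteq M[1] \supseteq \cdots$ terminates at $M[d] = 0$, producing a finite linear projective resolution of $E(M)$.

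To obtain the key SES, I would first apply the snake lemma to the relative projective presentations of $M$ and $M_0$, which share the common cover $Q^0 = \bigoplus_{h(\lambda) = i} Q_\lambda^{\oplus m_\lambda}$ by Lemma 5.2.4, giving
\begin{equation*}
0 \to \Omega_\Theta M \to \Omega_\Theta M_0 \to M[1] \to 0
\end{equation*}
with all three terms linearly filtered and generated in height $i+1$. A relative horseshoe-lemma step, using again that the middle term's relative projective cover decomposes as the direct sum of the outer ones (since height-$(i+1)$ $\Theta$-multiplicities are additive in a short exact sequence), shows that iterating $\Omega_\Theta$ preserves exactness, producing
\begin{equation*}
0 \to \Omega_\Theta^{s+1} M \to \Omega_\Theta^{s+1} M_0 \to \Omega_\Theta^s M[1] \to 0
\end{equation*}
with every term linearly filtered and generated in height $i+s+1$. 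Passing to the top height-$(i+s+1)$ layer yields a SES of direct sums of $\Theta_\lambda$'s at a common height; since distinct indices at the same height are incomparable in $(\Lambda, \leqslant)$, the EPSS axioms force $\Ext_A^1(\Theta_\nu, \Theta_\mu) = 0$ among these pieces, so this top-level SES splits.

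Applying $\Hom_A(-, \Theta)$ to the split top-level SES and invoking Lemmas 5.2.8 and 5.2.9 to translate the Hom-spaces into the appropriate Ext-groups yields, for every $s \geqslant 0$, the SES
\begin{equation*}
0 \to \Ext_A^s(M[1], \Theta) \to \Ext_A^{s+1}(M_0, \Theta) \to \Ext_A^{s+1}(M, \Theta) \to 0,
\end{equation*}
which together with the isomorphism $E^0(M_0) \cong E^0(M)$ of Lemma 5.2.8 assembles into
\begin{equation*}
0 \to E(M[1])[1] \to E(M_0) \to E(M) \to 0.
\end{equation*}
Here $E(M_0) = \bigoplus_{h(\lambda) = i}(\Gamma e_\lambda)^{\oplus m_\lambda}$ is projective and generated in degree $0$; since $E(M)$ is generated in degree $0$ by Proposition 5.2.10, this map is a projective cover, so $\Omega_\Gamma E(M) \cong E(M[1])[1]$ is generated in degree $1$, closing the induction.

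For the final \emph{in particular} statement, I would apply the first part to each $Q_\lambda$: since $\Omega_\Theta Q_\lambda = 0$ it is trivially linearly filtered, and $E(Q_\lambda) = \Hom_A(Q_\lambda, \Theta) \cong \Hom_A(\Theta_\lambda, \Theta) = \Gamma_0 e_\lambda$ using the hypothesis $\Hom_A(Q, \Theta) \cong \Hom_A(\Theta, \Theta)$, so each $\Gamma_0 e_\lambda$ admits a linear projective resolution; taking the direct sum over $\lambda$ produces one for $\Gamma_0$, which is precisely the condition for $\Gamma$ to be generalized Koszul. The hardest step will be the iterative horseshoe: verifying that the relative projective cover of the middle term genuinely decomposes as the direct sum of the outer covers at every stage, and that the resulting top-level SES splits, both of which depend on the subtle interplay between the height function, Lemma 5.2.4, and the EPSS axioms.
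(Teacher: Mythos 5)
Your proposal is correct and follows essentially the same route as the paper's proof: the key short exact sequence $0 \to \Omega_{\Theta}(M) \to \Omega_{\Theta}(M_0) \to M[1] \to 0$, its iteration to syzygies, the splitting at the top height layer, the translation via Lemmas 5.2.8 and 5.2.9 into $0 \to E(M[1])\langle 1\rangle \to E(M_0) \to E(M) \to 0$, the projective-cover argument giving $\Omega(E(M)) \cong E(M[1])\langle 1\rangle$, and the specialization $M = Q_{\lambda}$ for the generalized Koszul statement are all exactly the paper's steps. The only cosmetic differences are your explicit remark that the chain $M[s]$ terminates (not needed, since the recursion establishes linearity degree by degree) and your more detailed description of the horseshoe-type verification that the paper delegates to the proofs of Proposition 5.2.6 and Proposition 5.2.10.
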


\begin{proof}
The proof is similar to that of Theorem 2.3.1. For the completeness we include it here.

Suppose that $M$ is generated in height $d$. Define $m_{\lambda} = [M: \Theta_{\lambda}]$ for $\lambda \in \Lambda$, $Q^0 = \bigoplus _{h(\lambda) = d} Q_{\lambda} ^{\oplus m_{\lambda}}$, and $M_0 = \bigoplus _{h(\lambda) = d} \Theta_{\lambda} ^{\oplus m_{\lambda}}$. As in the proof of the previous lemma, we have the following short exact sequence of linearly filtered modules generated in height $d+1$:
\begin{align*}
\xymatrix{ 0 \ar[r] & \Omega _{\Theta} (M) \ar[r] & \Omega _{\Theta} (M_0) \ar[r] & M[1] \ar[r] & 0}
\end{align*}
where $\Omega _{\Theta} (M_0) = Q^0[1]$. This sequence induces exact sequences recursively (see the proof of Proposition 5.2.6):
\begin{align*}
\xymatrix{ 0 \ar[r] & \Omega^i _{\Theta} (M) \ar[r] & \Omega^i _{\Theta} (M_0) \ar[r] & \Omega^{i-1} _{\Theta} (M[1]) \ar[r] & 0,}
\end{align*}
where all modules are linearly filtered and generated in height $d+i$. Again as in the proof of the previous lemma, we get an exact sequence
\begin{align*}
0 \rightarrow \Hom _A (\Omega _{\Theta} ^{i-1} (M[1]), \Theta) \rightarrow \Hom _A (\Omega _{\Theta} ^i (M_0), \Theta) \rightarrow \Hom _A (\Omega _{\Theta} ^i (M), \Theta) \rightarrow 0.
\end{align*}
According to Lemma 5.2.9, the above sequence is isomorphic to:
\begin{align*}
0 \rightarrow \Ext ^{i-1} _A (M[1], \Theta) \rightarrow \Ext^i _A (M_0, \Theta) \rightarrow \Ext^i _A (M, \Theta) \rightarrow 0.
\end{align*}
Now let the index $i$ vary and put these sequences together. We have:
\begin{align*}
\xymatrix{ 0 \ar[r] & E(M[1]) \langle 1 \rangle \ar[r] & E(M_0) \ar[r]^p & E(M) \ar[r] & 0,}
\end{align*}
where $E = \Ext _A^{\ast} (-, \Theta)$ and $\langle - \rangle$ is the degree shift functor of graded modules. That is, for a graded module $T = \bigoplus _{i \geqslant 0} T_i$, $T \langle 1 \rangle _i$ is defined to be $T_{i-1}$.

Since $M_0 \in \text{add} (\Theta)$, $E(M_0)$ is a projective $\Gamma$-module. It is generated in degree 0 by the previous lemma. Similarly, $E(M[1])$ is generated in degree 0, so $E(M[1]) \langle 1 \rangle$ is generated in degree 1. Therefore, the map $p$ is a graded projective covering map. Consequently, $\Omega(E(M)) \cong E(M[1]) \langle 1 \rangle$ is generated in degree 1.

Replacing $M$ by $M[1]$ (since it is also linearly filtered), we have
\begin{equation*}
\Omega^2 (E(M)) \cong \Omega (E(M[1]) \langle 1 \rangle) \cong \Omega( E(M[1]) \langle 1 \rangle \cong E(M[2]) \langle 2 \rangle,
\end{equation*}
which is generated in degree 2. By recursion, $\Omega^i (E(M)) \cong E(M [i]) \langle i \rangle$ is generated in degree $i$ for all $i \geqslant 0$. Thus $E(M)$ is a linear $\Gamma$-module.

In particular let $M = Q_{\lambda}$ for a certain $\lambda \in \Lambda$. We get
\begin{equation*}
E(Q_{\lambda}) = \Ext _A^{\ast} (Q_{\lambda}, \Theta) = \Hom _A (Q_{\lambda}, \Theta)
\end{equation*}
is a linear $\Gamma$-module. Therefore,
\begin{align*}
\bigoplus _{\lambda \in \Lambda} E (Q_{\lambda}, \Theta) & = \bigoplus _{\lambda \in \Lambda} \Hom _A (Q_{\lambda}, \Theta) \cong \Hom _A (\bigoplus _{\lambda \in \Lambda} Q_{\lambda}, \Theta)\\
& = \Hom _A (Q, \Theta) \cong \Hom _A (\Theta, \Theta) = \Gamma_0
\end{align*}
is a linear $\Gamma$-module. So $\Gamma$ is a generalized Koszul algebra.
\end{proof}

\begin{remark}
To get the above result we made some assumptions on the EPSS $(\underline {\Theta}, \underline{Q})$. Firstly, each $\Theta_{\lambda}$ has a simple top $S_{\lambda}$ and $S_{\lambda} \ncong S_{\mu}$ for $\lambda \neq \mu$; secondly, $\Ext _A^s (Q, \Theta) = 0$ for every $s \geqslant 1$. These two conditions always hold for standardly stratified basic algebras. We also suppose that $\Hom _A (\Theta, \Theta) \cong \Hom _A (Q, \Theta)$. This may not be true even if $A$ is a quasi-hereditary algebra.
\end{remark}

Although $\Gamma$ is proved to be a generalized Koszul algebra, in general it does not have the Koszul duality. Consider the following example:

\begin{example}
Let $A$ be the path algebra of the following quiver with relation $\alpha \cdot \beta = 0$. Put an order $x < y < z$.
\begin{equation*}
\xymatrix{ x \ar@/^/ [rr]^{\alpha} & & y \ar@ /^/ [ll]^{\beta} \ar[r] ^{\gamma} & z.}
\end{equation*}
The projective modules and standard modules of $A$ are described as follows:
\begin{equation*}
P_x = \begin{matrix}  & x & \\ & y & \\ x & & z \end{matrix} \qquad P_y = \begin{matrix} & y & \\ x & & z \end{matrix} \qquad P_z = z
\end{equation*}
\begin{equation*}
\Delta_x = x \qquad \Delta_y = \begin{matrix} y \\ x \end{matrix} \qquad \Delta_z = z \cong P_z.
\end{equation*}
This algebra is quasi-hereditary. Moreover, $\Hom _A (\Delta, \Delta) \cong \Hom _A (A, \Delta)$, and all standard modules are linearly filtered. Therefore, $\Gamma = \Ext _A^{\ast} (\Delta, \Delta)$ is a generalized Koszul algebra by the previous theorem.\

We explicitly compute the extension algebra $\Gamma$. It is the path algebra of the following quiver with relation $\gamma \cdot \alpha = 0$.
\begin{equation*}
\xymatrix{ x \ar@/^/ [rr]^{\alpha} \ar@ /_/ [rr]_{\beta} & & y \ar[r] ^{\gamma} & z.}
\end{equation*}
\begin{equation*}
_{\Gamma} P_x = \begin{matrix}  & x_0 & \\ y_0 & & y_1 \\ z_1 & & \end{matrix} \qquad _{\Gamma} P_y = \begin{matrix} y_0 \\ z_1 \end{matrix} \qquad _{\Gamma} P_z = z_0
\end{equation*}
and
\begin{equation*}
_{\Gamma} \Delta_x = x_0 \qquad _{\Gamma} \Delta_y = y_0 \qquad _{\Gamma} \Delta_z = z_0 \qquad \Gamma_0 = \begin{matrix} x_0 \\ y_0 \end{matrix} \oplus y_0 \oplus z_0 \ncong _{\Gamma} \Delta.
\end{equation*}
Here we use indices to mark the degrees of simple composition factors. As asserted by the theorem, $\Gamma_0$ has a linear projective resolution. But $_{\Gamma} \Delta$ is not a linear $\Gamma$-module (we remind the reader that the two simple modules $y$ appearing in $_{\Gamma} P_x$ lie in different degrees!).

By computation, we get the extension algebra $\Gamma' = \Ext _{\Gamma} ^{\ast} (\Gamma_0, \Gamma_0)$, which is the path algebra of the following quiver with relation $\beta \cdot \alpha =0$.
\begin{equation*}
\xymatrix {x \ar[r] ^{\alpha} & y \ar[r] ^{\beta} & z.}
\end{equation*}
Since $\Gamma'$ is a Koszul algebra in the classical sense, the Koszul duality holds in $\Gamma'$. It is obvious that the Koszul dual algebra of $\Gamma'$ is not isomorphic to $\Gamma$. Therefore, as we claimed, the Koszul duality does not hold in $\Gamma$.
\end{example}

Let $\mathfrak{r} = \rad \Gamma_0$ and $\bar{\Gamma} = \Gamma / \Gamma \mathfrak{r} \Gamma$. Then we have an immediate corollary of the previous theorem.

\begin{corollary}
Suppose that $\Delta \cong \Gamma_0$ as a $\Gamma_0$-module and $\Delta_{\lambda}$ is linearly filtered for each $\lambda \in \Lambda$. Then the quotient algebra $\bar{\Gamma}$ is a classical Koszul algebra.
\end{corollary}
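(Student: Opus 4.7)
The strategy is to chain together two earlier results: Theorem~5.2.11 to obtain that $\Gamma$ is a generalized Koszul algebra, and Theorem~2.4.7 to pass from this to the classical Koszul property of the quotient $\bar\Gamma=\Gamma/\Gamma\mathfrak r\Gamma$.

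First I would verify that the hypotheses of Theorem~5.2.11 are in place for the EPSS $(\underline\Delta,\underline Q)$ underlying the corollary. The vanishing $\Ext_A^s(Q,\Delta)=0$ for $s\geqslant 1$ is condition~(3) of Definition~5.1.6, and the assumption that every $\Delta_\lambda$ is linearly filtered is the second hypothesis of Theorem~5.2.11. The remaining ingredient, $\Hom_A(Q,\Delta)\cong\Hom_A(\Delta,\Delta)$, is exactly the hypothesis $\Delta\cong\Gamma_0$ as a $\Gamma_0$-module once one identifies $\Gamma_0=\End_A(\Delta)$ and uses the standing conventions (indecomposability of each $\Delta_\lambda$ with pairwise non-isomorphic simple tops $S_\lambda$) to match the two decompositions across $\Lambda$. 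Theorem~5.2.11 then yields that $\Gamma$ is a generalized Koszul algebra.

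Next I would verify that $\Gamma$ is a projective $\Gamma_0$-module, which is the hypothesis needed to apply Theorem~2.4.7 (whose proof in Section~2.4 explicitly does not require the splitting property~(S)). Since $\Gamma_0$ is a Koszul $\Gamma$-module, Corollary~2.1.5 says each $\Omega^i(\Gamma_0)_i$ is a projective $\Gamma_0$-module; unwinding the linear resolution inductively as in the paragraph following Proposition~2.1.9 identifies $\Gamma_i\cong\Omega^i(\Gamma_0)[-i]_i$, and hence $\Gamma=\bigoplus_{i\geqslant 0}\Gamma_i$ is a projective $\Gamma_0$-module. Applying Theorem~2.4.7 now yields that $\bar\Gamma$ is a classical Koszul algebra, as required.

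The main obstacle is this projectivity step. Proposition~2.1.9 (and the deduction after it that a Koszul algebra is projective over its degree zero part) is proved via Proposition~2.1.8, which invokes the splitting property~(S) for $\Gamma_0$; but~(S) is not an explicit hypothesis of the corollary. The delicate point is therefore either to verify~(S) directly for $\Gamma_0=\End_A(\Delta)$ (a basic algebra whose diagonal subalgebras $e_\lambda\Gamma_0 e_\lambda\cong\End_A(\Delta_\lambda)$ are local), or to bypass Proposition~2.1.9 entirely by reading projectivity of each $\Gamma_i$ off the explicit linear projective resolution of $\Gamma_0$ constructed inside the proof of Theorem~5.2.11.
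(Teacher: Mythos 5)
Your skeleton coincides with the paper's proof: get generalized Koszulity of $\Gamma$ from Theorem 5.2.11, reduce everything to showing that $\Gamma$ is a projective $\Gamma_0$-module, and then invoke the correspondence theorem of Section 2.4 (it is Theorem 2.4.8, not 2.4.7, but you describe the right statement, and you are right that Section 2.4 needs no splitting property). The gap is exactly the step you yourself flag, and the mechanism you lean on for it would fail. The deduction ``Koszul algebra $\Rightarrow$ projective over its degree zero part'' in the paragraph after Proposition 2.1.9 rests on Proposition 2.1.9, hence on Proposition 2.1.8, hence on the splitting property (S) for $\Gamma_0$; and (S) is not merely unavailable as a hypothesis here, it genuinely fails under the corollary's assumptions: Example 5.2.14 of the same section satisfies the hypotheses of the corollary while $\Gamma_0$ admits a non-split exact sequence of projectives, so your first fallback (verifying (S) for $\Gamma_0=\End_A(\Delta)$ directly) is a dead end. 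Note also that the identification you propose is off: Corollary 2.1.5 gives projectivity of $\Omega^i(\Gamma_0)_i$, whereas what is needed is projectivity of $\Gamma_i$, which in Chapter 2 is obtained from Koszulity of $J^i[-i]$ (with $J=\bigoplus_{i\geqslant 1}\Gamma_i$), i.e. precisely through Proposition 2.1.9 and hence through (S).

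The correct completion — and this is the actual content of the paper's proof — is a direct computation with the relative homological lemmas of Section 5.2, close to your second fallback but carried out there rather than in Chapter 2. For each $s\geqslant 0$ and $\lambda\in\Lambda$, since $\Delta_\lambda$ is linearly filtered, its relative syzygy $\Omega^s_\Delta(\Delta_\lambda)$ is linearly filtered and in particular generated in a single height $d$. Lemma 5.2.9 (which uses the hypothesis $\Hom_A(Q,\Delta)\cong\End_A(\Delta)$, i.e. $\Delta\cong\Gamma_0$) gives $\Ext_A^s(\Delta_\lambda,\Delta)\cong\Hom_A(\Omega^s_\Delta(\Delta_\lambda),\Delta)$, and Lemma 5.2.8 identifies this with $\Hom_A\bigl(\bigoplus_{h(\mu)=d}\Delta_\mu^{\oplus m_\mu},\Delta\bigr)$, which is a direct summand of a finite direct sum of copies of $\Gamma_0=\End_A(\Delta)$ and hence a projective $\Gamma_0$-module. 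Summing over $s$ and $\lambda$ shows $\Gamma$ is projective over $\Gamma_0$, after which Theorem 2.4.8 applies exactly as you intend. So: right architecture, but the decisive projectivity step is left unproved, and the route you sketch for it (Corollary 2.1.5 plus Proposition 2.1.9, or verifying (S) for $\Gamma_0$) cannot be repaired; it must be replaced by the argument above.
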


\begin{proof}
We only need to show that $\Gamma$ is a projective $\Gamma_0$-module, and the conclusion follows from the correspondence between the generalized Koszul theory and the classical theory (see Theorem 2.4.8). With the conditions in Theorem 5.2.11, $\Ext _A^s (\Theta_{\lambda}, \Theta) \cong \Hom _A (\Omega ^s _{\Theta _{\lambda}} (\Theta), \Theta)$ for all $s \geqslant 0$ and $\lambda \in \Lambda$ by Lemma 5.2.9. Notice that $\Omega ^s _{\Theta} (\Theta _{\lambda})$ is linearly filtered. Suppose that min$(\Omega ^s _{\Theta} (\Theta _{\lambda}) ) = d$ and $m_{\mu} = [\Omega ^s _{\Theta} (\Theta _{\lambda}): \Theta_{\mu}]$. Then
\begin{equation}
\Ext _A^s (\Theta _{\lambda}, \Theta) \cong \Hom _A (\Omega ^s _{\Theta} (\Theta _{\lambda}), \Theta) \cong \Hom _A ( \bigoplus _{h(\mu) = d} \Theta_{\mu} ^{\oplus m_{\mu}}, \Theta),
\end{equation}
which is a projective $\Gamma_0 = \End_A (\Theta)$-module.
\end{proof}

\begin{example}
Let $A$ be the path algebra of the following quiver with relations $\delta^2 = \delta \alpha = \beta \delta = \beta \alpha = \gamma \beta =0$. Let $x > z > y$.
\begin{equation*}
\xymatrix{ x \ar[dr] ^{\alpha} & & z \ar[ll] _{\gamma}\\
& y \ar[ur] ^{\beta} \ar@(dl,dr)[]|{\delta} }
\end{equation*}
Indecomposable projective modules and standard modules of $A$ are described below:
\begin{align*}
& P_x = \begin{matrix} x \\ y \end{matrix} \quad P_y = \begin{matrix} & y & \\ y & & z \end{matrix} \quad P_z = \begin{matrix} z \\ x \\ y \end{matrix} \\
& \Delta_x = P_x = \begin{matrix} x \\ y \end{matrix} \quad \Delta_y = \begin{matrix} y \\ y \end{matrix} \quad \Delta_z = z.
\end{align*}
Clearly, $A$ is standardly stratified. Moreover, all standard modules have projective dimension 1 and are linearly filtered. By direct computation we check that $\Delta \cong \End _A (\Delta)$ as $\Gamma_0 = \End _A (\Delta)$-modules.

Now we compute the extension algebra $\Gamma$: $\Gamma_s = 0$ for $s \geqslant 2$; $\Ext _A^1 (\Delta_x, \Delta) = 0$; $\Ext _A^1 (\Delta_y, \Delta) \cong \End_A (\Delta_z)$; and $\Ext _A^1 (\Delta_z, \Delta) \cong \End_A (\Delta_x)$. Therefore, we find $\Gamma$ is the path algebra of the following quiver with relations $\delta^2 = \beta \delta = \alpha \delta = \gamma \beta =0$.
\begin{equation*}
\xymatrix{ x & & z \ar[ll] _{\gamma}\\
& y \ar[ur] ^{\beta} \ar[ul] ^{\alpha} \ar@(dl,dr)[]|{\delta} }
\end{equation*}

We remind the reader that $\alpha$ is in the degree 0 part of $\Gamma$. Indeed, $\Gamma_0 = \langle 1_x, 1_y, 1_z, \delta, \alpha \rangle$ and $\Gamma_1 = \langle \beta, \gamma \rangle$. In this case $\Gamma_0$ does not satisfy the splitting condition (S) since we can find the following non-splitting exact sequence:
\begin{equation*}
0 \rightarrow  x \rightarrow \begin{matrix} & y & \\ y & & x \end{matrix} \rightarrow \begin{matrix} y \\ y \end{matrix} \rightarrow 0.
\end{equation*}

Since $\mathfrak{r} = \rad \Gamma_0 = \langle \delta, \alpha \rangle = \Gamma \mathfrak{r} \Gamma$, the quotient algebra $\Gamma$ is the path algebra of the following quiver with relation $\gamma \beta = 0$, which is clearly a classical Koszul algebra.
\begin{equation*}
\xymatrix {x & z \ar[l]_{\gamma} & y \ar[l] _{\beta}}.
\end{equation*}
\end{example}

Let us return to the question of whether $\Gamma = \Ext _A ^{\ast} (\Theta, \Theta)$ is standardly stratified with respect to $\leqslant$. According to Theorem 5.1.3, this happens if and only if for each pair $\Theta_{\lambda}, \Theta_{\mu}$ and $s \geqslant 0$, $\Ext _A^s (\Theta_{\lambda}, \Theta_{\mu})$ is a projective $\End _A (\Theta_{\mu})$-module. Putting direct summands together, we conclude that $\Gamma$ is standardly stratified with respect to $\leqslant$ if and only if $\Ext _A^s (\Theta, \Theta)$ is a projective $\bigoplus _{\lambda \in \Lambda} \End _A (\Theta_{\lambda})$-module.

With this observation, we have:

\begin{corollary}
Let $(\underline {\Theta}, \underline {Q})$ be an EPSS indexed by a finite poset $(\Lambda, \leqslant)$. Suppose that all $\Theta_{\lambda}$ are linearly filtered for $\lambda \in \Lambda$, and $\Hom _A (Q, \Theta) \cong \Hom _A (\Theta, \Theta)$. Then $\Gamma = \Ext _A^{\ast} (\Theta, \Theta)$ is standardly stratified for $\leqslant$ if and only if $\End _A (\Theta)$ is a projective $\bigoplus _{\lambda \in \Lambda} \End _A (\Theta_{\lambda})$-module.
\end{corollary}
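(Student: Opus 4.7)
The plan is to reduce the statement to the characterization given in Theorem 5.1.10 combined with the identity (5.2.2) that appeared during the proof of Corollary 5.2.14. By Theorem 5.1.10, $\Gamma$ is standardly stratified with respect to $\leqslant$ if and only if $\Ext_A^s(\Theta_\lambda,\Theta_\mu)$ is a projective $\End_A(\Theta_\mu)$-module for every $\lambda,\mu\in\Lambda$ and every $s\geqslant 0$. Since $\End_A(\Theta)=\bigoplus_{\lambda,\mu}\Hom_A(\Theta_\lambda,\Theta_\mu)$ and the action of $\bigoplus_\lambda\End_A(\Theta_\lambda)$ respects this block decomposition (the $\mu$-factor acts by post-composition on $\Hom_A(-,\Theta_\mu)$ and trivially on the other blocks), the hypothesis ``$\End_A(\Theta)$ is a projective $\bigoplus_\lambda\End_A(\Theta_\lambda)$-module'' is equivalent to: each $\Hom_A(\Theta_\lambda,\Theta_\mu)$ is projective over $\End_A(\Theta_\mu)$.

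For the forward direction I will specialize Theorem 5.1.10 to $s=0$, which instantly yields projectivity of every $\Hom_A(\Theta_\lambda,\Theta_\mu)$ over $\End_A(\Theta_\mu)$, and hence of $\End_A(\Theta)$ over $\bigoplus_\lambda\End_A(\Theta_\lambda)$. For the converse, I will use the fact that each $\Theta_\lambda$ is linearly filtered, so $\Omega_\Theta^s(\Theta_\lambda)$ is generated in some height $d=d(\lambda,s)$; combining Lemma 5.2.9 with Lemma 5.2.8 (both applicable since $\Hom_A(Q,\Theta)\cong\Hom_A(\Theta,\Theta)$) gives the identity (5.2.2), namely
\[
\Ext_A^s(\Theta_\lambda,\Theta)\;\cong\;\Hom_A\Big(\bigoplus_{h(\mu)=d}\Theta_\mu^{\oplus m_\mu},\,\Theta\Big).
\]
The right-hand side is a direct summand of $\Hom_A(\Theta,\Theta)^{\oplus N}=\End_A(\Theta)^{\oplus N}$ for some finite $N$. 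Restricting this isomorphism to the $\nu$-block (i.e.\ composing with the projection $\Theta\twoheadrightarrow\Theta_\nu$) exhibits $\Ext_A^s(\Theta_\lambda,\Theta_\nu)$ as a direct summand of $\Hom_A\bigl(\bigoplus_{h(\mu)=d}\Theta_\mu^{\oplus m_\mu},\Theta_\nu\bigr)^{\oplus N'}$, and this identification is natural in the target, hence compatible with the right post-composition action of $\End_A(\Theta_\nu)$.

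Given the assumed projectivity of $\End_A(\Theta)$ over $\bigoplus_\lambda\End_A(\Theta_\lambda)$, the block $\Hom_A(\Theta_\lambda',\Theta_\nu)$ is a projective $\End_A(\Theta_\nu)$-module for every $\lambda'$, so direct summands and finite direct sums of such are projective as well. Therefore $\Ext_A^s(\Theta_\lambda,\Theta_\nu)$ is projective over $\End_A(\Theta_\nu)$ for every $\lambda,\nu,s$, and Theorem 5.1.10 yields the conclusion. The main subtlety to verify carefully is that the isomorphism (5.2.2), which was produced via the projective covering map $\bigoplus_{h(\mu)=d}Q_\mu^{\oplus m_\mu}\twoheadrightarrow\Omega_\Theta^s(\Theta_\lambda)$ and Lemma 5.2.8, is indeed a morphism of right $\End_A(\Theta)$-modules (equivalently, respects the $\End_A(\Theta_\nu)$-actions after restriction); this follows from naturality of $\Hom_A(-,?)$ in the second argument, but is the one bookkeeping step that must be spelled out with care.
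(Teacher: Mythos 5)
Your proposal is correct and follows essentially the same route as the paper: the forward direction is the $s=0$ specialization of the stratification criterion (Theorem 5.1.10), and the converse rests on the isomorphism (5.2.2) obtained from Lemmas 5.2.8 and 5.2.9 under the linear-filtration hypothesis. The only (cosmetic) difference is that you verify projectivity block-by-block over each $\End_A(\Theta_\nu)$, whereas the paper notes that $\Ext_A^s(\Theta,\Theta)$ is a projective $\Gamma_0$-module and then restricts scalars along $\bigoplus_{\lambda}\End_A(\Theta_\lambda)\subseteq\Gamma_0$ using the hypothesis that $\Gamma_0$ is projective over this subalgebra; your care about the $\End_A(\Theta_\nu)$-linearity of (5.2.2) is exactly the naturality the paper uses implicitly.
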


\begin{proof}
If $\Gamma$ is standardly stratified for $\leqslant$, then in particular $\Gamma_0 = \End _A (\Theta)$ is a projective $\bigoplus _{\lambda \in \Lambda} \End _A (\Theta_{\lambda})$-module by Theorem 5.1.3. Conversely, if $\Gamma_0 = \End _A (\Theta)$ is a projective $\bigoplus _{\lambda \in \Lambda} \End _A (\Theta_{\lambda})$-module, then by the isomorphism in (5.2.2) we know that $\Ext _A^s (\Theta, \Theta) = \bigoplus _{\lambda \in \Lambda} \Ext _A^s (\Theta_{\lambda}, \Theta)$ is a projective $\Gamma_0$-module for all $s \geqslant 0$, so it is a projective $\bigoplus _{\lambda \in \Lambda} \End _A (\Theta_{\lambda})$-module as well. Again by Theorem 5.1.3, $\Gamma$ is standardly stratified with respect to $\leqslant$.
\end{proof}

If $A$ is quasi-hereditary with respect to $\leqslant$ such that all standard module are linearly filtered, then $\Gamma = \Ext _A^{\ast} (\Delta, \Delta)$ is again quasi-hereditary for this partial order by Corollary 5.1.4, and $\Gamma_0$ is Koszul by the previous theorem. Let $_{\Gamma} \Delta$ be the direct sum of all standard modules of $\Gamma$ with respect to $\leqslant$. The reader may wonder whether $_{\Gamma} \Delta$ is Koszul as well. The following proposition gives a partial answer to this question.

\begin{proposition}
With the above notation, if $_{\Gamma} \Delta$ is Koszul, then $\Gamma_0 \cong  _{\Gamma}\Delta$, or equivalently $\Hom _A (\Delta_{\lambda}, \Delta_{\mu}) \neq 0$ only if $\lambda = \mu$, $\lambda, \mu \in \Lambda$. If furthermore $\Hom _A (A, \Delta) \cong \End _A (\Delta)$, then $\Delta \cong A / \rad A$.
\end{proposition}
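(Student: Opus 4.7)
The plan is to reduce the first claim to analyzing the graded projective cover of each summand of ${}_\Gamma\Delta$. By Corollary 5.1.4 the algebra $\Gamma$ is quasi-hereditary with respect to $\leqslant$, and by Proposition 3.1.5 applied to the associated category of $\Gamma$ (which is directed with respect to $\leqslant$ by Proposition 5.1.2), the standard $\Gamma$-module indexed by $\lambda$ is precisely the one-dimensional module $k_\lambda$ concentrated in degree $0$: the idempotent $e_\lambda = \mathrm{id}_{\Delta_\lambda} \in \Gamma_0$ acts as the identity, while all other $e_\mu$ and every positive-degree element of $\Gamma$ act as $0$. Thus ${}_\Gamma\Delta \cong \bigoplus_{\lambda \in \Lambda} k_\lambda$ sits entirely in degree $0$.

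Fix $\lambda$ and consider the graded projective cover $P^0 \twoheadrightarrow k_\lambda$ furnished by Lemma 2.1.1. Since $A$ is quasi-hereditary, only $e_\lambda \Gamma_0 e_\lambda = \End_A(\Delta_\lambda) \cong k$ acts non-trivially on $k_\lambda$, so $k_\lambda$ is a simple $\Gamma_0$-module whose $\Gamma_0$-projective cover is $\Gamma_0 e_\lambda$. This lifts to $P^0 = \Gamma e_\lambda$, with $(P^0)_0 = \Gamma_0 e_\lambda$, and the graded syzygy $\Omega_\Gamma(k_\lambda)$ has degree-zero component equal to $\rad_{\Gamma_0}(\Gamma_0 e_\lambda)$. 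If ${}_\Gamma\Delta$ is Koszul, each $k_\lambda$ is Koszul, which forces $\Omega_\Gamma(k_\lambda)$ to be generated in degree $1$ and in particular requires $\rad_{\Gamma_0}(\Gamma_0 e_\lambda) = 0$. Hence $\Gamma_0 e_\lambda \cong k_\lambda$ for every $\lambda$, and summing yields $\Gamma_0 \cong \bigoplus_\lambda k_\lambda \cong {}_\Gamma\Delta$. Since $\dim_k \Gamma_0 e_\lambda = \sum_\mu \dim_k \Hom_A(\Delta_\lambda, \Delta_\mu)$ and each $\End_A(\Delta_\lambda) \cong k$, the equality $\Gamma_0 e_\lambda = k_\lambda$ is equivalent to $\Hom_A(\Delta_\lambda, \Delta_\mu) = 0$ whenever $\lambda \neq \mu$.

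For the second assertion, the additional hypothesis gives $\Delta \cong \Hom_A(A, \Delta) \cong \End_A(\Delta) = \Gamma_0$, so $\dim_k \Delta = \dim_k \Gamma_0 = |\Lambda|$ by the first part. Writing $\Delta = \bigoplus_{\lambda \in \Lambda} \Delta_\lambda$ with each $\Delta_\lambda \neq 0$, this forces $\dim_k \Delta_\lambda = 1$ for every $\lambda$, so $\Delta_\lambda \cong S_\lambda$. Therefore $\Delta \cong \bigoplus_\lambda S_\lambda \cong A/\rad A$, using that $A$ is basic.

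The main obstacle is verifying that the graded projective cover of each $k_\lambda$ is really $\Gamma e_\lambda$ (and not some smaller graded module), together with the correct interpretation of linearity of the projective resolution in the generalized setting: namely that the non-semisimplicity of $\Gamma_0$ is precisely what obstructs Koszulity of ${}_\Gamma\Delta$, because the degree-zero part of $\Omega_\Gamma(k_\lambda)$ is exactly $\rad_{\Gamma_0}(\Gamma_0 e_\lambda)$.
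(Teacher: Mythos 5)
Your proof is correct and follows essentially the same route as the paper: identify ${}_{\Gamma}\Delta$ with $\bigoplus_{\lambda} k_\lambda$ via the directedness of the associated category of $\Gamma$, use Koszulity to force each $k_\lambda$ to be a projective $\Gamma_0$-module, and conclude $\Gamma_0 e_\lambda \cong k_\lambda$, with the dimension count giving $\Delta \cong A/\rad A$ in the second part. The only cosmetic difference is that where the paper invokes Corollary 2.1.5 directly (Koszul implies the degree-zero part is $\Gamma_0$-projective), you re-derive that fact by computing the degree-zero component of the syzygy of the cover $\Gamma e_\lambda$ as $\rad_{\Gamma_0}(\Gamma_0 e_\lambda)$, which is the same argument unwound.
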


\begin{proof}
We have proved that the $k$-linear category associated to $\Gamma$ is directed with respect to $\leqslant$. By Theorem 5.1.3, standard modules of $\Gamma$ for $\leqslant$ are exactly indecomposable summands of $\bigoplus _ {\lambda \in \Lambda} \End _A (\Delta_{\lambda})$, i.e., $_{\Gamma} \Delta \cong \bigoplus _ {\lambda \in \Lambda} \End _A (\Delta_{\lambda}) \cong \bigoplus _{\lambda \in \Lambda} k_{\lambda}$. Clearly, $_{\Gamma} \Delta \subseteq \Gamma_0 = \End _A (\Delta)$. If $_{\Gamma} \Delta$ is Koszul, then by Corollary 2.1.5, it is a projective $\Gamma_0$-module. Consequently, every summand $k_{\lambda}$ is a projective $\Gamma_0$-module. Since both $_{\Gamma} \Delta$ and $\Gamma_0$ have exactly $| \Lambda |$ pairwise non-isomorphic indecomposable summands, we deduce $_{\Gamma} \Delta \cong \Gamma_0 \cong \bigoplus _{\lambda \in \Lambda} k_{\lambda}$, or equivalently $\Hom _A (\Delta_{\lambda}, \Delta_{\mu}) = 0$ if $\lambda \neq \mu$.

If furthermore $\Hom _A (A, \Delta) \cong \End _A (\Delta)$, then
\begin{equation*}
\Delta \cong \Hom _A (A, \Delta) \cong \End _A (\Delta) \cong \bigoplus _{\lambda \in \Lambda} k_{\lambda} \cong A / \rad A.
\end{equation*}
\end{proof} 
\chapter{Algebras stratified for all linear orders}
\label{Algebras stratified for all linear orders}

Dlab and Ringel showed in \cite{Dlab2} that a finite-dimensional algebra $A$ is quasi-hereditary for all linear orders if and only if $A$ is a hereditary algebra. In this chapter we characterize and classify algebras standardly stratified for all linear orders. We will find that these algebras includes hereditary algebras as special cases, and have many properties similar to hereditary algebras.

In the first section we describe several characterizations of algebras stratified for all linear orders, and then classify these algebras in the second section. We also consider the problem of whether $\mathcal{F} (\Delta)$ is closed under cokernels of monomorphisms. The main content of this chapter comes from \cite{Li5}.

\section{Several characterizations}
Let $A$ be a basic finite-dimensional $k$-algebra with a chosen set of orthogonal primitive idempotents $\{ e_{\lambda} \} _{\lambda \in \Lambda}$ indexed by a set $\Lambda$ such that $\sum _{\lambda \in \Lambda} e_{\lambda} = 1$. Let $P_{\lambda} = A e_{\lambda}$ and $S_{\lambda} = P_{\lambda} / \rad P_{\lambda}$. For definitions of \textit{standardly stratified algebras, properly stratified algebras, standard modules, costandard modules, proper standard modules, proper costandard modules,} please refer to Section 3.1. Since by Proposition 6.1.1 $A$ is standardly stratified for all preorders on $\Lambda$ if and only if it is a direct sum of local algebras, in this chapter we only deal with linear orders on $\Lambda$ instead of general preorders as in \cite{Frisk2, Frisk3, Webb3}.

\begin{proposition}
The algebra $A$ is standardly stratified for all preorders if and only if $A$ is a direct sum of local algebras.
\end{proposition}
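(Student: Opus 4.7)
The plan is to verify each direction by selecting the most favorable preorder on $\Lambda$ for the argument, and to exploit the fact that standard modules, traces, and composition multiplicities behave transparently in the extreme cases.

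For the ``if'' direction, assume $A = \bigoplus_{\lambda \in \Lambda} A_{\lambda}$ with each $A_{\lambda}$ local. Then $P_{\lambda} = A_{\lambda} = Ae_{\lambda}$, and $P_{\lambda}$ has $S_{\lambda}$ as its only composition factor; in particular $\Hom_A(P_{\mu}, P_{\lambda}) = e_{\mu} A e_{\lambda} = 0$ for $\mu \neq \lambda$. Given any preorder $\preccurlyeq$ on $\Lambda$, this immediately forces $\text{tr}_{P_{\mu}}(P_{\lambda}) = 0$ whenever $\mu \neq \lambda$, so $\Delta_{\lambda} = P_{\lambda}$ for every $\lambda$. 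Condition (1) of standard stratification is then trivial because $[\Delta_{\lambda} : S_{\mu}] = 0$ already for every $\mu \neq \lambda$, and condition (2) is trivial with $K_{\lambda} = 0$.

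For the ``only if'' direction, I would apply the standardly stratified hypothesis to the \emph{discrete preorder} on $\Lambda$, namely $\lambda \preccurlyeq \mu$ iff $\lambda = \mu$ (this is reflexive and vacuously transitive, hence a legitimate preorder). For this preorder the strict relation $\mu \succ \lambda$ (meaning $\lambda \preccurlyeq \mu$ and $\mu \not\preccurlyeq \lambda$) is never satisfied, so $\Delta_{\lambda} = P_{\lambda} / \sum_{\mu \succ \lambda} \text{tr}_{P_{\mu}}(P_{\lambda}) = P_{\lambda}$. Condition (1) of the definition then reads $[P_{\lambda} : S_{\mu}] = 0$ whenever $\mu \npreceq \lambda$, i.e.\ whenever $\mu \neq \lambda$. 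Since $k$ is algebraically closed and $A$ is basic, $[P_{\lambda} : S_{\mu}] = \dim_k \Hom_A(P_{\mu}, P_{\lambda}) = \dim_k e_{\mu} A e_{\lambda}$, so $e_{\mu} A e_{\lambda} = 0$ for all $\mu \neq \lambda$.

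From $e_{\mu} A e_{\lambda} = 0$ for $\mu \neq \lambda$, I would conclude $A = \bigoplus_{\lambda \in \Lambda} e_{\lambda} A e_{\lambda}$ as a vector space, and the vanishing of cross-terms together with orthogonality of the $e_{\lambda}$ promotes this to an algebra decomposition. Each block $e_{\lambda} A e_{\lambda}$ is local because $e_{\lambda}$ is a primitive idempotent. This completes the argument. There is no real obstacle here; the only point requiring care is to check that the discrete preorder genuinely is a preorder and that the ``strict'' relation $\succ$ in the definition of $\Delta_{\lambda}$ is empty in this case, so that $\Delta_{\lambda} = P_{\lambda}$ and condition (1) translates directly into the desired vanishing of composition multiplicities.
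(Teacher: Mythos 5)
Your proposal is correct and follows essentially the same route as the paper: the "only if" direction uses exactly the same antichain (discrete) preorder, deduces $\Delta_{\lambda} \cong P_{\lambda}$, and then reads off from condition (1) that $e_{\mu}Ae_{\lambda}=0$ for $\mu \neq \lambda$, yielding the block decomposition into local algebras. The only cosmetic difference is that you obtain $\Delta_{\lambda}=P_{\lambda}$ from the trace formula while the paper cites condition (2) of the definition, which amounts to the same observation.
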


\begin{proof}
If $A$ is a direct sum of local algebras, it is standardly stratified for all preorders such that all standard modules coincide with indecomposable projective modules. Conversely, suppose that $A$ is standardly stratified for all preorders. To show that $A$ is a direct sum of local algebras, it is sufficient to show that $\Hom _A (P_{\lambda}, P_{\mu}) \cong e_{\lambda} A e_{\mu} = 0$ for all pairs of distinct elements $\lambda \neq \mu \in \Lambda$.

Consider the preorder on $\Lambda$ such that every two different elements cannot be compared. By the second condition in the definition of standardly stratified algebras, $\Delta_{\lambda} \cong P_{\lambda}$ for all $\lambda \in \Lambda$. By the first condition, $P_{\lambda}$ only has composition factors $S_{\lambda}$. Therefore, $\Hom _A (P_{\lambda}, P_{\mu}) \cong e_{\lambda} A e_{\mu} = 0$ for all $\lambda \neq \mu \in \Lambda$.
\end{proof}

The following statement is an immediate corollary of Dlab's theorem (\cite{Frisk3, Webb3}).

\begin{proposition}
The algebra $A$ is properly stratified for a linear order $\preccurlyeq$ on $\Lambda$ if and only if both $A$ and $A^{\textnormal{op}}$ are standardly stratified with respect to $\preccurlyeq$, in other words, $A$ is both left and right standardly stratified.
\end{proposition}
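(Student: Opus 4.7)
The plan is to reduce the statement to Dlab's characterization of properly stratified algebras (see \cite{Dlab1}), which asserts that $A$ is properly stratified with respect to $\preccurlyeq$ if and only if $_AA \in \mathcal{F}(\Delta)$ and $DA \in \mathcal{F}(\nabla)$, where $D = \Hom_k(-,k)$ is the $k$-duality. Once this is available, the remaining work is to identify the second condition with the statement that $A^{\textnormal{op}}$ is standardly stratified.

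To do so, I would first verify that the $k$-duality $D$ interchanges standard right $A$-modules and costandard left $A$-modules. Writing $\Delta_\lambda^{\textnormal{op}}$ for the standard right modules of $A^{\textnormal{op}}$, each $\Delta_\lambda^{\textnormal{op}}$ is the largest quotient of $e_\lambda A$ whose composition factors $S_\mu^{\textnormal{op}}$ satisfy $\mu \preccurlyeq \lambda$. Applying $D$ and using $D(e_\lambda A) \cong I_\lambda$ (the injective envelope of $S_\lambda$), one sees that $D(\Delta_\lambda^{\textnormal{op}})$ is the largest submodule of $I_\lambda$ whose composition factors $S_\mu$ satisfy $\mu \preccurlyeq \lambda$; this is precisely $\nabla_\lambda$. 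Since $D$ is an exact contravariant equivalence between $A$-mod and mod-$A$, it preserves filtrations, and therefore $A_A \in \mathcal{F}(\Delta^{\textnormal{op}})$ if and only if $DA \in \mathcal{F}(\nabla)$. By the definition of standardly stratified applied to $A^{\textnormal{op}}$, the left-hand side is precisely the statement that $A^{\textnormal{op}}$ is standardly stratified with respect to $\preccurlyeq$.

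Combining these two ingredients gives the proposition: $A$ is properly stratified iff $_AA \in \mathcal{F}(\Delta)$ and $DA \in \mathcal{F}(\nabla)$ (by Dlab's theorem), iff $_AA \in \mathcal{F}(\Delta)$ and $A_A \in \mathcal{F}(\Delta^{\textnormal{op}})$ (by the duality argument above), iff both $A$ and $A^{\textnormal{op}}$ are standardly stratified. The only substantive input is Dlab's theorem itself; everything else is a routine translation between left and right modules through $D$. Since the author explicitly presents the proposition as an immediate corollary of Dlab's theorem, I do not anticipate any genuine obstacle, and the entire proof should occupy only a few lines once the relevant statement of Dlab's theorem is invoked.
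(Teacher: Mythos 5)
Your duality bookkeeping (that $D$ interchanges right standard modules and left costandard modules, hence $A_A \in \mathcal{F}(\Delta^{\textnormal{op}})$ iff $DA \in \mathcal{F}(\nabla)$) is fine, but it only handles the easy half; the statement you invoke as ``Dlab's characterization of properly stratified algebras'' is where all the content lives, and it is not available in the form you cite. The definition in force (Section 3.1) is that $A$ is properly stratified when $_AA \in \mathcal{F}(\Delta) \cap \mathcal{F}(\overline{\Delta})$, and Dlab's theorem as used in this thesis (\cite{Frisk3, Webb3}) says that $_AA \in \mathcal{F}(\Delta)$ if and only if the injective cogenerator lies in $\mathcal{F}(\overline{\nabla})$ --- it involves \emph{proper} costandard modules, not costandard ones, and it is not a characterization of proper stratification. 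The statement you quote, that $A$ is properly stratified iff $_AA \in \mathcal{F}(\Delta)$ and $DA \in \mathcal{F}(\nabla)$, is (after your own routine duality step) literally equivalent to the proposition you are asked to prove, so citing it begs the question; moreover \cite{Dlab1} is a paper on quasi-hereditary algebras in which properly stratified algebras do not occur. In the quasi-hereditary case $\Delta = \overline{\Delta}$ and $\nabla = \overline{\nabla}$, so such a characterization is classical there, but in the standardly stratified generality the distinction between $\nabla$ and $\overline{\nabla}$ is exactly the point --- and indeed the proper standard modules $\overline{\Delta}$, which appear in the definition of proper stratification, never enter your argument at all.

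What is missing is precisely the bridge between $_AA \in \mathcal{F}(\overline{\Delta})$ and $A_A \in \mathcal{F}(\Delta_A)$ (equivalently $DA \in \mathcal{F}(\nabla)$). The paper supplies it in two moves: the duality $D$ turns a $\overline{\Delta}$-filtration of $_AA$ into a $\overline{\nabla}_A$-filtration of the right injective cogenerator, and then Dlab's theorem (Theorem 3.4 of \cite{Webb3}, applied on the right) converts ``all right injectives lie in $\mathcal{F}(\overline{\nabla}_A)$'' into ``all right projectives lie in $\mathcal{F}(\Delta_A)$,'' i.e.\ $A^{\textnormal{op}}$ is standardly stratified. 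If you replace your black-box citation either by this two-step argument or by a genuine reference proving the $\nabla$-characterization of properly stratified algebras, your proof closes; as written it has a real gap.
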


\begin{proof}
The algebra $A$ is properly stratified if and only if $\mathcal{F} (\Delta) \cap \mathcal{F} (\overline {\Delta})$ contains all left projective $A$-modules. By duality, this is true if and only if $\mathcal{F} (\Delta)$ contains all left projective $A$-modules and $\mathcal{F} (\overline {\nabla} _A)$ contains all right injective $A$-modules, where $\overline {\nabla} _A$ is a right $A$-module. By Theorem 3.4 in \cite{Webb3}, we conclude that $A$ is properly stratified if and only if $\mathcal{F} (\Delta)$ contains all left projective $A$-module and $\mathcal{F} (\Delta_A)$ contains all right projective $A$-modules, here $\Delta_A$ is the direct sum of all right standard modules. That is, $A$ is properly stratified if and only if it is both left and right standardly stratified.
\end{proof}

Recall the associated category $\mathcal{A}$ of $A$ is a \textit{directed} category if there is a partial order $\leqslant$ on $\Lambda$ such that $\mathcal{A} (e_{\lambda}, e_{\mu}) = e_{\mu} A e_{\lambda} \neq 0$ implies $\lambda \leqslant \mu$.

\begin{proposition}
If $A$ is standardly stratified for all linear orders on $\Lambda$, then the associated category $\mathcal{A}$ is a directed category.
\end{proposition}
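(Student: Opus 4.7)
The plan is to argue by contradiction: assume $A$ is standardly stratified for every linear order on $\Lambda$ but $\mathcal{A}$ is not directed. Failure of directedness means that the quiver on $\Lambda$ with an arrow $\lambda \to \mu$ whenever $\mathcal{A}(e_\lambda, e_\mu) = e_\mu A e_\lambda \neq 0$ contains a directed cycle on distinct vertices. I would pick such a cycle $\lambda_1 \to \lambda_2 \to \cdots \to \lambda_n \to \lambda_1$ of minimal length $n \geq 2$; by minimality, among the $\lambda_i$'s no morphisms in $\mathcal{A}$ exist other than the cycle edges, since any shortcut would produce a shorter cycle.

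Next, I would reduce to the idempotent subalgebra $B = eAe$ with $e = \sum_{i=1}^n e_{\lambda_i}$. The crucial point is that $B$ inherits the property of being standardly stratified for every linear order on $S := \{\lambda_1, \ldots, \lambda_n\}$: given any linear order on $S$, extend it to a linear order on $\Lambda$ that makes $S$ upward closed (placing the elements of $\Lambda \setminus S$ arbitrarily below $S$); $A$ is standardly stratified for this extension by hypothesis, and then the standard fact that, for an upward closed set of indices, the corresponding idempotent subalgebra is standardly stratified in the induced order (compare \cite{Cline, Webb3}) gives the claim for $B$. Moreover, by minimality of the cycle, the Cartan matrix $C$ of $B$ has diagonal entries $C_{ii} = a_i \geq 1$ and exactly $n$ nonzero off-diagonal entries, placed precisely at the cycle positions with values $c_i > 0$.

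The heart of the argument is then to show that no such cyclic Cartan matrix admits a factorization $C = DF$ with $D$ upper triangular and $F$ lower triangular (entries in $\mathbb{Z}_{\geq 0}$, diagonal of $D$ positive, diagonal of $F$ equal to $1$) for any linear order on $S$; such a factorization, with $D_{ik} = [\Delta_{\lambda_k}: S_{\lambda_i}]$ and $F_{kj} = [P_{\lambda_j}: \Delta_{\lambda_k}]$, is forced by standard stratification. For $n \geq 3$, identify the top element $\lambda^*$ of the chosen order together with its two cycle-neighbours, $\lambda^+$ (outgoing) and $\lambda^-$ (incoming): the cycle entries $C_{\lambda^+, \lambda^*}$ and $C_{\lambda^*, \lambda^-}$ being nonzero forces $D_{\lambda^+, \lambda^*}$ and $F_{\lambda^*, \lambda^-}$ to be nonzero (since by triangularity each is the only contributing summand). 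Then the $k = \lambda^*$ summand of $(DF)_{\lambda^+, \lambda^-}$ is strictly positive while the remaining summands are nonnegative, giving $C_{\lambda^+, \lambda^-} > 0$; but $C_{\lambda^+, \lambda^-} = 0$ by minimality (otherwise the edge $\lambda^- \to \lambda^+$ would shortcut the cycle), a contradiction. The case $n = 2$ follows from a direct analysis of the $2 \times 2$ factorization, which combines the integrality constraint $a_j \mid c_i$ (forcing $c_i \geq a_j$, hence $c_1 c_2 \geq a_1 a_2$) with the positivity constraint on the remaining diagonal entry of $D$ (which yields $c_1 c_2 < a_1 a_2$), an immediate contradiction.

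The main obstacle is the reduction step: ensuring that $B = eAe$ genuinely inherits the full property and that its Cartan matrix has the clean cyclic shape. Both hinge on the minimality of the chosen cycle (needed to exclude shortcut edges) together with the flexibility to extend any linear order on $S$ to one on $\Lambda$ in which $S$ is upward closed. Once these are in place, the cyclic factorization impossibility is forced by the triangular shapes of $D$ and $F$ and the location of nonzero entries of $C$.
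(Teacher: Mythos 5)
Your proof is correct, but it takes a genuinely different route from the paper's. The paper handles an arbitrary oriented cycle directly: choose a vertex $e_s$ on the cycle with $\dim_k P_s$ maximal among the cycle vertices and any linear order making $e_s$ maximal; standard stratification forces $\text{tr}_{P_s}(P_{s-1}) \cong P_s^{m}$ with $m \geqslant 1$ because $e_s A e_{s-1} \neq 0$, yet this trace lies in $\rad P_{s-1}$, whose dimension is strictly smaller than $\dim_k P_{s-1} \leqslant \dim_k P_s$ --- a two-line dimension count needing no chordless cycle, no idempotent truncation and no Cartan matrices. You instead pass to a minimal (chordless) cycle, truncate to $B = eAe$, and use the factorization $C = DF$ of the Cartan matrix coming from $\Delta$-filtrations, with $D$ triangular and $F$ unitriangular for the chosen order; your triangularity argument at the top element $\lambda^{*}$ for $n \geqslant 3$ and the divisibility/positivity count for $n = 2$ are both sound. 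Your method yields extra information (a chordless cycle of length $\geqslant 3$ already obstructs standard stratification for a single linear order, while a $2$-cycle genuinely requires playing two orders against each other), whereas the paper's method wins on brevity and self-containedness.

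Two points should be tightened. First, the phrase ``no such cyclic Cartan matrix admits a factorization $C=DF$ for any linear order on $S$'' is false as a per-order claim when $n=2$: for instance the $2$-cycle on $\{y,z\}$ inside the paper's Example 6.3.7 has Cartan matrix $\left(\begin{smallmatrix} 2 & 1 \\ 1 & 1 \end{smallmatrix}\right)$, which does admit such a factorization for one of the two orders. Your own $n=2$ computation in fact mixes the divisibilities $a_1 \mid c_{12}$ and $a_2 \mid c_{21}$, which necessarily come from two different linear orders; since the hypothesis supplies every order this is harmless, but the claim should be that the factorizations cannot exist for both orders simultaneously. Second, the reduction to $B=eAe$ invokes the fact that truncation at an upward-closed subset of $\Lambda$ preserves standard stratification; this is true but is not proved in the paper, and you can bypass it entirely by running the same Cartan computation in $A$ itself, using linear orders on $\Lambda$ in which the relevant cycle vertex is maximal: the extra summands over $k \in \Lambda$ only strengthen the inequalities you need, and chordlessness of the minimal cycle still gives $e_{\lambda^{+}} A e_{\lambda^{-}} = 0$.
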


\begin{proof}
Suppose that the conclusion is not true. Then there is an oriented cycle $e_0 \rightarrow e_1 \rightarrow \ldots \rightarrow e_n = e_0$ with $n > 1$ such that $\mathcal{A} (e_i, e_{i+1}) = e_{i+1} A e_i \neq 0$ for all $0 \leqslant i \leqslant n-1$. Take some $e_s$ such that $\dim _k P_s \geqslant \dim _k P_i$ for all $0 \leqslant i \leqslant n-1$, where $P_s = A e_s$ coincides with the vector space formed by all morphisms starting from $e_s$. Then for an arbitrary linear order $\preccurlyeq$ with respect to which $e_s$ is maximal, we claim that $A$ is not standardly stratified.

Indeed, if $A$ is standardly stratified with respect to $\preccurlyeq$, then tr$ _{P_s} (P_i) \cong P_s^{m_i}$ for some $m_i \geqslant 0$. Consider $P_{s-1}$ (if $s=0$ we consider $P_{n-1}$). Since $\mathcal{A} (e_{s-1}, e_s) = e_s A e_{s-1} \neq 0$, tr$ _{P_s} (P_{s-1}) \neq 0$, so $m_{s-1} \geqslant 1$. But on the other hand, since tr$ _{P_s} (P_{s-1}) \subseteq \rad P_{s-1}$, and $\dim _k P_s \geqslant \dim _k P_{s-1}$, we should have $m_{s-1} = 0$. This is absurd. Therefore, there is no oriented cycle in $\mathcal{A}$, and the conclusion follows.
\end{proof}

The following proposition motivates us to study the problem of whether $\mathcal{F} (\Delta)$ is closed under cokernels of monomorphisms.

\begin{proposition}
Suppose that the associated category $\mathcal{A}$ of $A$ is a directed category with respect to a linear order $\leqslant$. Then $\mathcal{F} (_{\leqslant} \Delta)$ is closed under cokernels of monomorphisms.
\end{proposition}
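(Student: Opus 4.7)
The plan is to induct on $|\Lambda|$. The key consequence of the directedness hypothesis is that for every $\lambda \in \Lambda$, the projective $P_\lambda = Ae_\lambda$ has composition factors only among $\{S_\mu : \mu \geqslant \lambda\}$ (since $e_\mu A e_\lambda = \mathcal{A}(e_\lambda, e_\mu) = 0$ whenever $\lambda \nleqslant \mu$), so after quotienting by the traces of $P_\mu$ for $\mu > \lambda$, the standard module $\Delta_\lambda$ is concentrated at the idempotent $e_\lambda$, i.e.\ its only composition factor is $S_\lambda$. When $\lambda$ is the maximum of $\leqslant$, one has $\Delta_\lambda = P_\lambda$, so $\Delta_\lambda$ is projective as an $A$-module and $\End_A(\Delta_\lambda) \cong e_\lambda A e_\lambda$ is local (as $e_\lambda$ is primitive).

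For the base case $|\Lambda| = 1$, $A$ is itself local and basic, $\Delta = A$, and $\mathcal{F}(\Delta) = \text{add}(A)$. A monomorphism $A^a \hookrightarrow A^b$ is given by a $b \times a$ matrix $M$ whose reduction $\bar M$ modulo $\rad A$ must have full column rank; choosing a left inverse $\bar N$ of $\bar M$ over $k = A/\rad A$ and lifting to $N \in M_{a \times b}(A)$ yields $NM = I_a + R$ with $R \in M_a(\rad A)$, and $I_a + R$ is invertible since $\rad A$ is nilpotent. Thus the mono splits and its cokernel is free.

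For the inductive step, let $\lambda$ be the maximum of $\leqslant$ and form $\bar A = A/Ae_\lambda A$. Its associated category is the full subcategory of $\mathcal{A}$ on $\{e_\mu : \mu \neq \lambda\}$, still directed, with standard modules $\bar\Delta_\mu = \Delta_\mu$ (viewed via the quotient). Directedness plus maximality of $\lambda$ forces $e_\mu A e_\lambda = 0$ for $\mu \neq \lambda$, hence $Ae_\lambda = e_\lambda A e_\lambda$, and this implies that $e_\lambda N$ is automatically an $A$-submodule of any $A$-module $N$. For $N \in \mathcal{F}(\Delta)$, applying the exact functor $e_\lambda(-)$ to a $\Delta$-filtration of $N$ produces a filtration of $e_\lambda N$ whose nonzero factors are exactly the $\Delta_\lambda$'s occurring in the original filtration (the other $e_\lambda \Delta_\mu$ vanish); since $\Delta_\lambda = P_\lambda$ is projective, this filtration splits and $e_\lambda N \cong \Delta_\lambda^{m_\lambda(N)}$. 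The quotient $N/e_\lambda N$ is annihilated by $Ae_\lambda A$, so it is a $\bar A$-module, and the induced filtration retains exactly the $\Delta_\mu$-factors with $\mu \neq \lambda$, placing $N/e_\lambda N$ in $\mathcal{F}(\bar\Delta)$.

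Given a short exact sequence $0 \to L \to M \to N \to 0$ with $L, M \in \mathcal{F}(\Delta)$, the snake lemma applied to the commutative square with inclusions $e_\lambda L \hookrightarrow e_\lambda M$, $L \hookrightarrow M$ and the induced injection $L/e_\lambda L \hookrightarrow M/e_\lambda M$ (injective because $L \cap e_\lambda M = e_\lambda L$) yields
\[
0 \longrightarrow e_\lambda M / e_\lambda L \longrightarrow N \longrightarrow (M/e_\lambda M)/(L/e_\lambda L) \longrightarrow 0.
\]
The left term is the cokernel of a monomorphism of free $e_\lambda A e_\lambda$-modules (via the equivalence $\text{add}(\Delta_\lambda) \simeq \text{add}(e_\lambda A e_\lambda)$), which is free by the base case applied to the local algebra $e_\lambda A e_\lambda$, hence isomorphic to $\Delta_\lambda^{m_\lambda(M)-m_\lambda(L)}$ and lies in $\mathcal{F}(\Delta)$. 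The right term is the cokernel of a monomorphism in $\mathcal{F}(\bar\Delta)$ inside $\bar A$-mod, hence lies in $\mathcal{F}(\bar\Delta) \subseteq \mathcal{F}(\Delta)$ by the induction hypothesis. Since $\mathcal{F}(\Delta)$ is trivially closed under extensions (concatenate filtrations), $N \in \mathcal{F}(\Delta)$, completing the induction. The principal technical hurdle is cleanly establishing the structural identifications $e_\lambda N \cong \Delta_\lambda^{m_\lambda(N)}$ as an $A$-module and $N/e_\lambda N \in \mathcal{F}(\bar\Delta)$; once these are in place, the snake lemma delivers the conclusion mechanically.
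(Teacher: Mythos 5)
Your argument is correct, but it is organized quite differently from the paper's proof. The paper does not induct on $|\Lambda|$ at all: it observes that, because $\mathcal{A}$ is directed, each $_{\leqslant}\Delta_{\lambda}$ is just $e_{\lambda}Ae_{\lambda}$ concentrated at $e_{\lambda}$, so that $K \in \mathcal{F}(_{\leqslant}\Delta)$ if and only if $e_{\lambda}K$ is a free $e_{\lambda}Ae_{\lambda}$-module for every $\lambda$; it then applies the exact functor $e_{\lambda}(-)$ to $0 \to L \to M \to N \to 0$ for each $\lambda$ separately and shows that the cokernel of a monomorphism of free modules over the local algebra $e_{\lambda}Ae_{\lambda}$ is again free (the tops embed, then compare dimensions). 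You instead strip off the $\leqslant$-maximal idempotent, pass to $\bar{A}=A/Ae_{\lambda}A$, split $N$ by the snake lemma into an $\mathrm{add}(\Delta_{\lambda})$-piece and a piece in $\mathcal{F}(\bar{\Delta})$, and finish by the local base case, the inductive hypothesis, and extension-closure. Both proofs rest on the same local-algebra fact, and in both it is asserted rather than proved: your "$\bar M$ must have full column rank" is exactly the paper's "the top of $e_{\lambda}L$ embeds into the top of $e_{\lambda}M$, since otherwise the map cannot be injective." It deserves a one-line justification (a nonzero free module cannot embed into $\rad$ of a free module, because any element of $\rad(R^b)$ is killed by the last nonzero power of $\rad R$), but you are no less rigorous than the paper here. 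What each route buys: the paper's criterion handles all $\lambda$ simultaneously and avoids any quotient-algebra bookkeeping, so it is shorter; your recursion avoids having to justify the "if" direction of that criterion (which the paper dismisses as clear, but whose proof is essentially your peeling-off argument), and your verifications that $e_{\lambda}N$ is an $A$-submodule isomorphic to $\Delta_{\lambda}^{m_{\lambda}(N)}$ and that $N/e_{\lambda}N \in \mathcal{F}(\bar{\Delta})$ are exactly the structural identifications one needs anyway; they also fit the style of the later reductions to $\bar{A}$ in this chapter (Lemma 6.1.5, Theorem 6.3.4).
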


\begin{proof}
Take an arbitrary exact sequence with $L, M \in \mathcal{F} (_{\leqslant}  \Delta)$, we need to show $N \in \mathcal{F} (_{\leqslant} \Delta)$ as well:
\begin{equation*}
\xymatrix{ 0 \ar[r] & L \ar[r] & M \ar[r] & N \ar[r] & 0.}
\end{equation*}
By the structure of standard modules, it is clear that an $A$-module $K \in \mathcal{F} (_{\leqslant} \Delta)$ if and only if $e_{\lambda} K$ is a free $\mathcal{A} (e_{\lambda},e_{\lambda}) = e_{\lambda} A e_{\lambda}$-module for all $\lambda \in \Lambda$. For an arbitrary $\lambda \in \Lambda$, the above sequence induces an exact sequence of $e_{\lambda} A e_{\lambda}$-modules
\begin{equation*}
\xymatrix{ 0 \ar[r] & e_{\lambda} L \ar[r] & e_{\lambda} M \ar[r] & e_{\lambda} N \ar[r] & 0.}
\end{equation*}
Since $L, M \in \mathcal{F} (_{\leqslant} \Delta)$, we know that $e_{\lambda} L \cong (e_{\lambda} A e_{\lambda})^l$ and $e_{\lambda} M \cong (e_{\lambda} A e_{\lambda})^m$ for some $l, m \geqslant 0$.

Notice that $e_{\lambda} A e_{\lambda}$ is a local algebra. The regular module over $e_{\lambda} A e_{\lambda}$ is indecomposable, has a simple top and finite length. Therefore, the top of $e_{\lambda} L$ is embedded into the top of $e_{\lambda} M$ since otherwise the first map cannot be injective. Therefore, the top of $e_{\lambda} N$ is isomorphic to $S_{\lambda}^{m-l}$ where $S_{\lambda} \cong (e_{\lambda} A e_{\lambda}) / \rad (e_{\lambda} A e_{\lambda})$ and $e_{\lambda} N$ is the quotient module of $(e_{\lambda} A e_{\lambda}) ^{m-l}$. But by comparing dimensions we conclude that $e_{\lambda} N \cong (e_{\lambda} A e_{\lambda}) ^{m-l}$. Consequently, $e_{\lambda} N$ and hence $N$ are contained in $\mathcal{F} (_{\leqslant} \Delta)$ as well. This finishes the proof.
\end{proof}

Actually, This also proves that if $A_0$ of a positively graded algebra $A$ is a direct sum of local algebras, then $A_0$ has the splitting property (S).

In some sense the property of being standardly stratified for all linear orders is inherited by quotient algebras. Explicitly,

\begin{lemma}
If $A$ is standardly stratified for all linear orders, then for an arbitrary primitive idempotent $e_{\lambda}$, the quotient algebra $\bar{A} = A / A e_{\lambda} A$ is standardly stratified for all linear orders.
\end{lemma}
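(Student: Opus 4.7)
The plan is to reduce to the case where $\lambda$ is a maximum element. Given an arbitrary linear order $\preccurlyeq$ on $\bar{\Lambda} := \Lambda \setminus \{\lambda\}$, I would extend it to a linear order $\preccurlyeq'$ on $\Lambda$ by declaring $\lambda$ to be the unique maximum. By hypothesis $A$ is standardly stratified for $\preccurlyeq'$; write $\Delta_{\nu}$ for the corresponding standard modules. The goal is to verify that the candidate standard modules $\bar{\Delta}_{\mu} := \Delta_{\mu}$ (for $\mu \neq \lambda$) witness the stratification of $\bar{A}$ with respect to $\preccurlyeq$.

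First, since $\lambda$ is maximal in $\preccurlyeq'$, the standard module $\Delta_{\lambda}$ coincides with $P_{\lambda}$, and for every $\mu \neq \lambda$ all composition factors of $\Delta_{\mu}$ are of the form $S_{\nu}$ with $\nu \preccurlyeq' \mu$, so $S_{\lambda}$ never occurs. Hence $e_{\lambda}\Delta_{\mu} = 0$ and $\Delta_{\mu}$ descends to an $\bar{A}$-module; the image $\bar{e}_{\mu}$ of $e_{\mu}$ is a primitive idempotent of $\bar{A}$, and $\bar{\Delta}_{\mu}$ satisfies condition (1) of the definition of standardly stratified algebras with respect to $\preccurlyeq$.

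Next, I would analyze the projective cover $\bar{P}_{\mu} = P_{\mu}/(Ae_{\lambda}A)e_{\mu}$ in $\bar{A}$-mod. A direct computation yields $(Ae_{\lambda}A)e_{\mu} = \mathrm{tr}_{P_{\lambda}}(P_{\mu})$. Using the defining exact sequence $0 \to K_{\mu} \to P_{\mu} \to \Delta_{\mu} \to 0$ of the stratification of $A$ for $\preccurlyeq'$ together with the fact that $\Delta_{\mu}$ has no $S_{\lambda}$ composition factor, I would show $\mathrm{tr}_{P_{\lambda}}(P_{\mu}) \subseteq K_{\mu}$, producing an $\bar{A}$-module exact sequence
\begin{equation*}
0 \to K_{\mu}/\mathrm{tr}_{P_{\lambda}}(P_{\mu}) \to \bar{P}_{\mu} \to \bar{\Delta}_{\mu} \to 0.
\end{equation*}
Since $K_{\mu}$ admits a filtration by $\Delta_{\nu}$ with $\nu \succ' \mu$, and since the maximality of $\lambda$ together with $\Delta_{\lambda} = P_{\lambda}$ allow me to collect the $\Delta_{\lambda}$-steps of this filtration into a submodule equal to $\mathrm{tr}_{P_{\lambda}}(P_{\mu})$, the quotient $K_{\mu}/\mathrm{tr}_{P_{\lambda}}(P_{\mu})$ inherits a filtration by $\bar{\Delta}_{\nu} = \Delta_{\nu}$ with $\nu \succ' \mu$ and $\nu \neq \lambda$, i.e., $\nu \succ \mu$ in $\preccurlyeq$. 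This verifies condition (2), so $\bar{A}$ is standardly stratified for $\preccurlyeq$; since $\preccurlyeq$ was arbitrary, the conclusion follows.

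The main obstacle will be the identification $\mathrm{tr}_{P_{\lambda}}(P_{\mu}) = \sum\{M_i \mid M_i/M_{i-1} \cong \Delta_{\lambda}\}$ inside a fixed $\Delta'$-filtration $0 = M_0 \subset M_1 \subset \cdots \subset M_n = P_{\mu}$ of $P_{\mu}$. This relies crucially on the maximality of $\lambda$: because $\lambda$ is the top element, the subquotient obtained by collapsing all $\Delta_{\lambda}$-steps to the bottom has only $\Delta_{\nu}$-factors with $\nu \prec' \lambda$ and hence no $S_{\lambda}$ composition factor, so it receives no nonzero $A$-map from $P_{\lambda}$; conversely each $\Delta_{\lambda}$-factor is cyclic over $P_{\lambda} = \Delta_{\lambda}$ and thus visibly lies in the trace. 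Making this collecting process rigorous (via induction on the filtration length and an application of the graded Nakayama-type argument used in Lemma 2.1.2) will occupy the bulk of the detailed proof.
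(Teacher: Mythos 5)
Your proposal is correct and follows the same route as the paper: extend the given linear order on $\Lambda\setminus\{\lambda\}$ to a linear order on $\Lambda$ in which $\lambda$ is the unique maximal element, and then deduce the stratification of $\bar{A}=A/Ae_{\lambda}A$ from that of $A$. The paper simply asserts this last passage as standard, whereas you supply the verification that each $\Delta_{\mu}$ ($\mu\neq\lambda$) descends to $\bar{A}$ and that $K_{\mu}/\mathrm{tr}_{P_{\lambda}}(P_{\mu})$ is filtered by the remaining standard modules, which is a correct filling-in of the omitted details.
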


\begin{proof}
Let $\preccurlyeq$ be a linear order on $\Lambda \setminus \{\lambda \}$. Then we can extend it to a linear order $\tilde {\preccurlyeq}$ on $\Lambda$ by letting $\lambda$ be the unique maximal element. Since $A$ is standardly stratified for $\tilde {\preccurlyeq}$, we conclude that $\bar {A}$ is standardly stratified for $\preccurlyeq$ as well.
\end{proof}

Now suppose that the associated category $\mathcal{A}$ of $A$ is directed. Then we define
\begin{equation*}
J = \bigoplus _{ \lambda \neq \mu \in \Lambda} \mathcal{A} (e_{\lambda}, e_{\mu} ) = \bigoplus _{ \lambda \neq \mu \in \Lambda} e_{\mu} A e_{\lambda}, \qquad A_0 = \bigoplus _{ \lambda \in \Lambda} \mathcal{A} (e_{\lambda}, e_{\lambda}) = \bigoplus _{ \lambda \in \Lambda} e_{\lambda} A e_{\lambda}.
\end{equation*}
Clearly, $J$ is a two-sided ideal of $A$ with respect to this chosen set of orthogonal primitive idempotents, and $A_0 \cong A /J$ as $A$-modules.

\begin{proposition}
If $A$ is standardly stratified for all linear orders on $\Lambda$, then $J$ is a projective $A$-module.
\end{proposition}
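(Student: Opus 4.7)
The plan is to proceed by induction on $|\Lambda|$; the base case $|\Lambda|=1$ is trivial because $J=0$. For the inductive step, using Proposition 6.1.3 I would fix a $\leqslant$-minimal element $\lambda\in\Lambda$, where $\leqslant$ is the partial order that makes $\mathcal{A}$ directed. The key structural observation is that directedness combined with the $\leqslant$-minimality of $\lambda$ forces $e_\lambda A e_\nu = 0$ for every $\nu\neq\lambda$; consequently $Ae_\lambda A\, e_\nu = A \cdot e_\lambda A e_\nu = 0$ for $\nu\neq\lambda$, so for the quotient $B = A/Ae_\lambda A$ we obtain $Be_\nu = Ae_\nu$ for all $\nu\neq\lambda$. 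This identity is crucial: it says that the indecomposable projective $B$-modules, other than the one we killed, are literally equal to the corresponding indecomposable projective $A$-modules, and hence every projective $B$-module is automatically projective as an $A$-module.

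Next I would invoke Lemma 6.1.5 to conclude that $B$ is still standardly stratified for all linear orders on $\Lambda\setminus\{\lambda\}$, apply the inductive hypothesis to $B$ to get that $J_B$ is a projective $B$-module, and then check by a direct computation (again using directedness and the minimality of $\lambda$) that for each $\nu\neq\lambda$ the submodule $Je_\nu\subseteq A$ is annihilated by $Ae_\lambda A$ and is naturally identified with $J_B e_\nu$. Taking direct sums, $\bigoplus_{\nu\neq\lambda} Je_\nu = J_B$ is then projective as an $A$-module. If $\lambda$ is not the unique $\leqslant$-minimum, I would pick another $\leqslant$-minimal element $\lambda'\neq\lambda$ and rerun the same argument with $\lambda'$ in place of $\lambda$; the statement "$Je_\nu$ is projective for $\nu\neq\lambda'$" then covers $\nu=\lambda$ and we are done.

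The main obstacle is the remaining case in which $\lambda$ is the unique $\leqslant$-minimum, for which the quotient argument above never reaches $Je_\lambda$. Here my plan would be to use the standardly stratified hypothesis for linear orders that do not extend $\leqslant$, in particular orders that place $\lambda$ strictly above some other elements. In such an order the filtration of $P_\lambda$ by standard modules forces submodules of the form $Ae_\mu Ae_\lambda$ to be isomorphic to direct sums of indecomposable projectives $Ae_\mu$, and unpacking this condition yields injectivity of multiplication maps $e_\kappa A e_\mu \otimes_{e_\mu A e_\mu} e_\mu A e_\lambda \to e_\kappa A e_\lambda$ for the relevant triples $(\kappa,\mu,\lambda)$. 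Combining these injectivities for a suitable collection of orders, together with the left $e_\mu A e_\mu$-projectivity of each $e_\mu A e_\lambda$ coming from Theorem 3.1.6 applied to $\leqslant$, allows one to build a filtration of $Je_\lambda$ by submodules of the form $A\cdot e_\mu A e_\lambda \cong (Ae_\mu)^{\oplus n_\mu}$ (with $\mu>\lambda$), and since extensions between distinct indecomposable projectives vanish the filtration splits, exhibiting $Je_\lambda$ as a direct sum of indecomposable projective $A$-modules. This combinatorial step—squeezing enough injectivity of composition maps out of the family of all linear orders to produce a projective decomposition of $Je_\lambda$—is where I expect essentially all of the technical work to concentrate.
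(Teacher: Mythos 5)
Your first half is sound and is essentially the paper's opening move: with $\lambda$ a $\leqslant$-minimal element, $e_\lambda A e_\nu = 0$ for $\nu \neq \lambda$, so $Be_\nu = Ae_\nu$ for $B = A/Ae_\lambda A$, Lemma 6.1.5 applies, and induction gives projectivity of $Je_\nu$ for all $\nu \neq \lambda$; the remark that a second minimal element would finish the argument is also correct. The genuine gap is exactly the case you flag, and it is the main case (any directed algebra with a unique minimal element lands there): projectivity of $Je_\lambda$ when $\lambda$ is the unique minimum. What you offer there is a plan, not a proof. Standard stratification for an order with $\mu$ maximal does give $\text{tr}_{P_\mu}(P_\lambda) = Ae_\mu Ae_\lambda \cong (Ae_\mu)^{\oplus n_\mu}$ for each single $\mu$, and Theorem 3.1.6 gives freeness of each $e_\mu A e_\lambda$ over $e_\mu A e_\mu$; but $Je_\lambda$ is only the (non-direct) sum of these traces, and in any filtration of $Je_\lambda$ by partial sums of traces the successive quotients are not the traces themselves — they are traces modulo their intersections with the earlier ones, and the projectivity of these quotients is the real content. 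Already for $\Lambda = \{\lambda < \mu < \nu\}$ the statement you must produce is that $Je_\lambda/\text{tr}_{P_\mu}(P_\lambda)$, equivalently $e_\nu A e_\lambda / e_\nu A e_\mu A e_\lambda$ as an $e_\nu A e_\nu$-module, is free; this does not follow formally from the pairwise freeness and injectivity statements you list, so the "combinatorial step" where you expect the work to concentrate is precisely the missing idea, not a routine verification.

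For comparison, the paper fills this hole with a second quotient-algebra induction: choose $\mu$ minimal in $\Lambda \setminus \{\lambda\}$, so that $e_\mu A e_\nu = 0$ for $\nu \neq \lambda, \mu$ and hence the indecomposable projectives of $\bar{A}' = A/Ae_\mu A$ other than those at $\lambda$ and $\mu$ coincide with the $P_\nu$; then $Je_\lambda/\text{tr}_{P_\mu}(P_\lambda)$ is the corresponding summand of the radical-type ideal of $\bar{A}'$, projective over $\bar{A}'$ by induction and hence over $A$, the extension by $\text{tr}_{P_\mu}(P_\lambda) \cong P_\mu^{m}$ splits, and $Je_\lambda$ is projective. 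If you prefer to realize your idea of exploiting linear orders that do not extend $\leqslant$, a single well-chosen order suffices: put $\lambda$ at the bottom and order $\Lambda \setminus \{\lambda\}$ by a linear extension of the opposite of $\leqslant$. Directedness then forces $\text{tr}_{P_\sigma}(P_\kappa) = 0$ whenever $\sigma \succ \kappa \succ \lambda$, so $\Delta_\kappa = P_\kappa$ for every $\kappa \neq \lambda$, while $K_\lambda = \sum_{\kappa \neq \lambda} Ae_\kappa Ae_\lambda = Je_\lambda$; standard stratification for this one order therefore filters $Je_\lambda$ by projective modules, and such a filtration splits. Either mechanism supplies the missing step; as written, your proposal contains neither.
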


\begin{proof}
Note that the associated category $\mathcal{A}$ is directed by Proposition 6.1.3, so $J$ is a well defined $A$-module. Let $\leqslant$ be a linear order on $\Lambda$ with respect to which $\mathcal{A}$ is directed, i.e., $\mathcal{A} (e_{\lambda}, e_{\mu}) = e_{\mu} A e_{\lambda} \neq 0$ implies $\lambda \leqslant \mu$. We prove this conclusion by induction on $| \Lambda |$, the size of $\Lambda$. It holds for $|\Lambda| = 1$ since $J = 0$. Now suppose that it holds for $|\Lambda| = s$ and consider $|\Lambda| = s+1$.

Let $\lambda$ be the minimal element in $\Lambda$ with respect to $\leqslant$. Therefore, $\mathcal{A} (e_{\mu}, e_{\lambda}) = e_{\lambda} A e_{\mu} = 0$ for all $\mu \neq \lambda \in \Lambda$. Let $\preccurlyeq$ be a linear order on $\Lambda$ such that $\lambda$ is the unique maximal element with respect to $\preccurlyeq$. Consider the quotient algebra $\bar{A} = A / A e_{\lambda} A$, which is standardly stratified for all linear orders on $\Lambda \setminus \{\lambda \}$ by the previous lemma. Thus the associated category $\bar {\mathcal{A}}$ of $\bar{A}$ is directed, and we can define $\bar{J}$ as well.

For $\mu \neq \lambda$, we have $\Hom_A (P_{\lambda}, P_{\mu}) \cong e_{\lambda} A e_{\mu} = 0$ since $\lambda$ is minimal with respect to $\leqslant$ and $\mathcal{A}$ is directed. Therefore, tr$ _{P_{\lambda}} (P _{\mu}) = 0$, and the corresponding indecomposable projective $\bar{A}$-module $\bar{P}_{\mu}$ is isomorphic to $P_{\mu}$. Let $J_{\mu} = J e_{\mu}$. Then we have:
\begin{align*}
J_{\mu} & = \bigoplus _{\nu \neq \mu \in \Lambda} e_{\nu} A e_{\mu} = \bigoplus _{\nu > \mu} e_{\nu} A e_{\mu} \cong \bigoplus _{\nu > \mu} \Hom _A (P_{\nu}, P_{\mu}) \\
& \cong \bigoplus _{\nu > \mu} \Hom _{\bar{A}} (\bar{P}_{\nu}, \bar{P}_{\mu}) \cong \bigoplus _{\nu > \mu} e_{\nu} \bar{A} e_{\mu} = \bar{J}_{\mu}
\end{align*}
By the induction hypothesis, $J_{\mu} \cong \bar{J}_{\mu}$ is a projective $\bar {A}$-module. By our choice of $\lambda$, it is actually a projective $A$-module since $e_{\lambda}$ acts on $J_{\mu}$ as 0. Therefore, $J_{\mu}$ is a projective $A$ module.

Since $J = \bigoplus _{\nu \in \Lambda} J_{\nu}$ and we have proved that all $J_{\nu}$ are projective for $\nu \neq \lambda$, it remains to prove that $J_{\lambda}$ is a projective module. To achieve this, we take the element $\mu \in \Lambda$ which is minimal in $\Lambda \setminus \{\lambda\}$ with respect to $\leqslant$. Therefore, $\mathcal{A} (e_{\nu}, e_{\mu}) = e_{\mu} A e_{\nu} \neq 0$ only if $\nu = \mu$ or $\nu = \lambda$.

Now define another linear order $\preccurlyeq '$ on $\Lambda$ such that $\mu$ is the unique maximal element with respect to $\preccurlyeq'$. Similarly, for all $\nu \neq \mu, \lambda$, we have $\bar{P}_{\nu}' \cong P_{\nu}$, where $\bar{A}' = A / A e_{\mu} A$ and $\bar{P}_{\nu}' = \bar{A}' e_{\nu}$. As before, the associated category of $\bar{A}'$ is directed so we can define $\bar{J}'$. Moreover, we have $M = \text{tr} _{P_{\mu}} (P_{\lambda}) \subseteq J_{\lambda}$. Thus we get the following commutative diagram:
\begin{equation*}
\xymatrix{ & & 0 \ar[d] & 0 \ar[d] & \\
0 \ar[r] & M \ar[r] \ar@{=}[d] & J_{\lambda} \ar[r] \ar[d] & \bar{J} _{\lambda}' \ar[r] \ar[d] & 0 \\
0 \ar[r] & M \ar[r] & P_{\lambda} \ar[r] \ar[d] & \bar{P} _{\lambda}' \ar[r] \ar[d] & 0 \\
 & & e_{\lambda} A e_{\lambda} \ar[d] \ar@{=}[r] & e_{\lambda} A e_{\lambda} \ar[d] \ar[r] & 0 \\
 & & 0 & 0 &
}
\end{equation*}
where $\bar{J}_{\lambda}' = \bar{J'} e_{\lambda}$. By the induction hypothesis on the quotient algebra $\bar {A'}$, $\bar{J}_{\lambda}'$ is a projective $\bar {A'}$-module. Clearly, each indecomposable summands of $\bar{J}_{\lambda}'$ is isomorphic to a certain $\bar{P}_{\nu}'$ with $\nu \neq \lambda, \mu$. But $\bar{P}_{\nu}' \cong P_{\nu}$.  Therefore, $\bar {J}_{\lambda}'$ is actually a projective $A$-module, and the top sequence in the above diagram splits, i.e., $J_{\lambda} \cong M \oplus \bar{J}_{\lambda}'$. But $M = \text{tr} _{P_{\mu}} (P_{\lambda})$ is also a projective $A$-module (which is actually isomorphic to a direct sum of copies of $P_{\mu}$) since $A$ is standardly stratified with respect to $\preccurlyeq'$ and $\mu$ is maximal with respect to this order. Thus $J_{\lambda}$ is a projective $A$-module as well. The proof is completed.
\end{proof}

\begin{proposition}
Suppose that the associated category $\mathcal{A}$ of $A$ is directed. If $J$ is a projective $A$-module then for an arbitrary pair $\lambda, \mu \in \Lambda$, tr$_{P_{\lambda}} (P_{\mu}) \cong P_{\lambda} ^{m_{\mu}}$ for some $m_{\mu} \geqslant 0$.
\end{proposition}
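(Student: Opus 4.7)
The plan is to prove the statement by downward induction on $\mu$, with respect to a linear order $\leqslant$ for which $\mathcal{A}$ is directed (one exists by Proposition 6.1.3). The case $\lambda = \mu$ is immediate with $m_{\mu} = 1$, so I fix $\lambda \neq \mu$. First I would check that $\text{tr}_{P_{\lambda}}(P_{\mu})$ lies inside the submodule $Je_{\mu} \subseteq P_{\mu}$. Any homomorphism $f \colon P_{\lambda} \to P_{\mu}$ has the form $a \mapsto ax$ for some $x = f(e_{\lambda}) \in e_{\lambda} A e_{\mu}$, so its image $Ax$ satisfies $e_{\mu} \cdot Ax \subseteq e_{\mu} A e_{\lambda} \cdot e_{\lambda} A e_{\mu}$. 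By directedness, $e_{\mu} A e_{\lambda} \neq 0$ forces $\lambda \leqslant \mu$ while $e_{\lambda} A e_{\mu} \neq 0$ forces $\mu \leqslant \lambda$, so for $\lambda \neq \mu$ at least one of these factors vanishes and $e_{\mu} \cdot Ax = 0$. Since every such $f$ therefore factors through the inclusion $Je_{\mu} \hookrightarrow P_{\mu}$, the two traces $\text{tr}_{P_{\lambda}}(P_{\mu})$ and $\text{tr}_{P_{\lambda}}(Je_{\mu})$ coincide as submodules of $P_{\mu}$.

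Next I would exploit the projectivity of $J$. The decomposition $J = \bigoplus_{\nu} Je_{\nu}$ shows that each $Je_{\mu}$ is a direct summand of $J$, hence itself projective, so by Krull–Schmidt $Je_{\mu} \cong \bigoplus_{\nu} P_{\nu}^{n_{\nu}^{\mu}}$. The multiplicities $n_{\nu}^{\mu}$ can only be nonzero for $\nu > \mu$: applying $e_{\nu}$ to both sides shows that $P_{\nu}$ can occur only if $e_{\nu} A e_{\mu} \neq 0$, which by directedness forces $\mu \leqslant \nu$, combined with the requirement $\nu \neq \mu$ built into $Je_{\mu}$. This is essentially the only place where the projectivity assumption on $J$ enters in a substantive way.

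Finally, the induction itself runs downward along $\leqslant$. The base case is $\mu$ maximal: then no $\nu > \mu$ exists, so $Je_{\mu} = 0$, and $\text{tr}_{P_{\lambda}}(P_{\mu}) = 0$ for every $\lambda \neq \mu$. For the inductive step, the induction hypothesis applied to each $\nu > \mu$ supplies integers $m_{\lambda,\nu}$ with $\text{tr}_{P_{\lambda}}(P_{\nu}) \cong P_{\lambda}^{m_{\lambda,\nu}}$. Using the obvious additivity of traces over finite direct sums, I then obtain
\begin{equation*}
\text{tr}_{P_{\lambda}}(P_{\mu}) \;=\; \text{tr}_{P_{\lambda}}(Je_{\mu}) \;\cong\; \bigoplus_{\nu > \mu} \text{tr}_{P_{\lambda}}(P_{\nu})^{n_{\nu}^{\mu}} \;\cong\; P_{\lambda}^{m_{\mu}},
\end{equation*}
with $m_{\mu} = \sum_{\nu > \mu} m_{\lambda,\nu}\, n_{\nu}^{\mu}$, completing the induction. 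I do not anticipate any serious obstacle here; the main conceptual point is simply that directedness forces every indecomposable projective summand of $Je_{\mu}$ to lie strictly above $\mu$, which is exactly what makes the downward induction go through.
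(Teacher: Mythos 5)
Your argument is correct and is essentially the paper's own proof: both reduce $\mathrm{tr}_{P_{\lambda}}(P_{\mu})$ to the trace inside the projective submodule $Je_{\mu}$ (the paper phrases this as $\mathrm{tr}_{P_{\lambda}}(E_{\mu})=0$ for the quotient $E_{\mu}=P_{\mu}/Je_{\mu}$, you phrase it via directedness killing $e_{\mu}\cdot\mathrm{im}\,f$), then use projectivity of $J$ and directedness to see that every indecomposable summand of $Je_{\mu}$ is some $P_{\nu}$ with $\nu>\mu$, and conclude by induction, the only cosmetic difference being that you induct downward on $\mu$ while the paper inducts on the distance from $\lambda$ to $\mu$ in the chosen linear order. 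One tiny slip: the linear order should be obtained by extending the partial order furnished by the directedness hypothesis (as explained in Section 3.1), not from Proposition 6.1.3, whose hypothesis is that $A$ is standardly stratified for all linear orders.
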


\begin{proof}
Let $\leqslant$ be a linear order on $\Lambda$ with respect to which $\mathcal{A}$ is directed. We can index all orthogonal primitive idempotents: $e_n > e_{n-1} > \ldots > e_1$. Let $P_i = A e_i$. Suppose that $e_{\lambda} = e_s$ and $e_{\mu} = e_t$. If $s < t$, tr$_{P_s} (P_t) = 0$ and the conclusion is trivially true. For $s = t$, the conclusion holds as well. So we assume $s > t$ and conduct induction on the difference $d = s -t$. Since it has been proved for $d =0$, we suppose that the conclusion holds for all $d \leqslant l$.

Now suppose $d = s - t = l+1$. Let $E_t = \mathcal{A} (e_t, e_t) \cong P_t /J_t \cong e_t A e_t$, which can be viewed as an $A$-module. We have the following exact sequence:
\begin{equation*}
\xymatrix{0 \ar[r] & J_t \ar[r] & P_t \ar[r] & E_t \ar[r] & 0}.
\end{equation*}
Since $J$ is projective, so is $J_t = Je_t$. Moreover, since $J_t = \bigoplus _{m \neq t} e_m A e_t$ and $e_m A e_t = 0$ if $m \ngtr t$, we deduce that $J_t$ has no summand isomorphic to $P_m$ with $m < t $. Therefore, $J_t \cong \bigoplus _{t+1 \leqslant i \leqslant n} P_i^{m_i}$, where $m_i$ is the multiplicity of $P_i$ in $J_t$.

The above sequence induces an exact sequence
\begin{equation*}
\xymatrix{0 \ar[r] & \bigoplus _{t+1 \leqslant i \leqslant n} \text{tr} _{P_s} (P_i)^{m_i} \ar[r] & \text{tr} _{P_s} (P_t) \ar[r] & \text{tr} _{P_s} (E_t) \ar[r] & 0}.
\end{equation*}
Clearly, $\text{tr} _{P_s} (E_t) = 0$, so
\begin{equation*}
\bigoplus _{t+1 \leqslant i \leqslant n} \text{tr} _{P_s} (P_i)^{m_i} \cong \text{tr} _{P_s} (P_t).
\end{equation*}
But for each $t+1 \leqslant i \leqslant n$, we get $s - i < s - t = l+1$. Thus by the induction hypothesis, each $\text{tr} _{P_s} (P_i)$ is a projective module isomorphic to a direct sum of $P_s$. Therefore, $\text{tr} _{P_s} (P_t)$ is a direct sum of $P_s$. The conclusion follows from induction.
\end{proof}

The condition that $\mathcal{A}$ is directed is required in the previous propositions since otherwise $J$ might not be well defined.

\begin{proposition}
If for each pair $\lambda, \mu \in \Lambda$, tr$ _{P_{\lambda}} (P_{\mu})$ is a projective module, then the associated category $\mathcal{A}$ is a directed category. Moreover, $A$ is standardly stratified for all linear orders.
\end{proposition}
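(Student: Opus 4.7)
\medskip
\noindent\textbf{Proof proposal.} The plan is to verify the two statements in turn: first directedness of $\mathcal{A}$, which is a short cycle-killing argument, and then stratification for an arbitrary linear order, which I would carry out by induction on $|\Lambda|$.

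For directedness, I would mimic the proof of Proposition 6.1.3. Suppose for contradiction that $\mathcal{A}$ contains an oriented cycle $e_{i_0}\to e_{i_1}\to\cdots\to e_{i_n}=e_{i_0}$, meaning $e_{i_{j+1}}Ae_{i_j}\neq 0$ for each $j$. Choose an index $s$ in the cycle with $\dim_k P_{i_s}$ maximal. By hypothesis, $\mathrm{tr}_{P_{i_s}}(P_{i_{s-1}})$ is a projective $A$-module, and it is nonzero because $e_{i_s}Ae_{i_{s-1}}\neq 0$. Its top is a quotient of a direct sum of copies of $S_{i_s}$, so as a projective module with simple-of-type-$\lambda$ top it must be a nonzero direct sum of copies of $P_{i_s}$. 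Since $i_s\neq i_{s-1}$ forces $\mathrm{tr}_{P_{i_s}}(P_{i_{s-1}})\subseteq \mathrm{rad}(P_{i_{s-1}})$, we obtain $\dim_k P_{i_s}\leqslant \dim_k\mathrm{rad}(P_{i_{s-1}})<\dim_k P_{i_{s-1}}$, contradicting maximality.

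For the stratification statement, fix an arbitrary linear order $\preccurlyeq$ on $\Lambda$ with $\mu_1\succ\mu_2\succ\cdots\succ\mu_n$ and set $\epsilon_i=e_{\mu_1}+\cdots+e_{\mu_i}$. Define the filtration $0=F_0\subseteq F_1\subseteq\cdots\subseteq F_n=P_\nu$ by $F_i=A\epsilon_iAe_\nu=\sum_{j\leqslant i}\mathrm{tr}_{P_{\mu_j}}(P_\nu)$. A direct computation identifies the successive quotient $F_i/F_{i-1}$ with the trace $\mathrm{tr}_{\bar P_{\mu_i}}(\bar P_\nu)$ in the quotient algebra $\bar A_i=A/A\epsilon_{i-1}A$, and in $\bar A_i$ the module $\bar P_{\mu_i}=Ae_{\mu_i}/\sum_{j<i}\mathrm{tr}_{P_{\mu_j}}(P_{\mu_i})$ coincides with $\Delta_{\mu_i}^{\preccurlyeq}$ as $A$-modules. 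To conclude that $F_i/F_{i-1}\cong(\Delta_{\mu_i}^{\preccurlyeq})^{m_i}$, I need the key lemma: each quotient $\bar A_i$ again satisfies the hypothesis of Proposition 6.1.8, because then the argument used to prove Proposition 6.1.7 (or rather, the same top-identification used in Step~1) shows that the trace is a direct sum of copies of $\bar P_{\mu_i}$. With this in hand, concatenating the successive quotients exhibits a $\preccurlyeq$-standard filtration of $P_\nu$, so $A$ is standardly stratified for $\preccurlyeq$; since $\preccurlyeq$ was arbitrary, we are done.

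\medskip
\noindent\textbf{Main obstacle.} The technical heart of the proof is the inductive step: verifying that if $A$ satisfies the hypothesis then so does $\bar A=A/Ae_\mu A$ for every $\mu$. The hypothesis is cleanly reformulated by observing that $\mathrm{tr}_{P_\lambda}(P_\nu)\cong P_\lambda\otimes_{E_\lambda}(e_\lambda Ae_\nu)$, so it says exactly that $e_\lambda Ae_\nu$ is a free left $E_\lambda$-module. The claim for $\bar A$ then asks that $e_\lambda Ae_\nu/(e_\lambda Ae_\mu)(e_\mu Ae_\nu)$ be free over $\bar E_\lambda=E_\lambda/(e_\lambda Ae_\mu)(e_\mu Ae_\lambda)$. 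The inclusion $IM\subseteq N$ (with $I,N$ denoting the respective relation ideals) is immediate from associativity, so the quotient is at least an $\bar E_\lambda$-module. The hard part is freeness on the quotient: one must produce a subset of a free $E_\lambda$-basis of $e_\lambda Ae_\nu$ that descends to a free $\bar E_\lambda$-basis modulo $N$. This is transparent in the special case where $\mu$ is maximal with respect to the directedness order $\leqslant$ of $\mathcal{A}$ (because then $Ae_\mu A=e_\mu A$ and $\bar e_\lambda\bar A\bar e_\nu=e_\lambda Ae_\nu$ unchanged), and the general case reduces to a careful bookkeeping of how the free-module bases $e_\lambda Ae_\mu\cong E_\lambda^{k_{\lambda\mu}}$ and $e_\mu Ae_\nu\cong E_\mu^{k_{\mu\nu}}$ interact under composition. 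Once this freeness-descent is established the induction closes and the proposition follows.
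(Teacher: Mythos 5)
Your first step (directedness) is fine and is exactly the paper's argument: an oriented cycle would force a nonzero projective trace, hence a direct sum of copies of $P_{i_s}$, sitting inside the radical of a projective of no larger dimension. Your second step also has the right skeleton: both you and the paper reduce the statement ``standardly stratified for an arbitrary linear order $\preccurlyeq$'' to the claim that the hypothesis (all traces $\mathrm{tr}_{P_\lambda}(P_\mu)$ projective) is inherited by the quotient algebra $\bar{A}=A/Ae_\lambda A$, where $\lambda$ is the $\preccurlyeq$-maximal element; your filtration $F_i=A\epsilon_i A e_\nu$ is just an unwound form of the paper's induction on $|\Lambda|$.

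The genuine gap is that this inheritance claim --- which you yourself flag as the ``main obstacle'' and ``key lemma'' --- is never proved. Your sketched route (reformulate the hypothesis as freeness of $e_\lambda Ae_\nu$ over $E_\lambda=e_\lambda Ae_\lambda$ and then perform a ``freeness-descent'' of bases to $e_\lambda Ae_\nu/(e_\lambda Ae_\mu)(e_\mu Ae_\nu)$ over $\bar{E}_\lambda$) is only carried out in the easy case where the deleted idempotent is maximal for the directedness order; but in the induction the deleted idempotent is the $\preccurlyeq$-maximal element of an \emph{arbitrary} linear order, so the general case is precisely what is needed, and it is not evident that a sub-basis of a free $E_\lambda$-basis descends as you hope. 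The paper closes exactly this gap by a module-theoretic argument rather than basis bookkeeping: for $\mu>\nu$ in the directedness order one takes the exact sequence $0\rightarrow \mathrm{tr}_{P_\mu}(P_\nu)\cong P_\mu^m\rightarrow P_\nu\rightarrow M\rightarrow 0$ with $e_\mu M=0$, observes that by hypothesis the traces of $P_\lambda$ in all three terms are direct sums of copies of $P_\lambda$, and forms the resulting $3\times 3$ commutative diagram whose bottom row is $0\rightarrow \bar{P}_\mu^m\rightarrow \bar{P}_\nu\rightarrow \bar{M}\rightarrow 0$ with $e_\mu\bar{M}=0$; this identifies $\mathrm{tr}_{\bar{P}_\mu}(\bar{P}_\nu)\cong \bar{P}_\mu^m$, which is projective over $\bar{A}$, and the induction then closes. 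Until you supply an argument of this kind (or complete your freeness-descent in full generality), the proposal does not prove the proposition.
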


\begin{proof}
The first statement comes from the proof of Proposition 6.1.3. Actually, if $\mathcal{A}$ is not directed, in that proof we find some $P_{\lambda}$ and $P_{\mu}$ such that tr$_{P_{\lambda}} (P_{\mu}) \neq 0$ is not projective.

To prove the second statement, we use induction on $|\Lambda|$. It is clearly true if $|\Lambda| = 1$. Suppose that the conclusion holds for $|\Lambda| \leqslant l$ and suppose that $|\Lambda| = l+1$. Let $\preccurlyeq$ be an arbitrary linear order on $\Lambda$ and take a maximal element $\lambda$ with respect to this linear order. Since tr$ _{P_{\lambda}} (P_{\mu})$ is a projective $A$-module for all $\mu \in \Lambda$ by the given condition, it is enough to show that the quotient algebra $\bar {A} =  A /A e_{\lambda} A$ has the same property. That is, tr$ _{\bar{P_{\mu}}} (\bar{P_{\nu}})$ is a projective $\bar {A}$-module for all $\mu, \nu \in \Lambda \setminus \{ \lambda \}$, where $\bar{P_{\mu}} = \bar {A} e_{\mu}$ and $\bar{P_{\nu}}$ is defined similarly. Then the conclusion will follow from induction hypothesis.

Let $\leqslant$ be a linear order on $\Lambda$ with respect to which $\mathcal{A}$ is directed. It restriction on $\Lambda \setminus \{ \lambda \}$ gives a linear order with respect to which $\bar{\mathcal{A}}$, the associated category of $\bar{A}$, is directed. Consider tr$ _{\bar{P_{\mu}}} (\bar{P_{\nu}})$. If $\mu \leqslant \nu$, this trace is 0 or $\bar{P_{\nu}}$. Thus we assume that $\mu > \nu$. Since tr$ _{P_{\mu}} (P_{\nu})$ is projective, it is isomorphic to a direct sum of of $P_{\mu}$, and we get the following exact sequence:
\begin{equation*}
\xymatrix{0 \ar[r] & \text{tr} _{P_{\mu}} (P_{\nu}) \cong P_{\mu}^m \ar[r] & P_{\nu} \ar[r] & M \ar[r] & 0}
\end{equation*}
with $e_{\mu} M = 0$.

Since tr$ _{P_{\lambda}} (P_{\mu})$ and tr$ _{P_{\lambda}} (P_{\nu})$ are also isomorphic to direct sums of $P_{\lambda}$, by considering the traces of $P_{\lambda}$ in the modules in the above sequence, we get a commutative diagram as follows:
\begin{equation*}
\xymatrix{ & 0 \ar[d] & 0 \ar[d] & 0 \ar[d] & \\
0 \ar[r] & P_{\lambda}^s \cong \text{tr} _{P_{\lambda}} (P_{\mu}^m) \ar[r] \ar[d] & P_{\lambda}^t \cong \text{tr} _{P_{\lambda}} (P_{\nu}) \ar[r] \ar[d] & P_{\lambda}^{t-s} \cong \text{tr} _{P_{\lambda}} (M) \ar[r] \ar[d] & 0 \\
0 \ar[r] & P_{\mu}^m \ar[r] \ar[d] & P_{\nu} \ar[r] \ar[d] & M \ar[r] \ar[d] & 0 \\
0 \ar[r] & \bar{P_{\mu}}^m \ar[r] \ar[d] & \bar{P_{\nu}} \ar[r] \ar[d] & \bar{M} \ar[r] \ar[d] & 0 \\
 & 0 & 0 & 0 &.}
\end{equation*}
Since $e_{\mu} M =0$, we get $e_{\mu} \bar{M} =0$ as well. Thus from the bottom row we conclude that tr$ _{\bar {P_{\mu}}} (\bar {P_{\nu}}) \cong \bar{P_{\mu}}^m$ is projective. This finishes the proof.
\end{proof}

Now we can give several characterizations for algebras standardly stratified for all linear orders.

\begin{theorem}
Let $A$ be a basic finite-dimensional algebra and $\mathcal{A}$ be its associated category. Then the following are equivalent:
\begin{enumerate}
\item $A$ is standardly stratified for all linear orders.
\item $\mathcal{A}$ is a directed category and $J = \bigoplus _{\lambda \neq \mu \in \Lambda} e_{\mu} A e_{\lambda}$ is a projective $A$-module.
\item The trace tr$ _{P_{\lambda}} (P_{\mu})$ is a projective module for $\lambda, \mu \in \Lambda$.
\item The projective dimension $\pd _A M \leqslant 1$ for all $M \in \mathcal{F} (_{\preccurlyeq} \Delta)$ and linear orders $\preccurlyeq$.
\end{enumerate}
\end{theorem}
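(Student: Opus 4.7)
The plan is to close the four-way equivalence by assembling the propositions proved above for the cycle through (1), (2), (3), and then grafting (4) onto this cycle via two separate implications. First I would observe that (1) $\Rightarrow$ (2) is immediate from Proposition 6.1.3 combined with Proposition 6.1.6, (2) $\Rightarrow$ (3) is Proposition 6.1.7, and (3) $\Rightarrow$ (1) is Proposition 6.1.8. This establishes (1) $\Leftrightarrow$ (2) $\Leftrightarrow$ (3) with no further work.

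For (4) $\Rightarrow$ (3), I would pick any pair $\mu, \lambda \in \Lambda$ and choose a linear order $\preccurlyeq$ in which $\mu$ is the $\preccurlyeq$-maximum and $\lambda$ is second from the top. Then the only index strictly above $\lambda$ is $\mu$, so the kernel of $P_{\lambda} \twoheadrightarrow {_\preccurlyeq\Delta_{\lambda}}$ is exactly $\operatorname{tr}_{P_{\mu}}(P_{\lambda})$. Hypothesis (4) yields $\pd{_\preccurlyeq\Delta_{\lambda}} \leqslant 1$, forcing $\operatorname{tr}_{P_{\mu}}(P_{\lambda})$ to be projective; since it is generated by images of maps from $P_{\mu}$, its top is a direct sum of copies of $S_{\mu}$, and hence $\operatorname{tr}_{P_{\mu}}(P_{\lambda}) \cong P_{\mu}^{m}$ for some $m \geqslant 0$, giving (3).

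For (1) $\Rightarrow$ (4), fix a linear order $\preccurlyeq$. Since $\mathcal{F}(_\preccurlyeq\Delta)$ is closed under extensions, it suffices to show $\pd_A \Delta_{\lambda} \leqslant 1$ for every $\lambda$, i.e. that $K_{\lambda} = \sum_{\mu \succ \lambda} \operatorname{tr}_{P_{\mu}}(P_{\lambda})$ is projective. By (3) every summand is projective, so the entire content is that a possibly overlapping sum of projective submodules of $P_{\lambda}$ is still projective. I would proceed by induction on $|\Lambda|$: let $\nu$ be the $\preccurlyeq$-maximum and pass to $\bar A = A/Ae_{\nu}A$, which is standardly stratified for every linear order on $\Lambda \setminus \{\nu\}$ by Lemma 6.1.5. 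Writing $\bar\Delta_{\lambda} = \Delta_{\lambda}$ and $\bar K_{\lambda} = K_{\lambda} / \operatorname{tr}_{P_{\nu}}(P_{\lambda})$, one obtains a short exact sequence
\begin{equation*}
0 \rightarrow \operatorname{tr}_{P_{\nu}}(P_{\lambda}) \rightarrow K_{\lambda} \rightarrow \bar K_{\lambda} \rightarrow 0
\end{equation*}
whose outer terms are $A$-projective (the first by (3), the third by the inductive hypothesis combined with a lifting argument).

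The main obstacle is exactly this last point: the inductive hypothesis only yields $\bar A$-projectivity of $\bar K_{\lambda}$, whereas one needs $A$-projectivity of $K_{\lambda}$, and $\bar A$-projective modules typically fail to be $A$-projective since $\operatorname{tr}_{P_{\nu}}(P_{\mu}) \subseteq \rad P_{\mu}$. To overcome this I would drop the quotient-algebra strategy in favour of a direct structural argument: perform a secondary induction on $|\{\mu : \mu \succ \lambda\}|$, at each step moving the $\preccurlyeq$-maximal such $\mu$ below $\lambda$ to obtain a strictly smaller index set and applying (3) to identify each trace as $P_{\mu}^{m_{\mu}}$. Combined with Proposition 6.1.4, which tells us that $\mathcal{F}(_\preccurlyeq\Delta)$ is closed under cokernels of monomorphisms, a filtration-multiplicity count on the top of $K_{\lambda}$ will show that its projective cover maps isomorphically onto it, yielding $K_{\lambda} \cong \bigoplus_{\mu \succ \lambda} P_{\mu}^{m_{\mu}}$ and therefore $\pd \Delta_{\lambda} \leqslant 1$.
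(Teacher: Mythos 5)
Your handling of the cycle $(1)\Leftrightarrow(2)\Leftrightarrow(3)$ and of $(4)\Rightarrow(3)$ matches the paper: the first three implications are exactly Propositions 6.1.3, 6.1.6, 6.1.7 and 6.1.8, and your choice of a linear order with $\mu$ on top and $\lambda$ immediately below realizes $\operatorname{tr}_{P_{\mu}}(P_{\lambda})$ as the kernel of $P_{\lambda}\twoheadrightarrow{_{\preccurlyeq}\Delta_{\lambda}}$, which is the paper's argument with the roles of $\lambda$ and $\mu$ interchanged.

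The gap is in your $(1)\Rightarrow(4)$. First, you invoke Proposition 6.1.4 as saying that $\mathcal{F}({_{\preccurlyeq}\Delta})$ is closed under cokernels of monomorphisms for an \emph{arbitrary} linear order $\preccurlyeq$; that proposition only applies to a linear order with respect to which $\mathcal{A}$ is directed, and for other orders the closure genuinely fails even under hypothesis (1): for the hereditary algebra of the quiver $x\to y$ with the order $x\succ y$ one has ${_{\preccurlyeq}\Delta_{x}}=P_{x}$, ${_{\preccurlyeq}\Delta_{y}}=S_{y}$, and the cokernel of the socle inclusion $S_{y}\hookrightarrow P_{x}$ is $S_{x}\notin\mathcal{F}({_{\preccurlyeq}\Delta})$. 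Second, the decisive point, projectivity of $K_{\lambda}=\sum_{\mu\succ\lambda}\operatorname{tr}_{P_{\mu}}(P_{\lambda})$, is exactly the difficulty you yourself isolate (an overlapping sum of projective submodules), and your proposed secondary induction with a ``filtration-multiplicity count on the top of $K_{\lambda}$'' asserts the conclusion $K_{\lambda}\cong\bigoplus_{\mu\succ\lambda}P_{\mu}^{m_{\mu}}$ rather than proving it; no mechanism is offered that rules out a non-projective sum, and it is unclear what ``moving the maximal $\mu$ below $\lambda$'' produces, since changing the order changes both $K_{\lambda}$ and the standard modules. The paper sidesteps all of this: it proves $(2)\Rightarrow(4)$ by noting that for the directedness order $\leqslant$ one has ${_{\leqslant}\Delta}\cong A_{0}$ and $\pd A_{0}\leqslant 1$ from $0\to J\to A\to A_{0}\to 0$ with $J$ projective, that $\mathcal{F}({_{\leqslant}\Delta})$ is closed under cokernels of monomorphisms (the only place Proposition 6.1.4 is used), and then quotes Theorem 6.3.4(2) to obtain $\mathcal{F}({_{\preccurlyeq}\Delta})\subseteq\mathcal{F}({_{\leqslant}\Delta})$ for every linear order $\preccurlyeq$, so the bound on projective dimension for arbitrary orders is inherited rather than re-proved order by order. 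To close your argument you would need either an actual proof that $K_{\lambda}$ is projective for every linear order, or a containment statement of the type of Theorem 6.3.4(2); as written the implication $(1)\Rightarrow(4)$ does not go through.
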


\begin{proof}
$(1) \Rightarrow (2)$: by Propositions 6.1.3 and 6.1.6.

$(2) \Rightarrow (3)$: by Proposition 6.1.7.

$(3) \Rightarrow (1)$: by Proposition 6.1.8.

$(2) \Rightarrow (4)$: By the assumption, there is a linear order $\leqslant$ on $\Lambda$ such that $\mathcal{A}$ is directed with respect to it. By Proposition 6.1.4, we know that $_{\leqslant} \Delta \cong A_0 = \bigoplus _{\lambda \in \Lambda} e_{\lambda} A e_{\lambda}$. Thus the projective dimension of $_{\leqslant} \Delta$ is at most 1 since we have the exact sequence $0 \rightarrow J \rightarrow A \rightarrow A_0 \rightarrow 0$ and $J$ is projective. Therefore, every $M \in \mathcal{F} (_{\leqslant} \Delta)$ has projective dimension at most 1.

Note that $\mathcal{F} (_{\leqslant} \Delta)$ is closed under cokernels of monomorphisms by Proposition 6.1.4. Let $\preccurlyeq$ be an arbitrary linear order on $\Lambda$. By the second statement of Theorem 6.3.4 (which will be proved later), $\mathcal{F} (_{\preccurlyeq} \Delta) \subseteq \mathcal{F} (_{\leqslant} \Delta)$. Thus every $M \in \mathcal{F} (_{\preccurlyeq} \Delta) \subseteq \mathcal{F} (_{\leqslant} \Delta)$ has projective dimension at most 1.

$(4) \Rightarrow (3)$: Take an arbitrary pair $\lambda, \mu \in \Lambda$. We want to show that tr$ _{P_{\lambda}} (P_{\mu})$ is a projective $A$-module. Clearly, there exists a linear order $\preccurlyeq$ on $\Lambda$ such that $\lambda$ is the maximal element in $\Lambda$ and $\mu$ is the maximal element in $\Lambda \setminus \{ \lambda \}$. Therefore, by the definition, we have a short exact sequence
\begin{equation*}
\xymatrix{ 0 \ar[r] & \text{tr} _{P_{\lambda}} (P_{\mu}) \ar[r] & P_{\mu} \ar[r] & _{\preccurlyeq} \Delta_{\mu} \ar[r] & 0}.
\end{equation*}
Clearly, $_{\preccurlyeq} \Delta_{\mu} \in \mathcal{F} (_{\preccurlyeq} \Delta)$. Therefore, it has projective dimension at most 1, which means that $\text{tr} _{P_{\lambda}} (P_{\mu})$ is projective.
\end{proof}

This immediately gives us the following characterization of algebras properly stratified for all linear orders.

\begin{corollary}
Let $A, \mathcal{A}$ and $J$ be as in the previous theorem. Then $A$ is properly stratified for all linear orders if and only if $\mathcal{A}$ is a directed category and $J$ is a left and right projective $A$-module.
\end{corollary}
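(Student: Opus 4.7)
The plan is to reduce the statement to the standardly stratified case (Theorem 6.1.9) via the left-right symmetry supplied by Proposition 6.1.2. Concretely, Proposition 6.1.2 tells us that $A$ is properly stratified for a linear order $\preccurlyeq$ if and only if both $A$ and $A^{\textnormal{op}}$ are standardly stratified with respect to $\preccurlyeq$. Quantifying over all linear orders on $\Lambda$, we obtain: $A$ is properly stratified for all linear orders if and only if both $A$ and $A^{\textnormal{op}}$ are standardly stratified for all linear orders.

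Next I would apply Theorem 6.1.9 twice. First, applied to $A$, it says that $A$ is standardly stratified for all linear orders if and only if the associated category $\mathcal{A}$ is directed and $J=\bigoplus_{\lambda\neq\mu}e_\mu A e_\lambda$ is a left projective $A$-module. Second, applied to $A^{\textnormal{op}}$, it says that $A^{\textnormal{op}}$ is standardly stratified for all linear orders if and only if the associated category $\mathcal{A}^{\textnormal{op}}$ of $A^{\textnormal{op}}$ is directed and the corresponding ideal $J'=\bigoplus_{\lambda\neq\mu}e_\lambda A e_\mu$ is a left projective $A^{\textnormal{op}}$-module, i.e.\ a right projective $A$-module. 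Here the key observation I would record is that $\mathcal{A}$ is directed with respect to a partial order $\leqslant$ on $\Lambda$ if and only if $\mathcal{A}^{\textnormal{op}}$ is directed with respect to $\leqslant^{\textnormal{op}}$; hence the ``directed'' condition appears only once in the combined statement. Likewise, as a vector space $J=J'$, so one merely needs this common two-sided ideal to be both left and right projective over $A$.

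Putting the two equivalences together yields exactly the claim: $A$ is properly stratified for all linear orders iff $\mathcal{A}$ is directed and $J$ is both a left and a right projective $A$-module. No further calculation is needed; the entire argument is a formal juxtaposition of Proposition 6.1.2 with two applications of Theorem 6.1.9, once on the left and once on the right. The only mild point of care, which I would flag but expect to be immediate, is checking that the ``directed'' condition on $\mathcal{A}$ and on $\mathcal{A}^{\textnormal{op}}$ coincide after passing to opposite linear orders; once this symmetry is noted the corollary is just a restatement.
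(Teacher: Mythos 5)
Your proposal is correct and is essentially the paper's own argument: the paper proves this corollary precisely by citing Proposition 6.1.2 together with Theorem 6.1.9, exactly the reduction you carry out (you simply spell out the application to $A^{\textnormal{op}}$ and the symmetry of the directedness condition, which the paper leaves implicit).
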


\begin{proof}
By Proposition 6.1.2 and Theorem 6.1.9.
\end{proof}

The following example describes an algebra which is standardly stratified (but not properly stratified) for all linear orders.

\begin{example}
Let $A$ be the algebra defined by the following quiver with relations: $\delta^2 = \beta \delta =0$, $\alpha \delta = \gamma \beta$.
\begin{equation*}
\xymatrix { & y \ar[dr] ^{\gamma} & \\
x \ar@(ld, lu)|[] {\delta} \ar[ur] ^{\beta} \ar[rr] ^{\alpha} & & z}
\end{equation*}
The indecomposable left projective modules are described as follows:
\begin{equation*}
\xymatrix{ & x \ar[dl] _{\alpha} \ar[dr]_{\delta} \ar[d]_{\beta} & \\
z & y \ar[d]_{\gamma} & x \ar[dl] _{\alpha} \\
 & z & }
\qquad \xymatrix{ y \ar[d] _{\gamma} \\ z}
\qquad {z}
\end{equation*}
It is easy to see that $J \cong P_y \oplus P_z \oplus P_z$ is a left projective $A$-module, so $A$ is standardly stratified for all linear orders.

On the other hand, the indecomposable right projective modules have the following structures:
\begin{equation*}
\xymatrix{ x \ar[d] _{\delta} \\ x}
\qquad \xymatrix{ y \ar[d] _{\beta} \\ x}
\qquad \xymatrix{ & z \ar[dl] _{\alpha} \ar[dr] ^{\gamma} & \\
x \ar[dr] _{\delta} &  & y \ar[dl] ^{\beta} \\
 & x & }
\end{equation*}
Thus $J$ is not a right projective $A$-module, and hence $A$ is not properly stratified for all linear orders.
\end{example}

\section{The classification}

Let $A$ be a finite-dimensional basic algebra with a chosen set of orthogonal primitive idempotents $\{ e _{\lambda} \} _{\lambda \in \Lambda}$ such that $\sum _{\lambda \in \Lambda} e_{\lambda} = 1$. We also suppose that the associated category $\mathcal{A}$ is directed. Thus we can define $A_0$ and $J$ as before, and consider its associated graded algebra $\check{A} = \bigoplus _{i \geqslant 0} J^i / J^{i+1}$, where we set $J^0 = A$. Correspondingly, for a finitely generated $A$-module $M$, its associated graded $\check{A}$-module $\check{M} = \bigoplus _{i \geqslant 0} J^iM/ J^{i+1} M$. Clearly, we have $\check{A}_i \cdot \check{A}_j = \check{A} _{i+j}$ for $i, j \geqslant 0$.

\begin{lemma}
Let $M$ be an $A$-module and $\check{M}$ be the associated graded $\check{A}$-module. Then $\check{M}$ is a graded projective $\check{A}$-module if and only if $M$ is a projective $A$-module.
\end{lemma}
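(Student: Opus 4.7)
For the easy direction $(\Leftarrow)$, I would reduce to the case $M = P_\lambda = Ae_\lambda$ using that the associated-graded construction commutes with finite direct sums. Note that $J$ is nilpotent, since $A$ is finite-dimensional and the associated category $\mathcal{A}$ is directed, so $J^n = 0$ for $n$ larger than the longest chain in $\Lambda$. The key calculation is then the natural isomorphism $\check{P}_\lambda \cong \check{A}\bar{e}_\lambda$, where $\bar{e}_\lambda$ denotes the image of $e_\lambda$ in $\check{A}_0 = A/J$. To establish this I would verify the identity $J^i e_\lambda \cap J^{i+1} = J^{i+1} e_\lambda$: the containment $\supseteq$ is clear, and for $\subseteq$, any $x \in J^i e_\lambda \cap J^{i+1}$ satisfies $x = xe_\lambda \in J^{i+1} e_\lambda$ because $e_\lambda$ is idempotent. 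This exhibits $\check{P}_\lambda$ as a direct summand of $\check{A}$, and hence as a graded projective $\check{A}$-module.

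For the converse $(\Rightarrow)$, the first observation is that for any finitely generated $A$-module $N$, the associated graded $\check{N}$ is automatically generated in degree $0$, since $\check{A}_i\cdot\check{N}_0 = (J^iN + J^{i+1}N)/J^{i+1}N = \check{N}_i$. I would then choose a projective cover $f: P \to M$ in $A$-mod. Surjectivity of $f$ gives $f(J^iP) = J^i M$, so $f$ induces a surjective graded homomorphism $\check{f}: \check{P} \to \check{M}$, and the projective-cover property ensures that $\check{f}_0: P/JP \to M/JM$ is an isomorphism. Assuming $\check{M}$ is graded projective, the surjection $\check{f}$ admits a graded splitting, so $\check{P} \cong \check{M} \oplus \check{K}$ with $\check{K} = \ker\check{f}$ satisfying $\check{K}_0 = 0$. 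As a direct summand of $\check{P}$, the module $\check{K}$ is generated in degree $0$; combined with $\check{K}_0 = 0$, this forces $\check{K} = \check{A}\cdot\check{K}_0 = 0$. Hence $\check{f}$ is an isomorphism, and nilpotence of $J$ together with the telescoping identity $\sum_i \dim_k(J^iP/J^{i+1}P) = \dim_k P$ yields $\dim_k P = \dim_k \check{P} = \dim_k \check{M} = \dim_k M$. A surjection between modules of equal finite dimension is an isomorphism, so $M \cong P$ is projective.

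The main technical point to handle carefully is the graded splitting of $\check{f}$: graded projectivity of $\check{M}$ is precisely what produces a section in $\check{A}$-gmod, and one must confirm that summands of modules generated in degree $0$ are themselves generated in degree $0$. Both are standard in graded module theory but merit explicit verification in our setting, since $\check{A}_0 \cong A_0$ need not be semisimple (it is, however, a finite direct sum of local algebras, so the basic Nakayama-type and lifting arguments still apply).
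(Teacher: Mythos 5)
Your overall skeleton — projective cover $f:P\to M$, the induced graded surjection $\check{f}:\check{P}\to\check{M}$, splitting it off using graded projectivity of $\check{M}$, the observation that a graded summand of a module generated in degree $0$ is generated in degree $0$, and the final dimension count — is sound and is essentially the paper's strategy; the forward direction is also fine. But there is a genuine gap at the pivotal claim that $\check{f}_0: P/JP \to M/JM$ is an isomorphism ``by the projective-cover property.'' A projective cover only gives $\ker f \subseteq \rad P$, and here $\rad A = \rad A_0 \oplus J$, so $\rad P = (\rad A_0)P + JP$, which strictly contains $JP$ whenever $A_0$ is not semisimple — exactly the situation this lemma is built for, since $A_0$ is merely a direct sum of local algebras. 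Injectivity of $\check{f}_0$ requires $\ker f \subseteq JP$, which does not follow: for instance, if $A=A_0=k[t]/(t^2)$ (so $J=0$) and $M$ is simple, then $\check{f}_0$ is the non-injective map $A\to k$. Of course there $\check{M}$ is not projective, but your stated justification never uses the projectivity hypothesis on $\check{M}$, so it cannot be correct as it stands; indeed, the vanishing of the degree-$0$ part of $\ker\check{f}$ is the real content of the lemma, not a formal consequence of choosing a projective cover.

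The fix is the paper's argument, and it drops straight into your framework. Once graded projectivity splits $0\to \check{K}\to \check{P}\to \check{M}\to 0$, take degree-$0$ parts to get a split exact sequence of $A_0$-modules $0\to \check{K}_0 \to P/JP \to M/JM \to 0$. View these as $A$-modules on which $J$ acts by zero; since $J\subseteq \rad A$, we have $\Top(P/JP)\cong \Top(P)$ and $\Top(M/JM)\cong\Top(M)$, while the projective cover gives $\Top(P)\cong\Top(M)$. The splitting then yields $\Top(P)\cong \Top(\check{K}_0)\oplus\Top(M)$, which forces $\Top(\check{K}_0)=0$ and hence $\check{K}_0=0$. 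Now your own observation that $\check{K}$, being a graded summand of $\check{P}$, is generated in degree $0$ gives $\check{K}=0$, and your telescoping dimension count finishes the proof. (The paper phrases this as a contradiction: assuming $M$ is not projective, $\dim_k M<\dim_k P$ makes $\check{K}\neq 0$ and hence $\check{K}_0\neq 0$, contradicting $\Top(P)\cong\Top(M)$ — but the content is the same.)
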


\begin{proof}
Without loss of generality we can assume that $M$ is indecomposable. Since the free module $_AA$ is sent to the graded free module $_{\check{A}} \check{A}$ by the grading process, we know that $\check{M}$ is a graded projective $\check{A}$-module generated in degree 0 if $M$ is a projective $A$-module. Now suppose that $M$ is not a projective $A$-module but $\check{M}$ is a projective $\check{A}$-module, and we want to get a contradiction.

Let $p: P \rightarrow M$ be a projective covering map of $M$. Then $\dim _k M < \dim _k P$ since $M$ is not projective. Moreover, $\Top (P) = P / \rad P \cong \Top (M) = M / \rad M$. The surjective map $p$ gives a graded surjective map $\check{p}: \check{P} \rightarrow \check{M}$. Since $\check{M}$ is supposed to be projective and $\check{P}$ is projective, we get a splitting exact sequence of graded projective $\check{A}$-modules generated in degree 0 as follows:
\begin{equation*}
\xymatrix{0 \ar[r] & \check{L} \ar[r] & \check{P} \ar[r] & \check{M} \ar[r] & 0}.
\end{equation*}
Notice that $\check{L} \neq 0$ since $\dim_k \check{M} = \dim_k M < \dim_k P = \dim_k \check{P}$. Consider the degree 0 parts. We have a splitting sequence of $A_0$-modules
\begin{equation*}
\xymatrix{0 \ar[r] & \check{L}_0 \ar[r] & \check{P}_0 \ar[r] & \check{M}_0 \ar[r] & 0}
\end{equation*}
which by definition is isomorphic to
\begin{equation*}
\xymatrix{0 \ar[r] & \check{L}_0 \ar[r] & P/JP \ar[r] & M/JM \ar[r] & 0}.
\end{equation*}
View them as $A$-modules on which $J$ acts as 0. Observe that $J$ is contained in the radical of $A$. Thus $\Top (P) \cong \Top (P/JP)$ and $\Top (M) \cong \Top (M/JM)$. But the above sequence splits, hence $\Top (P) \cong \Top (\check{L}_0) \oplus \Top (M)$, contradicting the fact that $\Top (P) \cong \Top (M)$. This finishes the proof.
\end{proof}

The above lemma still holds if we replace left modules by right modules. It immediately implies the following result:

\begin{proposition}
Let $A$ be a finite-dimensional basic algebra whose associated category is directed. Let $\check{A}$ be the associated graded algebra. Then $A$ is standardly (resp., properly) stratified for all linear orders if and only if so is $\check{A}$.
\end{proposition}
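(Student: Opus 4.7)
The plan is to combine the characterizations in Theorem 6.1.9 (and Corollary 6.1.10 for the properly stratified case) with Lemma 6.2.1, reducing everything to comparing $J$ as an $A$-module with its associated graded module as a $\check A$-module. Throughout, I fix a linear order $\leqslant$ with respect to which $\mathcal{A}$ is directed, so that $J$ and $A_0$ are well defined.

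First I would verify that the associated category $\check{\mathcal A}$ of $\check A$ is directed with respect to the same order. Each morphism space $e_\mu \check A e_\lambda = \bigoplus_{i \geqslant 0} e_\mu(J^i/J^{i+1})e_\lambda$ is a subquotient of $e_\mu A e_\lambda$, so it vanishes whenever the latter does. Hence directedness transfers automatically between $A$ and $\check A$, and only the projectivity condition in Theorem 6.1.9 remains to be compared.

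Next I would identify the two-sided ideal $J_{\check A} := \bigoplus_{\lambda \neq \mu} e_\mu \check A e_\lambda$ of $\check A$ (the object playing the role of $J$ for $\check A$) with the associated graded $\check A$-module $\check J = \bigoplus_{i \geqslant 0} J^{i+1}/J^{i+2}$ of $J$. Both equal $\bigoplus_{i \geqslant 1} J^i/J^{i+1}$ as graded subspaces of $\check A$, and the identification manifestly respects the $\check A$-action coming from the filtration. The one non-formal point is the vanishing $e_\mu J^i e_\mu = 0$ for every $\mu$ and every $i \geqslant 1$, which rules out any ``diagonal'' contributions. This follows by directedness: expanding a product $e_\mu j_1 \cdots j_i e_\mu$ with each $j_k \in J$ using resolutions of identity produces terms indexed by chains $\mu = \nu_0, \nu_1, \ldots, \nu_i = \mu$, where each factor $e_{\nu_{k-1}} j_k e_{\nu_k}$ lies in $e_{\nu_{k-1}} J e_{\nu_k}$ with $\nu_{k-1} \neq \nu_k$ and so forces the strict inequality $\nu_k < \nu_{k-1}$; the chain $\nu_0 > \nu_1 > \cdots > \nu_i$ contradicts $\nu_0 = \nu_i$.

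With the identification $\check J \cong J_{\check A}$ in hand, Lemma 6.2.1 applied to $M = J$ yields that $J$ is a projective left $A$-module if and only if $J_{\check A}$ is a projective left $\check A$-module; the right-module analogue of Lemma 6.2.1, noted by the author immediately after its proof, supplies the same equivalence on the right. Plugging this into Theorem 6.1.9 for $A$ and for $\check A$ gives the standardly-stratified equivalence, while Corollary 6.1.10 (which requires both left and right projectivity) delivers the properly-stratified equivalence. The only genuine obstacle in the argument is the identification step, which rests on the vanishing $e_\mu J^i e_\mu = 0$; everything else is a formal consequence of results already established in Sections 6.1 and 6.2.
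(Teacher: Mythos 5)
Your proof is correct and follows essentially the same route as the paper's: characterize stratification for all linear orders via Theorem 6.1.9 (resp.\ Corollary 6.1.10) and transfer the projectivity of $J$ between $A$ and $\check{A}$ using Lemma 6.2.1 and its right-module analogue. The only difference is that you spell out the identification of $\check{J}$ with $\bigoplus_{\lambda \neq \mu} e_{\mu} \check{A} e_{\lambda}$ via the vanishing $e_{\mu} J^i e_{\mu} = 0$ forced by directedness, a point the paper's proof leaves implicit; your verification of it is correct.
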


\begin{proof}
The algebra $A$ is standardly stratified for all linear orders if and only if the associated category $\mathcal{A}$ is a directed category and $J$ is a projective $A$-module. This happens if and only if the graded category $\check {\mathcal{A}}$ is a directed category and $\check{J}$ is a projective $\check{A}$-module by the previous lemma, and if and only if $\check{A}$ is standardly stratified for all linear orders.
\end{proof}

In general $\check{A}$ (when viewed as a non-graded algebra) is not isomorphic to $A$, as shown by the following example.

\begin{example}
Let $A$ be the algebra described in Example 6.1.10, where we proved that it is standardly stratified for all linear orders. It is easy to see $J/J^2 = \langle \bar{\alpha}, \bar{\beta}, \bar{\gamma} \rangle$, $J^2 / J^3 = \langle \bar{\gamma} \bar{\beta} \rangle$ and $J^3 = 0$. Therefore, $\check{A}$ is defined by the following quiver with relations $\bar{\alpha} \delta = \bar{\beta} \bar{\delta} = 0$, which is not isomorphic to $A$.
\begin{equation*}
\xymatrix { & y \ar[dr] ^{\bar{\gamma}} & \\
x \ar@(ld, lu)|[] {\delta} \ar[ur] ^{\bar{\beta}} \ar[rr] ^{\bar{\alpha}} & & z}
\end{equation*}
The indecomposable left projective $\check{A}$-modules have the following structures:
\begin{equation*}
\xymatrix{ & x \ar[dr] _{\bar{\delta}} \ar[dl] _{\bar{\alpha}} \ar[d] _{\bar{\beta}} & \\
z & y \ar[d] _{\bar{\gamma}} & x \\
 & z & }
\qquad \xymatrix{ y \ar[d] _{\bar{\gamma}} \\ z}
\qquad {z}
\end{equation*}
It still holds that $\check{J} \cong P_y \oplus P_z \oplus P_z$, so $\check{A}$ is standardly stratified for all linear orders.

The indecomposable right projective $\check{A}$-modules are as follows:
\begin{equation*}
\xymatrix{ x \ar[d] _{\bar{\delta}} \\ x} \qquad \xymatrix{ y \ar[d] _{\bar{\beta}} \\ x} \qquad \xymatrix{ & z \ar[dl] _{\bar{\gamma}} \ar[dr] _{\bar{\alpha}} & \\ y \ar[d] _{\bar{\beta}} & & x \\ x}
\end{equation*}
We deduce that $\check{A}$ is not properly stratified for all linear orders since $\check{J}$ is not a right projective module.
\end{example}

Let $X$ be an $(A_0, A_0)$-bimodule. We denote the tensor algebra generated by $A_0$ and $X$ to be $A_0[X]$. That is, $A_0[X] = A_0 \oplus X \oplus (X \otimes _{A_0} X) \oplus \ldots$. With this notation, we have:

\begin{lemma}
Let $A = \bigoplus _{i \geqslant 1} A_i$ be a finite-dimensional graded algebra with $A_i \cdot A_j = A_{i+j}$, $i, j \geqslant 0$. Then $J = \bigoplus _{i \geqslant 1} A_i$ is a projective $A$-module if and only if $A \cong A_0 [A_1]$, and $A_1$ is a projective $A_0$-module.
\end{lemma}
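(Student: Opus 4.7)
The plan is to prove the equivalence by exploiting the structure of graded projective $A$-modules generated in a single degree. For the forward direction, suppose $J$ is projective as an $A$-module. Since $A_i \cdot A_j = A_{i+j}$, in particular $A_1 \cdot A_n = A_{n+1}$ for all $n \geqslant 0$, whence $J = A \cdot A_1$ is generated in degree $1$. I will invoke the following structural fact (implicit in Lemma~2.1.1 of the paper): any graded projective $A$-module $M$ generated in a single degree $d$ is isomorphic to $A \otimes_{A_0} M_d$, and $M_d \cong M/JM$ is necessarily a projective $A/J = A_0$-module. Applied to $J$, this yields $J \cong A \otimes_{A_0} A_1$ (with $A_1$ placed in degree $1$), and in particular $A_1$ is a projective $A_0$-module.

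Next I will extract the tensor algebra structure. From $J \cong A \otimes_{A_0} A_1$, taking degree $n$ components gives $A_n \cong A_{n-1} \otimes_{A_0} A_1$ for all $n \geqslant 1$, so an easy induction yields $A_n \cong A_1^{\otimes n}$ (all tensor products over $A_0$). Summing over $n$ and noting that multiplication in $A$ corresponds to concatenation of tensors under these identifications, one concludes $A \cong A_0[A_1]$.

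For the converse, suppose $A \cong A_0[A_1]$ and $A_1$ is a projective $A_0$-module. Then
\begin{equation*}
J = \bigoplus_{n \geqslant 1} A_1^{\otimes n} \cong \Big( \bigoplus_{n \geqslant 0} A_1^{\otimes n} \Big) \otimes_{A_0} A_1 = A \otimes_{A_0} A_1
\end{equation*}
as graded $A$-modules (up to a degree-one shift). Since $A_1$ is a projective $A_0$-module and $A$ is free over $A_0$ on the right by construction of the tensor algebra, the functor $A \otimes_{A_0} -$ sends $A_1$ to a projective left $A$-module; hence $J$ is projective.

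The main delicacy will be justifying the tensor product description $M \cong A \otimes_{A_0} M_d$ in the forward direction, which underpins the whole argument. This depends on the lifting of primitive idempotents from $A_0$ to $A$ and the construction of graded projective covers established in the proof of Lemma~2.1.1: the graded projective cover of an $A_0$-module $P$ concentrated in degree $d$ must have the form $A \otimes_{A_0} P$ (with $P$ placed in degree $d$), so any graded projective module generated in degree $d$ coincides with the graded projective cover of its degree $d$ part. Once this identification is in hand, the rest of the argument is essentially bookkeeping.
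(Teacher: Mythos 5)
Your argument is correct and essentially the paper's own: both directions rest on identifying $J \cong A \otimes_{A_0} A_1$ — the paper by comparing the minimal projective resolution $\cdots \to A\otimes_{A_0}A_1 \to A \to A_0 \to 0$ with $0 \to J \to A \to A_0 \to 0$ and using projectivity of $J$, you by the equivalent structural fact (via Lemma~2.1.1, lifting idempotents and graded projective covers) that a projective graded module generated in degree $d$ is $A\otimes_{A_0}(\text{its degree-}d\text{ part})$ — followed by the same degree-by-degree comparison to conclude $A \cong A_0[A_1]$, and the same direct computation $J \cong A\otimes_{A_0}A_1$ in the converse. One small caveat: in the converse the clause ``$A$ is free over $A_0$ on the right'' is neither needed nor true in general; projectivity of $A\otimes_{A_0}A_1$ over $A$ follows simply because $A_1$ is a direct summand of some $A_0^{\oplus m}$, so $A\otimes_{A_0}A_1$ is a direct summand of $A^{\oplus m}$.
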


\begin{proof}
Suppose that $A = A_0[A_1]$ and $A_1$ is a projective $A_0$-module. Observe that $A$ is a finite-dimensional algebra, so there exists a minimal number $n \geqslant 0$ such that $A_{n+1} \cong \underbrace{ A_1 \otimes _{A_0} \ldots \otimes _{A_0} A_1}_{n+1} = 0$. Without loss of generality we assume that $n >0$ since otherwise $J = 0$ is a trivial projective module. Therefore,
\begin{align*}
J & = \bigoplus _{i = 1}^n A_i = A_1 \oplus (A_1 \otimes _{A_0} A_1) \oplus  \ldots \oplus (\underbrace{ A_1 \otimes _{A_0} \ldots \otimes _{A_0} A_1}_n) \\
& = A_1 \oplus (A_1 \otimes _{A_0} A_1) \oplus  \ldots \oplus (\underbrace{ A_1 \otimes _{A_0} \ldots \otimes _{A_0} A_1}_{n+1}) \\
& \cong \big{(} A_0 \oplus A_1 \oplus  \ldots \oplus (\underbrace{ A_1 \otimes _{A_0} \ldots \otimes _{A_0} A_1}_n) \big{)} \otimes _{A_0} A_1 \\
& = A \otimes _{A_0} A_1,
\end{align*}
which is projective since $A_1$ is a projective $A_0$-module.

Conversely, suppose that $J = \bigoplus _{i \geqslant 1} A_i$ is a projective $A$-module. Since $J$ is generated in degree 1, $A_1$ must be a projective $A_0$-module. Moreover, we have a minimal projective resolution of $A_0$ as follows
\begin{equation*}
\xymatrix {\ldots \ar[r] & A \otimes _{A_0} A_1 \ar[r] & A \ar[r] & A_0 \ar[r] & 0}.
\end{equation*}
But we also have the short exact sequence $0 \rightarrow J \rightarrow A \rightarrow A_0 \rightarrow 0$. Since $J$ is projective, we deduce that $J \cong A \otimes _{A_0} A_1$. Therefore, $A_2 \cong A_1 \otimes _{A_0} A_1$, $A_3 \cong A_1 \otimes _{A_0} A_1 \otimes _{A_0} A_1$, and so on. Thus $A \cong A_0[A_1]$ as claimed.
\end{proof}

Now we describe a classification for algebras stratified for all linear orders.

\begin{theorem}
Let $A$ be a basic finite-dimensional $k$-algebra whose associated category $\mathcal{A}$ is directed. Then the following are equivalent:
\begin{enumerate}
\item $A$ is standardly stratified (resp., properly stratified) for all linear orders;
\item the associated graded algebra $\check{A}$ is standardly stratified (resp., properly stratified) for all linear orders;
\item $\check{A}$ is the tensor algebra generated by $A_0 = \bigoplus _{\lambda \in \Lambda} \mathcal{A} (e_{\lambda}, e_{\lambda}) = e_{\lambda} A e_{\lambda}$ and a left (resp., left and right) projective $A_0$-module $\check{A}_1$.
\end{enumerate}
\end{theorem}

\begin{proof}
For standardly stratified algebras, the equivalence of (1) and (2) has been established in Proposition 6.2.2, and the equivalence of (2) and (3) comes from the previous lemma and Theorem 6.1.9. Since all arguments work for right modules, we also have the equivalence of these statements for properly stratified algebras.
\end{proof}

We end this section with a combinatorial description of $\check {\mathcal{A}}$, the associated graded category of $\check{A}$ stratified for all linear orders. Let $Q = (Q_0, Q_1)$ be a finite acyclic quiver, where both the vertex set $Q_0$ and the arrow set $Q_1$ are finite. We then define a \textit{quiver of bimodules} $\tilde{Q} = (Q_0, Q_1, f, g)$: to each vertex $v \in Q_0$ the map $f$ assigns a finite-dimensional local algebra $A_v$, i.e., $f(v) = A_v$; for each arrow $\alpha: v \rightarrow w$, $g(\alpha)$ is a finite-dimensional $(A_w, A_v)$-bimodule.

The quiver of bimodules $\tilde{Q}$ determines a category $\mathcal{C}$. Explicitly, $\Ob \mathcal{C} = Q_0$. The morphisms between an arbitrary pair of objects $v, w \in Q_0$ are defined as follows. Let
\begin{equation*}
\xymatrix {\gamma: v = v_0 \ar[r] ^{\alpha_1} \ar[r] & v_1 \ar[r] ^{\alpha_2} & \ldots \ar[r] ^{\alpha_{n-1}} & v_{n-1} \ar[r] ^{\alpha_n} & v_n = w}
\end{equation*}
be an oriented path in $Q$. We define
\begin{equation*}
M_{\gamma} = g(\alpha_n) \otimes _{f (v_{n-1})} g(\alpha_{n-1}) \otimes _{f (v_{n-2})} \ldots \otimes_{f (v_1)} g(\alpha_1).
\end{equation*}
This is a $(f(w), f(v))$-bimodule. Then
\begin{equation*}
\mathcal{C} (v, w) = \bigoplus _{\gamma \in P(v, w)} M_{\gamma},
\end{equation*}
Where $P(v, w)$ is the set of all oriented paths from $v$ to $w$. The composite of morphisms is defined by tensor product. We call a category defined in this way a \textit{free directed} category. It is \textit{left regular} if for every arrow $\alpha: v \rightarrow w$, $g(\alpha)$ is a left projective $f(w)$-module. Similarly, we define \textit{right regular} categories. It is \textit{regular} if this category is both left regular and right regular.

Using these terminologies, we get the following combinatorial description of $\check{\mathcal{A}}$.

\begin{theorem}
Let $A$ be a finite-dimensional basic algebra whose associated category is directed. Then $A$ is standardly (resp., properly) stratified for all linear orders if and only if the graded category $\check {\mathcal{A}}$ is a left regular (resp., regular) free directed category.
\end{theorem}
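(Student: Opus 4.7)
The plan is to deduce this theorem directly from Theorem 6.2.5, by showing that the three structural conditions appearing there translate transparently into the language of free directed categories. More precisely, I will show that a free directed category structure on $\check{\mathcal{A}}$ is exactly a presentation of $\check{A}$ as the tensor algebra $A_0[\check{A}_1]$, and that left (resp. left-and-right) regularity of this category corresponds precisely to $\check{A}_1$ being a left (resp. left and right) projective $A_0$-module.

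First I would construct the quiver of bimodules $\tilde{Q}=(Q_0,Q_1,f,g)$ canonically attached to $\check{A}$. Set $Q_0=\Lambda$. Because $A$ is basic with primitive idempotents $\{e_\lambda\}$, each $f(\lambda):=e_\lambda A e_\lambda=\check{A}_0\cdot 1_\lambda$ is a finite-dimensional local algebra. Decompose the $(A_0,A_0)$-bimodule $\check{A}_1$ using the idempotents: $\check{A}_1=\bigoplus_{\lambda\neq\mu}e_\mu\check{A}_1 e_\lambda$. For every pair $(\lambda,\mu)$ with $e_\mu\check{A}_1 e_\lambda\neq 0$ I put a single arrow $\alpha_{\lambda\mu}\colon\lambda\to\mu$ in $Q_1$ and set $g(\alpha_{\lambda\mu})=e_\mu\check{A}_1 e_\lambda$, which carries its natural $(f(\mu),f(\lambda))$-bimodule structure. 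Since $\mathcal{A}$ (and hence $\check{\mathcal{A}}$) is directed, $Q$ is acyclic.

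Next I would identify $\check{\mathcal{A}}$ with the free directed category $\mathcal{C}$ associated to $\tilde{Q}$ precisely when $\check{A}\cong A_0[\check{A}_1]$. For a path $\gamma\colon\lambda=v_0\to\cdots\to v_n=\mu$ in $Q$ the bimodule $M_\gamma$ is, by construction, the portion of the degree-$n$ piece of the tensor algebra routed through the prescribed intermediate idempotents, and composition in $\mathcal{C}$ is tensor product over the local algebras $f(v_i)$. On the other hand $\check{\mathcal{A}}(\lambda,\mu)=\bigoplus_{n\geqslant 0}e_\mu\check{A}_n e_\lambda$, and the multiplication maps $\check{A}_1\otimes_{A_0}\check{A}_1\to\check{A}_2$, etc., are always surjective by the definition of the associated graded. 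Thus the natural comparison functor $F\colon\mathcal{C}\to\check{\mathcal{A}}$, identity on objects and induced on morphisms by the iterated multiplication $g(\alpha_n)\otimes\cdots\otimes g(\alpha_1)\to e_\mu\check{A}_n e_\lambda$, is an isomorphism of categories if and only if each such multiplication map is injective, which by Lemma 6.2.4 is equivalent to $\check{A}=A_0[\check{A}_1]$.

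Finally, the regularity condition unpacks as follows. As a left $A_0$-module $\check{A}_1$ decomposes as $\bigoplus_\mu e_\mu\check{A}_1=\bigoplus_{\mu}\bigoplus_{\lambda\neq\mu}e_\mu\check{A}_1 e_\lambda$, and because $A_0=\bigoplus_\lambda f(\lambda)$ is a direct sum of local algebras, $\check{A}_1$ is left projective over $A_0$ if and only if each summand $g(\alpha_{\lambda\mu})=e_\mu\check{A}_1 e_\lambda$ is left projective over $f(\mu)$, i.e., $\mathcal{C}$ is left regular. The symmetric argument handles right projectivity, yielding the properly stratified case. Combining these observations with the equivalence of (1) and (3) in Theorem 6.2.5 gives both halves of the theorem. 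The main obstacle I anticipate is the clean verification that the comparison functor $F\colon\mathcal{C}\to\check{\mathcal{A}}$ is an equivalence precisely when the iterated multiplication $\check{A}_1^{\otimes n}\to\check{A}_n$ is an isomorphism for every $n\geqslant 1$; once this identification is in place, the remaining arguments are formal manipulations of idempotent decompositions.
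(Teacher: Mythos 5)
Your proposal is correct and follows essentially the same route as the paper's own argument: both reduce the theorem to the equivalence of (1) and (3) in Theorem 6.2.5 by building the canonical quiver of bimodules with $Q_0=\Lambda$, $f(\lambda)=e_\lambda A e_\lambda$, and $g(\alpha_{\lambda\mu})=e_\mu\check{A}_1 e_\lambda$, and identifying the resulting free directed category with $\check{\mathcal{A}}$ exactly when $\check{A}\cong A_0[\check{A}_1]$, with left (resp.\ two-sided) regularity matching projectivity of $\check{A}_1$ blockwise over the local algebras. You merely make explicit the comparison functor and the idempotent decomposition that the paper dismisses as ``straightforward to check'' (note only that the injectivity-of-multiplication criterion is really definitional for the tensor algebra presentation rather than the content of Lemma 6.2.4, which concerns projectivity of $J$).
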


\begin{proof}
It is straightforward to check that if $\check {\mathcal{A}}$ is a left regular free directed category, then $\check{A}$ satisfies (3) in Theorem 6.2.5. So $A$ is standardly stratified for all linear orders. Conversely, if $A$ is standardly stratified for all linear orders, then $\check {\mathcal{A}}$ is a directed category, and $\check{A}$ is a tensor algebra generated by $A_0 = \bigoplus _{\lambda \in \Lambda} e_{\lambda} A e_{\lambda}$ and a projective left $A_0$-module $\check{A}_1$. Define $Q = (Q_0, Q_1, f, g)$ in the following way: $Q_0 = \Lambda$, and $f(\lambda) = e_{\lambda} A_0 e_{\lambda}$. Arrows and the map $g$ are defined as follows: for $\lambda \neq \mu \in Q_0$, we put an arrow $\phi: \lambda \rightarrow \mu$ if $e_{\mu} \check{A}_1 e_{\lambda} \neq 0$ and define $g(\phi) = e_{\mu} \check{A}_1 e_{\lambda}$. In this way $Q$ defines a left regular free directed category which is isomorphic to $\check {\mathcal{A}}$. Since all arguments work for right modules, the proof is completed.
\end{proof}

\section{ Whether $\mathcal{F} (_{\preccurlyeq} \Delta)$ is closed under cokernels of monomorphisms?}

In Proposition 6.1.4 we proved that if $\mathcal{A}$ is directed and standardly stratified with respect to a linear order $\leqslant$, then the corresponding category $\mathcal{F} (_{\leqslant} \Delta)$ is closed under cokernels of monomorphisms. This result motivates us to study the general situation. Suppose $A$ is standardly stratified with respect to a fixed linear ordered set $(\Lambda, \preccurlyeq)$. In the following lemmas we describe several equivalent conditions for $\mathcal{F} (_{\preccurlyeq} \Delta)$ to be closed under cokernels of monomorphisms.

\begin{lemma}
The category $\mathcal{F} (_{\preccurlyeq} \Delta)$ is closed under cokernels of monomorphisms if and only if for each exact sequence $0 \rightarrow L \rightarrow M \rightarrow N \rightarrow 0$, where $L, M \in \mathcal{F} (_{\preccurlyeq} \Delta)$ and $L$ is indecomposable, $N$ is also contained in $\mathcal{F} (_{\preccurlyeq} \Delta)$.
\end{lemma}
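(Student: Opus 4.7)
The forward direction of the equivalence is immediate: if $\mathcal{F}(_{\preccurlyeq}\Delta)$ is closed under cokernels of monomorphisms, then the stated special case (where $L$ is additionally required to be indecomposable) is trivially a consequence. So the real content is the reverse direction, and my plan is to prove it by induction on the number of indecomposable direct summands of $L$.

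For an arbitrary exact sequence $0 \to L \to M \to N \to 0$ with $L, M \in \mathcal{F}(_{\preccurlyeq}\Delta)$, decompose $L$ via Krull-Schmidt as $L = L_1 \oplus L_2 \oplus \cdots \oplus L_n$ with each $L_i$ indecomposable. The base case $n = 1$ is precisely the hypothesis. For the inductive step, set $L' = L_2 \oplus \cdots \oplus L_n$, so that $L = L_1 \oplus L'$. I would first use the inclusion $L_1 \hookrightarrow L \hookrightarrow M$ to form the short exact sequence
\begin{equation*}
\xymatrix{0 \ar[r] & L_1 \ar[r] & M \ar[r] & M/L_1 \ar[r] & 0,}
\end{equation*}
in which $L_1$ is indecomposable and $M \in \mathcal{F}(_{\preccurlyeq}\Delta)$. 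By the hypothesis applied in the indecomposable case, $M/L_1 \in \mathcal{F}(_{\preccurlyeq}\Delta)$.

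Next, observing that $L/L_1 \cong L'$ embeds naturally into $M/L_1$, and $(M/L_1)/(L/L_1) \cong M/L \cong N$, I would consider the induced short exact sequence
\begin{equation*}
\xymatrix{0 \ar[r] & L' \ar[r] & M/L_1 \ar[r] & N \ar[r] & 0.}
\end{equation*}
Here $L'$ has exactly $n-1$ indecomposable summands and $M/L_1 \in \mathcal{F}(_{\preccurlyeq}\Delta)$, so by the induction hypothesis applied to this sequence we conclude $N \in \mathcal{F}(_{\preccurlyeq}\Delta)$, completing the induction.

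There is essentially no obstacle here; the argument is a clean reduction using Krull-Schmidt to peel off indecomposable summands one at a time, exploiting the fact that whenever $L_1$ is a direct summand of $L$, $L_1$ itself embeds into $M$ and $L/L_1$ then embeds into $M/L_1$. The only point requiring minor care is that the quotient $M/L_1$ remains in $\mathcal{F}(_{\preccurlyeq}\Delta)$ at each step, which is ensured precisely by the hypothesis we are assuming.
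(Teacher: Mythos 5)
Your argument is correct and is essentially the same as the paper's: induction on the number of indecomposable summands of $L$, peeling off one indecomposable summand $L_1$ at a time via the sequences $0 \to L_1 \to M \to M/L_1 \to 0$ and $0 \to L/L_1 \to M/L_1 \to N \to 0$ (the paper packages this as a single commutative diagram). The only point you leave implicit is that $L' \cong L/L_1$ lies in $\mathcal{F}(_{\preccurlyeq}\Delta)$, which follows because $\mathcal{F}(_{\preccurlyeq}\Delta)$ is closed under direct summands.
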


\begin{proof}
The only if direction is trivial, we prove the other direction: for each exact sequence $0 \rightarrow \tilde{L} \rightarrow M \rightarrow N \rightarrow 0$ with $\tilde{L}, M \in \mathcal{F} (_{\preccurlyeq} \Delta)$, we have $N \in \mathcal{F} (_{\preccurlyeq} \Delta)$ as well.

We use induction on the number of indecomposable direct summands of $\tilde{L}$. If $\tilde{L}$ is indecomposable, the conclusion holds obviously. Now suppose that the if part is true for $\tilde{L}$ with at most $l$ indecomposable summands. Assume that $\tilde{L}$ has $l+1$ indecomposable summands. Taking an indecomposable summand $L_1$ of $\tilde{L}$ we have the following diagram:
\begin{equation*}
\xymatrix{ & 0 \ar[d] & 0 \ar[d] \\
0 \ar[r] & L_1 \ar[d] \ar@{=}[r] & L_1 \ar[r] \ar[d] & 0\\
0 \ar[r] & \tilde{L} \ar[r] \ar[d] & M \ar[r] \ar[d] & N \ar[r] \ar@{=}[d] & 0 \\
0 \ar[r] & \bar{L} \ar[d] \ar[r] & \bar{M} \ar[d] \ar[r] & N \ar[r] & 0\\
 & 0 & 0}
\end{equation*}
Considering the middle column, $\bar{M} \in \mathcal{F} (_{\preccurlyeq} \Delta)$ by the given condition. Therefore, we conclude that $N \in \mathcal{F} (_{\preccurlyeq} \Delta)$ by using the induction hypothesis on the bottom row.
\end{proof}

\begin{lemma}
The category $\mathcal{F} (_{\preccurlyeq} \Delta)$ is closed under cokernels of monomorphisms if and only if for each exact sequence $0 \rightarrow L \rightarrow P \rightarrow N \rightarrow 0$, where $L, P \in \mathcal{F} (_{\preccurlyeq} \Delta)$ and $P$ is projective, $N$ is also contained in $\mathcal{F} (_{\preccurlyeq} \Delta)$.
\end{lemma}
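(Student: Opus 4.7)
The plan is as follows. The ``only if'' direction is immediate: since $A$ is standardly stratified with respect to $\preccurlyeq$, every projective $A$-module lies in $\mathcal{F}(_{\preccurlyeq}\Delta)$, so an exact sequence of the specified form is just a particular instance of the general situation already covered by closure under cokernels of monomorphisms.

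For the ``if'' direction, I would take an arbitrary short exact sequence $0 \to L \to M \to N \to 0$ with $L, M \in \mathcal{F}(_{\preccurlyeq}\Delta)$ and reduce it, via pullback along a projective cover, to a sequence whose middle term is projective. Explicitly, let $\pi \colon P \twoheadrightarrow M$ be a projective cover and form the pullback
\begin{equation*}
\xymatrix{
0 \ar[r] & \ker\pi \ar@{=}[d] \ar[r] & Q \ar[r] \ar[d] & L \ar[r] \ar[d] & 0 \\
0 \ar[r] & \ker\pi \ar[r] & P \ar[r]^{\pi} & M \ar[r] & 0
}
\end{equation*}
where $Q = \pi^{-1}(L)$. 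The two right-hand vertical arrows are injective, and a standard snake-lemma (or direct) argument identifies the cokernel of $Q \hookrightarrow P$ with $M/L = N$, yielding the short exact sequence $0 \to Q \to P \to N \to 0$.

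Next I would verify that $Q \in \mathcal{F}(_{\preccurlyeq}\Delta)$. Since $A$ is standardly stratified, $P \in \mathcal{F}(_{\preccurlyeq}\Delta)$; because $\mathcal{F}(_{\preccurlyeq}\Delta)$ is closed under kernels of epimorphisms between its objects (a standard fact recalled in Section~3.1), the surjection $P \twoheadrightarrow M$ with $P, M \in \mathcal{F}(_{\preccurlyeq}\Delta)$ gives $\ker\pi \in \mathcal{F}(_{\preccurlyeq}\Delta)$. Then the top row of the diagram exhibits $Q$ as an extension of $L \in \mathcal{F}(_{\preccurlyeq}\Delta)$ by $\ker\pi \in \mathcal{F}(_{\preccurlyeq}\Delta)$, and closure under extensions forces $Q \in \mathcal{F}(_{\preccurlyeq}\Delta)$.

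Finally, the sequence $0 \to Q \to P \to N \to 0$ satisfies the hypothesis of the lemma (both $Q$ and $P$ lie in $\mathcal{F}(_{\preccurlyeq}\Delta)$, and $P$ is projective), so by the assumed property we conclude $N \in \mathcal{F}(_{\preccurlyeq}\Delta)$, completing the proof. There is no serious obstacle here: the argument is purely formal once one invokes the stratified algebra's built-in closure properties (projectives are filtered, $\mathcal{F}(_{\preccurlyeq}\Delta)$ is closed under extensions and under kernels of epimorphisms), and notably Lemma~6.3.1 is not actually needed for this reduction. The only point requiring slight care is checking that the pullback does cleanly produce the sequence $0 \to Q \to P \to N \to 0$, which is routine.
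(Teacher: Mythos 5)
Your proof is correct and follows essentially the same route as the paper: the module you call $Q=\pi^{-1}(L)$ is exactly the module $N'$ in the paper's snake-lemma diagram, and both arguments obtain $Q\in\mathcal{F}(_{\preccurlyeq}\Delta)$ from $\Omega(M)\in\mathcal{F}(_{\preccurlyeq}\Delta)$ (closure under kernels of epimorphisms) plus closure under extensions, then apply the hypothesis to $0\to Q\to P\to N\to 0$. The "only if" direction is handled the same trivial way, so there is nothing to add.
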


\begin{proof}
It suffices to show the if part. Let $0 \rightarrow L \rightarrow M \rightarrow N \rightarrow 0$ be an exact sequence with $L, M \in \mathcal{F} (_{\preccurlyeq} \Delta)$. We want to show $N \in \mathcal{F} (_{\preccurlyeq} \Delta)$ as well. Let $P$ be a projective cover of $M$. Then we have a commutative diagram by the Snake Lemma:
\begin{equation*}
\xymatrix{ & 0 \ar[r] & \Omega(M) \ar[r] \ar[d] & N' \ar[r] \ar[d] & L \ar[r] & 0\\
 & & P \ar@{=}[r] \ar[d] & P \ar[d] \\
0 \ar[r] & L \ar[r] & M \ar[r] & N \ar[r] & 0,}
\end{equation*}
where all rows and columns are exact. Clearly, $\Omega(M) \in \mathcal{F} (_{\preccurlyeq} \Delta)$, so is $N'$ since $\mathcal{F} (_{\preccurlyeq} \Delta)$ is closed under extension. Considering the last column, we conclude that $N \in \mathcal{F} (_{\preccurlyeq} \Delta)$ by the given condition.
\end{proof}

Every $M \in \mathcal{F} (_{\preccurlyeq} \Delta)$ has a $_{\preccurlyeq} \Delta$-filtration $\xi$ and we define $[M: \Delta_{\lambda}]$ to be the number of factors isomorphic to $\Delta_{\lambda}$ in $\xi$. This number is independent of the particular $\xi$ (see \cite{Dlab1, Erdmann}). We then define $l(M) = \sum _{\lambda \in \Lambda} [M: \Delta_{\lambda}]$ and call it the \textit{filtration length} of $M$, which is also independent of the choice of $\xi$.

\begin{lemma}
The category $\mathcal{F} (_{\preccurlyeq} \Delta)$ is closed under cokernels of monomorphisms if and only if for each exact sequence $0 \rightarrow \Delta_{\lambda} \rightarrow M \rightarrow N \rightarrow 0$, where $M \in \mathcal{F} (_{\preccurlyeq} \Delta)$ and $\lambda \in \Lambda$, $N$ is also contained in $\mathcal{F} (_{\preccurlyeq} \Delta)$.
\end{lemma}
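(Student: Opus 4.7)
The only-if direction is immediate, since any standard module $\Delta_{\lambda}$ lies in $\mathcal{F}(_{\preccurlyeq}\Delta)$. For the converse, my plan is to assume the single-standard-module condition and show the general closure property by induction on the filtration length $l(L) = \sum_{\lambda\in\Lambda}[L:\Delta_{\lambda}]$ of the submodule appearing in an exact sequence $0\to L\to M\to N\to 0$ with $L,M\in\mathcal{F}(_{\preccurlyeq}\Delta)$. The base case $l(L)=1$ is exactly the hypothesis.

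For the inductive step, I would pick any $\Delta$-filtration $0 = L_0\subset L_1\subset\cdots\subset L_n = L$ of $L$ and let $\Delta_{\lambda}\cong L_1\subseteq L\subseteq M$ be its first term, so that $L' = L/L_1\in\mathcal{F}(_{\preccurlyeq}\Delta)$ has filtration length $n-1$. Apply the hypothesis to the short exact sequence
\begin{equation*}
0\longrightarrow \Delta_{\lambda}\cong L_1\longrightarrow M\longrightarrow M/L_1\longrightarrow 0,
\end{equation*}
which lies in the setting of the assumption (since $M\in\mathcal{F}(_{\preccurlyeq}\Delta)$), to conclude $M':=M/L_1\in\mathcal{F}(_{\preccurlyeq}\Delta)$. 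Then consider the induced sequence
\begin{equation*}
0\longrightarrow L/L_1 \longrightarrow M/L_1 \longrightarrow M/L \longrightarrow 0,
\end{equation*}
i.e.\ $0\to L'\to M'\to N\to 0$, in which both $L'$ and $M'$ are now in $\mathcal{F}(_{\preccurlyeq}\Delta)$ and $l(L') = n-1 < l(L)$. The induction hypothesis gives $N\in\mathcal{F}(_{\preccurlyeq}\Delta)$, finishing the argument.

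There is no real obstacle here: the only point that needs care is the basic snake-lemma / three-by-three observation that gives the second short exact sequence from the first, together with the well-definedness of the filtration multiplicities $[L:\Delta_{\lambda}]$ (hence of $l(L)$) guaranteed by Lemma~1.4 of \cite{Erdmann}, which has already been invoked in this chapter. This is in the same spirit as Lemma~6.3.1, where the reduction was made by peeling off one indecomposable summand at a time; here one peels off one standard filtration factor instead, which is the natural analogue since the category $\mathcal{F}(_{\preccurlyeq}\Delta)$ is built from standard modules rather than indecomposables.
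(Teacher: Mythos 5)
Your proof is correct and follows essentially the same strategy as the paper's: induction on the filtration length of $L$, with the base case being exactly the stated hypothesis. The only (harmless) difference is the order of the two steps: you peel off the bottom filtration factor $\Delta_{\lambda}\cong L_1\subseteq L$ and apply the hypothesis to $0\to L_1\to M\to M/L_1\to 0$ before inducting on $0\to L/L_1\to M/L_1\to N\to 0$, whereas the paper peels off the top factor via $0\to L'\to L\to \Delta_{\lambda}\to 0$, first applies the induction hypothesis to $0\to L'\to M\to M/L'\to 0$ and then the hypothesis to the induced sequence $0\to \Delta_{\lambda}\to M/L'\to N\to 0$ coming from the snake lemma.
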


\begin{proof}
We only need to show the if part. Let $0 \rightarrow L \rightarrow M \rightarrow N \rightarrow 0$ be an exact sequence with $L, M \in \mathcal{F} (_{\preccurlyeq} \Delta)$. We use induction on the filtration length of $L$.

If $l(L) = 1$, then $L \cong \Delta_{\lambda}$ for some $\lambda \in \Lambda$ and the conclusion holds clearly. Suppose that the conclusion is true for all objects in $\mathcal{F} (_{\preccurlyeq} \Delta)$ with filtration length at most $s$ and assume $l(L) = s+1$. Then we have an exact sequence $0 \rightarrow L' \rightarrow L \rightarrow \Delta_{\lambda} \rightarrow 0$ for some $\lambda \in \Lambda$ and $l(L') = s$. Then we have a commutative diagram by the Snake Lemma:

\begin{equation*}
\xymatrix{ & 0 \ar[r] & L' \ar[d] \ar[r] & L \ar[r] \ar[d] & \Delta_{\lambda} \ar[r] & 0\\
 & & M \ar[d] \ar@{=}[r] & M \ar[d]\\
0 \ar[r] & \Delta_{\lambda} \ar[r] & N' \ar[r] & N \ar[r] & 0.}
\end{equation*}

Consider the first column. By induction hypothesis, $N' \in \mathcal{F} (_{\preccurlyeq} \Delta)$. By considering the bottom row we conclude that $N \in \mathcal{F} (_{\preccurlyeq} \Delta)$ from the given condition.
\end{proof}

The following theorem contains a partial answer to the question of whether $\mathcal{F} (\Delta)$ is closed under cokernels of monomorphisms.

\begin{theorem}
Let $A$ be a finite-dimensional basic algebra standardly stratified for a linear order $\preccurlyeq$. Then:
\begin{enumerate}
\item $\mathcal{F} (_{\preccurlyeq} \Delta)$ is closed under cokernels of monomorphisms if and only if the cokernel of every monomorphism $\iota: \Delta_{\lambda} \rightarrow P$ is contained in $\mathcal{F} (_{\preccurlyeq} \Delta)$, where $P$ is an arbitrary projective module and $\lambda \in \Lambda$.
\item If $\mathcal{F} (_{\preccurlyeq} \Delta)$ is closed under cokernels of monomorphisms and $A$ is standardly stratified for another linear order $\preccurlyeq'$, then $\mathcal{F} (_{\preccurlyeq'} \Delta) \subseteq \mathcal{F} (_{\preccurlyeq} \Delta)$.
\item If $A$ is quasi-hereditary, then $\mathcal{F} (_{\preccurlyeq} \Delta)$ is closed under cokernels of monomorphisms if and only if $A$ is a quotient of a finite-dimensional hereditary algebra and all standard modules are simple.
\end{enumerate}
\end{theorem}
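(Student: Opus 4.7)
Both implications follow from Lemmas 6.3.1--6.3.3 almost mechanically; $(\Rightarrow)$ is tautological, and for $(\Leftarrow)$ I would apply the three lemmas in sequence. Lemma 6.3.1 reduces to the case where the left term $L$ of the sequence is indecomposable; Lemma 6.3.2 further reduces to the case where the middle term is an arbitrary projective $P$; Lemma 6.3.3, via induction on the filtration length of $L$, reduces to the case $L = \Delta_\lambda$. The hypothesis handles exactly this case.

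\textbf{Plan for Part (2).} I would show that $_{\preccurlyeq'}\Delta_\lambda \in \mathcal{F}(_\preccurlyeq\Delta)$ for every $\lambda$ by downward induction on the position of $\lambda$ in $\preccurlyeq'$. When $\lambda$ is maximal for $\preccurlyeq'$, $_{\preccurlyeq'}\Delta_\lambda \cong P_\lambda$, which lies in $\mathcal{F}(_\preccurlyeq\Delta)$ since $A$ is standardly stratified for $\preccurlyeq$. For the inductive step, the defining sequence $0 \to K_\lambda \to P_\lambda \to {_{\preccurlyeq'}\Delta_\lambda} \to 0$ exhibits $K_\lambda$ with a $_{\preccurlyeq'}\Delta$-filtration whose factors $_{\preccurlyeq'}\Delta_\mu$ (all $\mu \succ' \lambda$) already belong to $\mathcal{F}(_\preccurlyeq\Delta)$ by the induction hypothesis; closure of $\mathcal{F}(_\preccurlyeq\Delta)$ under extensions places $K_\lambda$ there, and the cokernel-closure hypothesis then places $_{\preccurlyeq'}\Delta_\lambda$ there. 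The inclusion $\mathcal{F}(_{\preccurlyeq'}\Delta) \subseteq \mathcal{F}(_\preccurlyeq\Delta)$ then follows, once more from closure under extensions.

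\textbf{Plan for Part (3).} The easy direction is that if every $\Delta_\lambda$ is simple then a $\Delta$-filtration is exactly a composition series, so $\mathcal{F}(_\preccurlyeq\Delta)$ coincides with $A\text{-mod}$ and is trivially closed. For the nontrivial direction I would first prove by induction on the position of $\lambda$ in $\preccurlyeq$ that every $\Delta_\lambda$ is simple. The minimum case is automatic from the quasi-hereditary conditions. For the inductive step, write $0 \to K \to \Delta_\lambda \to S_\lambda \to 0$ with $K = \rad \Delta_\lambda$; quasi-heredity constrains the composition factors of $K$ to be among $\{S_\nu : \nu \prec \lambda\}$, each of which equals $\Delta_\nu$ by induction, so a composition series is a $\Delta$-filtration and $K \in \mathcal{F}(_\preccurlyeq\Delta)$. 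The cokernel-closure hypothesis then gives $S_\lambda \in \mathcal{F}(_\preccurlyeq\Delta)$; since the only $\Delta_\mu$ with simple top $S_\lambda$ is $\Delta_\lambda$ itself, we conclude $\Delta_\lambda = S_\lambda$. Once all standards are simple, a direct analysis of $\rad P_\lambda = \sum_{\mu \succ \lambda} \text{tr}_{P_\mu}(P_\lambda)$ shows that $\Hom_A(P_\nu, P_\lambda) = 0$ for $\nu \prec \lambda$ and that $e_\lambda A e_\lambda = k$. Thus $A_0$ is semisimple, the associated category $\mathcal{A}$ is directed, and Gabriel's presentation writes $A \cong kQ/I$ for $Q$ the Ext-quiver of $A$; directedness of $\mathcal{A}$ forces $Q$ to be acyclic, so $kQ$ is a finite-dimensional hereditary algebra and $A$ is one of its quotients.

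\textbf{Main obstacle.} The crux of the proof is the inductive step in part (3) that upgrades the purely homological hypothesis (closure under cokernels of monomorphisms) into the structural conclusion $\Delta_\lambda = S_\lambda$; the key observation is that quasi-heredity combined with the induction hypothesis forces $\rad \Delta_\lambda \in \mathcal{F}(_\preccurlyeq\Delta)$, which is precisely what enables closure to produce $S_\lambda \in \mathcal{F}(_\preccurlyeq\Delta)$. Once that step is established, identifying $A$ as a quotient of a hereditary algebra is a standard Gabriel-type consequence, and parts (1) and (2) are essentially bookkeeping on top of Lemmas 6.3.1--6.3.3.
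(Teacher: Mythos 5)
Your proposal is correct, and its overall strategy matches the paper's: part (1) is read off from Lemmas 6.3.1--6.3.3 exactly as in the text, and in part (3) both you and the paper first force the standard modules to be simple using cokernel-closure and then deduce acyclicity of the ordinary quiver from the composition factors of the projectives, so that $A$ is a quotient of the (hereditary) path algebra of its own quiver. The difference is purely in how the inductions are organized. For (2) the paper recurses through the quotient algebras $\bar A = A/Ae_\lambda A$ via $0\to Ae_\lambda A\to A\to\bar A\to 0$, whereas you induct downward along $\preccurlyeq'$ using the defining sequences $0\to K_\lambda\to P_\lambda\to{}_{\preccurlyeq'}\Delta_\lambda\to 0$ together with closure under extensions and cokernels; the two arguments are equivalent in substance. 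For (3) the paper inducts on $|\Lambda|$ through $\bar A$ and thereby needs (and asserts without proof) that $\mathcal{F}({}_{\bar A}\Delta)$ is again closed under cokernels of monomorphisms, while your induction along $\preccurlyeq$ inside $A$ itself, applying closure to $0\to\rad\Delta_\lambda\to\Delta_\lambda\to S_\lambda\to 0$, sidesteps that point entirely, which is a small gain in self-containedness; you also record the easy converse of (3) (all standards simple gives $\mathcal{F}({}_{\preccurlyeq}\Delta)=A\text{-mod}$), which the paper leaves implicit. One detail worth making explicit when you write this up: both your base case and the claim that the composition factors of $\rad\Delta_\lambda$ lie strictly below $\lambda$ rest on $[\Delta_\lambda:S_\lambda]=1$, which follows for quasi-hereditary algebras from $\Hom_A(P_\lambda,\Delta_\lambda)\cong\End_A(\Delta_\lambda)\cong k$.
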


\begin{proof}
The first statement follows immediately from the above lemmas.

Now we prove the second statement. The claim is clear if $|\Lambda|=1$. So we assume $\Lambda$ has more than one elements. Take an element $\lambda \in \Lambda$ maximal with respect to $\preccurlyeq '$. Clearly, $\Delta'_{\lambda} \cong P_{\lambda} \in \mathcal{F} (_{\preccurlyeq} \Delta)$. Consider the quotient algebra $\bar{A} = A / Ae_{\lambda}A$. It is standardly stratified with respect to the restricted linear order $\preccurlyeq '$ on $\Lambda \setminus \{ \lambda\}$. We claim $\bar{A} \in \mathcal{F} (_{\preccurlyeq} \Delta)$ as well. Indeed, consider the exact sequence
\begin{equation*}
\xymatrix { 0 \ar[r] & Ae_{\lambda}A \ar[r] & A \ar[r] & \bar{A} \ar[r] & 0}.
\end{equation*}
Note that $A e_{\lambda}A \in \mathcal{F} (_{\preccurlyeq} \Delta)$ as a direct sum of $P_{\lambda}$. Therefore, $\bar{A} \in \mathcal{F} (_{\preccurlyeq} \Delta)$ as well since $\mathcal{F} (_{\preccurlyeq} \Delta)$ is closed under cokernels of monomorphisms. Taking a maximal element $\nu \in \Lambda \setminus \{\lambda \}$ with respect to $\preccurlyeq'$ and repeating the above procedure, we conclude that $_{\preccurlyeq'} \Delta_{\nu} \cong \bar{A} e_{\nu} \in \mathcal{F} (_{\preccurlyeq} \Delta)$ and $\bar{\bar{A}} = \bar{A} / \bar{A} e_{\nu} \bar{A} \in \mathcal{F} (_{\preccurlyeq} \Delta)$. Recursively, we proved that $_{\preccurlyeq'} \Delta_{\lambda} \in \mathcal{F} (\Delta)$ for all $\lambda \in \Lambda$, so $\mathcal{F} (_{\preccurlyeq'} \Delta) \subseteq \mathcal{F} (_{\preccurlyeq} \Delta)$.

The third statement can be proved by induction on $|\Lambda|$ as well. If $|\Lambda| = 1$, the claim is clear since $A \cong k$. Suppose that the conclusion holds for $|\Lambda| = s$ and let $\Lambda$ be a linear ordered set with $s+1$ elements. Take a maximal element $\lambda$ in $\Lambda$. Then $\bar{A} = A / Ae_{\lambda}A$ is also a quasi-hereditary algebra. Moreover, $\mathcal{F} (_{\bar{A}} \Delta)$ is still closed under cokernels of monomorphisms. By the induction hypothesis, $\bar{A}$ is the quotient of a finite-dimensional hereditary algebra, and all standard modules are simple. Note that these standard $\bar {A}$-modules can be viewed as standard $A$-modules.

Choose a composition series of $\Delta_{\lambda} \cong P_{\lambda}$: $0 = M_0 \subseteq M_1 \subseteq \ldots \subseteq M_t = \Delta_{\lambda}$. It is clear that $M_i / M_{i-1} \cong S_{\lambda}$ if and only if $i=t$ since $A$ is quasi-hereditary. But $\mathcal{F} (_{\preccurlyeq} \Delta)$ is closed under cokernels of monomorphisms and $\Delta_{\mu} \cong S_{\mu} \cong P_{\mu} / \rad P_{\mu}$ for all $\mu \in \Lambda \setminus \{ \lambda \}$, we deduce that $S_{\lambda} \in \mathcal{F} (_{\preccurlyeq} \Delta)$. Thus $S_{\lambda} \cong \Delta_{\lambda}$.

It remains to show that the ordinary quiver of $A$ has no oriented cycles. For $P_{\lambda} = Ae_{\lambda}$ and $P_{\mu} = Ae_{\mu}$ with $\lambda \nsucceq \mu$, we claim $e_{\lambda} A e_{\mu}  \cong \Hom _{\mathcal{A}} (P_{\lambda}, P_{\mu}) = 0$. Indeed, since $P_{\mu} \in \mathcal{F} (_{\preccurlyeq} \Delta)$ and all standard modules are simple, the composition factors of $P_{\mu}$ are those $S_{\nu}$ with $\nu \in \Lambda$ and $\nu \succcurlyeq \mu$. Thus $P_{\mu}$ has no composition factors isomorphic to $S_{\lambda}$. Therefore, $\Hom _{\mathcal{A}} (P_{\lambda}, P_{\mu}) =0$.
\end{proof}

An immediate corollary is:

\begin{corollary}
If $A$ is standardly stratified with respect to two different linear orders $\preccurlyeq$ and $\preccurlyeq'$ such that both $\mathcal{F} (_{\preccurlyeq} \Delta)$ and $\mathcal{F} (_{\preccurlyeq} \Delta)$ are closed under cokernels of monomorphisms, then $\mathcal{F} (_{\preccurlyeq} \Delta) = \mathcal{F} (_{\preccurlyeq'} \Delta)$.
\end{corollary}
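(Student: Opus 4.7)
The plan is to deduce this corollary as an immediate double application of statement (2) of Theorem 6.3.4, which has already been established. That theorem says: if $\mathcal{F}(_{\preccurlyeq}\Delta)$ is closed under cokernels of monomorphisms and $A$ is standardly stratified for a second linear order $\preccurlyeq'$, then $\mathcal{F}(_{\preccurlyeq'}\Delta) \subseteq \mathcal{F}(_{\preccurlyeq}\Delta)$. The hypotheses of the corollary are symmetric in $\preccurlyeq$ and $\preccurlyeq'$, so the theorem applies in both directions.

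Explicitly, first I would invoke Theorem 6.3.4(2) with the roles of the two orders as written there, using that $\mathcal{F}(_{\preccurlyeq}\Delta)$ is closed under cokernels of monomorphisms and that $A$ is standardly stratified for $\preccurlyeq'$; this yields the inclusion $\mathcal{F}(_{\preccurlyeq'}\Delta) \subseteq \mathcal{F}(_{\preccurlyeq}\Delta)$. Then I would interchange the roles of $\preccurlyeq$ and $\preccurlyeq'$ and invoke the same theorem again, using the closure assumption on $\mathcal{F}(_{\preccurlyeq'}\Delta)$ and standard stratification of $A$ with respect to $\preccurlyeq$; this yields the reverse inclusion $\mathcal{F}(_{\preccurlyeq}\Delta) \subseteq \mathcal{F}(_{\preccurlyeq'}\Delta)$. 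The two inclusions together give the desired equality.

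There is no genuine obstacle here: the work has already been done in the proof of Theorem 6.3.4(2), and the corollary is a purely formal consequence of the symmetry of the hypothesis. The only minor point worth noting in the write-up is to remark explicitly that the filtration categories $\mathcal{F}(_{\preccurlyeq}\Delta)$ and $\mathcal{F}(_{\preccurlyeq'}\Delta)$ are defined using a priori different sets of standard modules $\{_{\preccurlyeq}\Delta_\lambda\}$ and $\{_{\preccurlyeq'}\Delta_\lambda\}$, so the conclusion genuinely says that these two classes of filtered modules coincide, even though the individual standard modules themselves need not agree.
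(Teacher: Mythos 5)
Your proposal is correct and matches the paper's own argument: the paper simply cites the second statement of Theorem 6.3.4, which is exactly your double application of that statement with the roles of $\preccurlyeq$ and $\preccurlyeq'$ interchanged to obtain both inclusions. Your remark that the two filtration categories are built from a priori different standard modules is a reasonable clarification but not needed for the argument.
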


\begin{proof}
It is straightforward from the second statement of the previous theorem.
\end{proof}

In general it is possible that there are more than one linear orders with respect to which $A$ is standardly stratified. However, among these categories $\mathcal{F} (_{\preccurlyeq} \Delta)$, there is at most one which is closed under cokernels of monomorphisms. If this category exists, it is the unique maximal category and contains all other ones as subcategories. But the converse of this statement is not true by the next example. That is, if there is a linear order $\preccurlyeq$ with respect to which $A$ is standardly stratified and $\mathcal{F} (_{\preccurlyeq} \Delta)$ is the unique maximal category, $\mathcal{F} (_{\preccurlyeq} \Delta)$ might not be closed under cokernels of monomorphisms.

\begin{example}
Let $A$ be the path algebra of the following quiver with relations: $\gamma^2 = \rho^2 = \delta \gamma = \rho \delta =0$, $\gamma \alpha = \alpha'$, $\gamma \beta = \beta'$, and $\delta \alpha = \delta \beta$.
\begin{equation*}
\xymatrix {x \ar@/^1.5pc/[rr] ^{\alpha} \ar@/^1pc/[rr] _{\alpha'} \ar@/_1.5pc/[rr] _{\beta} \ar@/_1pc/[rr] ^{\beta'} & & y \ar@(dl, dr)[]|{\gamma} \ar[r]^{\delta} & z \ar@(rd,ru)[]|{\rho} }
\end{equation*}

The structures of indecomposable projective $A$-modules are described as follows:

\begin{equation*}
\xymatrix{ & & x \ar[dl] _{\alpha} \ar[dr] _{\beta} & & \\ & y \ar[dl] _{\gamma} \ar[dr] _{\delta} & & y \ar[dl] _{\delta} \ar[dr] _{\gamma}& \\ y & & z & & y } \qquad
\xymatrix{ & y \ar[dl] _{\gamma} \ar[dr] _{\delta} & \\ y & & z} \qquad
\xymatrix{ z \ar[d] _{\rho} \\ z}
\end{equation*}

It is not hard to check that if $A$ is standardly stratified with respect to some linear order $\preccurlyeq$, then all standard modules coincide with indecomposable projective modules. Therefore, $\mathcal{F} (_ {\preccurlyeq} \Delta)$ is the category of all projective modules and is the unique maximal category. However, we get an exact sequence as follows, showing that $\mathcal{F} (\Delta)$ is not closed under cokernels of monomorphisms:
\begin{equation*}
\xymatrix{ 0 \ar[r] & P_y \ar[r] & P_x \ar[r] & M \ar[r] & 0}
\end{equation*}
where $M \notin \mathcal{F} (\Delta)$ has the following structure:
\begin{equation*}
\xymatrix{ x \ar[d] _{\alpha} \\ y\ar[d] _{\gamma} \\ y}
\end{equation*}
\end{example}

The last statement of Theorem 6.3.4 is incorrect if $A$ is supposed to be standardly stratified, as shown by the next example.

\begin{example}
Let $A$ be the path algebra of the following quiver with relations $\delta^2 = \delta \alpha = \beta \delta = \beta \alpha = \gamma \beta =0$. Let $x \succ z \succ y$.
\begin{equation*}
\xymatrix{ x \ar[dr] ^{\alpha} & & z \ar[ll] _{\gamma}\\
& y \ar[ur] ^{\beta} \ar@(dl,dr)[]|{\delta} }
\end{equation*}
Indecomposable projective modules of $A$ are described below:
\begin{equation*}
\xymatrix{ x \ar[d] _{\alpha} \\ y} \qquad
\xymatrix{ & y \ar[dl] _{\delta} \ar[dr] ^{\beta} & \\ y & & z}
\qquad \xymatrix{ z \ar[d] _{\gamma} \\ x \ar[d] _{\alpha} \\ y}
\end{equation*}
The standard modules are:
\begin{equation*}
\xymatrix{ x \ar[d] _{\alpha} \\ y} \qquad
\xymatrix{ y \ar[d] _{\delta} \\ y}
\qquad z
\end{equation*}
It is clear that $A$ is standardly stratified. Moreover, by (1) of Theorem 0.3, $\mathcal{F} (\Delta)$ is closed under cokernels of monomorphisms. But $\mathcal{A}$ is not a directed category.
\end{example}

\section{An algorithm}

In this section we describe an algorithm to determine whether there is a linear order $\preccurlyeq$ with respect to which $A$ is standardly stratified and $\mathcal{F} (_{\preccurlyeq} \Delta)$ is closed under cokernels, as well as several examples.

Given an arbitrary algebra $A$, we want to check whether there exists a linear order $\preccurlyeq$ for which $A$ is standardly stratified and the corresponding category $\mathcal{F} (_{\preccurlyeq} \Delta)$ is closed under cokernels. It is certainly not an ideal way to check all linear orders. In the rest of this chapter we will describe an algorithm to construct a set $\mathcal{L}$ of linear orders for $A$ satisfying the following property: $A$ is standardly stratified for every linear order in $\mathcal{L}$; moreover, if there is a linear order $\preccurlyeq$ for which $A$ is standardly stratified and $\mathcal{F} (_{\preccurlyeq} \Delta)$ is closed under cokernels, then $\preccurlyeq \in \mathcal{L}$.

As before, choose a set $\{e_{\lambda} \} _{\lambda \in \Lambda}$ of orthogonal primitive idempotents in $A$ such that $\sum _{\lambda \in \Lambda} e_{\lambda} = 1$ and let $P_{\lambda} = A e_{\lambda}$. The algorithm is as follows:

\begin{enumerate}
\item Define $O_1 = \{ \lambda \in \Lambda \mid \forall \mu \in \Lambda, \text{tr} _{P_{\lambda}} (P_{\mu}) \cong P_{\lambda}^{m_{\mu}} \}$. If $O_1 = \emptyset$, the algorithm ends at this step. Otherwise, continue to the second step.
\item Define a partial order $\leqslant '$ on $O_1$: $\lambda \leqslant ' \mu$ if and only if tr$_{P_{\mu}} (P_{\lambda}) \neq 0$ for $\lambda, \mu \in O_1$. We can check that this partial order $\leqslant'$ is well defined.
\item Take $e_{s_1} \in O_1$ which is maximal with respect to $\leqslant'$. Let $\bar {A} = A / A e_{s_1} A$.
\item Repeat the above steps for $\bar{A}$ recursively until the algorithm ends. Thus we get a chain of $t$ idempotents $e_{s_1}, e_{s_2}, \ldots, e_{s_t}$ and define $e_{s_1} \succ e_{s_2} \succ \ldots \succ e_{s_t}$.
\end{enumerate}
Let $\tilde{ \mathcal{L} }$ be the set of all linear orders obtained from the above algorithm, and let $\mathcal{L} \subseteq \tilde{ \mathcal{L} }$ be the set of linear orders with length $n = |\Lambda|$.

The following example illustrates our algorithm.

\begin{example}
Let $\mathcal{A}$ be the following algebra with relation $\alpha^2 = \delta^2 = \beta \alpha = \delta \gamma = \rho^2 = \rho \varphi = 0$.
\begin{equation*}
\xymatrix{ x \ar[r] ^{\beta} \ar@(lu, ld)[]|{\alpha} & y \ar[r] ^{\gamma} \ar[d] ^{\varphi} & z \ar@(ru, rd)[]|{\delta} \\
 & w \ar@(dl, dr)[]|{\rho}}.
\end{equation*}
The projective $\mathcal{A}$-modules are:
\begin{equation*}
\xymatrix{ & x \ar[dl] _{\alpha} \ar[dr] _{\beta} & & \\ x & & y \ar[dl] _{\gamma} \ar[dr] _{\phi} & \\ & z & & w}
\qquad \xymatrix{ & y \ar[dl] _{\gamma} \ar[dr] _{\phi} & \\ z & & w}
\qquad \xymatrix{ z \ar[d] _{\delta} \\ z}
\qquad \xymatrix{ w \ar[d] _{\rho} \\ w}
\end{equation*}

Then by the above algorithm, $O_1 = \{ x, y \}$ and $x \leqslant' y$ in $O_1$, we should take $y$ as the maximal element. But then $O_2 = \{ x, z, w \}$ and all these elements are maximal in $O_2$ with respect to $\leqslant '$. Thus we get three choices for $O_3$. Similarly, the two elements in each $O_3$ are maximal with respect to $\leqslant'$. In conclusion, 6 linear orders are contained in $\mathcal{L}$: $y \succ x \succ z \succ w$, $y \succ x \succ w \succ z$, $y \succ z \succ x \succ w$, $y \succ z \succ w \succ x$, $y \succ w \succ z \succ x$, and $y \succ w \succ x \succ z$. For all these six linear orders $\mathcal{A}$ is standardly stratified and has the same standard modules. Moreover, the category $\mathcal{F} (_{\preccurlyeq} \Delta)$ is closed under cokernels of monomorphisms.
\begin{equation*}
\xymatrix{ x \ar[d] _{\alpha} \\ x}
\qquad \xymatrix{ & y \ar[dl] _{\gamma} \ar[dr] _{\phi} & \\ z & & w}
\qquad \xymatrix{ z \ar[d] _{\delta} \\ z}
\qquad \xymatrix{ w \ar[d] _{\rho} \\ w}
\end{equation*}
\end{example}

In general, for different linear orders in $\mathcal{L}$ the corresponding standard modules are different.

\begin{example}
Let $\mathcal{A}$ be the following category with relation: $\delta^2 = \delta \alpha =0$, $\beta \delta = \beta'$.
\begin{equation*}
\xymatrix{ x \ar[r] ^{\alpha} & z \ar@(dl,dr)[]|{\delta} & y \ar@/^/[l] ^{\beta} \ar@/_/[l] _{\beta'} }
\end{equation*}

The reader can check that the following two linear orders are contained in $\mathcal{L}$: $x \succ z \succ y$ and $y \succ x \succ z$. The corresponding standard modules are:
\begin{equation*}
\xymatrix{ x \ar[d] _{\alpha} \\ z} \qquad y \qquad \xymatrix{ z \ar[d] _{\delta} \\ z}
\end{equation*}
and
\begin{equation*}
\xymatrix{ x \ar[d] _{\alpha} \\ z} \qquad \xymatrix{y \ar[d] _{\beta} \\ z \ar[d] _{\delta} \\ z} \qquad \xymatrix{ z \ar[d] _{\delta} \\ z}
\end{equation*}
It is easy to see that $\mathcal{F} (\Delta)$ corresponding to the first order is closed under cokernels of monomorphisms.
\end{example}

\begin{proposition}
The algebra $\mathcal{A}$ is standardly stratified for every $\preccurlyeq \in \mathcal{L}$.
\end{proposition}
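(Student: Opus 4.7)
The plan is to proceed by induction on $n = |\Lambda|$, with the base case $n = 1$ being trivial (the algebra is local, and $P_{s_1} = \Delta_{s_1}$).

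For the inductive step, fix $\preccurlyeq \in \mathcal{L}$ with top element $e_{s_1}$. By the very definition of $\mathcal{L}$ the order $\preccurlyeq' := \preccurlyeq |_{\Lambda \setminus \{s_1\}}$ belongs to $\bar{\mathcal{L}}$, the set produced by running the algorithm on $\bar A = A/Ae_{s_1}A$; hence by induction $\bar A$ is standardly stratified with respect to $\preccurlyeq'$. From the definition of $O_1$, the trace tr$_{P_{s_1}}(P_\mu) \cong P_{s_1}^{m_\mu}$ for every $\mu \in \Lambda$, and this trace coincides with $Ae_{s_1}Ae_\mu$. Therefore we have the short exact sequence of $A$-modules
\begin{equation*}
0 \longrightarrow P_{s_1}^{m_\mu} \longrightarrow P_\mu \longrightarrow \bar P_\mu \longrightarrow 0,
\end{equation*}
where $\bar P_\mu$ is the indecomposable projective $\bar A$-module viewed as an $A$-module.

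Next I would verify that the standard module $\Delta_\lambda$ of $A$ with respect to $\preccurlyeq$ agrees with the standard module $\bar\Delta_\lambda$ of $\bar A$ with respect to $\preccurlyeq'$ for $\lambda \neq s_1$, and that $\Delta_{s_1} = P_{s_1}$. For $\lambda = s_1$ this is automatic since $s_1$ is maximal. For $\lambda \neq s_1$, using that $P_\mu$ is projective one shows the image of tr$_{P_\mu}(P_\lambda)$ under $P_\lambda \twoheadrightarrow \bar P_\lambda$ is precisely tr$_{\bar P_\mu}(\bar P_\lambda)$ for $\mu \neq s_1$; combined with the identity $P_\lambda/\text{tr}_{P_{s_1}}(P_\lambda) = \bar P_\lambda$ (which uses $s_1 \succ \lambda$) this gives
\begin{equation*}
\Delta_\lambda \;=\; P_\lambda \Big/ \sum_{\mu \succ \lambda} \text{tr}_{P_\mu}(P_\lambda) \;\cong\; \bar P_\lambda \Big/ \sum_{\substack{\mu \succ \lambda \\ \mu \neq s_1}} \text{tr}_{\bar P_\mu}(\bar P_\lambda) \;=\; \bar\Delta_\lambda.
\end{equation*}
Having identified the standard modules, the filtration condition follows by splicing: $\bar P_\mu$ has a $\bar\Delta$-filtration with factors $\bar\Delta_\nu = \Delta_\nu$ where $\nu \succ' \mu$ (equivalently $\nu \succ \mu$, $\nu \neq s_1$), and the kernel $P_{s_1}^{m_\mu}$ is a direct sum of copies of $\Delta_{s_1}$; concatenating yields a $\Delta$-filtration of $P_\mu$ whose top factor is $\Delta_\mu$ and whose remaining factors are all $\Delta_\nu$ with $\nu \succ \mu$. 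The composition-factor condition $[\Delta_\lambda : S_\mu]=0$ for $\mu \npreceq \lambda$ passes from $\bar A$ to $A$ trivially, since $\Delta_\lambda = \bar\Delta_\lambda$ has no $S_{s_1}$-factors and $s_1$ is the top of $\preccurlyeq$.

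The only delicate point I anticipate is the identification of traces in the quotient, namely that tr$_{P_\mu}(P_\lambda)$ maps onto tr$_{\bar P_\mu}(\bar P_\lambda)$ in $\bar P_\lambda$; this uses the lifting property of $P_\mu$ to go from $\bar A$-homomorphisms $\bar P_\mu \to \bar P_\lambda$ back to $A$-homomorphisms $P_\mu \to P_\lambda$, together with $Ae_{s_1}A \cdot e_\mu = \text{tr}_{P_{s_1}}(P_\mu)$, which was established via the $O_1$-hypothesis. Apart from this bookkeeping, the induction is straightforward, and crucially it does not require the maximality of $e_{s_1}$ in $O_1$ with respect to $\leqslant'$—that refinement will only matter later when one argues that the $\mathcal{F}(_{\preccurlyeq}\Delta)$ associated to an order in $\mathcal{L}$ is maximal among those closed under cokernels.
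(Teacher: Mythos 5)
Your proposal is correct and follows essentially the same route as the paper: induction on $|\Lambda|$, passing to $\bar{A} = A/Ae_{s_1}A$ (whose restricted order lies in the set produced by the algorithm, so the induction hypothesis applies), and using that $Ae_{s_1}A = \bigoplus_{\mu} \mathrm{tr}_{P_{s_1}}(P_{\mu})$ is projective because $s_1 \in O_1$. The paper simply asserts the final deduction, whereas you spell out the identification $\Delta_{\lambda} \cong \bar{\Delta}_{\lambda}$ and the splicing of filtrations; your side remark that maximality of $e_{s_1}$ in $(O_1,\leqslant')$ is not needed here is also accurate.
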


\begin{proof}
We use induction on $|\Lambda|$. The conclusion is clearly true if $|\Lambda| = 1$. Suppose that it holds for $|\Lambda| \leqslant n$ and consider the case that $|\Lambda| = n+1$.

Let $\preccurlyeq$ be an arbitrary linear order in $\mathcal{L}$ and take the unique maximal element $\lambda \in \Lambda$ with respect to $\preccurlyeq$. Consider the quotient algebra $\bar{A} = A / A e_{\lambda} A$. Then $\bar{A}$, by the induction hypothesis and our algorithm, is standardly stratified with respect to the restricted order on $\Lambda \setminus \{ \lambda \}$. It is clear from our definition of $\preccurlyeq$ that $A e_{\lambda} A = \bigoplus _{\mu \in \Lambda} \text{tr} _{P_{\lambda}} (P_{\mu})$ is projective. Thus $A$ is standardly stratified for $\preccurlyeq$.
\end{proof}

The next proposition tells us that it is enough to check linear orders in $\mathcal{L}$ to determine whether there exists a linear order $\preccurlyeq$ for which $\mathcal{A}$ is standardly stratified and the corresponding category $\mathcal{F} (_{\preccurlyeq} \Delta)$ is closed under cokernels of monomorphisms.

\begin{proposition}
Let $\preccurlyeq$ be a linear order on $\Lambda$ such that $A$ is standardly stratified and the corresponding category $\mathcal{F} (_{\preccurlyeq} \Delta)$ is closed under cokernels of monomorphisms. Then $\preccurlyeq \in \mathcal{L}$.
\end{proposition}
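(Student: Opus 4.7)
I plan to induct on $n = |\Lambda|$, showing that if $\lambda$ denotes the $\preccurlyeq$-maximal element, then the algorithm may select $\lambda$ at its first step, and the quotient $\bar{A} = A/Ae_\lambda A$ together with the restricted order again satisfies the hypotheses of the statement. The base case $n = 1$ is trivial. For the inductive step three items must be verified: (a) $\lambda \in O_1$; (b) $\lambda$ is maximal in $O_1$ with respect to $\leqslant'$; and (c) the pair $(\bar{A}, \preccurlyeq|_{\Lambda \setminus \{\lambda\}})$ makes $\bar{A}$ standardly stratified with $\mathcal{F}(_{\preccurlyeq}\bar{\Delta})$ closed under cokernels of monomorphisms.

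For (a), the $\preccurlyeq$-maximality of $\lambda$ forces $\Delta_\lambda = P_\lambda$, whence $\Ext_A^1(\Delta_\lambda, \Delta_\nu) = 0$ for every $\nu$. Consequently, in any $\Delta$-filtration of $P_\mu$ the copies of $\Delta_\lambda$ may be transposed past adjacent factors $\Delta_\nu$ ($\nu \neq \lambda$) and pushed to form an initial segment $M_t$; the vanishing $\Ext_A^1(P_\lambda, P_\lambda) = 0$ then forces $M_t \cong P_\lambda^{m_\mu}$. The quotient $P_\mu/M_t$ is $\Delta$-filtered by $\Delta_\nu$ with $\nu \neq \lambda$, and each such $\Delta_\nu$ satisfies $[\Delta_\nu : S_\lambda] = 0$ by $\lambda$-maximality; hence $\Hom_A(P_\lambda, P_\mu/M_t) = 0$, which gives $\mathrm{tr}_{P_\lambda}(P_\mu) = M_t \cong P_\lambda^{m_\mu}$, so $\lambda \in O_1$.

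For (b), I argue by contradiction: suppose some $\nu \in O_1$ with $\nu \neq \lambda$ satisfies $\lambda <' \nu$, i.e., $\mathrm{tr}_{P_\nu}(P_\lambda) \neq 0$. The $O_1$-condition for $\nu$ gives $\mathrm{tr}_{P_\nu}(P_\lambda) \cong P_\nu^m$ with $m \geqslant 1$, producing a monomorphism $P_\nu^m \hookrightarrow P_\lambda$. Applying the hypothesis that $\mathcal{F}(_{\preccurlyeq}\Delta)$ is closed under cokernels of monomorphisms to $0 \to P_\nu^m \to P_\lambda \to P_\lambda/P_\nu^m \to 0$ places the quotient in $\mathcal{F}(_{\preccurlyeq}\Delta)$. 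But $P_\lambda/P_\nu^m$ is a proper non-zero quotient of $P_\lambda = \Delta_\lambda$, hence has simple top $S_\lambda$; the last factor of any $\Delta$-filtration of a module with simple top $S_\lambda$ must be $\Delta_\lambda$, which forces $\dim_k(P_\lambda/P_\nu^m) \geqslant \dim_k \Delta_\lambda = \dim_k P_\lambda$, contradicting $m \geqslant 1$.

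For (c), since $\lambda$ is maximal and $\mu \neq \lambda$ implies $[\Delta_\mu : S_\lambda] = 0$, each $\Delta_\mu$ is naturally an $\bar{A}$-module and coincides with the standard $\bar{A}$-module $\bar{\Delta}_\mu$. Hence $\mathcal{F}(_{\preccurlyeq}\bar{\Delta})$ sits inside $\mathcal{F}(_{\preccurlyeq}\Delta)$ as the full subcategory of modules annihilated by $e_\lambda$, and closure under cokernels of monomorphisms descends. The inductive hypothesis then places $\preccurlyeq|_{\Lambda \setminus \{\lambda\}}$ in the $\mathcal{L}$ for $\bar{A}$, and by construction of the algorithm this lifts to $\preccurlyeq \in \mathcal{L}$ for $A$. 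The main technical obstacle is step (b): the crux is the length estimate showing that any $\Delta$-filtered module with simple top $S_\lambda$ has dimension at least $\dim_k \Delta_\lambda$, combined with the correct invocation of closure under cokernels; once this is established, the remaining steps reduce to verifying compatibilities in the algorithm.
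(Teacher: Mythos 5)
Your proposal is correct and follows essentially the same route as the paper's proof: induct on $|\Lambda|$, show the $\preccurlyeq$-maximal element $\lambda$ lies in $O_1$ and is $\leqslant'$-maximal there (the latter by applying closure under cokernels to $0 \rightarrow \mathrm{tr}_{P_\nu}(P_\lambda) \rightarrow P_\lambda \rightarrow P_\lambda/\mathrm{tr}_{P_\nu}(P_\lambda) \rightarrow 0$ and reading off a contradiction from the simple top $S_\lambda$), then pass to $\bar{A} = A/Ae_\lambda A$ with the restricted order. Your extra details — the filtration-rearrangement argument for $\lambda \in O_1$, the explicit dimension count, and the verification that closure under cokernels descends to $\bar{A}$ — merely flesh out steps the paper treats briefly.
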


\begin{proof}
The proof relies on induction on $|\Lambda|$. The claim is clear if $|\Lambda| = 1$. Suppose that the conclusion is true for $| \Lambda | \leqslant l$ and let $\Lambda$ be a set with $n=l+1$ elements. Note that our algorithm is defined recursively. Furthermore, the quotient algebra $\bar {A} = A / A e_{\lambda} A$ is also standardly stratified for the restricted linear order $\preccurlyeq$ on $\Lambda \setminus \{ \lambda \}$, where $\lambda$ is maximal with respect to $\preccurlyeq$; $\bar{\Delta} = \bigoplus _{\mu \in \Lambda \setminus \{\lambda \} } \Delta_{\mu}$; and $\mathcal{F} (\bar {\Delta})$ is also closed under cokernels of monomorphisms. Thus by induction it suffices to show that $\lambda \in O_1$, and is maximal in $O_1$ with respect to $\leqslant'$ (see the second step of the algorithm).

Consider $P_{\lambda} \cong \Delta_{\lambda}$. Since $A$ is standardly stratified for $\preccurlyeq$,  for each $\mu \in \Lambda$, tr$_{P_{\lambda}} (P_{\mu})$ is a projective module. Thus $\lambda \in O_1$.

If $\lambda$ is not maximal in $O_1$ with respect to $\leqslant '$, then we can choose some $\mu \in O_1$ such that $\mu >' \lambda$, i.e., tr$ _{P_{\mu}} (P_{\lambda}) \neq 0$. Since $\mu \in O_1$, by definition, tr$ _{P_{\mu}} (P_{\lambda}) \cong P_{\mu} ^m$ for some $m \geqslant 1$. Now consider the exact sequence:
\begin{equation*}
\xymatrix{ 0 \ar[r] & \text{tr} _{P_{\mu}} (P_{\lambda}) \ar[r] & P_{\lambda} \ar[r] & P_{\lambda} / \text{tr} _{P_{\mu}} (P_{\lambda}) \ar[r] & 0.}
\end{equation*}
Since tr$ _{P_{\mu}} (P_{\lambda}) \cong P_{\mu} ^m \in \mathcal{F} (_{\preccurlyeq} \Delta)$, $P_{\lambda} \in \mathcal{F} (_{\preccurlyeq} \Delta)$, and $\mathcal{F} (_{\preccurlyeq} \Delta)$ is closed under cokernels of monomorphism, we conclude that $P_{\lambda} / \text{tr} _{P_{\mu}} (P_{\lambda}) \in \mathcal{F} (_{\preccurlyeq} \Delta)$. This is impossible. Indeed, since $P_{\lambda} / \text{tr} _{P_{\mu}} (P_{\lambda})$ has a simple top $S_{\lambda} \cong P_{\lambda} / \rad P_{\lambda}$, if it is contained in $\mathcal{F} (_{\preccurlyeq} \Delta)$, then it has a filtration factor $\Delta_{\lambda} \cong P_{\lambda}$. This is absurd.

We have proved by contradiction that ${\lambda} \in O_1$ and is maximal in $O_1$ with respect to $\leqslant'$. The conclusion follows from induction.
\end{proof}

We reminder the reader that although $A$ is standardly stratified for all linear orders in $\mathcal{L}$, it does not imply that all linear orders for which $A$ is standardly stratified are contained in $\mathcal{L}$. Moreover, it is also wrong that for every linear order $\preccurlyeq \notin \mathcal{L}$, there exists some $\tilde{ \preccurlyeq } \in \mathcal{L}$ such that $\mathcal{F} (_{\preccurlyeq} \Delta) \subseteq \mathcal{F} (_{\tilde {\preccurlyeq}} {\Delta})$. Consider the following example.

\begin{example}
Let $A$ be the path algebra of the following quiver with relations: $\delta^2 = \delta \gamma = 0$, $\delta \beta = \beta'$.
\begin{equation*}
\xymatrix{ x \ar[r]^{\alpha} \ar@/^2pc/[rr] ^{\gamma} & y \ar@<0.5ex>[r] ^{\beta} \ar@<-0.5ex>[r] _{\beta'} & z \ar@(rd,ru)[]|{\delta}}
\end{equation*}
The structures of indecomposable projective $A$-modules are:
\begin{equation*}
\xymatrix{ & x \ar[dl] _{\alpha} \ar[dr] _{\gamma} & \\ y \ar[d] _{\beta} & & z \\ z \ar[d] _{\delta} & & \\ z & &} \qquad
\xymatrix{y \ar[d] _{\beta} \\ z \ar[d] _{\delta} \\ z} \qquad
\xymatrix{ z \ar[d] _{\delta} \\ z}
\end{equation*}
Applying the algorithm, we get a unique linear order $y \succ x \succ z$ contained in $\mathcal{L}$, and the corresponding standard modules are:
\begin{equation*}
\Delta_x = \xymatrix{ x \ar[d] _{\gamma} \\ z} \qquad \Delta_y \cong P_y \qquad \Delta_z \cong P_z.
\end{equation*}
But there is another linear order $x \succ z \succ y$ for which $A$ is also standardly stratified with standard modules
\begin{equation*}
\Delta_x' \cong P_x \quad \Delta_y' = y \quad \Delta_z' \cong P_z.
\end{equation*}
Both $\mathcal{F} (_{\preccurlyeq} \Delta)$ and $\mathcal{F} (_{\preccurlyeq'} \Delta)$ are not closed under cokernels of monomorphisms. Moreover, each of them is not contained in the other one.
\end{example} 




\end{document}